\documentclass[10pt]{article}
\RequirePackage[left=28mm,right=28mm,top=35mm,bottom=30mm]{geometry}
\usepackage[utf8]{inputenc}
\usepackage[english]{babel}
\usepackage[]{amsmath}
\usepackage{amssymb}
\usepackage{amsthm}
\usepackage[shortlabels]{enumitem} 
\usepackage[square,sort,comma,numbers]{natbib}
\usepackage{afterpage}
\usepackage{graphicx} 
\usepackage{tikz-cd} 
\usepackage{stmaryrd} 
\usepackage{url}
\usepackage{hyperref}
\usepackage{multicol}
\usepackage{xcolor}
\usepackage[toc, title]{appendix}

\DeclareFontFamily{U}{mathx}{\hyphenchar\font45}
\DeclareFontShape{U}{mathx}{m}{n}{
	<5> <6> <7> <8> <9> <10>
	<10.95> <12> <14.4> <17.28> <20.74> <24.88>
	mathx10
}{}
\DeclareSymbolFont{mathx}{U}{mathx}{m}{n}
\DeclareFontSubstitution{U}{mathx}{m}{n}
\DeclareMathAccent{\widecheck}{0}{mathx}{"71}
\DeclareMathAccent{\wideparen}{0}{mathx}{"75}

\tikzcdset{scale cd/.style={every label/.append style={scale=#1},
		cells={nodes={scale=#1}}}}
	
	\usepackage{mathtools}

\newcommand{\End}{\operatorname{End}}
\newcommand{\Hom}{\operatorname{Hom}}
\newcommand{\gldim}{\operatorname{gldim}}

\newcommand{\Ext}{\operatorname{Ext}}
\newcommand{\Tor}{\operatorname{Tor}}
\newcommand{\add}{\!\operatorname{add}}
\newcommand{\pdim}{\operatorname{pdim}}
\newcommand{\m}{\!\operatorname{-mod}} 
\newcommand{\M}{\!\operatorname{-Mod}} 
\newcommand{\proj}{\!\operatorname{-proj}}

\newcommand{\St}{\Delta}
\newcommand{\Cs}{\nabla}
\renewcommand{\L}{\Lambda}
\renewcommand{\l}{\lambda}
\newcommand{\pri}{\mathfrak{p}} 
\newcommand{\id}{\operatorname{id}}
\newcommand{\mi}{\mathfrak{m}} 

\newcommand{\Spec}{\operatorname{Spec}}
\newcommand{\rank}{\operatorname{rank}} 

\newcommand{\Proj}{\!\operatorname{-Proj}}
\newcommand{\MaxSpec}{\operatorname{MaxSpec}} 
\newcommand{\Stsim}{\tilde{\St}}
 
\newcommand{\Ann}{\operatorname{Ann}}
\newcommand{\coker}{\operatorname{coker}}
\newcommand{\sumSt}{\underset{\l\in\L}{\bigoplus}}

\newcommand{\characteristic}{\operatorname{char}}
\newcommand{\Cssim}{\tilde{\Cs}}
\newcommand{\domdim}{\operatorname{domdim}}
\newcommand{\R}{\operatorname{R}}
\newcommand{\flatdim}{\operatorname{flatdim}}
\newcommand{\height}{\operatorname{ht}}
\newcommand{\HN}{\operatorname{HNdim}}
\newcommand{\qcharacteristic}{\operatorname{-}\characteristic} 
\newcommand{\coheight}{\operatorname{coht}}

\newtheorem{numberingthm}{Theorem}[subsection] 
\theoremstyle{definition}
\newtheorem{Def}[numberingthm]{Definition}

\theoremstyle{plain}
\newtheorem{Prop}[numberingthm]{Proposition}
\newtheorem{Theorem}[numberingthm]{Theorem}
\newtheorem{Cor}[numberingthm]{Corollary}
\newtheorem{Lemma}[numberingthm]{Lemma}
\newtheorem{Remark}[numberingthm]{Remark}

\newenvironment{Example}
{\pushQED{\qed}\example}
{\popQED\endexample}

\theoremstyle{remark}
\newtheorem{Observation}[numberingthm]{Observation}


\theoremstyle{empty}
\newtheorem*{thmintroduction}{Theorem}
\providecommand{\keywords}[1]
{\scriptsize
	\textbf{\textit{Keywords:}} #1 \normalsize \hfill
}
\providecommand{\msc}[1]
{\scriptsize
	\textbf{\textit{2020 Mathematics Subject Classification:}} #1 \normalsize \hfill
}

\title{On Noetherian algebras, Schur functors and Hemmer-Nakano dimensions}
\author{Tiago Cruz} 
\date{}

\newcommand{\Address}{{
		\bigskip
		\footnotesize
		
		TIAGO CRUZ,\par \textsc{Institute of Algebra and Number Theory}\par \textsc{University of Stuttgart,}\par \textsc{Pfaffenwaldring 57, 70569 Stuttgart, Germany,}\par\nopagebreak
		\textit{E-mail address}, T.~Cruz: \texttt{tiago.cruz@mathematik.uni-stuttgart.de}
		}}
		
		\allowdisplaybreaks

		\setcounter{subsection}{1}
		\setcounter{secnumdepth}{4} 
		\setcounter{tocdepth}{2} 
\raggedbottom

\begin{document}

\maketitle

\begin{abstract}
	 Important connections in representation theory arise from resolving a finite-dimensional algebra by an endomorphism algebra of a generator-cogenerator with finite global dimension; for instance, Auslander's correspondence, classical Schur--Weyl duality and Soergel's Struktursatz. Here, the module category of the resolution and the module category of the algebra being resolved are linked via an exact functor known as the Schur functor.
	
In this paper, we investigate how to measure the quality of the connection between module categories of (projective) Noetherian algebras, $B$, and module categories of endomorphism algebras of generator-relative cogenerators over $B$ which are split quasi-hereditary Noetherian algebras. In particular, we are interested in finding, if it exists, the highest degree $n$ so that the endomorphism algebra of a generator-cogenerator provides an $n$-faithful cover, in the sense of Rouquier, of $B$. 
The degree $n$ is known as the Hemmer-Nakano dimension of the standard modules. 

We prove that, in general, the Hemmer-Nakano dimension of standard modules with respect to a Schur functor from a split highest weight category over a field to the module category of a finite-dimensional algebra $B$ is bounded above by the number of non-isomorphic simple modules of $B$.

We establish methods for reducing computations of Hemmer-Nakano dimensions in the integral setup to computations of Hemmer-Nakano dimensions over finite-dimensional algebras, and vice-versa. In addition, we extend the framework to study Hemmer-Nakano dimensions of arbitrary resolving subcategories.
In this setup, we find that the relative dominant dimension over (projective) Noetherian algebras is an important tool in the computation of these degrees, extending the previous work of Fang and Koenig. 
In particular, this theory allows us to derive results for Schur algebras and the BGG category $\mathcal{O}$ in the integral setup from the finite-dimensional case. More precisely, we use the relative dominant dimension of Schur algebras to completely determine the Hemmer-Nakano dimension of standard modules with respect to Schur functors between module categories of Schur algebras over regular Noetherian rings and module categories of group algebras of symmetric groups over regular Noetherian rings.

We exhibit several structural properties of deformations of the blocks of the Bernstein-Gelfand-Gelfand category $\mathcal{O}$ establishing an integral version of Soergel's Struktursatz. We show that deformations of the combinatorial Soergel's functor have better homological properties than the classical one.

\end{abstract}
 \keywords{split quasi-hereditary algebras, Schur functors, $\mathcal{A}$-covers, deformations of the BGG category $\mathcal{O}$, Hemmer--Nakano dimension, $q$-Schur algebras\\
 	\msc{16G30, 16E30, 20G43, 17B10}}

 \newpage
\tableofcontents

\section{Introduction}\label{Introduction}

A common theme in representation theory is the study of a module category through the lens of another module category having nicer properties. 
A successful example of this approach is the study of a finite-dimensional algebra by one of its quasi-hereditary covers. Quasi-hereditary covers appear quite frequently in algebraic Lie theory (for example as an instance of classical Schur--Weyl duality \cite{zbMATH03708660} and Soergel's Struktursatz \cite{zbMATH00005018}) and in several abstract results of representation theory, like  Auslander's correspondence \cite{zbMATH03517355} and finiteness of representation dimension \cite{zbMATH01849919}. In \cite{Dlab1989}, Dlab and Ringel using a construction developed in \cite{zbMATH03517355} by Auslander 
have shown that every finite-dimensional algebra admits a quasi-hereditary cover in the sense of Rouquier (see \cite{Rouquier2008}). In particular, all finite-dimensional algebras can be resolved by finite-dimensional algebras of finite global dimension.

\paragraph*{General setup} Given a finite-dimensional algebra $A$ and a finitely generated projective $A$-module $P$, we say that $(A, P)$ is a (resp. split) quasi-hereditary cover of $B$ if $A$ is a (resp. split) quasi-hereditary algebra (with respect to some ordering on the simple modules), $B$ is the endomorphism algebra of $P$, and the exact functor $F:=\Hom_A(P, -)\colon A\m\rightarrow B\m$ restricts to a fully faithful functor on the full subcategory of finitely generated projective $A$-modules. The functor $F$ is known in the literature as Schur functor and it can be used to completely determine the simple $B$-modules knowing the simple $A$-modules \citep[6.2]{zbMATH03708660}. 
To transfer cohomological information from the quasi-hereditary cover $(A, P)$ to $B$ through the Schur functor, the quasi-hereditary cover should possess stronger properties. In particular, we can distinguish quasi-hereditary covers by the properties that the Schur functor exhibits on standard modules. Following \cite{Rouquier2008}, a quasi-hereditary cover is called $n$-faithful if the exact functor $\Hom_A(P, -)\colon A\m\rightarrow B\m$ identifies extensions groups $$\Ext_A^i(M, N)\simeq \Ext^i_{B}(\Hom_A(P, M), \Hom_A(P, N))$$ for all integers $i$ ranging from $0$ to $n$ and all modules $M$ and $N$ having a filtration by standard $A$-modules. 
We give a meaning also to the term $(-1)$-faithful cover (see Definition \ref{faithfulcoverdef}). 
 In \cite{Rouquier2008}, we can see that covers with large enough quality (that is $n$-faithful covers with $n$ large enough) are in some sense unique, in particular, $1$-faithful (split quasi-hereditary) covers under some mild assumptions are unique. Nowadays, the optimal value $n$ making a cover being $n$-faithful is known as the Hemmer-Nakano dimension of the subcategory whose modules admit a filtration by standard modules, coined in \cite{zbMATH05871076}.  
 
\paragraph*{Examples in this setup} Schur algebras $S(d, d)$ together with their faithful projective-injective module, $V^{\otimes d}$, are a classical example of a (split) quasi-hereditary cover of group algebras of the symmetric group of $d$ letters $S_d$. The block algebras of the Bernstein-Gelfand-Gelfand category $\mathcal{O}$, together with their projective-injective module, form (split) quasi-hereditary covers of subalgebras of coinvariant algebras. The former connection is a consequence of Schur--Weyl duality \cite{zbMATH03708660} while the latter follows from Soergel's Struktursatz \cite{zbMATH00005018}.
 One more classical example of quasi-hereditary covers is the class of Auslander algebras. Auslander's correspondence can be viewed as an instance of cover theory. In fact, it assigns to each representation-finite finite-dimensional algebra $B$ a quasi-hereditary cover of $B$ known as Auslander algebra. These three classes of examples have actually more in common: they are instances of quasi-hereditary covers that can be realised as endomorphism algebras of generator-cogenerators. Here, generator-cogenerators, are modules whose additive closure contains all injective and all projective modules. In view of the Morita-Tachikawa correspondence all such covers are exactly the covers formed by algebras having dominant dimension at least two (see for example \cite{zbMATH03248955}). In such a case, the computation of Hemmer-Nakano dimensions relies on the computation of dominant dimensions following \cite{zbMATH03248955} and \cite{zbMATH05871076}.

 In \cite{CRUZ2022410}, the author introduced a generalisation of dominant dimension suitable for the integral setup. We wonder to what extent the computations of Hemmer-Nakano dimensions in the integral setup rely on computations of relative dominant dimension in the sense of \cite{CRUZ2022410}.
 Our focus in this paper is to construct more tools to compute Hemmer-Nakano dimensions and in particular to develop techniques to be able to compute Hemmer-Nakano dimensions in the integral setup like deformation techniques (see Sections \ref{A-covers under change of ground ring} and \ref{Hemmer--Nakano dimension with respect to covers constructed from relative injective modules}). There are very strong reasons to care about the integral case in this setup. To understand why this problem is relevant, we shall have a closer look to what values the Hemmer-Nakano dimension takes in the classical examples.

\paragraph*{Schur algebras and symmetric groups} Over the complex numbers, Schur in modern terminology used in \cite{zbMATH02662157} the cover $(S(d, d), V^{\otimes d})$ and more precisely the Schur functor (associated with this cover) to connect the polynomial representation theory of $\operatorname{GL}_d(\mathbb{C})$ with the complex representation theory of $S_d$. In this case, the Schur functor is an equivalence of categories. In positive characteristic, this cover is no longer a particular case of an equivalence of categories. Its quality in the positive characteristic case started to attract attention in \cite{hemmer_nakano_2004}. 
In \cite{hemmer_nakano_2004}, Hemmer and Nakano established that when the underlying field has characteristic $p>3$, $(S(d, d), V^{\otimes d})$ is a $(p-3)$-faithful cover. In particular, they proved that the Schur functor induces an exact equivalence between the full subcategory of modules having a Weyl filtration and the full subcategory of the module category of the group algebra of the symmetric group whose modules admit a filtration by dual Specht modules if the characteristic is bigger than three.
Later, in \cite{zbMATH05871076}, it was reproved that $(S(d, d), V^{\otimes d})$ is a $(p-3)$-faithful cover using dominant dimension when the underlying field has characteristic $p>0$.  There, it was established that this degree is optimal, that is, $(S(d, d), V^{\otimes d})$ is not a $(p-2)$-faithful cover when the underlying field has characteristic $p>0$. Unfortunately, the Schur functor in characteristics two or three no longer identifies the subcategory of modules having a Weyl filtration with the subcategory of modules having a filtration by dual Specht modules.

\paragraph*{BGG category $\mathcal{O}$ and subalgebras of coinvariant algebras}The situation for the BGG category $\mathcal{O}$ of a complex semi-simple Lie algebra seems to be even worse. In fact, all standard modules, also known as Verma modules, are sent to the same module under Soergel's combinatorial functor. Here Soergel's combinatorial functor is the Schur functor in Soergel's Struktursatz. So, these covers formed by the block algebras of the BGG category $\mathcal{O}$ are not even $0$-faithful (see \cite{zbMATH05278765}). 

\paragraph*{Integral setup} It turns out that this is not the end of the story for the study of these covers. The concept  of cover can be defined over any commutative Noetherian ring and Rouquier's framework is also suitable for Noetherian algebras. Moreover, \citep[Proposition 4.42]{Rouquier2008} provides evidence that quasi-hereditary covers might behave better in the integral setup. This claim is also supported by the work developed in \cite{zbMATH00971625}. Naively, we could think that increasing the global dimension of the ground ring could allow more extensions groups to be identified, improving the situation overall.

All previous examples mentioned above can be studied using dominant dimension.
In the integral setup, projective-injective modules rarely exist, so the classical dominant dimension cannot be used directly here. The generalisation of dominant dimension introduced in \cite{CRUZ2022410} fixes this obstacle. A major difference is that this new relative dominant dimension over projective Noetherian algebras is not characterised in terms of Ext groups but instead by Tor groups. This fact combined with \citep[Theorem 6.13]{CRUZ2022410} provides further evidence that covers might behave better in the integral setup. So, to continue this story, it is fundamental to understand the connections between Hemmer-Nakano dimensions and this recent concept of relative dominant dimension.

\paragraph*{List of questions} Given this context, the following questions are crucial.
\textit{\begin{enumerate}[(1)]
	\item Under what conditions are integral covers better than finite-dimensional ones?  
	\item Can we lift the covers mentioned above to covers over commutative regular rings with higher quality, resulting in higher values of Hemmer-Nakano dimensions?
	\item Does an increase in the Krull dimension of the ground ring result in a cover with higher quality?
	Conversely, is the cover tensored with a quotient ring always worse than the original cover?
\end{enumerate}}

In \cite{chen_fang_kerner_koenig_yamagata_2021}, the concept of rigidity dimension was introduced to measure the quality of the best resolution of a finite-dimensional algebra by an endomorphism algebra of a generator-cogenerator. In particular, such a dimension aims to give an upper bound to the quality of any resolution of a finite-dimensional algebra via endomorphism algebra of generator-cogenerators with finite global dimension.  In general, the finiteness of the rigidity dimension is an open problem. In our setup, we can replace the algebras of finite global dimension with stronger assumptions like being quasi-hereditary  algebras or even split quasi-hereditary algebras. This situation raises the following question:
\textit{\begin{enumerate}[(1)] \setcounter{enumi}{3}
	\item  Can the Hemmer-Nakano dimension of the subcategory of modules admitting a filtration by standard modules with respect to a (split) quasi-hereditary cover of $B$ be controlled solely by invariants of $B$?
\end{enumerate}}

\paragraph*{Contributions} The aim of this paper is to advance our knowledge on how to compute Hemmer-Nakano dimensions in the integral setup by giving answers to these questions and providing a generalisation of the Hemmer-Nakano theorem for Schur algebras (and $q$-Schur algebras) over regular rings. In addition, we study deformations of covers of subalgebras of coinvariant algebras having higher quality than their finite-dimensional counterparts. That is, these new covers have  higher levels of faithfulness - see Definition \ref{faithfulcoverdef}- in the sense of Rouquier.
We answer Question (4) for split quasi-hereditary covers in Theorem \ref{rigiditypartI}. Theorem \ref{rigiditypartI} states that if $(A, P)$ is an $n$-faithful (split quasi-hereditary) cover of a finite-dimensional algebra $B$ with infinite global dimension then $n$ must be smaller than or equal to the number of non-isomorphic classes of simple $B$-modules.
This result gives finiteness to resolving algebras by quasi-hereditary covers, in contrast with the rigidity dimension whose finiteness relies on homological conjectures.
Our answer to Question (1) is given mainly in Theorem \ref{improvingcoverwithspectrum}. This result allows us to determine the degrees in which a Schur functor identifies extension groups between modules belonging to a given resolving subcategory and their images under the Schur functor covering in this way more general situations than $n$-faithful covers. 
\begin{thmintroduction}[Theorem \ref{improvingcoverwithspectrum} and Corollary \ref{coverheightprimeideal} for $n$-faithful covers]
	Let $R$ be a regular local commutative Noetherian ring with quotient field $K$. Suppose that $(A, P)$ is a $0$-faithful cover of $B$. Let $i\geq 0$. Then $(A, P)$ is an $(i+1)$-faithful cover of $B$ if and only if the  following conditions are satisfied:
	\begin{enumerate}[(i)]
		\item $(K\otimes_R A, K\otimes_R P)$ is an $(i+1)$-faithful cover of $K\otimes_R B$.
		\item For each prime ideal $\mathfrak{p}$ of height one, $(R/\mathfrak{p}\otimes_R A, R/\mathfrak{p}\otimes_R P)$ is an $i$-faithful cover of $R/\mathfrak{p}\otimes_R B$.
	\end{enumerate}
\end{thmintroduction}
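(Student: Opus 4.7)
I would prove this by induction on $i \ge 0$, combining flat base change along $R \to K$ with a change-of-rings short exact sequence coming from height-one primes. The main technical ingredient is as follows. For a height-one prime $\mathfrak{p} = (\pi)$ (principal since the regular local ring $R$ is a UFD), write $\bar{(-)} = R/\mathfrak{p} \otimes_R -$. Standard-filtered modules $M, N$ over a split quasi-hereditary projective Noetherian $R$-algebra are $R$-projective, so applying $\Hom_A(-, N)$ to $0 \to M \xrightarrow{\pi} M \to \bar M \to 0$, together with the adjunction identity $\Ext^{k+1}_A(\bar M, N) \simeq \Ext^k_{\bar A}(\bar M, \bar N)$ (valid because $N$ is $\pi$-torsion free), produces the natural short exact sequence
\begin{equation*}
0 \to \Ext^k_A(M, N)/\pi \to \Ext^k_{\bar A}(\bar M, \bar N) \to \Ext^{k+1}_A(M, N)[\pi] \to 0
\end{equation*}
together with an analogous one on the $B$-side, connected by the natural transformation induced by the Schur functor (which commutes with $- \otimes_R R/\mathfrak{p}$ on standard-filtered modules).

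For the forward direction, condition (i) is obtained from flat base change along $R \to K$: the Schur functor commutes with $K \otimes_R -$ (since $P$ is finitely generated projective over $A$), and Ext of $R$-projective modules commutes with $K \otimes_R -$. Condition (ii) then follows from the five-lemma applied to the pair of short exact sequences above at each $k \le i$: the outer maps are isomorphisms under the hypothesis of $(i+1)$-faithfulness, forcing the middle map---the $k$-th Schur map for $(\bar A, \bar P)$---to be an isomorphism.

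For the reverse direction, assume (i) and (ii). By the inductive hypothesis (since the hypotheses downgrade to the corresponding conditions at level $i-1$), $(A, P)$ is $i$-faithful, so it remains to show the natural map $\phi^{i+1} \colon \Ext^{i+1}_A(M, N) \to \Ext^{i+1}_B(\Hom_A(P, M), \Hom_A(P, N))$ is an isomorphism for all standard-filtered $M, N$. Applying the five-lemma to the short exact sequence at $k = i$, with outer terms isomorphic by the $i$-faithfulness of $(A, P)$ and the $i$-faithfulness of $(\bar A, \bar P)$ supplied by (ii), we conclude that $\phi^{i+1}[\pi]$ is an isomorphism for every irreducible $\pi$. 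Combined with $\phi^{i+1}$ being an isomorphism after $K \otimes_R -$ from (i), both the kernel and cokernel of $\phi^{i+1}$ are finitely generated $R$-torsion modules. The associated-prime criterion in the UFD $R$ then finishes: any nonzero associated prime has positive height and must contain an irreducible element $\pi$, which forces the $[\pi]$-part of the kernel (respectively, an analogous torsion invariant of the cokernel) to be nonzero, contradicting the established $[\pi]$-isomorphism.

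The hard part is the cokernel: while $\ker(\phi^{i+1})[\pi] = 0$ is immediate from $\phi^{i+1}[\pi]$ being iso, the analogous vanishing for $\coker(\phi^{i+1})$ requires an additional diagram chase using the short exact sequence at $k = i + 1$, where one must establish injectivity of the Schur map at degree $i + 1$ for $(\bar A, \bar P)$. This is where the full strength of (i), namely $(i+1)$-faithfulness (rather than just $i$-faithfulness) over $K$, is essential. I expect this to proceed in parallel with the proof of the more general Theorem \ref{improvingcoverwithspectrum}, of which this statement is a specialization.
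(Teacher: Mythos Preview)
Your forward direction is sound and matches the paper's Theorem~\ref{truncationcovers}/Corollary~\ref{coverheightprimeideal}: flat base change gives (i), and your Rees-type short exact sequence is exactly the universal coefficient theorem (Corollary~\ref{Kunnethdeformationresult}) applied to $\Hom_A(P^\bullet_M,N)$, so the five-lemma argument for (ii) is essentially the paper's.

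For the reverse direction your kernel argument is fine, but the cokernel argument has a genuine gap. You correctly identify that one needs injectivity of $\bar\phi^{i+1}$, but you attribute this to condition (i); in fact condition (i) only tells you about the generic fibre and says nothing about the map over $R/\mathfrak p$. The injectivity of $\bar\phi^{i+1}$ actually follows from condition (ii) via the Grothendieck spectral sequence (Lemma~\ref{spectralsequencegrothendieck}): once $(\bar A,\bar P)$ is $i$-faithful, the five-term exact sequence forces $\Ext^{i+1}_{\bar A}(\bar M,\bar N)\hookrightarrow\Ext^{i+1}_{\bar B}(\overline{FM},\overline{FN})$. Without this input, your diagram at $k=i$ only gives that $\phi^{i+1}[\pi]$ is an isomorphism, and from $0\to S\hookrightarrow T\to C\to 0$ one gets $C[\pi]\simeq\ker(\phi^{i+1}/\pi)$, which is not visibly zero.

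The paper sidesteps this entirely by using Proposition~\ref{arbitraryAcover} from the outset: $(i{+}1)$-faithfulness is equivalent to $\R^jG(FM)=\Ext^j_B(FA,FM)=0$ for all $M\in\mathcal F(\tilde\Delta)$ and $1\le j\le i{+}1$. This is your argument specialised to the pair $(A,M)$, where the source $\Ext^j_A(A,M)$ vanishes for $j\ge 1$, so the question ``is $\phi^j$ an isomorphism?'' becomes ``is a single finitely generated $R$-module zero?''. The universal coefficient theorem applied to $\Hom_B(FA^\bullet,FM)$ then gives $\R^jG(FM)[\pi]=0$ for every irreducible $\pi$ directly from (ii), and the elementary fact that a nonzero finitely generated torsion module over a UFD has nonzero $\pi$-torsion for some irreducible $\pi$ finishes the proof. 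There is no cokernel to chase. Your approach can be completed, but only after importing precisely the spectral-sequence reduction that makes the paper's route shorter in the first place.
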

This means that the computation of the Hemmer-Nakano dimension depends on the spectrum of the ground ring $R$ and knowing the behaviour of a cover on residue fields is not enough, in contrast to the relative dominant dimension. Our approach evaluates the Schur functor mainly on resolving subcategories that behave nicely under change of ground ring like the subcategory of projective modules and the subcategory of modules having a finite filtration by direct summands of direct sums of standard modules.
Condition (ii) might be dropped if the Krull dimension is just one, or if we already know that $(A, P)$ is an $i$-faithful cover. The difference between the global dimension of the rings $R$ and $R/\pri$ is just one whenever $\pri$ has height one. Therefore, Condition (ii) guarantees that if $(A, P)$ (resp. $(A/\mi A, P/\mi P)$ with $\mi$ the maximal ideal of $R$) is an $i$-faithful cover (resp. $j$-faithful cover ) of $B$ (resp. $B/\mi B$) then $i$ is greater than or equal to $j$ (see also Propositions \ref{faithfulcoversresiduepart3} and \ref{faithfulcoverresiduefieldnext}) and their difference is not greater than the global dimension of the coefficient ring $R$. In short, a cover $(A, P)$ has better properties than their finite-dimensional counterpart $(A/\mi A, P/\mi P)$ if tensoring with the quotient field produces a cover with better properties.

Question (2) has a positive answer for both Schur algebras and blocks of the BGG category $\mathcal{O}$ of a semi-simple Lie algebra. \paragraph*{Applications for Schur algebras} For Schur algebras we obtain the following:

\begin{thmintroduction}[\ref{schuralgebraHn}, ~\ref{Hemmenakanodimofprojectives}]
	Let $R$ be a local regular commutative Noetherian ring and $d\in \mathbb{N}$. Define $i:=\inf\{k\in \mathbb{N} \ | \ (k+1)\cdot 1_R\notin R^\times, \ k<d \}\in \mathbb{N}\cup \{+\infty\}$, where $R^\times$ denotes the set of invertible elements of $R$.
	If $R$ is a local ring of equal characteristic, then $(S_R(d, d), V^{\otimes d})$ is an $(i-2)$-faithful cover of $RS_d$.
	If $R$ is a local ring of unequal characteristic, then $(S_R(d, d), V^{\otimes d})$ is an $(i-1)$-faithful cover of $RS_d$. Both of these values are maximal.
\end{thmintroduction}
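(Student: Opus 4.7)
The plan is to reduce to the finite-dimensional Hemmer--Nakano theorem in the form sharpened in \cite{zbMATH05871076} by iterating Theorem \ref{improvingcoverwithspectrum}, using a double induction: on the Krull dimension of $R$ and on the desired faithfulness level. First one checks that $(S_R(d,d),V^{\otimes d})$ is a split quasi-hereditary cover of $RS_d$ for every commutative regular Noetherian $R$, with $\End_{S_R(d,d)}(V^{\otimes d})\simeq RS_d$ by integral Schur--Weyl duality, and that it is at least $0$-faithful as soon as $i\geq 2$, so the hypothesis of Theorem \ref{improvingcoverwithspectrum} is in force. The edge cases $i\in\{0,1\}$ (residue characteristic $2$ or $3$), where the cover may fail to be $0$-faithful, will be handled separately by direct inspection of the Hom-level statement.

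For achievability, I would argue by induction on $\dim R$. The base case $\dim R=0$ is a field (automatically equal characteristic) and is exactly the content of \cite{zbMATH05871076}: $(p-3)$-faithful if $2\leq p\leq d$, an equivalence otherwise, matching $(i-2)$-faithful. For the inductive step in equal characteristic, the quotient field $K=\operatorname{Frac}(R)$ and every quotient $R/\mathfrak p$ by a height-one prime $\mathfrak p$ share the residue characteristic $p$ of $R$; by the inductive hypothesis their covers are $(i-2)$-faithful, and iterating the implication in Theorem \ref{improvingcoverwithspectrum} that conditions (i) and (ii) imply $(n+1)$-faithfulness (starting from $0$-faithfulness) promotes the $R$-cover to $(i-2)$-faithful. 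In unequal characteristic, $K$ has characteristic zero, so the $K$-cover is an equivalence by Schur's classical theorem (i.e.\ $\infty$-faithful); height-one primes $\mathfrak p$ with $p\notin\mathfrak p$ yield $R/\mathfrak p$ of characteristic zero, and height-one primes with $p\in\mathfrak p$ produce $R/\mathfrak p$ of characteristic $p$ with $(i-2)$-faithful cover by the equal characteristic case. Iterating Theorem \ref{improvingcoverwithspectrum} then promotes the $R$-cover to $(i-1)$-faithful.

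Maximality is by contradiction. In equal characteristic, $(i-1)$-faithfulness over $R$ would force the $K$-cover to be $(i-1)$-faithful by the only-if direction of Theorem \ref{improvingcoverwithspectrum}, contradicting the sharpness established in \cite{zbMATH05871076} for the field $K$ of characteristic $p$. In unequal characteristic, $i$-faithfulness would force each $R/\mathfrak p$-cover (for $\mathfrak p$ of height one) to be $(i-1)$-faithful; choosing $\mathfrak p\ni p$ and descending through a chain of height-one primes of $R/\mathfrak p$ down to its residue field (equal to $R/\mathfrak m$, of characteristic $p$), one reaches a field-level $(p-2)$-faithful cover, again contradicting \cite{zbMATH05871076}.

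The main technical obstacle is that $R/\mathfrak p$ need not be regular when $\mathfrak p$ is a non-principal height-one prime, so the inductive hypothesis does not apply directly to $R/\mathfrak p$. I would circumvent this either by strengthening the induction to include arbitrary local Noetherian quotients of regular local rings (leveraging the paper's framework of relative dominant dimension, which is well-behaved under base change and quotients and controls the Hemmer--Nakano dimension through a Fang--Koenig-type formula), or by reducing to the complete case via Cohen's structure theorem, where height-one primes can be chosen generated by a regular parameter so that each $R/\mathfrak p$ is again complete regular local and the induction proceeds cleanly.
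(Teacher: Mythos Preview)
Your inductive scheme via Theorem \ref{improvingcoverwithspectrum} has a genuine gap that you correctly identify but do not resolve: the theorem needs $R$ regular local, and $R/\mathfrak p$ for a height-one prime need not be regular, so the induction hypothesis cannot be invoked on it. Your workaround (b) does not fix this: even in a complete regular local ring such as $k[[x,y]]$, height-one primes are principal but can be generated by elements of $\mathfrak m^2$ (e.g.\ $x^2+y^3$), and then $R/\mathfrak p$ is singular; Theorem \ref{improvingcoverwithspectrum} demands condition (ii) for \emph{all} height-one primes, not just those generated by a regular parameter. Workaround (a), as you phrase it, collapses into the paper's argument rather than salvaging the induction.

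The paper avoids the induction on $\dim R$ entirely. For the \emph{lower bound} it uses the relative dominant dimension of $S_R(n,d)$ and of its characteristic tilting module, which are computed directly (Theorem \ref{dominantdimensioofSchuralgebras}, Corollary \ref{dominantdimensiontilting}) and fed into Theorem \ref{boundrelationcoversdom} (equal characteristic) or Theorem \ref{boundcoverimprovementdomquotientfield} (unequal characteristic); no quotient by a prime is ever needed here. For the \emph{upper bound}: in equal characteristic one simply passes to the quotient field $K$ by flatness (Proposition \ref{arbitraryfaithfulcoverflattwo}), where $\operatorname{char} K=\operatorname{char} R(\mathfrak m)$ and the field result of \cite{zbMATH05871076} applies. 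In unequal characteristic one uses that $R$ is a UFD to find a single height-one prime $\mathfrak p$ containing $p$, then passes from $R/\mathfrak p$ further to its quotient field $Q(R/\mathfrak p)$, a field of characteristic $p$; the field result bounds the $Q(R/\mathfrak p)$-cover, flatness bounds the $R/\mathfrak p$-cover, and a single application of Corollary \ref{coverheightprimeideal} bounds the $R$-cover. No regularity of $R/\mathfrak p$ is required because one immediately jumps to its fraction field. Your maximality argument in the unequal characteristic case, descending a chain of height-one primes to the residue field, is more elaborate than necessary for the same reason.
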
This result gives a positive answer to Question (2) for the case of Schur algebras. Moreover, this result says that the cover of the symmetric group formed by the Schur algebra behaves exactly like in the finite-dimensional case if the coefficient ring is a local regular ring containing a field as a subring, that is, a local ring of equal characteristic. 
In particular, it follows that the Schur functor over the localization of the integers away from 2 restricts to an exact equivalence between the category whose modules admit Weyl filtration and the category whose modules admit a dual Specht filtration improving, therefore, the characteristic three case. 
In Theorems \ref{HNqschuralgebraI} and \ref{HNqschuralgebraII}, we obtain an analogue result for $q$-Schur algebras.

\paragraph*{Applications for the BGG category $\mathcal{O}$} For the BGG category $\mathcal{O}$ the improvement of going integrally is more dramatic.
\begin{thmintroduction}[\ref{Oissplitqh}, ~\ref{integralstruktursatz}, ~\ref{HNforcategoryO}]
		Fix a natural number $t$. Let $R$ be the localization of $\mathbb{C}[X_1, \ldots, X_t]$ at the maximal ideal $(X_1, \ldots, X_t)$. Denote by $\mi$ the unique maximal ideal of $R$. Let $\mathcal{O}_{\mathcal{D}}$ be a block of the BGG category $\mathcal{O}$ of a complex semi-simple Lie algebra $\mathfrak{g}$. For any natural number $s$ being smaller than or equal to the minimum between the dimension of the Cartan subalgebra of $\mathfrak{g}$ and $t$, there exists an $R$-algebra $A_{\mathcal{D}_s}$ which is projective and finitely generated as $R$-module so that there exists an exact equivalence of categories between  $\mathcal{O}_{\mathcal{D}}$ and $R/\mi\otimes_R A_{\mathcal{D}_s}\m$. Furthermore, $A_{\mathcal{D}_s}$ is a split quasi-hereditary algebra over $R$ and  there exists a projective $A_{\mathcal{D}_s}$-module $P$ so that the following assertions hold:
		\begin{enumerate}[(i)]
			\item  $(A_{\mathcal{D}_s}, P)$ is a relative gendo-symmetric $R$-algebra;
			\item   $(A_{\mathcal{D}_s}, P)$ is an $(s-1)$-faithful cover of a deformation of a subalgebra of a coinvariant algebra.
		\end{enumerate}
\end{thmintroduction}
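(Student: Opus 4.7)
The plan is to build $A_{\mathcal{D}_s}$ as a deformation of the block algebra of $\mathcal{O}_{\mathcal{D}}$ along an $s$-dimensional slice inside the Cartan subalgebra $\mathfrak{h}$, in the spirit of Soergel's and Fiebig's deformed category $\mathcal{O}$. Concretely, fix linearly independent $h_1,\ldots,h_s\in \mathfrak{h}$ and define an $R$-algebra map $R\to S(\mathfrak{h})_{\mathrm{loc}}$ by $X_i\mapsto h_i$ for $i\leq s$ and $X_i\mapsto 0$ for $s<i\leq t$. Using this, the deformed universal enveloping algebra $U_R(\mathfrak{g})$ gives a deformed BGG category $\mathcal{O}_R$, from which one isolates the block $A_{\mathcal{D}_s}$ as the endomorphism algebra of a projective generator. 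Standard arguments (deformed PBW, flatness of deformed Verma modules, BGG reciprocity) give that $A_{\mathcal{D}_s}$ is a finitely generated projective $R$-module and that $R/\mi\otimes_R A_{\mathcal{D}_s}\m$ is exact-equivalent to $\mathcal{O}_{\mathcal{D}}$.

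Next, I would show that $A_{\mathcal{D}_s}$ is split quasi-hereditary over $R$: the deformed Verma modules $\St_R(\l)$, indexed by the weights in $\mathcal{D}$ with the Bruhat partial order, are free $R$-modules; the projective indecomposables admit a $\St_R$-filtration (this is the $R$-flat version of BGG reciprocity, which survives deformation because $R$ is regular); and the splitness follows from the fact that each $\St_R(\l)$ has a $R$-split projection onto $L_R(\l)$. Then define $P$ as the direct sum of indecomposable projective-injective $A_{\mathcal{D}_s}$-modules of the deformed block. Using Soergel's self-duality for the big projective $P_R(w_0\cdot\l)$ in the deformed setting, one identifies $\End_{A_{\mathcal{D}_s}}(P)^{op}$ with an $R$-deformation $B_s$ of a subalgebra of the coinvariant algebra; the same identification shows that $B_s$ is a symmetric $R$-algebra, so $(A_{\mathcal{D}_s},P)$ is relative gendo-symmetric. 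This settles item (i) of the conclusion.

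For the $(s-1)$-faithfulness in item (ii), I would combine the relative dominant dimension machinery of the paper with the integral-to-residue reduction from Theorem~\ref{improvingcoverwithspectrum} and its corollary. In the base case $s=0$ (the field case) the cover is only $(-1)$-faithful, matching Soergel's observation. To raise the level by one each time we add a deformation variable, choose the chain of primes $\mi\supset(X_1,\ldots,X_{s-1})\supset\cdots\supset(X_1)\supset 0$; along this chain each quotient $R/(X_1,\ldots,X_{s-k})$ is regular and identifies $A_{\mathcal{D}_s}/(X_1,\ldots,X_{s-k})A_{\mathcal{D}_s}$ with the lower deformation $A_{\mathcal{D}_{s-k}}$. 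Induction, fed by Theorem~\ref{improvingcoverwithspectrum} at each height-one step, then yields the desired degree $s-1$, provided that the generic fibre (over the fraction field of $R$) gives an equivalence on standard-filtered modules, which is the deformed analogue of the semisimplicity at the generic point. Once one has $(s-1)$-faithfulness with respect to the image $B_s$ of the Schur functor, it remains to identify $B_s$ with the claimed deformation of the subalgebra of coinvariants, which is exactly the integral Struktursatz content established in the deformed Soergel identification above.

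The main obstacle will be the inductive step in the third paragraph: showing that the generic point of $R$ actually produces an $\infty$-faithful (i.e.\ equivalence-level) cover of $K\otimes_R B_s$, so that Theorem~\ref{improvingcoverwithspectrum} may be iterated cleanly. This amounts to proving that after inverting a suitable element of $R$ the deformed block becomes semisimple (or at least projective-injective generated), a fact one expects from the genericity of the deformation direction $h_1,\ldots,h_s$, but which has to be argued either via Jantzen-type sum formulas for the deformed Verma flags or directly via the deformed Soergel functor, whose faithfulness on standard modules is strictly better than in the classical, non-deformed setting.
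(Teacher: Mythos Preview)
Your overall strategy---deform the block of $\mathcal{O}$ over $R$, verify that the resulting endomorphism algebra is split quasi-hereditary with deformed Verma modules as standards, identify the big projective as the projective--$(A,R)$-injective giving a relative gendo-symmetric cover via a Soergel-type argument, then climb to $(s-1)$-faithfulness by induction using Theorem~\ref{improvingcoverwithspectrum}---is the paper's approach. The construction and item (i) are essentially as in the paper (which follows Gabber--Joseph rather than Fiebig, but this is presentation). Your identification of generic semisimplicity as the engine is also correct.

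The genuine gap is in the inductive structure for item (ii). Condition (ii) of Theorem~\ref{improvingcoverwithspectrum} demands that \emph{every} height-one prime $\mathfrak{p}$ of the current base ring give an $i$-faithful reduction; your single chain $\mi\supset(X_1,\ldots,X_{s-1})\supset\cdots\supset(X_1)\supset 0$ only witnesses one height-one prime at each stage, so the hypothesis is not met and the induction does not close. The paper repairs this by inducting on the \emph{coheight} of primes of $R$, proving the faithfulness bound simultaneously for all primes of a given coheight. The point is that for any prime $\mathfrak{p}$ with $\height(\mathfrak{p})<s$, at least one $X_i$ survives in $R/\mathfrak{p}$, so the shift $\nu=\tfrac{X_1}{1}\alpha_1+\cdots+\tfrac{X_s}{1}\alpha_s$ has nonzero image in $\mi/\mathfrak{p}$; in the quotient field $Q(R/\mathfrak{p})$ this forces every weight of $\mathcal{D}$ off the integral lattice, so each is both dominant and antidominant and $Q(R/\mathfrak{p})\otimes_R A_{\mathcal{D}}$ is semisimple---this, rather than a Jantzen sum formula, is the mechanism for condition (i). For condition (ii), every height-one prime of $R/\mathfrak{p}$ is $\mathfrak{q}'/\mathfrak{p}$ with $\coheight(\mathfrak{q}')=\coheight(\mathfrak{p})-1$, which is exactly what the coheight induction supplies. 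Replacing your chain by this coheight induction is the missing ingredient.
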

Surprisingly, in the integral setup if $\mathfrak{g}\neq \mathfrak{sl}_2$ this result says that we can construct a deformation of  Soergel's combinatorial functor that actually restricts to an exact equivalence between the category of modules having a finite filtration by integral Verma modules and the category of modules having a finite filtration by the image of integral Verma modules by such a functor.

This result about deformations of blocks of the BGG category $\mathcal{O}$ answers Question (3). It turns out that the Hemmer-Nakano dimension is not fully determined by the Krull dimension and the relative dominant dimension alone, and without further assumptions simply increasing the Krull dimension of the ground ring does not cause an increase of the Hemmer-Nakano dimension.
These results also clarify that using extension groups to determine relative dominant dimension as it was introduced in \cite{CRUZ2022410} is not precise over rings with higher Krull dimension and so using Tor groups is essential for that purpose (see Remark \ref{dominantdimensionusingExtclarification}).

\paragraph*{Strategy} The quality of covers is a local property, so it is enough to consider the cases when the ground ring is a local regular ring.
The proofs of Theorem \ref{improvingcoverwithspectrum} and Corollary \ref{coverheightprimeideal} make use of a version of the universal coefficient Theorem to yield information about vanishing of the right derived functor of the right adjoint of a Schur functor by tensoring a suitable cochain of projective $R$-modules with $R/x$, for some element $x$ of a regular sequence of the regular local ring $R$.  We shall now give a brief overview of our approach to (1). Assume that we are provided with a finite-dimensional algebra $B$ and with a generator-cogenerator $M$ over $B$. Morita-Tachikawa correspondence states that the endomorphism algebra of $M$ over $B$, $A$, has dominant dimension at least two. To find an integral version of this resolution is then the same as finding a projective Noetherian algebra $A_R$ (over a commutative Noetherian ring or just a local commutative Noetherian $R$) with relative dominant dimension at least two in the sense of \cite{CRUZ2022410} and a projective module $M_R$ over $A_R$ so that $A$ can be recovered as $A_R/\mi A_R$ and $M$ as $M_R/\mi M_R$ for every maximal ideal $\mi$ of $R$, respectively. In particular, the endomorphism algebra  of $M_R$ over $A_R$, which we denote by $B_R$, is an integral version of $B$. Here projective Noetherian algebra means an algebra whose regular module is finitely generated and projective as a module over the ground ring.
By the relative Morita-Tachikawa correspondence (see \citep[Theorem 4.1]{CRUZ2022410}), $M_R$
is a generator and relative cogenerator over $B_R$ so that $A_R$ is the endomorphism algebra of $M_R$ over $B_R$ having a base change property. Now using Propositions \ref{faithfulcoversresiduepart3} and \ref{faithfulcoverresiduefieldnext} or relative dominant dimension in the form of \citep[Theorem 6.13]{CRUZ2022410} we see that the connection between the module categories of $A_R$ and $B_R$ is no less strong than the connection between the module categories of $A$ and $B$. Such connection is then obtained by measuring relative dominant dimensions over $A_R$ and over $Q(R/\pri)\otimes_R A_R$ for all quotient fields of quotients by prime ideals $\pri$ of $R$.

\paragraph*{Structure of the paper} This paper is organised as follows:
\newline
In Subsection \ref{Some basic facts}, we collect some basic results on change of rings and some elementary facts involving tensor products. In Subsections \ref{Split quasi-hereditary algebras} and \ref{Covers} we recall the definition and some properties of split quasi-hereditary algebras and covers, respectively. In \ref{Relative dominant dimension over Noetherian algebras}, we bring back the concept of relative dominant dimension over Noetherian algebras introduced by the author in \cite{CRUZ2022410}. 

In Section \ref{faithful covers}, we generalize the concept of faithful covers to what we call $\mathcal{A}$-covers, where $\mathcal{A}$ represents a resolving subcategory of the module category. We give also their basic properties. 

In Section \ref{Upper bounds for the quality of an A-cover}, we explore the question of finiteness of the quality of  an $\mathcal{A}$-cover, generalising the concept of equivalence of covers to tackle the uniqueness of covers. In particular, in Corollary \ref{equivalenceofoneuniqueness} we give an alternative proof for the uniqueness of 1-faithful covers under mild assumptions. In Theorem \ref{rigiditypartI}, we prove that if the degrees, in which a Schur functor preserves extensions groups between standard modules and their images under the Schur functor, are greater than the number of non-isomorphic classes of simple modules of one of the algebras involved then the Schur functor is actually an equivalence of categories.

In Section \ref{A-covers under change of ground ring}, we study the behaviour of $\mathcal{A}$-covers under change of ground rings. Here we have to restrict our attention to resolving subcategories that remain resolving under change of ground ring. To do that, in Definition \ref{wellbehavedresolving}, we introduce what we call well behaved resolving subcategories of a module category. In this section, we explore how we can increase and decrease the quality of a cover by changing the ground ring. In particular, Propositions \ref{arbitraryfaithfulcoverflattwo}, \ref{faithfulcoversresiduepart3}, Corollary \ref{coverheightprimeideal} and Theorem \ref{improvingcoverwithspectrum} say that the quality of a cover over a regular Noetherian ring gets determined by knowing the effect that tensoring the cover with the quotient field of $R/\pri$ causes, running $\pri$ over all the prime ideals of the ground ring $R$. 

In Section \ref{Hemmer--Nakano dimension with respect to covers constructed from relative injective modules}, we combine the tools of relative dominant dimension with cover theory to give lower and upper bounds for Hemmer-Nakano dimensions of well behaved resolving subcategories (with respect to covers formed by a relative QF3-algebra over a regular ring with relative dominant dimension at least two) in terms of relative dominant dimension of modules belonging to the resolving subcategory under study and of the Krull dimension of the ground ring. Based on the work of \cite{zbMATH05871076}, in Subsections \ref{Hemmer-Nakano dimension of}, \ref{Hemmer-Nakano dimension of f}, \ref{Hemmer-Nakano dimension of contravariantly finite resolving subcategories}, we show that inside the resolving categories that we are interested in there are tilting modules whose relative dominant dimension can be used to control the Hemmer-Nakano dimension of the subcategory under study.

In Subsection \ref{Classical Schur algebras}, we study the quality of the Schur functor from the module category over a Schur algebra over a regular Noetherian ring to the module category of the group algebra of a symmetric group over a regular Noetherian ring. In Subsubsection \ref{Uniqueness of covers for RSd}, we discuss the problem of the existence of better covers for symmetric groups than the one formed by the Schur algebra. In Subsection \ref{qSchuralgebras}, we study the quality of the cover of the Iwahori-Hecke algebra formed by the $q$-Schur algebra and $V^{\otimes d}$. This answer requires the introduction of the concept of a partially quantum divisible ring.  In Subsection \ref{Deformations of the BGG category O}, we study deformations of Bernstein-Gelfand-Gelfand category $\mathcal{O}$, over commutative rings, introduced in \cite{zbMATH03747378}. In \ref{noetherianalgebracategoryOblock}, we present for any block $\mathcal{O}_{\mathcal{D}}$ of the BGG category $\mathcal{O}$, a projective Noetherian algebra whose module category is a deformation of the block $\mathcal{O}_{\mathcal{D}}$. In Theorem \ref{Oissplitqh}, we show that this algebra is split quasi-hereditary with integral Verma modules as standard modules. In Theorem \ref{Oiscellular}, we prove that this algebra is also cellular over any local regular ring which is a $\mathcal{Q}$-algebra. In Theorem \ref{dominantdimensionO}, we compute its relative dominant dimension. Such a result is then applied to establish in Theorem \ref{integralstruktursatz} an integral version of Soergel's Struktursatz.
In Theorem \ref{HNforcategoryO}, we address the quality of the respective integral version of   Soergel's combinatorial functor.


Further applications of the work here developed will appear in forthcoming work. For instance, Theorem \ref{HNforcategoryO} will be used to deduce that the blocks of the BGG category $\mathcal{O}$ of complex semi-simple Lie algebra are Ringel self-dual. Another application of the framework here developed will be the existence of Hemmer-Nakano type results involving generalisations of Auslander algebras which are not necessarily quasi-hereditary.

\section{Preliminaries}\label{Preeliminaries}
In this section, we give the notation and the objects to be used in the current paper.

Throughout this paper, we assume that $R$ is a Noetherian commutative ring with identity and $A$ is a projective Noetherian $R$-algebra, unless stated otherwise. By a \textbf{projective Noetherian $R$-algebra} we mean an $R$-algebra $A$ so that $A$ is finitely generated and projective as $R$-module. 
We call $A$ a \textbf{free Noetherian} $R$-algebra if $A$ is a Noetherian $R$-algebra so that $A$ is free of finite rank as $R$-module. By $A\m$ we denote the category of all finitely generated (left) $A$-modules.
Given $M\in A\m$, $\add_A M$ denotes the full subcategory of $A\m$ whose modules are direct summands of a direct sum of copies of $M$. We will write $\add M$ when there is no ambiguity on the ambient algebra. We will denote by $\id_M$  the identity map on $M\in A\m$. With $\End_A(M)$ we denote the endomorphism algebra over an $A$-module $M$. We denote by $A\proj$ the subcategory $\add_A A$ of $A\m$. We will denote by $A^{op}$ the opposite algebra of $A$ and by $D_R$ the standard duality functor $\Hom_R(-, R)\colon A\m\rightarrow A^{op}\m$. We will just write $D$ when there is no ambiguity on the ground ring. A module $M\in A\m$ is known as \textbf{generator} (resp. \textbf{$(A, R)$-cogenerator}) if $A\in \add_A M$ (resp. $DA\in \add_A M$). By a \textbf{progenerator} we mean a generator that is also projective. By an $(A, R)$-exact sequence we mean an exact sequence of $A$-modules which splits as a sequence of $R$-modules. A map $f\in \Hom_A(M, N)$ is called an \textbf{$(A, R)$-monomorphism} if the sequence $0\rightarrow M\xrightarrow{f} N$ is $(A, R)$-exact. A module $M\in A\m\cap R\proj$ is \textbf{$(A, R)$-injective} if and only if $M\in \add DA$.

Given $\mathcal{C}$ a full subcategory of $A\m$, we denote by $\widecheck{\mathcal{C}}$ the full subcategory of $A\m$ whose modules $M$ fit into an exact sequence of the form $0\rightarrow M\rightarrow X_0 \rightarrow \cdots \rightarrow X_s\rightarrow 0$, where all $X_i\in \mathcal{C}$. 

We write $\pdim_A M$ to denote the projective dimension of $M\in A\m$ and we write $\gldim A$ to denote the global dimension of $A$. By $\dim R$ we mean the Krull dimension of $R$. We will denote by $\Spec R$ the set of prime ideals of $R$ and by $\MaxSpec R$ the set of maximal ideals of $R$. Given $\pri\in \Spec R$, we denote by $\height(\pri)$ (resp. $\coheight(\pri))$ the height (resp. coheight) of $\pri$ and by $M_\pri$ (resp. $R_\pri$) the localization of $M\in R\m$ (resp. $R_\pri$) at $\pri$. Given $f\in \Hom_A(M, N)$ we write $f_\pri$ to denote the localization of $f$ at $\pri\in \Spec R$. By a \textbf{regular ring} we mean a commutative Noetherian ring $R$ for which for every $\pri\in \Spec R$ the commutative Noetherian local ring $R_\pri$ has finite global dimension. In such a case, $\dim R=\gldim R$ (see for example \citep[Theorem 8.62]{Rotman2009a}). 
By $R^\times$ we denote the set of invertible elements of $R$. 
For each $\mi\in \MaxSpec R$, we will denote by $R(\mi)$ the residue field $R/\mi\simeq R_\mi/\mi_\mi$. For each $\mi\in \MaxSpec R$ and $M\in A\m$ we will write $M(\mi)$ to denote $R(\mi)\otimes_R M\in A(\mi)\m$. Further, for each $\mi\in \MaxSpec R$, we will use $D_{(\mi)}$ to denote the standard duality $D_{R(\mi)}$.
Given a subset $S$ of the extended natural numbers $\mathbb{N}\cup \{0, +\infty\}$, we will denote by $\inf S$ the infimum of the subset $S$ in the poset of the extended natural numbers.

\subsection{Some basic facts}\label{Some basic facts}

	Some facts to keep in mind about localization are the following:  $M=0$ if and only if $M_\mi=0$ for every $\mi\in \MaxSpec R$ and localization is an exact functor. In particular, localization commutes with $\Ext$ and $\Tor$ functors (see for example \citep[Proposition 3.3.10]{Rotman2009a}).

\begin{Lemma}\label{homtensorarbitraryring}
	Let $f\colon R\rightarrow S$ be a surjective $R$-algebra homomorphism.  Let $A$ be an $R$-algebra.
	If $M$ and $N$ are $A$-modules, then
	$\Hom_{S\otimes_R A}(S\otimes_R M, S\otimes_R N)\simeq \Hom_{A}(S\otimes_R M, S\otimes_R N)\simeq \Hom_A(M, S\otimes_R N)$. 
\end{Lemma}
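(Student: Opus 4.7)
The plan is to exploit the surjectivity of $f\colon R\to S$: writing $I=\ker f$, we have $S\simeq R/I$, and for any $R$-module $X$ there is a natural identification $S\otimes_R X\simeq X/IX$. Under this identification, $S\otimes_R A\simeq A/IA$ (a well-defined quotient algebra since $R$ maps to the centre of $A$, so that $IA$ is a two-sided ideal), $S\otimes_R M\simeq M/IM$, and $S\otimes_R N\simeq N/IN$. Both claimed isomorphisms then follow from elementary bookkeeping, with no homological input required.

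For the first isomorphism, restriction along the canonical algebra homomorphism $A\to S\otimes_R A$ yields the inclusion $\Hom_{S\otimes_R A}(S\otimes_R M, S\otimes_R N)\subseteq \Hom_{A}(S\otimes_R M, S\otimes_R N)$. For the converse, given an $A$-linear map $\varphi\colon M/IM\to N/IN$, I would observe that $IA$ annihilates both $M/IM$ and $N/IN$, so the $A$-actions descend to actions of $A/IA\simeq S\otimes_R A$, and $\varphi$ is automatically linear for these quotient actions. Equivalently, one may check $S$-linearity directly: for $s\in S$, surjectivity of $f$ produces $r\in R$ with $s=f(r)$, and then $\varphi(s\cdot x)=\varphi((r\cdot 1_A)\cdot x)=(r\cdot 1_A)\cdot \varphi(x)=s\cdot \varphi(x)$ by $A$-linearity applied to $r\cdot 1_A\in A$.

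For the second isomorphism, I would invoke the universal property of the quotient. Any $A$-linear map $\varphi\colon M\to S\otimes_R N=N/IN$ sends $IM$ into $I\cdot(N/IN)=0$ (using that $R$ acts through the centre of $A$), hence factors uniquely through the canonical projection $\pi\colon M\to M/IM\simeq S\otimes_R M$. The assignment $\varphi\mapsto\bar{\varphi}$ is then the required bijection onto $\Hom_A(S\otimes_R M, S\otimes_R N)$, with inverse $\psi\mapsto \psi\circ \pi$.

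There is no genuine obstacle here; the statement is essentially a formal consequence of the identification $S\otimes_R(-)\simeq (-)/I(-)$. The only two points that require care are that $R$-centrality in $A$ (built into the paper's convention of an $R$-algebra) is what makes $IA$ a two-sided ideal and the quotient $A/IA$ an algebra, and that the surjectivity of $f$ is precisely what upgrades automatic $R$-linearity to $S$-linearity in the first isomorphism.
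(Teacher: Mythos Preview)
Your proof is correct and follows essentially the same approach as the paper. The only cosmetic difference is packaging: you pass through the identification $S\otimes_R X\simeq X/IX$ and argue via the universal property of the quotient, whereas the paper works directly with elements $s\otimes m$ of the tensor product, defining the extension $\phi'(s\otimes m)=s\phi(m)$ and the restriction $\phi_{|}(m)=\phi(1_S\otimes m)$ and checking the relevant linearity by hand using $s'=f(r')$.
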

\begin{proof}
	Let $\phi\in \Hom_{S\otimes_R A}(S\otimes_R M, S\otimes_R N)$. Then, for any $a\in A, s\otimes_R m\in S\otimes_R M$,
	\begin{align}
		\phi(a (s\otimes m))=\phi(s\otimes am)=\phi((1_S\otimes a)(s\otimes m))=(1_S\otimes a)\phi(s\otimes m)=a\phi(s\otimes a).
	\end{align}Thus, $\phi\in \Hom_A(S\otimes_R M, S\otimes_R N)$. Now consider $\phi\in \Hom_A(S\otimes_R M, S\otimes_R N)$. For any $a\in A$, $m\in M$, $s', s\in S$ we have
	\begin{align}
		\phi(s'\otimes a s\otimes m)&=\phi(s's\otimes am)=\phi(a(s's\otimes m))=a\phi(f(r')s\otimes m)=a \phi(r'f(1_R)s\otimes m)\\&= r'a\phi(1_Ss\otimes m)=(1_S\otimes r'a)\phi(s\otimes m)=(f(1_Rr')\otimes a)\phi(s\otimes m)=s'\otimes a\phi(s\otimes m),
	\end{align}
	for some $ r'\in R$. Hence, $\phi\in \Hom_{S\otimes_R A}(S\otimes_R M, S\otimes_R N)$. Therefore, the first isomorphism is established.
	Let $\phi\in \Hom_A(M, S\otimes_R N)$. We extend $\phi$ to a map $\phi'\in \Hom_A(S\otimes_R M, S\otimes_R N)$ by imposing ${\phi'(s\otimes m)}=s\phi(m)$. Let $\phi\in \Hom_A(S\otimes_R M, S\otimes_R N)$, we restrict it to $\phi_|\in \Hom_A(M, S\otimes_R N)$ by defining $\phi_|(m)=\phi(1_S\otimes m)$, $m\in M$.
	Using these two correspondences, we obtain the second isomorphism.
\end{proof}

\begin{Lemma}\label{zerodivisormono}
	Let $x$ be a non-zero divisor of $R$. The following assertions hold.
	\begin{enumerate}[(i)]
		\item Let $M\in R\proj$. Then the $R$-homomorphism $\delta\colon M\rightarrow M, \ m\mapsto xm$ is a monomorphism.
		\item Let $M\in A\m\cap R\proj$. 
		The map ${\End_A(M)\otimes_R R/Rx\rightarrow \Hom_A(M, R/Rx\otimes_R M)}$, given by ${f\otimes r+Rx}\mapsto (m\mapsto r+Rx\otimes_R f(m))$, is a monomorphism.
	\end{enumerate} 
\end{Lemma}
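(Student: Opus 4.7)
For part (i), the plan is to exploit flatness of finitely generated projective modules. Since $M\in R\proj$ is $R$-flat, tensoring the short exact sequence
$$0\to R \xrightarrow{\cdot x} R\to R/Rx\to 0$$
with $M$ over $R$ preserves exactness, yielding the exact sequence $0\to M\xrightarrow{\delta} M\to R/Rx\otimes_R M\to 0$. In particular $\delta$ is a monomorphism, and as a bonus we obtain a canonical identification of $M/xM$ with $R/Rx\otimes_R M$ which will be useful for part (ii).

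For part (ii), the strategy is to apply the left exact functor $\Hom_A(M,-)$ to the short exact sequence produced in part (i). This yields the exact sequence
$$0\to \End_A(M) \xrightarrow{\delta_*} \End_A(M) \xrightarrow{\pi_*} \Hom_A(M, R/Rx\otimes_R M),$$
where $\delta_*$ is postcomposition with $\delta$, i.e.\ $\delta_*(f)=\delta\circ f = xf$, which therefore coincides with multiplication by $x$ on the $R$-module $\End_A(M)$. Hence the cokernel of $\delta_*$, which embeds into $\Hom_A(M, R/Rx\otimes_R M)$ via the map induced by $\pi_*$, is $\End_A(M)/x\End_A(M)$, canonically identified with $\End_A(M)\otimes_R R/Rx$ through the standard isomorphism $N\otimes_R R/Rx\cong N/xN$.

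It remains to check that this induced embedding coincides with the explicit map stated in the lemma. Unwinding the identifications, a simple tensor $f\otimes (r+Rx)\in \End_A(M)\otimes_R R/Rx$ corresponds to the coset $rf+x\End_A(M)$, whose image under the morphism induced by $\pi_*$ is the $A$-linear map sending $m$ to $(1+Rx)\otimes_R (rf)(m)=(r+Rx)\otimes_R f(m)$, exactly as prescribed. No step presents a substantive obstacle: part (i) is immediate from flatness of projective modules, and part (ii) reduces to the standard naturality of the connecting maps in the long exact $\Hom$ sequence; the only real bookkeeping is identifying the cokernel in two different ways and verifying that the formula in the statement is the composite of these identifications.
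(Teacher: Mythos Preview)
Your proof is correct. The approach differs from the paper's in presentation rather than substance, but the difference is worth noting. For (i), the paper argues directly from the summand description $M\oplus K\simeq R^n$ and the fact that $x$ is a non-zero divisor on each coordinate, whereas you invoke flatness of projectives to tensor the sequence $0\to R\xrightarrow{\cdot x} R\to R/Rx\to 0$; your route is cleaner and immediately gives the short exact sequence needed in (ii). For (ii), the paper takes an element $f\otimes(1+Rx)$ in the kernel and explicitly constructs, using part (i), a map $g\in\End_A(M)$ with $f=xg$, so that $f\otimes(1+Rx)=g\otimes(x+Rx)=0$. Your argument obtains the same conclusion by applying the left exact functor $\Hom_A(M,-)$ to the exact sequence from (i), identifying $\delta_*$ with multiplication by $x$, and reading off the induced embedding of $\End_A(M)/x\End_A(M)$. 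The two arguments are logically equivalent—the paper is establishing by hand exactly the middle-term exactness that your functor gives for free—but your packaging is more systematic and avoids the pointwise construction of $g$.
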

\begin{proof}
	Let $m\in M$ such that $xm=0$. Since $M$ is projective over $R$, there exists a natural number $n$ and $K\in R\m$ such that $R^n\simeq M\bigoplus K$. So, there exists $\alpha_i\in R$ satisfying $m=\sum_{i} \alpha_i e_i$, where $\{e_i: i=1, \ldots, n\}$ is an $R$-basis for $R^n$. Therefore, $x\alpha_i=0$ for all $i=1, \ldots, n$. Since $x$ is a non-zero divisor, $\alpha_i=0$ for all $i$. Hence, $m=0$. Thus, $i)$ follows.
	
	Assume that $\displaystyle 
	0=\delta(\sum_i f_i\otimes_R r_i+Rx)=\delta(\sum_i r_if_i\otimes_R 1+Rx)=\delta(f\otimes_R 1+Rx)
	$ for some $f\in \End_A(M)$. In particular, $1+Rx\otimes_R f(m)=0$ for all $m\in M$. Since $R/Rx\otimes_R M\simeq M/xM,$ it follows that $f(m)\in RxM=xM$ for all $m\in M$. We claim that $f=xg$ for some $g\in \End_A(M)$.
	By assumption, there is for every $m\in M$, $y_{m}\in M$ satisfying $f(m)=xy_m$. 
	Note that, any $b\in A$ and $m, m_1, m_2\in M$
	\begin{align}
		xy_{bm} = f(bm)=bf(m)=b(xy_m)\implies x(y_{bm}-by_m)=0 \text{ and}\\
		xy_{m_1+m_2}=f(m_1+m_2)=f(m_1)+f(m_2)=xy_{m_1}+xy_{m_2}.
	\end{align}By $i)$, $y_{bm}-by_m=0$ and $y_{m_1+m_2}=y_{m_1}+y_{m_2}$. Thus, $g\colon M\rightarrow M$, given by $g(m):=y_m$ for every $m\in M$, is a well defined element of $\End_A(M)$ satisfying $f=xg$.
	
	Hence, $f\otimes_R (1+Rx)=xg\otimes 1+Rx=g\otimes x+Rx=0$. So, $\delta$ is a monomorphism.
\end{proof}

\begin{Lemma}\label{tensoronsecondcomponetofhom}
	Let $M\in A\proj$. Then the map \mbox{$\varsigma_{M, N, U}\colon \Hom_A(M, N)\otimes_R U\rightarrow \Hom_A(M, N\otimes_R U)$}, given by $ \ g\otimes u\mapsto g(-)\otimes u$, is an $R$-isomorphism.
\end{Lemma}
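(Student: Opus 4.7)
The plan is to establish this by reducing from a general finitely generated projective $M$ to the free case $M = A^n$, and ultimately to $M = A$, where the statement becomes essentially tautological. This is the standard pattern for "the functor $\Hom_A(M,-)$ commutes with certain things when $M$ is projective."

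First I would verify that $\varsigma_{M,N,U}$ is well-defined and $R$-linear (straightforward from the bilinearity of the assignment $(g,u)\mapsto (m\mapsto g(m)\otimes u)$), and that it is natural in $M$. Then I would treat the base case $M = A$: both sides are naturally isomorphic to $N\otimes_R U$ via evaluation at $1_A$, namely $\Hom_A(A,N)\otimes_R U \xrightarrow{\mathrm{ev}_1\otimes\id} N\otimes_R U$ and $\Hom_A(A, N\otimes_R U)\xrightarrow{\mathrm{ev}_1} N\otimes_R U$. A direct check shows that $\varsigma_{A,N,U}$ becomes the identity under these identifications, so it is an isomorphism.

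Next, both functors $\Hom_A(-,N)\otimes_R U$ and $\Hom_A(-, N\otimes_R U)$ are additive in the first argument, sending finite direct sums to finite direct products (which coincide with direct sums here, since the sums are finite). The transformation $\varsigma_{-,N,U}$ is compatible with this decomposition, so from the case $M=A$ one obtains immediately that $\varsigma_{A^n,N,U}$ is an isomorphism for any $n\in \mathbb{N}$.

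Finally, since $M\in A\proj$ is finitely generated and projective, there exist $n\in \mathbb{N}$ and $K\in A\m$ with $A^n\simeq M\oplus K$. By naturality of $\varsigma$ in the first variable, the isomorphism $\varsigma_{A^n,N,U}$ decomposes as the direct sum of $\varsigma_{M,N,U}$ and $\varsigma_{K,N,U}$; since a direct summand of an isomorphism is an isomorphism, $\varsigma_{M,N,U}$ is an $R$-isomorphism, as required. There is no substantive obstacle here: the only thing to be careful about is the compatibility of $\varsigma$ with the natural isomorphisms used in the base case and with the direct sum decomposition, both of which are checked on pure tensors $g\otimes u$.
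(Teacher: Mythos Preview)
Your proof is correct and follows essentially the same approach as the paper: establish the case $M=A$ by identifying both sides with $N\otimes_R U$ via evaluation at $1_A$, then use compatibility with finite direct sums and direct summands to conclude for all $M\in A\proj$. The paper's own argument is just a terser version of exactly this reduction.
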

\begin{proof}
	Consider $M=A$. The following diagram is commutative.
	\begin{center}
		\begin{tikzcd}
			\Hom_A(A, N)\otimes_R U \arrow[r, "\varsigma_{A, N, U}"] \arrow[d]& \Hom_A(M, N\otimes_R U)\arrow[d]\\
			N\otimes_R U\arrow[r, equal] &N\otimes_R U
		\end{tikzcd}
	\end{center} 
	Both columns are isomorphisms, thus $\varsigma_{A, N, U}$ is an isomorphism. Since this map is compatible with direct sums, it follows that $\varsigma_{M, N, U}$ is an isomorphism for every $M\in A\proj$ and any $N\in A\M$, $U\in R\M$.
\end{proof}

\subsubsection{Filtrations and exact categories}

A category $\mathcal{A}$ is called a \textbf{(Quillen) exact category} if $\mathcal{A}$ is a full subcategory closed under extensions of some abelian category $\mathcal{C}$.

A functor $F\colon \mathcal{A}\rightarrow \mathcal{B}$ between exact categories is called \textbf{exact} provided that $F$ preserves exact sequences. Let $B$ be a projective Noetherian $R$-algebra.
Let $\mathcal{A}$ be a full subcategory of $A\m$ closed under extensions and closed under direct summands and let $\mathcal{B}$ be a full subcategory of  $B\m$ closed under extensions and closed under direct summands. In this paper, we will call $F$ an \textbf{exact equivalence (commuting with $R$)} or just \textbf{exact equivalence} if $F$ is an exact functor, an equivalence of categories and it satisfies $F(M\otimes_R U)\simeq FM\otimes_R U$ for all $U\in R\proj$. Observe that if $R$ is a field then the latter just follows from $F$ preserving direct summands and direct sums.
Moreover, it follows from Lemma \ref{tensoronsecondcomponetofhom} that all exact equivalences  of exact categories that descend from equivalences of categories of module categories  do have this extra property. 

A category $\chi$  is said to be \textbf{resolving} of a category $\mathcal{A}$ if it is closed under extensions, direct summands and kernels of epimorphisms, and it contains all projective objects of $\mathcal{A}$. 

Given a set of finitely generated $A$-modules, $\Theta$, we denote by $\mathcal{F}(\Theta)$ the full subcategory of $A\m$ whose modules $M$ admit a finite filtration
$
			0= M_{n+1}\subset M_n\subset \cdots \subset M_1=M$,  such that $ M_i/M_{i+1}\in \Theta.
$

Given an exact functor $F\colon \mathcal{A}\rightarrow \mathcal{B}$ and suppose that $\mathcal{F}(\Theta)\subset \mathcal{A}$, then we write $F\Theta$ to denote the set of objects ${\{F\theta\colon \theta\in \Theta \}}$. 

\subsection{Split quasi-hereditary algebras}\label{Split quasi-hereditary algebras}

Quasi-hereditary algebras over fields were introduced in \cite{MR961165} to model the representation theory of complex semi-simple Lie algebras and algebraic groups. For the study of quasi-hereditary algebras over fields, we refer to \citep{PS88, MR961165, zbMATH00140218}, \citep[A]{MR1284468}. In particular, over algebraically closed fields, every quasi-hereditary algebra is split quasi-hereditary. In \citep{CLINE1990126}, the concept of quasi-hereditary algebra and split quasi-hereditary algebra was extended to Noetherian algebras. In \cite{Rouquier2008}, the study of split quasi-hereditary algebras over commutative Noetherian rings (not necessarily local) was developed using a module theoretical approach exploiting "integral" standard modules.

\begin{Def}\label{qhdef}
	Given a projective Noetherian $R$-algebra $A$ and a collection of finitely generated left $A$-modules $\{\St(\l)\colon \l\in \L\}$ indexed by a  poset $\L$, we say that $(A, \{\Delta(\lambda)_{\lambda\in \Lambda}\})$ is a \textbf{split quasi-hereditary $R$-algebra} if the following conditions hold:
	\begin{enumerate}[(i)]
				\item The modules $\St(\l)\in A\m$ are projective over $R$.
		\item Given $\l, \mu\in \L$, if $\Hom_A(\St(\l), \St(\mu))\neq 0$, then $\l\leq\mu$.
		\item $\End_A(\St(\l))\simeq R$, for all $\l\in\L$.
		\item Given $\l\in\L$, there is $P(\l)\in A\proj$ and an exact sequence $0\rightarrow C(\l)\rightarrow P(\l)\rightarrow \St(\l)\rightarrow 0$ such that $C(\l)$ has a finite filtration by modules of the form $\St(\mu)\otimes_R U_\mu$ with $U_\mu\in R\proj$ and $\mu>\l$. 
		\item $P=\sumSt P(\l)$ is a progenerator for $A\m$. 
	\end{enumerate}
\end{Def}
Under these conditions, we also say that $(A\m, \{\Delta(\lambda)_{\lambda\in \Lambda}\})$ is a \textbf{split highest weight category.}

Here we can write the condition (v) instead of condition (4) in \citep[Definition 4.11]{Rouquier2008} by exploiting that a module $P$ is a progenerator if and only if for any $M\in A\m$ there exists a surjective map $X\rightarrow M$ with $X\in \add_A P$. Using (iv) any non-zero map $P(\l)\rightarrow M$ induces a non-zero map $\St(\mu)\rightarrow M$ for some standard module appearing in the filtration of $P(\l)$. Taking the direct sum of all generators of $\Hom_A(\sumSt P(\l), A)$ and using the previous fact, we can deduce our claim.

  Results on split quasi-hereditary algebras can be found for example in \citep{CLINE1990126, Rouquier2008, cruz2021cellular}. See also \citep{appendix}. We shall recall some facts about these algebras.
  
 Let $(A, \{\Delta(\lambda)_{\lambda\in \Lambda}\})$ be a split quasi-hereditary algebra. 
 We will write $\St$ to denote the set ${\{\St(\l)\colon \l\in \L \}}$ and $\Stsim$ to denote the set ${\{\St(\l)\otimes_R U_\l\colon \l\in \L, U_\l\in R\proj \}}$.  By $\Stsim_{\mu>\l}$, we will denote the set \linebreak${\{\St(\mu)\otimes_R U_\mu\colon \mu\in \L, \ \mu>\l, U_\mu\in R\proj \}}$. Analogously, we define the set $\Stsim_{\mu<\l}$.
The subcategory $\mathcal{F}(\Stsim)$ contains all finitely generated projective $A$-modules, and moreover it is a resolving subcategory of $A\m\cap R\proj$.
Since $\Ext_A^{i>0}(\St(\l), \St(\mu))\neq 0$ only if $\l<\mu$, filtrations of the modules of $\mathcal{F}(\Stsim)$ can be rearranged into a filtration of the form $0=M_{n+1}\subset M_n\subset \cdots\subset M_1=M$ with $ M_i/M_{i+1}\simeq \St_i\otimes_R U_i, $ for some $U_i\in R\proj,$ where $\L\rightarrow \{1, \ldots, n\}$, $\l\mapsto i_\l$, is an increasing bijection with $\St_{i_\l}:=\St(\l)$. 

The opposite algebra of a split quasi-hereditary algebra has again a split quasi-hereditary structure, therefore there exists for each $\l\in \L$ a costandard module $\Cs(\l)$ making $(A^{op}, \{D\Cs(\lambda)_{\lambda\in \Lambda}\})$ a split quasi-hereditary algebra. Fixing $\Cssim=\{\Cs(\l)\otimes_R U_\l\colon \l\in \L, \ U_\l\in R\proj \}$, we obtain \begin{align}
	\mathcal{F}(\Cssim)&=\{M\in A\m\cap R\proj\colon \Ext_A^1(X, M)=0, \ \forall X\in \mathcal{F}(\Stsim) \}\\&=\{M\in A\m\cap R\proj\colon \Ext_A^{i>0}(X, M)=0,  \ \forall X\in \mathcal{F}(\Stsim) \}.
\end{align}
An important aspect of split quasi-hereditary algebras is the existence of a \textbf{characteristic tilting module} $T=\bigoplus_{\l\in \L} T(\l)$ which is a full generalized tilting module satisfying $\add T=\mathcal{F}(\Stsim)\cap \mathcal{F}(\Cssim)$ and $\mathcal{F}(\Stsim)=\widecheck{\add T}$. In particular, there exist exact sequences of the form
$0\rightarrow \St(\l)\rightarrow T(\l)\rightarrow X(\l)\rightarrow 0$, with $X\in \mathcal{F}(\Stsim_{\mu<\l})$, $\l\in \L$ and $T(\l)\in \mathcal{F}(\Cssim)$.

\begin{Def}
	Given two split highest weight categories $(A\m, \{\Delta(\lambda)_{\lambda\in \Lambda}\})$, $(B\m, \{\Omega(\theta)_{\theta\in \Theta}\})$, we say that a functor $F\colon A\m\rightarrow B\m$ is an equivalence of highest weight categories if it satisfies the following:
	\begin{enumerate}[(i)]
		\item $F$ is an equivalence of categories.
		\item There is a bijection of posets $\Phi\colon \L\rightarrow \Theta$ and for each $\l\in \L$ there exists an invertible $R$-module $U_\l$ satisfying $F\St(\l)\simeq \Omega(\Phi(\l))\otimes_R U_\l$.
	\end{enumerate}
\end{Def}
By an \textbf{invertible} $R$-module we mean an $R$-module $M$ satisfying $M_\mi\simeq R_\mi$ for all $\mi\in \MaxSpec R$. We denote by $Pic(R)$ the set of isomorphism classes of invertible $R$-modules.

\begin{Theorem}\label{standardsgivesthewholealgebrapre}
	Two split highest weight categories $(A\m, \{\Delta(\lambda)_{\lambda\in \Lambda}\})$ and $(B\m, \{\Omega(\theta)_{\theta\in \Theta}\})$ are equivalent as split highest weight categories if and only if there exists an exact equivalence between $\mathcal{F}(\Stsim)$ and $\mathcal{F}(\tilde{\Omega})$.
\end{Theorem}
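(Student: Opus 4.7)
The forward direction is essentially unpacking definitions. Given an equivalence $F\colon A\m\to B\m$ of split highest weight categories with poset bijection $\Phi$ and invertible twists $U_\l$ satisfying $F\St(\l)\simeq\Omega(\Phi(\l))\otimes_R U_\l$, the functor $F$ is automatically a Morita equivalence and hence exact; it commutes with $-\otimes_R U$ for $U\in R\proj$ by Lemma \ref{tensoronsecondcomponetofhom}. Because the $U_\l$ are invertible, $F$ maps $\Stsim$ into $\tilde{\Omega}$ up to isomorphism, and together with exactness and closure of $\mathcal{F}(\tilde{\Omega})$ under extensions this gives an exact equivalence $\mathcal{F}(\Stsim)\to\mathcal{F}(\tilde{\Omega})$ by restriction.

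For the converse, assume $G\colon\mathcal{F}(\Stsim)\to\mathcal{F}(\tilde{\Omega})$ is an exact equivalence. The plan is to upgrade $G$ to a Morita equivalence $A\m\to B\m$ and then verify it respects the highest weight structure. The key intrinsic fact is that $M\in\mathcal{F}(\Stsim)$ lies in $A\proj$ if and only if $M$ is projective in the exact category $\mathcal{F}(\Stsim)$: given $M$ with the latter property, a projective cover $0\to K\to Q\to M\to 0$ in $A\m$ satisfies $K\in\mathcal{F}(\Stsim)$ by the resolving property, so the sequence splits in $\mathcal{F}(\Stsim)$ and $M\in A\proj$. Hence $G$ sends $A\proj$ into $B\proj$; applying the symmetric argument to a quasi-inverse of $G$ on $B\in B\proj\subseteq\mathcal{F}(\tilde{\Omega})$ yields $N\in A\proj$ with $G(N)\simeq B$, and therefore $B\in\add_B G(A)$. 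So $P:=G(A)$ is a progenerator of $B\m$, and Morita theory supplies an equivalence $F:=P\otimes_A-\colon A\m\to B\m$ with $\End_B(P)^{op}\simeq\End_A(A)^{op}=A$ coming from the full faithfulness of $G$. Because $F$ and $G$ agree on $\add_A A$ and are both exact, applying them to an $A\proj$-resolution of $M\in\mathcal{F}(\Stsim)$ (whose kernels remain in $\mathcal{F}(\Stsim)$ by the resolving property) yields a natural isomorphism $F|_{\mathcal{F}(\Stsim)}\simeq G$.

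It remains to show that $F\St(\l)\simeq\Omega(\Phi(\l))\otimes_R U_\l$ for a poset bijection $\Phi$ and invertible $U_\l$. Full faithfulness of $F$ transports $\End_A(\St(\l))\simeq R$ to $\End_B(F\St(\l))\simeq R$. The plan is to localize at each $\mi\in\MaxSpec R$, where standard modules of $B_\mi$ become indecomposable; a Krull-Schmidt argument inside $\mathcal{F}(\tilde{\Omega})_\mi$ identifies $(F\St(\l))_\mi$ with a unique $\Omega(\theta)_\mi$, with the label $\theta=:\Phi(\l)$ forced to be independent of $\mi$ because $F$ sends the indecomposable projective cover $P(\l)$ of $\St(\l)$ to an indecomposable projective of $B$ of globally defined isomorphism type. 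Using the commutation of $F$ with $-\otimes_R U$, the local isomorphisms descend to a globally defined invertible $R$-module $U_\l$ with $F\St(\l)\simeq\Omega(\Phi(\l))\otimes_R U_\l$; the order compatibility of $\Phi$ then follows from the fact that the poset on $\L$ can be recovered intrinsically from the multiplicities of standards in the filtrations of the projectives $P(\mu)$ of axiom (iv) of Definition \ref{qhdef}, data which $F$ preserves. The main technical obstacle is this last identification: over a general commutative Noetherian ring standards need not be indecomposable globally (their endomorphism ring $R$ need not be local), which rules out a direct Krull-Schmidt matching at the global level, and the localization-and-gluing strategy above is precisely what the extra axiom that an exact equivalence commutes with $-\otimes_R U$ is designed to enable.
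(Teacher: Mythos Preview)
The paper does not prove this statement itself; it defers to \citep{zbMATH00140218} for fields and to \citep[Theorem 6.1]{appendix} for the general Noetherian case. Your overall architecture for the converse --- characterize $A\proj$ intrinsically as the projective objects of the exact category $\mathcal{F}(\Stsim)$, deduce that $G(A)$ is a progenerator for $B$, upgrade $G$ to a Morita equivalence $F\simeq P\otimes_A-$, and check $F|_{\mathcal{F}(\Stsim)}\simeq G$ via projective resolutions inside the resolving subcategory --- is correct and is the standard route that the cited references follow.

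The gap is in your final paragraph. You assert that ``a Krull--Schmidt argument inside $\mathcal{F}(\tilde{\Omega})_\mi$ identifies $(F\St(\l))_\mi$ with a unique $\Omega(\theta)_\mi$'', but Krull--Schmidt only gives you that $(F\St(\l))_\mi$ is \emph{indecomposable} (its endomorphism ring is the local ring $R_\mi$); it does not force it to be a \emph{standard}. The exact category $\mathcal{F}(\tilde{\Omega})_\mi$ contains many indecomposables that are not standards, for instance the indecomposable partial tilting modules $T(\theta)_\mi$. What is missing is an intrinsic characterization of the standards that an exact equivalence must respect: over a local ring, the $\St(\l)$ are precisely the nonzero \emph{simple objects} of the exact category $\mathcal{F}(\Stsim)$, i.e.\ those admitting no nontrivial admissible short exact sequence (this follows from well-definedness of filtration multiplicities after passing to the residue field). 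Once you insert this, the local identification goes through. Your argument for the constancy of $\theta$ in $\mi$ is also incomplete as written: over non-local $R$ the modules $P(\l)$ need not be indecomposable, so you cannot speak of ``the indecomposable projective cover $P(\l)$'' globally; the independence is instead extracted from the fact that the filtration multiplicities $\rank_R\Hom_B(FP(\mu),\Cs_B(\theta))$ are locally constant on $\Spec R$, together with the commutation of $F$ with $-\otimes_R U$.
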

\begin{proof}
	If $R$ is a field we refer to \citep{zbMATH00140218}. For the general case, we refer to \citep[Theorem 6.1]{appendix}.
\end{proof}

\subsection{Covers}\label{Covers}

Let $P$ be a finitely generated projective $A$-module and $B$ the endomorphism algebra $\End_A(P)^{op}$. The functor $F:=\Hom_A(P, -)\colon A\M\rightarrow B\M$ is known as the \textbf{Schur functor}. Since $P\in A\proj$, the Schur functor is also isomorphic to the functor $FA\otimes_A -\colon A\M\rightarrow B\M$ (see for example Lemma 4.2.5 of \cite{Zimmerman}). Observe that the module $FA$ is a generator when viewed as a left $B$-module. We will denote by $G$ the right adjoint functor of $F$,
$G=\Hom_B(FA, -)\colon B\M\rightarrow A\M$. Since $R$ is a commutative Noetherian ring, the functors $\Hom_A(P, -)\colon A\m\rightarrow B\m$ and $\Hom_B(FA, -)\colon B\m\rightarrow A\m$ are well defined and form an adjoint pair as a consequence of the Tensor-Hom adjunction. In fact, for any $X\in A\m$, $FX\subset \Hom_R(P, X)$ which is finitely generated over $R$, in particular, $FX$ is finitely generated over $B$.

The unit of the adjunction $F\dashv G$ is the natural transformation $\eta\colon \id_{A\m}\rightarrow  G\circ F$ such that  for any module $N\in A\m$, the $A$-homomorphism \begin{align*}
	\eta_N\colon N \rightarrow \Hom_B(FA, \Hom_A(P, N)) \text{ is given by } \eta(n)(f)(p)=f(p)n, \ n\in N, \ f\in FA, p\in P.
\end{align*}

The counit of the adjunction $F\dashv G$ is the natural transformation $\varepsilon\colon F\circ G\rightarrow \id_{B\m}$ such that for any module $M\in B\m$, the $B$-homomorphism is given by the following commutative diagram
\begin{center}
	\begin{tikzcd}
		FA\otimes_A \Hom_B(FA, M)\arrow[r, "\varepsilon'_M"] \arrow[d, "\simeq"] & M\arrow[d, equal]\\
		\Hom_A(P, \Hom_B(FA, M))\arrow[r, "\varepsilon_M"]& M
	\end{tikzcd} where $\varepsilon_M'\colon FA\otimes_A \Hom_B(FA, M)\rightarrow M$ is given by $\varepsilon_M'(f\otimes g)=g(f), f\otimes g\in FA\otimes_A \Hom_B(FA, M).$
\end{center}

\begin{Prop}\label{fullfaithfulG}
	$G$ is fully faithful and $\varepsilon_M$ is a $B$-isomorphism for any $M\in B\m$.
\end{Prop}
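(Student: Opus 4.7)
My approach is to reduce both claims to a single verification using the standard categorical fact that, for an adjoint pair $F\dashv G$, the right adjoint $G$ is fully faithful if and only if the counit $\varepsilon$ is a natural isomorphism. By the commutative diagram that defines $\varepsilon_M$ (together with Lemma \ref{tensoronsecondcomponetofhom}), this reduces the whole proposition to showing that the evaluation map $\varepsilon_M'\colon FA\otimes_A\Hom_B(FA,M)\to M$, $f\otimes g\mapsto g(f)$, is a $B$-isomorphism for every $M\in B\m$.

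The central input is a dual basis for $P$: since $P$ is finitely generated projective over $A$, there exist $p_1,\dots,p_n\in P$ and $f_1,\dots,f_n\in FA=\Hom_A(P,A)$ with $p=\sum_i f_i(p)p_i$ for every $p\in P$. Equivalently, the evaluation map $\nu\colon FA\otimes_A P\to B$, $f\otimes p\mapsto(q\mapsto f(q)p)$, is a $(B,B)$-bimodule isomorphism satisfying $\nu(\sum_i f_i\otimes p_i)=\id_P=1_B$. I then define a candidate inverse $\xi_M\colon M\to FA\otimes_A\Hom_B(FA,M)$ by $\xi_M(m)=\sum_i f_i\otimes\phi_{p_i,m}$, where $\phi_{p,m}(f):=\nu(f\otimes p)\cdot m$; the $B$-linearity of each $\phi_{p,m}$ is immediate from the left $B$-linearity of $\nu$ in the first variable.

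One composition is easy: $\varepsilon_M'\circ\xi_M=\id_M$ since $\sum_i\nu(f_i\otimes p_i)=1_B$ acts as the identity on $M$. The reverse composition $\xi_M\circ\varepsilon_M'=\id$ is the main technical step and, in my view, the principal obstacle. The plan there is, for a generic $\phi\in\Hom_B(FA,M)$ and $f\in FA$, to invoke the $B$-linearity of $\phi$ to rewrite $\phi_{p_i,\phi(f)}(g)=\phi(\nu(g\otimes p_i)\cdot f)$, to identify $\nu(g\otimes p_i)\cdot f=g\cdot f(p_i)$ by unwinding the left $B$-action on $\Hom_A(P,A)$ against the right $A$-action (this is the key compatibility), to move $f(p_i)\in A$ across the tensor using the induced left $A$-module structure on $\Hom_B(FA,M)$, and finally to apply the dual basis identity $\sum_i f_i\cdot f(p_i)=f$ inside $FA$ to collapse the sum to $f\otimes\phi$. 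The main obstacle is keeping the various $(A,B)$-, $(B,A)$-, and $(B,B)$-bimodule structures consistent so that every manipulation inside $FA\otimes_A\Hom_B(FA,M)$ is legitimate.
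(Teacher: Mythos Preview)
Your proof is correct, but it takes a different route from the paper's. The paper dispatches the whole proposition in one line via Tensor--Hom adjunction:
\[
\Hom_A(P,\Hom_B(FA,M))\simeq\Hom_B(FA\otimes_A P,M)\simeq\Hom_B(B,M)\simeq M,
\]
and observes that this chain of canonical isomorphisms is exactly $\varepsilon_M$. You instead construct an explicit inverse $\xi_M$ to $\varepsilon_M'$ using a dual basis for $P$, and verify both compositions by hand. Both arguments rest on the same underlying fact, namely that $\nu\colon FA\otimes_A P\to B$ is a $(B,B)$-bimodule isomorphism; the paper invokes this implicitly inside the adjunction, while you make it the centrepiece of the computation. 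Your approach is more elementary and yields an explicit formula for $\varepsilon_M^{-1}$; the paper's is shorter and more conceptual.

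One small quibble: your appeal to Lemma~\ref{tensoronsecondcomponetofhom} for the vertical isomorphism in the defining diagram of $\varepsilon_M$ is misplaced. That lemma concerns tensoring with $R$-modules, whereas the isomorphism $FA\otimes_A X\simeq\Hom_A(P,X)$ you need follows from $P$ being finitely generated projective over $A$ (this is exactly the identification $F\simeq FA\otimes_A-$ cited in the paper just before the proposition). Since the diagram already records that arrow as an isomorphism, you do not need to justify it further, and the rest of your argument is unaffected.
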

\begin{proof}
	For any $M\in B\m$, there are canonical isomorphisms by Tensor-Hom adjunction,
	\begin{align*}
		\Hom_A(P, \Hom_B(FA, M))\simeq \Hom_B(FA\otimes_A P, M)\simeq \Hom_B(B, M)\simeq M.
	\end{align*}So, the counit $\varepsilon_M$ is a $B$-isomorphism for any $M\in B\m$. This fact implies that $G$ is full and faithful.
\end{proof}

In \citep{Rouquier2008}, the concept of cover was introduced. The pair $(A, P)$ is a \textbf{cover} of $B$ if the functor $F=\Hom_A(P, -)\colon A\m\rightarrow B\m$ is fully faithful on $A\proj$. 
Covers can also be characterized using double centralizer properties. To see that, we can observe that since the functors $F$ and $G$ are additive Proposition \ref{fullfaithfulG} implies the following result:

\begin{Lemma}\citep[Lemma 4.32]{Rouquier2008}\label{whenunitisiso}
	Let $M\in A\m$. The following assertions are equivalent.\begin{enumerate}[(a)]
		\item The unit $\eta_M\colon M\rightarrow GFM$ is an isomorphism.
		\item $F$ induces a bijection of abelian groups $\Hom_A(N, M)\rightarrow \Hom_B(FN, FM)$, $f\mapsto Ff$ for every $N\in A\m$.
		\item $F$ induces an isomorphism of $A$-modules $\Hom_A(A, M)\rightarrow \Hom_B(FA, FM)$, $f\mapsto Ff$.
		\item $M$ is a direct summand of a module in the image of $G$.
	\end{enumerate}
\end{Lemma}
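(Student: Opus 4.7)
The plan is to prove the chain $(a) \Longleftrightarrow (c) \Longleftarrow (b) \Longleftarrow (a)$ and separately $(a) \Longleftrightarrow (d)$, using only the adjunction $F\dashv G$ together with Proposition \ref{fullfaithfulG}.

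The cleanest start is to identify the map in (c) with $\eta_M$. Under the canonical isomorphism $\Hom_A(A,M)\simeq M$, $\alpha\mapsto\alpha(1)$, and the definition $GFM=\Hom_B(FA,FM)$, I would verify by unraveling the formulas for $F$ and $\eta$ that the map $\Hom_A(A,M)\to\Hom_B(FA,FM)$, $\alpha\mapsto F\alpha$, becomes exactly $\eta_M$. Concretely, if $\alpha(a)=an$, then $(F\alpha)(f)(p)=\alpha(f(p))=f(p)n=\eta_M(n)(f)(p)$, which agrees with the formula for $\eta$ given just before Proposition \ref{fullfaithfulG}. This establishes $(a)\Leftrightarrow(c)$ immediately. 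For $(a)\Rightarrow(b)$, I would use the general fact that for an adjunction $F\dashv G$ with unit $\eta$, the composition $\Hom_A(N,M)\xrightarrow{F}\Hom_B(FN,FM)\xrightarrow{\simeq}\Hom_A(N,GFM)$ equals post-composition with $\eta_M$; if $\eta_M$ is an isomorphism, this composition is bijective, hence so is $F$. The implication $(b)\Rightarrow(c)$ is the specialization $N=A$.

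For the equivalence of (a) and (d), one direction is immediate: if $\eta_M$ is an iso, then $M\simeq GFM$ lies in the image of $G$ and is trivially a direct summand of it. For $(d)\Rightarrow(a)$, I would first show that $\eta_{GN}$ is an isomorphism for every $N\in B\m$. This follows from the triangle identity $G\varepsilon_N\circ\eta_{GN}=\id_{GN}$ combined with Proposition \ref{fullfaithfulG}, which gives that $\varepsilon_N$ — and hence $G\varepsilon_N$ — is an isomorphism, forcing $\eta_{GN}$ to be its inverse. Then, if $M$ is a direct summand of some $GN$, write $GN\simeq M\oplus M'$; since $\eta$ is a natural transformation between additive functors $\id$ and $GF$, it commutes with this direct sum decomposition, so $\eta_{M}\oplus\eta_{M'}$ is an isomorphism, forcing each summand (in particular $\eta_M$) to be an isomorphism.

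No step is a real obstacle; the only mildly delicate point is the identification of the map in (c) with $\eta_M$, which amounts to tracing through the explicit formulas for $F=\Hom_A(P,-)$ and the unit $\eta$ given in the excerpt. Once this bookkeeping is done, the rest is formal category-theoretic manipulation of the adjunction $F\dashv G$ together with the fullness of $G$ established in Proposition \ref{fullfaithfulG}.
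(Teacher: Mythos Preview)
Your proposal is correct and matches the paper's approach: the paper does not spell out a proof but simply notes that the result follows from the additivity of $F$ and $G$ together with Proposition~\ref{fullfaithfulG}, which is exactly the content you unpack (the triangle identity $G\varepsilon_N\circ\eta_{GN}=\id_{GN}$ combined with $\varepsilon_N$ being an isomorphism handles $(d)\Rightarrow(a)$, additivity passes this to summands, and the identification of the map in $(c)$ with $\eta_M$ gives the rest).
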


Again since $F$ and $G$ are additive functors, and the counit $\eta_A$ gives the canonical homomorphism of $R$-algebras $A\rightarrow \End_B(FA)^{op}$, Lemma \ref{whenunitisiso} implies the following characterization of covers.

\begin{Prop}\citep[Proposition 4.33]{Rouquier2008} The following assertions are equivalent. \label{dcpropertycover}
	\begin{enumerate}[(i)]
		\item The canonical map of algebras $A\rightarrow \End_B(FA)^{op}$, given by $a\mapsto (f\mapsto f(-)a)$, $a\in A, f\in FA$, is an isomorphism of $R$-algebras, that is, there exists a double centralizer property on $FA$.
		\item For all $M\in A\proj$, the unit $\eta_M\colon M\rightarrow GFM$ is an isomorphism of $A$-modules.
		\item $(A, P)$ is a cover of $B$.
	\end{enumerate}
\end{Prop}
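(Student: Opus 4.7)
The plan is to deduce everything from Lemma \ref{whenunitisiso}, with the only real preparatory observation being that the canonical map in (i) is literally $\eta_A$.

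First I would spell out the identification between the canonical algebra map and the unit at $A$. Under the standard identification $GFA=\Hom_B(FA,FA)=\End_B(FA)^{op}$, the formula $\eta_A(a)(f)(p)=f(p)a$ says precisely $\eta_A(a)=f\mapsto f(-)\cdot a$, which is the map given in (i). Moreover, $\eta_A$ is a homomorphism of $R$-algebras $A\to \End_B(FA)^{op}$ (the opposite is essential because right multiplication by $a$ is a left $B$-module endomorphism of $FA$ and composition reverses order). Hence (i) is the assertion that $\eta_A$ is an isomorphism.

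Next I would establish (ii)$\Leftrightarrow$(iii) as a direct consequence of Lemma \ref{whenunitisiso}. If $(A,P)$ is a cover, then $F$ is fully faithful on $A\proj$; in particular, for every $M\in A\proj$ the map $\Hom_A(A,M)\to \Hom_B(FA,FM)$ is an isomorphism, i.e.\ condition (c) of Lemma \ref{whenunitisiso} holds, so $\eta_M$ is an isomorphism. Conversely, if $\eta_M$ is an isomorphism for every $M\in A\proj$, then Lemma \ref{whenunitisiso}(a)$\Rightarrow$(b) gives that $\Hom_A(N,M)\to\Hom_B(FN,FM)$ is bijective for every $N\in A\m$; restricting to $N\in A\proj$ yields that $F$ is fully faithful on $A\proj$, that is, (iii).

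Finally I would prove (i)$\Leftrightarrow$(ii). The implication (ii)$\Rightarrow$(i) is immediate by taking $M=A\in A\proj$ together with the identification made above. For (i)$\Rightarrow$(ii), I use that $\eta$ is a natural transformation between additive functors: if $\eta_A$ is an isomorphism, then $\eta_{A^n}$ is an isomorphism for every $n$, and for any direct summand $M\oplus M'\simeq A^n$ in $A\proj$, naturality and additivity force $\eta_M$ to be an isomorphism as well. Hence $\eta_M$ is an isomorphism for every $M\in A\proj$, which is (ii). No step here is genuinely difficult; the main thing to be careful with is the opposite-algebra convention and the verification that the map $a\mapsto(f\mapsto f(-)a)$ is indeed the unit of the adjunction $F\dashv G$ after identifying $GFA$ with $\End_B(FA)^{op}$.
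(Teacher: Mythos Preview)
Your proof is correct and follows exactly the approach the paper sketches in the sentence preceding the proposition: identify the canonical algebra map with $\eta_A$, then use additivity of $F$ and $G$ together with Lemma \ref{whenunitisiso}. You have simply spelled out in detail what the paper compresses into a single line.
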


In particular, covers provide an abstract setup to deal with double centralizer properties on projective modules. Since $FA$ is a generator over $B$, if $(A, P)$ is a cover of $B$ we see that Proposition \ref{fullfaithfulG} is a special case of the Gabriel-Popescu theorem.

By a \textbf{split quasi-hereditary cover of $B$} we mean a cover $(A, P)$ of $B$ such that $(A, \{\Delta(\lambda)_{\lambda\in \Lambda}\})$  is a split quasi-hereditary algebra for some collection of standard modules. By a \textbf{quasi-hereditary cover of $B$} we mean a cover $(A, P)$ of $B$ such that $A$ is a  quasi-hereditary algebra.

Every finite-dimensional algebra over a field has a quasi-hereditary cover (see \cite{Dlab1989}). 

Moreover, there are a couple of general methods to construct quasi-hereditary covers. In fact, Iyama gave another construction of quasi-hereditary covers in \cite{zbMATH01849919, zbMATH02070262} to establish Iyama’s finiteness theorem. This construction has better properties than the construction established in \cite{Dlab1989}. In particular, we have the following result.

\begin{Theorem}Let $k$ be a field.
	Let $B$ be a finite-dimensional $k$-algebra. Then $B$ has a (not necessarily split) quasi-hereditary cover $(A, P)$ whose associated Schur functor $\Hom_A(P, -)$ is also faithful on $\mathcal{F}(\St)$.
\end{Theorem}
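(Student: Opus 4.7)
The plan is to invoke Iyama's rejective-chain construction from \cite{zbMATH01849919, zbMATH02070262} to produce the quasi-hereditary cover, and then verify the extra faithfulness on $\mathcal{F}(\St)$ via the unit characterisation of Lemma~\ref{whenunitisiso}.

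First, I would build the cover. Starting from $B$, Iyama's construction iteratively adjoins kernels of right-minimal right-approximations to the subcategory $\add_B B \subset B\m$, producing a finite rejective chain $\add_B B = \mathcal{M}_n \subset \cdots \subset \mathcal{M}_0$ whose additive generator $M$ satisfies $B\in \add_B M$. Setting $A := \End_B(M)^{op}$ and $P := \Hom_B(M, B)$, Iyama's theorem gives that $A$ is quasi-hereditary for the ordering induced by the chain, and the immediate double-centraliser identification $A \cong \End_B(P)^{op}$, combined with Proposition~\ref{dcpropertycover}, shows that $(A, P)$ is a quasi-hereditary cover of $B$.

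Next, I would reduce faithfulness of $F=\Hom_A(P,-)$ on $\mathcal{F}(\St)$ to faithfulness on the individual standard modules. By Lemma~\ref{whenunitisiso}, $F$ is faithful on $N\in A\m$ precisely when the unit $\eta_N\colon N\to GFN$ is a monomorphism. Since $P$ is projective, $F$ is exact, and since $G$ is a right adjoint it is left exact; thus $GF$ is left exact. Applied to a short exact sequence $0\to N'\to N\to N''\to 0$ in $\mathcal{F}(\St)$, naturality of $\eta$ yields a commutative diagram with exact rows $0\to N'\to N\to N''\to 0$ on top and $0\to GFN'\to GFN\to GFN''$ on the bottom, and a five-lemma chase then shows $\eta_N$ is injective whenever $\eta_{N'}$ and $\eta_{N''}$ are. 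Inducting on the length of a standard filtration reduces the problem to proving $\eta_{\St(\l)}$ is a monomorphism for each $\l\in\L$.

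Finally, I would verify this base case using the explicit shape of the standard modules in Iyama's construction: each $\St(\l)$ is presented as $\Hom_B(M,X_\l)$ modulo morphisms factoring through a later rejective layer, with the $A$-action by pre-composition. Since $B\in\add_B M$, a nonzero class $[a]\in\St(\l)$ corresponds to a map $M\to X_\l$ that is detected by its restriction to the summand $B$ of $M$, and unwinding the adjunction $GF\St(\l)\cong\Hom_B(\Hom_B(M,M),F\St(\l))$ turns this detection into $\eta_{\St(\l)}([a])\neq 0$. The main obstacle is precisely this last verification: ruling out the existence of standard modules on which $F$ loses information. This is exactly the refinement that Iyama's construction provides over Dlab--Ringel's, and it is handled by exploiting the minimality of the right-approximations used at each step, so that every indecomposable summand of $M$ outside $\add_B B$ is forced to carry $B$-module evidence of its presence.
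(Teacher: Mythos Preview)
Your overall strategy matches the paper's: invoke Iyama's construction to obtain a quasi-hereditary cover, then reduce faithfulness on $\mathcal{F}(\St)$ to injectivity of $\eta_{\St(\l)}$ for each $\l$. Your reduction step is fine and is essentially Proposition~\ref{m1faithfulcover}.

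The gap is in your final paragraph, where you try to verify $\eta_{\St(\l)}$ is a monomorphism by a direct analysis of the standard modules. You yourself flag this as ``the main obstacle'' and then dispatch it with a phrase about ``exploiting the minimality of the right-approximations''; that is not an argument. The assertion that a nonzero class in $\St(\l)$ is ``detected by its restriction to the summand $B$ of $M$'' is precisely the statement that $\eta_{\St(\l)}$ is injective, so you are restating what must be proved rather than proving it.

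The paper closes this gap with a single structural observation you are missing: Iyama's construction (as formulated in \cite{zbMATH05697051}) produces a \emph{left strongly quasi-hereditary} algebra, meaning every standard module has projective dimension at most one. Concretely, for each $\l$ there is an exact sequence
\[
0\longrightarrow X(\l)\longrightarrow P(\l)\longrightarrow \St(\l)\longrightarrow 0
\]
with \emph{both} $X(\l)$ and $P(\l)$ projective. Since $(A,P)$ is a cover, $\eta_{X(\l)}$ and $\eta_{P(\l)}$ are isomorphisms, and the Snake Lemma applied to the diagram comparing this sequence with its image under $GF$ forces $\eta_{\St(\l)}$ to be injective. This replaces your unfinished explicit computation with a two-line homological argument, and it is the reason the left-strong property of Iyama's algebras is the right thing to cite here rather than the bare existence of the rejective chain.
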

\begin{proof}
	By Theorem 5(2) of \cite{zbMATH05697051}, there is a left strongly quasi-hereditary algebra $A$ and an idempotent $e$ of $A$ such that $eA$ is a generator-cogenerator of $eAe=B$ and $A=\End_{eAe}(eA)$. Therefore, $(A, Ae)$ is a cover of $eAe=B$. Now, since $A$ is left strongly quasi-hereditary for a certain poset $\L$, there are for each $\l\in \L$,
	exact sequences
	\begin{align}
		0\rightarrow X(\l)\rightarrow P(\l)\rightarrow \St(\l)\rightarrow 0
	\end{align}with both $X(\l)$ and $P(\l)$ projective $A$-modules. Let $F=\Hom_A(Ae, -)$ and $G$ its right adjoint. Since, $(A, Ae)$ is a cover of $B$, $\eta_X$ is an isomorphism for every $X\in A\proj$. By the commutativity of the diagram 
	\begin{equation}
		\begin{tikzcd}
			0\arrow[r] & X(\l) \arrow[r] \arrow[d, "\eta_{X(\l)}"] & P(\l)\arrow[r] \arrow[d, "\eta_{P(\l)}"] & \St(\l)\arrow[r] \arrow[d, "\eta_{\St(\l)}"] & 0\\
			0\arrow[r] & GFX(\l)\arrow[r]& GFP(\l) \arrow[r] & GF\St(\l)
		\end{tikzcd}, 	\end{equation} and the Snake Lemma we deduce that $\eta_{\St(\l)}$ is a monomorphism for every $\l\in \L$.
\end{proof} 

Not every (split) quasi-hereditary cover has this extra property. For instance, in Example 14 of \cite{cruz2021characterisation}   it was constructed a split quasi-hereditary cover $(A, P)$ of an algebra isomorphic to the $2\times 2$ lower triangular matrix algebra where the standard modules of $A$ are labelled by $3>2>1$ and $\eta_{\St(1)}$ is the zero map. Also, we should mention that not all projective Noetherian algebras have split quasi-hereditary covers.
	\begin{Cor}Let $C_3$ be the abelian group of order 3. Let $\mathbb{Z}_7$ be the localization of $\mathbb{Z}$ at $7\mathbb{Z}$.
	The group algebra $\mathbb{Z}_7 C_3$ over $\mathbb{Z}_7$ does not have a split quasi-hereditary cover.\label{noteverygroupalgebrahassplitqhcover}
	Moreover, the group algebra $\mathbb{Z}C_3$ does not have a split quasi-hereditary cover.
\end{Cor}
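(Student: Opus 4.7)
The plan is to reduce both non-existence statements, via flat base change, to a single assertion: that $\mathbb{Q}C_3$ admits no split quasi-hereditary cover. First I would verify that a split quasi-hereditary cover $(A,P)$ of an $R$-algebra $B$ yields, upon tensoring with a flat commutative $R$-algebra $S$, a split quasi-hereditary cover $(S\otimes_R A, S\otimes_R P)$ of $S\otimes_R B$. Each condition in the definition of split quasi-hereditary algebra (projectivity of the standards, the $\End$ condition, the filtered resolutions, and the progenerator condition) is preserved by flat base change, as is the double centralizer characterisation of covers. This should be a direct consequence of the base change machinery developed for $\mathcal{A}$-covers in this paper. Applied to $R=\mathbb{Z}$ and $R=\mathbb{Z}_7$ with $S=\mathbb{Q}$, both claims collapse to the $\mathbb{Q}C_3$ case.

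Next I would invoke the factorisation $\mathbb{Q}C_3 \cong \mathbb{Q}[x]/(x^3-1) \cong \mathbb{Q}\times\mathbb{Q}(\zeta_3)$, valid because $x^2+x+1$ is irreducible over $\mathbb{Q}$. This exhibits $\mathbb{Q}C_3$ as a semisimple but \emph{non-split} $\mathbb{Q}$-algebra: the simple module supported on the $\mathbb{Q}(\zeta_3)$-factor has endomorphism ring the quadratic field $\mathbb{Q}(\zeta_3)\neq\mathbb{Q}$, so $\mathbb{Q}C_3$ is not a product of matrix algebras over $\mathbb{Q}$.

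The contradiction then comes from the rigidity of split quasi-hereditary algebras over a field. Suppose $(A,P)$ is a split quasi-hereditary cover of $\mathbb{Q}C_3$. Combining $\End_A(\Delta(\lambda))=\mathbb{Q}$ with the observation that the kernel $C(\lambda)$ of $P(\lambda)\twoheadrightarrow\Delta(\lambda)$ is the trace of $\bigoplus_{\mu>\lambda}P(\mu)$ in $P(\lambda)$ -- and hence stable under every endomorphism of $P(\lambda)$ -- lifting an endomorphism of the simple top $L(\lambda)$ to $P(\lambda)$ and descending to $\Delta(\lambda)$ will force $\End_A(L(\lambda))=\mathbb{Q}$ for every $\lambda$; consequently $A/\rad A$ is split semisimple over $\mathbb{Q}$. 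Writing $P = A^n e$ for some idempotent $e\in M_n(A)\cong\End_A(A^n)^{op}$, the double centralizer property of the cover yields $\mathbb{Q}C_3 = \End_A(P)^{op} = eM_n(A)e$, whence
\begin{equation*}
\mathbb{Q}C_3 \;=\; \mathbb{Q}C_3/\rad\mathbb{Q}C_3 \;=\; e\bigl(M_n(A)/\rad M_n(A)\bigr)e
\end{equation*}
is a corner of a split semisimple $\mathbb{Q}$-algebra, hence itself a product of matrix algebras over $\mathbb{Q}$ -- contradicting the preceding paragraph. The main obstacle I anticipate is the bookkeeping in the base change step, ensuring that the split quasi-hereditary structure and the cover property transport cleanly from $\mathbb{Z}$ or $\mathbb{Z}_7$ to $\mathbb{Q}$; the finite-dimensional contradiction over $\mathbb{Q}$ is then immediate.
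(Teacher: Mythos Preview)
Your argument is correct and takes a genuinely different route from the paper's proof.

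The paper argues directly over $\mathbb{Z}_7$: it cites that $\mathbb{Z}_7 C_3$ is not semi-perfect, observes that every split quasi-hereditary algebra over a local ring is semi-perfect, and then uses Krull--Schmidt type results to show that $\End_A(P)^{op}$ inherits semi-perfectness from $A$. The contradiction is structural, via idempotent lifting. The passage from $\mathbb{Z}$ to $\mathbb{Z}_7$ uses only that covers survive localisation.

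You instead base-change all the way to $\mathbb{Q}$ and exploit the \emph{split} hypothesis directly: since $\End_A(\Delta(\lambda))=\mathbb{Q}$ forces $\End_A(L(\lambda))=\mathbb{Q}$ for every simple (your trace argument is valid, as $C(\lambda)$ really is the trace of the higher projectives once one is over a field), the semisimple quotient of $A$ is a product of matrix algebras over $\mathbb{Q}$, and the corner $\End_A(P)^{op}$ inherits this. But $\mathbb{Q}C_3\cong\mathbb{Q}\times\mathbb{Q}(\zeta_3)$ is not split, contradiction.

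Your approach is arguably more elementary---it avoids the external reference on semi-perfectness of $\mathbb{Z}_7 C_3$ and the Krull--Schmidt machinery---and isolates the obstruction cleanly as a splitting-field phenomenon. The paper's approach, on the other hand, stays over the DVR and would apply to any situation where the target algebra fails to be semi-perfect, without needing to know anything about its generic fibre.
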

\begin{proof}
	In \citep{zbMATH03382538}, it was shown that the ring $\mathbb{Z}_7 C_3$ is not semi-perfect. By \citep[Theorem 3.4.1]{cruz2021cellular}, every split quasi-hereditary algebra over $\mathbb{Z}_7$ is semi-perfect.  If $(A, P)$ is a split quasi-hereditary cover of $\mathbb{Z}_7C_3$ then Proposition 3.14 and Theorem 2.12 of \citep{MR3025306} yield that $P$ is a direct sum of $A$-modules with local endomorphism rings. Proposition 3.14 of \citep{MR3025306} would then imply that $\mathbb{Z}_7C_3=\End_A(P)^{op}$ is semi-perfect, which is a contradiction.
	Since any split quasi-hereditary cover remains a split quasi-hereditary cover under localization we obtain that $\mathbb{Z}C_3$ cannot have a split quasi-hereditary cover.
\end{proof}

Our focus will not be on studying the existence of (split) quasi-hereditary covers but instead on how to measure their quality, in the cases that we know that they exist.
Before we discuss properties and how to measure the quality of a split quasi-hereditary cover we need the following technical results.

The faithful part of Lemma \ref{whenunitisiso} is as follows.
\begin{Lemma}\label{considerationseta}
	Let $(A, P)$ be a cover of $B$.  Let $M\in A\m$. Then
the map $\eta_{M}$ is monomorphism if and only if $\Hom_A(N, M)\rightarrow \Hom_B(FN, FM)$ is injective for any $N\in A\m$ if and only if ${\Hom_A(A, M)\rightarrow \Hom_B(FA, FM)}$ is injective.
\end{Lemma}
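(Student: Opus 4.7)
The plan is to mirror the structure of Lemma \ref{whenunitisiso} (the corresponding statement for isomorphisms) but in the faithful setting, by factoring the map $F\colon \Hom_A(N,M)\to \Hom_B(FN,FM)$ through the unit $\eta_M$ via the adjunction $F\dashv G$.

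First I would recall the key naturality identity. For any $f\in \Hom_A(N,M)$, naturality of $\eta$ yields a commutative square giving $\eta_M\circ f = GFf\circ \eta_N$. Under the adjunction isomorphism $\Hom_B(FN,FM)\simeq \Hom_A(N,GFM)$, the element $Ff$ corresponds precisely to $GFf\circ \eta_N=\eta_M\circ f=(\eta_M)_*(f)$. Thus the map $F\colon\Hom_A(N,M)\to \Hom_B(FN,FM)$ is injective if and only if $(\eta_M)_*\colon \Hom_A(N,M)\to \Hom_A(N,GFM)$ is injective. This observation is the engine of the whole argument; no feature of $(A,P)$ being a cover is actually used here, only the adjunction.

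Next, I would prove the three equivalences in a small cycle. The implication from injectivity for all $N$ to injectivity for $N=A$ is trivial. For the implication from injectivity at $N=A$ to $\eta_M$ being a monomorphism, I use the canonical identification $\Hom_A(A,-)\simeq \id_{A\m}$ (evaluation at $1_A$), under which $(\eta_M)_*\colon \Hom_A(A,M)\to \Hom_A(A,GFM)$ is identified with $\eta_M\colon M\to GFM$; hence injectivity of the first forces $\eta_M$ to be a monomorphism. Finally, assuming $\eta_M$ is a monomorphism, post-composition with $\eta_M$ is injective on $\Hom_A(N,M)$ for every $N\in A\m$, so $(\eta_M)_*$ is injective and therefore, by the factorization above, so is $F$ on $\Hom_A(N,M)$.

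There is no real obstacle; the only thing to be careful about is the direction of the adjunction identification, i.e.\ making sure one uses the identity $\eta_M\circ f=GFf\circ \eta_N$ (rather than $\varepsilon$) so that the map $F$ at the level of $\Hom$-sets is genuinely $(\eta_M)_*$ after transport. Note that the hypothesis that $(A,P)$ is a cover is not needed for this lemma at all; the statement is purely formal from the existence of the adjoint pair $(F,G)$ and can therefore be quoted freely for any $M\in A\m$ in later arguments involving the unit $\eta$.
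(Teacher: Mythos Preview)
Your proof is correct and follows essentially the same approach as the paper: both arguments use the naturality identity $\eta_M\circ f = GFf\circ \eta_N$ and the identification $\Hom_A(A,M)\simeq M$ under which $F$ on $\Hom$-sets becomes $\eta_M$; the paper does this by explicit element-chasing while you phrase it via the adjunction bijection, but the content is identical. Your observation that the cover hypothesis is not actually used is correct and worth noting.
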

\begin{proof}
	Assume that $\Hom_A(N, M)\rightarrow \Hom_B(FN, FM)$ is injective for any $N\in A\m$. In particular, \linebreak\mbox{$\Hom_A(A, M)\rightarrow \Hom_B(FA, FM)$} is injective. Let $m\in M$ such that $\eta_M(m)=0$.  Consider $f_m\in \Hom_A(A, M)$, given by $f_m(1_A)=m$. Then $Ff_m=\eta_M(m)=0$. Thus, $f_m=0$ and $m=0$. So, $\eta_M$ is a monomorphism. Now assume that $\eta_M$ is a monomorphism. Let $f\in \Hom_A(N, M)$ satisfying $Ff=0$. Then $
		\eta_M\circ f=GFf\circ \eta_N=0\implies f=0.$
	Thus, the statement follows.
\end{proof}
The following result clarifies the behaviour of the unit under a change of ground ring.

\begin{Lemma}\label{unitmono}
	Let $M\in A\m\cap R\proj$. If the unit $\eta_{M(\mi)}$ is a monomorphism for every maximal ideal $\mi$ in $R$, then the unit $\eta_M$ is $(A, R)$-monomorphism. If, in addition, $DM\otimes_A P\otimes_B\Hom_A(P, A)\in R\proj$ and  $\eta_M$ is $(A, R)$-monomorphism, then $\eta_{M(\mi)}$ is a monomorphism for every maximal ideal $\mi$ in $R$.
\end{Lemma}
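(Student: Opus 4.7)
The plan is to exploit the naturality of the unit $\eta$ under base change $R\to R(\mi)$, combined with a Tensor--Hom identification of $GFM$ arising from $R$-biduality. Set $Q:=P\otimes_B\Hom_A(P,A)$. Using $P\in A\proj$ together with Lemmas \ref{tensoronsecondcomponetofhom} and \ref{homtensorarbitraryring}, there are natural identifications $(FM)(\mi)\simeq F(\mi)(M(\mi))$ and $(FA)(\mi)\simeq F(\mi)(A(\mi))$. Naturality of the unit then produces, for each $\mi\in\MaxSpec R$, a factorisation
\[
\eta_{M(\mi)}=\nu_\mi\circ \eta_M(\mi)\colon M(\mi)\xrightarrow{\eta_M(\mi)} (GFM)(\mi)\xrightarrow{\nu_\mi} G(\mi)F(\mi)(M(\mi)),
\]
where $\nu_\mi$ is the natural base-change map $\Hom_B(FA,FM)\otimes_R R(\mi)\to\Hom_{B(\mi)}(F(\mi)A(\mi),F(\mi)M(\mi))$. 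In particular, if $\eta_{M(\mi)}$ is injective then so is $\eta_M(\mi)=\eta_M\otimes_R R(\mi)$.

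For the forward implication, I would then conclude that $\eta_M$ is a monomorphism by Nakayama. Since $M$ is $R$-flat, tensoring the left-exact sequence $0\to\ker\eta_M\to M\xrightarrow{\eta_M} GFM$ with $R(\mi)$ preserves exactness at $M(\mi)$, so $(\ker\eta_M)(\mi)$ surjects onto $\ker\eta_M(\mi)=0$. Applying Nakayama's lemma to the finitely generated $R_\mi$-module $(\ker\eta_M)_\mi$ gives $(\ker\eta_M)_\mi=0$ for every $\mi$, hence $\ker\eta_M=0$. For the $(A,R)$-splitness, one globalizes $R$-linearly the projectivity of $M$ against the local injectivity data: since $M\in R\proj$, an $R$-retraction of $\eta_M$ can be assembled from the data at each $R_\mi$, using that the image of $M_\mi$ inside $(GFM)_\mi$ is detected modulo $\mi_\mi$ by the hypothesis on $\eta_{M(\mi)}$.

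For the converse, the extra hypothesis lets one pin down $GFM$ explicitly. By Tensor--Hom adjunction $GFM\simeq\Hom_A(Q,M)$, and since $M\in R\proj$ implies $M\simeq D D M$, a further application of Tensor--Hom yields
\[
GFM\simeq D(DM\otimes_A Q)=DX,\qquad X:=DM\otimes_A P\otimes_B\Hom_A(P,A).
\]
Under the hypothesis $X\in R\proj$, $GFM\in R\proj$ and the canonical map $(DX)(\mi)\to D_{(\mi)}(X(\mi))$ is an isomorphism. Performing the analogous Tensor--Hom/biduality computation inside $A(\mi)\m$, and using the base-change identification $X(\mi)\simeq D_{(\mi)}M(\mi)\otimes_{A(\mi)}Q(\mi)$, one obtains $G(\mi)F(\mi)(M(\mi))\simeq D_{(\mi)}(X(\mi))$. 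Therefore $\nu_\mi$ in the triangle above is an isomorphism, identifying $\eta_{M(\mi)}$ with $\eta_M(\mi)$. If $\eta_M$ is an $(A,R)$-monomorphism, then $\eta_M(\mi)=\eta_M\otimes_R R(\mi)$ is still $R(\mi)$-split, in particular injective, so $\eta_{M(\mi)}$ is a monomorphism.

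The main obstacle is justifying the $R$-splitness in the forward direction without any control over $GFM$ as an $R$-module: the naive route through vanishing of $\Tor^R_1(\coker\eta_M,R(\mi))$ would require $\Tor^R_1(GFM,R(\mi))=0$, which is unavailable a priori without the extra hypothesis. The splitness must therefore be extracted from the specific form of $\eta_M$ as the unit morphism induced by the trace $Q\to A$, together with the $R$-projectivity of $M$, reflecting the genuine asymmetry between the two directions and justifying why the converse implication requires the additional $R$-projectivity assumption on $X$.
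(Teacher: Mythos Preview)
Your forward direction has a genuine gap, and it is precisely the one you flag in your last paragraph; the hand-waved sentence about ``assembling an $R$-retraction from the data at each $R_\mi$'' is not an argument. In fact, even your preliminary step showing $\eta_M$ is injective is not sound: flatness of $M$ does \emph{not} guarantee that the left-exact sequence $0\to K\to M\to GFM$ stays exact at $M(\mi)$ after tensoring (you would need $\Tor_1^R(\coker\eta_M,R(\mi))=0$, not $\Tor_1^R(M,R(\mi))=0$), and even if it did, the statement ``$K(\mi)$ surjects onto $\ker\eta_M(\mi)=0$'' is vacuous and certainly does not force $K(\mi)=0$. Take $R=k[x]_{(x)}$, $M=R$, $K=xR$: then $K(\mi)\to M(\mi)$ is the zero map $k\to k$ while $K\neq 0$.

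The paper circumvents both problems with a single dualization trick. It introduces the map $\lambda_M\colon DM\otimes_A P\otimes_B\Hom_A(P,A)\to DM$, $f\otimes p\otimes g\mapsto f\cdot g(p)$, and exhibits a commutative square identifying $\eta_M$ (up to the isomorphisms $w_M$, $\kappa$, $\iota$) with $D\lambda_M$. The hypothesis that each $\eta_{M(\mi)}$ is mono translates into each $\lambda_{M(\mi)}$ being surjective; Nakayama then gives that $\lambda_M$ itself is surjective. Now the target $DM$ is $R$-projective, so this surjection is automatically $R$-split, and applying $D$ produces an explicit $R$-retraction of $D\lambda_M$ and hence of $\eta_M$. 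This is the missing idea: pass to the dual side so that the \emph{target} is $R$-projective, where surjectivity plus Nakayama gives splitting for free. Your converse direction is essentially the paper's argument read through the same diagram, and it is fine once the extra hypothesis makes the source of $\lambda_M$ projective over $R$ as well.
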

\begin{proof}
	Let $\lambda_M\colon DM\otimes_A P\otimes_B \Hom_A(P, A)\rightarrow DM$, given by $\lambda_M(f\otimes p\otimes g)=fg(p)$ for \linebreak\mbox{$f\otimes p\otimes g\in DM\otimes_A P\otimes_B \Hom_A(P, A)$.}
	There is a commutative diagram
	\begin{equation}
		\begin{tikzcd}
			DDM \arrow[rr, "D\lambda_M"] & & D(DM\otimes_A P\otimes_B \Hom_A(P, A))\\
			&	& D(D\Hom_A(P, M)\otimes_B\Hom_A(P, A))\arrow[u, "D(\iota\otimes \Hom_A(P{,}A))"]\\
			M\arrow[rr, "\eta_M"]\arrow[uu, "w_M"]& &\Hom_B(\Hom_A(P, A), \Hom_A(P, M))\arrow[u, "\kappa"] \label{eqmsf20}
		\end{tikzcd} 
	\end{equation}where the isomorphism maps $\kappa$ and $\iota$ are according to \citep[Proposition 2.1]{CRUZ2022410}. 
	In fact, for $m\in M,$ \linebreak\mbox{$f\otimes p\otimes g\in DM\otimes_A P\otimes_B \Hom_A(P, A)$,}
	\begin{align*}
		D(\iota\otimes \Hom_A(P, A))\circ \kappa \circ \eta_M(m)(f\otimes p\otimes g)&= \kappa(\eta_M(m))(\iota\otimes \Hom_A(P, A))(f\otimes p \otimes g)\\
		&= \kappa(\eta_M(m))(\iota(f\otimes p)\otimes g)=\iota(f\otimes p)(\eta_M(m)(g))\\&=f(\eta_M(m)(g)(p))=f(g(p)m)\\
		D\lambda_M\circ w_M(m)(f\otimes p\otimes g)&=w_M(m)(\lambda_M(m)(f\otimes p \otimes g))=w_M(m)(f\cdot g(p))\\&=(f\cdot g(p))(m)=f(g(p)m).
	\end{align*}
	By assumption, $\eta_{M_{(\mi)}}$ is a monomorphism for every maximal ideal $\mi$ in $R$. According to the commutative diagram (\ref{eqmsf20}), $D_{(\mi)}\lambda_{M(\mi)}=\Hom_{R(\mi)}(\lambda_{M(\mi)}, R(\mi))$ is a monomorphism for every maximal ideal $\mi$ in $R$. Hence, $\lambda_{M(\mi)}$ is surjective for every maximal ideal in $R$. In view of the commutative diagram
	\begin{equation}
		\begin{tikzcd}
			DM\otimes_A P\otimes_B \Hom_A(P, A)(\mi)\arrow[r, "\lambda_M(\mi)"]\arrow[d, "\simeq"] & DM(\mi)\arrow[dd, "\simeq"]\\
			DM(\mi)\otimes_{A(\mi)}P(\mi)\otimes_{B(\mi)}\Hom_A(P, A)(\mi)\arrow[d, "\simeq"] &\\
			D_{(\mi)}M(\mi)\otimes_{A(\mi)}P(\mi)\otimes_{B(\mi)}\Hom_{A(\mi)}(P(\mi), A(\mi))\arrow[r, "\lambda_{M(\mi)}"]& D_{(\mi)}M(\mi)
		\end{tikzcd}, \label{eqmsf21}
	\end{equation}
	$\lambda_M(\mi)$ is surjective for every maximal ideal in $R$. By Nakayama's Lemma, $\lambda_M$ is surjective. As $DM\in R\proj$, $\lambda_M$ splits over $R$, so there is an $R$-homomorphism $t$ such that $t\circ D\lambda_M=\id_M$. Thus,
	\begin{align*}
		w_M^{-1}\circ t\circ D(\iota\otimes \Hom_A(P, A))\circ \kappa\circ \eta_M=w_M^{-1}\circ t \circ D\lambda_M\circ w_M=w_M^{-1}\circ w_M=\id_M.
	\end{align*}Hence, $\eta_M$ is an $(A, R)$-monomorphism.
	
	Conversely, assume that $\eta_M$ is an $(A, R)$-monomorphism and ${DM\otimes_A P\otimes_B \Hom_A(P, A)}\in R\proj$. In view of diagram (\ref{eqmsf20}), $D\lambda_M$ is an $(A, R)$-monomorphism. Then $DD\lambda_M$ is surjective. 
	
	As ${DM\otimes_A P\otimes_B \Hom_A(P, A)}\in R\proj$, the map $w_{DM\otimes_A P\otimes_B \Hom_A(P, A)}$ is an isomorphism and consequently, $\lambda_M$ is surjective. Applying the right exact functor $R(\mi)\otimes_R -$, we obtain by  diagram (\ref{eqmsf21}) that $\lambda_{M(\mi)}$ is surjective for every maximal ideal $\mi$ in $R$. By the first diagram, it follows that $\eta_{M(\mi)}$ is a monomorphism for every maximal ideal $\mi$ in $R$.
\end{proof}

Higher versions of Lemma \ref{whenunitisiso} and Lemma \ref{considerationseta} are encoded in the long exact sequence obtained by Grothendieck's Spectral sequence applied to the Schur functor $F$.

\begin{Lemma}\label{spectralsequencegrothendieck}
	Let $M\in A\m$.
	Suppose that either $\R^iG(FM)=0$ for $1\leq i\leq q$ or $q=0$. Then, for any $X\in A\m$, there are isomorphisms $\Ext_A^i(X, GFM)\simeq \Ext_B^i(FX, FM), \ 0\leq i\leq q$ and an exact sequence \begin{multline*}
		0\rightarrow \Ext_A^{q+1}(X, GFM)\rightarrow \Ext_B^{q+1}(FX, FM)\rightarrow \Hom_A(X, \R^{q+1}G(FM))\rightarrow \Ext_A^{q+2}(X, GFM)\\\rightarrow \Ext_B^{q+2}(FX, FM).
	\end{multline*}
\end{Lemma}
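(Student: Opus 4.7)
The plan is to invoke the Grothendieck composition-of-functors spectral sequence applied to the pair of functors $G\colon B\M\to A\M$ and $\Hom_A(X,-)\colon A\M\to \operatorname{Ab}$. The key observation is that the adjunction $F\dashv G$ yields a natural isomorphism $\Hom_B(FX,N)\simeq \Hom_A(X,GN)$ for all $N\in B\M$, so that $\Ext_B^\bullet(FX,-)$ coincides with the total right derived functor of the composite $\Hom_A(X,-)\circ G$.

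First I would verify the hypothesis of Grothendieck's theorem. Since $P\in A\proj$, the Schur functor $F$ is exact, hence its right adjoint $G$ preserves injective objects: every injective $B$-module is sent to an injective $A$-module, which is in particular acyclic for $\Hom_A(X,-)$. Grothendieck's theorem therefore produces a convergent first-quadrant cohomological spectral sequence
\[
E_2^{p,i}=\Ext_A^p\bigl(X,\R^i G(N)\bigr)\Longrightarrow \Ext_B^{p+i}(FX,N), \qquad N\in B\M.
\]

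I would then specialise at $N=FM$. The hypothesis $\R^i G(FM)=0$ for $1\leq i\leq q$ annihilates every row of the $E_2$-page whose index lies in $[1,q]$. On a diagonal $p+i=n$ with $n\leq q$ only the bottom-row entry $E_2^{n,0}$ survives and no nonzero differential can touch it, because both the incoming and outgoing differentials land in, or come from, the vanishing rows or rows of negative index. The edge morphisms therefore realise the asserted isomorphisms $\Ext_A^n(X,GFM)\simeq \Ext_B^n(FX,FM)$ for every $0\leq n\leq q$. The five-term exact sequence in degrees $q+1$ and $q+2$ is then extracted by the usual truncation argument: on these two diagonals the only $E_2$-entries that can be non-zero are $E_2^{q+1,0}$, $E_2^{0,q+1}$ and $E_2^{q+2,0}$, connected by the unique surviving differential $d_{q+2}\colon E_{q+2}^{0,q+1}\to E_{q+2}^{q+2,0}$. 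The degenerate case $q=0$ imposes no vanishing hypothesis at all (the index range $[1,0]$ is empty) and delivers the classical five-term exact sequence of a first-quadrant cohomological spectral sequence.

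No step is genuinely hard; the only subtle point is to make sure that Grothendieck's spectral sequence is legitimately available in our setting. This amounts to running the computation inside the Grothendieck abelian categories $A\M$ and $B\M$ and observing that the Ext groups computed there coincide with those computed in $A\m$ and $B\m$, since $X$ is finitely generated over the Noetherian algebra $A$ and $FX$ is correspondingly finitely generated over $B$, so projective resolutions by finitely generated modules exist on both sides.
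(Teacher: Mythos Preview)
Your proposal is correct and follows essentially the same approach as the paper: both invoke the Grothendieck spectral sequence $E_2^{p,i}=\Ext_A^p(X,\R^iG(FM))\Rightarrow \Ext_B^{p+i}(FX,FM)$ (note that $\R^iG=\Ext_B^i(FA,-)$, which is exactly the form the paper writes down), and then read off the isomorphisms and the five-term sequence from the vanishing of rows $1,\ldots,q$. The paper simply cites the literature for this argument, whereas you spell out the verification that $G$ preserves injectives and the passage between $A\M$ and $A\m$; these details are fine and do not diverge from the intended route.
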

\begin{proof}See for example \citep[Proposition 3.1]{zbMATH05278765} and \citep[2.2]{Doty2004}. The case $q=0$ is obtained by applying the dual version of \citep[Lemma A.3]{CRUZ2022410} together with an example of a  Grothendieck's Spectral sequence, $E_2^{i, j}=\Ext_A^i(X, \Ext_B^j(FA, FM))$.
\end{proof}

\begin{Cor} \label{Kunnethdeformationresult}
	Let $P^{\bullet}$ be a flat cochain complex of $R$-modules $0\rightarrow P_0\rightarrow P_1\rightarrow \cdots$. Let $M$ be an $R$-module with flat $R$-dimension at most one. Then, for each integer $n\geq 0$, there exists an exact sequence
	\begin{align}
		0\rightarrow H^n(P^{\bullet})\otimes_R M\rightarrow H^n(P^{\bullet}\otimes_RM)\rightarrow \Tor_1^R(H^{n+1}(P^{\bullet}), M)\rightarrow 0.
	\end{align}
\end{Cor}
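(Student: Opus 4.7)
The plan is to reduce the statement to the ordinary universal coefficient/Künneth argument by resolving $M$ by a short flat resolution, then chasing the resulting long exact sequence.

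First, since $M$ has flat dimension at most one, I can choose a flat resolution
\begin{equation*}
0\rightarrow F_1\rightarrow F_0\rightarrow M\rightarrow 0
\end{equation*}
with $F_0,F_1$ flat $R$-modules. Because every $P^n$ is flat, tensoring this resolution with the cochain complex $P^{\bullet}$ produces a short exact sequence of cochain complexes
\begin{equation*}
0\rightarrow P^{\bullet}\otimes_R F_1\rightarrow P^{\bullet}\otimes_R F_0\rightarrow P^{\bullet}\otimes_R M\rightarrow 0,
\end{equation*}
which gives rise to a long exact sequence in cohomology.

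Next, since $F_0$ and $F_1$ are flat, cohomology commutes with the functor $-\otimes_R F_i$, so $H^n(P^{\bullet}\otimes_R F_i)\simeq H^n(P^{\bullet})\otimes_R F_i$ for every $n$ and $i\in\{0,1\}$. Substituting into the long exact sequence, a portion of it reads
\begin{equation*}
H^n(P^{\bullet})\otimes_R F_1\xrightarrow{\alpha_n} H^n(P^{\bullet})\otimes_R F_0\rightarrow H^n(P^{\bullet}\otimes_R M)\rightarrow H^{n+1}(P^{\bullet})\otimes_R F_1\xrightarrow{\alpha_{n+1}} H^{n+1}(P^{\bullet})\otimes_R F_0,
\end{equation*}
where $\alpha_k$ is induced by the inclusion $F_1\hookrightarrow F_0$. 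From the flat resolution of $M$, I can identify $\coker\alpha_n\simeq H^n(P^{\bullet})\otimes_R M$ (by right exactness of $-\otimes_R H^n(P^{\bullet})$ applied to the resolution) and $\ker\alpha_{n+1}\simeq \Tor_1^R(H^{n+1}(P^{\bullet}),M)$ (by computing $\Tor_1$ from the flat resolution of $M$). Splitting the five-term portion at the middle module then yields exactly the desired short exact sequence
\begin{equation*}
0\rightarrow H^n(P^{\bullet})\otimes_R M\rightarrow H^n(P^{\bullet}\otimes_R M)\rightarrow \Tor_1^R(H^{n+1}(P^{\bullet}),M)\rightarrow 0.
\end{equation*}

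No genuine obstacle is expected: the only subtle point is ensuring that $0\rightarrow P^{\bullet}\otimes_R F_1\rightarrow P^{\bullet}\otimes_R F_0\rightarrow P^{\bullet}\otimes_R M\rightarrow 0$ is exact at every cochain degree, which is guaranteed by the flatness of each $P^n$. Everything else is a routine identification of the terms in the cohomology long exact sequence with the classical derived functors $-\otimes_R M$ and $\Tor_1^R(-,M)$ using the chosen length-one flat resolution.
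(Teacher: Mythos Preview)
Your proof is correct and is precisely the standard argument for the universal coefficient theorem; the paper itself does not give a proof but only cites a reference (\emph{Universal coefficient theorem}), whose content is exactly what you have written out. There is nothing to add.
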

\begin{proof}
	See for example \citep[III, Lemma 2.1.2 (Universal coefficient theorem)]{zbMATH01527053}.
\end{proof}

The classification of simple $eAe$-modules in terms of Schur functors for a given finite-dimensional algebra $A$ over a field goes back to the work by Green and T. Martins \citep[Theorem 6.2g]{zbMATH03708660}.

\begin{Theorem}
	\label{simplemoduleseAe} Let $A$ be a finite-dimensional algebra over a field $k$.
	Suppose $\{V_\lambda\colon \lambda\in \Lambda \}$ is a full set of simple modules in $A\m$, indexed by a set $\Lambda$. Let \mbox{$\Lambda'=\{\lambda\in\Lambda\colon eV_{\lambda}\neq 0 \}.$} Then $\{eV_{\lambda}\colon \lambda\in \Lambda' \}$ is a full set of simple modules in $eAe\m$. 
	The simple $A/AeA$-modules are exactly the simple $A$-modules $S$ with $eS=0$.
\end{Theorem}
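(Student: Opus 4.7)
The plan is to classify the simple $eAe$-modules by reducing to the semisimple quotient of $A$. The key input will be the standard identity $J(eAe)=e\,J(A)\,e$ relating Jacobson radicals; combined with Artin-Wedderburn for the semisimple quotient, this will read off the simples of $eAe$ explicitly in terms of those $V_\lambda$ on which $e$ acts non-trivially.

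First I would verify the radical identity. The inclusion $e\,J(A)\,e\subseteq J(eAe)$ is immediate since $J(A)$ is nilpotent (as $A$ is finite-dimensional), hence so is the two-sided ideal $e\,J(A)\,e$ of $eAe$. For the reverse inclusion, restricting the projection $A\twoheadrightarrow A/J(A)$ to $eAe$ yields a surjective ring homomorphism $eAe\twoheadrightarrow e(A/J(A))e$ whose kernel is easily checked to be $e\,J(A)\,e$; since a corner of a semisimple ring is semisimple, $eAe/e\,J(A)\,e$ is semisimple, forcing $J(eAe)\subseteq e\,J(A)\,e$. Consequently $eAe/J(eAe)\cong e(A/J(A))e$ as $k$-algebras, and the simple $eAe$-modules are the same as the simple $e(A/J(A))e$-modules.

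Next I apply Artin-Wedderburn to write $A/J(A)\cong\prod_{\lambda\in\Lambda}M_{n_\lambda}(D_\lambda)$, with $D_\lambda=\End_A(V_\lambda)$, so that $V_\lambda$ is the simple module of the $\lambda$-block. Writing $e=(e_\lambda)_\lambda$ componentwise, $e_\lambda\neq 0$ if and only if $eV_\lambda\neq 0$, i.e.\ precisely when $\lambda\in\Lambda'$. Each non-zero corner $e_\lambda M_{n_\lambda}(D_\lambda)e_\lambda$ is itself a matrix algebra $M_{r_\lambda}(D_\lambda)$ over the same division ring, where $r_\lambda=\rank e_\lambda$, and its unique simple module is $e_\lambda V_\lambda=eV_\lambda$. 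Hence $e(A/J(A))e\cong\prod_{\lambda\in\Lambda'}M_{r_\lambda}(D_\lambda)$, and $\{eV_\lambda:\lambda\in\Lambda'\}$ is a full set of pairwise non-isomorphic simple $eAe$-modules.

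For the second part, a simple $A$-module $S$ factors through $A/AeA$ precisely when $AeA\cdot S=A\cdot eS=0$; but $A\cdot eS$ is either $S$ or $0$ by simplicity, so this happens exactly when $eS=0$. I expect the main subtlety to lie in the radical identity $J(eAe)=e\,J(A)\,e$; once that is in hand, the remainder is a routine assembly of classical structure theory.
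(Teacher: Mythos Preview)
Your proof is correct. The paper does not give its own argument for this theorem: it is stated with a reference to Green's book \emph{Polynomial representations of $GL_n$} (Theorem~6.2g), so there is nothing to compare against line by line.

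That said, your route via the radical identity $J(eAe)=e\,J(A)\,e$ followed by Artin--Wedderburn on $A/J(A)$ is a clean and standard way to prove this result, and all the steps you outline go through without difficulty. The one place worth a second glance is the implicit identification of $eV_\lambda$ computed as an $A$-module with $eV_\lambda$ computed as an $eAe/J(eAe)\cong e(A/J(A))e$-module; but since $J(A)$ kills every simple $A$-module, the action of $e$ on $V_\lambda$ factors through $A/J(A)$ and the identification is immediate. The argument for the $A/AeA$-part is likewise correct: the observation that $AeA\cdot S=A\cdot eS$ is either $0$ or $S$ by simplicity is exactly what is needed.
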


\subsection{Relative dominant dimension over Noetherian algebras}\label{Relative dominant dimension over Noetherian algebras}

In \cite{CRUZ2022410}, the author introduced the concept of relative dominant dimension for projective Noetherian algebras.

	\begin{Def}\label{relativedominantdef}
	Let $M\in A\m$. We say that $M$ has \textbf{relative dominant dimension at least $t\in \mathbb{N}$}  if there exists an $(A, R)$-exact sequence of 
	finitely generated left $A$-modules
	\begin{align}
		0\rightarrow M\rightarrow I_1\rightarrow\cdots \rightarrow I_t \label{eq1}
	\end{align}with $I_i$ being both $A$-projective and $(A, R)$-injective. If $M$ admits no such $(A, R)$-exact sequence, then we say that $M$ has relative dominant dimension zero. Otherwise, the relative dominant dimension of $M$ is the supremum of the set of all values $t$ such that an $(A, R)$-exact sequence of the form \ref{eq1} exists.
	We denote by $\domdim_{(A, R)} M$ the relative dominant dimension of $M$. We write $\domdim_A M$ whenever $R$ is a field and consequently when $A$ is a finite-dimensional algebra over a field.
\end{Def}

We will denote by $\domdim (A, R)$ the relative dominant dimension of the regular $A$-module $A$.
A left $A$-module is called \textbf{$(A, R)$-strongly faithful} if there exists an $(A, R)$-monomorphism $A\hookrightarrow M^t$ for some $t>0$. We say that a left $A$-module is  an \textbf{$(A, R)$-injective-strongly faithful module} if it is simultaneously $(A, R)$-injective and $(A, R)$-strongly faithful.
We call a triple $(A, P, V)$ a \textbf{relative QF3 $R$-algebra, or just RQF3 algebra} provided $P$ is an
$A$-projective $(A, R)$-injective-strongly faithful left $A$-module and $V$ is an
$A$-projective $(A, R)$-injective-strongly faithful right $A$-module.

The following characterisation of relative dominant dimension, known as Mueller's theorem in the classical case, will be crucial to our purposes.

\begin{Theorem}\label{Mullertheorem}
	Let $(A, P, V)$ be an RQF3 algebra and $B$ be the endomorphism algebra $\End_A(V)$.
	
	For any $X\in A\m\cap R\proj$, the following assertions hold.
	\begin{enumerate}[(a)]
		\item $\domdim_{(A, R)} X\geq 1$ if and only if the canonical map
		$\Phi_X\colon \Hom_{A^{op}}(V, DX)\otimes_B V\rightarrow DX$, given by $f\otimes v\mapsto f(v)$, is an epimorphism.
		\item If $\domdim_{(A, R)} X\geq 1$, then $\alpha_X\colon X\rightarrow \Hom_B(V, V\otimes_A X)$, given by $x\mapsto (v\mapsto v\otimes x)$, is an $(A,
		R)$-monomorphism. The converse holds if $\Hom_{A^{op}}(V, DX)\otimes_B V\in R\proj$.
		\item	For any natural number $n\geq 2$, the following assertions are equivalent:
		\begin{enumerate}[(i)]
		\item $\domdim_{(A, R)} X\geq n.$
		\item $\Phi_X\colon \Hom_A(V, DX)\otimes_B V\rightarrow DX$ is an isomorphism
and $\Tor_i^B(\Hom_{A^{op}}(V, DX), V)=0,$  \mbox{$1\leq i\leq n-2$.}
		\end{enumerate}
	\item Let $n$ be an integer greater than 1. If $\domdim_{(A, R)} X\geq n$, then $\alpha_X\colon X\rightarrow \Hom_B(V, V\otimes_A X)$ is an isomorphism and
	$\Ext_B^i(V, V\otimes_A X)=0$ for $1\leq i\leq n-2$.
	\item  Assume that $R$ is a commutative Noetherian regular ring and let $n$ be a natural number.  If \mbox{$\alpha_X\colon X\rightarrow \Hom_B(V, V\otimes_A X)$} is an isomorphism and $\Ext_B^i(V, V\otimes_A X)=0$ for every \mbox{$1\leq i\leq n-2$}, then $\domdim_{(A, R)} X\geq n-\dim R$.
	\end{enumerate}
\end{Theorem}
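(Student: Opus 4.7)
My approach is to adapt Mueller's classical double centraliser characterisation of dominant dimension to the $(A, R)$-relative setting, exploiting $D_R = \Hom_R(-, R)$ to transfer between left and right $A$-modules, together with the Hom-tensor adjunction over $B = \End_A(V)$. The crucial bridge is: every $A^{op}$-projective $(A^{op}, R)$-injective right $A$-module lies in $\add_{A^{op}} V$, which follows from strong faithfulness of $V$ together with $A \in R\proj$ and standard splitting arguments over $R$. Consequently, $D_R$ sends $A$-projective $(A, R)$-injective left $A$-modules into $\add_{A^{op}} V$, bridging the intrinsic resolution definition of $\domdim_{(A, R)}$ with the data $\Phi_X$, $\alpha_X$, $\Tor_i^B$, and $\Ext_B^i$. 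For (a), the assumption $\domdim_{(A, R)} X \geq 1$ gives an $(A, R)$-monomorphism $X \hookrightarrow I$ with $I$ both $A$-projective and $(A, R)$-injective; dualising produces an $R$-split surjection $DI \twoheadrightarrow DX$ with $DI \in \add_{A^{op}} V$. Since the canonical evaluation $\Hom_{A^{op}}(V, W) \otimes_B V \to W$ is an isomorphism for $W \in \add_{A^{op}} V$, this surjection equals $\Phi_X$ precomposed with an isomorphism, so $\Phi_X$ is epi; conversely, an epimorphism $\Phi_X$ yields a surjection from a sum of copies of $V$ onto $DX$, which dualises to the required $(A, R)$-embedding of $X$.

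For (b), $\alpha_X$ and $D_R \Phi_X$ fit into a commutative diagram of the same shape as (\ref{eqmsf20}) in Lemma \ref{unitmono}, differing only by canonical natural isomorphisms; the extra $R$-projectivity assumption in the converse is exactly what is needed for $D_R$ to reflect epimorphisms on the relevant object. For (c) and (d), I would induct on $n$: dualising an $n$-step $(A, R)$-injective coresolution of $X$ and applying $\Hom_{A^{op}}(V, -)$ yields a partial projective $B$-resolution of $\Hom_{A^{op}}(V, DX)$, since $\Hom_{A^{op}}(V, V^{(J)}) = B^{(J)}$. Tensoring this partial resolution with $V$ over $B$ reproduces the original $V$-resolution of $DX$, so vanishing of $\Tor_i^B(\Hom_{A^{op}}(V, DX), V)$ for $1 \leq i \leq n - 2$ is equivalent to exactness of the $V$-resolution in those positions, equivalently to the existence of the next step of the $(A, R)$-injective coresolution. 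This gives (c). Part (d) follows by applying Lemma \ref{spectralsequencegrothendieck} to the adjoint pair $V \otimes_A - \dashv \Hom_B(V, -)$, translating $\Tor^B$-vanishing into $\Ext_B$-vanishing via $D_R$ together with the $\alpha_X$-isomorphism coming from (b) and (c).

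Finally, for (e), the regular Noetherian hypothesis enables descent along a regular sequence of length $\dim R$. Each step of modding out by a regular element $x$ is controlled by Corollary \ref{Kunnethdeformationresult}: at most one degree of Ext-vanishing is lost per step, and each $\alpha$-isomorphism passes to the residue by Lemma \ref{unitmono} and analogues. After $\dim R$ base changes the ground ring becomes a field, where classical Mueller (the field case of (c) and (d)) produces an $(A, R)$-injective coresolution of length at least $n - \dim R$; this coresolution then lifts back to $R$ by Nakayama-type arguments along the regular sequence. The main obstacle I anticipate lies here: one must track that exactly $\dim R$ degrees are lost and no more, i.e., that $\Tor_1^R$-contributions from Corollary \ref{Kunnethdeformationresult} do not accumulate beyond what is permitted by the Krull dimension. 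A secondary subtlety is that the bridge lemma $DI \in \add_{A^{op}} V$ should be isolated as a preliminary step, since it underpins the whole argument and genuinely uses both strong faithfulness and $A \in R\proj$.
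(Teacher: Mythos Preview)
The paper does not prove this theorem at all: it is stated in the Preliminaries as a recall, and the ``proof'' consists entirely of pointers to Propositions~3.23, 5.5 and Theorem~5.2 of the author's earlier paper \cite{CRUZ2022410}. So there is no argument in the present paper for you to compare against; your sketch is a genuine attempt to supply what the paper outsources.

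That said, your outline is essentially the correct adaptation of Mueller's argument to the relative setting, and the ingredients you isolate (the bridge lemma that $A$-projective $(A,R)$-injectives lie in $\add P$, respectively their duals in $\add_{A^{op}} V$; the evaluation map being an isomorphism on $\add V$; dualising an $(A,R)$-coresolution and reading off $\Tor^B$-vanishing) are the right ones for (a)--(c). One point to tighten: for (d) your passage from $\Tor^B$-vanishing to $\Ext^B$-vanishing via ``$D_R$ together with the $\alpha_X$-isomorphism'' is too compressed. The clean route is to apply $V\otimes_A -$ directly to the $(A,R)$-coresolution $0\to X\to I_1\to\cdots\to I_n$ (exact since $V_A$ is projective), observe that $V\otimes_A I_j\in\add_B V$ because $I_j\in\add_A A$, and then use that $\alpha_{I_j}$ is an isomorphism for $I_j\in\add P$ (this is where strong faithfulness and $\domdim(A,R)\geq 2$-style input enter) to conclude that $\Hom_B(V,-)$ recovers the original sequence, hence is exact in the relevant range; the derived functor long exact sequence then gives the $\Ext$-vanishing. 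Invoking Lemma~\ref{spectralsequencegrothendieck} as you do is also fine once you set up the adjunction carefully, but you should make explicit why $V\otimes_A I_j$ is $\Hom_B(V,-)$-acyclic. For (e) your diagnosis of the obstacle is accurate and matches the shape of the argument in \cite{CRUZ2022410}; the loss of exactly one degree per regular element is what Corollary~\ref{Kunnethdeformationresult} delivers, and the lifting back uses Nakayama together with the $R$-projectivity of the terms.
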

\begin{proof}
	Assertions (a) and (b) are contained in \citep[Proposition 3.23]{CRUZ2022410}, (c) and (d) follow from  \citep[Proposition 3.23, Theorem 5.2]{CRUZ2022410} while (e) is \citep[Proposition 5.5]{CRUZ2022410}.
\end{proof}

The following technical lemma is useful to relate covers with relative dominant dimension.

\begin{Lemma}\label{unitschurfunctoranddominant}
	Let $(A, P, V)$ be an RQF3 algebra and $B$ be the endomorphism algebra $\End_A(V)$.
	Let $F$ be the Schur functor  $\Hom_A(\Hom_{A^{op}}(V, A), -)\colon A\m \rightarrow B\m$.
	
	For any $X\in A\m$, there exists an isomorphism $\beta_X\in \Hom_A(\Hom_B(V,
	V\otimes_A X), \Hom_B(FA, FX))$ making the following diagram commutative: 
	\begin{center}
		\begin{tikzcd}
			X\arrow[d, equal]\arrow[r, "\alpha_X"] & \Hom_B(V, V\otimes_A X) \arrow[d,
			"\beta_X"] \\
			X\arrow[r, "\eta_X"] & \Hom_B(FA, FX)
		\end{tikzcd}
	\end{center}
\end{Lemma}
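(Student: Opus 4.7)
The plan is to exhibit $\beta_X$ as the composition of two natural isomorphisms induced by the finite-generation and projectivity of $V$ as a right $A$-module (which is part of the RQF3 hypothesis), and then verify commutativity by a direct element-chase on the defining formulas for $\alpha_X$ and $\eta_X$.

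First I would observe that, since $V$ is finitely generated projective as a right $A$-module, the left $A$-module $Q := \Hom_{A^{op}}(V,A)$ is finitely generated projective, and there are two classical natural transformations which become isomorphisms in this setting. The first is the evaluation map $\iota \colon V \to \Hom_A(Q,A) = FA$, $v \mapsto (q \mapsto q(v))$, which is an isomorphism of $(B,A)$-bimodules (the $B$-equivariance follows from the formula $(qb)(v) = q(bv)$ defining the right $B$-action on $Q$). The second is the tensor-hom comparison map
\[
\theta_X \colon V \otimes_A X \longrightarrow \Hom_A(Q,X) = FX, \qquad v \otimes x \mapsto \bigl(q \mapsto q(v)\cdot x\bigr),
\]
which, for $X = A$, reduces to $\iota$ and hence is an isomorphism; by additivity and right-exactness (using $Q$ finitely generated projective on the left) it is an isomorphism of left $B$-modules for every $X \in A\m$.

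Next I would define
\[
\beta_X \colon \Hom_B(V, V\otimes_A X) \longrightarrow \Hom_B(FA, FX), \qquad \psi \longmapsto \theta_X \circ \psi \circ \iota^{-1}.
\]
This is an isomorphism of $R$-modules because both $\iota$ and $\theta_X$ are; and it is manifestly natural in $X$. It remains to verify $\beta_X \circ \alpha_X = \eta_X$.

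Finally, commutativity follows by tracing the definitions on an element $x \in X$ and evaluating at $\iota(v) \in FA$ and $q \in Q$. On one hand, $\alpha_X(x)(v) = v \otimes x$, so $\beta_X(\alpha_X(x))(\iota(v)) = \theta_X(v \otimes x) = \bigl(q \mapsto q(v)\cdot x\bigr)$. On the other hand, from the formula for the unit recalled in Subsection~\ref{Covers} (with $Q$ in the role of $P$), we have $\eta_X(x)(\iota(v))(q) = \iota(v)(q)\cdot x = q(v)\cdot x$. These agree, so $\beta_X \circ \alpha_X = \eta_X$. The main obstacle is essentially bookkeeping: keeping the left/right $A$- and $B$-actions consistent through the identifications $FA \cong V$ and $FX \cong V\otimes_A X$; once these bimodule compatibilities are pinned down, the required commutativity is automatic.
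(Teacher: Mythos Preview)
Your proof is correct and essentially the same as the paper's. Your $\iota$ is precisely the paper's $w_V$, your $\theta_X$ is the composite $\psi_X\circ(w_V\otimes\id_X)$ in the paper's notation, and your formula $\beta_X(\psi)=\theta_X\circ\psi\circ\iota^{-1}$ unwinds to exactly the paper's $\beta_X=\Hom_B(FA,\psi_X\circ w_V\otimes\id_X)\circ\Hom_B(w_V^{-1},V\otimes_A X)$; the element-chase verifying $\beta_X\circ\alpha_X=\eta_X$ is also the same computation.
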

\begin{proof}
	Denote by $w_V$ the map $V\rightarrow \Hom_A(\Hom_{A^{op}}(V, A), A)$, given by
	$w(v)(f)=f(v)$. Since $V$ is a projective $A$-module, this map is an
	$(\End_{A^{op}}(V), A)$-bimodule isomorphism.
	
	Fix $\psi_X\colon \Hom_A(\Hom_{A^{op}}(V, A), A)\otimes_A X\rightarrow
	\Hom_A(\Hom_{A^{op}}(V, A), X)$ the canonical isomorphism given by $\Hom_{A^{op}}(V, A)$ being a projective $A$-module (see for example Lemma 4.2.5 of \cite{Zimmerman}). Then
	define $\beta_X=\Hom_B(FA, \psi_X\circ w_V\otimes \id_X)\circ \Hom_B(w_V^{-1},
	V\otimes_A X)$. Let $x\in X$. Then
	\begin{align}
		\Hom_B(FA, \psi_X\circ w_V\otimes \id_X)\circ \Hom_B(w_V^{-1}, V\otimes_A
		X)(\alpha_X(x))&=\Hom_B(FA, \psi_X\circ w_V\otimes \id_X)(\alpha_X(x)\circ
		w_V^{-1}) \nonumber\\
		&= \psi_X\circ w_V\otimes \id_X \circ \alpha_X(x)\circ w_V^{-1}.
	\end{align} For $v\in V, \ f\in \Hom_{A^{op}}(V, A)$,
	\begin{align*}
		\psi_X\circ w_V\otimes \id_X\circ \alpha_X(x)(v)(f)= \psi_X\circ w_V\otimes
		\id_X(v\otimes x)(f)=\psi_X(w_V(v)\otimes x)(f)=w_V(v)(f)x=f(v)x.
	\end{align*} 
	On the other hand,
	$
	\eta_X(x)\circ w_V(v)(f)=w_V(v)(f)x=f(v)x.
	$
	Therefore, composing with $w_V^{-1}$ on both sides we conclude
	\begin{align}
		\Hom_B(FA, \psi_X\circ w_V\otimes \id_X)\circ \Hom_B(w_V^{-1}, V\otimes_A
		X)(\alpha_X(x)) = \eta_X(x), \ x\in X. \tag*{\qedhere}
	\end{align}
\end{proof}

\begin{Prop}
	Let $(A, P, V)$ be an RQF3 algebra over a commutative Noetherian ring $R$. If \linebreak$\domdim (A, R)\geq 2$, then $(A, \Hom_A(V, A))$ is a cover of $B:=\End_{A}(V)$.\label{dominantgeqtwodcp}
\end{Prop}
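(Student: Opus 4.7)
The plan is to reduce the cover property to the isomorphism of a single map, namely the unit $\eta_A$, and then to identify this map, via Lemma \ref{unitschurfunctoranddominant}, with the map $\alpha_A$ which is controlled by the relative dominant dimension through Mueller's theorem.

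First, I would set up the notation: write $P := \Hom_A(V, A) = \Hom_{A^{op}}(V, A)$. Since $V$ is finitely generated projective as a right $A$-module, $P$ is a finitely generated projective left $A$-module, and there is a canonical isomorphism of $R$-algebras $\End_A(P)^{op} \simeq B$ (using $\End_{A^{op}}(V) \simeq B$ together with $\Hom_A(\Hom_{A^{op}}(V,A), A) \simeq V$ as $(B,A)$-bimodules). Hence the Schur functor $F = \Hom_A(P, -)\colon A\m \to B\m$ together with its right adjoint $G = \Hom_B(FA, -)$ fits into the general cover framework of Subsection~\ref{Covers}. By Proposition~\ref{dcpropertycover}, to prove that $(A, P)$ is a cover of $B$ it suffices to show that the unit $\eta_A\colon A \to GFA = \Hom_B(FA, FA)$ is an isomorphism of $A$-modules; indeed once $\eta_A$ is an isomorphism, additivity of $F$ and $G$ gives that $\eta_M$ is an isomorphism for every $M \in A\proj$.

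Next, I invoke Lemma~\ref{unitschurfunctoranddominant} with $X = A$. That lemma produces an isomorphism $\beta_A\colon \Hom_B(V, V\otimes_A A) \to \Hom_B(FA, FA)$ fitting into a commutative square, so that $\eta_A = \beta_A \circ \alpha_A$, where $\alpha_A\colon A \to \Hom_B(V, V\otimes_A A)$ is the map sending $a$ to $(v \mapsto v \otimes a)$. Since $\beta_A$ is an isomorphism, the task reduces to showing that $\alpha_A$ is an isomorphism.

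Finally, I apply Theorem~\ref{Mullertheorem}(d) with $X = A$ and $n = 2$: the hypothesis $\domdim(A, R) = \domdim_{(A,R)} A \geq 2$ gives that $\alpha_A$ is an isomorphism (the Ext-vanishing statement in (d) is vacuous for $n = 2$). Combining the two steps, $\eta_A$ is an isomorphism, and the conclusion follows from Proposition~\ref{dcpropertycover}. There is no real obstacle in this proof; the content is entirely concentrated in the preceding lemmas, and the argument is essentially a direct assembly of them. The only mild care needed is to confirm that the Schur functor associated to $P = \Hom_A(V, A)$ identifies with the functor considered in Lemma~\ref{unitschurfunctoranddominant}, which is immediate from projectivity of $V$.
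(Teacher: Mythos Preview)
Your proof is correct and follows the same approach as the paper: invoke Theorem~\ref{Mullertheorem} to get that $\alpha_A$ is an isomorphism, then translate this into the cover property via Proposition~\ref{dcpropertycover}. The paper's version is terser (it jumps directly from $\alpha_A$ being an isomorphism to Proposition~\ref{dcpropertycover} without explicitly mentioning Lemma~\ref{unitschurfunctoranddominant}), but your explicit use of that lemma to identify $\eta_A$ with $\beta_A\circ\alpha_A$ is exactly the bridge the paper is using implicitly.
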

\begin{proof}
	Since $\domdim(A, R)\geq 2$ then $\alpha_A\colon A\rightarrow \Hom_B(V, V)$ is an isomorphism by Theorem \ref{Mullertheorem}. By Lemma \ref{dcpropertycover}, it follows that $(A, \Hom_{A^{op}}(V, A))$ is a cover of $B=\End_A(\Hom_{A^{op}}(V, A))^{op}\simeq \End_A(V)$.
\end{proof}

\begin{Remark}\label{gendocovers}
	It is essential to consider the projective $\Hom_{A^{op}}(V, A)$ instead of $P$. Indeed, in \citep[Example 15]{cruz2021characterisation}, we see that there are examples of algebras with dominant dimension two with a projective-injective-faithful module $P$ but the pair $(A, P)$ fails to be a cover of $\End_A(P)^{op}$.
\end{Remark}

Given Remark \ref{gendocovers}, we could ask in what situations $(A, P)$ is a cover of $\End_A(P)^{op}$ for a given RQF3 algebra $(A, P, V)$. 
It turns out that this property (over finite-dimensional algebras) characterizes Morita algebras, that is, endomorphism algebras of generators over self-injective algebras.

\begin{Theorem}\citep[Theorem 1]{cruz2021characterisation} Let $k$ be a field. \label{gendocharaccovers}
	Let $(A, P, V)$ be a QF3 $k$-algebra. Then $(A, P)$ is a cover of $\End_A(P)^{op}$ if and only if $A$ is a Morita algebra.
\end{Theorem}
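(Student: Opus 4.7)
I would prove both directions by passing through the Morita--Tachikawa correspondence and translating the cover condition on $(A,P)$ into a property of the corresponding $S$-module.

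\textit{Direction} $(\Leftarrow)$. Suppose $A\cong\End_S(M)^{op}$ with $S$ a self-injective $k$-algebra and $M$ a generator in $S\m$. Self-injectivity gives $\add_S S=\add_S DS$, so $M$ is automatically a cogenerator and hence $\domdim A\geq 2$ by Morita--Tachikawa (Proposition \ref{dominantgeqtwodcp}). Set $P:=\Hom_S(M,S)$. I would check that $P$ is projective as an $A$-module (since $S\in\add_S M$), injective as an $A$-module (because $S$ is injective in $S\m$ and Morita--Tachikawa transports injectivity), and faithful (because $M$ is a generator, so no nonzero $a\in A=\End_S(M)^{op}$ can kill $\Hom_S(M,S)$). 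Using the fully faithful equivalence $\Hom_S(M,-)\colon \add_S M\to A\proj$ applied to $S\in\add_S M$, one identifies $\End_A(P)^{op}\cong\End_S(S)=S$ and $\Hom_A(P,A)\cong\Hom_S(S,M)\cong M$. The cover property $A\cong\End_B(\Hom_A(P,A))^{op}$ then reduces to the tautology $A=\End_S(M)^{op}$.

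\textit{Direction} $(\Rightarrow)$. Suppose $(A,P)$ is a cover of $B:=\End_A(P)^{op}$ with $P$ projective-injective-faithful. By Proposition \ref{dcpropertycover}, the cover property is equivalent to the double centralizer $A\cong\End_B(\Hom_A(P,A))^{op}$, and combined with $P$ being injective this forces $\domdim A\geq 2$. Applying Morita--Tachikawa, write $A\cong\End_S(M)^{op}$ with $M\in S\m$ a generator-cogenerator. Under the equivalence $\Hom_S(M,-)\colon \add_S M\to A\proj$, the module $P$ corresponds to some $N\in\add_S S\cap \add_S DS$ with $P\cong\Hom_S(M,N)$, $B\cong\End_S(N)^{op}$, and the Schur functor $F$ restricted to $A\proj$ corresponds to $\Hom_S(N,-)\colon\add_S M\to\End_S(N)\m$.

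The key step, which I expect to be the main obstacle, is to extract self-injectivity of $S$ from the cover property. The cover property is equivalent to the full faithfulness of $\Hom_S(N,-)$ on $\add_S M$. Since $M$ is a generator, $S\in\add_S M$, and applying full faithfulness to $X=Y=S$ yields the double centralizer $S\cong\End_{\End_S(N)}(\Hom_S(N,S))^{op}$; writing $N=Sf$ for an idempotent $f\in S$, Morita theory identifies this with the condition $SfS=S$, i.e., $f$ is a full idempotent, equivalently $N$ is a progenerator of $S\m$, equivalently $S\in\add_S N$. Combined with $N\in\add_S DS$, we conclude $\add_S S\subseteq\add_S N\subseteq\add_S DS$, so every indecomposable projective $S$-module is injective. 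Hence $S$ is self-injective and $A\cong\End_S(M)^{op}$ is Morita. The delicate point is making this translation precise, in particular ensuring that the equivalence $\Hom_S(M,-)$ really does intertwine the Schur functor $F$ with $\Hom_S(N,-)$ and that faithfulness of $P$ matches the condition needed to invoke the double centralizer argument for $N$.
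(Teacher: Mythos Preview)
First, note that the present paper does not prove this statement; it is quoted from \cite{cruz2021characterisation} without proof, so there is no ``paper's own proof'' to compare against here.

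On the merits of your plan: the $(\Leftarrow)$ direction is essentially fine (modulo checking that your constructed $\Hom_S(M,S)$ has the same additive closure as the given $P$, which holds since both are faithful projective--injective). The $(\Rightarrow)$ direction, however, contains a genuine error. You assert that the double centralizer $S\cong\End_{\End_S(N)}(\Hom_S(N,S))^{op}$, obtained by applying full faithfulness at $X=Y=S$, is ``identified by Morita theory with $SfS=S$''. This is false: writing $N=Sf$, that double centralizer is exactly the statement that $(S,Sf)$ is a cover of $fSf$ (Proposition~\ref{dcpropertycover}), and covers do \emph{not} force the idempotent to be full. A concrete counterexample is $S=S_k(d,d)$ with $Sf=V^{\otimes d}$ over a field of positive characteristic: the double centralizer holds by Schur--Weyl duality, yet $SfS\neq S$ whenever the Schur algebra is not semisimple. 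So from $X=Y=S$ alone you cannot conclude $S\in\add_S N$.

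There is also an earlier unjustified step: you claim that the cover property together with $P$ injective forces $\domdim A\geq 2$, which you need to invoke Morita--Tachikawa. The cover property gives $A\cong\End_B(\Hom_A(P,A))^{op}$, whereas $\domdim A\geq 2$ corresponds (via M\"uller) to the double centralizer on $DP$ rather than on $\Hom_A(P,A)$; these two agree precisely when $DA\otimes_A P\cong P$, which is already the Morita condition you are trying to establish. A cleaner route is to bypass Morita--Tachikawa: set $S:=B$ and $M:=FA$ directly (so $A\cong\End_S(M)^{op}$ by the cover, with $M$ a generator), and then prove $B$ is self-injective by exploiting the identification $DB\cong F(DA\otimes_A P)$ together with full faithfulness of $F$ on \emph{all} of $A\proj$ and the injectivity of $P$ --- not merely the single instance $X=Y=S$.
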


Recall that a pair $(A, P)$ is a \textbf{relative Morita algebra} over a commutative Noetherian ring if $(A, P, DP)$ is an RQF3 algebra so that $\domdim (A, R)\geq 2$ and $\add DA\otimes_A P=\add P$. 
\begin{Remark}
	It remains true that if $(A, P)$ is a relative Morita algebra over a commutative Noetherian ring $R$, then $(A, P)$ is a cover of $B$.
\end{Remark}

\section{Introduction to $\mathcal{A}$-covers}\label{faithful covers}

In this section, we give the setup to measure the quality of a cover based on the approach of \citep{Rouquier2008}. In particular, we introduce the concept of an $\mathcal{A}$-cover $(A, P)$ for an arbitrary resolving subcategory $\mathcal{A}$ of $A\m$. Under this concept, faithful split quasi-hereditary covers are exactly $\mathcal{F}(\Stsim)$-covers.
Allowing arbitrary resolving subcategories offers the possibility to assign a quality to a cover which is not necessarily quasi-hereditary and to distinguish the covers which are not even $(-1)$-faithful.

\begin{Def}\label{faithfulcoverdef}
	Let $A$ be a projective Noetherian $R$-algebra. Let $\mathcal{A}$ be a resolving subcategory of $A\m$. Let $B=\End_A(P)^{op}$ for some $P\in A\proj$ and let $i\geq 0$. We say that the pair $(A, P)$ is an \textbf{$i$-$\mathcal{A}$ cover} of $B$ if the Schur functor $F=\Hom_A(P, -)$ induces isomorphisms 
	\begin{align*}
		\Ext_A^j(M, N)\rightarrow \Ext_B^j(FM, FN), \quad \forall M, N\in \mathcal{A}, \ j\leq i.
	\end{align*}\par
	We say that $(A, P)$ is a \textbf{$i$-cover }of $B$ if $(A, P)$ is an $i$-$(A\proj)$ cover of $B$. \par
	We say that $(A, P)$ is an \textbf{$(-1)$-$\mathcal{A}$ cover} of $B$ if $(A, P)$ is a cover of $B$ and $F$ induces monomorphisms 
	\begin{align*}
		\Hom_A(M, N)\rightarrow\Hom_B(FM, FN), \quad \forall M, N\in \mathcal{A}.
	\end{align*}
We say that $(A, P)$ is an $(+\infty)$-$\mathcal{A}$ cover of $B$ if $(A, P)$ is an $i$-$\mathcal{A}$ cover of $B$ for all $i\geq 0$.
\end{Def}

Let $(A, \{\Delta(\lambda)_{\lambda\in \Lambda}\})$ be a split quasi-hereditary algebra. As we have mentioned $\mathcal{F}(\Stsim)$ is a resolving subcategory of $A\m\cap R\proj$ (see for example \citep[Theorem 4.1]{appendix}). 
In particular, we can see that the concept of an $i$-$\mathcal{F}(\Stsim)$ cover coincides with the concept of $i$-faithful cover introduced in \citep[Definition 4.37]{Rouquier2008}.

\begin{Remark}
	In our notation, a $0$-cover is a cover in the usual sense.
\end{Remark}

We can see that Lemma \ref{spectralsequencegrothendieck} gives that a cover $(A, P)$ of $B$ is a $(-1)$-$\mathcal{A}$ cover of $B$ if and only if the restriction of $\Hom_A(P, -)$ to $\mathcal{A}$ is faithful if and only if $\eta_M$ is a monomorphism for every $M\in \mathcal{A}$.

\begin{Prop}\label{zeroAcover}
	The following assertions are equivalent.
	\begin{enumerate}[(a)]
		\item $(A, P)$ is a $0$-$\mathcal{A}$ cover; that is, the restriction of $F=\Hom_A(P, -)$ to $\mathcal{A}$ is full and faithful.
		\item $\eta_M$ is an isomorphism for all $M\in \mathcal{A}$.
		\item Every module of $\mathcal{A}$ is in the image of the functor $G=\Hom_B(FA, -)$.
	\end{enumerate}
\end{Prop}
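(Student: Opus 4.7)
The plan is to derive the proposition directly from Lemma \ref{whenunitisiso}, which gives the pointwise equivalence between $\eta_M$ being an isomorphism, $F$ inducing bijections $\Hom_A(N,M)\to \Hom_B(FN,FM)$ for all $N\in A\m$, the same bijection just for $N=A$, and $M$ being a direct summand of a module in the image of $G$. The only extra ingredient I need is the fact that $\mathcal{A}$ being resolving forces $A\proj\subset \mathcal{A}$, and in particular $A\in \mathcal{A}$.

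For (a)$\Leftrightarrow$(b), I would argue as follows. If (b) holds, then for each fixed $M\in\mathcal{A}$ the implication (a)$\Rightarrow$(b) of Lemma \ref{whenunitisiso} gives that $\Hom_A(N,M)\to \Hom_B(FN,FM)$ is a bijection for every $N\in A\m$; restricting $N$ to $\mathcal{A}$ yields that the restriction of $F$ to $\mathcal{A}$ is full and faithful, which is (a). Conversely, assume (a). Since $A\in\mathcal{A}$, the map $\Hom_A(A,M)\to \Hom_B(FA,FM)$ is a bijection for every $M\in\mathcal{A}$, so implication (c)$\Rightarrow$(a) of Lemma \ref{whenunitisiso} gives that $\eta_M$ is an isomorphism for every $M\in\mathcal{A}$, which is (b).

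For (b)$\Leftrightarrow$(c), the implication (b)$\Rightarrow$(c) is immediate: if $\eta_M\colon M\to GFM$ is an isomorphism, then $M$ is isomorphic to $GFM$ and hence lies in the image of $G$. For (c)$\Rightarrow$(b), if $M\in\mathcal{A}$ lies in the image of $G$, then $M$ is trivially a direct summand of a module in the image of $G$, and the implication (d)$\Rightarrow$(a) of Lemma \ref{whenunitisiso} gives that $\eta_M$ is an isomorphism.

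There is no substantive obstacle in this argument; it is purely a bookkeeping of which implications of Lemma \ref{whenunitisiso} are needed and a use of the fact that a resolving subcategory contains $A$. The only mild subtlety worth highlighting is that in (a)$\Rightarrow$(b) one only needs the bijection $\Hom_A(A,M)\to \Hom_B(FA,FM)$, while (b)$\Rightarrow$(a) automatically yields the stronger statement that $F$ is fully faithful on all of $A\m$ when evaluated into $FM$, which then restricts to $\mathcal{A}$ without further work.
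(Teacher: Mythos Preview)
Your proof is correct and follows essentially the same route as the paper's own proof: both rely entirely on Lemma~\ref{whenunitisiso} together with the observation that $A\in\mathcal{A}$ because $\mathcal{A}$ is resolving. The only cosmetic difference is in $(c)\Rightarrow(b)$: you invoke the implication $(d)\Rightarrow(a)$ of Lemma~\ref{whenunitisiso} directly, whereas the paper unpacks that implication by writing out the triangle identity $\id_{GN}=G\varepsilon_N\circ\eta_{GN}$ and appealing to Proposition~\ref{fullfaithfulG}; these are the same argument.
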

\begin{proof}
	$(a)\implies (b)$. Since $\mathcal{A}$ is resolving of $A\m\cap R\proj$, $A\in \mathcal{A}$. By $a)$, \linebreak$\Hom_A(A, M)\rightarrow \Hom_B(FA, FM)$ is an isomorphism. By Lemma \ref{considerationseta}, $\eta_M$ is an isomorphism.
	
	$(b)\implies (c)$. Let $M\in \mathcal{A}$. By assumption, $\eta_M$ is an isomorphism. Hence, $M\simeq G(FM)$. 
	
	$(c)\implies (b)$. Let $M\in \mathcal{A}$. There exists $N\in B\m$ such that $GN\simeq M$. Since $\id_{GN}=G\varepsilon_N\circ \eta_{GN}$ and $\varepsilon_N$ is an isomorphism according to Proposition \ref{fullfaithfulG}, it follows that $\eta_{GN}$ is an isomorphism. Let $\alpha\colon M\rightarrow GN$ be an isomorphism. As $\eta_M$ is the composition of the isomorphisms $GF\alpha^{-1}\circ \eta_{GN}\circ \alpha$, it is an isomorphism.
	
	$(b)\implies (a)$. By Lemma \ref{whenunitisiso}, $\Hom_A(M, N)\rightarrow \Hom_B(FN, FM)$ for every $N, M\in \mathcal{A}$.
\end{proof}

\begin{Prop}\label{arbitraryAcover}
	Let $(A, P)$ be a $0$-$\mathcal{A}$ cover of $B$. Let $i\geq 1$. The following assertions are equivalent.
	\begin{enumerate}[(a)]
		\item $(A, P)$ is an $i$-$\mathcal{A}$ cover of $B$.
		\item For all $M\in \mathcal{A}$, we have $\R^jG(FM)=0$, $1\leq j\leq i$.
	\end{enumerate}
\end{Prop}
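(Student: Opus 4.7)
The strategy is to exploit Lemma \ref{spectralsequencegrothendieck}, which provides the long exact sequence
\begin{multline*}
0\to \Ext_A^{q+1}(X,GFM)\to \Ext_B^{q+1}(FX,FM)\to \Hom_A(X,\R^{q+1}G(FM))\\\to \Ext_A^{q+2}(X,GFM)\to \Ext_B^{q+2}(FX,FM)
\end{multline*}
whenever $\R^jG(FM)=0$ for $1\le j\le q$. Since $(A,P)$ is a $0$-$\mathcal{A}$ cover, Proposition \ref{zeroAcover} gives a natural isomorphism $\eta_M\colon M\xrightarrow{\sim} GFM$ for every $M\in\mathcal{A}$. Under this identification the $F$-induced map $\Ext_A^j(X,M)\to \Ext_B^j(FX,FM)$ coincides with the edge morphism $\Ext_A^j(X,GFM)\to \Ext_B^j(FX,FM)$ from the Grothendieck spectral sequence, since both arise by applying $F$ and passing through the adjunction $F\dashv G$.

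For the implication $(b)\Rightarrow(a)$ I would apply Lemma \ref{spectralsequencegrothendieck} with $q=i$. The vanishing hypothesis in (b) yields isomorphisms $\Ext_A^j(X,GFM)\simeq \Ext_B^j(FX,FM)$ for every $X\in A\m$ and every $0\le j\le i$. Restricting to $X,M\in\mathcal{A}$ and replacing $GFM$ by $M$ via $\eta_M$ gives the isomorphisms required by the definition of an $i$-$\mathcal{A}$ cover.

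For $(a)\Rightarrow(b)$ I would argue by induction on $i\ge 1$. In the base case $i=1$, apply the Lemma with $q=0$; for $X\in\mathcal{A}$, the first map of the resulting five-term sequence is, up to the identification $\eta_M$, the isomorphism $\Ext_A^1(X,M)\xrightarrow{\sim}\Ext_B^1(FX,FM)$ coming from (a). Hence the sequence yields an injection $\Hom_A(X,\R^1G(FM))\hookrightarrow \Ext_A^2(X,GFM)\simeq \Ext_A^2(X,M)$. As $\mathcal{A}$ is resolving of $A\m$, it contains $A$; taking $X=A$ and using $\Ext_A^2(A,-)=0$ forces $\R^1 G(FM)=0$. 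For the inductive step ($i\ge 2$), an $i$-$\mathcal{A}$ cover is \emph{a fortiori} an $(i-1)$-$\mathcal{A}$ cover, so by the inductive hypothesis $\R^jG(FM)=0$ for $1\le j\le i-1$ and every $M\in\mathcal{A}$. Applying the Lemma with $q=i-1$ produces an analogous five-term sequence in which the first map is again an isomorphism (now by the $i$-$\mathcal{A}$ assumption), and the same choice $X=A$ yields $\R^iG(FM)=0$.

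The only non-routine point is the compatibility of the $F$-induced $\Ext$ map with the edge morphism of the Grothendieck spectral sequence under $\eta_M$; this is a standard diagram chase using the naturality of $\eta$ and the functoriality of $F$, and should present no essential obstacle. Everything else is a direct application of Lemma \ref{spectralsequencegrothendieck} combined with the fact that $A\in\mathcal{A}$.
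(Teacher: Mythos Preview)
Your proof is correct, and for $(b)\Rightarrow(a)$ it matches the paper's argument. For $(a)\Rightarrow(b)$, however, you take a considerably longer route than necessary. The paper simply observes that by definition $G=\Hom_B(FA,-)$, so
\[
\R^jG(FM)=\Ext_B^j(FA,FM).
\]
Since $\mathcal{A}$ is resolving we have $A\in\mathcal{A}$, and the $i$-$\mathcal{A}$ cover hypothesis applied with $X=A$ gives directly $\Ext_B^j(FA,FM)\simeq\Ext_A^j(A,M)=0$ for $1\le j\le i$. No induction, no five-term sequence, and no compatibility check between the $F$-induced map and the spectral-sequence edge morphism is needed in this direction.

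Your inductive argument via the five-term sequence does work, but it front-loads the delicate point (the identification of the edge morphism with the $F$-induced map) into both directions rather than only the one where it is genuinely required. The paper's direct computation for $(a)\Rightarrow(b)$ is cleaner and isolates the spectral-sequence machinery to the implication $(b)\Rightarrow(a)$ where it is unavoidable.
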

\begin{proof}
	$(a)\implies (b)$. Let $M\in \mathcal{A}$. Let $1\leq j\leq i$. Then
	\begin{align}
		\R^jG(FM)=\R^j \Hom_B(FA, -) (FM)=\Ext_B^j(FA, FM)\simeq \Ext_A^j(A, M)=0.
	\end{align}
	
	$(b)\implies (a)$. Let $M\in \mathcal{A}$. By assumption, $\R^jG(FM)=0$, $1\leq j\leq i$. By Lemma \ref{spectralsequencegrothendieck}, \linebreak\mbox{$\Ext_A^j(X, GFM)\simeq \Ext_B^j(FX, FM)$,} $0\leq j\leq i$ for any $X\in A\m$. Since $(A, P)$ is a $0$-$\mathcal{A}$ cover of $B$, $\eta_M\colon M\rightarrow GFM$ is an $A$-isomorphism, and thus we have
	\begin{align}
		\Ext_A^j(X, M)\simeq \Ext_B^j(FX, FM), \quad 0\leq j\leq i, \ \forall X\in A\m.
	\end{align}The choice of $M\in \mathcal{A}$ is arbitrary, hence $(a)$ follows.
\end{proof}

\subsection{Faithful (split quasi-hereditary) covers}

The most prominent example of $\mathcal{A}$-covers is the faithful split quasi-hereditary covers. We shall now recall what Propositions \ref{zeroAcover} and \ref{arbitraryAcover} translate into.

\begin{Prop}\label{m1faithfulcover}
	Let $(A, P)$ be a cover of $B$ and $(A, \{\Delta(\lambda)_{\lambda\in \Lambda}\})$ be a split quasi-hereditary algebra over a commutative Noetherian ring. The following assertions are equivalent.
	\begin{enumerate}[(i)]
		\item $(A, P)$ is a $(-1)$-faithful split quasi-hereditary cover of $B$; that is the restriction of $F=\Hom_A(P, -)$ to $\mathcal{F}(\Stsim)$ is faithful. 
		\item $\eta_{\sumSt \St(\l)}$ is a monomorphism.
		\item $\eta_{\St(\l)}$ is a monomorphism for all $\l\in \L$.
		\item $\eta_{M}$ is a monomorphism for all $M\in \mathcal{F}(\Stsim)$.
		\item $\eta_T$ is a monomorphism for all (partial) tilting modules $T$.
		\item Every module of $\mathcal{F}(\Stsim)$ can be embedded into some module in the image of the functor $G=\Hom_B(FA, -)$. 
	\end{enumerate}
\end{Prop}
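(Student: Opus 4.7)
The plan is to establish (iv) as the central condition and to show that each of (i), (ii), (iii), (v), (vi) is equivalent to it. The easy implications (iv)$\implies$(iii) and (iv)$\implies$(v) are immediate since standard modules $\St(\l)$ and any partial tilting module lie in $\mathcal{F}(\Stsim)$, while (ii)$\iff$(iii) follows from the additivity of $\eta$: $\eta_{\sumSt \St(\l)}$ decomposes as the direct sum $\bigoplus_{\l\in\L}\eta_{\St(\l)}$ under the canonical identification $GF(\bigoplus N_\l)\simeq\bigoplus GFN_\l$.

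For (i)$\iff$(iv), I first observe that $A\in\mathcal{F}(\Stsim)$: the progenerator $P=\sumSt P(\l)$ lies in $\mathcal{F}(\Stsim)$ by Definition \ref{qhdef}(iv), and $A$ is a direct summand of $P^n$ for some $n$, while $\mathcal{F}(\Stsim)$ is closed under direct summands. Given this, Lemma \ref{considerationseta} applied with $N=A$ directly produces the desired equivalence, since faithfulness of $F$ on $\mathcal{F}(\Stsim)$ specialised to $N=A$ becomes injectivity of $\Hom_A(A,M)\rightarrow \Hom_B(FA,FM)$ for every $M\in\mathcal{F}(\Stsim)$.

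The main obstacle will be (iii)$\implies$(iv). The strategy is a two-step reduction. First, I would upgrade (iii) to the statement that $\eta_{\St(\l)\otimes_R U_\l}$ is a monomorphism for every $\l\in\L$ and every $U_\l\in R\proj$, by writing $U_\l$ as a direct summand of some $R^n$ and invoking the additivity of $\eta$. Then I would induct on the length of a $\Stsim$-filtration of $M\in\mathcal{F}(\Stsim)$. Given a short exact sequence $0\to M'\to M\to \St(\l)\otimes_R U_\l\to 0$ with $M'$ of strictly smaller filtration length, applying $F$ (exact because $P\in A\proj$) followed by the left-exact right adjoint $G$ yields a commutative diagram with exact rows
\begin{equation*}
\begin{tikzcd}
0 \arrow[r] & M' \arrow[r] \arrow[d, "\eta_{M'}"] & M \arrow[r] \arrow[d, "\eta_M"] & \St(\l)\otimes_R U_\l \arrow[r] \arrow[d, "\eta_{\St(\l)\otimes_R U_\l}"] & 0 \\
0 \arrow[r] & GFM' \arrow[r] & GFM \arrow[r] & GF(\St(\l)\otimes_R U_\l).
\end{tikzcd}
\end{equation*}
A short diagram chase (or the kernel portion of the snake lemma) yields the exactness of $0\to\ker\eta_{M'}\to\ker\eta_M\to\ker\eta_{\St(\l)\otimes_R U_\l}$, and the induction hypothesis combined with the first step forces $\ker\eta_M=0$.

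For (v)$\implies$(iv), I would use the characterisation $\mathcal{F}(\Stsim)=\widecheck{\add T}$: every $M\in\mathcal{F}(\Stsim)$ embeds into some $T_0\in\add T$, which is a partial tilting module and hence has $\eta_{T_0}$ a monomorphism by (v). Naturality gives $\eta_{T_0}\circ\iota=GF\iota\circ\eta_M$ with both $\iota\colon M\hookrightarrow T_0$ and $\eta_{T_0}$ monomorphisms, forcing $\eta_M$ to be a monomorphism. For (iv)$\iff$(vi), the forward direction is the embedding $\eta_M\colon M\hookrightarrow GFM$; conversely, given an embedding $\phi\colon M\hookrightarrow GN$, naturality yields $\eta_{GN}\circ\phi=GF\phi\circ\eta_M$, and since $\varepsilon_N$ is an isomorphism by Proposition \ref{fullfaithfulG}, the triangle identity $G\varepsilon_N\circ\eta_{GN}=\id_{GN}$ makes $\eta_{GN}$ an isomorphism, whence $\eta_M$ must be a monomorphism.
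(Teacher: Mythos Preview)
Your proof is correct and follows essentially the same approach as the paper. The paper organises the implications as a cycle $(i)\Rightarrow(ii)\Rightarrow(iii)\Rightarrow(iv)\Rightarrow(vi)\Rightarrow(v)\Rightarrow(iv)\Leftrightarrow(i)$ rather than your hub-and-spoke around (iv), but the substantive ingredients---Lemma~\ref{considerationseta} for $(i)\Leftrightarrow(iv)$, induction on the filtration length with the Snake Lemma for $(iii)\Rightarrow(iv)$, the embedding into a partial tilting module via $\mathcal{F}(\Stsim)=\widecheck{\add T}$ for $(v)\Rightarrow(iv)$, and the triangle identity $G\varepsilon_N\circ\eta_{GN}=\id_{GN}$ for handling (vi)---are all identical.
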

\begin{proof}
Implications	$(ii)\implies (iii)$ and 	$(iv)\implies (vi)$ are clear.

	$(i)\implies (ii)$. $A\in \mathcal{F}(\Stsim)$ and clearly $\sumSt \St(\l)\in \mathcal{F}(\Stsim)$. 
	
	In view of $(i)$, $\Hom_A(A, \sumSt \St(\l))\rightarrow \Hom_B(FA, F\sumSt \St(\l))$ is injective. By Lemma \ref{considerationseta}, $\eta_{\sumSt \St(\l)}$ is a monomorphism.

	$(iii)\implies (iv)$.  By induction on the size of a filtration of $M\in \mathcal{F}(\Stsim)$ and using the Snake Lemma, it follows that $\eta_M$ is a monomorphism for all $M\in \mathcal{F}(\Stsim)$.

	$(vi)\implies (v)$. Every (partial) tilting module belongs to $\mathcal{F}(\Stsim)\cap \mathcal{F}(\Cssim)$. In particular, it belongs to $\mathcal{F}(\Stsim)$. Thus, given a (partial) tilting $T$, there exists a monomorphism $\alpha\colon T\rightarrow GN$ for some $N\in B\m$. Since $\id_{GN}=G\varepsilon_N\circ \eta_{GN}$ and $\varepsilon_N$ is an isomorphism according to Proposition \ref{fullfaithfulG}, it follows that $\eta_{GN}$ is an isomorphism. Now, $GF\alpha\circ \eta_T=\eta_{GN}\circ \alpha$ is a monomorphism. Thus, $\eta_T$ is a monomorphism.

	$(v)\implies (iv)$. Let $M\in \mathcal{F}(\Stsim)$. Since $\mathcal{F}(\Stsim)=\widecheck{\add_A T}$ and $\mathcal{F}(\Stsim)$ is closed under kernels of epimorphisms, there exists a  (partial) tilting module $T$, $N\in \mathcal{F}(\Stsim)$ and an exact sequence $
	0\rightarrow M\rightarrow T \rightarrow N\rightarrow 0.
	$ Applying $G\circ F=\Hom_B(FA, F-)$ (left exact functor) yields the following commutative diagram with exact rows
	\begin{center}
		\begin{tikzcd}
			0\arrow[r]& M\arrow[r]\arrow[d, "\eta_M"]& T\arrow[r]\arrow[d, "\eta_T"] & N\arrow[r]\arrow[d, "\eta_N"]& 0\\
			0\arrow[r]& GFM\arrow[r] & GFT\arrow[r]& GFN &
		\end{tikzcd}.
	\end{center}
	By assumption, $\eta_T$ is a monomorphism. By Snake Lemma, $\eta_M$ is a monomorphism.
	
	$(i)\Leftrightarrow (iv)$. By Lemma \ref{considerationseta}, $\eta_M$ is monomorphism for every $M\in \mathcal{F}(\Stsim)$ if and only if the functor $F_{|_{\mathcal{F}(\Stsim)}}$ is faithful.
\end{proof}

\begin{Prop}\label{zerofaithfulcover}
	The following assertions are equivalent.
	\begin{enumerate}[(i)]
		\item $(A, P)$ is a $0$-faithful split quasi-hereditary cover of $B$; that is, the restriction of $F=\Hom_A(P, -)$ to $\mathcal{F}(\Stsim)$ is full and faithful.
		\item $\eta_{\sumSt \St(\l)}$ is an isomorphism.
		\item $\eta_{\St(\l)}$ is an isomorphism for all $\l\in \L$.
		\item $\eta_{M}$ is an isomorphism for all $M\in \mathcal{F}(\Stsim)$.
		\item Every module of $\mathcal{F}(\Stsim)$ is in the image of the functor $G=\Hom_B(FA, -)$.
		\item $\eta_T$ is an isomorphism for all (partial) tilting modules $T$.
		\item Let $T$ be a characteristic tilting module. Every module of $\add T$ is in the image of the functor \linebreak\mbox{$G=\Hom_B(FA, -)$}.
	\end{enumerate}
\end{Prop}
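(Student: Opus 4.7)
The plan is to mirror the proof of Proposition~\ref{m1faithfulcover}, replacing ``monomorphism'' by ``isomorphism'' throughout. First, I would invoke Proposition~\ref{zeroAcover} applied to the resolving subcategory $\mathcal{A}=\mathcal{F}(\Stsim)$ (which is resolving in $A\m\cap R\proj$, as recalled in Subsection~\ref{Split quasi-hereditary algebras}) to deduce at once the equivalence (i) $\Leftrightarrow$ (iv) $\Leftrightarrow$ (v). The implications (iv) $\Rightarrow$ (ii) and (ii) $\Rightarrow$ (iii) are immediate from the naturality of $\eta$ and the fact that each $\St(\l)$ is a direct summand of $\sumSt \St(\l)\in\mathcal{F}(\Stsim)$.

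The key nontrivial implication is (iii) $\Rightarrow$ (iv), which I would prove by induction on the length of a filtration of $M\in\mathcal{F}(\Stsim)$. For the inductive step, pick a short exact sequence $0\to N\to M\to \St(\l)\otimes_R U_\l\to 0$ with $U_\l\in R\proj$ and $N\in\mathcal{F}(\Stsim)$ having a shorter filtration. Using Lemma~\ref{tensoronsecondcomponetofhom} applied both to $F$ (since $P\in A\proj$) and to $G=\Hom_B(FA,-)$ (with $FA\in B\proj$), one identifies $\eta_{\St(\l)\otimes_R U_\l}$ with $\eta_{\St(\l)}\otimes_R U_\l$, which is an isomorphism by (iii) and flatness of $U_\l$. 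Applying $GF$ to the sequence gives a commutative diagram with exact top row and left-exact bottom row; a direct diagram chase (a variant of the Five Lemma valid in this setting, since surjectivity of $M\twoheadrightarrow \St(\l)\otimes_R U_\l$ together with iso $\eta_{\St(\l)\otimes_R U_\l}$ allows one to prove $\eta_M$ is both injective and surjective from the iso's on the outer terms) yields that $\eta_M$ is an isomorphism.

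The tilting equivalences come from: (iv) $\Rightarrow$ (vi) is immediate since every partial tilting module belongs to $\mathcal{F}(\Stsim)\cap\mathcal{F}(\Cssim)\subset\mathcal{F}(\Stsim)$; (vi) $\Rightarrow$ (vii) because every $M\in\add T$ then satisfies $M\simeq GFM$, exhibiting $M$ in the image of $G$. For (vii) $\Rightarrow$ (iv), I note first that if $M\in\add T$ is in the image of $G$, say $M\simeq GN$, then the identity $\id_{GN}=G\varepsilon_N\circ \eta_{GN}$ combined with Proposition~\ref{fullfaithfulG} forces $\eta_{GN}$, and hence $\eta_M$, to be an isomorphism. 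Then for arbitrary $M\in\mathcal{F}(\Stsim)=\widecheck{\add T}$, one takes an exact sequence $0\to M\to T^0\to M'\to 0$ with $T^0\in\add T$ and $M'\in\mathcal{F}(\Stsim)$ of shorter $\add T$-coresolution length, and concludes by the same left-exact Five Lemma argument, proceeding by induction.

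The main technical obstacle I anticipate is the verification in step (iii) $\Rightarrow$ (iv) that $F$ and $G$ commute with $\otimes_R U_\l$ in the precise sense needed to identify $\eta_{\St(\l)\otimes_R U_\l}$ with $\eta_{\St(\l)}\otimes_R U_\l$; this requires a careful application of Lemma~\ref{tensoronsecondcomponetofhom} on both sides of the adjunction. Once this naturality in the tensor factor is in place, the Five Lemma diagram chase with a left-exact bottom row is routine, and all the remaining implications follow by the same pattern as in Proposition~\ref{m1faithfulcover}.
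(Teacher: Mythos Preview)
Your proposal is correct and follows essentially the same route as the paper, which simply writes ``This is an application of Proposition~\ref{zeroAcover} using analogous arguments to Proposition~\ref{m1faithfulcover}.'' Your detailed implications $(\mathrm{iii})\Rightarrow(\mathrm{iv})$ via the Snake Lemma on filtrations and $(\mathrm{vii})\Rightarrow(\mathrm{iv})$ via induction on the $\add T$-coresolution length are exactly what that sentence unpacks to.

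There is, however, one small error in the justification of what you flag as the ``main technical obstacle.'' You write that Lemma~\ref{tensoronsecondcomponetofhom} applies to $G=\Hom_B(FA,-)$ ``with $FA\in B\proj$'', but $FA$ is \emph{not} projective over $B$ in general: by projectivization $B\proj\simeq\add_A P$, so $FA\in B\proj$ would force $A\in\add_A P$, i.e.\ $P$ a progenerator, hence $F$ an equivalence. Fortunately the identification you want is cheaper than Lemma~\ref{tensoronsecondcomponetofhom}: since $U_\l\in R\proj$ is a summand of some $R^n$, the module $\St(\l)\otimes_R U_\l$ lies in $\add_A\St(\l)$, and the naturality and additivity of $\eta$ (as a natural transformation between additive functors) immediately give that $\eta_{\St(\l)\otimes_R U_\l}$ is an isomorphism whenever $\eta_{\St(\l)}$ is. This is precisely how the paper handles the analogous step in the proof of Proposition~\ref{coverisfaithfulwhenepi}. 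With this correction your argument goes through unchanged.
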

\begin{proof}
	See for example \citep[Proposition 4.40]{Rouquier2008}.
	This is an application of Proposition \ref{zeroAcover} using analogous arguments to Proposition \ref{m1faithfulcover}.
\end{proof}

If we know that $(A, P)$ is a split quasi-hereditary cover of $B$, testing that it is a $0$-faithful (split quasi-hereditary) cover amounts to check that the counit is an epimorphism on standard modules.

\begin{Prop}\label{coverisfaithfulwhenepi}
	Let $(A, P)$ be a cover of $B$. Then $(A, P)$ is a $0$-faithful split quasi-hereditary cover of $B$ if and only if $\eta_{\St(\l)}$ is an epimorphism for all $\l\in \L$.
\end{Prop}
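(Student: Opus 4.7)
The forward direction is immediate from Proposition~\ref{zerofaithfulcover}(iii): if $(A,P)$ is $0$-faithful then $\eta_{\St(\l)}$ is an isomorphism, hence in particular an epimorphism. So all the substance is in the converse, and my plan is to assume $\eta_{\St(\l)}$ is epi for every $\l\in\L$ and upgrade this to ``iso'' for each $\l$, whereupon Proposition~\ref{zerofaithfulcover}(iii) will give $0$-faithfulness.

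I would run a downward induction on $\l\in\L$. For $\l$ maximal, the submodule $C(\l)$ must be zero, being filtered by $\St(\mu)\otimes_R U_\mu$ with $\mu>\l$; hence $P(\l)\simeq\St(\l)$, and $\eta_{P(\l)}$ is an isomorphism from the cover property (Proposition~\ref{dcpropertycover}). For the inductive step, assuming $\eta_{\St(\mu)}$ is iso for every $\mu>\l$, I would apply the unit $\eta$ to the defining short exact sequence $0\to C(\l)\to P(\l)\to \St(\l)\to 0$ and use the snake lemma: $\eta_{P(\l)}$ is iso, and if I can also show $\eta_{C(\l)}$ is iso then the snake sequence forces $\ker\eta_{\St(\l)}=0$, which together with the standing hypothesis yields iso.

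The technical heart will be to upgrade the outer hypothesis to the statement that $\eta_N$ is an isomorphism for every $N\in \mathcal{F}(\Stsim_{\mu>\l})$, which then applies in particular to $C(\l)$. I will prove this by a secondary induction on filtration length, handling a typical step $0\to N'\to N\to \St(\mu)\otimes_R U\to 0$ with $\mu>\l$ and $U\in R\proj$. First I will show $\eta_{\St(\mu)\otimes_R U}$ is iso: it factors as
\[\St(\mu)\otimes_R U\xrightarrow{\eta_{\St(\mu)}\otimes\id_U} GF\St(\mu)\otimes_R U\longrightarrow GF(\St(\mu)\otimes_R U),\]
where the first arrow is iso by the outer induction hypothesis and $R$-flatness of $U$, and the second is the natural comparison map, which is iso for $U=R$ and so for any summand of $R^n$ by naturality, after using Lemma~\ref{tensoronsecondcomponetofhom} to commute $F$ with tensoring by $U$ (since $P\in A\proj$). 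For $\eta_N$ itself, the snake lemma applied to the $\eta$-diagram gives $\ker\eta_N=0$ immediately, and I will prove surjectivity by a direct element chase using that $\eta_{N'}$ and $\eta_{\St(\mu)\otimes_R U}$ are epi and that the bottom row $0\to GFN'\to GFN\to GF(\St(\mu)\otimes_R U)$ is left-exact: lift $g\in GFN$ to $\St(\mu)\otimes_R U$ then to $N$, correct by an element of $GFN'$ obtained from $\eta_{N'}$ epi.

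The main obstacle, and the reason I cannot simply invoke the short five lemma, is that $G=\Hom_B(FA,-)$ is only left-exact; the bottom row of every snake diagram appearing in the argument may fail to be right-exact (the obstruction being $\R^1G$ applied to the leftmost term, on which we have no control). This is what forces the hybrid of snake-lemma-for-injectivity and explicit element-chase-for-surjectivity, and it is also why the outer induction must proceed downward on $\L$, so that the rightmost term of each pivotal short exact sequence (either $\St(\l)$ or a summand $\St(\mu)\otimes_R U$ with $\mu>\l$) is already under control. Once the intermediate claim about $\mathcal{F}(\Stsim_{\mu>\l})$ is established, applying it to $C(\l)$ closes the outer induction.
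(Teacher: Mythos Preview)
Your argument is correct, but it takes a longer route than the paper's. The paper avoids the nested induction entirely by separating the ``epi'' and ``mono'' parts: first it shows by a single induction on filtration length that $\eta_M$ is an \emph{epimorphism} for every $M\in\mathcal{F}(\Stsim)$ (using only the hypothesis that each $\eta_{\St(\l)}$ is epi, together with the snake lemma on $0\to M'\to M\to \St(\mu)\otimes_R U_\mu\to 0$). Then, for each $\l$, it applies this to $K(\l)$ in the sequence $0\to K(\l)\to P(\l)\to \St(\l)\to 0$; since $\eta_{K(\l)}$ is epi and $\eta_{P(\l)}$ is iso, the snake sequence $0=\ker\eta_{P(\l)}\to\ker\eta_{\St(\l)}\to\coker\eta_{K(\l)}=0$ gives $\ker\eta_{\St(\l)}=0$ at once, with no downward induction on $\L$ needed.

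Two minor simplifications within your own framework: first, for $\eta_{\St(\mu)\otimes_R U}$ you can skip the factorisation through $\varsigma$ and simply use that $\St(\mu)\otimes_R U\in\add_A\St(\mu)$ together with naturality of $\eta$ (this is exactly how the paper handles the base case). Second, your element chase for surjectivity of $\eta_N$ is unnecessary: the snake lemma already gives the exact sequence $\coker\eta_{N'}\to\coker\eta_N\to\coker\eta_{\St(\mu)\otimes_R U}$, and since the outer terms vanish (both maps being iso by your inductive hypotheses), so does $\coker\eta_N$. The concern you raise about the bottom row not being right-exact is real for the four-lemma, but the snake lemma only requires left-exactness of the bottom row, which you have.
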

\begin{proof}
	By Proposition \ref{zerofaithfulcover}, one implication is clear.
	
	Assume that $\eta_{\St(\l)}$ is an epimorphism for all $\l\in \L$. We claim that $\eta_M$ is an epimorphism for all $M\in \mathcal{F}(\Stsim)$. We will prove it by induction on the size of filtration of $M$, $t$.
	If $t=1$, then $M\simeq \St(\l)\otimes_R U_\l$ for some $\l$ and $U_\l\in R\proj$. So, $\St(\l)\otimes_R U_\l\in \add_A \St(\l)$ It follows that $\eta_{\St(\l)\otimes_R U_\l}$ is surjective. Assume $t>1$. 
	There is a commutative diagram with exact rows
	\begin{center}
		\begin{tikzcd}
			0\arrow[r] & M'\arrow[r] \arrow[d, "\eta_{M'}"]&M \arrow[r] \arrow[d, "\eta_M"] & \St(\mu)\otimes_R U_\mu \arrow[r]\arrow[d, "\eta_{\St(\mu)\otimes_R U_\mu}"]& 0\\
			0 \arrow[r]& GFM'\arrow[r] &GFM\arrow[r] & GF(\St(\mu)\otimes_R U_\mu) &
		\end{tikzcd}.
	\end{center}
	By induction, $\eta_{M'}$ is an epimorphism. By Snake Lemma, $\eta_M$ is an epimorphism. Now consider the commutative diagram
	\begin{center}
		\begin{tikzcd}
			0\arrow[r] & K(\l)\arrow[r] \arrow[d, "\eta_{K(\l)}"]&P(\l) \arrow[r] \arrow[d, "\eta_{P(\l)}"] & \St(\l) \arrow[r]\arrow[d, "\eta_{\St(\l)}"]& 0\\
			0 \arrow[r]& GFK(\l)\arrow[r] &GFP(\l)\arrow[r] & GF\St(\l) &
		\end{tikzcd}.
	\end{center} Since $K(\l)\in \mathcal{F}(\Stsim)$, $\eta_{K(\l)}$ is an epimorphism. By Snake Lemma, there is an exact sequence
	\begin{align}
		0=\ker \eta_{P(\l)}\rightarrow \ker \eta_{\St(\l)}\rightarrow \coker \eta_{K(\l)}=0.
	\end{align}It follows that $ \eta_{\St(\l)}$ is also a monomorphism, and thus $ \eta_{\St(\l)}$ is an isomorphism for every $\l\in \L$. By Proposition \ref{zerofaithfulcover}, the result follows.
\end{proof}

\begin{Prop}\label{onefaithfulcover}
	Let $(A, P)$ be a $0$-faithful split quasi-hereditary cover of $B$. The following assertions are equivalent.
	\begin{enumerate}[(a)]
		\item $(A, P)$ is a $1$-faithful split quasi-hereditary cover of $B$.
		\item $F=\Hom_A(P, -)$ restricts to an exact equivalence of categories $\mathcal{F}(\Stsim)\rightarrow\mathcal{F}(F\Stsim)$ with inverse the exact functor $G_{|_{\mathcal{F}(F\Stsim)}}=\Hom_B(FA, -)_{|_{\mathcal{F}(F\Stsim)}}$.
		\item For all $M\in \mathcal{F}_A(\Stsim)$, we have $\R^1G(FM)=0$.
	\end{enumerate}
\end{Prop}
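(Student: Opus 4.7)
The strategy is to derive $(a) \Leftrightarrow (c)$ directly from Proposition \ref{arbitraryAcover} and then prove $(c) \Leftrightarrow (b)$ by using the vanishing of $\R^1G(FM)$ to build a quasi-inverse. Throughout, I would exploit three facts: $\mathcal{F}(\Stsim)$ is resolving in $A\m \cap R\proj$, so extensions of two of its objects remain in $R\proj$ and in $\mathcal{F}(\Stsim)$; the category $\mathcal{F}(F\Stsim)$ is closed under extensions in $B\m$, as any subcategory of the form $\mathcal{F}(\Theta)$ is closed under extensions by concatenating filtrations; and the counit $\varepsilon$ is always an isomorphism by Proposition \ref{fullfaithfulG}.

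For $(a) \Leftrightarrow (c)$, I would apply Proposition \ref{arbitraryAcover} to the resolving subcategory $\mathcal{A} = \mathcal{F}(\Stsim)$ with $i = 1$; the required hypothesis that $(A, P)$ is a $0$-$\mathcal{F}(\Stsim)$ cover of $B$ is exactly the $0$-faithfulness assumption combined with Proposition \ref{zerofaithfulcover}.

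For $(c) \Rightarrow (b)$, the $0$-faithfulness assumption yields that $\eta_M$ is an isomorphism for every $M \in \mathcal{F}(\Stsim)$, so $F$ restricts to a fully faithful functor on $\mathcal{F}(\Stsim)$ with candidate quasi-inverse $G$. I would then show essential surjectivity onto $\mathcal{F}(F\Stsim)$ by induction on the filtration length of $Y \in \mathcal{F}(F\Stsim)$: in the inductive step, a short exact sequence $0 \to Y' \to Y \to F(\St(\l) \otimes_R U_\l) \to 0$ with $Y' \simeq FM'$ for some $M' \in \mathcal{F}(\Stsim)$ produces, upon applying $G$ and invoking $\R^1G(FM') = 0$, a short exact sequence $0 \to M' \to GY \to \St(\l) \otimes_R U_\l \to 0$. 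Both outer terms are $R$-projective, hence so is $GY$, and extension-closure places $GY \in \mathcal{F}(\Stsim)$. The counit isomorphism then gives $Y \simeq FGY \in F(\mathcal{F}(\Stsim))$. Exactness of $G$ restricted to $\mathcal{F}(F\Stsim)$ follows from the same vanishing applied to any short exact sequence in the essential image.

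For $(b) \Rightarrow (c)$, I would fix $M \in \mathcal{F}(\Stsim)$ and represent a class in $\R^1G(FM) = \Ext^1_B(FA, FM)$ by an extension $0 \to FM \to Y \to FA \to 0$ in $B\m$. Extension-closure of $\mathcal{F}(F\Stsim)$ in $B\m$ places $Y$ in $\mathcal{F}(F\Stsim)$, so by (b) we have $Y \simeq FE$ for some $E \in \mathcal{F}(\Stsim)$. Applying the exact quasi-inverse $G$ yields $0 \to M \to E \to A \to 0$, which splits as $A$ is projective, hence so does the original extension, forcing $\R^1G(FM) = 0$. The main technical obstacle will be the essential surjectivity step in $(c) \Rightarrow (b)$: the induction must simultaneously control the $A$-module extension structure and the $R$-projectivity of $GY$ so that the candidate quasi-inverse actually lands in $\mathcal{F}(\Stsim)$.
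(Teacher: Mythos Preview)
Your proof is correct and complete. The paper itself does not supply an argument for this proposition; it simply refers the reader to \cite[Proposition 4.41]{Rouquier2008}. Your approach—deducing $(a)\Leftrightarrow(c)$ from Proposition~\ref{arbitraryAcover}, proving essential surjectivity in $(c)\Rightarrow(b)$ by induction on filtration length using the vanishing of $\R^1G$, and deducing $(b)\Rightarrow(c)$ by pulling back an arbitrary extension of $FA$ by $FM$ through the exact quasi-inverse—is the standard route and is essentially what one finds in Rouquier's paper. One small remark: in your $(b)\Rightarrow(c)$ step you implicitly use that the splitting of $G\delta$ transports back to a splitting of $\delta$; this follows because $FG\delta\simeq\delta$ via the counit isomorphisms, which you have already invoked, so the argument is sound.
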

\begin{proof}
	See for example \citep[Proposition 4.41]{Rouquier2008}.
\end{proof}

\begin{Prop}\label{arbitraryfaithfulcover}
	Let $(A, P)$ be a $0$-faithful split quasi-hereditary cover of $B$. Let $i\geq 1$. The following assertions are equivalent.
	\begin{enumerate}[(a)]
		\item $(A, P)$ is an $i$-faithful split quasi-hereditary cover of $B$.
		\item For all $M\in \mathcal{F}(\Stsim)$, we have $\R^jG(FM)=0$, $1\leq j\leq i$.
		\item For all $\l\in\L$, we have $\R^jG(F\St(\l))=0$, $1\leq j\leq i$.
	\end{enumerate}
\end{Prop}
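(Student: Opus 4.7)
The plan is to derive the equivalence $(a)\Leftrightarrow (b)$ directly from Proposition \ref{arbitraryAcover}. By Definition \ref{faithfulcoverdef}, an $i$-faithful split quasi-hereditary cover is by convention an $i$-$\mathcal{F}(\Stsim)$ cover, and $\mathcal{F}(\Stsim)$ is a resolving subcategory of $A\m\cap R\proj$ containing all finitely generated projective $A$-modules. Since $(A,P)$ is a $0$-faithful split quasi-hereditary cover, it is in particular a $0$-$\mathcal{F}(\Stsim)$ cover, so the hypotheses of Proposition \ref{arbitraryAcover} are fulfilled with $\mathcal{A}=\mathcal{F}(\Stsim)$, and the equivalence of the first two conditions follows at once. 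The implication $(b)\Rightarrow (c)$ is immediate because each standard module lies in $\mathcal{F}(\Stsim)$.

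The main step is $(c)\Rightarrow (b)$. The strategy is to reduce from an arbitrary $M\in\mathcal{F}(\Stsim)$ to the standard modules themselves. First I would extend the hypothesised vanishing from $F\St(\l)$ to $F(\St(\l)\otimes_R U_\l)$ for every $U_\l\in R\proj$. Using Lemma \ref{tensoronsecondcomponetofhom}, one has $F(\St(\l)\otimes_R U_\l)\simeq F\St(\l)\otimes_R U_\l$; since $U_\l$ is a direct summand of some $R^n$, this module is in turn a direct summand of $(F\St(\l))^n$, and the additivity of $\R^jG$ then forces
\[
\R^jG(F(\St(\l)\otimes_R U_\l))=0, \quad 1\leq j\leq i.
\]

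Next I would induct on the length $n$ of a filtration $0=M_{n+1}\subset M_n\subset\cdots\subset M_1=M$ with $M_k/M_{k+1}\simeq \St(\l_k)\otimes_R U_{\l_k}$. The case $n=1$ is the previous paragraph. For the inductive step, the short exact sequence $0\to M_2\to M\to \St(\l_1)\otimes_R U_{\l_1}\to 0$ has $M_2\in\mathcal{F}(\Stsim)$ carrying a filtration of length $n-1$. Applying the exact functor $F$ gives a short exact sequence in $B\m$, whose associated long exact sequence for the right derived functors of $G$ sandwiches $\R^jG(FM)$ between $\R^jG(FM_2)$ and $\R^jG(F(\St(\l_1)\otimes_R U_{\l_1}))$, both of which vanish for $1\leq j\leq i$ by the inductive hypothesis and the base step respectively. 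Hence $\R^jG(FM)=0$ for every $M\in\mathcal{F}(\Stsim)$ and $1\leq j\leq i$, yielding $(b)$.

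The only technical point is the compatibility of the Schur functor with tensoring on the right by finitely generated projective $R$-modules; this is handled cleanly by Lemma \ref{tensoronsecondcomponetofhom} together with additivity of $\R^jG$, so no deeper obstacle arises and the whole argument reduces to an induction on filtration length once Proposition \ref{arbitraryAcover} is in hand.
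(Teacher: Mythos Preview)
Your proof is correct and follows essentially the same approach as the paper: the equivalence $(a)\Leftrightarrow(b)$ is reduced to Proposition \ref{arbitraryAcover}, and $(c)\Rightarrow(b)$ is handled by induction on the length of a $\Stsim$-filtration after observing that $\St(\l)\otimes_R U_\l$ lies in $\add_A \St(\l)$ so that $\R^jG(F(\St(\l)\otimes_R U_\l))$ is a summand of $\R^jG(F\St(\l))^s=0$. The only cosmetic difference is that the paper inducts on $n-t+1$ (from the bottom of the filtration upward) whereas you induct on the total length $n$, but the content is identical.
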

\begin{proof}
	$(a)\Leftrightarrow (b)$ is given by Proposition \ref{arbitraryAcover}. The implication $(b)\implies (c)$ is also clear.
	
	Assume that $(c)$ holds. Let $M\in \mathcal{F}(\Stsim)$. There is a filtration
	\begin{align}
		0=M_{n+1}\subset M_n\subset M_{n-1}\subset \cdots \subset M_1=M, \quad M_i/M_{i+1}\simeq \St_i\otimes_R U_i, \quad 1\leq i\leq n.
	\end{align}
	We claim that $\R^j G(FM_t)=0$, for $t=1, \ldots, n$, $1\leq j\leq i$. We will prove it by induction on $n-t+1$. Assume that $n-t+1=1$. Let $1\leq j\leq i$. Then $\R^jG(FM_t)=\R^jG(F(\St_t\otimes_R U_t))$ is an $R$-summand of $\R^jG(F\St_t)^s=0$ for some $s>0$ since $U_t\in R\proj$. Thus, $ \R^jG(FM_t)=0$. Moreover, $\R^jG(F(\St_i)\otimes_R U_i)=0$ for every $i=1, \ldots, n$. Assume that the claim holds for $s>t$ for some $n\geq t>1$. 
	Consider the exact sequence
$
		0\rightarrow M_{t+1}\rightarrow M_t\rightarrow \St_t\otimes_R U_t\rightarrow 0.
$ Applying the left exact functor $G\circ F$ yields the exact sequence
	\begin{align}
		\R^jG(FM_{t+1})\rightarrow \R^jG(FM_t)\rightarrow \R^jG(F\St_t\otimes_R U_t)=0.
	\end{align}By induction, $\R^jG(FM_{t+1})=0$, hence $\R^jG(FM_t)=0$. Therefore, $(b)$ follows.
\end{proof}

Hence, the quality of a $0$-faithful split quasi-hereditary cover is given by the value $$n(\mathcal{F}(\Stsim))=\sup\{i\in \mathbb{N}_0\colon \R^jG(F\St(\l))=0, \ \l\in\L, \ 1 \leq j\leq i \}.$$ 

This motivates the following definition:

\begin{Def}\label{Hemmernakanodimension}
	Let $(A, P)$ be a cover of $B=\End_A(P)^{op}$. 
	Let $\mathcal{A}$ be a resolving subcategory of $A\m\cap R\proj$. The\textbf{ Hemmer--Nakano dimension} of $\mathcal{A}$ (with respect to $P$) is the maximal number $n$ such that $(A, P)$ is an $n$-$\mathcal{A}$ cover of $B$. We will denote it by $\HN_F \mathcal{A}$, where $F$ denotes the functor $\Hom_A(P, -)$. 
\end{Def}

We also say the Hemmer--Nakano dimension of $\mathcal{A}$ (with respect to the functor $F=\Hom_A(P, -)$).
When there is no confusion about the functor $F$, we will just call $\HN_F \mathcal{A}$ the Hemmer-Nakano dimension of $\mathcal{A}$. If $(A, P)$ is not a $(-1)$-$\mathcal{A}$ cover of $B$, then we say that the Hemmer-Nakano dimension of $\mathcal{A}$ (with respect to $P$) is $-\infty$.

\subsection{Elementary properties of $\mathcal{A}$-covers}

$0$-$\mathcal{A}$ covers can help us understand the indecomposable objects in $B\m$ using the indecomposable modules of $\mathcal{A}$.

\begin{Prop}
	Let $(A, P)$ be a $0$-$\mathcal{A}$ cover of $B$ for some resolving subcategory $\mathcal{A}$ of $A\m$.
	Then the Schur functor $F=\Hom_A(P, -)$ preserves the indecomposable objects of $\mathcal{A}$.
\end{Prop}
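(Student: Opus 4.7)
The plan is to reduce indecomposability to a statement about idempotents in endomorphism rings, and exploit the fact that by Proposition \ref{zeroAcover} the Schur functor $F$ is fully faithful on $\mathcal{A}$.

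First, I would use Proposition \ref{zeroAcover} to observe that for every $M \in \mathcal{A}$, the map
\[
F_{M,M}\colon \End_A(M) \longrightarrow \End_B(FM), \qquad f\mapsto Ff,
\]
is an isomorphism of abelian groups. Since $F$ is a functor, this map is automatically compatible with composition and sends $\id_M$ to $\id_{FM}$; hence it is in fact an isomorphism of $R$-algebras. Therefore the idempotents of $\End_A(M)$ correspond bijectively with the idempotents of $\End_B(FM)$, and the elements $0,\id_M$ on the left correspond precisely to $0,\id_{FM}$ on the right.

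Next, recall that for a finitely generated module over any (Noetherian) ring, decompositions $N = N_1 \oplus N_2$ are in bijection with pairs of orthogonal idempotents $(e_1,e_2)$ in $\End(N)$ summing to the identity. In particular, a nonzero finitely generated module is indecomposable if and only if its endomorphism ring contains no idempotents other than $0$ and the identity. Since $\mathcal{A}$ is a full subcategory of $A\m$ closed under direct summands (being resolving), indecomposability of $M \in \mathcal{A}$ is the same as indecomposability of $M$ in $A\m$, and similarly $FM$ is finitely generated over $B$ (cf.\ the discussion at the start of Subsection~\ref{Covers}), so the same criterion applies on the $B$-side.

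Combining the two steps: if $M \in \mathcal{A}$ is indecomposable, then $\id_M \neq 0$ in $\End_A(M)$, so $\id_{FM} = F(\id_M) \neq 0$, which forces $FM \neq 0$; and the ring isomorphism $\End_A(M) \cong \End_B(FM)$ transports the fact that $\End_A(M)$ has only the idempotents $0, \id_M$ to the analogous statement for $\End_B(FM)$. Hence $FM$ is indecomposable. There is essentially no obstacle here; the only subtlety worth flagging is to make sure one uses the \emph{ring} (not just abelian group) character of the isomorphism $\End_A(M)\simeq \End_B(FM)$ coming from full faithfulness, so that idempotents are sent to idempotents.
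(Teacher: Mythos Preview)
Your proof is correct and takes a genuinely different route from the paper. You argue purely via endomorphism rings: full faithfulness of $F$ on $\mathcal{A}$ (Proposition \ref{zeroAcover}) gives a ring isomorphism $\End_A(M)\cong\End_B(FM)$, and indecomposability is read off from the idempotents. The paper instead uses the adjunction: from $M\simeq GFM$ it pulls a decomposition $FM\simeq X_1\oplus X_2$ back through $G$, and then shows that $G$ kills no nonzero $B$-module because $FA$ is a generator of $B\m$ (so $\Hom_B(FA,X_i)\neq 0$ whenever $X_i\neq 0$). Your argument is more elementary and avoids invoking $G$ or the generator property of $FA$; the paper's argument, on the other hand, isolates the useful side fact that $G$ is conservative, which is of independent interest even if not strictly needed here. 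A minor remark: the idempotent criterion for indecomposability holds for arbitrary modules, so your finite-generation remark about $FM$ is harmless but unnecessary.
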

\begin{proof}
	Let $M\in \mathcal{A}$ be an indecomposable module. Assume that we can write $FM\simeq X_1\oplus X_2$. Then \begin{align}
		M\simeq GFM\simeq GX_1\oplus GX_2.
	\end{align}So, either $GX_1=0$ or $GX_2=0$. Since $FA$ is a $B$-generator, there must exist a non-zero epimorphism $FA^t\rightarrow X_1$ for some $t>0$ if $X_1$ is non-zero. So, if $X_1\neq 0$, then $GX_1\neq 0$. Thus, $FM$ is indecomposable.
\end{proof}

The study of $i$-$\mathcal{A}$ covers of finite-dimensional algebras over a field with $i\geq 0$ can be reduced to covers coming from idempotents.

\begin{Prop}\label{idempotentsAcovers}
	Let $R$ be a field and let $i\geq 0$ be an integer. Let $\mathcal{A}$ be a resolving subcategory of $A\m$.
	If $(A, P)$ is an $i$-$\mathcal{A}$ cover of $B$, then there exists an idempotent $e\in A$ such that $(A, Ae)$ is an $i$-$\mathcal{A}$ cover of $eAe$.
\end{Prop}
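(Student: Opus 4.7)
My plan is to replace $P$ by a basic projective module $Ae$ satisfying $\add_A P = \add_A Ae$, and then transfer the $i$-$\mathcal{A}$ cover property from $B$ to $eAe$ via the resulting Morita equivalence. Since $A$ is a finite-dimensional algebra over the field $R$, I will pick a complete set of primitive orthogonal idempotents $\{f_1, \ldots, f_s\}$ of $A$; every indecomposable projective $A$-module is isomorphic to some $Af_j$. Letting $J \subseteq \{1, \ldots, s\}$ index those $j$ for which $Af_j$ appears as a direct summand of $P$, and setting $e := \sum_{j \in J} f_j$, I obtain an idempotent of $A$ with $\add_A P = \add_A Ae$.

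The equality $\add_A P = \add_A Ae$ yields a Morita equivalence between $B = \End_A(P)^{op}$ and $eAe = \End_A(Ae)^{op}$ implemented by the bimodules $V = \Hom_A(Ae, P)$ and $U = \Hom_A(P, Ae)$, so that $\Phi := V \otimes_B -\colon B\m \to eAe\m$ is an exact equivalence of categories. To carry over the $i$-$\mathcal{A}$ cover property, I would establish a natural isomorphism of functors $\Phi \circ \Hom_A(P, -) \simeq \Hom_A(Ae, -)$ from $A\m$ to $eAe\m$, given on objects by the composition map $V \otimes_B \Hom_A(P, X) \to \Hom_A(Ae, X)$, $f \otimes g \mapsto g \circ f$. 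Both sides are exact and additive in $X$, so it would suffice to check the isomorphism on the projective generator $A$, where it reduces to the identification $\Hom_A(Ae, P) \otimes_B \Hom_A(P, A) \simeq \Hom_A(Ae, A) = eA$ as $(eAe, A)$-bimodules. Combining this natural isomorphism with the Morita invariance of Ext groups yields, for each $M, N \in \mathcal{A}$ and each $0 \leq j \leq i$,
\begin{align*}
\Ext^j_{eAe}(\Hom_A(Ae, M), \Hom_A(Ae, N)) \simeq \Ext^j_B(\Hom_A(P, M), \Hom_A(P, N)) \simeq \Ext^j_A(M, N),
\end{align*}
where the last isomorphism is the hypothesis that $(A, P)$ is an $i$-$\mathcal{A}$ cover of $B$. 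This is precisely the statement that $(A, Ae)$ is an $i$-$\mathcal{A}$ cover of $eAe$.

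The main obstacle will be the verification of the compatibility $\Phi \circ \Hom_A(P, -) \simeq \Hom_A(Ae, -)$, equivalently the identification $\Hom_A(Ae, P) \otimes_B \Hom_A(P, A) \simeq eA$. This is essentially forced by the double centralizer property of the cover $(A, P)$ (Proposition \ref{dcpropertycover}): since $Ae \in \add_A P$, one can reconstruct $\Hom_A(Ae, -)$ from the adjoint pair $(F, G) = (\Hom_A(P, -), \Hom_B(\Hom_A(P, A), -))$ using that the unit $\eta_{Ae}$ is an isomorphism by Lemma \ref{whenunitisiso}. Once this compatibility is in place, the remaining steps are routine applications of Morita theory, and the case $i = 0$ automatically supplies the cover property for $(A, Ae)$ by taking $M = N = A \in \mathcal{A}$ in the chain of isomorphisms above.
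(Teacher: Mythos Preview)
Your proposal is correct and follows essentially the same route as the paper: pick an idempotent $e$ with $\add_A P=\add_A Ae$, use the resulting Morita equivalence $\Phi\colon B\m\to eAe\m$, and check the compatibility $\Phi\circ F\simeq \Hom_A(Ae,-)$ so that the Ext-isomorphisms transfer. The paper obtains $e$ and the Morita equivalence from an external reference and verifies the compatibility only on $\mathcal{A}$ via the $0$-$\mathcal{A}$ cover hypothesis, whereas you construct $e$ by hand and aim for the compatibility on all of $A\m$.

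One small simplification: the identification $\Hom_A(Ae,P)\otimes_B\Hom_A(P,X)\simeq\Hom_A(Ae,X)$ does not actually need the double centralizer property. Since $Ae\in\add_A P$, the module $V=\Hom_A(Ae,P)$ is a direct summand of $\Hom_A(P^n,P)=B^n$ as a right $B$-module, and the composition map is additive in the first variable; for $Q=P$ it is the canonical isomorphism $B\otimes_B FX\simeq FX$, so for the summand $Q=Ae$ it is an isomorphism for \emph{every} $X$. Your route via the unit works too, but note that the relevant isomorphism from Lemma~\ref{whenunitisiso} is $\eta_A$ (giving $\Hom_A(Ae,A)\simeq\Hom_B(F(Ae),FA)$), not $\eta_{Ae}$; both are isomorphisms here, so this is only a cosmetic slip.
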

\begin{proof}
There exists an idempotent $e\in A$ such that $(A, Ae)$ is a cover of $eAe$ and $eAe$ is Morita equivalent to $B$ (see for example \citep[Proposition 9]{cruz2021characterisation}). Denote by $H$ the equivalence of categories ${B\m\rightarrow eAe\m}$. For $M, N\in \mathcal{A}$,
	\begin{align}
		\Ext_A^j(M, N)\simeq \Ext_B^j(FM, FN)\simeq \Ext_{eAe}^j(HFM, HFN), \ j\leq i.
	\end{align}It remains to show that $HFM\simeq \Hom_A(Ae, M)$ for every $M\in \mathcal{A}$. But, this isomorphism holds since $(A, P)$ is a $0$-$\mathcal{A}$ cover of $B$. Thus,  $(A, Ae)$ is an $i$-$\mathcal{A}$ cover of $eAe$.
\end{proof}

We should remark that the exact equivalence in Proposition \ref{onefaithfulcover} does not make the image of the resolving subcategory $\mathcal{A}$ under the Schur functor a resolving subcategory in $B\m\cap R\proj$. In fact, this only occurs when the Schur functor is an equivalence of categories.

\begin{Prop}\label{resolvingsubcategoryonecover}
	Let $\mathcal{A}$ be a resolving subcategory of $A\m$. Let $(A, P)$ be a $1$-$\mathcal{A}$ cover of $B$. Assume that $\{FM\colon M\in \mathcal{A} \}$ is a resolving subcategory of $B\m$. Then $F=\Hom_A(P, -)$ is an exact equivalence.
\end{Prop}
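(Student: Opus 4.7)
The plan is to show that $FA = \Hom_A(P, A)$ is a progenerator of $B\m$. Combined with the double centraliser isomorphism $A \simeq \End_B(FA)^{\mathrm{op}}$ from Proposition \ref{dcpropertycover}, Morita's theorem then produces an exact equivalence $FA \otimes_A - \colon A\m \to B\m$ with quasi-inverse $\Hom_B(FA, -) = G$; since $P \in A\proj$ we have $F = \Hom_A(P, -) \simeq FA \otimes_A -$, finishing the argument. The module $FA$ is already a generator of $B\m$ (as recalled immediately before Proposition \ref{fullfaithfulG}), so the real work is to show that $FA$ is projective over $B$.

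First I would unpack the consequences of the $1$-$\mathcal{A}$ cover hypothesis. By Propositions \ref{zeroAcover} and \ref{arbitraryAcover}, $\eta_M$ is an isomorphism and $\R^1 G(FM) = 0$ for every $M \in \mathcal{A}$; moreover, $F$ restricts to an exact equivalence $F|_{\mathcal{A}} \colon \mathcal{A} \to F\mathcal{A}$ with quasi-inverse $G|_{F\mathcal{A}}$, so every object of $F\mathcal{A}$ has the form $FM$ for some $M \in \mathcal{A}$.

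Now I turn to the projectivity of $FA$. As $FA$ is finitely generated over $B$, pick an epimorphism $\pi \colon B^n \twoheadrightarrow FA$ and set $K = \ker \pi$. Since $F\mathcal{A}$ is resolving in $B\m$ it contains all projective $B$-modules, hence $B^n \in F\mathcal{A}$; and $FA \in F\mathcal{A}$ because $A \in \mathcal{A}$. Closure of $F\mathcal{A}$ under kernels of epimorphisms then forces $K \in F\mathcal{A}$, so $K \simeq FM$ for some $M \in \mathcal{A}$. The $1$-$\mathcal{A}$ cover property now supplies
\[
\Ext^1_B(FA, K) \;\simeq\; \Ext^1_B(FA, FM) \;=\; \R^1 G(FM) \;=\; 0,
\]
so the short exact sequence $0 \to K \to B^n \to FA \to 0$ splits, exhibiting $FA$ as a direct summand of $B^n$ and therefore projective over $B$. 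Together with the generator property, this makes $FA$ a progenerator, and the proof concludes via Morita as sketched above.

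The crux of the argument, and the only place where the resolving hypothesis on $F\mathcal{A}$ is genuinely used, is the conclusion $K \in F\mathcal{A}$: without this closure property one could not apply the $1$-cover vanishing of $\R^1 G$ to the kernel $K$ and split the sequence. This is the main obstacle that the resolving assumption is specifically designed to overcome.
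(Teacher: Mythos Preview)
Your proof is correct and follows essentially the same route as the paper: take a projective $B$-presentation of $FA$, use the resolving hypothesis on $F\mathcal{A}$ to see that the kernel lies in $F\mathcal{A}$, invoke the $1$-$\mathcal{A}$ cover property to kill the relevant $\Ext^1$, and conclude that $FA$ is projective (hence a progenerator). The only cosmetic differences are that the paper phrases the vanishing as $\Ext_B^1(FA, FN)\simeq \Ext_A^1(A, N)=0$ directly from the definition of a $1$-$\mathcal{A}$ cover, whereas you route it through $\R^1 G(FM)=0$ via Proposition~\ref{arbitraryAcover}; and your aside that $F|_{\mathcal{A}}$ is an exact equivalence onto $F\mathcal{A}$ is not actually needed, since ``$K\in F\mathcal{A}$ implies $K\simeq FM$'' is immediate from the definition of $F\mathcal{A}$.
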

\begin{proof}
	Consider the projective $B$-presentation \begin{align}
		\delta\colon 0\rightarrow K\rightarrow Q\rightarrow FA\rightarrow 0.
	\end{align}By projectivization, $Q=FX $ for some $X\in \add P$, and consequently $X\in \mathcal{A}$. Because $\{FM\colon M\in \mathcal{A} \}$ is a resolving subcategory, there exists $N\in \mathcal{A}$ such that $K\simeq FN$. Hence, 
	\begin{align}
		\delta\in \Ext_B^1(FA, K)\simeq \Ext_A^1(A, N)=0.
	\end{align}Therefore, $FA$ is a $B$-summand of $Q$. Thus, $FA\in B\proj$. As we have seen before, since $(A, P)$ is a cover of $B$, this implies that $F$ is an exact equivalence.
\end{proof}
Observe that $\{FM\colon M\in \mathcal{A} \}$ being a resolving subcategory of $B\m$ is not a sufficient condition for $F$ to be an equivalence of categories. As an example, one may think about the split quasi-hereditary cover $(S_{\mathbb{F}_2}(2, 2), \mathbb{F}_2^{\otimes 2})$ of $\mathbb{F}_2 S_2$.

\section{Upper bounds for the quality of an $\mathcal{A}$-cover}\label{Upper bounds for the quality of an A-cover}

As Proposition \ref{onefaithfulcover} suggests, the better the quality of a cover $(A, P)$ of $B$ the better the connection between $A\m$ and $B\m$. In this section, this slogan is made precise when $A$ is of finite global dimension.
\subsection{$\mathcal{F}(\St)$}\label{mathcalFSt}
For finite-dimensional algebras over fields, there is an upper bound for the level of faithfulness of a split quasi-hereditary cover. To realize that we need the concept of length in a poset.

For any $\l\in \L$, we define the \textbf{length of $\l\in \L$} to be the length $t$ of the longest chain \linebreak${\l=x_0<x_1<\ldots<x_t}$ in $\L$ and denote it by $d(\L, \l)$. Denote by $d(\L)$ to be the maximum value of $d(\L, \l)$ over all $\l\in \L$. 
Note that $\l\in \L$ is maximal if and only if $d(\L, \l)=0$ and $d(\L)$ is bounded by $|\L|$. Also, if $z>\l$, then $d(\L, \l)\geq $ $d(\L, z)+1$.

\begin{Theorem}
	\label{rigiditypartI}
	Let $R$ be a field and let $(A, \{\Delta(\lambda)_{\lambda\in \Lambda}\})$ be a split quasi-hereditary algebra over $R$. Let \mbox{$\{S(\l)\colon \l\in \L\}$} be a complete set of non-isomorphic simple $A$-modules.
	
	We shall denote by $\L^*$ the set $\{\l\in \L\colon FS(\l)\neq 0\}$. 
	
	If $(A, P)$ is a split quasi-hereditary $(d(\L^*)+1)$-faithful cover of $B\m$, then the Schur functor induces by restriction to $A\proj$ the functor $F_{|_{A\proj}}\colon A\proj \rightarrow B\proj$.
	Moreover, $F$ is an equivalence of categories.
\end{Theorem}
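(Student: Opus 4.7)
The goal is to prove that $F$ is an equivalence of categories; the first conclusion (that $F$ restricts to $A\proj \to B\proj$) then follows automatically since every $FP$ with $P \in A\proj$ becomes a summand of $(FA)^n$ and hence projective. My strategy is Morita-theoretic: the module $FA$ is always a $B$-generator, and the cover hypothesis combined with Proposition \ref{dcpropertycover} yields the double centralizer identification $A \simeq \End_B(FA)^{op}$. Thus it suffices to show that $FA$ is a projective $B$-module, for then $FA$ is a progenerator of $B\m$ and Morita theory promotes $F \simeq \Hom_A(P,-)$ to an equivalence with $\Hom_B(FA,-)$ as inverse.

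To show $FA \in B\proj$, I would first reduce to the case $P = Ae$ and $B = eAe$ for an idempotent $e \in A$ via Proposition \ref{idempotentsAcovers}. The refined claim I target is that $F\St(\l) \in B\proj$ for every $\l \in \L$. Granting this, the standard filtration of $A \in \mathcal{F}(\Stsim)$ is carried by the exact functor $F$ to a filtration of $FA$ with projective factors; since $\Ext_B^1$ vanishes between projective $B$-modules, each extension splits and $FA \in B\proj$ follows.

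The central input is the vanishing $\R^jG(F\St(\l)) = \Ext_B^j(FA, F\St(\l)) = 0$ for $1 \leq j \leq d(\L^*)+1$ and every $\l \in \L$, supplied by Proposition \ref{arbitraryfaithfulcover} and the hypothesis. I would establish the refined claim by induction on $d(\L,\l)$, starting from $\l$ maximal. In the inductive step for non-maximal $\l$, applying $F$ to the defining sequence $0 \to C(\l) \to P(\l) \to \St(\l) \to 0$ with $C(\l) \in \mathcal{F}(\Stsim_{\mu > \l})$ gives, using the inductive hypothesis on the $\St(\mu)$'s with $\mu > \l$ occurring in the filtration of $C(\l)$, that $FC(\l)$ is a successive extension of projective $B$-modules; the Ext-vanishing splits these extensions so that $FC(\l) \in B\proj$; a final use of the vanishing splits $0 \to FC(\l) \to FP(\l) \to F\St(\l) \to 0$ and places $F\St(\l)$ as a direct summand of $FP(\l)$, making both projective.

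The genuine obstacle is the base case of $\l$ maximal in $\L$, together with the treatment of weights $\l \in \L \setminus \L^*$: in the base case $\St(\l) = P(\l)$, while $FS(\l) = 0$ in the second situation deprives $F\St(\l)$ of a distinguished simple top in $B\m$. Both difficulties are handled through the precise matching between the hypothesis degree $d(\L^*)+1$ and the chain-length $d(\L^*)$ of the composition factors $FS(\mu)$, $\mu \in \L^*$, $\mu \leq \l$, of $F\St(\l)$. The bound $d(\L^*)+1$ is exactly the depth needed to compare $F\St(\l)$ with the projective cover in $B\m$ of its top via long exact sequences of $\Ext$'s, and this comparison forces the natural map to be an isomorphism, ultimately also yielding $\L^* = \L$. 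This accounting of depths is the delicate heart of the argument and explains why the bound $d(\L^*) + 1$ is sharp.
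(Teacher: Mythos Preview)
Your ``refined claim''---that $F\St(\l)$ is projective over $B$ for every $\l \in \L$---is false, and in fact contradicts the conclusion of the theorem. Once $F$ is an equivalence of categories, $F\St(\l)$ is projective in $B\m$ if and only if $\St(\l)$ is projective in $A\m$, which happens precisely when $\l$ is maximal in $\L$. The error already surfaces in your inductive step: you claim the sequence $0 \to FC(\l) \to FP(\l) \to F\St(\l) \to 0$ splits by ``a final use of the vanishing'', but no available vanishing gives this. Splitting would require $\Ext_B^1(F\St(\l), FC(\l)) = 0$, yet the cover hypothesis yields $\Ext_B^1(F\St(\l), FC(\l)) \simeq \Ext_A^1(\St(\l), C(\l))$, and the right-hand side is typically nonzero since $C(\l)$ is filtered by $\St(\mu)$ with $\mu > \l$. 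Your base case ($\l$ maximal, so $\St(\l) = P(\l)$) is not handled either: showing $FP(\l)$ projective for maximal $\l$ is already the heart of the theorem, and your last paragraph only gestures at it.

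The paper runs the induction in the opposite direction: from $\l$ minimal in $\L^*$, using the radical sequence $0 \to X \to \St(\l) \to S(\l) \to 0$. The target claim is $\Ext_B^j(FM, FS(\l)) = 0$ for $1 \le j \le d(\L^*, \l)+1$, for every $\l \in \L^*$ and $M \in A\proj$. For $\l$ minimal in $\L^*$, every composition factor $S(\mu)$ of $X$ has $\mu < \l$, hence $\mu \notin \L^*$ and $FS(\mu) = 0$; thus $FX = 0$ and $F\St(\l) \simeq FS(\l)$, so the cover hypothesis gives the claim directly. In the inductive step, the long exact sequence of $\Ext_B^*(FM, -)$ on $0 \to FX \to F\St(\l) \to FS(\l) \to 0$ sandwiches $\Ext_B^j(FM, FS(\l))$ between $\Ext_B^j(FM, F\St(\l)) \simeq \Ext_A^j(M, \St(\l)) = 0$ and $\Ext_B^{j+1}(FM, FX) = 0$, the latter coming from the inductive hypothesis applied to the composition factors of $X$. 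Since the $FS(\l)$ with $\l \in \L^*$ are all the simple $B$-modules (Theorem~\ref{simplemoduleseAe}), the case $j = 1$ shows $FM \in B\proj$ for every $M \in A\proj$, and the equivalence follows by Morita theory.
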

\begin{proof}  Since $R$ is a field we can assume, without loss of generality, that there exists an idempotent $e\in A$ such that $P=Ae$ and $B=eAe$.
	By Theorem \ref{simplemoduleseAe}, the simple $B$-modules can be written in the form $FS$ where $S$ is a simple $A$-module. Moreover, the simple $B$-modules are indexed by $\L^*$. The set $\L^*$ is again a poset where its partial order is the one induced by the poset $\L$.
	
	Consider $M$ a finitely generated projective $A$-module. We want to show that $FM$ is a projective $B$-module. It is enough to show that $\textrm{Ext}_B^1(FM, S)=0$ for all simple $B$-modules $S$.

	We claim that $\textrm{Ext}_B^{j}(FM, FS(\l))=0, \ 1\leq j\leq d(\L^*, \l)+1$, $\l\in \L^*$.
	
	We shall proceed by induction on $n(\l)=d(\L^*)-d(\L^*, \l)$, $\l\in \L^*$. Assume that $n(\l)=0$. Then $\l$ is minimal in $\L^*$.
	Assume that $\l$ is also minimal in the poset $\L$. Then $\St(\l)=S(\l)$. Hence, $F\St(\l)=FS(\l)$. Now, assume that $\l$ is not minimal in $\L$.  Consider the short exact sequence
	\begin{align}
		0\rightarrow X\rightarrow \St(\l)\rightarrow S(\l)\rightarrow 0, \label{eqfc43}
	\end{align} where $X$ has a composition series with composition factors $S(\mu)$ satisfying $\mu<\l$. The minimality of $\l$ in $\L^*$ implies that $FS(\mu)=0$ for $\mu<\l$, $\mu\in \L$. By induction on the length of the composition series of $X$ it follows that $FX=0$. Applying the functor $F$ to the short exact sequence (\ref{eqfc43}) yields $F\St(\l)\simeq FS(\l)$. 
	
	Therefore,
	\[
	\Ext^j_B(FM, FS(\l))=\Ext_B^j(FM, F\St(\l))\simeq \Ext_A^j(M, \St(\l))=0, \quad 1\leq j\leq d(\L^*)+1=d(\L^*, \l)+1.
	\]
	The last isomorphism follows from the fact that $(A, P)$ is a $(d(\L^*)+1)$-faithful cover of $B$.
	
	Assume that there exists a positive integer $k$ such that the claim holds for all $\l\in \L^*$ satisfying $n(\l)<k$. Let $\l\in \L^*$ such that $n(\l)=k$. 
	Consider again the short exact sequence (\ref{eqfc43}). Let $S(\mu)$ be a composition factor of $X$. Hence, $\mu<\l$. If $\mu\notin \L^*$, then $FS(\mu)=0$. Otherwise, $d(\L^*, \mu)\geq d(\L^*, \l)+1$ and  \[
	n(\mu)=d(\L^*)-d(\L^*, \mu)\leq d(\L^*)-d(\L^*, \l)-1=k-1<k\]By induction,
	$\Ext_B^j(FM, FS(\mu))=0$, $1\leq j\leq d(\L^*, \l)+2.$ By induction on the length of the composition series of $FX$, we obtain $\Ext_B^j(FM, FX)=0$, $1\leq j\leq d(\L^*, \l)+2.$ Now, applying the functor $\Hom_B(FM, -)\circ F$ to (\ref{eqfc43}) yields the long exact sequence
	\begin{align}
		0=\Ext_B^j(FM, F\St(\l))\rightarrow \Ext_B^j(FM, FS(\l))\rightarrow \Ext_B^{j+1}(FM, FX)=0, \ 1\leq j\leq d(\L^*, \l)+1.
	\end{align}This completes the proof of our claim. In particular, $\Ext_B^1(FM, FS(\l))=0$ for all $\l\in \L^*$. So, $FM$ is projective over $B$. By projectivization, since the Schur functor is written in the form $F=\Hom_A(P, -)$,  $B\proj$ is equivalent to $\add(P)$. Thus, by projectivization, the functor $F_{|_{A\proj}}\colon A\proj \rightarrow B\proj$ is essentially surjective. As by definition of cover, the functor $F_{|_{A\proj}}$ is fully and faithful it follows that the functor $F_{|_{A\proj}}\colon A\proj\rightarrow B\proj$ is an equivalence of categories. 
	
	So, for any  finitely generated projective $A$-module $M$, we obtain $FM=\Hom_A(P, M)\cong \Hom_A(P, P')$ for some $P'\in \textrm{add}(P)$. By applying the adjoint functor $G$ we get that $M\simeq GFM\simeq GFP'\simeq P'$. So, $A\in  \add(P)$, which means that $P$ is a progenerator. Hence, by Morita theory (see for example \citep[Proposition 4.2.4]{Zimmerman}), $F$ is an equivalence of categories. 
\end{proof}

Observe that $d(\L^*)+1\leq |\L^*|-1+1=|\L^*|$ which is exactly the number of non-isomorphic classes of simple $B$-modules. We have therefore proved that the number of simple $B$-modules is an upper bound for the level of faithfulness of a split quasi-hereditary cover of $B$.

\subsection{$A\proj$}

We can also give upper bounds for $A\proj$-covers. To do that, we will use another example of resolving subcategories. Let $i\geq 0$ be an integer. Let $\mathcal{P}^i$ be the full subcategory of $A\m$ whose modules have projective dimension over $A$ less than or equal to $i$. The category $\mathcal{P}^i$ is a resolving subcategory of $A\m\cap R\proj$. For $i=0$, $\mathcal{P}^i$ is exactly $A\proj$. For $i=\gldim A$, $\mathcal{P}^i=A\m$.

\begin{Theorem}\label{resolvingprojdimensioncover} Let $A$ be a projective Noetherian $R$-algebra.
	Let $i, j\geq 0$ be integers. If $(A, P)$ is an $i$-$\mathcal{P}^j$ cover of $B=\End_A(P)^{op}$, then $(A, P)$ is an $(i-1)$-$\mathcal{P}^{j+1}$ cover of $B$. 
\end{Theorem}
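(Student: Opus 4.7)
The plan is to use Proposition \ref{arbitraryAcover}, which reformulates the $i$-$\mathcal{P}^j$ cover condition (for $i \geq 0$) as the vanishing $\R^k G(FM) = 0$ for $M \in \mathcal{P}^j$ and $1 \leq k \leq i$, together with the fact that $\eta_M$ is an isomorphism for $M \in \mathcal{P}^j$. Given $M \in \mathcal{P}^{j+1}$, I would pick a projective presentation
\begin{equation*}
0\to K_M\to Q_M\to M\to 0
\end{equation*}
with $Q_M\in A\proj \subseteq \mathcal{P}^j$ and $K_M\in \mathcal{P}^j$ (since $\pdim_A M\leq j+1$). Because $P$ is projective, $F=\Hom_A(P,-)$ is exact, so applying $F$ yields a short exact sequence $0\to FK_M\to FQ_M\to FM\to 0$ in $B\m$. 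This is the only structural input; everything that follows is a homological dimension shift losing one degree.

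For $i\geq 1$, the long exact sequence for the right derived functors of the left exact functor $G$ applied to the SES above reads
\begin{equation*}
\cdots \to \R^k G(FK_M)\to \R^k G(FQ_M)\to \R^k G(FM)\to \R^{k+1}G(FK_M)\to \cdots
\end{equation*}
By hypothesis together with Proposition \ref{arbitraryAcover}, the outer terms vanish whenever $k$ and $k+1$ both lie in $\{1,\ldots,i\}$, which forces $\R^k G(FM)=0$ for $1\leq k\leq i-1$. To promote this to a $0$-$\mathcal{P}^{j+1}$ cover, I would consider the commutative diagram with exact rows
\begin{equation*}
\begin{tikzcd}
0\arrow[r]&K_M\arrow[r]\arrow[d,"\eta_{K_M}"]& Q_M\arrow[r]\arrow[d,"\eta_{Q_M}"] & M\arrow[r]\arrow[d,"\eta_M"]& 0\\
0\arrow[r]& GFK_M\arrow[r] & GFQ_M\arrow[r]& GFM\arrow[r]&\R^1 G(FK_M)
\end{tikzcd}
\end{equation*}
Since $\eta_{K_M}$ and $\eta_{Q_M}$ are isomorphisms by Proposition \ref{zeroAcover} and $\R^1 G(FK_M)=0$, the $5$-lemma yields that $\eta_M$ is an isomorphism. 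Invoking Propositions \ref{zeroAcover} and \ref{arbitraryAcover} one last time finishes the case $i\geq 1$.

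The edge case $i=0$ needs a separate, gentler argument, because we have no vanishing of $\R^1 G(FK_M)$ at our disposal; we only need to establish a $(-1)$-$\mathcal{P}^{j+1}$ cover. By Lemma \ref{considerationseta}, it suffices to show that $\eta_M$ is a monomorphism for every $M\in \mathcal{P}^{j+1}$. Working with the same diagram but truncating the bottom row at $GFM$, which is still exact since $G$ is left exact, a short diagram chase goes through: if $m\in \ker\eta_M$ and $q\in Q_M$ lifts $m$, then $\eta_{Q_M}(q)$ lies in the image of $GFK_M\to GFQ_M$; using that $\eta_{K_M}$ and $\eta_{Q_M}$ are isomorphisms, $q$ must come from $K_M$, whence $m=0$. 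Consequently $F$ is faithful on $\mathcal{P}^{j+1}$, and combined with $(A,P)$ being a cover of $B$ (which we inherit from the $0$-$\mathcal{P}^j$ hypothesis since $A\proj\subseteq \mathcal{P}^j$), this gives the desired $(-1)$-$\mathcal{P}^{j+1}$ cover. The only real obstacle is this bookkeeping at $i=0$; the rest is a routine one-step dimension shift.
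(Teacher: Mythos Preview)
Your proof is correct and follows essentially the same route as the paper: pick a projective presentation $0\to K_M\to Q_M\to M\to 0$ with $Q_M\in A\proj$ and $K_M\in\mathcal{P}^j$, use the commutative square against $GF$ and the Snake/5-Lemma to control $\eta_M$, and then shift dimensions via the long exact sequence of $\R^\bullet G$ to kill $\R^kG(FM)$ for $1\leq k\leq i-1$. The only cosmetic differences are the order in which you treat the cases and your invocation of the 5-Lemma where the paper cites the Snake Lemma; the substance is identical.
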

\begin{proof}
	Let $X$ be a module with projective dimension at most $j+1$. We can consider a projective presentation over $A$ for $X$ 
	\begin{align}
		0\rightarrow Q\rightarrow P\rightarrow X\rightarrow 0, \label{eqfc44}
	\end{align}such that $Q\in \mathcal{P}^j$ and $P\in A\proj$.
	Consider the following commutative diagram
	\begin{equation}
		\begin{tikzcd}
			0\arrow[r] &Q\arrow[r] \arrow[d, "\eta_Q"]& P\arrow[r] \arrow[d, "\eta_P"] & X\arrow[r] \arrow[d, "\eta_X"] &0\\
			0\arrow[r] & GFQ \arrow[r] & GFP\arrow[r] & GFX \arrow[r] & \R^1G(FQ) 
		\end{tikzcd}.
	\end{equation}
	Due to $i\geq 0$ and $Q, P\in \mathcal{P}^j$, $\eta_Q$ and $\eta_P$ are $A$-isomorphisms. By Snake Lemma, $\eta_X$ is a monomorphism. So, $(A, P)$ is an $(-1)$-$\mathcal{P}^{j+1}$ cover of $B$. If $i\geq 1$, then $\R^1G(FQ) =0$. In such a case, the Snake Lemma implies that $\eta_X$ is an isomorphism. So, the claim holds for $i=1$. Assume now that $i\geq 2$. Applying $GF$ to (\ref{eqfc44}) yields the long exact sequence
	\begin{align}
		0=\R^l G(FP)\rightarrow \R^lG(FX)\rightarrow \R^{l+1}G(FQ)=0, \quad 1\leq l\leq i-1.
	\end{align} Thus, $(A, P)$ is an $(i-1)$-$\mathcal{P}^{j+1}$ cover of $B$. 
\end{proof}

An immediate consequence of this result is the following bound on $A\proj$-covers.

\begin{Cor}\label{resolvingprojdimensioncoverfinitess}
	Let $(A, P)$ be a $(\gldim A)$-$(A\proj)$ cover of $B$. Then \mbox{$\Hom_A(P, -)\colon A\m\rightarrow B\m$} is an equivalence of categories.
\end{Cor}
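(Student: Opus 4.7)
The plan is to iterate Theorem \ref{resolvingprojdimensioncover} downwards on the faithfulness index while simultaneously enlarging the resolving subcategory, stopping once the subcategory exhausts all of $A\m$. Assume $\gldim A$ is finite (otherwise the hypothesis is vacuous in the standard interpretation, but the same argument applies with any finite replacement). Starting from the hypothesis that $(A,P)$ is a $(\gldim A)$-$\mathcal{P}^{0}$ cover of $B$, I would apply Theorem \ref{resolvingprojdimensioncover} a total of $\gldim A$ times to obtain, by a straightforward induction, that $(A,P)$ is a $(\gldim A - k)$-$\mathcal{P}^{k}$ cover of $B$ for every $0 \leq k \leq \gldim A$. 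Taking $k = \gldim A$, and using that $\mathcal{P}^{\gldim A} = A\m$, one concludes that $(A,P)$ is a $0$-$(A\m)$ cover of $B$.

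Next, I would invoke Proposition \ref{zeroAcover} applied to the resolving subcategory $\mathcal{A} = A\m$: this tells us that the unit $\eta_M \colon M \to GFM$ is an isomorphism of $A$-modules for every $M \in A\m$. Combined with Proposition \ref{fullfaithfulG}, which asserts that the counit $\varepsilon_N \colon FGN \to N$ is an isomorphism for every $N \in B\m$, it follows that $(F,G)$ is a pair of mutually quasi-inverse functors. Therefore $F = \Hom_A(P,-)\colon A\m \to B\m$ is an equivalence of categories.

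There is no real obstacle; the only subtlety is bookkeeping the repeated application of Theorem \ref{resolvingprojdimensioncover}. One has to note that the base of the induction, $k=0$, is literally the hypothesis, and that the inductive step is exactly one application of Theorem \ref{resolvingprojdimensioncover} (with $i$ replaced by $\gldim A - k$ and $j$ replaced by $k$). The iteration terminates precisely because $\gldim A$ is the smallest index $j$ with $\mathcal{P}^{j} = A\m$, so after $\gldim A$ steps we arrive at the ambient module category and the faithfulness index drops to exactly $0$, which is still enough to invoke Proposition \ref{zeroAcover}.
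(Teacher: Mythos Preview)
Your proof is correct and follows essentially the same route as the paper: iterate Theorem \ref{resolvingprojdimensioncover} to pass from a $(\gldim A)$-$\mathcal{P}^0$ cover to a $0$-$\mathcal{P}^{\gldim A}=0$-$(A\m)$ cover, then conclude that $F$ is an equivalence. The only cosmetic difference is the last step: you argue via the unit/counit criterion (both $\eta$ and $\varepsilon$ are isomorphisms, using Propositions \ref{zeroAcover} and \ref{fullfaithfulG}), whereas the paper phrases this as both $F$ and its adjoint being fully faithful; these are of course equivalent.
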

\begin{proof}
	Using induction on Theorem \ref{resolvingprojdimensioncover}, we obtain that $(A, P)$ is a $0$-$\mathcal{P}^{\gldim A}$ cover of $B$. Moreover, $(A, P)$ is a $0$-$(A\m)$ cover of $B$. Thus, $\eta_M$ is an isomorphism for every $M\in A\m$. This means that the functor $\Hom_A(P, -)\colon A\m\rightarrow B\m$ is full and faithful. Because  $(A, P)$ is a cover of $B$, the left adjoint of $\Hom_A(P, -)$ is also full and faithful. Therefore, $\Hom_A(P, -)$ is an equivalence of categories.
\end{proof}

	\subsection{Uniqueness of $\mathcal{A}$-covers}
	
	We will now introduce a concept of equivalence of covers that generalizes the concept of equivalent highest weight covers of \citep{Rouquier2008}.
	
	\begin{Def}\label{uniquenessAcovers}
		Let $A, A', B, B'$ be projective Noetherian $R$-algebras and $\mathcal{A}$ and $\mathcal{A}'$ be resolving subcategories of $A\m\cap R\proj$ and $A'\m\cap R\proj$, respectively.
		
		Assume that $(A, P)$ is a $0$-$\mathcal{A}$ cover of $B$ and $(A', P')$ is a $0$-$\mathcal{A}'$ cover of $B'$. We say that the $\mathcal{A}$-covers $(A, P)$ and $(A', P')$ are \textbf{equivalent} if there is an equivalence of categories $H\colon A\m\rightarrow A'\m$, which restricts to an exact equivalence $\mathcal{A}\rightarrow \mathcal{A}'$, and an equivalence of categories $L\colon B\m\rightarrow B'\m$ making the following diagram commutative:
		
		\begin{center}
			\begin{tikzcd}
				A\m \arrow[rr, "{\Hom_A(P, -)}"] \arrow[d, swap, "H"] && B\m \arrow[d, "L"]\\
				A'\m \arrow[rr,swap, "{\Hom_{A'}(P', -)}"] && B'\m  
			\end{tikzcd}.
		\end{center}
		We say that two covers $(A, P)$ and $(A', P')$ are \textbf{isomorphic} if they are equivalent with $L$ being the restriction of scalars functor $B\m\rightarrow B'\m$ along an isomorphism of $R$-algebras $B'\simeq B$.
	\end{Def}
	
	With this concept, Proposition \ref{idempotentsAcovers} says that if $(A, P)$ is an $i$-$\mathcal{A}$ cover of a finite-dimensional algebra (over a field) $B$ then there exists an idempotent $e\in A$ such that the cover $(A, P)$ is equivalent with $(A, Ae)$.
	The first observation to make is that equivalent covers have the same level of faithfulness.
	
	\begin{Prop}\label{uniquenesssamelevel}
		Let $(A, P)$ be a $0$-$\mathcal{A}$ cover of $B$ and let $(A', P')$ be a $0$-$\mathcal{A}'$ cover of $B'$. Assume that the covers $(A, P)$ and $(A', P')$ are equivalent. If $(A, P)$ is an $i$-$\mathcal{A}$ cover of $B$, then $(A', P')$ is an $i$-$\mathcal{A}'$ cover of $B'$.
	\end{Prop}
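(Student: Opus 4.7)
The plan is to chase the commutative square in Definition \ref{uniquenessAcovers} and use that both $H$ and $L$, being equivalences of abelian categories, are exact and preserve $\Ext$-groups in all degrees.

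First I would fix $M', N' \in \mathcal{A}'$ and set $M = H^{-1}M'$, $N = H^{-1}N'$, where $H^{-1}$ denotes a quasi-inverse of $H$. Since $H$ restricts to an equivalence $\mathcal{A} \to \mathcal{A}'$, we have $M, N \in \mathcal{A}$. Because $H$ is an equivalence of abelian categories (hence exact and preserving projective resolutions up to isomorphism), it induces isomorphisms $\Ext_A^j(M,N) \simeq \Ext_{A'}^j(HM, HN) \simeq \Ext_{A'}^j(M', N')$ for all $j \geq 0$. Similarly, $L$ being an equivalence of abelian categories induces isomorphisms $\Ext_B^j(X,Y) \simeq \Ext_{B'}^j(LX, LY)$ for all $X, Y \in B\m$ and all $j \geq 0$.

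Next, the commutativity of the square in Definition \ref{uniquenessAcovers} (i.e.\ the natural isomorphism $\Hom_{A'}(P', -) \circ H \cong L \circ \Hom_A(P, -)$) yields $L\,\Hom_A(P, M) \simeq \Hom_{A'}(P', M')$ and analogously for $N$. Hence, under the hypothesis that $(A,P)$ is an $i$-$\mathcal{A}$ cover of $B$, we can splice the isomorphisms
\begin{align*}
\Ext_{A'}^j(M', N') \simeq \Ext_A^j(M, N) &\simeq \Ext_B^j(\Hom_A(P, M), \Hom_A(P, N)) \\
&\simeq \Ext_{B'}^j(\Hom_{A'}(P', M'), \Hom_{A'}(P', N'))
\end{align*}
for every $0 \leq j \leq i$ (and for $j = -1$ the analogous monomorphism on Hom-spaces, if that case is being tracked).

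Finally, I would check that this composite isomorphism coincides with the map induced by the Schur functor $\Hom_{A'}(P', -)$. This is a naturality argument: the middle isomorphism is the one induced by $\Hom_A(P,-)$ by hypothesis, the outer two are induced by the functors $H$ and $L$, and the commutative square guarantees that applying $L$ to the morphism $\Ext_A^j(M,N) \to \Ext_B^j(\Hom_A(P,M),\Hom_A(P,N))$ and then identifying via $L \circ \Hom_A(P,-) \cong \Hom_{A'}(P',-) \circ H$ produces precisely the Schur functor map for $(A', P')$. The only mildly delicate step is this last naturality verification, but it is formal once one unwinds the definition of the induced map on $\Ext$ via a projective resolution (and uses that $H$ sends projective resolutions in $A\m$ to projective resolutions in $A'\m$, since equivalences preserve projectives). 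This gives that $(A', P')$ is an $i$-$\mathcal{A}'$ cover of $B'$, as desired.
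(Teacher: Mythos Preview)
Your approach is correct, but it differs from the paper's in a way worth noting. You verify the definition directly: for arbitrary $M',N'\in\mathcal{A}'$ you transport the Ext-isomorphism through $H$ and $L$ and then argue that the composite coincides with the map induced by $F'$. The paper instead uses the characterisation in Proposition~\ref{arbitraryAcover}: since $(A',P')$ is already assumed to be a $0$-$\mathcal{A}'$ cover, it suffices to show $\R^jG'(F'M)=\Ext_{B'}^j(F'A',F'M)=0$ for $1\le j\le i$ and $M\in\mathcal{A}'$. Writing $A'\simeq HQ$ with $Q\in A\proj$ and $M\simeq HX$ with $X\in\mathcal{A}$, the chain
\[
\Ext_{B'}^j(F'HQ,F'HX)\simeq\Ext_{B'}^j(LFQ,LFX)\simeq\Ext_B^j(FQ,FX)\simeq\Ext_A^j(Q,X)=0
\]
finishes it, because $Q$ is projective. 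The point is that this route only needs the Ext groups to \emph{vanish}, so no naturality check is required at all.

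Your naturality step is fine, but your justification at the end is slightly off: you say one unwinds the induced map on $\Ext$ ``via a projective resolution'', noting that $H$ preserves projectives. That is true for $H$, but the Schur functor $F=\Hom_A(P,-)$ need not send projectives to projectives, so the map $\Ext_A^j(M,N)\to\Ext_B^j(FM,FN)$ is not built by applying $F$ to a projective resolution. The correct (and genuinely formal) way is via Yoneda Ext: $F$ is exact, so it sends a $j$-extension of $M$ by $N$ to a $j$-extension of $FM$ by $FN$; the natural isomorphism $LF\cong F'H$ then matches the images term-by-term. With that adjustment your argument goes through, and what it buys over the paper's is that you see explicitly that the Schur functor map itself is the isomorphism (not merely that its source and target are abstractly isomorphic). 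The paper's version buys brevity and avoids the bookkeeping entirely.
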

	\begin{proof}
		Denote the functor $\Hom_{B'}(F'A', -)$ by $G'$. Let $M\in \mathcal{A}'$. Then, for $1\leq j\leq i$,
		\begin{multline*}
			\R^j G'(F'M)=\Ext_{B'}^j(F'A', F'M)=\Ext_B^j(F'HQ, F'HX)\\=\Ext_{B'}^j(LFQ, LFX)=\Ext_B^j(FQ, FX) =\Ext^j_A(Q, X)=0,
		\end{multline*}for some $X\in \mathcal{A}$ and $Q\in A\proj$. By Proposition \ref{arbitraryAcover}, the result follows.
	\end{proof}

	Rouquier defined equivalence of split quasi-hereditary covers in the following way.
	\begin{Def}\label{uniquenessfaithfuldef}
		Two split quasi-hereditary covers $(A, P)$ and $(A', P')$ are \textbf{equivalent in the sense of Rouquier} if there is an equivalence of highest weight categories $A\m\xrightarrow{\simeq} A'\m$ making the following diagram commutative:
		\begin{center}
			\begin{tikzcd}
				A\m \arrow[dr, "{Hom_A(P, -)}"] \arrow[dd, swap, "\simeq"] &\\
				& B\m\\
				A'\m \arrow[ur,swap, "{Hom_{A'}(P', -)}"] & 
			\end{tikzcd}.
		\end{center}
	\end{Def}
	
	We will show next that this definition is a particular case of our definition of isomorphic covers by fixing $\mathcal{A}=\mathcal{F}(\Stsim)$, $\mathcal{A}'=\mathcal{F}(\Stsim')$, and $L$ the identity functor.
	Moreover, the notion of isomorphic covers for the resolving subcategory $\mathcal{F}(\Stsim)$ is equivalent to the equivalence of covers of Definition  \ref{uniquenessfaithfuldef}.

	\begin{Prop}\label{equivalenceofnotionsuniqueness}
		Let $(A, \{\Delta(\lambda)_{\lambda\in \Lambda}\})$ , $(A', \{\Delta'(\lambda')_{\lambda'\in \Lambda'}\})$  be split quasi-hereditary algebras over a commutative Noetherian ring $R$.
		Let $(A, P)$ and $(A', B')$ be split quasi-hereditary covers of $B$. The covers $(A, P)$ and $(A', P')$ are equivalent with $L=\id_{B\m}$ in the sense of Definition  \ref{uniquenessfaithfuldef} if and only if they are isomorphic in the sense of Definition \ref{uniquenessAcovers} with respect to the resolving subcategories $\mathcal{F}(\Stsim)$ and $\mathcal{F}(\Stsim')$.
	\end{Prop}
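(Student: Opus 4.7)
For the forward direction, assume the covers are equivalent in Rouquier's sense via a highest weight equivalence $H\colon A\m\to A'\m$ with $\Hom_A(P,-)\simeq \Hom_{A'}(P',-)\circ H$. Being an equivalence of abelian categories, $H$ is exact; being an equivalence between module categories of $R$-algebras, it is Morita and hence commutes with the functor $-\otimes_R U$ for every $U\in R\proj$. By the very definition of an equivalence of highest weight categories, $H(\St(\l))\simeq \St'(\Phi(\l))\otimes_R U_\l$ for a bijection of posets $\Phi$ and invertible $R$-modules $U_\l$, so $H$ carries $\Stsim$ into $\Stsim'$. Exactness together with induction on the length of a $\Stsim$-filtration shows that $H$ restricts to an exact equivalence $\mathcal{F}(\Stsim)\to \mathcal{F}(\Stsim')$; the same argument applied to $H^{-1}$ yields the reverse inclusion. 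Taking $L=\id_{B\m}$, which is trivially the restriction of scalars along $\id_B$, we obtain an isomorphism of covers in the sense of Definition \ref{uniquenessAcovers}.

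For the converse, suppose the covers are isomorphic in the sense of Definition \ref{uniquenessAcovers}, with $H\colon A\m\to A'\m$ restricting to an exact equivalence $\mathcal{F}(\Stsim)\to \mathcal{F}(\Stsim')$ and $L$ the restriction of scalars along an $R$-algebra automorphism $\phi\colon B\to B$. By Theorem \ref{standardsgivesthewholealgebrapre}, the existence of such an exact equivalence already implies that $A\m$ and $A'\m$ are equivalent as split highest weight categories. Inspection of the construction underlying that theorem shows that the produced highest weight equivalence extends the given exact equivalence from $\mathcal{F}(\Stsim)$ uniquely to $A\m$; since $H$ is itself an equivalence of module categories agreeing with this exact equivalence on $\mathcal{F}(\Stsim)$ (which contains $A$ and all of $A\proj$), uniqueness of the extension forces $H$ itself to be an equivalence of highest weight categories. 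In particular $H\St(\l)\simeq \St'(\Phi(\l))\otimes_R U_\l$ for some poset bijection $\Phi$ and invertible $R$-modules $U_\l$.

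It then remains to upgrade the commutativity in Definition \ref{uniquenessAcovers}, which involves $\phi$, to Rouquier's triangle commutativity with $L=\id$. This is achieved by absorbing $\phi$ into the identification $\End_{A'}(P')^{op}\simeq B$ underlying the cover $(A',P')$: replacing this identification by its composition with $\phi^{-1}$ turns $\Hom_{A'}(P',-)$ into its restriction of scalars along $\phi^{-1}$, and converts the relation $L\circ \Hom_A(P,-)\simeq \Hom_{A'}(P',-)\circ H$ into the triangle commutativity required by Definition \ref{uniquenessfaithfuldef}. The main obstacle in the proof is precisely this bookkeeping step, tracking how the automorphism $\phi$ of $B$ interacts with the identification of $B$ with $\End_{A'}(P')^{op}$; once this is handled, the equivalence of the two notions follows directly from Theorem \ref{standardsgivesthewholealgebrapre}.
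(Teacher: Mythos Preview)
Your forward direction matches the paper's argument.

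For the converse, the paper takes a more concrete route than your ``uniqueness of extension'' argument. Rather than invoking Theorem~\ref{standardsgivesthewholealgebrapre} as a black box and arguing that the given $H$ must coincide with the highest weight equivalence it produces, the paper identifies the quasi-inverse $I$ of $H$ explicitly as $\Hom_{A'}(HA,-)$ (since $HA$ is an $A'$-progenerator), notes that $I$ therefore commutes with $-\otimes_R U$ for $U\in R\proj$, and then appeals to the \emph{proof} of Theorem~\ref{standardsgivesthewholealgebrapre} to extract the bijection $\phi$ and the invertible twists directly, verifying $H\St(\l)\simeq \St'(\phi(\l))\otimes_R F_\l$ by an explicit computation with $I$. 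Your argument is essentially correct --- two equivalences $A\m\to A'\m$ agreeing on $A\proj\subset\mathcal{F}(\Stsim)$ are naturally isomorphic, so $H$ is forced to be the highest weight equivalence constructed there --- but the phrase ``inspection of the construction underlying that theorem'' hides exactly the work the paper carries out explicitly, so you are not really saving anything.

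Your final paragraph, absorbing the automorphism $\phi$ of $B$ into the identification $\End_{A'}(P')^{op}\simeq B$, addresses a point the paper leaves implicit: the paper simply concludes from $H$ being a highest weight equivalence that the covers are Rouquier-equivalent, without discussing how the triangle in Definition~\ref{uniquenessfaithfuldef} commutes when the commutativity one starts from involves a possibly non-identity $L$. Your observation that the identification underlying the cover $(A',P')$ can be adjusted to absorb $\phi$ is a reasonable way to close this gap and is consistent with how covers are treated elsewhere in the paper.
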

	\begin{proof}
		Assume that $(A, P)$ and $(A', P')$ are equivalent in the sense of Definition \ref{uniquenessfaithfuldef}. Let \linebreak${H\colon A\m\rightarrow A'\m}$ be a highest weight category equivalence such that $\Hom_{A'}(P', -)\circ H=\Hom_A(P, -)$.  Since $H$ is an equivalence of highest weight categories, there is a bijection $\phi\colon \L\rightarrow \L'$ satisfying $H\St(\l)=\St'(\phi(\l))\otimes_R U_\l$. As $H$ is exact and $H\St(\l)\in \mathcal{F}(\Stsim')$, the restriction functor $H\colon \mathcal{F}(\Stsim)\rightarrow \mathcal{F}(\Stsim')$ is well defined and it is fully faithful and exact. As $U_\l\in Pic(R)$ there is $F_\l$ such that $F_\l\otimes_R U_\l\simeq R$, thus $\St(\l')=H\St(\phi^{-1}(\l'))\otimes_R F_{\l'}=H(\St(\phi^{-1}(\l'))\otimes_R F_{\l'})$. Let $M\in \mathcal{F}(\Stsim')$. By induction on the filtration of $M$, we deduce that $M$ is in the image of $H_{|_{\mathcal{F}(\Stsim)}}$. Therefore, $H$ restricts to an exact equivalence $\mathcal{F}(\Stsim)\rightarrow \mathcal{F}(\Stsim')$ and $B\simeq \End_A(P)^{op}\simeq \End_{A'}(P')^{op}$. Hence,  the covers $(A, P)$ and $(A', P')$ are isomorphic in the sense of Definition \ref{uniquenessAcovers}. 
		
		Conversely, assume that the covers $(A, P)$ and $(A', P')$ are isomorphic in the sense of Definition \ref{uniquenessAcovers} with respect to the resolving subcategories $\mathcal{F}(\Stsim)$ and $\mathcal{F}(\Stsim')$. Let $I$ be the quasi-inverse of $H$.
		Then $HA$ is a $B$-progenerator and for any $Y\in B\m$, \begin{align}
			\Hom_B(HA, Y)\simeq \Hom_A(IHA, IY)\simeq \Hom_A(A, IY)\simeq IY.
		\end{align}In particular, $I$ commutes with tensor products of projective $R$-modules, that is, for $Y\in B\m$ and $X\in R\proj$, $I(Y\otimes_R X)=IY\otimes_R X$.
		In view of the proof of Theorem 6.1 of \cite{appendix}, there is a bijection $\phi\colon \L\rightarrow \L'$ and \begin{align}
			\Hom_B(HA, \St'(\l'))=\St(\phi^{-1}(\l'))\otimes_R U_{\l'}=\St(\l)\otimes_R U_{\l'}, \ U_{\l'}\in Pic(R).
		\end{align}Moreover, as $A$-modules,
		\begin{align}
			I(\St'(\l')\otimes_R F_{\l'})&\simeq I\St'(\l')\otimes_R F_{\l'}\simeq \Hom_A(A, I\St'(\l'))\otimes_R F_{\l'}\simeq \Hom_A(IHA, I\St'(\l'))\otimes_R F_{\l'}\\&\simeq \Hom_B(HA, \St'(\l'))\otimes_R F_{\l'}
			\simeq \St(\l)\otimes_R U_{\l'} \otimes_R F_{\l'} \simeq  \St(\l).
		\end{align}
		Thus,
		$
			H\St(\l)\simeq HI(\St'(\l')\otimes_R F_{\l'})\simeq \St'(\phi(\l))\otimes_R F_{\l},
		$ with $F_{\l'}=F_{\l}$. Thus, $H$ is an equivalence of highest weight categories, and it follows that $(A, P)$ and $(A', P')$ are equivalent in the sense of Definition \ref{uniquenessfaithfuldef}.
	\end{proof}
	The main reason to make the difference between isomorphic covers and equivalent covers is that intuitively the covers constructed in Proposition \ref{idempotentsAcovers} should be equivalent although they are not isomorphic. So, from now on, we will use only the concepts in the sense of Definition \ref{uniquenessAcovers}.
	
	In \citep[Proposition 4.45]{Rouquier2008} it is stated that when $B$ admits a $1$-faithful split quasi-hereditary cover $(A, P)$, for every object $M\in \mathcal{B}:=\mathcal{F}(\Hom_A(P, \Stsim))$ there exists a unique pair $(Y(M), M)$ together with a surjection $\pi_M$ so that $Y(M)$ is relative projective to $\mathcal{B}$ and $\ker \pi_M\in \mathcal{B}$.
	However, in \citep[Proposition 4.45]{Rouquier2008} the assumptions to guarantee the uniqueness of the pair $(Y(M), M)$ have not been stated explicitly. When the ground ring $R$ is assumed to be a local ring, an appropriate assumption is to require $Y(M)$ to be indecomposable when $M$ is indecomposable. This assumption is used in the context of Hecke algebras in \citep[Subsection 4.2]{zbMATH05707949}. In a general context, this assumption does, however, restrict the situation. Indeed, even the standard modules are not indecomposable if $R$ is not local. 
	
	In the following, we first give an example showing that \citep[Proposition 4.45]{Rouquier2008} fails without more restrictive assumptions. Subsequently, we then give an alternative proof of \citep[Corollary 4.46]{Rouquier2008}, avoiding to use \citep[Proposition 4.45]{Rouquier2008} and avoiding to impose additional assumptions, by using instead our Theorem \ref{standardsgivesthewholealgebrapre}.
	
	
	\begin{Example}
		Let $A$	the path algebra of the quiver 
			\begin{tikzcd}
				2\arrow[r, "\alpha", shift left=0.75ex] & 1 \arrow[l, "\beta",  shift left=0.75ex]
			\end{tikzcd} 
	 modulo the ideal generated by $\alpha\beta$. Pick the partial order $2>1$ and $\St(2)=P(2)$, $\St(1)=1$. Trivially, $(A, {}_A A)$ is a 1-faithful split quasi-hereditary cover of $A$. The exact sequence
		\begin{align}
			0\rightarrow \St(2)\oplus \St(2)\rightarrow \St(2)\oplus P(1)\rightarrow \St(1)\rightarrow 0
		\end{align} also satisfies the conditions required for the pair $(Y(\St(1)), p_{\St(1)})$. 
	\end{Example}

	We will now give an alternative proof of \citep[Corollary 4.46]{Rouquier2008}.
	\begin{Cor}\citep[Corollary 4.46]{Rouquier2008}\label{equivalenceofoneuniqueness}
		Let $(A, P)$ and $(A', P')$ be two 1-faithful split quasi-hereditary covers of $B$. Let $F=\Hom_A(P, -)$ and let $F'=\Hom_{A'}(P', -)$. 
		Assume that there exists an exact equivalence $L\colon B\m\rightarrow B\m$ which restricts to an exact equivalence \begin{align}
			\mathcal{F}(F\Stsim)\rightarrow  \mathcal{F}(F'\Stsim'). \label{fceq148}
		\end{align} Then $(A, P)$ and $(A', P')$ are equivalent as  faithful split quasi-hereditary covers of $B$. If, in addition, the given bijection $\phi\colon \L\rightarrow \L'$ associated with the equivalence of categories $H\colon A\m\rightarrow A'\m$ satisfies
		\begin{align*}
			F\St(\l)=F'\St'(\phi(\l))\otimes_R U_\l, \forall \l\in \L.
		\end{align*} where $H\St(\l)\simeq \St'(\phi(\l))\otimes_R U_\l$, $U_\l\in Pic(R)$,
		then $(A, P)$ and $(A', P')$ are isomorphic as split quasi-hereditary covers of $B$.
	\end{Cor}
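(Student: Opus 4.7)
The plan is to construct the equivalence $H\colon A\m \to A'\m$ via Theorem~\ref{standardsgivesthewholealgebrapre}, and then verify that the square commutes by comparing $F'\circ H$ with $L\circ F$ on a class of modules large enough to determine exact functors out of $A\m$.

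First, by Proposition~\ref{onefaithfulcover}, the $1$-faithfulness of $(A,P)$ and $(A',P')$ says that $F$ and $F'$ restrict to exact equivalences $F\colon \mathcal{F}(\Stsim) \xrightarrow{\simeq} \mathcal{F}(F\Stsim)$ and $F'\colon \mathcal{F}(\Stsim') \xrightarrow{\simeq} \mathcal{F}(F'\Stsim')$, with exact quasi-inverses obtained by restricting $G=\Hom_B(FA,-)$ and $G'=\Hom_B(F'A',-)$ to the relevant subcategories. Composing with the hypothesized restriction \eqref{fceq148}, we obtain an exact equivalence
\[
G' \circ L \circ F \colon \mathcal{F}(\Stsim) \xrightarrow{\simeq} \mathcal{F}(\Stsim').
\]
Theorem~\ref{standardsgivesthewholealgebrapre} then upgrades this to an equivalence of split highest weight categories $H\colon A\m \to A'\m$ whose restriction to $\mathcal{F}(\Stsim)$ is naturally isomorphic to $G'\circ L \circ F$, and the associated bijection $\phi\colon \L\to\L'$ satisfies $H\St(\l)\simeq \St'(\phi(\l))\otimes_R U_\l$ for some $U_\l\in Pic(R)$.

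Next, I verify $F'\circ H \simeq L\circ F$ as functors $A\m\to B\m$. On $\mathcal{F}(\Stsim)$ this is immediate, since $F'\circ H \simeq F' \circ G' \circ L \circ F \simeq L\circ F$ using $F'\circ G' \simeq \id$ on $\mathcal{F}(F'\Stsim')$, which contains the essential image of $L\circ F|_{\mathcal{F}(\Stsim)}$. Because $A\proj \subseteq \mathcal{F}(\Stsim)$, the natural isomorphism is already in place on finitely generated projective $A$-modules. Both $F'\circ H$ and $L\circ F$ are compositions of exact equivalences and Schur functors, hence exact. For an arbitrary $M\in A\m$, choose a projective presentation $Q_1\to Q_0\to M \to 0$; applying both exact functors and using the Five Lemma extends the natural isomorphism from $A\proj$ to all of $A\m$. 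This establishes the desired commutative diagram and shows that $(A,P)$ and $(A',P')$ are equivalent in the sense of Definition~\ref{uniquenessAcovers}.

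For the isomorphism statement, the extra hypothesis $F\St(\l)=F'\St'(\phi(\l))\otimes_R U_\l$ together with $H\St(\l)\simeq \St'(\phi(\l))\otimes_R U_\l$ yields $F\St(\l)\simeq F'H\St(\l)$ for every $\l\in\L$; combined with $F'\circ H\simeq L\circ F$ proved above, this gives $L(F\St(\l))\simeq F\St(\l)$ for all $\l$. Since $L$ is exact and $\mathcal{F}(F\Stsim)$ is generated under extensions by the set $F\Stsim$, an induction on filtration length produces a natural isomorphism $L|_{\mathcal{F}(F\Stsim)}\simeq \id$. As $A\in \mathcal{F}(\Stsim)$, the progenerator $FA$ of $B\m$ lies in $\mathcal{F}(F\Stsim)$, so $L(FA)\simeq FA$; tracing the induced map on endomorphism algebras produces an $R$-algebra automorphism $\sigma$ of $B\simeq \End_A(P)^{op}$ such that $L$ is naturally isomorphic to restriction of scalars along $\sigma$, whence $(A,P)$ and $(A',P')$ are isomorphic. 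The main obstacle is the final step: upgrading the pointwise isomorphism $L(FA)\simeq FA$ into an honest identification of $L$ with a scalar-restriction functor, which requires controlling compatibility with the right $A$-action on $FA$ via the chosen natural transformation, rather than just a bare $B$-module isomorphism.
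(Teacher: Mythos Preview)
Your first part is correct and matches the paper's approach almost exactly. The paper also composes $G'\circ L\circ F$ to get an exact equivalence $\mathcal{F}(\Stsim)\to\mathcal{F}(\Stsim')$, invokes Theorem~\ref{standardsgivesthewholealgebrapre}, and then verifies $F'H\simeq LF$ by checking it on projectives and extending by exactness. The only cosmetic difference is that the paper writes out the explicit formula $H=\Hom_A(GL^{-1}F'A',-)$ coming from the proof of Theorem~\ref{standardsgivesthewholealgebrapre} and computes $HX$ for $X\in A\proj$ via a chain of natural isomorphisms, whereas you phrase the same computation as ``$H|_{\mathcal{F}(\Stsim)}\simeq G'LF$ hence $F'H\simeq F'G'LF\simeq LF$''. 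Both are fine.

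For the second part you take a genuinely different route from the paper, and the gap you flag is real. You try to prove that the \emph{given} $L$ is naturally isomorphic to a restriction-of-scalars functor. Even granting $L(F\St(\l))\simeq F\St(\l)$ objectwise, the induction on filtration length does not produce a \emph{natural} isomorphism $L|_{\mathcal{F}(F\Stsim)}\simeq\id$: extensions do not canonically propagate compatibility with morphisms, and you would need the individual isomorphisms on the filtration quotients to be chosen coherently. The subsequent step, promoting $L(FA)\simeq FA$ to an identification of $L$ with restriction along an automorphism of $B$, has the same problem and you correctly identify it as unresolved.

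The paper sidesteps all of this. Rather than analysing $L$, it shows directly that $F\simeq F'H$ as functors $A\m\to B\m$: the extra hypothesis gives $F\St(\l)\simeq F'H\St(\l)$ for each $\l$, induction on Verma filtrations pushes this to $A\proj\subset\mathcal{F}(\Stsim)$, and exactness extends it to $A\m$. Once $F\simeq F'H$, the square commutes with the \emph{identity} in the role of $L$, and the identity is trivially restriction of scalars along $\id_B$. In other words, ``isomorphic'' only asks that \emph{some} restriction-of-scalars $L$ makes the square commute, not that the originally given $L$ does; exhibiting $L=\id$ is enough. Reworking your second paragraph along these lines removes the obstacle entirely.
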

	\begin{proof}
		By Proposition \ref{onefaithfulcover}, there exists an exact equivalence $$\mathcal{F}(\Stsim)\xrightarrow{F} \mathcal{F}(F\Stsim) \xrightarrow{L} \mathcal{F}(F'\Stsim') \xrightarrow{G'} \mathcal{F}(\Stsim').$$
		By Theorem \ref{standardsgivesthewholealgebrapre}, $A$ and $A'$ are equivalent as split quasi-hereditary algebras.
		Furthermore, the equivalence of categories is given by $H=\Hom_A(GL^{-1}F'A', -)\colon A\m\rightarrow A'\m$.  Thus, for every $X\in A\proj$,
		\begin{align}
			HX&\simeq \Hom_A(GL^{-1}F'A', X)\simeq \Hom_B(FGL^{-1}F'A', FX)\simeq \Hom_B(L^{-1}F'A', FX) \\
			&\simeq \Hom_B(F'A', LFX)\simeq \Hom_{A'}(A', G'LFX)\simeq G'LFX.
		\end{align} Therefore, $F'HX\simeq LFX$ for every $X\in A\proj$. Since all functors involved are exact we conclude that $F'H=LF$.

		Assume, in addition, 	 
		$	F\St(\l)=F'\St'(\phi(\l))\otimes_R U_\l, \forall \l\in \L
		$ and $H\St(\l)\simeq \St'(\phi(\l))\otimes_R U_\l$, $U_\l\in Pic(R)$ for some bijection $\phi\colon \L\rightarrow \L'$. 
		Using induction on the filtrations of $M\in A\proj$ we obtain $FM\simeq F'HM$ for every $M\in A\proj$. For any $M\in A\m$, applying $F$ and $F'H$ to one of the projective resolutions of $M$, it follows that $F'HM\simeq FM$. 
	\end{proof}

\section{$\mathcal{A}$-covers under change of ground ring}\label{A-covers under change of ground ring}
We shall now see how  $\mathcal{A}$-covers behave under change of ground ring. Here we need to impose constraints to the resolving subcategories $\mathcal{A}$ we want to work with. As a first step, note that $A\m\cap R\proj$ is a resolving subcategory of $A\m$. So, we will restrict our attention to resolving subcategories of $A\m\cap R\proj$ since exact sequences in this category remain exact under change of ground ring. However, this is not sufficient, so we are interested in resolving subcategories which behave well under change of ground ring in the following sense.

\begin{Def}\label{wellbehavedresolving}
	We will call $\mathcal{R}_A$ a \textbf{well behaved resolving subcategory} of $A\m\cap R\proj$ if it is a resolving subcategory of $A\m\cap R\proj$ and the following properties are satisfied:
	\begin{enumerate}
		\item For any commutative $R$-algebra $S$ which is a Noetherian ring, there is a resolving subcategory $ \mathcal{R}_A(S\otimes_R A)$  of  $S\otimes_R A\m\cap S\proj$ and there is a well defined functor $H\colon \mathcal{R}_A\rightarrow \mathcal{R}_A(S\otimes_R A)$, given by ${M\mapsto S\otimes_R M}$, satisfying \mbox{$\langle H\mathcal{R}_A \rangle=\mathcal{R}_A(S\otimes_R A)$,} where $\langle H\mathcal{R}_A \rangle$ denotes the smallest subcategory of ${S\otimes_RA\m\cap S\proj}$ containing $H\mathcal{R}_A$ closed under direct summands and extensions.
		\item $M\in \mathcal{R}_A$ if and only if $M_\mi\in  \mathcal{R}_A(A_\mi)$ for every maximal ideal $\mi$ in $R$.
		\item $M\in \mathcal{R}_A$ if and only if $M(\mi)\in  \mathcal{R}_A(A(\mi))$ for every maximal ideal $\mi$ in $R$ and $M\in R\proj$.
	\end{enumerate} 
\end{Def}
It follows that $\mathcal{R}_A(A)=R_A$.
From now on we will consider $\mathcal{R}_A$ to be a well behaved resolving subcategory of $A\m\cap R\proj$. Here are some examples of well behaved resolving subcategories.

\begin{Prop}\label{exampleswellbehavedres}
	Let $A$ be a projective Noetherian $R$-algebra. The following assertions hold.
	\begin{enumerate}[(I)]
		\item $A\proj$ is a well behaved resolving subcategory of $A\m\cap R\proj$.
		\item Let $(A, \{\Delta(\lambda)_{\lambda\in \Lambda}\})$ be a split quasi-hereditary algebra. Then $\mathcal{F}(\Stsim)$ is a well behaved resolving subcategory of $A\m\cap R\proj$.
	\end{enumerate}
\end{Prop}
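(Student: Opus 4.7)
The plan is to verify the three conditions of Definition \ref{wellbehavedresolving} in each of the two cases, leveraging the base change theory of split quasi-hereditary algebras and standard facts about projectivity over Noetherian rings.

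For (I), I would first dispatch the resolving property: $A\proj$ contains all projectives of $A\m\cap R\proj$ by definition and is trivially closed under direct summands and extensions; closure under kernels of epimorphisms is automatic because any epimorphism $P\twoheadrightarrow P'$ between finitely generated projectives splits. To produce the functor of Definition \ref{wellbehavedresolving}(1), set $\mathcal{R}_A(S\otimes_R A) := (S\otimes_R A)\proj$ and take $H = S\otimes_R -$. This lands in projectives because $H(A^n)\simeq (S\otimes_R A)^n$ and direct summands of projectives are projective; the opposite containment $\langle H(A\proj)\rangle\supseteq (S\otimes_R A)\proj$ holds because every finitely generated projective $S\otimes_R A$-module is a summand of such a free $(S\otimes_R A)$-module. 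Conditions (2) and (3) are then precisely the classical local-global criteria for projectivity of a finitely presented module over a Noetherian algebra.

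For (II), the central input is the base change property for split quasi-hereditary algebras: if $(A,\{\St(\l)\}_{\l\in\L})$ is split quasi-hereditary over $R$, then $(S\otimes_R A,\{S\otimes_R \St(\l)\}_{\l\in\L})$ is split quasi-hereditary over $S$ for every commutative Noetherian $R$-algebra $S$, and $\mathcal{F}(\widetilde{S\otimes_R\St})$ is a resolving subcategory of $S\otimes_R A\m\cap S\proj$ (see \citep{Rouquier2008} and \citep[Theorem 4.1]{appendix}). I set $\mathcal{R}_A(S\otimes_R A) := \mathcal{F}(\widetilde{S\otimes_R\St})$. The functor $H = S\otimes_R -$ sends $\mathcal{F}(\Stsim)$ into this target because any $\Stsim$-filtration is $R$-split (the subquotients being $R$-projective), so $S\otimes_R -$ preserves exactness and transforms the filtration into a $\widetilde{S\otimes_R\St}$-filtration. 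For the generation statement, observe that each $S\otimes_R\St(\l)\otimes_S U$ with $U\in S\proj$ is a direct summand of $(S\otimes_R\St(\l))^n = H(\St(\l)^n)$ for suitable $n$, and closing under extensions then recovers all of $\mathcal{F}(\widetilde{S\otimes_R\St})$.

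For conditions (2) and (3), the key tool is a local-global principle: $M\in \mathcal{F}(\Stsim)$ if and only if $M\in R\proj$ and $M(\mi)\in \mathcal{F}(\widetilde{\St(\mi)})$ for every $\mi\in\MaxSpec R$, and equivalently iff $M_\mi\in \mathcal{F}(\widetilde{\St_\mi})$ for every such $\mi$; this is essentially \citep[Proposition 4.30]{Rouquier2008} strengthened to the $\Stsim$-setting of \citep{appendix}. The easy directions follow from the exactness of localization and reduction modulo $\mi$ combined with the fact that these operations send standard modules to standard modules for $A_\mi$ and $A(\mi)$ respectively. The main obstacle is the harder direction of going back from the residue field to $R$: here one must combine flatness of $M$ over $R$ with vanishing of $\Ext^1_A(M,\Cs(\l)\otimes_R U)$ locally (using the $\Ext$-characterisation $\mathcal{F}(\Cssim)^{\perp_1}\cap R\proj = \mathcal{F}(\Stsim)$ and the commutation of $\Ext$ with localisation/residue fields for modules in $R\proj$) to lift the filtration. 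Once this descent is available, the well-behavedness of $\mathcal{F}(\Stsim)$ follows without further difficulty.
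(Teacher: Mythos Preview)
Your proposal is correct and follows essentially the same approach as the paper's proof: both arguments verify Definition~\ref{wellbehavedresolving} directly, use the $R$-splitness of $\Stsim$-filtrations to show $S\otimes_R-$ is well defined on $\mathcal{F}(\Stsim)$, handle the generation condition via the summand-of-a-power trick, and appeal to local--global criteria for conditions~(2) and~(3). The paper cites the relevant descent statements from \citep{appendix} (Lemma~4.3, Propositions~5.1 and~5.7) and \citep[Lemma~3.3.2]{CLINE1990126}, while you sketch their content via the $\Ext$-characterisation of $\mathcal{F}(\Stsim)$; note only that your orthogonal should read ${}^{\perp_1}\mathcal{F}(\Cssim)\cap R\proj$ rather than $\mathcal{F}(\Cssim)^{\perp_1}$.
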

\begin{proof}
	Clearly, $A\proj$ is a resolving subcategory of $A\m\cap R\proj$. Condition \ref{wellbehavedresolving}.2 follows from the identity: $\Ext_A^1(M, N)_\mi\simeq \Ext_{A_\mi}^1(M_\mi, N_\mi)$ for every maximal ideal $\mi$ of $R$.  Condition \ref{wellbehavedresolving}.3 follows by \citep[Lemma 3.3.2]{CLINE1990126}. Let $M\in A\proj$. Then $A^t\simeq M\bigoplus K$ for some $t>0$ and some module $K$. Hence, \mbox{$(S\otimes_R A)^t\simeq S\otimes_R M\bigoplus S\otimes_R K$.} So, $S\otimes_RM\in S\otimes_RA\proj$. Thus, the functor $H$ is well defined. Let  $X\in S\otimes_R A\proj$. Hence, $S\otimes_R A^s\simeq (S\otimes_R A)^s\simeq X\bigoplus K$ for some $s>0$ and $S\otimes_R A^s\in \langle H(A\proj)\rangle$, thus $X\in\langle H(A\proj)\rangle$.  So, $(I)$ holds.
	
	By Theorem 4.1 of \cite{appendix}, $\mathcal{F}(\Stsim)$ is a resolving subcategory of $A\m\cap R\proj$. Recall that
	$0=\Ext_A^{i>0}(M, N)$ if and only if $\Ext_{A_\mi}^{i>0}(M_\mi, N_\mi)=0$ for every maximal ideal of $R$.
	By  \citep[Lemma 4.3, Proposition 5.1]{appendix}, Condition \ref{wellbehavedresolving}.2 follows. Condition \ref{wellbehavedresolving}.3 follows from Proposition 5.7 of \cite{appendix}. Since the exact sequences arising from a filtration of $M\in \mathcal{F}(\Stsim)$ are $(A, R)$-exact, applying the tensor product \mbox{$S\otimes_R -$} preserves the filtration and hence $S\otimes_RM\in \mathcal{F}(S\otimes \Stsim)$. So, the functor $H$ is well defined. Let \mbox{$X\in \mathcal{F}(S\otimes_R \Stsim)$.} Then there is a filtration \begin{align}
		0=X_{n+1}\subset X_n\subset \cdots \subset X_1=X, \ X_i/X_{i+1}\simeq S\otimes_R\St_i\otimes_S U_i, 1\leq i \leq n.
	\end{align}
	We will proceed by induction to prove that each $X_i$ belongs to $\langle H\mathcal{F}(\Stsim) \rangle$. For $i=n$, $X_n=S\otimes_R \St_n \otimes_S U_n$ is an $S\otimes_R A$-summand of $(S\otimes_R \St_n)^s\simeq S\otimes_R \St_n^s\in \langle H\mathcal{F}(\Stsim)\rangle$ for some $s>0$. Since $\langle H\mathcal{F}(\Stsim)\rangle$ is closed under direct summands, $X_n\in \langle H\mathcal{F}(\Stsim)\rangle$. 
	Assume that we have proven the result for $X_s$ for $s>i$ for some $i$. Consider the exact sequence
	\begin{align}
		0\rightarrow X_{i+1}\rightarrow X_i\rightarrow S\otimes_R \St_i\otimes_S U_i\rightarrow 0.
	\end{align} By induction, $X_{i+1}\in \langle H\mathcal{F}(\Stsim)\rangle$, and since it is closed under extensions, $X_i\in \langle H\mathcal{F}(\Stsim)\rangle$. Thus, $(II)$ holds.
\end{proof}

As before we will separate the cases -1 and 0 and consider them first.

\begin{Prop}\label{arbitraryfaithfulcoverflat}
	Let $i\in \{-1, 0\}$. Let $\mathcal{R}_A$ be a well behaved resolving subcategory of $A\m\cap R\proj$. The following assertions are equivalent.
	\begin{enumerate}[(a)]
		\item  $(A, P)$ is an $i$-$\mathcal{R}_A$ cover of $B$.
		\item  $(S\otimes_R A, S\otimes_R P)$ is an $i$-$\mathcal{R}_A(S\otimes_R A)$ cover of $S\otimes_R B$ for any commutative flat $R$-algebra $S$ which is a Noetherian ring.
		\item $(A_\mi, P_\mi)$ is an $i$-$\mathcal{R}_A(A_\mi)$ cover of $B_\mi$ for every maximal ideal $\mi$ of $R$.
	\end{enumerate}
\end{Prop}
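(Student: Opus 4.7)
The plan is to show all three statements reduce to the single assertion that $\eta_M$ is an isomorphism (when $i=0$) or a monomorphism together with $\eta_A$ iso (when $i=-1$) for every $M\in\mathcal{R}_A$, and to verify that both ``iso'' and ``mono'' are preserved by flat base change and detected by localization. The technical heart is to show that for any flat commutative $R$-algebra $S$, writing $F^S=\Hom_{S\otimes_RA}(S\otimes_RP,-)$ and $G^S$ for its right adjoint, there is a natural identification $S\otimes_R\eta_M\cong \eta^S_{S\otimes_RM}$ via canonical isomorphisms $S\otimes_RFM\cong F^S(S\otimes_RM)$ and $S\otimes_RGFM\cong G^SF^S(S\otimes_RM)$. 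The first isomorphism follows from Lemma \ref{tensoronsecondcomponetofhom} since $P\in A\proj$; for the second, I would pass through the Hom-tensor adjunction $GFM\cong \Hom_A(\Hom_A(P,A)\otimes_BP,M)$, so that base change is only needed for $\Hom_A(-,M)$ with a finitely generated projective first argument, which is handled again by Lemma \ref{tensoronsecondcomponetofhom} combined with flatness of $S$.

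For (a)$\Rightarrow$(b), I start from Proposition \ref{zeroAcover} (for $i=0$) or Lemma \ref{considerationseta} together with Proposition \ref{dcpropertycover} (for $i=-1$), which translate hypothesis (a) into: $\eta_M$ is an isomorphism (resp.\ $\eta_A$ is an isomorphism and $\eta_M$ is a monomorphism) for every $M\in\mathcal{R}_A$. Flatness of $S$ preserves isomorphisms and monomorphisms, so by the identification above, $\eta^S_{S\otimes_RM}$ is iso (resp.\ mono) for every $M\in\mathcal{R}_A$. Next I would argue that the class $\mathcal{E}$ of objects $N\in S\otimes_RA\m\cap S\proj$ for which $\eta^S_N$ is iso (resp.\ mono) is closed under direct summands (naturality of $\eta^S$) and under extensions: given a short exact sequence $0\to X\to Y\to Z\to 0$, apply the Snake/Five Lemma to the diagram with top row this sequence and bottom row the left-exact sequence $0\to G^SF^SX\to G^SF^SY\to G^SF^SZ$ to deduce $\eta^S_Y$ iso/mono from $\eta^S_X,\eta^S_Z$ iso/mono. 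Since $\mathcal{E}$ contains $H\mathcal{R}_A$, condition \ref{wellbehavedresolving}.1 forces $\mathcal{E}\supseteq\mathcal{R}_A(S\otimes_RA)$, which gives (b).

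For (b)$\Rightarrow$(c), I simply specialize to $S=R_\mi$, which is $R$-flat and satisfies $R_\mi\otimes_RA\cong A_\mi$, $R_\mi\otimes_RB\cong B_\mi$. For (c)$\Rightarrow$(a), I use two standard facts: a morphism of finitely generated $A$-modules is iso (resp.\ mono) iff its localization at every maximal ideal is iso (resp.\ mono); and by condition \ref{wellbehavedresolving}.2, $M\in\mathcal{R}_A$ forces $M_\mi\in\mathcal{R}_A(A_\mi)$ for every $\mi\in\MaxSpec R$. Combining these with the identification $(\eta_M)_\mi\cong\eta^{R_\mi}_{M_\mi}$ from the opening paragraph and the hypothesis (c) applied to $M_\mi$, I conclude that $\eta_M$ is iso (resp.\ mono) for every $M\in\mathcal{R}_A$, which is exactly (a). The case $i=-1$ needs one extra observation: the cover property ``$(A,P)$ is a cover of $B$,'' characterized by $\eta_A$ being an isomorphism (Proposition \ref{dcpropertycover}), is handled inside the same argument because $A\in\mathcal{R}_A$.

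The main obstacle I anticipate is the compatibility statement $S\otimes_R\eta_M\cong\eta^S_{S\otimes_RM}$; the subtlety is that $FA=\Hom_A(P,A)$ is not in general a projective left $B$-module, so one cannot naively apply Lemma \ref{tensoronsecondcomponetofhom} to the outer $\Hom_B$. Routing through $GFM\cong\Hom_A(\Hom_A(P,A)\otimes_BP,M)$ circumvents this, since $\Hom_A(P,A)\otimes_BP$ is finitely generated projective as a right $A$-module (being a summand of $A^n$ via $P\in A\proj$). Everything else is a routine naturality or Snake/Five Lemma argument, together with faithful flatness over $\MaxSpec R$.
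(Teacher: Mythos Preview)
Your overall strategy is exactly the paper's: reduce everything to whether $\eta_M$ is an isomorphism (or a monomorphism together with $\eta_A$ iso) for $M\in\mathcal{R}_A$, establish the compatibility $S\otimes_R\eta_M\cong\eta^S_{S\otimes_RM}$ under flat base change, and then run closure under summands and extensions to pass from $H\mathcal{R}_A$ to $\mathcal{R}_A(S\otimes_RA)$. The implications (b)$\Rightarrow$(c)$\Rightarrow$(a) via localization are also handled the same way.

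The one genuine problem is in your treatment of the compatibility $S\otimes_R GFM\cong G^SF^S(S\otimes_RM)$. You diagnose a difficulty (that $FA$ is not projective over $B$) and propose routing through $GFM\cong\Hom_A(\Hom_A(P,A)\otimes_BP,\,M)$. This workaround does not work: with the natural bimodule structures, $P$ is an $(A,B)$-bimodule and $FA=\Hom_A(P,A)$ is a $(B,A)$-bimodule, so the tensor product $FA\otimes_B P$ is not well-formed, and there is no such isomorphism for $GFM$ in general. Moreover, the obstacle you anticipated is not really there. The flat base change isomorphism
\[
S\otimes_R\Hom_B(X,Y)\;\cong\;\Hom_{S\otimes_RB}(S\otimes_RX,\,S\otimes_RY)
\]
requires only that $X$ be \emph{finitely presented} over $B$, not projective. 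Since $B=\End_A(P)^{op}$ is finitely generated over the Noetherian ring $R$, it is itself Noetherian; and $FA=\Hom_A(P,A)$ is finitely generated over $B$ (indeed over $R$), hence finitely presented. This is precisely how the paper proceeds: it writes down the explicit commutative square comparing $S\otimes_R\eta_M$ with $\eta_{S\otimes_RM}$ using the canonical flat-base-change maps $\omega$ for both the inner $\Hom_A(P,-)$ and the outer $\Hom_B(FA,-)$, and checks commutativity by hand. So you should simply drop the workaround and invoke flat base change directly; Lemma~\ref{tensoronsecondcomponetofhom} is not the relevant tool for the outer $\Hom_B$.
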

\begin{proof}
	Let $M\in A\m\cap R\proj$ and $S$ be a commutative flat $R$-algebra which is a Noetherian ring. Consider the following diagram
	\begin{equation}
		\begin{tikzcd}
			S\otimes_R M\arrow[dd, equal] \arrow[r, "S\otimes_R \eta_M"]& S\otimes_R \Hom_B(\Hom_A(P, A), \Hom_A(P, M))\arrow[d, "\omega_{\Hom_A(P {,}A) {,} \Hom_A(P {,}M)}"]\\
			& \Hom_{S\otimes_R B}(S\otimes_R \Hom_A(P, A), S\otimes_R \Hom_A(P, M))\\
			S\otimes_RA \arrow[r, "\eta_{S\otimes_R M}"] & \Hom_{S\otimes_R B}(\Hom_{S\otimes_R A}(S\otimes_R P, S\otimes_R A),\Hom_{S\otimes_R A}(S\otimes_R P, S\otimes_R M) ) \arrow[u, "\omega_{P {,} M}^{-1}\circ (-)\circ \omega_{P {,} A}"]
		\end{tikzcd}. \label{eqfc34}
	\end{equation}
	The maps $\omega$ are the canonical isomorphisms giving the identity involving $\operatorname{Hom}$ and tensor product with a flat module (see for example \citep[Proposition 3.3.10]{Rotman2009a}).
	This is a commutative diagram. In fact, for every $s, s', s''\in S$, $m\in M$, $g\in \Hom_A(P, A)$, $p\in P$, we have
	\begin{align*}
		\begin{aligned}
			\omega_{P, M}\circ 	\omega_{P, M}^{-1}\circ (-)\circ \omega_{P, A}\circ  \eta_{S\otimes_R M}(s\otimes m)(s'\otimes g)(s''\otimes p)&=\eta_{S\otimes_R M}(s\otimes m)\omega_{P, A}(s'\otimes g)(s''\otimes p)\\&=\omega_{P, A}(s'\otimes g)(s''\otimes p)s\otimes m=ss's''\otimes g(p)m
		\end{aligned}\\
		\begin{aligned}
			&\omega_{P, M}\circ \omega_{\Hom_A(P {,}A), \Hom_A(P, M)}\circ S\otimes_R \eta_M(s\otimes m)(s'\otimes g)(s''\otimes p)=\\& \omega_{P, M}\omega_{\Hom_A(P {,}A), \Hom_A(P, M)}(s\otimes \eta_M(m))(s'\otimes g)(s''\otimes p)= \omega_{P, M}(ss'\otimes \eta_M(m)(g))(s''\otimes p)=ss's''\otimes g(p)m.
	\end{aligned}	\end{align*}
	
	The implication $(b)\implies (c)$ is clear. 
	By (\ref{eqfc34}), it follows that if $S\otimes_R \eta_A$ is an isomorphism then $\eta_{S\otimes_R A}$ is an isomorphism. Thus, since $S$ is flat if $\eta_A$ is an isomorphism, then $\eta_{S\otimes_RA}$ is an isomorphism. On the other hand, if $\eta_{A_\mi}$ is an isomorphism for all $\mi\in \MaxSpec R$, then (\ref{eqfc34}) gives that $(\eta_A)_\mi$ is an isomorphism for all $\mi\in \MaxSpec R$ which in turn implies that $\eta_A$ is an isomorphism. This proves the result for $\mathcal{R}_A=A\proj$.
	
	Assume that $(a)$ holds. By the previous case, $(S\otimes_R A, S\otimes_R P)$ is a cover of $S\otimes_R B$. Let $M\in \mathcal{R}_A$. By assumption, $\eta_M$ is a monomorphism in case $i=-1$ or it is an isomorphism in case $i=0$. Applying the exact functor $S\otimes_R -$, $S\otimes_R \eta_M$ is a monomorphism (resp. an isomorphism) if $i=-1$ (resp. $i=0$)
	In view of the diagram (\ref{eqfc34}), $\eta_{S\otimes_R M}$ is a monomorphism (resp. an isomorphism)  if $i=-1$ (resp. $i=0$).  According to the additivity of $G\circ F$ and Snake Lemma, $\eta_N$ is a monomorphism (resp. an isomorphism)  if $i=-1$ (resp. $i=0$) for any $N\in \langle H\mathcal{R}_A \rangle=\mathcal{R}_A(S\otimes_R A)$. By Proposition \ref{zeroAcover} and Lemma \ref{considerationseta}, $(b)$ follows. 
	
	Assume that $(c)$ holds. By the first case, $( A,  P)$ is a cover of $ B$.
	Let $M\in \mathcal{R}_A$. By assumption, $\eta_{M_\mi}$ is a monomorphism (resp. an isomorphism) in case $i=-1$ (resp. $i=0$) for every maximal ideal $\mi$ in $R$. According to the diagram (\ref{eqfc34}), $(\eta_M)_\mi$ is a monomorphism (resp. an isomorphism) in case $i=-1$ (resp. $i=0$) for every maximal ideal $\mi$ in $R$. Therefore, $\eta_M$ is a monomorphism (resp. isomorphism) in case $i=-1$ (resp. $i=0$).  Thus, $(a)$ follows.
\end{proof}

To simplify the notation, whenever $(S\otimes_R A, S\otimes_R P)$ is a cover of $S\otimes_R B$ we will write $F_S$ to denote the Schur functor associated with this cover and we will write $G_S$ to denote its right adjoint. When $S=R(\mi)$ with $\mi\in \MaxSpec R$ we will write instead $F_{(\mi)}$ and $G_{(\mi)}$.

\begin{Prop}\label{arbitraryfaithfulcoverflattwo}
	Let $\mathcal{R}_A$ be a well behaved resolving subcategory of $A\m\cap R\proj$.	Let $(A, P)$ be a  $0$-$\mathcal{R}_A$ cover of $B$. For $i\geq 1$, the following assertions are equivalent.
	\begin{enumerate}[(a)]
		\item $(A, P)$ is an $i$-$\mathcal{R}_A$ cover of $B$.
		\item $(S\otimes_R A, S\otimes_R P)$ is an $i$-$\mathcal{R}_A(S\otimes_R A)$ cover of $S\otimes_R B$ for any commutative flat $R$-algebra $S$ which is Noetherian ring.
		\item $(A_\pri, P_\pri)$ is an $i$-$\mathcal{R}_A(A_\pri)$ cover of $B_\pri$ for every prime ideal $\pri$ of $R$.
		\item $(A_\mi, P_\mi)$ is an $i$-$\mathcal{R}_A(A_\mi)$ cover of $B_\mi$ for every maximal ideal $\mi$ of $R$.
	\end{enumerate}
\end{Prop}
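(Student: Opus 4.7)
The plan is to follow the circular chain $(a) \Rightarrow (b) \Rightarrow (c) \Rightarrow (d) \Rightarrow (a)$, using Proposition \ref{arbitraryAcover} to translate everything into vanishing of $\R^j G(FM)$ and $\R^j G_S(F_S N)$, and then using flat base change to move these vanishings between the various rings. Note that by Proposition \ref{arbitraryfaithfulcoverflat} applied with $i=0$, the hypothesis that $(A, P)$ is a $0$-$\mathcal{R}_A$ cover of $B$ already propagates: $(S\otimes_R A, S\otimes_R P)$ is a $0$-$\mathcal{R}_A(S\otimes_R A)$ cover of $S\otimes_R B$ for every flat commutative Noetherian $R$-algebra $S$, and similarly after localisation at every prime. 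In particular the corresponding units are isomorphisms on the relevant resolving subcategories.

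For $(a)\Rightarrow (b)$: given $M\in \mathcal{R}_A$, flatness of $S$ together with Lemma \ref{tensoronsecondcomponetofhom} (and $P\in A\proj$) yields natural identifications $S\otimes_R FA\simeq F_S(S\otimes_R A)$ and $S\otimes_R FM\simeq F_S(S\otimes_R M)$, and the standard flat base change for $\Ext$ gives
$$S\otimes_R \R^j G(FM) \simeq \R^j G_S\bigl(F_S(S\otimes_R M)\bigr), \qquad 1\le j\le i.$$
Hence vanishing of the left side for $M\in \mathcal{R}_A$ and $1\le j\le i$ forces vanishing of $\R^j G_S(F_S N)$ for $N\in H\mathcal{R}_A$. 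I would then extend this to all of $\mathcal{R}_A(S\otimes_R A)=\langle H\mathcal{R}_A\rangle$ by showing the full subcategory $\mathcal{V}$ of $\mathcal{R}_A(S\otimes_R A)$ on which $\R^j G_S F_S$ vanishes for $1\le j\le i$ is closed under direct summands (immediate from additivity) and under extensions. This last step is the one that requires attention: exactness of $F_S$ gives the long exact $\R^{\bullet}G_S$ sequence, and the vanishing for $j\ge 2$ follows routinely; the case $j=1$ is where one must invoke the $0$-cover hypothesis. Explicitly, applying $G_S F_S$ to $0\to N'\to N\to N''\to 0$ and using that $\eta_{N'},\eta_N,\eta_{N''}$ are isomorphisms shows the connecting map $G_S F_S N''\to \R^1 G_S F_S N'$ is zero, and then the four-term exact sequence sandwiches $\R^1 G_S F_S N$ between two zeros.

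The implication $(b)\Rightarrow (c)$ is immediate from taking $S=R_\pri$, which is a flat commutative Noetherian $R$-algebra. The implication $(c)\Rightarrow (d)$ is trivial since every maximal ideal is a prime ideal. For $(d)\Rightarrow (a)$: given $M\in \mathcal{R}_A$, Condition \ref{wellbehavedresolving}.2 ensures $M_\mi\in \mathcal{R}_A(A_\mi)$ for every $\mi\in \MaxSpec R$, and flat base change under localisation gives
$$\bigl(\R^j G(FM)\bigr)_\mi \simeq \R^j G_{R_\mi}\bigl(F_{R_\mi}(M_\mi)\bigr),$$
which vanishes for $1\le j\le i$ by $(d)$ and Proposition \ref{arbitraryAcover}. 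Since an $R$-module is zero iff its localisation at every maximal ideal is zero, we conclude $\R^j G(FM)=0$ for $1\le j\le i$ and all $M\in \mathcal{R}_A$, so $(a)$ follows by Proposition \ref{arbitraryAcover}.

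The main obstacle, as indicated, is the closure-under-extensions step in $(a)\Rightarrow (b)$ for $j=1$: this is not formal and genuinely uses that $(S\otimes_R A, S\otimes_R P)$ is already a $0$-$\mathcal{R}_A(S\otimes_R A)$ cover (supplied by Proposition \ref{arbitraryfaithfulcoverflat}). Everything else is a matter of carefully tracking flat base change between $\Hom$, $\Ext$, $F$ and $F_S$, which is legitimate because $A$ and $B$ are Noetherian and $P$, $FA$ are finitely generated projective, respectively finitely presented.
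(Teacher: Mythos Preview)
Your proof is correct and follows essentially the same route as the paper's: the circular chain $(a)\Rightarrow(b)\Rightarrow(c)\Rightarrow(d)\Rightarrow(a)$, flat base change for $\Ext$ to identify $S\otimes_R \R^jG(FM)$ with $\R^jG_S(F_S(S\otimes_R M))$, and then propagation from $H\mathcal{R}_A$ to $\langle H\mathcal{R}_A\rangle$ via direct summands and extensions.

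One remark: you flag the $j=1$ case in the extension closure as the ``main obstacle'' requiring the $0$-cover hypothesis, but in fact it needs no special treatment. If $N',N''\in\mathcal{V}$ (so $\R^jG_SF_SN'=\R^jG_SF_SN''=0$ for $1\le j\le i$), then the portion
\[
\R^1G_SF_SN' \to \R^1G_SF_SN \to \R^1G_SF_SN''
\]
of the long exact sequence already sandwiches $\R^1G_SF_SN$ between two zeros, exactly as for $j\ge 2$. The unit maps $\eta_{N'},\eta_N,\eta_{N''}$ and the connecting map play no role here. The $0$-cover hypothesis is of course needed globally (to invoke Proposition~\ref{arbitraryAcover} at both ends), but not for this particular step. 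The paper handles this in one line, and you could too.
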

\begin{proof}
	$(a)\implies (b)$. By Proposition \ref{arbitraryfaithfulcoverflat}, $(S\otimes_R A, S\otimes_R P)$ is a $0$-$\mathcal{R}_A(S\otimes_R A)$ cover of $S\otimes_R B$ for any commutative flat $R$-algebra $S$ which is a Noetherian ring. Let $M\in \mathcal{R}_A$. Let $1\leq j\leq i$. Then
	\begin{align*}
		\R^jG_S(F_S(S\otimes_R M))&=\Ext_{S\otimes_R B}^j(\Hom_{S\otimes_R A}(S\otimes_R P, S\otimes_R A), \Hom_{S\otimes_R A}(S\otimes_R P, S\otimes_R M))\\&=\Ext_{S\otimes_R B}^j(S\otimes_R \Hom_A(P, A), S\otimes_R \Hom_A(P, M))\\&=S\otimes_R \Ext_B^j(\Hom_A(P, A), \Hom_A(P, M))=S\otimes_R \R^jG(FM)=0
	\end{align*}
	Using long exact sequences coming from the derived functors $\R^j G_S$ and since it commutes with direct summands we obtain that $\R^jG_S(F_S(N))=0$ for all $N\in \langle H\mathcal{R}_A \rangle=\mathcal{R}_A(S\otimes_R A)$. By Proposition \ref{arbitraryAcover}, $(b)$ follows. The implications $(b)\implies (c)\implies (d)$ are clear.
	
	Assume that $(d)$ holds. By Proposition \ref{arbitraryfaithfulcoverflat}, $(A, P)$ is a $0$-$\mathcal{R}_A$ cover of $B$. Let $M\in \mathcal{R}_A$. Let $1\leq j\leq i$. We have, for every maximal ideal $m$ in $R$,
	\begin{align}
		\R^jG(FM)_\mi&=\Ext_{B}^j(\Hom_{A}(P, A), \Hom_{A}(P, M))_\mi\simeq \Ext_{B_\mi}^j(\Hom_{A_\mi}(P_\mi, A_\mi), \Hom_{A_\mi}(P_\mi, M_\mi))\\&=\R^jG_\mi(F_\mi M_\mi)=0,
	\end{align}since $M_\mi\in \mathcal{R}_A(A_\mi)$.
	Therefore, $\R^jG(FM)=0$. By Proposition \ref{arbitraryAcover}, $(A, P)$ is an $i$-$\mathcal{R}_A$ cover of $B$.
\end{proof}

After seeing that $\mathcal{R}_A$-covers behave nicely under extension of scalars to a flat commutative ring, our next step is to see to what extent can the study of $\mathcal{R}_A$-covers be reduced to the study of covers of finite-dimensional algebras.

\begin{Prop}\label{faithfulcoverresiduefieldnext}
	Let $R$ be a regular (commutative Noetherian) ring. Let $\mathcal{R}_A$ be a well behaved resolving subcategory of $A\m\cap R\proj$.  Let $P\in A\m\cap R\proj$ and $i\in \{-1, 0\}$. If  $(A(\mi), P(\mi))$ is an $i$-$\mathcal{R}_A(A(\mi))$ cover of $B(\mi)$ for every maximal ideal $\mi$ of $R$, then $(A, P)$ is an $i$-$\mathcal{R}_A$ cover of $B$.
\end{Prop}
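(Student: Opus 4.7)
By Proposition \ref{arbitraryfaithfulcoverflat} the condition of being an $i$-$\mathcal{R}_A$ cover is local on $R$, so I reduce to the case where $R$ is a regular local ring with maximal ideal $\mi$ and proceed by induction on $\dim R = \gldim R$. When $\dim R = 0$ the ring $R$ equals its residue field $R(\mi)$ and the statement is vacuous. For $\dim R \geq 1$, pick $x \in \mi$ starting a regular system of parameters; then $R/xR$ is a regular local ring of dimension $\dim R - 1$ with the same residue field as $R$. The induction hypothesis, applied to $(A/xA, P/xP)$ over $R/xR$ with the well-behaved resolving subcategory $\mathcal{R}_A(A/xA)$, gives that $(A/xA, P/xP)$ is an $i$-$\mathcal{R}_A(A/xA)$ cover of $B/xB$. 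Consequently the unit $\eta'_{M/xM}$ of the adjunction over $A/xA$ is an isomorphism whenever $M = A$ (for $i = -1$) or $M \in \mathcal{R}_A$ (for $i = 0$).

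\textbf{Lifting step.} Fix one such $M$. Using $P \in A\proj$, Lemma \ref{tensoronsecondcomponetofhom} and Lemma \ref{homtensorarbitraryring} supply canonical isomorphisms $F'(N/xN) \cong FN/xFN$ for $N \in \{A, M\}$ and identify the target of $\eta'_{M/xM}$ with $\Hom_B(FA, FM/xFM)$. Under these identifications and by naturality, $\eta'_{M/xM}$ factors as the composition
\[
M/xM \xrightarrow{\eta_M \otimes \id_{R/xR}} \Hom_B(FA, FM)/x\Hom_B(FA, FM) \xrightarrow{\gamma} \Hom_B(FA, FM/xFM),
\]
where $\gamma$ is the map induced by applying $\Hom_B(FA, -)$ to the short exact sequence $0 \to FM \xrightarrow{x} FM \to FM/xFM \to 0$. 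Because $FM \in R\proj$, multiplication by $x$ is injective on $FM$ and hence on $\Hom_B(FA, FM)$, so the long exact sequence forces $\gamma$ to be a monomorphism. With $\eta'_{M/xM}$ an isomorphism and $\gamma$ injective, $\eta_M \otimes \id_{R/xR}$ must be bijective, whence $\coker \eta_M = x \cdot \coker \eta_M$. Since $\coker \eta_M$ is finitely generated over the local ring $R$ and $x \in \mi$, Nakayama's lemma yields $\coker \eta_M = 0$. Combining this with Lemma \ref{unitmono} — which turns the hypothesis $\eta_{M(\mi)}$ is a monomorphism into $\eta_M$ being an $(A, R)$-monomorphism — shows that $\eta_M$ is an isomorphism.

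\textbf{Conclusion and main obstacle.} For $i = -1$ the lifting step applied to $M = A$ gives $\eta_A$ an isomorphism, i.e.\ $(A, P)$ is a cover of $B$, and Lemma \ref{unitmono} directly yields faithfulness of $F$ on $\mathcal{R}_A$. For $i = 0$ the lifting step yields $\eta_M$ an isomorphism for every $M \in \mathcal{R}_A$, which by Proposition \ref{zeroAcover} is precisely the conclusion. The main obstacle will be ensuring the induction hypothesis is available over $R/xR$: this requires $\mathcal{R}_A(A/xA)$ to itself be a well-behaved resolving subcategory of $(A/xA)\m \cap (R/xR)\proj$, a transitivity property that should follow from the compatibility built into Definition \ref{wellbehavedresolving} — the assignment $S \mapsto \mathcal{R}_A(S \otimes_R A)$ being functorial under composition of base changes — and which is easy to check directly in the standing examples $A\proj$ and $\mathcal{F}(\Stsim)$ treated by Proposition \ref{exampleswellbehavedres}.
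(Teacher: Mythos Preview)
Your proof is correct and follows essentially the same approach as the paper's: reduce to the local case, induct on the Krull dimension via a regular element $x$, use the factorisation of $\eta'_{M/xM}$ through the injective comparison map $\gamma$ (the paper's $\delta$, coming from Lemma~\ref{zerodivisormono}) to force $\eta_M \otimes R/xR$ to be bijective, conclude surjectivity of $\eta_M$ by Nakayama, and obtain injectivity from Lemma~\ref{unitmono}. Your handling of the two cases $i=-1$ and $i=0$ matches the paper's, and your identification of the transitivity of the well-behaved property as the one implicit assumption is accurate---the paper also uses this without comment when invoking the induction hypothesis over $R/xR$.
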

\begin{proof}
	As $P\in R\proj$ and $P(\mi)\in A(\mi)\proj$ for every $\mi\in \MaxSpec R$ we obtain that $P\in A\proj$. 
	By Proposition \ref{arbitraryfaithfulcoverflattwo}, we can assume without loss of generality that $R$ is a regular local commutative Noetherian ring with unique maximal ideal $\mi$. In particular, we can assume that $R$ is an integral domain.
	
	The strategy, inspired by \citep[Proposition 4.36]{Rouquier2008} is for any $M\in \mathcal{R}_A$ to compare $\eta_M(\mi)$ with $R/Rx\otimes_R \eta_M$, where $x$ is a non-zero divisor of $R$, which in turn can be compared with $\eta_{R/Rx\otimes_R M}$. Using such comparisons, the injectivity and bijectivity of the latter map can then be used to infer the injectivity or bijectivity of $\eta_M$.
	
	Let $x\in \mi/\mi^2$ be a non-zero element. Then $Q:=R/Rx$ is a regular commutative Noetherian local ring with Krull dimension $\dim(Q)=\dim R-1$ (see for example \citep[Theorem 8.62, Proposition 8.56]{Rotman2009a}). Also, $\mi_Q:=\mi/Rx$ is the unique maximal ideal of $R$ and $R(\mi)\simeq R/Rx/\mi/Rx\simeq  Q(\mi_Q)$. 
	
	Let $M\in \mathcal{R}_A$. By definition, $Q\otimes_R M\in \mathcal{R}_A(Q\otimes_RA)$.
	
	We will denote by $\mu_{Q, M}$ the following composition of canonical maps
	\begin{center}
		\begin{tikzcd}[column sep=tiny]
\hspace*{-2cm}	Q\otimes_R M \arrow[r, "\eta_{Q\otimes_RM}"near start, shorten >=8ex, above right] &[-2.5em] \hspace*{-1.2cm} \Hom_{Q\otimes_RB}(\Hom_{Q\otimes_R A}(Q\otimes_RP, Q\otimes_R A), \Hom_{Q\otimes_R A}(Q\otimes_RP, Q\otimes_RM))\arrow[d] \\       \Hom_B(\Hom_A(P, A),Q\otimes_R \Hom_A(P, M) ) & \Hom_{Q\otimes_R B}(Q\otimes_R \Hom_A(P, A), Q\otimes_R \Hom_A(P, M)) \arrow[l] 
		\end{tikzcd}. \label{map4}
	\end{center} Explicitly, we have $\mu_{Q, M}(q\otimes m)=(f\mapsto q\otimes \eta_M(m)(f)),$ \mbox{$q\otimes m\in Q\otimes_R M$} (see for example Lemma \ref{homtensorarbitraryring} and \citep[Proposition 2.4]{CRUZ2022410}.)  In particular, if $\eta_{Q\otimes_R M}$  is an isomorphism, then so it is $\mu_{Q, M}$ since $P\in A\proj$.
	We have a commutative triangle
	\begin{center}
		\begin{tikzcd}
			Q\otimes_R M\arrow[rr, "\mu_{Q {,} M}"] \arrow[dr, swap," Q\otimes_R\eta_M"]& &\Hom_B(\Hom_A(P, A),Q \otimes_R\Hom_A(P, M) )\\
			& Q\otimes_R\Hom_B(\Hom_A(P, A), \Hom_A(P, M)) \arrow[ur, hookrightarrow, "\delta"] &
		\end{tikzcd} 
	\end{center}with a monomorphism $\delta$ given by Lemma \ref{zerodivisormono}. It follows that if $\eta_{Q\otimes_R M}$ is bijective then $\delta$ must be also surjective and so, in such a case, $\delta$ must be bijective. This implies that if $Q\otimes_R \eta_M$ is an isomorphism whenever $\eta_{Q\otimes_R M}$. 
	Denote the canonical surjective map $Q\rightarrow Q/\mi/Rx=R(\mi)$ by $\pi$. Now using the commutative diagram
	\begin{center}
		\begin{tikzcd}
			Q \otimes_R M\arrow[r, "Q\otimes_R \eta_M"] \arrow[d, "M\otimes_R \pi"] & Q\otimes_R \Hom_B(\Hom_A(P, A), \Hom_A(P, M)) \arrow[d, "\pi\otimes_R \Hom_B(\Hom_A(P{,} A){,} \Hom_A(P{,} M))"]\\
			R(\mi) \otimes_R	M \arrow[r, "\eta_M(\mi)"] &R(\mi) \otimes_R  \Hom_B(\Hom_A(P, A), \Hom_A(P, M))
		\end{tikzcd},
	\end{center}
	It follows that $\eta_M(\mi)\circ M\otimes_R \pi$ is surjective whenever $\eta_{Q\otimes_R M}$ is an isomorphism. In particular, $\eta_M(\mi)$ is surjective whenever $\eta_{Q\otimes_R M}$ is an isomorphism.
	
	Using this observation, we will proceed by induction on the Krull dimension of $R$ to prove the case $i=0$.
	If $\dim R=0$ then $R$ is a field, and so there is nothing to prove. If $\dim R>1$ we can pick $x\in \mi/\mi^2$ making that $Q:=R/Rx$ has Krull dimension equal to $\dim R -1$. By assumption, $((Q\otimes_RA)(\mi_Q), (Q\otimes_R P(\mi_Q))$ is a $0$-$\mathcal{R}_A((Q\otimes_RA)(\mi_Q))$ cover of $Q\otimes_R B(\mi_Q)$. By induction, \linebreak\mbox{$(Q\otimes_R A, Q\otimes_RP)$} is a $0$-$\mathcal{R}_A(Q\otimes_R A)$ cover of $Q\otimes_RB$. By the previous discussion, we obtain that $\eta_M(\mi)$ is surjective for every $M\in \mathcal{R}_A$. By Nakayama's Lemma, $\eta_M$ is surjective for every $M\in \mathcal{R}_A$. 
	Also, by assumption, $\eta_{M(\mi)}$ is injective, for every $M\in \mathcal{R}_A$.
	By Lemma \ref{unitmono}, it follows that $\eta_M$ is an isomorphism for every $M\in \mathcal{R}_A$ and consequently the case $i=0$ is finished by Proposition \ref{zeroAcover}.
	Assume now that $i=-1$. By the previous case, $(A, P)$ is a cover of $B$. 	Let $M\in \mathcal{R}_A$. By definition, $M(\mi)\in \mathcal{R}_A(A(\mi))$ for every maximal ideal $\mi$ in $R$ and $M\in R\proj$. By Lemma \ref{considerationseta}, $\eta_{M(\mi)}$ is a monomorphism. By Lemma \ref{unitmono}, $\eta_M$ is a monomorphism. Applying again Lemma \ref{considerationseta}, it follows that $(A, P)$ is a $(-1)$-$\mathcal{R}_A$ cover of $B$.
\end{proof}

\begin{Prop}\label{faithfulcoversresiduepart3}
	Let $R$ be a regular (commutative Noetherian) ring.  Let $\mathcal{R}_A$ be a well behaved resolving subcategory of $A\m\cap R\proj$. Let $P\in A\m\cap R\proj$. Let $i\geq 1$. If $(A(\mi), P(\mi))$ is an $i$-$\mathcal{R}_A(A(\mi))$ cover of $B(\mi)$ for every maximal ideal $\mi$ of $R$, then $(A, P)$ is an $i$-$\mathcal{R}_A$ cover of $B$.
\end{Prop}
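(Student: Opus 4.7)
Since the $i=0$ case is already handled by Proposition \ref{faithfulcoverresiduefieldnext}, I may assume $i\geq 1$ and that $(A,P)$ is at least a $0$-$\mathcal{R}_A$ cover of $B$. The plan is first to reduce to the local case by invoking Proposition \ref{arbitraryfaithfulcoverflattwo}, and then to induct on $\dim R$. When $\dim R=0$, $R$ is a field and the statement is among the hypotheses. For the inductive step, choose $x\in\mi\setminus\mi^2$ (where $\mi$ is the maximal ideal of $R$), set $Q:=R/Rx$, and observe that $Q$ is a regular local ring of dimension $\dim R-1$ with $Q(\mi_Q)\simeq R(\mi)$. The hypothesis therefore transfers to $(Q\otimes_R A,Q\otimes_R P)$ over the unique residue field of $Q$, so the inductive hypothesis yields $\R^n G_Q(F_Q(Q\otimes_R N))=0$ for every $1\leq n\leq i$ and every $N\in \mathcal{R}_A(Q\otimes_R A)$; in particular this holds for $N=Q\otimes_R M$ with $M\in \mathcal{R}_A$, by the well-behavedness of $\mathcal{R}_A$.

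The bridge between the $R$-situation and the $Q$-situation will be the universal coefficient theorem (Corollary \ref{Kunnethdeformationresult}). To apply it, I would fix a finitely generated projective resolution $P^{\bullet}\to FA$ in $B\m$ and consider $C^{\bullet}:=\Hom_B(P^{\bullet},FM)$, so that $H^n(C^{\bullet})=\R^n G(FM)$. Since $P\in A\proj$ is a direct summand of some $A^k$, the modules $FA=\Hom_A(P,A)$ and $FM=\Hom_A(P,M)$ are direct summands of $A^k$ and $M^k$ respectively, and thus lie in $R\proj$; consequently each $C^n$ is a summand of $(FM)^{r_n}\in R\proj$, so $C^{\bullet}$ is a cochain complex of finitely generated flat $R$-modules. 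Using Lemmas \ref{tensoronsecondcomponetofhom} and \ref{homtensorarbitraryring} one checks that $C^n\otimes_R Q\simeq \Hom_{Q\otimes_R B}(Q\otimes_R P^n,F_Q(Q\otimes_R M))$; and since $FA\in R\proj$, tensoring the resolution $P^{\bullet}\to FA$ with $Q$ preserves exactness, so $Q\otimes_R P^{\bullet}$ is a finitely generated projective resolution of $F_Q(Q\otimes_R A)\simeq Q\otimes_R FA$ over $Q\otimes_R B$. This yields $H^n(C^{\bullet}\otimes_R Q)\simeq \R^n G_Q(F_Q(Q\otimes_R M))$.

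Applying Corollary \ref{Kunnethdeformationresult} to $C^{\bullet}$ and the $R$-module $Q$ (which has flat dimension one, being resolved by $0\rightarrow R\xrightarrow{x}R\rightarrow Q\rightarrow 0$) produces, for every $n\geq 0$, a short exact sequence
\begin{align*}
0\rightarrow \R^n G(FM)\otimes_R Q\rightarrow \R^n G_Q(F_Q(Q\otimes_R M))\rightarrow \Tor_1^R(\R^{n+1}G(FM),Q)\rightarrow 0.
\end{align*}
For $1\leq n\leq i$ the middle term vanishes by the previous paragraph, so $\R^n G(FM)\otimes_R Q=\R^n G(FM)/x\R^n G(FM)=0$. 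Since $\R^n G(FM)$ is a subquotient of the finitely generated $R$-modules $C^n$ over the Noetherian ring $R$, it is itself finitely generated over $R$, and $x\in\mi=\rad R$ together with Nakayama's Lemma then forces $\R^n G(FM)=0$ for $1\leq n\leq i$. By Proposition \ref{arbitraryAcover} this is exactly the conclusion that $(A,P)$ is an $i$-$\mathcal{R}_A$ cover of $B$. I expect the main technical obstacle to be the precise identification of $H^n(C^{\bullet}\otimes_R Q)$ with $\R^n G_Q(F_Q(Q\otimes_R M))$, which rests on the $R$-projectivity of $FA$ and $FM$ and on a careful comparison of the boundary maps coming from Lemmas \ref{tensoronsecondcomponetofhom} and \ref{homtensorarbitraryring}.
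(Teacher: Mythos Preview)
Your proposal is correct and follows essentially the same approach as the paper's proof: reduce to the local case via Proposition \ref{arbitraryfaithfulcoverflattwo}, induct on $\dim R$, pass to $Q=R/Rx$ for $x\in\mi\setminus\mi^2$, apply the universal coefficient theorem (Corollary \ref{Kunnethdeformationresult}) to the cochain complex computing $\R^nG(FM)$, and conclude by Nakayama. Your Nakayama step is in fact slightly more direct than the paper's (which detours through $R(\mi)$ after obtaining $R/Rx\otimes_R \R^nG(FM)=0$), and you are more explicit about why the terms of $C^\bullet$ lie in $R\proj$ and why $H^n(C^\bullet\otimes_R Q)$ identifies with $\R^nG_Q(F_Q(Q\otimes_R M))$.
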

\begin{proof}
	$(A(\mi), P(\mi))$ is a $0$-$\mathcal{R}_A(A(\mi))$ cover of $B(\mi)$ for every maximal ideal $\mi$ of $R$. By Proposition \ref{faithfulcoverresiduefieldnext}, $(A, P)$ is a $0$-$\mathcal{R}_A$ cover of $B$. We can assume, without loss of generality, that $R$ is a local regular ring. Let $M\in \mathcal{R}_A$.  Let \begin{align}
		\Hom_A(P, A)^{\bullet}\colon\cdots\rightarrow Q_1\rightarrow Q_0\rightarrow 0
	\end{align} be a deleted complex chain obtained by deleting $\Hom_A(P, A)$ from a projective $B$-resolution of $\Hom_A(P, A)$.  Consider the cochain complex $P^{\bullet}=\Hom_B(\Hom_A(P, A)^{\bullet}, \Hom_A(P, M))$. Note that each module in $\Hom_A(P, A)^\bullet$ is projective over $B$, so that each module in $P^{\bullet}$ belongs to $\add_R \Hom_A(P, M)$. In particular, each module in $P^{\bullet}$ is projective over $R$. 
	
	We claim that $\R^jG(FM)=0$, $1\leq j\leq i$. We shall prove it by induction on $\dim R$.
	
	If $\dim R=0$, there is nothing to show. Assume that $\dim R>0$. Let $x\in \mi/\mi^2$. Then $\dim(R/Rx)=\dim R-1$. $\mi/Rx$ is the unique maximal ideal of $R/Rx$ and $R/Rx/\mi/Rx\simeq R/\mi$ as $R$-modules. Hence, for every $X\in A\m$,
	\begin{align}
		R/Rx\otimes_R X(\mi/Rx)=R/Rx/\mi/Rx\otimes_{R/Rx} R/Rx\otimes_R X\simeq R/\mi\otimes_R X=X(\mi).
	\end{align}
	Thus,  $(R/Rx\otimes_R A(\mi/Rx), R/Rx\otimes_R P(\mi/Rx))=(A(\mi), P(\mi))$ is an $i$-$\mathcal{R}_A(A(\mi/Rx))$ cover of \linebreak$R/Rx\otimes_R B(\mi/Rx)$. Denote by $F_x \dashv G_x$ the adjoint functors associated with this cover. Therefore, $R^jG_x(F_x(R/Rx\otimes_R M))=0$ for $1\leq j\leq i$.
	
	Observe that $\pdim_R R/Rx\leq 1$. By Corollary \ref{Kunnethdeformationresult}, for each $1\leq j\leq i$, there exists an exact sequence
	\begin{align}
		0\rightarrow R/Rx\otimes_R H^j(P^{\bullet})\rightarrow H^j(R/Rx\otimes_R P^{\bullet}).
	\end{align}
	Note that, $ H^j(R/Rx\otimes_R P^{\bullet})=R^jG_x(F_x(R/Rx\otimes_R M))=0$ and $H^j(P^{\bullet})=\Ext_B^j(FA, FM)$ for each $1\leq j\leq i$.
	
	Consider the surjective map $R/Rx\rightarrow R/\mi$ induced by the canonical map $R\rightarrow R/\mi$. Applying, for each $1\leq j\leq i$, $\Ext_B^j(FA, FM)\otimes_R -$ yields that $\Ext_B^j(FA, FM)(\mi)=0$, $1\leq j\leq i$. So, we conclude that $\Ext_B^j(FA, FM)=0$ for $1\leq j\leq i$. 
\end{proof}

We say that a cover $(A', P')$ is \textbf{truncated from} $(A, P)$ if there exists an ideal $J$ of $A$ such that $(A', P')$ and $(A/J, P/JP)$ are two isomorphic covers.
We shall now see that under some conditions truncating a cover, the quality of the cover drops at most by one.

\begin{Theorem} \label{truncationcovers} Let $R$ be a commutative Noetherian ring. 
	Let $I$ be an ideal of $R$ such that $I\in R\proj$ and $i\geq 0$. Let $\mathcal{R}_A$ be a well behaved resolving subcategory of $A\m\cap R\proj$. Let $(A, P)$ be an $i$-$\mathcal{R}_A$ cover of $B$.
	Then $(R/I\otimes_R A, R/I\otimes_R P)$ is an $(i-1)$-$\mathcal{R}_A(R/I\otimes_R A)$ cover of $R/I\otimes_R B$.
\end{Theorem}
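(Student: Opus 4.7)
The plan is to adapt the strategy of Proposition \ref{faithfulcoversresiduepart3}, replacing restriction to the residue field by restriction to the quotient $R/I$, and exploiting the hypothesis $I\in R\proj$ (which via $0\to I\to R\to R/I\to 0$ gives $\pdim_R R/I\leq 1$) in order to invoke the universal coefficient theorem, Corollary \ref{Kunnethdeformationresult}. Before tackling the Ext vanishing, I would first verify the cover property separately: tensoring the double centralizer isomorphism $A\simeq\End_B(FA)^{op}$ from Proposition \ref{dcpropertycover} with $R/I$, and using Lemma \ref{homtensorarbitraryring} together with the identification $F_{R/I}(R/I\otimes_R A)\simeq R/I\otimes_R FA$ (an application of Lemma \ref{tensoronsecondcomponetofhom} since $P\in A\proj$), immediately yields the double centralizer isomorphism over $R/I\otimes_R B$.

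For the main step, I would fix $N\in\mathcal{R}_A$ and a projective $B$-resolution of $FA$, with deleted cochain $Q^\bullet$, and form the cochain $P^\bullet:=\Hom_B(Q^\bullet, FN)$, whose cohomology is $H^j(P^\bullet)=\R^jG(FN)$, vanishing in degrees $1\leq j\leq i$ by the hypothesis and Proposition \ref{arbitraryAcover}. Since $B$, each $Q_k$, and $FN$ lie in $R\proj$ (the last because $FN$ is a direct summand of $N^t$, as $P\in A\proj$), the complex $P^\bullet$ is $R$-flat, and the resolution $Q^\bullet\to FA\to 0$ is $R$-split exact (by an inductive splitting argument, since $FA\in R\proj$ and consecutive kernels remain $R$-projective). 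Consequently, $R/I\otimes_R Q^\bullet\to R/I\otimes_R FA\to 0$ is a projective $R/I\otimes_R B$-resolution of $R/I\otimes_R FA\simeq F_{R/I}(R/I\otimes_R A)$; combining Lemma \ref{tensoronsecondcomponetofhom} (applied to the $B$-projective $Q_k$) with the extension-of-scalars adjunction along $B\to R/I\otimes_R B$ then identifies $R/I\otimes_R P^\bullet$ with $\Hom_{R/I\otimes_R B}(R/I\otimes_R Q^\bullet, F_{R/I}(R/I\otimes_R N))$, so that $H^n(R/I\otimes_R P^\bullet)\simeq\R^nG_{R/I}(F_{R/I}(R/I\otimes_R N))$.

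Applying Corollary \ref{Kunnethdeformationresult} to the flat cochain $P^\bullet$ and the $R$-module $R/I$ produces, for each $n\geq 0$, the exact sequence
\begin{equation*}
0\to R/I\otimes_R H^n(P^\bullet)\to H^n(R/I\otimes_R P^\bullet)\to \Tor_1^R(H^{n+1}(P^\bullet), R/I)\to 0.
\end{equation*}
For $1\leq n\leq i-1$ both $H^n(P^\bullet)$ and $H^{n+1}(P^\bullet)$ vanish, forcing $\R^nG_{R/I}(F_{R/I}(R/I\otimes_R N))=0$. For $n=0$, naturality of the unit identifies the leftmost map of this sequence with the canonical comparison $R/I\otimes_R GFN\to G_{R/I}F_{R/I}(R/I\otimes_R N)$, which when pre-composed with $R/I\otimes_R\eta_N$ yields $\eta'_{R/I\otimes_R N}$; when $i\geq 1$ the vanishing of $H^1(P^\bullet)$ promotes this comparison to an isomorphism, so $\eta'_{R/I\otimes_R N}$ is an isomorphism, while when $i=0$ only the injection survives and $\eta'_{R/I\otimes_R N}$ is merely monic. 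Propagating these facts from $H\mathcal{R}_A$ to the full subcategory $\mathcal{R}_A(R/I\otimes_R A)=\langle H\mathcal{R}_A\rangle$ by additivity on direct summands, and by the snake lemma applied to the naturality square of $\eta'$ on extensions, then concludes the proof via Proposition \ref{arbitraryAcover} when $i\geq 1$, and via Lemma \ref{considerationseta} together with the definition of $(-1)$-cover when $i=0$.

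The main technical obstacle lies in the naturality chase at $n=0$: one must verify that the universal coefficient connecting map truly coincides with the canonical comparison $R/I\otimes_R GFN\to G_{R/I}F_{R/I}(R/I\otimes_R N)$, so that what emerges from the universal coefficient sequence is honestly the unit $\eta'$ rather than an unrelated morphism. A secondary but essential point is the $R$-split exactness of the chosen projective $B$-resolution of $FA$, which rests precisely on $FA$ and all $Q_k$ being $R$-projective; this is where the hypothesis that $A$ (hence $B$) is a projective Noetherian $R$-algebra, combined with $P\in A\proj$, plays its decisive role.
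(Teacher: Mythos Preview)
Your overall strategy coincides with the paper's: both exploit $\pdim_R R/I\leq 1$ and apply the universal coefficient theorem (Corollary~\ref{Kunnethdeformationresult}) to the cochain $\Hom_B(Q^\bullet,FN)$ to obtain $\R^nG_{R/I}\bigl(F_{R/I}(R/I\otimes_RN)\bigr)=0$ for $1\leq n\leq i-1$. The difference, and the place where your argument has a real gap, is in degree zero.

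Your separate cover-property argument does not go through as written. After tensoring the double centralizer isomorphism with $R/I$ and invoking Lemma~\ref{homtensorarbitraryring}, you are left with identifying $R/I\otimes_R\Hom_B(FA,FA)$ with $\Hom_B(FA,R/I\otimes_R FA)$. But Lemma~\ref{tensoronsecondcomponetofhom} requires the \emph{first} argument to be projective over the relevant algebra, and $FA$ is only a $B$-generator, not $B$-projective in general; the identification is therefore not ``immediate''. For $i\geq 1$ this is harmless, since your UCT-at-$n=0$ argument applied to $N=A$ already yields that $\eta^{R/I}_{R/I\otimes_RA}$ is an isomorphism (because $\R^1G(FA)=0$), and the cover property follows. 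For $i=0$ the UCT gives only injectivity, so the cover property is not established.

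The paper does not transport the double centralizer directly. Instead, for each $M\in\mathcal{R}_A$ it tensors the sequence $0\to I\to R\to R/I\to 0$ with $M$ (using $I\in R\proj$ so that $I\otimes_RM\in\add_AM\subset\mathcal{R}_A$), applies $GF$, and reads off from the Snake Lemma that the unit $\eta_{R/I\otimes_RM}$ of $F\dashv G$ at the $A$-module $R/I\otimes_RM$ is a monomorphism, and an isomorphism once $\R^1G(FM)=0$. It then carries out exactly the naturality chase you flag as the main obstacle: two explicit commutative diagrams (built from Lemmas~\ref{homtensorarbitraryring} and~\ref{tensoronsecondcomponetofhom}, the latter applied only with $P\in A\proj$, where it is legitimate) identify $\eta_{R/I\otimes_RM}$ with the unit $\eta^{R/I}_{R/I\otimes_RM}$ of $F_{R/I}\dashv G_{R/I}$ up to isomorphism. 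This single argument replaces both your separate cover-property step and your UCT-at-$n=0$ step; for $i\geq 2$ the paper then proceeds via UCT exactly as you outline.
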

\begin{proof}
	Denote by $Q$ the commutative ring $R/I$. Consider the exact sequence
	\begin{align}
		0\rightarrow I\rightarrow R\rightarrow Q\rightarrow 0 \label{eqfc60}.
	\end{align} This exact sequence induces the fully faithful functor $H\colon Q\otimes_R A\m\rightarrow A\m$. Moreover, for every $M\in Q\otimes_R A\m$,
	$Q\otimes_R HM \simeq HM/IHM=HM=M$. Hence, it is enough, to show that if $\eta_{Q\otimes_R M}^Q$ is an isomorphism (resp. a monomorphism) for every $M\in \mathcal{R}_A$, then $(Q\otimes_R A, Q\otimes_R P)$ is a $0$-$\mathcal{R}_A(Q\otimes_RA)$ (resp. $(-1)$-$\mathcal{R}_A(Q\otimes_RA)$) cover of $Q\otimes_R B$.
	
	Here, $\eta^Q$ denotes the unit associated with the adjunction \begin{align}
		F_Q:=\Hom_{Q\otimes_R A}(Q\otimes_R P, -) \dashv \Hom_{Q\otimes_R B}(F_Q(Q\otimes_R A), -):=G_Q.	\end{align}
	First, we will show that for every $M\in \mathcal{R}_A$ we can relate $\eta_M$ with $\eta^Q_{Q\otimes_R M}$.
	
	Applying $-\otimes_R M$ and $GF(-\otimes_R M)$ to (\ref{eqfc60})  yields the commutative diagram
	\begin{equation}
		\begin{tikzcd}
			0 \arrow[r] & I\otimes_R M\arrow[r] \arrow[d, "\eta_{I\otimes_R  M}"] & M \arrow[r] \arrow[d, "\eta_M"] & R/I\otimes_R M \arrow[d, "\eta_{Q\otimes_R M}"] \arrow[r] & 0\\
			0 \arrow[r] & GF(I\otimes_R M) \arrow[r] & GFM \arrow[r] & GF(R/I\otimes_R M)\arrow[r] & \Ext_B^1(FA, F(I\otimes_R M))
		\end{tikzcd}\label{eqfc62}
	\end{equation} with exact rows. Since $I\in R\proj$, $I\otimes_R M\in \add_A M$. Thus, $I\otimes_R M\in \mathcal{R}_A(A)$. Hence, $\eta_{I\otimes_R M}$ and $\eta_M$ are isomorphisms. By Snake Lemma, $\eta_{Q\otimes_R M}$ is a monomorphism. If $\Ext_B^1(FA, F(I\otimes_R M))=0$, then $\eta_{Q\otimes_R M}$ is an isomorphism.
	
	On the other hand, there are isomorphisms $\delta$ and $\psi$ making the following diagram commutative:
	\begin{equation}
		\begin{tikzcd}
			Q\otimes_R M \arrow[r, "\eta_{Q\otimes_R M}"] \arrow[d, "\mu"] & GF(Q\otimes_R M)\\
			\Hom_{Q\otimes_R B}(Q\otimes_R FA, Q\otimes_R FM)\arrow[r, "\delta"] & \Hom_B(FA, Q\otimes_R FM) \arrow[u, "\Hom_B(FA{,} \psi)"] 
		\end{tikzcd}. \label{eqfc63}
	\end{equation}
	
	By Lemma \ref{homtensorarbitraryring}, $\delta$ is an isomorphism. By Lemma \ref{tensoronsecondcomponetofhom}, $\psi$ is an isomorphism. We define $\mu$ to be the $Q\otimes_RA$-homomorphism that maps $m\otimes q$ to the map $(q_1\otimes f\mapsto qq_1\otimes f(-)m)$. We claim that (\ref{eqfc63}) is commutative. Let $m\in M, \ q\in Q, \ g\in FA, \ p\in P$. Then
	\begin{align}
		\Hom_B(FA, \psi)\delta\mu(q\otimes m)(g)(p)&= \psi(\delta \mu(q\otimes m))(g)(p)= \psi(\mu(q\otimes m)(1_Q\otimes g))(p) \\&=\psi(q\otimes g(-)m)(p)=q\otimes g(p)m
		\\&=\eta_{Q\otimes_R M}(q\otimes m)(g)(p).
	\end{align}
	Finally, we shall relate $\mu$ with $\eta_{Q\otimes_R M}^Q$. There exists a commutative diagram
	\begin{equation}
		\begin{tikzcd}
			Q\otimes_R M\arrow[r, "\mu"] \arrow[d, "\eta_{Q\otimes_R M}^Q"] & \Hom_{Q\otimes_R B}(Q\otimes_R FA, Q\otimes_R FM)\arrow[d, "\Hom_B(Q\otimes_R FA{,} \varphi_M)"] \\
			\Hom_{Q\otimes_R B}(F_Q (Q\otimes_R A), F_Q(Q\otimes_R M)) \arrow[r, "\Hom_{Q\otimes_R B}(\varphi_A {,} F_Q(Q\otimes_R M))", outer sep=0.75ex] & \Hom_{Q\otimes_R B}(Q\otimes_R FA, F_Q(Q\otimes_R M))
		\end{tikzcd},\label{eqfc67}
	\end{equation}where $\varphi_X$, $X\in A\m$, is the canonical isomorphism $Q\otimes_R FX\rightarrow F_Q Q\otimes_R X$.
	In fact, for any $m\in M, p\in P, f\in FA, q_1, q_2, q_3\in Q$
	\begin{align*}
		\begin{aligned}
			\hspace*{-1em}\Hom_{Q\otimes_R B}(\varphi_A, F_Q(Q\otimes_R M))\eta_{Q\otimes_R M}^Q(q_1\otimes m)(q_2\otimes f)(q_3\otimes p)&= \eta_{Q\otimes_R M}^Q(q_1\otimes m)(\varphi_A(q_2\otimes f))(q_3\otimes p)
			\\ &= \varphi_A(q_2\otimes f)(q_3\otimes p)(q_1\otimes m)\\&=q_2q_3\otimes f(p) q_1\otimes m\\&= q_1q_2q_3\otimes f(p)m\end{aligned}\\ \begin{aligned}
			\Hom_B(Q\otimes_R FA, \varphi_M)\mu(q_1\otimes m)(q_2\otimes f)(q_3\otimes p)&= \varphi_M (\mu(q_1\otimes m)(q_2\otimes f))(q_3\otimes p)\\&= \varphi_M(q_1q_2\otimes f(-)m)(q_3\otimes p)
			\\&=q_1q_2q_3\otimes f(-)m(p)\\&=q_1q_2q_3\otimes f(p)m.\nonumber 
	\end{aligned}\end{align*}
	Thus, using commutative diagrams (\ref{eqfc63}) and (\ref{eqfc67}) we deduce that $\eta_{Q\otimes_R M}^Q$ is a monomorphism. Further, $\eta_{Q\otimes_R M}^Q$ is an isomorphism if $\R^1G(FM)=0$. So, the result follows for $i\in \{0, 1\}$. Assume that $i\geq 1$. Then $(Q\otimes_R A, Q\otimes_R P)$ is a $0$-$\mathcal{R}_A(Q\otimes_R A)$ cover of $Q\otimes_R B$. The exact sequence (\ref{eqfc60}) yields that $\flatdim_R Q\leq 1$. Let $FA^{\bullet}$ be a deleted projective $B$-resolution of $FA$ and $M\in \mathcal{R}_A$. By Corollary \ref{Kunnethdeformationresult}, for each $n\geq 0$, there exists an exact sequence
	\begin{align}
		H^n(\Hom_B(FA^{\bullet}, FM))\otimes_R Q\hookrightarrow H^n(\Hom_B(FA^{\bullet}, FM)\otimes_R Q)\twoheadrightarrow \Tor_1^R(H^{n+1}(\Hom_B(FA^{\bullet}, FM)), Q). \nonumber
	\end{align}
	Notice that $H^n(\Hom_B(FA^{\bullet}, FM))=\Ext_B^n(FA, FM)=0, \ 1\leq n\leq i$. Hence, \begin{align}
		0&=H^n(\Hom_B(FA^{\bullet}, FM)\otimes_RQ)=H^n(\Hom_{Q\otimes_R B}(Q\otimes_R FA^{\bullet}, Q\otimes_R FM))\\&=H^n(\Hom_{Q\otimes_R B}(F_Q(Q\otimes_R A)^{\bullet}, F_Q Q\otimes_R M) )=\Ext_{Q\otimes_R B}^n(F_QQ\otimes_R A, F_Q Q\otimes_R M), \ 1\leq n\leq i-1. \nonumber
	\end{align} It follows that $(Q\otimes_R A, Q\otimes_R P)$ is an $(i-1)$-$\mathcal{R}_A(Q\otimes_R A)$ cover of $Q\otimes_R B$.
\end{proof}

We can describe Theorem \ref{truncationcovers} not just for projective ideals of $R$ but also for prime ideals of $R$ in case $R$ is a commutative Noetherian regular local ring.

\begin{Cor}\label{coverheightprimeideal}
	Let $R$ be a commutative Noetherian regular local ring. Let $\mathcal{R}_A$ be a well behaved resolving subcategory of $A\m\cap R\proj$. Let $(A, P)$ be an $i$-$\mathcal{R}_A$ cover of $B$ for some integer $i\geq 0$. Then \linebreak $(R/\mathfrak{p}\otimes_R A, R/\mathfrak{p}\otimes_R P)$ is an $(i-\height(\mathfrak{p}))$-$\mathcal{R}_A(R/\mathfrak{p}\otimes_R A)$ cover of $R/\mathfrak{p}\otimes_R B$ for every prime ideal $\mathfrak{p}$ of $R$ with $\height(\mathfrak{p})\leq i+1$.
\end{Cor}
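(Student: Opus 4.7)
The plan is to argue by induction on $h := \height(\mathfrak{p})$. For the base case $h = 0$, since $R$ is a regular local ring and hence a domain, the only prime of height zero is $\mathfrak{p} = (0)$, so $R/\mathfrak{p} = R$ and the statement coincides with the hypothesis that $(A, P)$ is an $i$-$\mathcal{R}_A$ cover of $B$.

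For the inductive step with $h \geq 1$, the main tool is Theorem \ref{truncationcovers}, which asserts that truncating a cover by a projective ideal decreases its quality by exactly one. In our setting, $R$ is a regular local domain, so any nonzero $x \in R$ is a non-zero divisor and the principal ideal $(x) \cong R$ is free, hence projective over $R$. Picking any nonzero $x\in\mathfrak{p}$, Theorem \ref{truncationcovers} yields that $(R/(x) \otimes_R A,\, R/(x) \otimes_R P)$ is an $(i-1)$-$\mathcal{R}_A(R/(x) \otimes_R A)$ cover of $R/(x) \otimes_R B$. When $h = 1$, the regular local (and therefore UFD) ring $R$ has $\mathfrak{p}$ itself principal, so one takes $(x) = \mathfrak{p}$ and the assertion follows immediately. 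When $h \geq 2$, the aim is to iterate by choosing $x \in \mathfrak{p} \cap (\mathfrak{m}\setminus \mathfrak{m}^2)$: then $R/(x)$ is again a regular local ring of Krull dimension $\dim R - 1$ and, by the Cohen--Macaulay property of $R$, the image $\mathfrak{p}/(x)$ is a prime of height $h - 1$ in $R/(x)$, so the induction hypothesis applied to $(R/(x), \mathfrak{p}/(x))$ produces the desired $(i-h)$-cover over $R/\mathfrak{p}$. Throughout the iteration, the well behaved resolving subcategory structure is preserved by Definition \ref{wellbehavedresolving}, which guarantees that $\mathcal{R}_A$ descends compatibly under the successive base changes.

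The main obstacle is the possibility that $\mathfrak{p} \subseteq \mathfrak{m}^2$, in which case no element of $\mathfrak{p}$ lies in $\mathfrak{m}\setminus \mathfrak{m}^2$ and the iterative reduction via regular systems of parameters breaks down. To treat this exceptional case one would emulate the argument of Theorem \ref{truncationcovers} directly for the non-projective ideal $\mathfrak{p}$: take a deleted projective $B$-resolution $\Hom_A(P, A)^{\bullet}$ of $\Hom_A(P, A)$, form the cochain complex $\Hom_B(\Hom_A(P, A)^{\bullet},\,FM)$ for $M \in \mathcal{R}_A$, and tensor it with $R/\mathfrak{p}$. Since $R$ is regular local, $R/\mathfrak{p}$ has finite flat dimension over $R$, and one needs a Künneth-type spectral sequence generalizing Corollary \ref{Kunnethdeformationresult} to modules of higher flat dimension; the hypothesis $\Ext_B^j(FA, FM) = 0$ for $1 \leq j \leq i$ should then force the vanishing of $\Ext^j_{R/\mathfrak{p} \otimes_R B}(FA/\mathfrak{p} FA,\, FM/\mathfrak{p} FM)$ for $1 \leq j \leq i - h$, which by Proposition \ref{arbitraryAcover} is precisely the $(i-h)$-cover condition.
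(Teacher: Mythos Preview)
Your strategy—induction on $\height(\mathfrak{p})$ with repeated use of Theorem~\ref{truncationcovers}—is exactly the paper's, and your treatment of $h=0$ and $h=1$ is correct. The divergence is in the descent for $h\geq 2$: you try to stay inside the class of regular local rings by choosing $x\in\mathfrak{p}\cap(\mathfrak{m}\setminus\mathfrak{m}^2)$ so that $R/(x)$ is again regular local, and then recurse on $\mathfrak{p}/(x)$. As you recognise, this breaks when $\mathfrak{p}\subseteq\mathfrak{m}^2$, and that case genuinely occurs even for $h\geq 2$ (for instance the defining ideal of the monomial curve $t\mapsto(t^3,t^4,t^5)$ in $k[[x,y,z]]$ is a height-two prime contained in $\mathfrak{m}^2$). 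Your proposed spectral-sequence fix for this exceptional case is only a heuristic outline, not a proof: you have neither produced the higher-flat-dimension analogue of Corollary~\ref{Kunnethdeformationresult} nor verified that the differentials behave so as to force the required vanishing.

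The paper organises the induction differently and thereby sidesteps your obstacle. Instead of passing through quotients $R/(x)$ chosen to be regular, it fixes a saturated chain of primes $0=\mathfrak{p}_0\subset\mathfrak{p}_1\subset\cdots\subset\mathfrak{p}_n=\mathfrak{p}$ in $R$ and runs the induction on $n$ entirely over the fixed ring $R$: the inductive hypothesis applied to $\mathfrak{p}_{n-1}$ (a prime of $R$ of height $n-1$) gives an $(i-n+1)$-cover over $R/\mathfrak{p}_{n-1}$, and then a single application of Theorem~\ref{truncationcovers} over the base $R/\mathfrak{p}_{n-1}$ with the ideal $\mathfrak{p}/\mathfrak{p}_{n-1}$ drops the level by one more. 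The point is that Theorem~\ref{truncationcovers} only requires a projective ideal over an arbitrary commutative Noetherian base; it never needs the base to be regular, so whether $\mathfrak{p}$ meets $\mathfrak{m}\setminus\mathfrak{m}^2$ is irrelevant. What is needed is that the height-one prime $\mathfrak{p}/\mathfrak{p}_{n-1}$ be principal (hence free of rank one) in the local domain $R/\mathfrak{p}_{n-1}$, and the paper obtains this by asserting that $R/\mathfrak{p}_{n-1}$ is regular local and therefore a UFD.
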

\begin{proof}
	Let $\mathfrak{p}$ be a prime ideal of $R$. Suppose that, for $n=\height(\mathfrak{p})$, \begin{align}
		0=\mathfrak{p}_0\subset \mathfrak{p}_1\subset \cdots \subset \mathfrak{p}_n=\mathfrak{p}
	\end{align} is the largest chain of distinct prime ideals that are contained in $\mathfrak{p}$. We will proceed by induction on $n=\height(\mathfrak{p})$.
	
	If $n=0$, there is nothing to show. Assume that $n>0$. By construction, $\height(\mathfrak{p}_{n-1})=\height(\mathfrak{p})-1=n-1$, or even, $\height(\mathfrak{p}/\mathfrak{p}_{n-1})=1$. By induction, $(R/\mathfrak{p}_{n-1}\otimes_R A, R/\mathfrak{p}_{n-1}\otimes_R P)$ is an $(i-\height(\mathfrak{p}_{n-1}))$-$\mathcal{R}_A(R/\mathfrak{p}_{n-1}\otimes_R A)$ cover of $R/\mathfrak{p}_{n-1}\otimes_R B$. On the other hand, $R/\mathfrak{p}_{n-1}$ is a local regular ring. Hence, $R/\mathfrak{p}_{n-1}$ is a unique factorization domain. Therefore, every prime ideal of height one is principal. So, $\mathfrak{p}/\mathfrak{p}_{n-1}=R/\mathfrak{p}_{n-1}x\in R/\mathfrak{p}_{n-1}\proj$ for some $x\in R/\mathfrak{p}_{n-1}$. Note that, $i-\height(\mathfrak{p}_{n-1})=i-\height(\mathfrak{p})+1\geq i-i-1+1=0$.  By Theorem \ref{truncationcovers},  \begin{align*}
		(R/\mathfrak{p}\otimes_R A, R/\mathfrak{p}\otimes_R P)=(R/\mathfrak{p}_{n-1}/\mathfrak{p}/\mathfrak{p}_{n-1}\otimes_{R/\mathfrak{p}_{n-1}} R/\mathfrak{p}_{n-1}\otimes_R A, R/\mathfrak{p}_{n-1}/\mathfrak{p}/\mathfrak{p}_{n-1}\otimes_{R/\mathfrak{p}_{n-1}} R/\mathfrak{p}_{n-1}\otimes_R P)
	\end{align*}is an $i-\height(\mathfrak{p})$-$\mathcal{R}_A(R/\mathfrak{p}\otimes_R A)$ cover of $R/\mathfrak{p}\otimes_R B$.
\end{proof}

Now, we shall see that under some conditions we can obtain a reciprocal statement of Theorem \ref{truncationcovers}. Furthermore, we want to establish, similar to Rouquier's work, 
that by increasing the Krull dimension of the ground ring we can create covers with higher quality.

\begin{Theorem}\label{deformationpartthhre}
	Let $R$ be a commutative Noetherian regular ring with Krull dimension at least one. Let $i\geq 0$. Let $\mathcal{R}_A$ be a well behaved resolving subcategory of $A\m\cap R\proj$. Let $P\in A\m\cap R\proj$. Assume that $(K\otimes_R A, K\otimes_RP)$ is an $(i+1)$-$\mathcal{R}_A(K\otimes_RA)$ cover of $K\otimes_R B$ for some Noetherian commutative flat $R$-algebra $K$. 
	
	If $(A(\mi), P(\mi))$ is an $i$-$\mathcal{R}_A(A(\mi))$ cover of $B(\mi)$ for every maximal ideal $\mi$ of $R$, then $(A, P)$ is a $(1+i)$-$\mathcal{R}_A$ cover of $B$.
\end{Theorem}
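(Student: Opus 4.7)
The plan is first to reduce to the local case: by Proposition \ref{arbitraryfaithfulcoverflattwo}, being an $(i+1)$-$\mathcal{R}_A$ cover is local on $\Spec R$, and for each maximal ideal $\mi$ of $R$ both hypotheses transfer to $R_\mi$ ($K_\mi := K \otimes_R R_\mi$ is flat over $R_\mi$ and inherits the $(i+1)$-cover property via the same Proposition, while the residue-field hypothesis at $\mi$ is intrinsic to $R_\mi$). So I assume $R$ is a regular local ring of dimension $d \geq 1$ with maximal ideal $\mi$. Proposition \ref{faithfulcoversresiduepart3} combined with the residue-field hypothesis immediately gives that $(A, P)$ is already an $i$-$\mathcal{R}_A$ cover of $B$, so the task reduces to showing $N := \Ext^{i+1}_B(\Hom_A(P, A), \Hom_A(P, M)) = 0$ for every $M \in \mathcal{R}_A$. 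The flatness of $K$ together with the $K$-hypothesis yields at once $K \otimes_R N \simeq \Ext^{i+1}_{K \otimes_R B}(F_K(K \otimes A), F_K(K \otimes M)) = 0$; combined with $K \neq 0$, once I know the finitely generated $R$-module $N$ is free, writing $N \simeq R^n$ forces $K^n = 0$, hence $n = 0$ and $N = 0$.

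To show that $N$ is free, I would verify that a regular system of parameters $x_1, \ldots, x_d$ for $\mi$ forms a regular sequence on $N$; then $\operatorname{depth}_R N = d$, so $N$ is maximal Cohen--Macaulay and therefore free by the Auslander--Buchsbaum--Serre theorem for regular local rings. Set $R_k := R/(x_1, \ldots, x_k)$, which is again regular local with the same residue field $R(\mi)$, and write $L_k^j := \Ext^j_{R_k \otimes_R B}(F_k(R_k \otimes_R A), F_k(R_k \otimes_R M))$. Proposition \ref{faithfulcoversresiduepart3} applied to $R_k$ gives $L_k^j = 0$ for $1 \leq j \leq i$. Feeding the cochain $\Hom_{R_k \otimes_R B}(F_k(R_k \otimes_R A)^\bullet, F_k(R_k \otimes_R M))$ (whose terms are $R_k$-projective by Lemma \ref{tensoronsecondcomponetofhom}) together with the flat-dimension-one quotient $R_{k+1} = R_k/(x_{k+1})$ into Corollary \ref{Kunnethdeformationresult} yields, at degree $j = i$, the vanishing $L_k^{i+1}[x_{k+1}] = 0$ (so $x_{k+1}$ is a non-zero divisor on $L_k^{i+1}$), and at $j = i+1$ an embedding $L_k^{i+1}/x_{k+1} L_k^{i+1} \hookrightarrow L_{k+1}^{i+1}$. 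Starting from $N = L_0^{i+1}$ and iterating down the tower, I would propagate embeddings $N/(x_1, \ldots, x_k) N \hookrightarrow L_k^{i+1}$ so that the non-zero-divisor property of $x_{k+1}$ on $L_k^{i+1}$ descends to the submodule $N/(x_1, \ldots, x_k) N$, producing the regular sequence.

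The delicate point is maintaining the inductive embedding $N/(x_1, \ldots, x_k) N \hookrightarrow L_k^{i+1}$ as $k$ grows: tensoring an established inclusion over $R_k$ with $R_{k+1}$ is injective only when the cokernel is $x_{k+1}$-torsion-free, and the cokernel involves $x_j$-torsion of higher Ext groups $\Ext^{i+2}_{R_{k-1} \otimes_R B}(\ldots)$ that the hypotheses do not directly control. I expect the cleanest way through is to bypass the step-by-step embedding in favour of the hypercohomology spectral sequence $E_2^{-p, q} = \Tor_p^R(\Ext^q_B(\Hom_A(P, A), \Hom_A(P, M)), R_{k+1}) \Rightarrow L_{k+1}^{q-p}$: the outgoing differentials from the corner $(-1, i+1)$ vanish by degree reasons, and using the vanishing of $\Ext^j_B$ for $1 \leq j \leq i$ (already established) together with the vanishing of the abutment $L_{k+1}^j = 0$ in the same range should force $\Tor_1^R(N, R(\mi)) = 0$. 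The local criterion of flatness then delivers that $N$ is $R$-free, and the conclusion follows.
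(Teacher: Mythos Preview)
Your reduction to the local case and the invocation of Proposition \ref{faithfulcoversresiduepart3} to get the $i$-cover property over $R$ are fine, and you correctly isolate the remaining task as proving $N:=\Ext_B^{i+1}(FA,FM)=0$. You also correctly deduce $K\otimes_R N=0$ and correctly obtain, for every $k$, that $x_{k+1}$ is a non-zero-divisor on $L_k^{i+1}$ (the case $i=0$ needs the identification of $L_k^0$ with $R_k\otimes_R M$, but that follows from the $0$-cover property exactly as in the paper).

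The genuine gap is the one you flag yourself, and your proposed spectral-sequence repair does \emph{not} close it. With $E_2^{-p,q}=\Tor_p^R\!\big(\Ext_B^{q}(FA,FM),\,R(\mi)\big)\Rightarrow \Ext_{B(\mi)}^{\,q-p}(FA(\mi),FM(\mi))$, the outgoing differentials from $E_r^{-1,\,i+1}$ do vanish for degree reasons, and the abutment in total degree $i$ is zero; this yields $E_\infty^{-1,\,i+1}=0$. But the \emph{incoming} differentials
\[
d_r\colon E_r^{-1-r,\,i+r}\longrightarrow E_r^{-1,\,i+1},\qquad r\ge 2,
\]
originate in subquotients of $\Tor_{r+1}^R\!\big(\Ext_B^{\,i+r}(FA,FM),\,R(\mi)\big)$, which your hypotheses do not control. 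Hence you only learn that $\Tor_1^R(N,R(\mi))$ is exhausted by the images of these differentials, not that it vanishes. (Your first approach does succeed when $\dim R\le 2$, since then only the embedding $N/x_1N\hookrightarrow L_1^{\,i+1}$ is needed, and that comes directly from the K\"unneth sequence at degree $i+1$.)

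The paper avoids this obstacle by using the $K$-hypothesis at the \emph{start} rather than the end. From $K\otimes_R N=0$ (and $N$ finitely generated, $K\neq 0$ flat) it deduces that $N$ is not faithful, so one may pick a single non-zero $x\in\mi\smallsetminus\mi^2$ with $xN=0$, i.e.\ $N=N[x]$. A single application of Corollary \ref{Kunnethdeformationresult} over $R/Rx$ (which is again regular local with the same residue field, so Proposition \ref{faithfulcoversresiduepart3} applies) then gives $N[x]=\Tor_1^R(N,R/Rx)=0$, hence $N=0$. For $i=0$ this is combined with an induction on $\dim R$. In other words, the paper exploits $K$ to force $N$ to be \emph{torsion}, and then one torsion-freeness calculation suffices; you try to show $N$ is \emph{free} and defer the use of $K$, which forces you to control all of $x_1,\dots,x_d$ simultaneously and runs into the uncontrolled higher Ext groups.
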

\begin{proof}
	We can assume, without loss of generality, that $R$ is a regular local commutative Noetherian ring. By Proposition \ref{faithfulcoversresiduepart3},  $(A, P)$ is an $i$-$\mathcal{R}_A$ cover of $B$. Let $M\in \mathcal{R}_A$. It is enough to show that $\R^{i+1}G(FM)=0$. Hence, we want to show that the annihilator of $\R^{i+1}G(FM)$ is $R$.  Assume, by contradiction, that $\Ann_R \R^{i+1}G(FM) =0$. In particular, $\R^{i+1}G(FM)$ is a faithful $R$-module. Thus, there exists an exact sequence
	\begin{align}
		0 \rightarrow R\rightarrow \bigoplus_{I} \R^{i+1}G(FM), 
	\end{align}for some set (possibly infinite) $I$. Since $K$ is flat over $R$ we obtain a monomorphism \linebreak\mbox{$K\rightarrow  \bigoplus_{I}  K\otimes_R \R^{i+1}G(FM)$.} On the other hand, as $K\otimes_R M\in \mathcal{R}_A(K\otimes_R A)$,
	\begin{align*}
		K\otimes_R \R^{i+1}G(FM) \simeq \Ext_{K\otimes_R B}^{i+1}(K\otimes_R FA, K\otimes_R FM)\simeq
		\Ext_{K\otimes_R B}^{i+1}(F_K (K\otimes_R A), F_K(K\otimes_R M))=0.
	\end{align*}Here, $F_K$ denotes the functor $\Hom_{K\otimes_R A}(K\otimes_R P, -)$.
	This would imply that $K=0$. Hence, $\R^{i+1}G(FM)$ cannot be $R$-faithful. Moreover, there exists a non-zero divisor  $x\in R$ such that \begin{align}
		\R^{i+1}G(FM)[x]:=\{y\in \R^{i+1}G(FM)\colon xy=0 \} = \R^{i+1}G(FM). \label{eqfc86}
	\end{align} Observe that if $x_1x_2 y=0$, then $x_2y\in \R^{i+1}G(FM)[x_1]$ where $y\in \R^{i+1}G(FM)$ and $x_1$ and $x_2$ belong to the unique maximal ideal $\mi$. Thus, we can assume without loss of generality, that the element $x$ given in  (\ref{eqfc86}) belongs to $\mi\backslash \mi^2$.
	Furthermore, $\mi/Rx$ is the unique maximal ideal of $R/Rx$ so that \mbox{$(R/Rx\otimes_R A(\mi/Rx), R/Rx\otimes_R P (\mi/Rx))$}$=(A(\mi), P(\mi))$ is an $i$-$\mathcal{R}_A(A(\mi))$ cover of $B(\mi)$. Therefore, $(R/Rx\otimes_R A, R/Rx\otimes_R P)$ is an $i$-$\mathcal{R}_A(R/Rx\otimes_R A)$ cover of $R/Rx\otimes_R B$. Denote by $F_x$ and $G_x$, with $F_x \dashv G_x$, the adjoint functors associated with this cover. Let \begin{align}
		\Hom_A(P, A)^{\bullet}\colon\cdots\rightarrow Q_1\rightarrow Q_0\rightarrow 0
	\end{align}  be a deleted complex chain obtained by deleting $\Hom_A(P, A)$ from a projective $B$-resolution of $\Hom_A(P, A)$.

	Observe that	$\pdim_R R/Rx \leq 1$, so applying $P^{\bullet}:=\Hom_B(\Hom_A(P, A)^{\bullet}, \Hom_A(P, M))$ on Corollary \ref{Kunnethdeformationresult} yields  exact sequences
	\begin{align}
		0\rightarrow R/Rx\otimes_RH^n( P^\bullet) \rightarrow H^n(R/Rx\otimes_R P^\bullet) \rightarrow \Tor_1^R(H^{n+1}(P^\bullet), R/Rx)\rightarrow 0, \ \forall n\geq 0. \label{eqfc64}
	\end{align} 
	
	First, assume  that $i>0$. 
	Then $H^i(R/Rx\otimes_R P^\bullet)= \R^iG_x (F_x R/Rx\otimes_R M)=0$. So,
	\begin{align}
		\R^{i+1}G(FM)=\R^{i+1}G(FM)[x]=\Tor_1^R(H^{i+1}(P^\bullet), R/Rx)=0.
	\end{align}
	
	Now, assume that $i=0$. We need to proceed by induction on the Krull dimension of $R$. If $\dim R=1$, then $Rx=\mi$. As $R/\mi$ is a field and \begin{align}
		R/\mi\otimes_R H^0(P^\bullet)= R/\mi\otimes_R GFM\simeq M(\mi)\simeq G_x F_x (M(\mi))= H^0(R/\mi\otimes_R P^\bullet)
	\end{align} the exact sequence (\ref{eqfc64}) yields that 
	\begin{align}
		\R^{1}G(FM)=\R^{1}G(FM)[x]=\Tor_1^R(H^{1}(P^\bullet), R/\mi)=0.
	\end{align} Assume that the result holds for all rings with Krull dimension less than $t$. Let $R$ have Krull dimension $t$. The Krull dimension of $R/Rx$ is $t-1$. By induction, $\R^1G_x(F_x R/Rx\otimes_R M)=0$. The exact sequence (\ref{eqfc64}) implies that $0=R/Rx\otimes_R H^1(P^\bullet)=R/Rx\otimes_R \R^1G(FM)$. Applying the functor $\R^1G(FM)\otimes_R -$ on the surjective map $R/Rx\rightarrow R/\mi$ we get that $\R^1G(FM)(\mi)=0$. Thus, $\R^1G(FM)=0$. This completes the proof. 
\end{proof}

We remark that Proposition 4.42 of \citep{Rouquier2008} is a particular case of Theorem \ref{deformationpartthhre} by fixing $\mathcal{R}_A=\mathcal{F}(\Stsim)$ and $i=1$. To illustrate, recall that for a flat $R$-algebra  $K$ with $\gldim K\otimes_R A=0$, every module in $K\otimes_R A\m$ is projective over $K\otimes_R A$. So, $\mathcal{R}_A(K\otimes_R A)=K\otimes_R A\m$.  By Proposition \ref{arbitraryfaithfulcoverflattwo}, $(K\otimes_R A, K\otimes_R P)$ is a 0-$\mathcal{R}_A(K\otimes_R A)$ cover of $K\otimes_R B$. By Lemma \ref{whenunitisiso}, the functor ${\Hom_{K\otimes_R A}(K\otimes_R P, -)\colon K\otimes_R A\m\rightarrow K\otimes_R B\m}$ is full and faithful. Consequently, it is an equivalence of categories.

It is now natural to ask, knowing Theorem \ref{deformationpartthhre}, how large can be the difference between the level of faithfulness of the covers (if they exist) $(A, P)$ and $(A(\mi), P(\mi))$ for $\mi\in \MaxSpec R$.

The answer to this question in theoretical terms depends heavily on the spectrum of the ground ring.

\subsection{Quality of a cover and the spectrum of the ground ring}
In the same spirit of Theorem \ref{deformationpartthhre}, we can obtain a converse statement for Corollary \ref{coverheightprimeideal}.

\begin{Theorem}\label{improvingcoverwithspectrum}
	Let $R$ be a regular local commutative Noetherian ring with quotient field $K$. Suppose that $(A, P)$ is a $0$-$\mathcal{R}_A$ cover of $B$ for some well behaved resolving subcategory $\mathcal{R}_A$ of $A\m\cap R\proj$. Let $i\geq 0$.
	Assume that the following conditions hold:
	\begin{enumerate}[(i)]
		\item $(K\otimes_R A, K\otimes_R P)$ is an $(i+1)$-$\mathcal{R}_A(K\otimes_R A)$ cover of $K\otimes_R B$.
		\item For each prime ideal $\mathfrak{p}$ of height one, $(R/\mathfrak{p}\otimes_R A, R/\mathfrak{p}\otimes_R P)$ is an $i$-$\mathcal{R}_A(R/\mathfrak{p}\otimes_R A)$ cover of $R/\mathfrak{p}\otimes_R B$.
	\end{enumerate}
	Then $(A, P)$ is an $(i+1)$-$\mathcal{R}_A$ cover of $B$.
\end{Theorem}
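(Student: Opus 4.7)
Fix $M\in\mathcal{R}_A$ and set $N_j:=\R^jG(FM)\simeq \Ext_B^j(FA,FM)$. The plan is to prove that $N_j=0$ for every $1\leq j\leq i+1$; by Proposition~\ref{arbitraryAcover} this establishes the theorem. The case $\dim R=0$ is trivial ($R=K$ and (ii) is vacuous), so I assume $\dim R\geq 1$.

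The first step extracts torsion from condition (i). Since $K$ is flat over the integral domain $R$ and $K\otimes_R M\in\mathcal{R}_A(K\otimes_R A)$ by the well-behaved property, for $1\leq j\leq i+1$ one has
\begin{equation*}
K\otimes_R N_j\simeq \R^jG_K(F_K(K\otimes_R M))=0
\end{equation*}
by Proposition~\ref{arbitraryAcover}. Because $K$ is the quotient field of $R$ and each $N_j$ is finitely generated over $B$ (hence over $R$, as $B\in R\proj$), this forces each $N_j$ to be an $R$-torsion module.

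The second step exploits condition (ii) through the universal coefficient theorem. Since $R$ is a regular local ring, it is a UFD and every height one prime has the form $\pri=Rx$ for a prime element $x\in\mi$. For such $x$, $R/Rx$ has flat dimension at most one over $R$, so applying Corollary~\ref{Kunnethdeformationresult} to the cochain complex $P^\bullet:=\Hom_B(\Hom_A(P,A)^\bullet,FM)$ — with $\Hom_A(P,A)^\bullet$ a deleted projective $B$-resolution of $FA$, so that every term of $P^\bullet$ lies in $\add_R FM\subset R\proj$ and $H^j(P^\bullet)=N_j$ — produces the short exact sequences
\begin{equation*}
0\to R/Rx\otimes_R N_j\to H^j(R/Rx\otimes_R P^\bullet)\to \Tor_1^R(N_{j+1},R/Rx)\to 0.
\end{equation*}
The same tensor-Hom computation used in the proof of Theorem~\ref{deformationpartthhre} identifies $H^j(R/Rx\otimes_R P^\bullet)$ with $\R^jG_\pri(F_\pri(R/\pri\otimes_R M))$, which vanishes for $1\leq j\leq i$ by hypothesis (ii) and Proposition~\ref{arbitraryAcover}. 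Consequently, for $1\leq j\leq i$ one obtains $N_j=xN_j\subset \mi N_j$, so Nakayama's lemma applied to the finitely generated $R$-module $N_j$ yields $N_j=0$; and taking $j=i$ forces $\Tor_1^R(N_{i+1},R/Rx)=0$, which via $0\to R\xrightarrow{x}R\to R/Rx\to 0$ is precisely the vanishing $N_{i+1}[x]=0$ for every prime element $x\in R$.

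To conclude, I combine both properties of $N_{i+1}$: it is $R$-torsion yet has no $p$-torsion for any prime element $p\in R$. Since $R$ is a UFD, any annihilator of $n\in N_{i+1}$ factors as $y=p_1\cdots p_k$ into prime elements, and a quick induction on $k$ (base: $p_1n=0$ gives $n\in N_{i+1}[p_1]=0$; step: $p_kn$ is annihilated by $p_1\cdots p_{k-1}$, hence $p_kn=0$ by the induction hypothesis, and then $n\in N_{i+1}[p_k]=0$) forces $n=0$ and therefore $N_{i+1}=0$. The main subtlety will be the cohomological identification $H^j(R/Rx\otimes_R P^\bullet)\simeq \R^jG_\pri(F_\pri(R/\pri\otimes_R M))$: it rests on $R/Rx\otimes_R \Hom_A(P,A)^\bullet$ remaining a projective $R/\pri\otimes_R B$-resolution of $F_\pri(R/\pri\otimes_R A)$, which is handled exactly as in the proof of Theorem~\ref{deformationpartthhre} by using the projectivity of every term and applying Corollary~\ref{Kunnethdeformationresult} to the resolution itself.
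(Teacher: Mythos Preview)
Your argument is correct for $i\geq 1$ and is organised somewhat differently from the paper's. For $1\leq j\leq i$ you deduce $N_j=xN_j$ from the K\"unneth injection $N_j/xN_j\hookrightarrow H^j(R/Rx\otimes_R P^\bullet)=0$ and then invoke Nakayama's lemma; this is a clean alternative to the paper's route, which instead works with an annihilating element of $N_j$. For $j=i+1$ both arguments ultimately rest on the same idea: $N_{i+1}$ is torsion (from (i)) but has trivial $p$-torsion for every prime element $p$ (from the K\"unneth sequence in degree $i$ and hypothesis (ii)), hence vanishes since $R$ is a UFD.

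The gap is at $i=0$. Your deduction ``taking $j=i$ forces $\Tor_1^R(N_{i+1},R/Rx)=0$'' uses the K\"unneth sequence
\[
0\longrightarrow R/Rx\otimes_R N_i\longrightarrow H^i(R/Rx\otimes_R P^\bullet)\longrightarrow \Tor_1^R(N_{i+1},R/Rx)\longrightarrow 0
\]
together with the vanishing $H^i(R/Rx\otimes_R P^\bullet)=0$. For $i\geq 1$ this vanishing is exactly what hypothesis (ii) gives, but for $i=0$ one has $H^0(R/Rx\otimes_R P^\bullet)\simeq G_\pri F_\pri(R/\pri\otimes_R M)\simeq R/\pri\otimes_R M$, which is generally nonzero. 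To rescue the step you would need to show that the first map in the displayed sequence is surjective, i.e., that the change-of-rings map
\[
R/Rx\otimes_R GFM\longrightarrow G_\pri F_\pri(R/\pri\otimes_R M)
\]
is an isomorphism; this follows from the compatibility of the unit maps $\eta_M$ and $\eta^{Rx}_{M/xM}$ under reduction modulo $x$, as in the commutative diagrams in the proof of Theorem~\ref{truncationcovers}. The paper instead treats the case $j=1$ separately: it applies $\Hom_B(FA,F-)$ to the short exact sequence $0\to M\xrightarrow{\,x\,}M\to M/xM\to 0$ and uses the $0$-cover hypothesis over $R/\pri$ to see that $\Hom_B(FA,F\pi)$ is surjective, whence multiplication by $x$ is injective on $N_1$, i.e., $N_1[x]=0$ for every prime element $x$.
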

\begin{proof}
	Let $1\leq j\leq i+1$ and let $M\in \mathcal{R}_A$. Denote by $F_K$ and $G_K$ the adjoint functors associated with the cover $(K\otimes_R A, K\otimes_R P)$ and denote by $F_\mathfrak{p}$ and $G_\mathfrak{p}$ the adjoint functors associated with the cover $(R/\mathfrak{p}\otimes_R A, R/\mathfrak{p}\otimes_R P)$, for each prime ideal $\mathfrak{p}$ of $R$. Assumption $(i)$ implies that
	\begin{align}
		K\otimes_R \R^jG(FM)\simeq \R^jG_K(F_K K\otimes_R M)=0.
	\end{align} Hence, $\R^jG(FM)$ cannot be $R$-faithful. Moreover, for each $1 \leq j\leq i+1$, there exists a non-zero divisor $x_j\in \mi/\mi^2$ such that \begin{align}
		\R^{j}G (FM)[x_j]=\R^jG(FM),
	\end{align}where $\mi$ is the unique maximal ideal of $R$. Since $x_j\in \mi/\mi^2$, $R/Rx_j$ is an integral domain of Krull dimension $\dim R-1$. So, $Rx_j$ is a prime ideal of height one. Fixing $P^{\bullet}:=\Hom_B(\Hom_A(P, A)^{\bullet}, \Hom_A(P, M))$ on Corollary \ref{Kunnethdeformationresult}  we get  exact sequences
	\begin{align}
		0\rightarrow R/Rx_j\otimes_RH^n( P^\bullet) \rightarrow H^n(R/Rx_j\otimes_R P^\bullet) \rightarrow \Tor_1^R(H^{n+1}(P^\bullet), R/Rx_j)\rightarrow 0, \ \forall n\geq 0. \label{eqfc103}
	\end{align}  Using now assumption $(ii)$ it follows that $H^{j-1}(R/Rx_j\otimes_R P^\bullet) =0$ for $i\geq j>1$. So, $\R^jG(FM)=0$ for $2\leq j\leq i+1$. The case $j=1$ requires a little more work. For each $x\in \mi/\mi^2$, consider the exact sequence
	\begin{align}
		0\rightarrow R\rightarrow R\rightarrow R/Rx\rightarrow 0,
	\end{align}where the first map is multiplication by $x$. Since $M\in R\proj$, we get an exact sequence
	\begin{align}
		0\rightarrow M\rightarrow M\rightarrow M/xM\rightarrow 0,
	\end{align}where the first map is multiplication by $x$. Denote by $\pi$ the projection $M\rightarrow M/xM$. Applying $\Hom_B(FA, F-)$ yields a long exact sequence
	\begin{align}
		GFM\rightarrow \Hom_B(FA, F(M/xM))\rightarrow \R^1G(FM)\rightarrow \R^1G(FM). \label{eqfc104}
	\end{align} By Lemma \ref{homtensorarbitraryring} and Lemma \ref{tensoronsecondcomponetofhom}, there exists a commutative diagram
	\begin{equation}
		\begin{tikzcd}
			GFM\arrow[r, "\Hom_B(FA{,} F\pi)", outer sep=0.75ex, swap] & \Hom_B(FA, F(M/xM)) \arrow[r, "\simeq"] & G_{Rx}F_{Rx}(M/xM)\\
			M\arrow[u, "\eta_M"] \arrow[rr, "\pi"] & & M/xM\arrow[u, "\eta^{Rx}_{M/xM}", "\simeq"']
		\end{tikzcd}.
	\end{equation}
	Thus, $\Hom_B(FA{,} F\pi)$ is surjective. By exactness of (\ref{eqfc104}), the map $\R^1G(FM)\rightarrow \R^1G(FM)$ is injective. Since this map is given by multiplication by $x$, its kernel is $\R^1G(FM)[x]=0$. As discussed before, $0=\R^1G(FM)[x]=\R^1G(FM)$. Thus, the result follows.
\end{proof}

Observe that the arguments used in the proof of Theorems \ref{deformationpartthhre} and \ref{faithfulcoversresiduepart3} remain valid if we are interested only in a given module $M\in \mathcal{R}_A$. Hence, the following corollary follows.

\begin{Cor}\label{deformationmodulesisolated}
	Let $(S\otimes_R A, S\otimes_R P)$ be a cover of $S\otimes_R B$ for any commutative  $R$-algebra $S$ which is a Noetherian ring and let $M\in$ $A\m\cap R\proj$. Assume that the following conditions hold.
	\begin{enumerate}
		\item The unit $\eta_M\colon M\rightarrow GFM$ is an isomorphism.
		\item $\Ext_{B(\mi)}^j(FA (\mi), FM(\mi) )=0$ for every maximal ideal $\mi$ of $R$, where $1\leq j\leq i$ for some $i\geq 0$.
	\end{enumerate}
	Then $\Ext_B^{j}(FA, FM)=0$ for all $1\leq j\leq i$. If, in addition, $\dim R\leq 1$ and there exists a Noetherian commutative $R$-algebra $K$ such that $\Ext_{K\otimes_R B}^{i+1}(K\otimes_R FA, K\otimes_R FM)=0$, then $\Ext_B^{j}(FA, FM)=0$ for all $1\leq j\leq i+1$.
\end{Cor}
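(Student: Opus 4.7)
The plan is to mimic the proofs of Proposition~\ref{faithfulcoversresiduepart3} and Theorem~\ref{deformationpartthhre}, now carried out module-by-module rather than on an entire well-behaved resolving subcategory. Since localization commutes with $\Hom$ and $\Ext$ and both hypotheses of the corollary are stable under localization at every maximal ideal --- $(\eta_M)_\mi = \eta_{M_\mi}$ remains an isomorphism and the residue field of $R_\mi$ is $R(\mi)$ --- I will assume throughout that $R$ is a regular local ring.

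For the first assertion, I would form the cochain $P^\bullet := \Hom_B(\Hom_A(P,A)^\bullet, \Hom_A(P,M))$ obtained from a deleted projective $B$-resolution of $FA$. As in the proof of Proposition~\ref{faithfulcoversresiduepart3}, each term of $P^\bullet$ is projective over $R$ (because both $FA$ and $FM$ lie in $R\proj$) and $H^n(P^\bullet)\cong \Ext_B^n(FA,FM)$. I then induct on $\dim R$: the base case $\dim R = 0$ is exactly hypothesis~2. For the inductive step, fix $x\in \mi\setminus\mi^2$, so that $R/Rx$ is a regular local ring of Krull dimension $\dim R - 1$ with the same residue field as $R$. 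The base-changed cover $(R/Rx\otimes_R A, R/Rx\otimes_R P)$ together with the module $R/Rx\otimes_R M$ will again satisfy the corollary's two hypotheses, the inductive hypothesis then delivers $\Ext_{R/Rx\otimes_R B}^j(R/Rx\otimes_R FA, R/Rx\otimes_R FM) = 0$ for $1\leq j\leq i$, Corollary~\ref{Kunnethdeformationresult} applied to $P^\bullet$ with $N = R/Rx$ yields $R/Rx\otimes_R \Ext_B^j(FA,FM) = 0$, and Nakayama's Lemma (valid because $\Ext_B^j(FA,FM)$ is finitely generated over the local Noetherian ring $R$) closes the step.

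For the second assertion, assuming additionally $\dim R\leq 1$ and the existence of a Noetherian commutative $R$-algebra $K$ --- which must be taken to be flat over $R$ for the base-change comparison of Ext groups to be an isomorphism --- with $\Ext_{K\otimes_R B}^{i+1}(K\otimes_R FA, K\otimes_R FM) = 0$, I would follow the scheme of Theorem~\ref{deformationpartthhre}. Flatness of $K$ produces an injection $K\otimes_R\Ext_B^{i+1}(FA,FM)\hookrightarrow 0$, so $\Ext_B^{i+1}(FA,FM)$ cannot be $R$-faithful. The annihilator-adjustment argument in the proof of Theorem~\ref{deformationpartthhre} then furnishes a non-zero divisor $x \in \mi\setminus\mi^2$ with $\Ext_B^{i+1}(FA,FM)[x] = \Ext_B^{i+1}(FA,FM)$, and a last application of Corollary~\ref{Kunnethdeformationresult} to $P^\bullet$ tensored with $R/Rx$, combined with the vanishing of lower-degree Ext from the first part, forces $\Ext_B^{i+1}(FA,FM) = 0$.

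The hard part will be transferring hypothesis~1 of the corollary to the $R/Rx$-level during the inductive step: the commutative diagrams in the proof of Theorem~\ref{truncationcovers} only guarantee that the unit $\eta^{R/Rx}_{R/Rx\otimes_R M}$ is a monomorphism, with cokernel exactly $\Ext_B^1(FA,FM)[x]$, so a direct induction would be circular. The cleanest way to sidestep this obstruction is to recast the entire argument through the long exact sequence of $\Ext$ attached to $0\to F_{R_k}M_k\xrightarrow{x_{k+1}} F_{R_k}M_k \to F_{R_{k+1}}M_{k+1}\to 0$ along a regular system of parameters $x_1,\ldots,x_d$ for $\mi$, with $R_k := R/(x_1,\ldots,x_k)$: Nakayama's Lemma applied at each step propagates the vanishing from the residue field $R(\mi) = R_d$ up through $R_{d-1},\ldots,R_0 = R$ without requiring any unit-level statement at the intermediate rings.
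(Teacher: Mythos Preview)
Your approach is the paper's: the corollary is proved exactly by running the arguments of Proposition~\ref{faithfulcoversresiduepart3} and Theorem~\ref{deformationpartthhre} for a single fixed module rather than for a whole resolving subcategory.

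Your worry in the final paragraph is overcaution. Look again at the proof of Proposition~\ref{faithfulcoversresiduepart3}: the unit never enters the induction establishing $\Ext_B^j(FA,FM)=0$ for $1\le j\le i$. The only ingredients are the K\"unneth embedding $R/Rx\otimes_R H^j(P^\bullet)\hookrightarrow H^j(R/Rx\otimes_R P^\bullet)$, the identification $H^j(R/Rx\otimes_R P^\bullet)\simeq\Ext_{B/xB}^j((FA)/x(FA),(FM)/x(FM))$, and Nakayama. So the correct inductive statement for the first assertion is simply ``hypothesis~2 alone forces the vanishing'', and then there is nothing to transfer. Your long-exact-sequence workaround is also valid --- once you note that $R_{k+1}\otimes_{R_k}Q^\bullet_k$ is again a projective $B_{k+1}$-resolution of $FA_{k+1}$, so that $\Ext_{B_k}^j(FA_k,FM_{k+1})\simeq\Ext_{B_{k+1}}^j(FA_{k+1},FM_{k+1})$, your sequence \emph{is} the K\"unneth sequence --- but it is solving a non-problem. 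Hypothesis~1 is only invoked for the second assertion in the boundary case $i=0$, precisely as in Theorem~\ref{deformationpartthhre}, to identify $R(\mi)\otimes_R H^0(P^\bullet)\simeq M(\mi)$ with $H^0(R(\mi)\otimes_R P^\bullet)$; and you are right that flatness of $K$ is implicitly required there.
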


\section{Hemmer--Nakano dimension with respect to covers constructed from relative injective modules}\label{Hemmer--Nakano dimension with respect to covers constructed from relative injective modules}

As we saw in Proposition \ref{dominantgeqtwodcp}, for an RQF3 algebra $(A, P, V)$ so that $\domdim (A, R)\geq 2$, \linebreak$(A, \Hom_{A^{op}}(V, A))$ is a cover of $\End_A(V)$. In the following, we will see how we can use relative dominant dimension to measure the quality of this cover.

\begin{Theorem}\label{boundrelationcoversdom}
	Let $(A, P, V)$ be an RQF3 algebra over a commutative Noetherian regular ring $R$ with \linebreak$\domdim (A, R)\geq 2$. Suppose that $\mathcal{R}_A$ is a well behaved resolving subcategory of $A\m\cap R\proj$. Let $$n=\inf \{\domdim_{(A, R)} M\colon M\in \mathcal{R}_A \}\in \mathbb{Z}_{\geq 0}\cup \{+\infty\}.$$ Then $(A, \Hom_{A^{op}}(V, A))$ is an $(n-2)$-$\mathcal{R}_A$ cover of $\End_A(V)$. Moreover, the Hemmer--Nakano dimension of $\mathcal{R}_A$ (with respect to $\Hom_{A^{op}}(V, A)$) is less than or equal to $n+\dim R -2$.
\end{Theorem}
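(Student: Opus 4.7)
The plan is to translate relative dominant dimension into cohomological data via Mueller's theorem (Theorem \ref{Mullertheorem}) and then read this data as a statement about the cover.

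Since $\domdim(A,R) \geq 2$, Proposition \ref{dominantgeqtwodcp} provides the cover $(A, \Hom_{A^{op}}(V, A))$ of $B := \End_A(V)$. Denote its Schur functor by $F$ and its right adjoint by $G$. The canonical isomorphism of right $B$-modules $w_V\colon V \to \Hom_A(\Hom_{A^{op}}(V, A), A) = FA$ appearing in Lemma \ref{unitschurfunctoranddominant}, combined with the canonical isomorphism $V \otimes_A M \simeq FM$ (available because $\Hom_{A^{op}}(V, A) \in A\proj$), yields natural identifications
\[
\R^jG(FM) \simeq \Ext_B^j(V, V \otimes_A M), \quad j \geq 0.
\]
Moreover, Lemma \ref{unitschurfunctoranddominant} furnishes an isomorphism $\beta_M$ with $\eta_M = \beta_M \circ \alpha_M$, so $\eta_M$ is an isomorphism (resp.\ a monomorphism) if and only if $\alpha_M$ is.

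For the first claim, fix $M \in \mathcal{R}_A$; by definition of $n$, $\domdim_{(A,R)} M \geq n$. Assuming $n \geq 2$, Theorem \ref{Mullertheorem}(d) gives that $\alpha_M$ is an isomorphism and $\Ext_B^i(V, V \otimes_A M) = 0$ for $1 \leq i \leq n-2$. The identifications above translate these into $\eta_M$ being an isomorphism and $\R^iG(FM) = 0$ for $1 \leq i \leq n-2$, so Propositions \ref{zeroAcover} and \ref{arbitraryAcover} provide the $(n-2)$-$\mathcal{R}_A$ cover. The boundary case $n = 1$ follows from Theorem \ref{Mullertheorem}(b): $\alpha_M$, and hence $\eta_M$, is an $(A,R)$-monomorphism, which gives a $(-1)$-$\mathcal{R}_A$ cover by Lemma \ref{considerationseta}.

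For the upper bound, let $k$ denote the Hemmer-Nakano dimension of $\mathcal{R}_A$; we may assume $k \geq 0$ (otherwise the bound is trivial). By definition of $k$, for every $M \in \mathcal{R}_A$, $\eta_M$ is an isomorphism and $\R^jG(FM) = 0$ for $1 \leq j \leq k$; via our identifications this reads as $\alpha_M$ being an isomorphism and $\Ext_B^j(V, V \otimes_A M) = 0$ for $1 \leq j \leq k$. Applying Theorem \ref{Mullertheorem}(e) (which uses the regularity of $R$) with parameter $k+2$ yields $\domdim_{(A,R)} M \geq k + 2 - \dim R$ for every $M \in \mathcal{R}_A$. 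Taking the infimum gives $n \geq k + 2 - \dim R$, i.e.\ $k \leq n + \dim R - 2$. The main conceptual ingredient is the dictionary between the units $\alpha_M$ and $\eta_M$ supplied by Lemma \ref{unitschurfunctoranddominant}; once this is in place, the two halves of the theorem mirror each other, and the loss of $\dim R$ in the upper bound reflects exactly the gap between parts (d) and (e) of Mueller's theorem.
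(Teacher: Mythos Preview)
Your proof is correct and follows the same strategy as the paper: establish the dictionary $\R^jG(FM)\simeq\Ext_B^j(V,V\otimes_A M)$ and $\eta_M\leftrightarrow\alpha_M$ via Lemma~\ref{unitschurfunctoranddominant}, then read Mueller's theorem in both directions. Your organisation is in fact cleaner than the paper's: the paper runs a case analysis on $n$, whereas you give a uniform argument in $k$ for the upper bound, and for $n=1$ you invoke Theorem~\ref{Mullertheorem}(b) directly rather than passing through residue fields and Lemma~\ref{unitmono} as the paper does.

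There is one small gap. You dismiss $k<0$ as ``trivial'' for the upper bound, but when $k=-1$ and $\dim R=0$ the inequality $k\leq n+\dim R-2$ reads $n\geq 1$, which is not automatic. The missing step is precisely the paper's $n=0$ argument: over a field (to which one reduces by localisation when $\dim R=0$), a $(-1)$-$\mathcal{R}_A$ cover gives $\alpha_M$ a monomorphism for every $M\in\mathcal{R}_A$, and the converse in Theorem~\ref{Mullertheorem}(b) (whose hypothesis $\Hom_{A^{op}}(V,DM)\otimes_B V\in R\proj$ is automatic over a field) then forces $\domdim_{(A,R)}M\geq 1$, hence $n\geq 1$. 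With this one line added your argument is complete.
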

\begin{proof}
	By Proposition \ref{dominantgeqtwodcp}, $(A, \Hom_{A^{op}}(V, A))$ is a cover of $\End_A(V)$. Observe that since $V$ is projective as $A$-module, for every $i\in \mathbb{N}$ there exists isomorphisms
	\begin{multline}
		\Ext_B^ i(V, V\otimes_A M)\simeq \Ext_B^i(\Hom_A(\Hom_{A^{op}}(V, A), A), \Hom_A(\Hom_{A^{op}}(V, A), A)\otimes_A M)\\
		\simeq \Ext_B^i(\Hom_A(\Hom_{A^{op}}(V, A), A), \Hom_A(\Hom_{A^{op}}(V, A), M))=\Ext_B^i(FA, FM)=\R^iG(FM). \label{eq59}
	\end{multline} Assume $n=0$. By contradiction, assume that $(A, \Hom_{A^{op}}(V, A))$ is a $(\dim R-1)$-$\mathcal{R}_A$ cover of $\End_A(V)$. If $\dim R=0$, then every localization of $R$ at a maximal ideal is a field. In particular, $\eta_{M_\mi}$ is a monomorphism for every maximal ideal $\mi$ in $R$ for every $M\in \mathcal{R}_A$. As $R_\mi$ is a field, in view of Lemma \ref{unitschurfunctoranddominant}, $\domdim_{A_\mi} M_\mi\geq 1$ for every maximal ideal $\mi$ in $R$. By  \citep[Proposition 6.8]{CRUZ2022410}, $\domdim_{(A, R)} M\geq 1$ for every $M\in \mathcal{R}_A$. This is a contradiction with $n$ being zero. If $\dim R\geq 1$, then $\eta_M$ is an isomorphism for every $M\in \mathcal{R}_A$ by Proposition \ref{zeroAcover}. By Theorem \ref{Mullertheorem} and Lemma \ref{unitschurfunctoranddominant}, $\domdim_{(A, R)} M\geq 1$ for every $M\in \mathcal{R}_A$ which contradicts our assumption on $n$. 
	
	Now assume that $n=1$. For every $M\in \mathcal{R}_A$, $\domdim_{(A, R)} M\geq 1$. Hence, for every maximal ideal $\mi$ of $R$, $\domdim_{A(\mi)} M(\mi)\geq 1$. By Lemma \ref{unitschurfunctoranddominant} and Theorem \ref{Mullertheorem}, $\eta_{M(\mi)}$ is a monomorphism for every maximal ideal $\mi$ in $R$. By Lemma \ref{unitmono}, $\eta_M$ is an $(A,R)$-monomorphism for every $M\in \mathcal{R}_A$. By Lemma \ref{considerationseta}, we obtain that $(A, \Hom_{A^{op}}(V, A))$ is a $(-1)$-$\mathcal{R}_A$ cover of $\End_A(V)$. By contradiction, assume that $(A, \Hom_{A^{op}}(V, A))$ is a $\dim R$-$\mathcal{R}_A$ cover of $\End_A(V)$. Then, in particular, $\eta_M$ is an isomorphism and $\R^iG(FM)=0$, $1\leq i\leq \dim R$, for every $M\in \mathcal{R}_A$. By Lemma \ref{unitschurfunctoranddominant}, $\alpha_M$ is an isomorphism for every $M\in \mathcal{R}_A$. By Theorem \ref{Mullertheorem} and (\ref{eq59}), $\domdim_{(A, R)} M\geq \dim R+2-\dim R=2$ for every $M\in \mathcal{R}_A$ which contradicts our assumption of $n$.

	Finally, assume that $n\geq 2$. By Theorem \ref{Mullertheorem} and (\ref{eq59}), $\alpha_M$ is an isomorphism for every $M\in \mathcal{R}_A$ and
	\begin{align}
		0&=\Ext_B^ i(V, V\otimes_A M)\simeq \R^iG(FM), \ 1\leq i\leq n-2.\nonumber
	\end{align}
	Hence, $(A, \Hom_{A^{op}}(V,A))$ is an $(n-2)$-$\mathcal{R}_A$ cover of $B$. If $n=+\infty$, then we are done. If $n<+\infty$, using again Theorem \ref{Mullertheorem}, we see that $(A, \Hom_{A^{op}}(V, A))$ cannot be an $(n+3-\dim R)$-$\mathcal{R}_A$ cover of $B$.
\end{proof}

For algebras admitting additional properties and with Krull dimension greater than one, we can improve the lower bound.

\begin{Theorem}\label{boundcoverimprovementdom}
	Let $R$ be a commutative Noetherian regular ring with Krull dimension at least one. Let $(A, P, V)$ be an RQF3 algebra over a commutative Noetherian ring $R$ with $\domdim (A, R)\geq 2$.  Let $\mathcal{R}_A$ be a well behaved resolving subcategory of $A\m\cap R\proj$. 
	Let $n=\inf \{\domdim_{(A, R)} M\colon M\in \mathcal{R}_A \}\in \mathbb{Z}_{\geq 0}\cup \{+\infty\}$ and suppose that $n\geq 2$.
	Assume that $\inf \{\domdim_{(K\otimes_R A, K)} N\colon N\in \mathcal{R}_A(K\otimes_RA) \}\geq n+1$  for some Noetherian commutative flat $R$-algebra $K$. 
	
	Then $(A, \Hom_{A^{op}}(V, A))$ is an $(n-1)$-$\mathcal{R}_A$ cover of $\End_A(V)$. Moreover, if $\dim R=1$ the Hemmer--Nakano dimension of $\mathcal{R}_A$ (with respect to $\Hom_{A^{op}}(V, A)$) is $n-1$.
\end{Theorem}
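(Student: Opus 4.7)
The plan is to combine Theorem \ref{boundrelationcoversdom} (which already gives an $(n-2)$-$\mathcal{R}_A$ cover) with Theorem \ref{deformationpartthhre} to bump the cover quality up by one. First I would reduce to the case where $R$ is local, by Proposition \ref{arbitraryfaithfulcoverflattwo} and the well behavedness of $\mathcal{R}_A$. Then the assumption $n\geq 2$ combined with Theorem \ref{boundrelationcoversdom} gives that $(A,\Hom_{A^{op}}(V,A))$ is a $0$-$\mathcal{R}_A$ cover of $B=\End_A(V)$. To apply Theorem \ref{deformationpartthhre} with $i=n-2$, I need the two base-change hypotheses: an $(n-2)$-$\mathcal{R}_A(A(\mi))$ cover at each residue field and an $(n-1)$-$\mathcal{R}_A(K\otimes_RA)$ cover over the flat extension $K$.

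Both of these will in turn come from Theorem \ref{boundrelationcoversdom} applied over the respective ground rings, once we know the appropriate infima of relative dominant dimensions. For the residue field case, I would argue that for each $M\in\mathcal{R}_A$, base change along $R\to R(\mi)$ preserves $(A,R)$-exactness and maps $(A,R)$-injective $A$-projective modules to $(A(\mi),R(\mi))$-injective $A(\mi)$-projective modules, so $\domdim_{A(\mi)}M(\mi)\geq \domdim_{(A,R)}M\geq n$; the analogous statement for the flat extension $K$ holds by the hypothesis $\inf\{\domdim_{(K\otimes_RA,K)}N\}\geq n+1$ applied directly to $K\otimes_R M$. The subtle step is extending these bounds from $H\mathcal{R}_A$ to the entire resolving subcategory $\mathcal{R}_A(A(\mi))=\langle H\mathcal{R}_A\rangle$ (resp.\ $\mathcal{R}_A(K\otimes_RA)$), since by Definition \ref{wellbehavedresolving}.1 its objects are only obtained from the images $M(\mi)$ by closing under extensions and direct summands. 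Here I would invoke the standard fact that relative dominant dimension is preserved under direct summands and that in an $(A,R)$-exact sequence $0\to X\to Y\to Z\to 0$ in which $X$ and $Z$ both have relative dominant dimension at least $d$, so does $Y$; these follow from stitching together $(A,R)$-injective resolutions through the horseshoe lemma.

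Having established both base-change hypotheses, Theorem \ref{deformationpartthhre} applied with $i=n-2$ then upgrades $(A,\Hom_{A^{op}}(V,A))$ from an $(n-2)$- to an $(n-1)$-$\mathcal{R}_A$ cover of $B$, which is the first assertion. For the second assertion, when $\dim R=1$, Theorem \ref{boundrelationcoversdom} provides the upper bound $n+\dim R-2=n-1$ on the Hemmer--Nakano dimension. Combined with the lower bound just established, this forces the Hemmer--Nakano dimension of $\mathcal{R}_A$ with respect to $\Hom_{A^{op}}(V,A)$ to equal $n-1$ exactly.

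I expect the main obstacle to be the verification that relative dominant dimension descends nicely from $H\mathcal{R}_A$ to the full resolving subcategory $\langle H\mathcal{R}_A\rangle$ over the residue field and the flat extension, and in particular checking that the $(A(\mi),R(\mi))$-exactness (resp.\ $(K\otimes_RA,K)$-exactness) of the horseshoe construction remains available in the generality of a projective Noetherian algebra. Everything else is a direct invocation of the previously established Theorems \ref{boundrelationcoversdom} and \ref{deformationpartthhre}.
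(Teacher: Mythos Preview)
Your proposal is correct and follows essentially the same strategy as the paper: verify an $(n-2)$-cover at each residue field and an $(n-1)$-cover over $K$, then invoke Theorem \ref{deformationpartthhre}; the $\dim R=1$ upper bound from Theorem \ref{boundrelationcoversdom} finishes the second assertion. The only difference is in the bookkeeping of the ``extension to $\langle H\mathcal{R}_A\rangle$'' step: the paper translates $\domdim_{A(\mi)} M(\mi)\geq n$ into the conditions $\eta_{M(\mi)}$ iso and $\R^jG_{(\mi)}F_{(\mi)}(M(\mi))=0$ for $1\leq j\leq n-2$ via Mueller (Theorem \ref{Mullertheorem}) \emph{first}, and then closes these Ext-vanishing conditions under summands and extensions; you instead propose to close the relative dominant dimension bound under summands and extensions and then apply Theorem \ref{boundrelationcoversdom} over $R(\mi)$. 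Both orderings work. Two small remarks that simplify your route: your worry about $(A(\mi),R(\mi))$-exactness in the horseshoe construction is moot because $R(\mi)$ is a field, so every short exact sequence of $A(\mi)$-modules is $(A(\mi),R(\mi))$-exact; and for the flat extension $K$ no extension argument is needed at all, since the hypothesis $\inf\{\domdim_{(K\otimes_R A,K)} N : N\in \mathcal{R}_A(K\otimes_R A)\}\geq n+1$ is already stated for the full subcategory.
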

\begin{proof}
	Let $B$ denote the endomorphism algebra $\End_A(V)$. Let $\mi$ be a maximal ideal in $R$. Fix $F_{(\mi)}$ the functor \mbox{$\Hom_{A(\mi)}(\Hom_{A(\mi)^{op}}(V(\mi), A(\mi)), -)$} and $G_{(\mi)}$ its right adjoint. Since every module in $\mathcal{R}_A(A(\mi))$ is constructed as a direct summand or by extensions of modules  $R(\mi)\otimes_R M$ for $M\in \mathcal{R}_A$, it is enough to check that $\eta_{M(\mi)}$ is an isomorphism and $\R^i G_{(\mi)}F_{(\mi)}(M(\mi))=0$, $1\leq i\leq n-2$ for every $M\in \mathcal{R}_A$ to deduce that $(A(\mi), \Hom_{A(\mi)^{op}}(V(\mi), A(\mi)))$ is an $(n-2)$-$\mathcal{R}_A$ cover of $B(\mi)$. Let $M\in \mathcal{R}_A$. Then
	\begin{align}
		\domdim_{A(\mi)} M(\mi)\geq \domdim_{(A, R)} M\geq 2.
	\end{align}By Theorem \ref{Mullertheorem},
	\begin{align}
		0=\Ext_{B(\mi)}^i(V(\mi), V(\mi)\otimes_{A(\mi)} M(\mi))=\Ext_{B(\mi)}^i(F_{(\mi)}A(\mi), F_{(\mi)}(M(\mi))), \ 1\leq i\leq n-2.
	\end{align}In the same way, $(K\otimes_R A, K\otimes_R \Hom_{A^{op}}(V, A))$ is an $(n-1)$-$\mathcal{R}_A(K\otimes_R A)$ cover of $\End_{K\otimes_R A}(K\otimes_R V)$.
	By Theorem \ref{deformationpartthhre}, $(A, \Hom_{A^{op}}(V, A))$ is an $(n-1)$-$\mathcal{R}_A$ cover of $\End_A(V)$.
\end{proof}

If the ground ring is an integral domain, then we can use its quotient field to take the role of $K$. Even better using the quotient field we can improve Theorem \ref{boundcoverimprovementdom} to include the case $n=1$.

\begin{Theorem}\label{boundcoverimprovementdomquotientfield}
	Let $R$ be a commutative Noetherian regular integral domain with Krull dimension at least one and with quotient field $K$. Let $(A, P, V)$ be an RQF3 algebra over a commutative Noetherian ring $R$ with $\domdim (A, R)\geq 2$.  Let $\mathcal{R}_A$ be a well behaved resolving subcategory of $A\m\cap R\proj$. 
	Let $$n=\inf \{\domdim_{(A, R)} M\colon M\in \mathcal{R}_A\}\in \mathbb{Z}_{\geq 0}\cup \{+\infty\}.$$
	Assume that $n\geq 1$ and \mbox{$\inf \{\domdim_{(K\otimes_R A, K)} N\colon N\in \mathcal{R}_A(K\otimes_RA) \}\geq n+1$}. Then $(A, \Hom_{A^{op}}(V, A))$ is an \mbox{$(n-1)$-$\mathcal{R}_A$} cover of $\End_A(V)$. Moreover, if $\dim R=1$ the Hemmer--Nakano dimension of $\mathcal{R}_A$ (with respect to $\Hom_{A^{op}}(V, A)$) is $n-1$. 
\end{Theorem}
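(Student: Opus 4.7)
For $n\geq 2$, my approach is simply to invoke Theorem \ref{boundcoverimprovementdom} with $K$ the quotient field of $R$; since $R$ is an integral domain, $K$ is a flat Noetherian commutative $R$-algebra, so all hypotheses of Theorem \ref{boundcoverimprovementdom} are satisfied and this immediately yields that $(A, \Hom_{A^{op}}(V, A))$ is an $(n-1)$-$\mathcal{R}_A$ cover of $B = \End_A(V)$.

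The new content is the case $n = 1$, where Theorem \ref{boundrelationcoversdom} already supplies a $(-1)$-$\mathcal{R}_A$ cover but I must upgrade this to a genuine $0$-$\mathcal{R}_A$ cover, i.e., $\eta_M$ must be an isomorphism for every $M\in\mathcal{R}_A$. By Proposition \ref{arbitraryfaithfulcoverflat} I may assume $R$ is local. For every $M\in\mathcal{R}_A$, the assumption $\domdim_{(A,R)} M\geq 1$ forces $\domdim_{A(\mi)} M(\mi)\geq 1$ at every maximal ideal $\mi$; Theorem \ref{Mullertheorem}(b) together with Lemma \ref{unitschurfunctoranddominant} then makes $\eta_{M(\mi)}$ a monomorphism, and Lemma \ref{unitmono} lifts this to $\eta_M$ being an $(A,R)$-monomorphism. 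Meanwhile, the generic-fiber hypothesis $\inf\{\domdim_{(K\otimes_R A, K)} N : N\in\mathcal{R}_A(K\otimes_R A)\}\geq 2$ combined with Theorem \ref{boundrelationcoversdom} applied over the field $K$ makes $(K\otimes_R A, K\otimes_R\Hom_{A^{op}}(V,A))$ a $0$-$\mathcal{R}_A(K\otimes_R A)$ cover of $K\otimes_R B$, so $K\otimes_R \eta_M$ is an isomorphism. Consequently, $C:=\coker \eta_M$ is a finitely generated $R$-module with $K\otimes_R C=0$, hence $R$-torsion.

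The main obstacle will be concluding $C=0$: Theorem \ref{deformationpartthhre} as stated requires the deformation parameter $i\geq 0$, and therefore does not directly promote a $0$-cover on the generic fiber paired with a $(-1)$-cover on residue fields to a $0$-cover over $R$; I need to extend that deformation technique to the $i=-1$ situation in the present setting where $K$ is the quotient field. My plan is to argue by contradiction: assuming $C\neq 0$, the torsion property produces a non-zero divisor $x\in\mi$ with $xC=0$ which, after passing to a nonzero submodule of $C$ if necessary, can be taken in $\mi\setminus\mi^2$ so that $R/Rx$ is a regular local ring of Krull dimension $\dim R-1$ with the same residue field as $R$. Then the commutative diagrams appearing in the proofs of Proposition \ref{faithfulcoverresiduefieldnext} and Theorem \ref{truncationcovers} — in particular the comparison between $\eta^{R/Rx}_{R/Rx\otimes_R M}$, $R/Rx\otimes_R \eta_M$ and $\mu_{R/Rx, M}$, together with the injectivity statement of Lemma \ref{zerodivisormono} — and the Universal Coefficient exact sequence of Corollary \ref{Kunnethdeformationresult} applied to the cochain $\Hom_B(\Hom_A(\Hom_{A^{op}}(V,A), A)^{\bullet}, \Hom_A(\Hom_{A^{op}}(V,A), M))$ propagate the generic-fiber vanishing through a regular sequence in $R$ down to each residue field, forcing $C(\mi)=0$ at the unique maximal ideal $\mi$ and hence $C=0$ by Nakayama. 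This proves $\eta_M$ is an isomorphism and establishes the $0$-$\mathcal{R}_A$ cover property in the $n=1$ case.

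Combining the two cases shows that $(A,\Hom_{A^{op}}(V,A))$ is an $(n-1)$-$\mathcal{R}_A$ cover of $B$ for every $n\geq 1$. For the final assertion, when $\dim R=1$ Theorem \ref{boundrelationcoversdom} provides the matching upper bound $n+\dim R-2 = n-1$ on the Hemmer-Nakano dimension of $\mathcal{R}_A$ with respect to $\Hom_{A^{op}}(V, A)$, so this dimension equals $n-1$ exactly.
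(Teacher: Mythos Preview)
Your treatment of the case $n\geq 2$ is correct and matches the paper exactly. The gap is in the $n=1$ case.

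Your sketch for showing $C=\coker\eta_M=0$ does not go through. You propose to choose $x\in\mi\setminus\mi^2$ annihilating (a submodule of) $C$ and then ``propagate the generic-fiber vanishing through a regular sequence'' using the diagrams of Proposition~\ref{faithfulcoverresiduefieldnext}, Theorem~\ref{truncationcovers} and Corollary~\ref{Kunnethdeformationresult}. But passing to $R/Rx$ destroys the only hypothesis distinguishing this situation from a mere $(-1)$-cover: the quotient field of $R/Rx$ is not $K$, so the condition $\inf\{\domdim_{(K\otimes_R A,K)}N\}\geq 2$ is no longer available over $R/Rx$, and your induction has no fuel. Even in the would-be base case $\dim R=1$, nothing in your argument rules out $C\neq 0$; you never control $GFM$ as an $R$-module, and an $(A,R)$-split monomorphism $M\hookrightarrow GFM$ with $K\otimes_R C=0$ is perfectly compatible with $GFM\cong M\oplus (R/\mi)^k$ as $R$-modules.

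The paper's argument supplies the missing idea: $GFM=\Hom_B(FA,FM)$ embeds into $\Hom_R(V,FM)$, which lies in $R\Proj$ because $V\in R\proj$ and $FM\in R\proj$; hence $GFM$ is torsion-free over $R$. Combined with the $(A,R)$-splitting of $\eta_M$, this makes $C$ a direct $R$-summand of a torsion-free module at every localization. One then invokes a result of Auslander--Buchsbaum: if $C\neq 0$ there is a height-one prime $\mathfrak{q}$ with $C_{\mathfrak{q}}\neq 0$; over the discrete valuation ring $R_{\mathfrak{q}}$ torsion-free finitely generated modules are free, so $GFM_{\mathfrak{q}}$ and hence $C_{\mathfrak{q}}$ are free, contradicting $K\otimes_{R_{\mathfrak{q}}}C_{\mathfrak{q}}=0$. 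This height-one localization replaces your attempted regular-sequence descent and is what makes the argument close.
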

\begin{proof}
	The case $n\geq 2$ is just a particular case of Theorem \ref{boundcoverimprovementdom}. 	
	We will now consider the case $n=1$. Hence, $\domdim_{A(\mi)} M(\mi)\geq \domdim_{(A, R)} M\geq 1$ for any $M\in \mathcal{R}_A$. Taking into account that $\domdim (A, R)\geq 2$ we obtain by Proposition \ref{faithfulcoverresiduefieldnext} that $(A, P)$ is a $(-1)$-$\mathcal{R}_A$ cover of $\End_A(V)$. Further, by Lemma \ref{unitschurfunctoranddominant}, Theorem \ref{Mullertheorem} and Proposition \ref{dominantgeqtwodcp}, we obtain that the unit map $\eta_M\colon M\rightarrow GFM$ is an $(A, R)$-monomorphism for every $M\in \mathcal{R}_A$. On the other hand, $K\otimes_R M\in \mathcal{R}_A(K\otimes_R A)$. Hence, $\eta_{K\otimes_R M}$ is an isomorphism by assumption. Thus, $K\otimes_R \eta_M$ is an isomorphism since $K$ is flat over $R$.
	
	Denote by $X$ the cokernel of $\eta_M$. By the flatness of $K$ and $K\otimes_R \eta_M$ being an isomorphism, it follows that $K\otimes_R X=0$. In particular, $X$ is a torsion $R$-module. Using the monomorphism
	\begin{align}
		GFM\rightarrow \Hom_R(V, FM)\in R\proj, 
	\end{align} we deduce that $GFM$ is a torsion-free $R$-module. By a result of Auslander-Buchsbaum (see Proposition 3.4 of \citep{zbMATH03151673}) if $X\neq 0$, then there exists a prime ideal of height one $\mathfrak{q}$ such that $X_\mathfrak{q}\neq 0$. But $\dim R_\mathfrak{q}=1$, so the localization $GFM_{\mathfrak{q}}$ is a projective $R_{\mathfrak{q}}$-module. Thus, using the fact that ${\eta_{M}}_\mathfrak{q}$ is an $(A_\mathfrak{q}, R_\mathfrak{q})$-monomorphism we obtain that $X_{\mathfrak{q}}$ is a projective $R_\mathfrak{q}$-module. By applying the tensor product $K\otimes_{R_{\mathfrak{q}}} -$ it follows that $X_\mathfrak{q}=0$. So, we must have $X=0$. Hence, $\eta_M$ is an isomorphism. This finishes the proof.
\end{proof}

\subsection{Hemmer-Nakano dimension of $A\proj$}\label{Hemmer-Nakano dimension of}

It is an easy consequence of $\domdim_{(A, R)} (M_1\oplus M_2) =\inf \{\domdim_{(A, R)} M_1, \domdim_{(A, R)} M_2\}$ (see \citep[Corollary 5.10]{CRUZ2022410}) that $$\domdim (A, R)=\inf \{\domdim_{(A, R)} M\colon M\in A\proj \}.$$
In particular,  given an  RQF3 algebra $(A, P, V)$ with $\domdim{(A,R)}\geq 2$, Theorem \ref{boundrelationcoversdom} gives that $(A, \Hom_{A^{op}}(V, A))$ is a cover of $B$ with \begin{align}
	\domdim (A, R)-2\leq \HN_{\Hom_A(\Hom_{A^{op}}(V, A), -)} A\proj\leq \domdim (A, R)-2+\dim R.
\end{align}
The idea of computing the Hemmer-Nakano dimension of $A\proj$ using the dominant dimension of the regular module goes back to \citep{zbMATH05871076}.

\subsection{Hemmer-Nakano dimension of $\mathcal{F}(\Stsim)$}\label{Hemmer-Nakano dimension of f}

We will now see what Theorem \ref{boundrelationcoversdom} gives for $\mathcal{F}(\Stsim)$-covers. 	The answer is based on  \citep[Corollary 3.7]{zbMATH05871076}.

\begin{Theorem}\label{domdimofmoduleswithfiltrationbystandard}
	Let $(A, \{\Delta(\lambda)_{\lambda\in \Lambda}\})$  be a split quasi-hereditary algebra over a commutative Noetherian ring and $(A, P, V)$ an
	RQF3 $R$-algebra. Let $T$ be a characteristic tilting module. Then
	\begin{align}
		\domdim_{(A, R)} T=\inf \{\domdim_{(A, R)}\St(\l)\colon \l\in\L \}=\inf
		\{\domdim_{(A, R)} M\colon M\in \mathcal{F}(\Stsim) \} . \label{eq64}
	\end{align}
\end{Theorem}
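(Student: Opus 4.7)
The plan follows the pattern of \citep[Corollary 3.7]{zbMATH05871076} but replaces classical dominant dimension by the relative version, and tracks the $(A,R)$-exact structure throughout. Two structural properties are central: \textbf{(i)} the direct-sum formula $\domdim_{(A,R)}(M_1\oplus M_2)=\inf\{\domdim_{(A,R)}M_1,\domdim_{(A,R)}M_2\}$ of \citep[Corollary 5.10]{CRUZ2022410}; and \textbf{(ii)} the sequence inequality which for any $(A,R)$-exact sequence $0\to X\to Y\to Z\to 0$ in $A\m\cap R\proj$ reads
\[\domdim_{(A,R)}X\geq \min\{\domdim_{(A,R)}Y,\ \domdim_{(A,R)}Z+1\},\]
obtained by splicing $(A,R)$-injective coresolutions (any $(A,R)$-injective lies in $\add DA$, and the horseshoe construction respects $(A,R)$-exactness because the $R$-split sequences form an exact structure in which the relative injectives are still injective). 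I will use both freely.

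For the equality $\inf\{\domdim_{(A,R)}M\colon M\in\mathcal{F}(\Stsim)\}=\inf\{\domdim_{(A,R)}\St(\l)\colon \l\in\L\}$, the inclusion $\St(\l)\in\mathcal{F}(\Stsim)$ immediately gives $\leq$. For $\geq$, take $M\in\mathcal{F}(\Stsim)$ with filtration $0=M_{n+1}\subset\cdots\subset M_1=M$ and subquotients $\St_i\otimes_R U_i$, $U_i\in R\proj$. Since each $U_i$ is an $R$-summand of some $R^{k_i}$, $\St_i\otimes_R U_i$ is an $A$-summand of $\St_i^{k_i}$, and (i) yields $\domdim_{(A,R)}(\St_i\otimes_R U_i)\geq \domdim_{(A,R)}\St_i$. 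Since $\St_i\otimes_R U_i\in R\proj$, each step $0\to M_{i+1}\to M_i\to \St_i\otimes_R U_i\to 0$ of the filtration is automatically $(A,R)$-exact; induction on the filtration length via (ii) then gives $\domdim_{(A,R)}M\geq \inf_{\l}\domdim_{(A,R)}\St(\l)$.

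For the equality $\domdim_{(A,R)}T=\inf_{\l}\domdim_{(A,R)}\St(\l)$, the direction $\geq$ is obtained by applying (i) to $T=\bigoplus_\l T(\l)$ and the first equality to each $T(\l)\in\mathcal{F}(\Stsim)$. For $\leq$, I argue by induction on the finite poset $\L$ that $\domdim_{(A,R)}\St(\l)\geq\domdim_{(A,R)}T$ for every $\l$, using the exact sequence $0\to\St(\l)\to T(\l)\to X(\l)\to 0$ with $X(\l)\in\mathcal{F}(\Stsim_{\mu<\l})$. For minimal $\l$ one has $X(\l)=0$, hence $\St(\l)=T(\l)$ and the claim is trivial. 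For the inductive step, the first equality applied to $X(\l)$ together with the inductive hypothesis gives
\[\domdim_{(A,R)}X(\l)\geq\inf_{\mu<\l}\domdim_{(A,R)}\St(\mu)\geq\domdim_{(A,R)}T;\]
since $\domdim_{(A,R)}T(\l)\geq\domdim_{(A,R)}T$ as well and the sequence is $(A,R)$-exact (all three terms are $R$-projective), (ii) yields $\domdim_{(A,R)}\St(\l)\geq\domdim_{(A,R)}T$.

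The main obstacle I anticipate is making (ii) precise in the relative setting: one must verify that the horseshoe construction in $\add DA$ produces an $(A,R)$-injective coresolution of $Y$ consistent with those of $X$ and $Z$, which requires that $R$-splittings of the given short exact sequence lift through each cosyzygy step. This is routine once one notes that the functor $D=\Hom_R(-,R)$ preserves $(A,R)$-exactness and turns $(A,R)$-injective coresolutions of left modules into projective resolutions of right modules over $R$-projective pieces.
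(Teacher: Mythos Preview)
Your proof is correct and uses the same ingredients as the paper's (the short exact sequences $0\to\St(\l)\to T(\l)\to X(\l)\to 0$ and the relative dominant dimension inequalities of \citep[Lemma~5.12]{CRUZ2022410}), with one cosmetic slip and one organizational difference.

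The slip: in the first equality, the induction on the filtration length $0\to M_{i+1}\to M_i\to \St_i\otimes_R U_i\to 0$ needs the \emph{middle-term} inequality $\domdim_{(A,R)} M_i\geq\min\{\domdim_{(A,R)} M_{i+1},\,\domdim_{(A,R)}(\St_i\otimes_R U_i)\}$, not your property (ii) (which bounds the left term). Both inequalities are contained in \citep[Lemma~5.12]{CRUZ2022410}, so this is only a labeling issue, and indeed the paper invokes exactly that middle-term bound here.

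The organizational difference: for the inequality $\domdim_{(A,R)}T\leq\inf_\l\domdim_{(A,R)}\St(\l)$, the paper fixes $\l_0$ attaining the infimum $c$ and argues by contradiction that $\domdim_{(A,R)}T(\l_0)>c$ would force $\domdim_{(A,R)}X(\l_0)=c-1$, violating the first equality. You instead run a direct poset induction showing $\domdim_{(A,R)}\St(\l)\geq\domdim_{(A,R)}T$ for all $\l$ via your (ii). These are two phrasings of the same step; your (ii) is the contrapositive reformulation of the clause ``if $\domdim Y>\domdim X$ then $\domdim Z=\domdim X-1$'' from Lemma~5.12.

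Your final paragraph of concern is unnecessary: the relative horseshoe and the full sequence lemma in the $(A,R)$-exact structure are precisely what \citep[Lemma~5.12]{CRUZ2022410} establishes, so you can cite it directly rather than rebuild it.
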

\begin{proof}	Let $M\in \mathcal{F}(\Stsim)$.
	By Lemma 5.12 of \cite{CRUZ2022410}, \begin{align}
		\domdim_{(A, R)} M\geq &\inf\{\domdim_{(A,R)} \St(\l)\otimes_R U_\l\colon \l\in \L, \ U_\l\in R\proj \}  \nonumber\\  \geq &\inf \{\domdim_{(A, R)}\St(\l)\colon \l\in\L \}, \label{eq65}
	\end{align} since $\St(\l)\otimes_R U_\l\in \add \St(\l)$.
If $\inf \{\domdim_{(A, R)}\St(\l)\colon \l\in\L \} = +\infty$, then (\ref{eq65}) implies that (\ref{eq64}) reads $+\infty=+\infty=+\infty$. Assume now that $\inf \{\domdim_{(A, R)}\St(\l)\colon \l\in\L \} $ is finite.
	Denote by $c$ the value $\inf\{\domdim_{(A,R)} \St(\l)\colon \l\in \L \}$ and $\l_0$ the index such that $\domdim_{(A, R)} \St(\l_0)=c$.
	
By (\ref{eq65}), $
		\domdim_{(A, R)} M\geq \inf\{\domdim_{(A,R)} \St(\l)\otimes_R U_\l\colon \l\in \L, \ U_\l\in R\proj \}=c,
$ for any $M\in \mathcal{F}(\Stsim)$. Consider an $(A, R)$-exact sequence
	\begin{align}
		0\rightarrow \St(\l_0)\rightarrow T(\l_0)\rightarrow X(\l_0)\rightarrow 0,
	\end{align} for example the one constructed in \citep[Proposition 4.5]{appendix}. 	By Lemma 5.12 of \cite{CRUZ2022410}, $$\domdim_{(A, R)} T(\l_0)\geq \inf\{\domdim_{(A, R)} \St(\l_0), \domdim_{(A, R)} X(\l_0) \}.$$ Since $X(\l_0)\in \mathcal{F}(\Stsim)$ we obtain $\domdim_{(A, R)} X(\l_0)\geq c$. Hence, $$\inf\{\domdim_{(A, R)} \St(\l_0), \domdim_{(A, R)} X(\l_0) \}=c.$$ Assume that $\domdim_{(A, R)} T(\l_0)>c$. Then \begin{align}
		\domdim_{(A, R)} X(\l_0)=\domdim_{(A, R)} \St(\l_0)-1=c-1.
	\end{align}This contradicts the minimality of $c$. Thus, \begin{align}
		\domdim_{(A, R)} T=\inf \{\domdim_{(A, R)} T(\l)\colon \l\in \L \}=c.
		\tag*{\qedhere}	\end{align}
\end{proof}

As a consequence, if $(A, \{\Delta(\lambda)_{\lambda\in \Lambda}\})$  is a split quasi-hereditary algebra over a commutative Noetherian ring and $(A, P, V)$ is an
RQF3 $R$-algebra satisfying $\domdim{(A, R)}\geq 2 $ and $\domdim_{(A, R)} T\geq 1$, then $(A, \Hom_{A^{op}}(V, A))$ is a $(\domdim_{(A, R)} T - 2)$-$\mathcal{F}(\Stsim)$ cover of $\End_A(V)$. In particular, \begin{align}
	\domdim_{(A, R)}T-2\leq \HN_{\Hom_A(\Hom_{A^{op}}(V, A), -)} \mathcal{F}(\Stsim)\leq \domdim_{(A, R)}T-2+\dim R.
\end{align} This case applied to finite-dimensional algebras was first observed in \cite{zbMATH05871076}.

Instead of requiring both conditions $\domdim{(A, R)} \geq 2$ and $\domdim_{(A, R)} T\geq 1$, when can we just ask for $\domdim{(A,R)} \geq 2$?  In \cite{zbMATH05871076}, a class of algebras that generalize classical Schur algebras of positive dominant dimension and block algebras of the BGG category $\mathcal{O}$ was introduced for which the dominant dimension of the characteristic tilting module is exactly half the dominant dimension of the regular module. This result can be generalized to the integral setup. 
For this, we need the concept of duality of a Noetherian algebra.
	\begin{Def}\label{dualitydef}
	Let $A$ be a free Noetherian $R$-algebra. Assume that $A$ admits a set of orthogonal idempotents $\textbf{e}:=\{e_1, \ldots, e_t \}$ such that for each maximal ideal $\mi$ of $R$ $\{e_1(\mi), \ldots, e_t(\mi) \}$ becomes a complete set of primitive orthogonal idempotents of $A(\mi)$. We say that $A$ has a \textbf{duality} $\iota\colon A\rightarrow A$ (with respect to $\textbf{e}$) if  $\iota$ is an anti-isomorphism with $\iota^2=\id_A$ fixing the set of orthogonal idempotents $\{e_1, \ldots, e_t \}$. 
\end{Def}

Given a free Noetherian $R$-algebra $A$, we say that $(A, \textbf{e})$ is a \textbf{split quasi-hereditary algebra with a duality} $\iota$ if $\iota$ is a duality of $A$ with respect to $\textbf{e}:=\{e_1, \ldots, e_t \}$ and $A$ is split quasi-hereditary with split heredity chain $	0\subset Ae_tA\subset \cdots \subset A(e_1+\cdots+ e_t)A=A $. In such a case, $A$ is also a cellular algebra (see \citep[Proposition 4.0.1]{cruz2021cellular}). Recall that a pair $(A, P)$ is an \textbf{relative gendo-symmetric algebra} over a commutative Noetherian ring if $(A, P, DP)$ is a RQF3 algebra so that $\domdim (A, R)\geq 2$ and $P\simeq DA\otimes_A P$  as $(A, \End_A(P)^{op})$-bimodules.

\begin{Theorem}\label{tiltingmoduledominantdimensioncover}
	Let $A$ be a free Noetherian $R$-algebra. Assume that the following holds.
	\begin{itemize}
		\item $(A, \textbf{e})$ is split quasi-hereditary algebra with a duality.
		\item $(A, Af)$ is a relative gendo-symmetric $R$-algebra for some idempotent $f\in \mathbb{Z}\langle \textbf{e}\rangle$ of $A$.
	\end{itemize}Let $T$ be a characteristic tilting module of $A$.
	Then \begin{align}
		\domdim{(A, R)}=2\domdim_{(A, R)} T.
	\end{align}
\end{Theorem}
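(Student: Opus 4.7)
Set $d:=\domdim_{(A,R)}T$. By Theorem~\ref{domdimofmoduleswithfiltrationbystandard} applied to $A\in\mathcal{F}(\Stsim)$ we already have $\domdim(A,R)\geq d$; the task is to sharpen this to the equality $\domdim(A,R)=2d$, in the spirit of Fang--Koenig's classical result for gendo-symmetric quasi-hereditary algebras with duality.

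The first step is to assemble a self-duality on $A\m\cap R\proj$. Restriction of scalars along $\iota\colon A\to A^{op}$ composed with $D$ yields an exact, contravariant, self-inverse endofunctor $\delta:=D\circ \iota^\ast$ that commutes with tensoring by modules in $R\proj$ and preserves $(A,R)$-exact sequences. Because $\iota$ fixes the idempotents in $\textbf{e}$, $\delta$ sends each indecomposable projective $P(\l)$ to the corresponding indecomposable $(A,R)$-injective; by the compatibility of the split heredity chain with $\iota$, $\delta\St(\l)\cong \Cs(\l)$ up to an invertible $R$-twist, and therefore $\delta T\cong T$. Crucially, the relative gendo-symmetric hypothesis $DA\otimes_A Af\cong Af$ as $(A,fAf)$-bimodules translates into $\delta(Af)\cong Af$, so $\delta$ preserves the class $\add_A(Af)$ of relative projective-injective modules.

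For the lower bound $\domdim(A,R)\geq 2d$, I would start from a length-$d$ $(A,R)$-exact injective coresolution $0\to T\to J^0\to \cdots\to J^{d-1}\to K\to 0$ with $J^i\in\add(Af)$. Applying $\delta$ and using $\delta T\cong T$ produces an $(A,R)$-exact projective resolution of $T$ by $\add(Af)$ of the same length $d$. Since $A\in\mathcal{F}(\Stsim)\subseteq \widecheck{\add T}$ carries an $(A,R)$-exact $\add T$-coresolution, one can splice these two projective-injective resolutions/coresolutions of $T$ through the tilting coresolution of $A$, thereby producing an $(A,R)$-injective coresolution of $A$ of length $2d$ whose terms lie in $\add(Af)$.

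For the matching upper bound $\domdim(A,R)\leq 2d$, I would use Mueller's theorem (Theorem~\ref{Mullertheorem}), whose parts (c) and (d) reformulate $\domdim(A,R)\geq n$ and $\domdim_{(A,R)}T\geq n$ as vanishing conditions for $\Tor_i^B(DV,V)$ and for $\Ext_B^i(V,V\otimes_A T)$ respectively. The relative gendo-symmetric hypothesis makes $B=\End_A(V)$ relative symmetric, and the self-duality $\delta T\cong T$ identifies the two families of Tor/Ext groups so that every two units of vanishing for $A$ correspond to one unit of vanishing for $T$; any assumption $\domdim(A,R)>2d$ would therefore force $\domdim_{(A,R)}T>d$, contradicting the choice of $d$. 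The principal obstacle is the rigorous verification of the bimodule identifications needed to transfer the classical Tor--Ext dictionary, and equivalently the splicing of two $\add(Af)$-resolutions in the lower bound, to the relative setting over $R$; the universal coefficient theorem (Corollary~\ref{Kunnethdeformationresult}) together with the flatness built into projective Noetherian algebras makes this transfer possible, though the bookkeeping is delicate.
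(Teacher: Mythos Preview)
Your route is to reprove the Fang--Koenig equality directly in the relative setting: build the duality $\delta=D\circ\iota^\ast$, check $\delta T\cong T$ and $\delta(Af)\cong Af$, and then splice resolutions. The paper does something entirely different and far shorter: it reduces to residue fields. For each $\mi\in\MaxSpec R$ the hypotheses descend --- the duality $\iota$ restricts to $A(\mi)$, the bimodule isomorphism $Af\cong DA\otimes_A Af$ survives $R(\mi)\otimes_R-$, and $\add_{A(\mi)}T(\mi)$ contains a characteristic tilting module --- so $A(\mi)$ is a gendo-symmetric split quasi-hereditary algebra with a duality over the field $R(\mi)$. Fang--Koenig's finite-dimensional theorem \cite[Theorem~4.3]{zbMATH05871076} then gives $\domdim A(\mi)=2\domdim_{A(\mi)}T(\mi)$ for every $\mi$, and \cite[Theorem~6.13]{CRUZ2022410}, which identifies $\domdim_{(A,R)}X$ with $\inf_{\mi}\domdim_{A(\mi)}X(\mi)$ for $X\in A\m\cap R\proj$, finishes the argument in one line. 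No duality functor on $A\m$, no splicing, and no Mueller-type bookkeeping over $R$ are needed.

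Your approach could in principle be completed, but as written it has real gaps. The splicing step for $\domdim(A,R)\geq 2d$ is not a routine totalization: merging a tilting coresolution $0\to A\to T_0\to\cdots\to T_r\to 0$ with length-$d$ $\add(Af)$-coresolutions of the $T_i$ produces a coresolution of length $r+d$, not $2d$; obtaining $2d$ genuinely requires the self-duality of $T$ and an argument closer to the Nakayama-functor manipulation in Fang--Koenig than the phrase ``splice through'' suggests. The upper-bound sketch is vaguer still: you assert that two units of $\Tor$-vanishing for $A$ correspond to one unit of $\Ext$-vanishing for $T$, but supply no mechanism; over a field this passes through $\nu T\cong T$ for the Nakayama functor, and the relative analogue would need its own justification. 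Finally, Corollary~\ref{Kunnethdeformationresult} is a change-of-ring tool and has no role in a splicing argument where the ground ring is held fixed; invoking it here is a red herring.
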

\begin{proof}
 Let $T$ be a characteristic tilting module of $A$. Let $\mi$ be a maximal ideal of $R$. Then $A(\mi)$ is a split quasi-hereditary algebra over $R(\mi)$ with characteristic tilting module $T_{(\mi)}$ so that $\add_{A(\mi)}T(\mi)= \add_{A(\mi)} T_{(\mi)}$  (see for example Proposition \citep[Proposition 5.1]{appendix} and  \citep[Theorem 3.1.2]{cruz2021cellular} or \citep[Theorem 4.15, Proposition 4.30]{Rouquier2008}). Fix $V:=fA$. Let $\theta$ be the $(\End_A(V), A)$-bimodule isomorphism given by $V\rightarrow V\otimes_A DA$. Applying the functor $R(\mi)\otimes_R -$ to $\theta$ gives an $(\End_{A(\mi)}(V(\mi)), A(\mi))$-bimodule isomorphism between $V(\mi)$ and $V(\mi)\otimes_{A(\mi)} \Hom_{R(\mi)}(A(\mi), R(\mi))$. In particular, $\domdim A(\mi)\geq \domdim (A, R)\geq 2$. Hence, $A(\mi)$ is a gendo-symmetric algebra. By Theorem \citep[Theorem 4.3]{zbMATH05871076}, $\domdim A(\mi)=2\domdim_{A(\mi)} T(\mi)$. Hence, by Theorem 6.13 of \citep{CRUZ2022410} we obtain that $	\domdim (A, R) = 2\domdim_{(A, R)} T$.
\end{proof}

For the split quasi-hereditary algebras satisfying Theorem \ref{tiltingmoduledominantdimensioncover}, the Hemmer-Nakano dimension of $\mathcal{F}(\Stsim)$ (with respect to $\Hom_A(P, Ae)$) is at least $\frac{1}{2}\domdim{(A, R)}-2$.

We can extend to Noetherian rings the result given in \citep{zbMATH05278765} which indicates an upper bound of the faithfulness of a faithful split quasi-hereditary cover in terms of relative dominant dimension of the algebra. This result is particularly useful when we have no clear relation between the relative dominant dimension of a characteristic tilting module and the relative dominant dimension of the regular module while still wanting to have a lower bound for the Hemmer-Nakano dimension of $\mathcal{F}(\Stsim)$ using the relative dominant dimension of the regular module. Recall the length of $\l$ in the poset $\L$ (see Subsection \ref{mathcalFSt}).

\begin{Prop}\label{faithfulcoverboundbydominantdime}
	Let $(A, \{\Delta(\lambda)_{\lambda\in \Lambda}\})$ be a split quasi-hereditary algebra over a commutative Noetherian ring.  For any $\l\in \L$, $\domdim_{(A, R)} \St(\l)\geq \domdim{(A, R)}-d(\L, \l)$.
\end{Prop}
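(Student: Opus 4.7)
The plan is to induct on the length $d(\Lambda,\lambda)$, using the defining short exact sequence from Definition~\ref{qhdef}(iv) to step down the poset.

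For the base case $d(\Lambda,\lambda)=0$, the element $\lambda$ is maximal in $\Lambda$. In this situation the filtration of $C(\lambda)$ in Definition~\ref{qhdef}(iv) must be empty, since there are no $\mu>\lambda$, so $C(\lambda)=0$ and therefore $\St(\lambda)\simeq P(\lambda)\in A\proj$. Since $\add_A A\subseteq \add_A \St(\lambda)$ fails in general, but $\St(\lambda)$ is itself a summand of the progenerator $P=\bigoplus_{\mu\in\Lambda}P(\mu)$, the additivity of relative dominant dimension (Corollary~5.10 of \cite{CRUZ2022410}) gives $\domdim_{(A,R)}\St(\lambda)\geq \domdim_{(A,R)}P=\domdim(A,R)$, matching the desired bound.

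For the inductive step, fix $\lambda$ with $d(\Lambda,\lambda)=n>0$ and assume the statement for all $\mu$ with $d(\Lambda,\mu)<n$. Take the defining $(A,R)$-exact sequence
\begin{equation*}
0\to C(\lambda)\to P(\lambda)\to \St(\lambda)\to 0,
\end{equation*}
with $C(\lambda)\in \mathcal{F}(\Stsim_{\mu>\lambda})$. For every $\mu>\lambda$ one has $d(\Lambda,\mu)\leq d(\Lambda,\lambda)-1=n-1$, so by induction
$\domdim_{(A,R)}\St(\mu)\geq \domdim(A,R)-(n-1)$. Since $\St(\mu)\otimes_R U_\mu\in \add_A \St(\mu)$ for $U_\mu\in R\proj$, the same bound holds for each composition factor of a filtration of $C(\lambda)$. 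Applying Lemma~5.12 of \cite{CRUZ2022410} (behaviour of relative dominant dimension under $(A,R)$-exact filtrations) yields
\begin{equation*}
\domdim_{(A,R)} C(\lambda)\geq \domdim(A,R)-n+1.
\end{equation*}

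It remains to combine this with $\domdim_{(A,R)}P(\lambda)\geq \domdim(A,R)$ through the short exact sequence. Using the standard inequality for relative dominant dimension along $(A,R)$-exact sequences $0\to X\to Y\to Z\to 0$, namely $\domdim_{(A,R)}Z\geq \min\{\domdim_{(A,R)}Y,\;\domdim_{(A,R)}X-1\}$ (a direct consequence of Lemma~5.12 of \cite{CRUZ2022410} applied to a long $(A,R)$-exact sequence), we obtain
\begin{equation*}
\domdim_{(A,R)}\St(\lambda)\geq \min\bigl\{\domdim(A,R),\ \domdim(A,R)-n+1-1\bigr\}=\domdim(A,R)-d(\Lambda,\lambda),
\end{equation*}
which closes the induction.

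The routine parts are the base case and the arithmetic of $d(\Lambda,\cdot)$; the only point that requires care is the third-term inequality used at the end, which must be verified in the relative (not just classical) setting and properly sourced in \cite{CRUZ2022410}. This is the main technical obstacle, since one needs to ensure that the $(A,R)$-split assumption is preserved when one resolves $C(\lambda)$ step by step by $(A,R)$-injective projective modules and reads off the relevant part of the long exact sequence governing $\St(\lambda)$.
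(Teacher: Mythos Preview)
Your proof is correct and follows the same approach as the paper: induction on $d(\Lambda,\lambda)$ via the defining exact sequence from Definition~\ref{qhdef}(iv), with Lemma~5.12 of \cite{CRUZ2022410} controlling the relative dominant dimension along the sequence. The only cosmetic difference is that the paper handles the final step by an explicit case split on whether $\domdim_{(A,R)}P(\lambda)>\domdim_{(A,R)}C(\lambda)$ rather than via your min-formula, but the two are equivalent and both rest on Lemma~5.12, so your flagged concern about sourcing the third-term inequality is resolved there.
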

\begin{proof} 
	We shall prove this result by induction on $d(\L, \l)$. If $d(\L, \l)=0$, then $\l$ is maximal in $\L$. Thus, $\St(\l)$ is a projective $A$-module. By \citep[Lemma 5.12, Corollary 5.10]{CRUZ2022410}, $\domdim_{(A, R)} \St(\l)\geq \domdim (A, R)$. 
	
	Suppose now the claim holds for all $\mu\in \L$ with $d(\mu)<t$ for some $t>1$. Let $\l\in \L$ such that $d(\l)=t$. Consider the $(A, R)$-exact sequence
	\begin{align}
		0\rightarrow K(\l)\rightarrow P(\l)\rightarrow \St(\l)\rightarrow 0,
	\end{align} where $K(\l)\in \mathcal{F}(\Stsim_{\mu>\l})$ and $P(\l)\in A\proj$, $\Stsim_{\mu>\l}=\{\St(\mu)\otimes_R U\colon U\in R\proj, \ \mu>\l \}$. Comparing lengths, $d(\L, \mu)<d(\L, \l)=t$ for $\mu>\l$. By induction, $\domdim_{(A, R)} \St(\mu)\geq \domdim(A, R)-d(\L, \mu)>\domdim (A, R) -d(\L, \l)$. By \citep[Lemma 5.12]{CRUZ2022410}, $\domdim_{(A, R)} K(\l)> \domdim (A, R) -d(\L, \l)$. If $\domdim_{(A, R)} P(\l)> \domdim_{(A, R)} K(\l)$, then by \citep[Lemma 5.12]{CRUZ2022410}, we have \begin{align}
		\domdim_{(A, R)} \St(\l)=\domdim_{(A, R)} K(\l)-1\geq \domdim (A, R) -d(\L, \l).
	\end{align}
	If $\domdim_{(A, R)} P(\l)\leq \domdim_{(A, R)} K(\l)$, then by \citep[Lemma 5.12]{CRUZ2022410}, we have
	\begin{align}
		\domdim_{(A, R)} \St(\l)\geq \domdim_{(A, R)} P(\l)-1\geq \domdim(A, R)-1\geq \domdim(A, R)-d(\L, \l).\tag*{\qedhere}
	\end{align}
\end{proof}

\begin{Prop}\label{faithfulcoverupperbound}Let $(A, \{\Delta(\lambda)_{\lambda\in \Lambda}\})$ be a split quasi-hereditary algebra over a commutative Noetherian ring. 
	If $\domdim{(A, R)} \geq d(\L) + 2+ s$ for some $s\geq 1$, then $(A, \Hom_{A^{op}}(V, A))$ is an $s$-$\mathcal{F}(\Stsim)$ cover of $\End_A(V)$.
\end{Prop}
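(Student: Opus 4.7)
The proof is a direct chain of already-established results.

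First I would observe that the hypothesis $\domdim(A,R) \geq d(\Lambda) + 2 + s$ with $s \geq 1$ implies in particular $\domdim(A,R) \geq 2$, so $(A, \Hom_{A^{op}}(V,A))$ is a cover of $\End_A(V)$ by Proposition \ref{dominantgeqtwodcp}, and Theorem \ref{boundrelationcoversdom} is applicable to the well behaved resolving subcategory $\mathcal{F}(\Stsim)$ (which is indeed well behaved by Proposition \ref{exampleswellbehavedres}(II)).

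Next, I would use Proposition \ref{faithfulcoverboundbydominantdime} together with the trivial bound $d(\Lambda, \lambda) \leq d(\Lambda)$ to deduce, for every $\lambda \in \Lambda$,
\begin{align*}
\domdim_{(A,R)} \St(\l) \geq \domdim(A,R) - d(\L,\l) \geq \domdim(A,R) - d(\L) \geq 2+s.
\end{align*}
By Theorem \ref{domdimofmoduleswithfiltrationbystandard}, this yields
\begin{align*}
n := \inf\{\domdim_{(A,R)} M \colon M \in \mathcal{F}(\Stsim)\} = \inf\{\domdim_{(A,R)} \St(\l) \colon \l \in \L\} \geq 2+s.
\end{align*}

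Finally, applying Theorem \ref{boundrelationcoversdom} to $\mathcal{R}_A = \mathcal{F}(\Stsim)$, the cover $(A, \Hom_{A^{op}}(V,A))$ of $\End_A(V)$ is an $(n-2)$-$\mathcal{F}(\Stsim)$ cover, hence in particular an $s$-$\mathcal{F}(\Stsim)$ cover since $n - 2 \geq s$. There is no real obstacle here: the whole content of the statement lies in the earlier Propositions \ref{faithfulcoverboundbydominantdime} and Theorems \ref{domdimofmoduleswithfiltrationbystandard}, \ref{boundrelationcoversdom}, and this proposition is simply packaging them together to replace the relative dominant dimension of the characteristic tilting module by that of the regular module at the cost of subtracting $d(\Lambda)$.
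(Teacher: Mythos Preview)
Your proof is correct and follows essentially the same route as the paper: use Proposition \ref{faithfulcoverboundbydominantdime} together with $d(\Lambda,\lambda)\leq d(\Lambda)$ to obtain $\domdim_{(A,R)}\St(\lambda)\geq 2+s$ for every $\lambda$, and then conclude via Theorem \ref{boundrelationcoversdom}. The only difference is that you make explicit the intermediate invocations of Proposition \ref{dominantgeqtwodcp}, Proposition \ref{exampleswellbehavedres}(II) and Theorem \ref{domdimofmoduleswithfiltrationbystandard}, which the paper leaves implicit.
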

\begin{proof}
	By Proposition \ref{faithfulcoverboundbydominantdime}, \begin{align}
		\domdim_{(A, R)} \St(\l)\geq \domdim{(A, R)} -d(\St, \l)\geq d(\L)-d(\L, \l)+2+s,
	\end{align} for every $\l\in \L$.
	Hence, $\inf\{\domdim_{(A, R)} \St(\l)\colon \l\in \L \}\geq 2+s$. The result follows from Theorem \ref{boundrelationcoversdom}. 
\end{proof}

\subsection{Hemmer-Nakano dimension of contravariantly finite resolving subcategories}\label{Hemmer-Nakano dimension of contravariantly finite resolving subcategories}
Let $A$ be a finite-dimensional algebra (over a field) with finite global dimension. Let $\mathcal{A}$ be a subcategory of $A\m$. Recall that $\mathcal{A}$ is called \textbf{contravariantly finite} in $A\m$ if for every $X\in A\m$ there exists a map $f\colon M\rightarrow X$ so that $M\in \mathcal{A}$ and the map $\Hom_A(M', f)$ is surjective for every $M'\in \mathcal{A}$. Let $\mathcal{A}$ be a contravariantly finite resolving subcategory in $A\m$. By Theorem 2.2 of \cite{MR2384611}, there exists a tilting module $T$ so that $\mathcal{A}$ coincides with the full subcategory of $A\m$ whose modules $X$ fit into exact sequences 
$0\rightarrow X\rightarrow T_1 \rightarrow T_2\rightarrow \cdots \rightarrow T_t\rightarrow 0$, where all $T_i\in \add T$. 

\begin{Theorem}
	Let $A$ be a finite-dimensional algebra with finite global dimension and with dominant dimension greater than or equal to two.  Let $\mathcal{A}$ be a contravariantly finite resolving subcategory in $A\m$.  
	Then
	\begin{align}
		\domdim T = \inf \{\domdim M\colon M\in \mathcal{A} \},
	\end{align}where $T$ is the tilting module satisfying $\widecheck{\add T}=\mathcal{A}$.
\end{Theorem}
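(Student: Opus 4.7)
The plan is to replicate the strategy of Theorem \ref{domdimofmoduleswithfiltrationbystandard}, but replacing the characteristic tilting module with the arbitrary tilting module $T$ produced by Theorem 2.2 of \cite{MR2384611}. The key fact I will use is that every $M\in\mathcal{A}=\widecheck{\add T}$ admits an $\add T$-coresolution $0\to M\to T_0\to T_1\to\cdots\to T_s\to 0$, allowing an induction on the length $s$.

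The inequality $\inf\{\domdim M\colon M\in\mathcal{A}\}\leq \domdim T$ is immediate, since $T$ itself belongs to $\mathcal{A}$ via the length-$0$ coresolution $0\to T\to T\to 0$. For the reverse inequality, I would fix $M\in\mathcal{A}$ together with an $\add T$-coresolution as above and induct on $s$. In the base case $s=0$ we have $M\in\add T$, so $M$ is a direct summand of some $T^n$, and Corollary 5.10 of \cite{CRUZ2022410} gives $\domdim M\geq \domdim T^n=\domdim T$. For the inductive step, I would cleave the coresolution into a short exact sequence $0\to M\to T_0\to K\to 0$ and a shorter $\add T$-coresolution of $K$, so that $K\in\mathcal{A}$ and the inductive hypothesis yields $\domdim K\geq \domdim T$, while $\domdim T_0\geq \domdim T$ by the base case. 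Applying Lemma 5.12 of \cite{CRUZ2022410} to the short exact sequence then delivers $\domdim M\geq \min\{\domdim T_0,\ \domdim K+1\}\geq \domdim T$, closing the induction.

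No serious obstacle appears; the only point requiring care is the bookkeeping when invoking Lemma 5.12 of \cite{CRUZ2022410}, whose conclusion splits into the subcases $\domdim T_0\leq \domdim K$ (which forces $\domdim M\geq \domdim T_0$) and $\domdim T_0>\domdim K$ (which forces $\domdim M=\domdim K+1$). In either case the resulting bound on $\domdim M$ is at least $\min\{\domdim T_0,\ \domdim K+1\}\geq \domdim T$. Combining the two inequalities then yields the desired equality $\domdim T=\inf\{\domdim M\colon M\in\mathcal{A}\}$.
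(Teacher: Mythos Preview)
Your proof is correct and relies on the same two ingredients as the paper: the $\add T$-coresolution coming from $\mathcal{A}=\widecheck{\add T}$ and Lemma~5.12 of \cite{CRUZ2022410} governing dominant dimension along short exact sequences. The paper organises the argument slightly differently: rather than inducting on coresolution length, it picks a single $X\in\mathcal{A}$ realising the infimum, looks at the first step $0\to X\to T_1\to X'\to 0$ of its $\add T$-coresolution, and observes that $\domdim T_1>\domdim X$ would force $\domdim X'=\domdim X-1$, contradicting $X'\in\mathcal{A}$; hence $\domdim T\leq\domdim T_1\leq\domdim X$, and together with $T\in\mathcal{A}$ this gives the equality in one stroke. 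One minor point: in your Case~2 you assert $\domdim M=\domdim K+1$, but in that direction Lemma~5.12 only delivers $\domdim M\geq\domdim K+1$ (equality can fail when $\domdim T_0\leq\domdim M$); the inequality is all your argument needs, so this does not affect the conclusion.
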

\begin{proof}
	If $\inf \{\domdim M\colon M\in \mathcal{A} \}$ is infinite, then there is nothing to show. 
	
	Assume that $\inf \{\domdim M\colon M\in \mathcal{A} \}$ is finite, and assume, without loss of generality, that \linebreak$\inf \{\domdim M\colon M\in \mathcal{A} \}= \domdim X =  t$, for some $X\in \mathcal{A}$. There exists an exact sequence $0\rightarrow X\rightarrow T_1\rightarrow \cdots \rightarrow T_d\rightarrow 0$, for some natural number $d$, where all $T_i\in \add T$. In particular,  $\domdim T_i\geq \domdim T$ (see for example  \citep[Corollary 5.10]{CRUZ2022410}). Let $X'$ be the cokernel of $X\rightarrow T_1$. So, $X'\in \widecheck{\add T}=\mathcal{A}$ and thus $\domdim X'\geq \domdim X$. Now, observe that if $\domdim T_1>\domdim X$, then $\domdim X'=\domdim X-1$ (see for example \citep[Lemma 5.12]{CRUZ2022410}) which leads to a contradiction with the choice of $X$. Hence, $\domdim T_1=\domdim X$. We conclude that $\domdim T=\domdim X$.
\end{proof}

It follows that if $A$ is a finite dimensional algebra with finite global dimension and dominant dimension greater than or equal to two with $V$ a faithful projective-injective right $A$-module, then $(A, \Hom_{A^{op}}(V, A))$ is an $(\domdim T-2)$-$\mathcal{A}$ cover of $\End_A(V)$, if $\domdim T\geq 1$.

\subsection{Hemmer-Nakano dimension of $\add DA\oplus A$}

We can also reformulate the Nakayama Conjecture in terms of Schur functors in a similar way as the cover property is defined.

\begin{Prop}\label{faithfulongeneratorcogeneratorequivalence}
	Let $A$ be a finite-dimensional algebra over a field. Let $V$ be a projective right $A$-module. Let $B=\End_A(V)=\End_A(\Hom_{A^{op}}(V, A))^{op}$. If the restriction of the Schur functor $F=\Hom_A(\Hom_{A^{op}}(V, A), -)$ to $\add DA\oplus A$ is  faithful, then $F$ is an equivalence of categories. 
\end{Prop}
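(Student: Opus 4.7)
The plan is to extract from the faithfulness hypothesis that the trace ideal $\tau_P:=\sum_{\psi\in \Hom_A(P,A)}\psi(P)$ of $P:=\Hom_{A^{op}}(V,A)$ is all of $A$, which by classical Morita theory forces $F$ to be an equivalence. Since $V$ is projective over $A^{op}$, the module $P$ is finitely generated and projective as a left $A$-module, and so the condition $\tau_P=A$ (equivalently, $P$ generates $A\m$) automatically upgrades $P$ to a progenerator, in which case $F=\Hom_A(P,-)$ is a Morita equivalence with quasi-inverse $P\otimes_B-$.

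For the argument, only the faithfulness of $F$ on the pair $(A,DA)\subseteq \add DA\oplus A$ will be needed. The second half of the proof of Lemma \ref{considerationseta} in fact never uses the cover hypothesis, so it applies here verbatim: the injectivity of $\Hom_A(A,DA)\to \Hom_B(FA,FDA)$ implies that the unit $\eta_{DA}\colon DA\to GFDA$ is a monomorphism, since for $\phi\in DA\cong \Hom_A(A,DA)$ the associated map $f_\phi\colon A\to DA$, $1\mapsto\phi$, satisfies $Ff_\phi=\eta_{DA}(\phi)$ by a direct computation.

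The main computation is then to identify $\ker\eta_{DA}$ with $D(A/\tau_P)$. For any $M\in A\m$ and $m\in M$, the map $\eta_M(m)\colon FA\to FM$ sends $\psi\in \Hom_A(P,A)$ to $p\mapsto\psi(p)m$; hence $m\in \ker\eta_M$ if and only if $\tau_P\cdot m=0$. Specialising to $M=DA$, whose left $A$-action reads $(a\cdot\phi)(x)=\phi(xa)$, and using that $\tau_P$ is a two-sided ideal of $A$ (being the image of the bimodule evaluation $P\otimes_B\Hom_A(P,A)\to A$), the condition $\tau_P\cdot\phi=0$ becomes $\phi|_{\tau_P}=0$. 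Therefore $\ker\eta_{DA}$ is canonically the $k$-linear annihilator of $\tau_P$ inside $DA$, which equals $D(A/\tau_P)$. Combined with the previous paragraph this forces $D(A/\tau_P)=0$, hence $\tau_P=A$ by faithfulness of $D$ on finite-dimensional $k$-vector spaces, and the proof is complete. The main obstacle is keeping careful track of how $A$ acts on $DA$ when computing $\ker\eta_{DA}$; once $\tau_P=A$ is established, the conclusion is immediate from Morita theory.
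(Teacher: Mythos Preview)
Your proof is correct and takes a genuinely different route from the paper's.

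Both arguments begin identically: from faithfulness on the pair $(A,DA)$ one extracts that $\eta_{DA}$ is a monomorphism (and you are right that the relevant implication in Lemma~\ref{considerationseta} does not actually use the cover hypothesis; the paper invokes the lemma in the same spirit). From here the two proofs diverge. The paper composes $\eta_{DA}$ with the map $\Hom_B(V,V\otimes_A DA)\hookrightarrow \Hom_B(V,I_0)$ into the injective hull $I_0$ of $V\otimes_A DA$, observes that $\Hom_B(V,I_0)\in\add DV$, and concludes $DA\in\add DV$, whence $V$ is a right $A$-progenerator. You instead compute $\ker\eta_{DA}$ directly from the explicit formula $\eta_M(m)(f)(p)=f(p)m$, identifying it with $D(A/\tau_P)$ via the trace ideal, and then invoke Morita theory once $\tau_P=A$ is established. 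Your approach is more elementary and self-contained (no injective hulls, no need to recognise $\Hom_B(V,DB)\simeq DV$); the paper's approach is slightly more conceptual in that it literally exhibits $DA$ as a summand of a module in $\add DV$. Both land on the same Morita-theoretic endgame.
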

\begin{proof}
	By assumption, the map induced by $F$, $\Hom_A(A, DA)\rightarrow \Hom_B(FA, F(DA))$ is injective. By Lemma \ref{considerationseta}, $\eta_{DA}\colon DA\rightarrow \Hom_B(FA, FDA)$ is a monomorphism. Note that $FA=V$ and $FDA=V\otimes_A DA$. Let $I_0$ be the injective hull of $V\otimes_A DA$. Since $\Hom_B(V, -)$ is left exact, the composition of maps $DA\rightarrow \Hom_B(V, V\otimes_A DA)\rightarrow \Hom_B(V, I_0)
	$ is a monomorphism. Observe that $\Hom_B(V, I_0)\in \add \Hom_B(V, DB)=\add DV$. Hence, $DA\in \add DV$, and consequently, $V$ is a right $A$-progenerator. By Morita theory, $\Hom_{A^{op}}(V, A)$ is a left $A$-progenerator. 	\end{proof}

So, we can rewrite the Nakayama Conjecture in the following way:

\begin{itemize}
	\item 	Let $A$ be a finite-dimensional algebra over a field.  If $\domdim A=+\infty$, then $A$ is a QF3 algebra with faithful projective-injective right $A$-module $V$ such that the restriction of the Schur functor $\Hom_A(\Hom_{A^{op}}(V, A), -)$ to $\add DA\oplus A$ is  faithful.
\end{itemize}

\section{Applications}

	In this section, we use the results obtained in \cite{CRUZ2022410} for Schur algebras and $q$-Schur algebras combined with the theory developed to Hemmer-Nakano dimensions to see in particular when the respective Schur functor induce exact equivalences between the exact categories of $S_R(d, d)$-modules having a finite filtration by direct summands of direct sums of Weyl modules (resp. of $S_{R, q}(d, d)$ having a $q$-Weyl filtration) and of  $RS_d$-modules  having a finite filtration by direct summands of direct sums of cell modules (resp. cell-filtered modules over an Iwahori-Hecke algebra). We study deformations of blocks of the BGG category $\mathcal{O}$ using the concept of the BGG category $\mathcal{O}$ over a commutative ring in the sense of Gabber and Joseph \cite{zbMATH03747378} and their properties like relative dominant dimension and their quasi-hereditary structure. We will present an integral version of Soergel's Struktursatz \cite{zbMATH00005018}. The computation of the Hemmer-Nakano dimension of the exact category of modules having a filtration by integral Verma modules will help us clarify the interconnections between relative dominant dimension and the Hemmer-Nakano dimension.

\subsection{Classical Schur algebras}\label{Classical Schur algebras}

The study of Schur algebras started in \citep{zbMATH02662157}. Schur used them to link  the polynomial representation theory of the complex general linear group with the representation theory of the symmetric group over the complex numbers. The latter was known at the time due to Frobenius \citep{Frobenius[s.a.]}. Schur algebras can be defined over any commutative ring and nowadays  this connection in positive characteristic is used in the opposite direction. A classical reference for the study of Schur algebras (over infinite fields) is \citep{zbMATH05080041}.

Let $R$ be a commutative ring with identity.	Fix natural numbers $n, d$. The symmetric group on $d$ letters $S_d$ acts  on the $d$-fold tensor product $(R^n)^{\otimes d}$ by place permutations. We will write $V_R^{\otimes d}$ instead of $(R^n)^{\otimes d}$ or simply $V^{\otimes d}$ when no confusion about the ground ring can arise.

\begin{Def} \citep{zbMATH03708660}
	The subalgebra $\End_{RS_d}\left( V^{\otimes d}\right)$ of the endomorphism algebra $\End_R\left( V^{\otimes d}\right)$ is called the \textbf{Schur algebra}. We will denote it by $S_R(n, d)$.
\end{Def}

Schur algebras admit nice properties. They have a base change property 
\begin{align}
	R\otimes_{\mathbb{Z}} S_{\mathbb{Z}}(n, d)\simeq S_R(n, d),
\end{align}and also $R\otimes_{\mathbb{Z}} V_{\mathbb{Z}}^{\otimes d}\simeq V_R^{\otimes d}$ as $S_R(n, d)$-modules. For any commutative Noetherian ring $R$, the Schur algebra $S_R(n, d)$ is a split quasi-hereditary $R$-algebra (see for example \citep[Theorem 4.1]{zbMATH04116809}, \citep[Theorem 11.5.2]{zbMATH04193959}, \citep[Theorem 3.7.2]{CLINE1990126}, \citep[1.2]{zbMATH04031957}, \citep[Theorem 5.0.1]{cruz2021cellular}).
The standard modules associated with this split heredity chain are called \textbf{Weyl modules}. In particular, the Weyl modules are indexed by the partitions of $d$ in at most $n$ parts. Also, the simple $S_K(n, d)$ modules are indexed by the partitions of $d$ in at most $n$ parts whenever $K$ is a field. As of the time of writing, determining the dimensions or the characters of simple modules of the Schur algebra remains still an open problem. Schur algebras over regular local Noetherian rings have finite global dimension (see for example \cite{zbMATH00966941} or \citep[Proposition 5.0.2]{cruz2021cellular}). Besides their quasi-hereditary structure, Schur algebras are also cellular algebras with a duality $\iota$ in the sense of Definition \ref{dualitydef} (see for example \cite[Section 5]{cruz2021cellular}). 

From now on we will assume that $n\geq d$. Then  $V^{\otimes d}$ is a projective $(S_R(n, d), R)$-injective $S_R(n, d)$-module and there exists an idempotent $e\in S_R(n, d)$ so that $\iota(e)=e$ and $V^{\otimes d}\simeq S_R(n, d)e$ as $S_R(n, d)$-modules. Moreover, we have the following:
\begin{Theorem}\citep[Theorem 7.12]{CRUZ2022410}\label{dominantdimensioofSchuralgebras}
	Let $R$ be a commutative Noetherian ring. If $n\geq d$ are natural numbers,  then $(S_R(n, d), V^{\otimes d})$ is a relative gendo-symmetric $R$-algebra and
	\begin{align}
		\domdim{(S_R(n, d), R)} =\inf\{2k\in \mathbb{N} \ | \ (k+1)\cdot 1_R\notin R^\times, \ k<d \}\geq 2.
	\end{align}
\end{Theorem}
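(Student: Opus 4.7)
The plan is to proceed in three stages: first establish the relative gendo-symmetric structure, then reduce the dominant dimension computation to residue fields via base change, and finally invoke the known characteristic-$p$ formula for Schur algebras over fields.

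For the first stage, I would verify the three ingredients of a relative gendo-symmetric algebra. The module $V^{\otimes d}$ is a projective $S_R(n,d)$-module (via the isomorphism $V^{\otimes d} \simeq S_R(n,d)e$) and is $(S_R(n,d), R)$-injective, so $V^{\otimes d} \in \add DS_R(n,d)$; symmetrically, $DV^{\otimes d}$ is projective and $(S_R(n,d)^{op}, R)$-injective on the right. Strong faithfulness on both sides is obtained from the defining embedding $S_R(n,d) \hookrightarrow \End_R(V^{\otimes d})$, whose codomain lies in $\add_R V^{\otimes d}$, together with $n \geq d$ which guarantees that the trivial idempotent of $V^{\otimes d}$ picks out a copy of $RS_d$. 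These facts assemble $(S_R(n,d), V^{\otimes d}, DV^{\otimes d})$ as an RQF3 algebra with $\domdim(S_R(n,d), R) \geq 2$. The bimodule isomorphism $V^{\otimes d} \simeq DS_R(n,d) \otimes_{S_R(n,d)} V^{\otimes d}$ is derived from the duality $\iota$ fixing $e$: applying $\iota$ yields $D(S_R(n,d)e) \simeq eS_R(n,d)$ as bimodules, and Schur--Weyl duality translates this into the desired identification.

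For the second stage, I would use the base change identities $S_R(n,d) \simeq R \otimes_\mathbb{Z} S_\mathbb{Z}(n,d)$ and $V_R^{\otimes d} \simeq R \otimes_\mathbb{Z} V_\mathbb{Z}^{\otimes d}$, both of which are compatible with the gendo-symmetric structure. Combined with \citep[Theorem 6.13]{CRUZ2022410}, this reduces the computation of $\domdim(S_R(n,d), R)$ to computing $\domdim S_{R(\mi)}(n,d)$ for $\mi \in \MaxSpec R$ and taking the infimum.

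For the third stage, one invokes the classical calculation: over a field $k$ of characteristic $p$, one has $\domdim S_k(n,d) = 2(p-1)$ when $2 \leq p \leq d$ and $\domdim S_k(n,d) = +\infty$ otherwise (this is the Koenig--Slungard--Xi result, also appearing in work of Fang--Koenig). Taking the infimum over $R(\mi)$, $\mi \in \MaxSpec R$, selects the smallest integer $k$ with $1 \leq k < d$ such that $k+1$ lies in some maximal ideal of $R$; equivalently, such that $(k+1) \cdot 1_R \notin R^\times$. This yields precisely the stated formula, with $+\infty$ corresponding to the case where no such $k$ exists. The main obstacle I anticipate is the careful bookkeeping between residue characteristics and the bound $k < d$, in particular checking that the infimum of the $2(p_\mi - 1)$ over all maximal ideals $\mi$ whose residue characteristic $p_\mi$ satisfies $p_\mi \leq d$ coincides with the combinatorial infimum in the statement; this amounts to observing that $(k+1)\cdot 1_R$ is a non-unit if and only if some residue field of $R$ has positive characteristic dividing $k+1$.
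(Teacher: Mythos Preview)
Your proposal is correct and matches the approach of the cited source: the paper itself does not prove this theorem but quotes it from \citep[Theorem~7.12]{CRUZ2022410}, noting only that it ``is built upon \citep[Theorem~5.1]{zbMATH05871076}'' (the Fang--Koenig computation of $\domdim S_k(n,d)$ over fields). Your three stages---establishing the relative gendo-symmetric structure via the duality $\iota$ fixing $e$, reducing to residue fields through \citep[Theorem~6.13]{CRUZ2022410}, and invoking the field case---are exactly the ingredients of that external proof, and your translation between residue characteristics and the non-invertibility condition $(k+1)\cdot 1_R\notin R^\times$ is the correct final bookkeeping step.
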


This previous result is built upon \citep[Theorem 5.1]{zbMATH05871076}. Now, Theorem \ref{dominantdimensioofSchuralgebras} means that \linebreak$(S_R(n, d), V^{\otimes d})$ is a split quasi-hereditary cover of $RS_d$. By the \textbf{Schur functor} we mean the functor $$F_R=\Hom_{S_R(n, d)}(V^{\otimes d}, -)\colon S_R(n, d)\m\rightarrow RS_d\m$$ arising from this cover (we will write just $F$ when there is no confusion on the ground ring $R$).   Much of the representation theory of symmetric groups can be studied through the representation theory of Schur algebras using the Schur functor. For example, since $\iota(e)=e$ and $eS_R(n, d)e\simeq RS_d$, the Schur functor sends the Weyl modules to the cell modules of $RS_d$ making $RS_d$ a cellular algebra (see for example \citep[Proposition 2.2.11]{cruz2021cellular} or \cite{zbMATH00871761}). These cell modules are also known as dual Specht modules.

We now wish to determine the Hemmer-Nakano dimension of $\mathcal{F}(\Stsim)$, generalizing the results presented in \citep[Section 3]{hemmer_nakano_2004}  by completely determining the quality of the correspondence between Weyl filtrations and dual Specht filtrations. To do that observe the following:

\begin{Cor}\label{dominantdimensiontilting}
	Let $R$ be a commutative Noetherian ring and assume that $n\geq d$. Let $T$ be a characteristic tilting module of $S_R(n, d)$. Then\begin{align}
		\domdim_{(S_R(n, d), R)} T=\inf\{k\in \mathbb{N} \ | \ (k+1)\cdot 1_R\notin R^\times, \ k<d \}\geq 1.
	\end{align}
\end{Cor}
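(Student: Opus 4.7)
The plan is to deduce this directly from Theorem \ref{dominantdimensioofSchuralgebras} by invoking Theorem \ref{tiltingmoduledominantdimensioncover}, since Schur algebras satisfy all the hypotheses of the latter result.

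First, I would verify that $S_R(n,d)$ fits into the setup of Theorem \ref{tiltingmoduledominantdimensioncover}. The base change property $R\otimes_{\mathbb{Z}} S_{\mathbb{Z}}(n,d)\simeq S_R(n,d)$ together with the fact that $S_{\mathbb{Z}}(n,d)$ is free over $\mathbb{Z}$ (coming from the Schur--Weyl double centralizer with $\mathbb{Z} S_d$) shows that $S_R(n,d)$ is a free Noetherian $R$-algebra. As recalled in the paragraphs preceding the corollary, $S_R(n,d)$ is split quasi-hereditary with a duality $\iota$ in the sense of Definition \ref{dualitydef}, arising from the cellular structure, and there exists an idempotent $e$ fixed by $\iota$ with $e \in \mathbb{Z}\langle \textbf{e}\rangle$ such that $V^{\otimes d}\simeq S_R(n,d)e$.

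Next I would check the relative gendo-symmetric hypothesis for the pair $(S_R(n,d), V^{\otimes d})$: this is exactly the content of Theorem \ref{dominantdimensioofSchuralgebras}. Thus all assumptions of Theorem \ref{tiltingmoduledominantdimensioncover} are in force with $A=S_R(n,d)$, $f=e$, giving the identity
\begin{align*}
    \domdim(S_R(n,d), R) = 2\domdim_{(S_R(n,d), R)} T,
\end{align*}
where $T$ denotes a characteristic tilting module of $S_R(n,d)$.

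Finally, substituting the explicit value
\begin{align*}
    \domdim(S_R(n,d), R) = \inf\{2k\in \mathbb{N} \ |\ (k+1)\cdot 1_R \notin R^\times,\ k<d\}
\end{align*}
from Theorem \ref{dominantdimensioofSchuralgebras} and dividing by two yields the claimed formula. Since $\domdim(S_R(n,d), R)\geq 2$, the bound $\domdim_{(S_R(n,d), R)} T\geq 1$ follows automatically. No step here is technically deep: the work has all been done in Theorems \ref{dominantdimensioofSchuralgebras} and \ref{tiltingmoduledominantdimensioncover}, and the only thing to double-check is that the specific idempotent $e$ cutting out $V^{\otimes d}$ indeed lies in the $\mathbb{Z}$-span of the fixed set of primitive idempotents used to describe the duality, which is well known for the cellular structure of Schur algebras.
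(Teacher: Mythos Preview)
Your proposal is correct and follows essentially the same approach as the paper: the paper's proof simply cites Theorem~\ref{dominantdimensioofSchuralgebras} (which is \cite[Theorem~7.12]{CRUZ2022410}) together with Theorem~\ref{tiltingmoduledominantdimensioncover}, exactly as you do. Your added verification that the idempotent $e$ cutting out $V^{\otimes d}$ lies in $\mathbb{Z}\langle\textbf{e}\rangle$ is a sensible sanity check---indeed $e=\xi_{(1,\ldots,d),(1,\ldots,d)}$ is one of the diagonal idempotents $\xi_{\lambda}$ used to define the split heredity chain.
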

\begin{proof}
	The result follows from applying Theorem Theorem 7.12 of \cite{CRUZ2022410} and Theorem \ref{tiltingmoduledominantdimensioncover}. Alternatively, we could also reproduce the second part of the proof of Theorem 7.12 of \cite{CRUZ2022410} together with Theorem 4.3 of \citep{zbMATH05871076}.
\end{proof}

\subsubsection{Hemmer-Nakano dimension of $S_R(n, d)\proj$ and of $\mathcal{F}(\Stsim_{S_R(n,d)})$}

In relative Mueller characterization (see Theorem \ref{Mullertheorem}) we saw that the vanishing of certain $\Ext$ groups alone might not give the value of relative dominant dimension, only a lower bound dependent on the Krull dimension of the ground ring. Of course, when the ground ring is a field the dominant dimension can be determined using only $\Ext$ groups. Fortunately, for the Schur algebra over a local ring, we can completely determine the Hemmer-Nakano dimensions in terms of relative dominant dimensions. We can reduce the problem to local rings $R$ due to Propositions \ref{arbitraryfaithfulcoverflattwo} and \ref{arbitraryfaithfulcoverflat}. Essentially, the value of the Hemmer-Nakano dimension of $S_R(n, d)\proj$ in terms of relative dominant dimension divides into two separate cases. Assume that $R$ is a local regular Noetherian commutative ring with unique maximal ideal $\mi$. Either $R$ contains a field as a subring or not.

\paragraph{Case 1 - $R$ contains a field}\ \\
Recall that a regular local  commutative Noetherian ring is always an integral domain.
We can start by observing that a regular local commutative Noetherian ring $R$ contains a field as a subring if and only if every quotient ring of $R$ has the same characteristic. In fact, if $R$ contains a field $K$ as subring, the characteristics of $R$, $K$ and the residue field of $R$, $R/\mi$, are all equal since $K\hookrightarrow R\twoheadrightarrow R/\mi$ is injective.  Conversely, the canonical map $\mathbb{Z}\rightarrow R\twoheadrightarrow R/\mi$ can be extended to a homomorphism of rings $\mathbb{Q}\rightarrow R\twoheadrightarrow R/\mi$  if $\characteristic R=0$ or to a homomorphism of rings $\mathbb{Z}/(\characteristic R)\mathbb{Z}\rightarrow R\twoheadrightarrow R/\mi$  if the characteristic of $R$, $\characteristic R$, is positive.

\begin{Theorem}\label{schuralgebraHn}
	Let $R$ be a regular local commutative Noetherian ring containing a field $k$ as a subring. Assume that $n\geq d$. Let $T$ be a characteristic tilting module of $S_R(n, d)$.  Then
	\begin{align*}
		\HN_F(S_R(n, d)\proj)&=\domdim (S_R(n, d), R) - 2 = \inf\{2k\in \mathbb{N} \ | \ (k+1)\cdot 1_R\notin R^\times, \ k<d \}-2\geq 0.\\
		\HN_F(\mathcal{F}(\Stsim))&=\domdim_{(S_R(n, d), R)} T-2=\inf\{k\in \mathbb{N} \ | \ (k+1)\cdot 1_R\notin R^\times, \ k<d \}-2\geq -1.
	\end{align*}
\end{Theorem}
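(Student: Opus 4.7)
The plan is to sandwich each Hemmer--Nakano dimension between matching lower and upper bounds. The lower bounds come essentially for free from Theorem \ref{boundrelationcoversdom} once one substitutes the known relative dominant dimensions for the Schur algebra. The upper bounds rely on flat base change to the field of fractions of $R$, where the equal characteristic hypothesis keeps the governing integer $c$ unchanged, so that the field-case upper bound (which is already sharp because $\dim R = 0$) transfers back to $R$.

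For the lower bounds, I will apply Theorem \ref{boundrelationcoversdom} to the well behaved resolving subcategories $S_R(n,d)\proj$ and $\mathcal{F}(\Stsim)$ of $S_R(n,d)\m$ (well behaved by Proposition \ref{exampleswellbehavedres}). The infimum of $\domdim_{(S_R(n,d),R)}(-)$ over $S_R(n,d)\proj$ is simply $\domdim(S_R(n,d),R) = 2c$ by Theorem \ref{dominantdimensioofSchuralgebras}, where $c := \inf\{k\in\mathbb{N}\mid (k+1)\cdot 1_R\notin R^\times,\ k<d\}$. The infimum over $\mathcal{F}(\Stsim)$ equals $\domdim_{(S_R(n,d),R)} T = c$ by Theorem \ref{domdimofmoduleswithfiltrationbystandard} combined with Corollary \ref{dominantdimensiontilting}. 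Theorem \ref{boundrelationcoversdom} then yields the lower bounds $\HN_F(S_R(n,d)\proj) \geq 2c-2$ and $\HN_F(\mathcal{F}(\Stsim)) \geq c-2$.

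For the upper bounds, let $K$ be the field of fractions of $R$, which exists since a regular local ring is an integral domain. The chain $k\hookrightarrow R\hookrightarrow K$ forces $\characteristic K = \characteristic R$, so by Theorem \ref{dominantdimensioofSchuralgebras} and Corollary \ref{dominantdimensiontilting} applied to $S_K(n,d)$ one has $\domdim S_K(n,d) = 2c$ and $\domdim_{(S_K(n,d),K)} T_K = c$, governed by the same $c$. Because $\dim K = 0$, the upper bound in Theorem \ref{boundrelationcoversdom} collapses onto the lower bound, recovering the field case of Fang--Koenig and yielding $\HN_{F_K}(S_K(n,d)\proj) = 2c-2$ and $\HN_{F_K}(\mathcal{F}(\Stsim_{S_K(n,d)})) = c-2$. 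Flatness of $K$ over $R$ then invokes Proposition \ref{arbitraryfaithfulcoverflat} (for $i\in\{-1,0\}$) and Proposition \ref{arbitraryfaithfulcoverflattwo} (for $i\geq 1$): any $i$-$\mathcal{R}_A$ cover $(S_R(n,d), V^{\otimes d})$ of $RS_d$ base-changes to an $i$-$\mathcal{R}_A(S_K(n,d))$ cover of $KS_d$, forcing the matching upper bounds $i \leq 2c-2$ and $i \leq c-2$ respectively.

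The main subtlety is the bookkeeping of index ranges: Proposition \ref{arbitraryfaithfulcoverflattwo} assumes $0$-faithfulness, which is automatic here because $\domdim(S_R(n,d),R)\geq 2$ gives $0$-faithfulness via Proposition \ref{dominantgeqtwodcp}; but the boundary case $\characteristic R = 2$ (where $c=1$, so the claim for $\mathcal{F}(\Stsim)$ is that the cover is $(-1)$-faithful and no better) must be handled by Proposition \ref{arbitraryfaithfulcoverflat} instead, and relies on the fact that over a field of characteristic $2$ the Schur functor is already not $0$-faithful on $\mathcal{F}(\Stsim)$. Apart from this bookkeeping, no deeper obstacle arises, and the combined bounds give both equalities in the statement.
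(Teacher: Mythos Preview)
Your proposal is correct and follows essentially the same route as the paper: the lower bounds come from Theorem~\ref{boundrelationcoversdom} via the known relative dominant dimensions (Theorem~\ref{dominantdimensioofSchuralgebras}, Corollary~\ref{dominantdimensiontilting}, Theorem~\ref{domdimofmoduleswithfiltrationbystandard}), and the upper bounds come by flat base change to the quotient field $K$, where equal characteristic forces $\characteristic K=\characteristic R(\mi)$ so that the field-case Hemmer--Nakano dimensions coincide with the claimed values and bound the $R$-case from above via Propositions~\ref{arbitraryfaithfulcoverflat} and~\ref{arbitraryfaithfulcoverflattwo}. The paper packages this as a single closed chain of inequalities, but the content is identical; your explicit handling of the $c=1$ boundary case via Proposition~\ref{arbitraryfaithfulcoverflat} is a point the paper leaves implicit.
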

\begin{proof}
		Let $K$ be the quotient field of $R$. Then $\characteristic K=\characteristic R=\characteristic R(\mi)$, where $\mi$ is the unique maximal ideal of $R$. In particular,
	$\domdim S_K(n, d)=\domdim{(S_R(n, d), R)}$. By Theorem \ref{boundrelationcoversdom} and the flatness of $K$ over $R$,
	\begin{align}
		\HN_{F_R}(S_R(n, d\proj))&\geq \domdim (S_R(n, d), R)-2=\domdim S_K(n, d)-2 \\&=\HN_{F_K}(S_K(n, d)\proj)\geq \HN_{F_R}(S_R(n, d)\proj). \nonumber
	\end{align}By Theorem \ref{dominantdimensioofSchuralgebras}, the first line of equalities follow.

Let $T_{K}$ be the characteristic tilting module of $S_K(n, d)$. Note that  $\add T_K=\add  K\otimes_R T$. Again, by Theorems \ref{boundrelationcoversdom}, \ref{domdimofmoduleswithfiltrationbystandard} and the flatness of $K$, 
\begin{align}
	\HN_F(\mathcal{F}(\Stsim))\geq \domdim_{(S_R(n, d), R)} T -2 &= \domdim_{S_K(n, d)} K\otimes_R T -2 \\&=\HN_{F_K}(\mathcal{F}(K\otimes_R \St))\geq 	\HN_F(\mathcal{F}(\Stsim)).
\end{align}
By  Corollary \ref{dominantdimensiontilting}, the result follows.
\end{proof}

\begin{Remark}
	We should point out that there is a typo in Corollary 3.9.2 of \cite{hemmer_nakano_2004}. It should read $p-3$ instead of $p-2$. This typo is a repercussion of a typo in the use of spectral sequences in the published version \citep[2.3]{zbMATH01818928}. There we should read $0\leq i\leq t$ instead of $0\leq i\leq t+1$. The reader can check Lemma \ref{spectralsequencegrothendieck} for clarifications. The result \citep[2.3]{zbMATH01818928} was corrected on Kleshchev's homepage.
\end{Remark}

This result also shows that the existence of the flat condition in Theorem \ref{boundcoverimprovementdom} cannot be dropped.

\paragraph{Case 2 - $R$ does not contain a field}\ %
\begin{Theorem}\label{Hemmenakanodimofprojectives}
	Let $R$ be a local regular commutative Noetherian ring that does not contain a field as a subring. Assume that $n\geq d$. 	Let $T$ be a characteristic tilting module of $S_R(n, d)$. Then\begin{align*}
			\HN_F(S_R(n, d)\proj)&=\domdim{(S_R(n, d), R)} -1 =\inf\{2k\in \mathbb{N} \ | \ (k+1)\cdot 1_R\notin R^\times, \ k<d \}-1\geq 1. \\
		\HN_F(\mathcal{F}(\Stsim))&=\domdim_{(S_R(n, d), R)} T - 1 = \inf\{k\in \mathbb{N} \ | \ (k+1)\cdot 1_R\notin R^\times, \ k<d \}-1\geq 0. 
	\end{align*}
\end{Theorem}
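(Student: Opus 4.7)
The key observation driving the stronger bound is that $R$ must have characteristic zero while its residue field $R/\mi$ has positive characteristic $p$: otherwise the canonical map $\mathbb{Z}\to R$ would extend to either $\mathbb{Q}\hookrightarrow R$ or $\mathbb{F}_p\hookrightarrow R$, contradicting the hypothesis that $R$ contains no field. Consequently, the quotient field $K$ of $R$ has characteristic zero, and classical Schur--Weyl duality yields that $S_K(n,d)$ is semi-simple, so $\domdim_{S_K(n,d)} N = +\infty$ for every finitely generated $S_K(n,d)$-module $N$.

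For the lower bound I would apply Theorem \ref{boundcoverimprovementdomquotientfield}. With $\mathcal{R}_A$ being either $S_R(n,d)\proj$ or $\mathcal{F}(\Stsim)$ (both well behaved resolving subcategories by Proposition \ref{exampleswellbehavedres}), Theorem \ref{dominantdimensioofSchuralgebras}, Theorem \ref{domdimofmoduleswithfiltrationbystandard} and Corollary \ref{dominantdimensiontilting} identify the infimum $n:=\inf\{\domdim_{(S_R(n,d),R)} M\colon M\in\mathcal{R}_A\}$ as either $\domdim(S_R(n,d),R)\geq 2$ or $\domdim_{(S_R(n,d),R)} T\geq 1$; the hypothesis that the corresponding infimum over $S_K(n,d)$ is at least $n+1$ then holds automatically by the semi-simplicity observed above. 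Theorem \ref{boundcoverimprovementdomquotientfield} shows that $(S_R(n,d), V^{\otimes d})$ is an $(n-1)$-$\mathcal{R}_A$ cover of $RS_d$, which produces the bound $\HN_F(\mathcal{R}_A)\geq n-1$ and in particular the $0$-$\mathcal{R}_A$ cover structure needed below.

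For the matching upper bound I would reduce to the one-dimensional case where Theorem \ref{boundcoverimprovementdomquotientfield} already yields an equality. Since $p\cdot 1_R$ is a non-zero non-unit of the regular local (hence UFD) ring $R$, it admits an irreducible factor $q$, and $\pri:=(q)$ is a height-one prime of $R$ containing $p$. The localization $R_\pri$ is a DVR of characteristic zero whose residue field still has characteristic $p$, so $R_\pri$ contains no field; moreover, because the conditions in Theorem \ref{dominantdimensioofSchuralgebras} and Corollary \ref{dominantdimensiontilting} depend only on the invertibility of integers, we get $\domdim(S_{R_\pri}(n,d), R_\pri)=\domdim(S_R(n,d),R)$ and $\domdim_{(S_{R_\pri}(n,d),R_\pri)} T_{R_\pri}=\domdim_{(S_R(n,d),R)} T$. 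The $\dim R=1$ assertion of Theorem \ref{boundcoverimprovementdomquotientfield}, applied to the DVR $R_\pri$, then yields the exact equality $\HN_{F_{R_\pri}}(\mathcal{R}_A(A_\pri))=n-1$, after which the equivalence (a)$\Leftrightarrow$(c) of Proposition \ref{arbitraryfaithfulcoverflattwo} forces $\HN_F(\mathcal{R}_A)\leq \HN_{F_{R_\pri}}(\mathcal{R}_A(A_\pri))=n-1$.

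The hard part will be producing a height-one prime $\pri$ containing $p$ along which to localize, so that the sharp one-dimensional case of Theorem \ref{boundcoverimprovementdomquotientfield} can be transferred back to $R$ as an upper bound. This step is precisely what distinguishes the present unequal-characteristic setting from the equal-characteristic setting of Theorem \ref{schuralgebraHn}: when $R$ contains a field, $K$ has the same residue characteristic, no dominant-dimension gain occurs over $K$, and the resulting cover is one degree weaker.
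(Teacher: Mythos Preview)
Your proof is correct, and the lower-bound argument is exactly the paper's. The upper bound, however, is obtained along a genuinely different route.

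Both you and the paper start by producing a height-one prime $\pri$ of the regular local (hence UFD) ring $R$ containing $p\cdot 1_R$. From there the paper takes the \emph{quotient} $R/\pri$: this is an equal-characteristic ring (it contains $\mathbb{F}_p$), so one may compute over the field $Q(R/\pri)$ that the Hemmer--Nakano dimension is exactly $n-2$, and then Corollary~\ref{coverheightprimeideal} (truncation by a height-one prime lowers the Hemmer--Nakano dimension by at most one) gives the bound $\HN_F(\mathcal{R}_A)\leq (n-2)+1=n-1$.

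You instead \emph{localize} at $\pri$: the resulting DVR $R_\pri$ is still of unequal characteristic, and because the invertible integers in $R_\pri$ are the same as in $R$, the dominant dimensions agree. The sharp $\dim R=1$ clause of Theorem~\ref{boundcoverimprovementdomquotientfield} then yields $\HN_{F_{R_\pri}}(\mathcal{R}_A(A_\pri))=n-1$ directly, and flat base change via Proposition~\ref{arbitraryfaithfulcoverflattwo} transfers this as an upper bound to $R$.

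Your approach is slightly more self-contained: it stays within the unequal-characteristic framework and reuses Theorem~\ref{boundcoverimprovementdomquotientfield} a second time rather than appealing to the field case (Theorem~\ref{schuralgebraHn}) and Corollary~\ref{coverheightprimeideal}. The paper's approach, on the other hand, makes explicit the reduction to the equal-characteristic situation already treated, which perhaps better illustrates the dichotomy between the two cases. One minor omission: you should note, as the paper does, that if no $k<d$ with $(k+1)\cdot 1_R\notin R^\times$ exists (i.e.\ $p>d$), then both sides are $+\infty$ and there is nothing to prove.
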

\begin{proof}Of course, $R$ has Krull dimension greater than or equal to one. Let $K$ be the quotient field of $R$.
	By assumption, $\characteristic R=0$ and $\characteristic R(\mi)$ is a prime number $p>0$. In particular, $\characteristic K=0$  the quotient field of $R$ and $\add K\otimes_R T=\add T_K$, where $T_K$ is the characteristic tilting module of $S_K(n, d)$.  Hence, Theorem \ref{dominantdimensioofSchuralgebras} and Corollary \ref{dominantdimensiontilting} give that $\domdim_{S_K(n, d)} K\otimes_R T=\domdim S_K(n, d) = +\infty$, while $2\domdim_{(S_R(n, d), R)} T=\domdim (S_R(n, d), R)=2(p-1)\geq 2$. 
	By Theorems \ref{domdimofmoduleswithfiltrationbystandard} and \ref{boundcoverimprovementdomquotientfield}, it follows that $\HN_F(S_R(n, d)\proj)\geq \domdim{(S_R(n, d), R)}-1$ and $\HN_F(\mathcal{F}(\Stsim))\geq \domdim_{(S_R(n, d), R)} T-1$.  We can assume, without loss of generality, that $p\geq d$. Otherwise, all the values in the statement are equal to $+\infty$, and consequently, the equalities hold.
	 Since $R$ is a local regular ring, $R$ is a unique factorization domain. Therefore, we can write $p1_R=up_1\cdots p_n$ for some prime elements of $R$. So, $p1_R$ belongs to a prime ideal $\mathfrak{p}$ of height one. Hence, $\characteristic R/\mathfrak{p}$ is $p$.
 Let $Q(R/\mathfrak{p})$ be the quotient field of $R/\mathfrak{p}$. Then $\characteristic Q(R/\mathfrak{p})=p$ and by Theorem \ref{dominantdimensioofSchuralgebras} $\domdim S_{Q(R/\mathfrak{p})}(n, d) =2(p-1)= \domdim (S_R(n, d), R)$. Therefore,
	\begin{align}
		\HN_{F_{R/\pri}}(S_{R/\mathfrak{p}}(n, d)\proj )&\leq \HN_{F_{Q(R/\pri)}}(S_{Q(R/\mathfrak{p})}(n, d)\proj)= \domdim (S_R(n, d), R) -2,\\
		\HN_{F_{R/\pri}}(\mathcal{F}(R/\mathfrak{p}\otimes_R \Stsim))&\leq \HN_{F_{Q(R/\pri)}}(\mathcal{F}(Q(R/\mathfrak{p}\otimes_R \St))\\&=\domdim_{S_{Q(R/\mathfrak{p})}(n, d) } Q(R/\mathfrak{p})\otimes_R T -2 = p-3 = \domdim_{(S_R(n, d), R)} T-2. \nonumber
	\end{align}By Corollary \ref{coverheightprimeideal}, the Hemmer-Nakano dimension of $S_R(n, d)\proj$ cannot be higher than \linebreak $\domdim{(S_R(n, d), R)} -1$ while $\HN_F(\mathcal{F}(\Stsim))$ cannot be higher than $\domdim_{(S_R(n, d), R)} T - 1$.  The result now follows by Theorem \ref{dominantdimensioofSchuralgebras} and Corollary \ref{dominantdimensiontilting}.
\end{proof}

We note that the situation for $\mathbb{Z}$ is way better than for $\mathbb{F}_2$. In fact,
\begin{align}
	\HN_{F_{\mathbb{Z}}}(S_{\mathbb{Z}}(n, d)\proj)&=1, \quad \HN_{F_{\mathbb{F}_2}}(S_{\mathbb{F}_2}(n, d)\proj)=0\\
	\HN_{F_{\mathbb{Z}}}(\mathcal{F}(\Stsim_{\mathbb{Z}}))&=0, \quad \HN_{F_{\mathbb{F}_2}}(\mathcal{F}(\St_{\mathbb{F}_2}))=-1.
\end{align}
These results are compatible with the results of \citep{zbMATH00971625}. Moreover, these two particular cases were already known to them and they used this knowledge to define the Young modules and  Specht modules of the group algebra $\mathbb{F}_2S_d$ by defining first the Young and Specht modules for the integral group algebra $\mathbb{Z}S_d$ and then applying the functor $\mathbb{F}_2\otimes_{\mathbb{Z}}-$. This becomes more relevant for Weyl modules over fields of  characteristic two since they cannot be reconstructed from dual Specht modules. That is, 
the image of a dual Specht module under the adjoint functor of the Schur functor only contains, in general, a Weyl module. 

\subsubsection{Uniqueness of covers for $RS_d$}\label{Uniqueness of covers for RSd}

Considering the localization of $\mathbb{Z}$ away from $2$, $\mathbb{Z}[\frac{1}{2}]$, on Theorem \ref{Hemmenakanodimofprojectives} yields that $(S_{\mathbb{Z}[\frac{1}{2}]}(n, d), V_{\mathbb{Z}[\frac{1}{2}]}^{\otimes d})$ is a 1-$\mathcal{F}(\Stsim)$ cover of $\mathbb{Z}[\frac{1}{2}]S_d$. By Corollary \ref{equivalenceofoneuniqueness}, this Schur algebra is the unique cover of $RS_d$ which sends the standard modules (in this case the Weyl modules) to the Specht modules. We remark that this improves the situation for the fields of characteristic 3 since they are algebras over $\mathbb{Z}[\frac{1}{2}]$ and for characteristic 3 the Hemmer-Nakano dimension of $\mathcal{F}(\St)$ is only zero.

For the ring of integers, we can already conclude that there is no better cover than the Schur algebra to study the Specht modules over the symmetric group.

\begin{Theorem}\label{uniquenessofcoverspreservingspecht}
	Let $k$ be a field of characteristic two and $d\geq 2$. Let $\theta=\{\theta(\l)\colon \l\in \Lambda^+(d) \}$ be the cell modules of $kS_d$. Then $(kS_d, \theta)$ does not have a $0$-faithful split quasi-hereditary cover.
	Moreover, there are no $1$-faithful split quasi-hereditary covers of $\mathbb{Z}S_d$ satisfying $F\St(\l)=\theta_{\mathbb{Z}}(\l), \ \l\in \Lambda^+(d)$, where $F$ is the Schur functor associated with the cover of $\mathbb{Z}S_d$.
\end{Theorem}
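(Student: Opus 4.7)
The plan is to prove the two assertions separately, deducing the second from the first by reducing modulo 2 via Theorem \ref{truncationcovers}.

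For the first assertion I would argue by contradiction. The crucial observation is that the two cell modules $\theta((d))$ and $\theta((1^d))$ indexed by the extreme partitions of $d$ are both one-dimensional, corresponding respectively to the trivial and the sign representations of $S_d$. In characteristic two, the abelianisation $S_d/[S_d, S_d]\simeq \mathbb{Z}/2$ has no non-trivial one-dimensional representation, so $\theta((d))\simeq \theta((1^d))\simeq k$ as $kS_d$-modules, even though $(d)\neq (1^d)$ in $\Lambda^+(d)$ because $d\geq 2$. Now suppose a 0-faithful split quasi-hereditary cover $(A, P)$ of $kS_d$ with $F\St(\lambda)=\theta(\lambda)$ exists. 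By Proposition \ref{zerofaithfulcover} the Schur functor is fully faithful on $\mathcal{F}(\Stsim)$, so
\begin{align*}
\Hom_A(\St((d)), \St((1^d))) &\simeq \Hom_{kS_d}(\theta((d)), \theta((1^d))) \simeq \End_{kS_d}(k)=k,\\
\Hom_A(\St((1^d)), \St((d))) &\simeq \Hom_{kS_d}(\theta((1^d)), \theta((d))) \simeq \End_{kS_d}(k)=k,
\end{align*}
and both groups are non-zero. Condition (ii) of Definition \ref{qhdef} then forces $(d)\leq (1^d)$ and $(1^d)\leq (d)$ simultaneously, contradicting $(d)\neq (1^d)$.

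For the second assertion, assume $(A, P)$ is a 1-faithful split quasi-hereditary cover of $\mathbb{Z}S_d$ with $F\St(\lambda)=\theta_{\mathbb{Z}}(\lambda)$. The ideal $I=2\mathbb{Z}$ is free, hence projective as a $\mathbb{Z}$-module, and $\mathcal{F}(\Stsim)$ is a well-behaved resolving subcategory by Proposition \ref{exampleswellbehavedres}. Applying Theorem \ref{truncationcovers} with $i=1$ yields that $(A/2A, P/2P)$ is a 0-$\mathcal{F}(\Stsim_{A/2A})$ cover of $\mathbb{F}_2 S_d$. Its standard modules are the base changes $\mathbb{F}_2\otimes_{\mathbb{Z}}\St(\lambda)$, and because $P$ is $A$-projective the Schur functor of the truncated cover satisfies
\begin{align*}
F_{\mathbb{F}_2}(\mathbb{F}_2\otimes_{\mathbb{Z}}\St(\lambda)) \simeq \mathbb{F}_2\otimes_{\mathbb{Z}} F(\St(\lambda)) = \mathbb{F}_2\otimes_{\mathbb{Z}}\theta_{\mathbb{Z}}(\lambda)=\theta_{\mathbb{F}_2}(\lambda).
\end{align*}
Thus we obtain a 0-faithful split quasi-hereditary cover of $(\mathbb{F}_2 S_d, \theta_{\mathbb{F}_2})$, contradicting the first assertion.

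The main point to justify carefully is the isomorphism $\theta_k((d))\simeq \theta_k((1^d))$ in characteristic two (via the one-dimensionality of trivial and sign Specht modules and their coincidence in characteristic two); once this is secured, the remainder is a purely formal combination of the poset constraint on Homs between standard modules in a split quasi-hereditary algebra, the full faithfulness guaranteed by 0-faithfulness, and the base-change behaviour of covers recorded in Theorem \ref{truncationcovers}.
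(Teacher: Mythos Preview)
Your proof is correct and follows the same overall strategy as the paper: establish $\theta((d))\simeq\theta((1^d))$ in characteristic two, use full faithfulness on standards to contradict the quasi-hereditary poset axiom, and then deduce the integral statement via Theorem~\ref{truncationcovers}.

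Two small differences are worth noting. First, the paper obtains $\theta((d))\simeq\theta((1^d))$ by invoking James's Theorem~8.15 (${}^\natural\theta(1^d)\simeq\theta(d)$) together with simplicity of $\theta(1^d)$, whereas your argument that both are one-dimensional and hence trivial in characteristic two is more elementary and self-contained. Second, the paper derives the contradiction from a single non-vanishing $\Hom_A(\St(d),\St(1^d))\neq 0$ against the specific relation $d>1^d$, implicitly fixing the dominance order; your use of both Hom directions to force $(d)\leq(1^d)$ and $(1^d)\leq(d)$ simultaneously is cleaner, since it rules out \emph{any} poset structure on $\Lambda^+(d)$ rather than just the expected one.
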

\begin{proof}
	Assume, by contradiction, that $(A, P)$ is a $0$-faithful quasi-hereditary cover of $kS_d$ satisfying \linebreak $\Hom_A(P, \St(\l))=\theta(\l), \l\in \L^+(d):=\L^+(d, d)$.
	
	Let ${}^\natural(-)\colon kS_d\m\rightarrow kS_d\m$ be the simple preserving duality of the symmetric group. By Theorem 8.15 of \cite{zbMATH03609919},
	$
	{}^\natural\theta(1^d)\simeq \theta(d).$ 
	On the other hand, $\theta(1^d)$ is a simple module, so $\theta(d)\simeq \theta(1^d)$.
	This implies that
	\begin{align}
		\Hom_{A}(\St(d), \St(1^d))\simeq \Hom_{kS_d}(\theta(d), \theta(1^d))\simeq \Hom_{kS_d}(\theta(d), \theta(d))\neq 0.
	\end{align} This contradicts $A$ being split quasi-hereditary with the order on the partitions $d>1^d$. So, $kS_d$ has no such faithful quasi-hereditary cover.
	
	Assume that there exists a 1-faithful split quasi-hereditary cover of $\mathbb{Z}S_d$, say $(A, P)$ such that \linebreak$\Hom_A(P, \St(\l))=\theta(\l)$. By Theorem \ref{truncationcovers}, $(A(2\mathbb{Z}), P(2\mathbb{Z}))$ is a 0-faithful quasi-hereditary cover of $\mathbb{Z}/2\mathbb{Z}S_d=\mathbb{F}_2S_d$ satisfying \begin{align}
		\theta_{\mathbb{F}_2}(\l)=\mathbb{Z}/2\mathbb{Z}\otimes_{\mathbb{Z}} \theta(\l) =	\mathbb{Z}/2\mathbb{Z}\otimes_\mathbb{Z} \Hom_A(P, \St(\l))\simeq \Hom_{\mathbb{F}_2\otimes_\mathbb{Z} A}(P(2\mathbb{Z}) , \St(\l)(2\mathbb{Z})).
	\end{align} By the first part of our discussion, this cannot happen.
\end{proof}

\subsection{$q$-Schur algebras}\label{qSchuralgebras}

The $q$-Schur algebras were introduced in \cite{zbMATH00014844, zbMATH04168928}. A classical reference to $q$-Schur algebras is \citep{MR1707336}. They are used to link the representation theory of Iwahori-Hecke algebras with the representation theory of quantum general linear groups.

There are many equivalent ways to define Iwahori-Hecke algebras. In this work, we will follow the definition due to Parshall-Wang \citep{zbMATH04193959} (but we use $u$ instead of $q$ and $q$ instead of $h$).
Let $R$ be a commutative ring with identity. Fix natural numbers $n, d$. Let $u$ be an invertible element of $R$ and put $q=u^{-2}$. The \textbf{Iwahori-Hecke algebra} $H_{R, q}(d)$ is the $R$-algebra with basis $\{T_\sigma\colon \sigma\in S_d \}$ satisfying the relations
\begin{align}
	T_\sigma T_s =\begin{cases}
		T_{\sigma s}, \quad &\text{ if } l(\sigma s)=l(\sigma) +1\\
		(u-u^{-1})T_\sigma +T_{\sigma s}, \quad &\text{ if } l(\sigma s)=l(\sigma)-1,
	\end{cases}\label{eqex24}
\end{align}
where $s\in S:=\{(1,2), (2,3), \ldots, (d-1, d) \}$ and $l$ is the length function defined with respect to these basic transpositions. The length of a permutation $\omega$, denoted by $l(\omega)$, is the minimal number $t$ such that $w=s_{j_1}\cdots s_{j_t}$, where $s_{j_k}$ are basic transpositions for $k=1, \ldots, t$. 
 In particular,  the elements $T_s$, $s\in S$, generate $H_{R, q}(d)$ as algebra.

The Iwahori-Hecke algebra $H_{R, q}(d)$ admits a base change property.
\begin{align}
	H_{R, q}(d)\simeq R\otimes_{\mathbb{Z}[u, u^{-1}]} H_{\mathbb{Z}[u, u^{-1}], u^{-2}}(d) \label{heckechangering}
\end{align}
Under this isomorphism of $R$-algebras $1_R\otimes_{\mathbb{Z}[u, u^{-1}]} T_\sigma$ is mapped to $T_\sigma\in H_{R, q}(d)$. 

Let $I(n, d)$ be the set of maps $i\colon \{1, \ldots, d\}\rightarrow \{1, \ldots, n\}$. We write $i(a)=i_a$. We can associate to $I(n, d)$ a right $S_d$-action by place permutation, and extend it to an $S_d$-action on  $I(n, d)\times I(n, d)$. Let $\{e_1, \ldots, e_n\}$ be an $R$-basis of $V$.
We can regard $V^{\otimes d}$ as right $H_{R, q}(d)$-module by imposing to the $R$-basis $\{ e_i:=e_{i_1}\otimes\cdots \otimes e_{i_d}\ | \ i\in I(n, d)\}$ of $V^{\otimes d}$, 
\begin{align*}
	e_{i_1}\otimes\cdots \otimes e_{i_d}\cdot T_s = \begin{cases}
		e_{i_1}\otimes\cdots \otimes e_{i_d} \cdot s \quad &\text{ if } i_t<i_{t+1}\\
		u e_{i_1}\otimes\cdots \otimes e_{i_d} \quad &\text{ if } i_t=i_{t+1}\\
		(u-u^{-1}) e_{i_1}\otimes\cdots \otimes e_{i_d} + e_{i_1}\otimes\cdots \otimes e_{i_d}\cdot s \quad &\text{ if } i_t>i_{t+1}
	\end{cases}, \quad s=&(t, t+1)\in S, \\[-0.6cm] &1\leq t< d.\nonumber
\end{align*}
This action is a deformation of place permutation. In fact, $H_{R, 1}(d)$ is just the group algebra of the symmetric group $S_d$ and this action is the one given by place permutation.

\begin{Def}
	The subalgebra $\End_{H_{R, q}(d)}\left( V^{\otimes d}\right)$ of the endomorphism algebra $\End_R\left( V^{\otimes d}\right)$ is called the \mbox{\textbf{$q$-Schur algebra}}. We will denote it by $S_{R, q}(n, d)$.
\end{Def}

We can associate to $I(n, d)\times I(n, d)$ the lexicographical order. Each $S_d$-orbit of $I(n, d)\times I(n, d)$ has a representative $(i, j)$ satisfying $(i_1, j_1)\leq \cdots \leq (i_d, j_d)$. Denote by $\L$ the set of such representatives.
The $q$-Schur algebra admits an $R$-basis by elements $\xi_{j, i}$, $(i, j)\in \L$, satisfying 
\begin{align}
	\xi_{j, i}(e_f)=\sum_{g\in I(n, d)} p_{i, j}^{f, g}(u) e_g, \quad \forall f\in I(n, d),
\end{align} for some elements $p_{i, j}^{f, g}(u) \in \mathbb{Z}[u, u^{-1}]$ (see for example \citep[Proposition 7.16]{CRUZ2022410} or \cite{MR1707336}). In  \citep[Proposition 7.16]{CRUZ2022410} the strategy was to construct a basis for $V^{\otimes d} \otimes_{H_{R, q}(d)} DV^{\otimes d}$ and the dualize this basis to a basis of the $q$-Schur algebra. The advantage of such an approach is that it follows immediately that  the $q$-Schur algebra admits a base change property (see also \citep[2.18(ii)]{zbMATH04168928}), that is, for any commutative $R$-algebra $S$:
\begin{align}
	S_{R, q}(n, d)\simeq R\otimes_{\mathbb{Z}[u, u^{-1}]} S_{\mathbb{Z}[u, u^{-1}], u^{-2}}(n, d),
	\ \quad 	S_{S, q1_S}(n, d)\simeq S\otimes_R S_{R, q}(n, d).
\end{align}

Also, from \citep[Proposition 7.16]{CRUZ2022410}, for $f\in I(n,d)$, $p_{i, i}^{f, f}=1$ if $f\sim i$ and $p_{i, i}^{f, f}$ is zero otherwise. Therefore, for each $(i, i)\in \L$, $\xi_{i, i}$ is an idempotent. Further, we can index these idempotents by the compositions of $d$ in at most $n$ parts, by associating to each $i$ its weight. We can consider an  increasing bijection $\L^+(n, d)\rightarrow \{1, \ldots, t\}, \ \l^k\mapsto k$. Set $e^k$ to be the idempotent $\sum_{l\geq k} \xi_{\l^l}$ and define $J_k=S_{R, q}(n, d)e^kS_{R, q}(n, d)$. It follows that $S_{R, q}(n, d)$ is split quasi-hereditary. 

\begin{Theorem}
	For any commutative Noetherian ring $R$, the $q$-Schur algebra $S_{R, q}(n, d)$ is a split quasi-hereditary algebra over $R$ with split heredity chain $0\subset J_t\subset \cdots\subset J_2\subset J_1=S_R(n, d)$.
\end{Theorem}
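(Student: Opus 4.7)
The plan is to mimic the approach used for the classical Schur algebra in \cite[Theorem 5.0.1]{cruz2021cellular} or \cite[Theorem 11.5.2]{zbMATH04193959}, exploiting the base change property $S_{R,q}(n,d)\simeq R\otimes_{\mathbb{Z}[u,u^{-1}]} S_{\mathbb{Z}[u,u^{-1}],u^{-2}}(n,d)$ to reduce the argument to a universal computation. Since the basis $\{\xi_{j,i}\}_{(i,j)\in\Lambda}$, the idempotents $\xi_{\lambda^k,\lambda^k}$, and the structure constants $p_{i,j}^{f,g}(u)\in\mathbb{Z}[u,u^{-1}]$ are all defined integrally, the filtration $\{J_k\}_k$ and the idempotents $e^k$ behave well under $R\otimes_{\mathbb{Z}[u,u^{-1}]}(-)$; it therefore suffices to establish the split heredity chain over $\mathbb{Z}[u,u^{-1}]$ and transport it to any commutative Noetherian $R$.

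The first step is to verify formal properties: the $\xi_{\lambda^l,\lambda^l}$ are pairwise orthogonal idempotents (direct from the explicit formula for $p_{\lambda^l,\lambda^l}^{f,g}$ on weight vectors), so each $e^k$ is an idempotent, and the inclusion $J_{k+1}\subseteq J_k$ follows since the idempotents $\xi_{\lambda^l,\lambda^l}$ with $l\geq k+1$ all factor through $e^k$. The second step is to show that $\bar J_k:=J_k/J_{k+1}$ is a split heredity ideal in $\bar A_k:=S_{R,q}(n,d)/J_{k+1}$. Setting $\bar e^k=\xi_{\lambda^k,\lambda^k}+J_{k+1}$, one needs to establish that (a) $\bar J_k=\bar A_k \bar e^k \bar A_k$, (b) $\bar e^k \bar A_k \bar e^k\simeq R$, and (c) the multiplication map $\bar A_k\bar e^k\otimes_R \bar e^k\bar A_k\to \bar J_k$ is an isomorphism; projectivity of $\bar A_k\bar e^k$ as a left $\bar A_k$-module then gives that $\bar J_k$ is projective as well.

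The core of the argument is a triangularity statement: $J_k$ is freely spanned as an $R$-module by those $\xi_{j,i}$ for which either $i$ or $j$ has weight $\lambda^l$ with $l\geq k$. Granted this, (a)--(c) follow because only $\xi_{\lambda^k,\lambda^k}$ survives in $\bar e^k \bar A_k \bar e^k$ modulo $J_{k+1}$, and the basis of $\bar J_k$ splits compatibly as a tensor product of bases of $\bar A_k\bar e^k$ and $\bar e^k\bar A_k$. The main obstacle will be proving this triangularity cleanly in the $q$-setting: unlike the classical case, products $\xi_{j,i}\xi_{j',i'}$ generate correction terms with non-trivial coefficients in $u,u^{-1}$ coming from the relation \eqref{eqex24}, and one has to control weight drops through multiplication. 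A possibly smoother route is to first invoke known quasi-heredity of $S_{K,q}(n,d)$ over a field $K$ (Dipper--James, Parshall--Wang), observe that the chain $\{J_k\}_k$ is defined by integrally-available idempotents, and then use flatness of $R\otimes_{\mathbb{Z}[u,u^{-1}]}(-)$ together with \cite[Proposition 5.1]{appendix} (base change preserves split quasi-heredity) to conclude for arbitrary commutative Noetherian $R$.
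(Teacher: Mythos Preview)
Your ``smoother route'' is essentially what the paper does, but with one directional gap. You correctly note that split quasi-heredity over fields is known (Parshall--Wang, \cite[Theorem 11.5.2]{zbMATH04193959}) and that the idempotents $e^k$ are defined over $\mathbb{Z}[u,u^{-1}]$, but invoking ``base change preserves split quasi-heredity'' does not get you from residue fields \emph{up} to a general Noetherian ring $R$; that result goes the other way. (Also, $R$ is not flat over $\mathbb{Z}[u,u^{-1}]$ in general, so the flatness remark does no work here---though split quasi-heredity is preserved under arbitrary base change anyway.) What is actually needed is a \emph{lifting} theorem: if the chain $0\subset J_t\subset\cdots\subset J_1=A$ is defined by idempotents in $A$ and becomes a split heredity chain over every residue field $A(\mathfrak{m})$, then it is already a split heredity chain over $R$. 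The paper's proof is precisely: field case from \cite[Theorem 11.5.2]{zbMATH04193959}, then lift via Theorem~3.7.2 of \cite{CLINE1990126} (or alternatively Theorem~3.3.11 of \cite{cruz2021cellular}).

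Your first approach---establish the split heredity chain directly over $\mathbb{Z}[u,u^{-1}]$ by proving the triangularity of the basis $\{\xi_{j,i}\}$ with respect to the filtration $\{J_k\}$, then base change to $R$---is a legitimate alternative and would indeed work, since split quasi-heredity survives arbitrary base change. But you correctly flag the triangularity as the main obstacle and do not carry it out; the paper bypasses this computation entirely by appealing to the lifting theorems above. So the missing ingredient in your proposal is not a wrong idea but the name of the right tool: replace the appeal to \cite[Proposition 5.1]{appendix} with \cite[Theorem 3.7.2]{CLINE1990126} or \cite[Theorem 3.3.11]{cruz2021cellular}, and your smoother route becomes the paper's proof.
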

\begin{proof}
	The statement for fields follows from \citep[Theorem 11.5.2]{zbMATH04193959}. The statement for Noetherian rings which are not fields follows from Theorem 3.7.2 of \citep{CLINE1990126}. An alternative proof for this statement without using Theorem 3.7.2 of \citep{CLINE1990126} is to apply Theorem 3.3.11 of \cite{cruz2021cellular}.
\end{proof}

In particular, $S_{R, q}(n, d)$ has finite global dimension whenever $R$ has finite global dimension. The standard modules associated with this split heredity chain of the $q$-Schur algebra are called $q$-Weyl modules, indexed by the partitions of $d$ in at most $n$ parts.  To define a cellular structure on the $q$-Schur algebra we can define the involution $\iota$ by assigning to each element basis $\xi_{j, i}$ $(i, j)\in \L$, the image in $S_{R, q}(n, d)$ of $(e_j\otimes_{H_{R, q}(d)} e_i^* )^*$.
Observe that $\iota(\xi_\l)=\xi_\l$ for every $\l\in \L^+(n, d)$. In particular, we can choose idempotents giving a split heredity chain of $S_{R, q}(n, d)$ which are all preserved by $\iota$. Hence, for any commutative Noetherian ring $R$, $S_{R, q}(n, d)$ is a cellular algebra (see for example \citep[Proposition 4.0.1]{cruz2021cellular}).

Analogously to the classical case,  we will assume, from now on, that $n\geq d$. Then $V^{\otimes d}$ is a projective $(S_{R, q}(n, d), R)$-injective $S_{R, q}(n, d)$-module and 	$V^{\otimes d}\simeq S_{R, q}(n, d)\xi_{(1, \ldots, d), (1, \ldots, d)}$ as $(S_{R, q}(n, d), R)$-modules and . Moreover, we have the following:

\begin{Theorem}\citep[Theorem 7.20]{CRUZ2022410}\label{dominantdimensionquantumschur}
	Let $R$ be a commutative Noetherian ring with invertible element $u\in R$. Put $q=u^{-2}$ and assume that $n\geq d$. Then $(S_{R, q}(n, d), V^{\otimes d})$ is a relative gendo-symmetric $R$-algebra and
	\begin{align}
		\domdim (S_{R, q}(n, d), R)=\inf \{ 2s\in \mathbb{N} \ | \ 1+q+\cdots+ q^s\notin R^\times, \ s<d \}.\label{eqex54}
	\end{align}
\end{Theorem}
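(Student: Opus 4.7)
The plan is to follow the strategy used for the classical Schur algebra (Theorem \ref{dominantdimensioofSchuralgebras}) and adapt it to the quantum setting, exploiting the base change property for both $S_{R,q}(n,d)$ and $H_{R,q}(d)$. First I would verify that $(S_{R,q}(n,d), V^{\otimes d}, DV^{\otimes d})$ is an RQF3 algebra: since $n\geq d$, the idempotent $\xi:=\xi_{(1,\ldots,d),(1,\ldots,d)}$ yields $V^{\otimes d}\simeq S_{R,q}(n,d)\xi$, so $V^{\otimes d}$ is projective, and using $\iota$ one checks $V^{\otimes d}$ is $(S_{R,q}(n,d),R)$-injective and strongly faithful. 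For the relative gendo-symmetric property, the key point is to produce an $(\End_{S_{R,q}(n,d)}(V^{\otimes d}),S_{R,q}(n,d))$-bimodule isomorphism $V^{\otimes d}\simeq DS_{R,q}(n,d)\otimes_{S_{R,q}(n,d)} V^{\otimes d}$; this should come from the cellular/duality data, together with the identification $\End_{S_{R,q}(n,d)}(V^{\otimes d})^{op}\simeq H_{R,q}(d)$ coming from quantum Schur-Weyl duality.

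To compute $\domdim(S_{R,q}(n,d),R)$, I would invoke Mueller's theorem (Theorem \ref{Mullertheorem}) applied to the RQF3 triple above: the integer $\domdim(S_{R,q}(n,d),R)\geq n$ is controlled by the surjectivity/bijectivity of $\Phi_{S_{R,q}(n,d)}$ together with the vanishing of $\Tor^{H_{R,q}(d)}_i(V^{\otimes d},V^{\otimes d})$ for $1\leq i\leq n-2$. Thus the problem is reduced to a Tor computation over the Iwahori-Hecke algebra. Using the base change isomorphism $H_{R,q}(d)\simeq R\otimes_{\mathbb{Z}[u,u^{-1}]}H_{\mathbb{Z}[u,u^{-1}],u^{-2}}(d)$ and the analogous isomorphism for $V^{\otimes d}$, I would reduce the computation to the generic ring $\mathbb{Z}[u,u^{-1}]$ (where all quantum integers are non-zero) and then track how tensoring with $R$ kills generic Tor classes precisely when some quantum integer $[s+1]_q=1+q+\cdots+q^s$ fails to be invertible in $R$.

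The concrete mechanism I expect is that a minimal projective resolution of $V^{\otimes d}$ over $H_{\mathbb{Z}[u,u^{-1}],u^{-2}}(d)$ has differentials whose obstructions are controlled by the quantum integers $[s+1]_q$ for $s<d$; tensoring with $R$ then yields $\Tor^{H_{R,q}(d)}_{2s-1}(V^{\otimes d},V^{\otimes d})\neq 0$ precisely when $[s+1]_q\notin R^\times$, forcing the relative dominant dimension to equal $2s$ where $s$ is the smallest such index. Once this generic resolution is in place, combining with Mueller's theorem gives the stated formula
\[
\domdim(S_{R,q}(n,d),R)=\inf\{2s\in\mathbb{N}\mid 1+q+\cdots+q^s\notin R^\times,\ s<d\}.
\]

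The main obstacle will be producing the explicit generic projective resolution of $V^{\otimes d}$ over $H_{\mathbb{Z}[u,u^{-1}],u^{-2}}(d)$ and identifying its differentials in terms of the quantum integers $[s+1]_q$; in the classical case ($q=1$) this is the symmetric group analogue worked out in \cite{zbMATH05871076} via Young permutation modules, and the quantum analogue requires quantum Young modules and care with the Hecke algebra relations. An alternative circumventing this step is to invoke Theorem 6.13 of \cite{CRUZ2022410} to reduce the computation of relative dominant dimension to the residue fields $R(\mi)$, where one can apply the known computation of $\domdim S_{k,q}(n,d)$ over a field $k$ (which is precisely the $q$-analogue of \cite[Theorem 5.1]{zbMATH05871076}) and then reassemble the answer using that $[s+1]_q\in R^\times$ if and only if $[s+1]_{q(\mi)}\in R(\mi)^\times$ for every maximal ideal $\mi$.
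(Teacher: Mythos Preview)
Your proposal is essentially correct, and the alternative route you describe at the end is in fact the approach taken in the cited reference. The paper itself does not prove this statement: it imports it from \citep[Theorem 7.20]{CRUZ2022410} and remarks only that the result ``is built upon \citep[Theorem 3.13]{zbMATH07050778} which uses quantum characteristic.'' Concretely, one first establishes the RQF3 and relative gendo-symmetric structure as you outline, then invokes \citep[Theorem 6.13]{CRUZ2022410} to reduce the computation of $\domdim(S_{R,q}(n,d),R)$ to the residue fields $R(\mi)$, where the field case \citep[Theorem 3.13]{zbMATH07050778} gives $\domdim S_{R(\mi),q(\mi)}(n,d)$ in terms of the quantum characteristic; the formula over $R$ then follows from the observation that $1+q+\cdots+q^s\in R^\times$ if and only if its image is nonzero in every $R(\mi)$.

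Your ``main'' strategy via an explicit generic projective resolution of $V^{\otimes d}$ over $H_{\mathbb{Z}[u,u^{-1}],u^{-2}}(d)$ is a genuinely different and more hands-on approach; it would work in principle and has the conceptual advantage of making the role of the quantum integers visible directly in the differentials, but it is considerably more laborious than the residue-field reduction. Since you already identified the reduction approach as the cleaner alternative, you have the correct proof.
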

The previous result is built upon \citep[Theorem 3.13]{zbMATH07050778} which uses quantum characteristic, in contrast to the classical case of Schur algebras, where the dominant dimension depends on the characteristic of the ground field. Theorem \ref{dominantdimensionquantumschur} implies that $(S_{R, q}(n, d), V^{\otimes d})$ is a split quasi-hereditary cover of $H_{R, q}(d)$. 
Parallel to the classical case, the Schur functor $$F_{R, q}=\Hom_{S_{R, q}(n, d)}(V^{\otimes d}, -)\colon S_{R, q}(n, d)\m\rightarrow H_{R, q}(d)\m$$ associated with this cover (we will write just $F_q$ when there is no confusion on the ground ring $R$) can be exploited to obtain information for the representation theory of Hecke algebras using  the representation theory of $q$-Schur algebras.
Since $\iota(\xi_{(1, \ldots, d), (1, \ldots, d)})=\xi_{(1, \ldots, d), (1, \ldots, d)}$ and
\begin{align}
	H_{R, q}(d)\simeq \End_{S_{R, q}(n, d)}(V^{\otimes d})\simeq \xi_{(1, \ldots, d), (1, \ldots, d)} S_{R, q}(n, d)\xi_{(1, \ldots, d), (1, \ldots, d)},
\end{align}multiplication by the idempotent $ \xi_{(1, \ldots, d), (1, \ldots, d)}$ sends the split heredity chain of $S_{\mathbb{Z}[u, u^{-1}], q}(n, d)$ to a cell chain of $H_{\mathbb{Z}[u, u^{-1}], q}(d)$. Also taking into account projective $\mathbb{Z}[u, u^{-1}]$-modules are free, this implies that $H_{\mathbb{Z}[u, u^{-1}, q}(d)$ and $H_{R, q}(d)$ are cellular algebras (see for example \citep[Proposition 2.2.11]{cruz2021cellular} and \cite{zbMATH00871761}).
In particular, the Schur functor sends the $q$-Weyl modules to the cell modules of $H_{R, q}(d)$. 
This motivates us to determine the connection between $q$-Weyl modules filtrations and cell filtrations. 

The starting point is to look at the values of relative dominant dimension of a characteristic tilting module.
\begin{Cor}\label{dominantdimensionquantumschurtilting}
	Let $R$ be a commutative Noetherian ring with an invertible element $u\in R$. Put $q=u^{-2}$ and assume that $n\geq d$.  Let $T$ be a characteristic tilting module of $S_{R, q}(n, d)$. Then
	\begin{align}
		\domdim_{(S_{R, q}(n, d), R)} T=\inf \{ s\in \mathbb{N} \ | \ 1+q+\cdots+ q^s\notin R^\times, \ s<d \}.\label{eqex55}
	\end{align}
\end{Cor}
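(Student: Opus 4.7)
The plan is to mirror the proof strategy used for the classical analog Corollary \ref{dominantdimensiontilting}: invoke Theorem \ref{tiltingmoduledominantdimensioncover} to obtain that the relative dominant dimension of a characteristic tilting module of $S_{R,q}(n,d)$ is exactly half the relative dominant dimension of the regular module, and then combine with Theorem \ref{dominantdimensionquantumschur} to read off the explicit value.

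First, I would verify that $A := S_{R,q}(n,d)$ satisfies the hypotheses of Theorem \ref{tiltingmoduledominantdimensioncover}. The algebra is free Noetherian over $R$ by the base change property together with the explicit $\mathbb{Z}[u, u^{-1}]$-basis $\{\xi_{j,i}\}_{(i,j) \in \Lambda}$ obtained in \citep[Proposition 7.16]{CRUZ2022410}. The set $\mathbf{e} := \{\xi_\lambda : \lambda \in \Lambda^+(n,d)\}$ consists of orthogonal idempotents, summing to $1_A$, whose associated two-sided ideals $A(\xi_{\lambda^t} + \cdots + \xi_{\lambda^k})A = J_k$ are precisely the layers of the split heredity chain discussed before the statement. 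The anti-involution $\iota$ defined on the basis by $\xi_{j,i} \mapsto \xi_{i,j}$ satisfies $\iota(\xi_\lambda) = \xi_\lambda$ for every $\lambda \in \Lambda^+(n,d)$, so $(A, \mathbf{e})$ is a split quasi-hereditary algebra with a duality in the sense recalled in Definition \ref{dualitydef}.

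Next, taking $f := \xi_{(1^d)} \in \mathbb{Z}\langle \mathbf{e}\rangle$ (which lies in $\mathbf{e}$ because $n \geq d$), the identification $V^{\otimes d} \simeq Af$ and Theorem \ref{dominantdimensionquantumschur} imply that $(A, Af)$ is a relative gendo-symmetric $R$-algebra. Hence Theorem \ref{tiltingmoduledominantdimensioncover} yields
\[
\domdim(S_{R,q}(n,d), R) = 2 \, \domdim_{(S_{R,q}(n,d), R)} T.
\]
Dividing the formula of Theorem \ref{dominantdimensionquantumschur} by two then produces the claimed value.

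The main obstacle is not an obstacle in the strong sense, but rather the verification of the fiberwise primitivity condition tacit in Definition \ref{dualitydef} for the idempotents $\xi_\lambda$; this is where the alternative route suggested by the analog of Corollary \ref{dominantdimensiontilting} becomes useful. One can instead reduce directly to residue fields via \citep[Theorem 6.13]{CRUZ2022410}: since $S_{R(\mathfrak{m}), q}(n,d)$ is a gendo-symmetric finite-dimensional algebra with dominant dimension at least two (by Theorem \ref{dominantdimensionquantumschur} applied fiberwise), Theorem 4.3 of \cite{zbMATH05871076} gives the halving relation for each residue field, and \citep[Proposition 5.1]{appendix} ensures that the characteristic tilting module is compatible with this base change. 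Either route concludes the proof.
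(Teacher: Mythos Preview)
Your proposal is correct and follows exactly the route the paper takes: apply Theorem \ref{tiltingmoduledominantdimensioncover} to obtain $\domdim(S_{R,q}(n,d),R)=2\,\domdim_{(S_{R,q}(n,d),R)}T$ and then halve the value from Theorem \ref{dominantdimensionquantumschur}. The paper's proof is a single sentence invoking these two results, while you spell out the hypothesis check; your alternative residue-field route via \cite[Theorem 6.13]{CRUZ2022410} and \cite[Theorem 4.3]{zbMATH05871076} is in fact precisely how Theorem \ref{tiltingmoduledominantdimensioncover} itself is proved, so this is not really a second argument but the same one unpacked. One minor imprecision: the involution $\iota$ on $S_{R,q}(n,d)$ is not literally $\xi_{j,i}\mapsto\xi_{i,j}$ (see the paragraph preceding Theorem \ref{dominantdimensionquantumschur}), but what you actually use, namely $\iota(\xi_\lambda)=\xi_\lambda$, is stated in the paper and suffices.
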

\begin{proof}
	The result follows from applying Theorem \ref{dominantdimensionquantumschur} and Theorem \ref{tiltingmoduledominantdimensioncover}. 
\end{proof}

\subsubsection{Hemmer-Nakano dimension of $S_{R, q}(n, d)\proj$ and $\mathcal{F}(\Stsim_{S_{R, q}(n, d)})$}

For the Schur algebras, we saw that both the Hemmer-Nakano dimension and the relative dominant dimension are independent of the Krull dimension of the ground ring contrary to other homological invariants like the global dimension. For $q$-Schur algebras, we expect a similar behaviour. Further, a crucial fact for a better value of the Hemmer-Nakano dimension regarding the relative dominant dimension of $S_R(n, d)\proj$ was $R$ not being similar to a field. In particular, $R$ must have Krull dimension greater than or equal to one and it does not contain a field. So, a natural question that arises is 
\begin{itemize}
	\item For which rings $R$ does $S_{R, q}(n, d)\proj$ and $\mathcal{F}(\Stsim)$  have higher Hemmer-Nakano dimension than the respective resolving subcategories over its residue fields?
\end{itemize}

The following notion based on the work \citep[1.9]{stum2013quantum} gives us the answer to this question.

\begin{Def}Let $R$ be a commutative ring with invertible element $q$.
	We call $R$ a \textbf{$q$-divisible ring} (or \emph{quantum divisible ring}) if $1+q+\cdots+q^s\in R^\times$ whenever $1+q+\cdots + q^s\neq 0$ for any $s\in \mathbb{N}$. For a given natural number $d$, we call $R$ a \textbf{$d$-partially $q$-divisible ring} (or \emph{$d$-partially quantum divisible ring}) if $1+q+\cdots+q^s\in R^\times$ whenever $1+q+\cdots + q^s\neq 0$ for any $s<d$.
\end{Def}

For example, any field is a quantum divisible ring, and in particular, it is a $d$-partially quantum divisible ring for any $d$. The $q$-characteristic of $R$, denoted by $q\qcharacteristic R$, is the smallest positive number $s$ such that $1+q+\cdots+q^{s-1}=0$ if such $s$ exists, and zero otherwise.

Once again, we can assume that $R$ is a local regular (commutative Noetherian) ring for the computation of Hemmer-Nakano dimension of $S_{R, q}(n, d)\proj$ and $\mathcal{F}(\Stsim)$.

\paragraph{Case 1 - $R$ is a $d$-partially quantum divisible ring} \ 
\begin{Theorem}
	Let $R$ be a local regular $d$-partially $q$-divisible (commutative Noetherian) ring, where $q=u^{-2}$, $u\in R^\times$. Assume that $n\geq d$. Then
	\begin{align*}
		\HN_{F_{q}}(S_{R,q}(n, d)\proj)&=\domdim (S_{R, q}(n, d), R) - 2 = \inf \{ 2s\in \mathbb{N} \ | \ 1+q+\cdots+ q^s\notin R^\times, \ s<d \}-2 \\
		&=\inf \{ 2s\in \mathbb{N} \ | \ 1+q+\cdots+ q^s=0, \ s<d \}-2\geq 0.
	\end{align*}
	Moreover, if  $T$ is a characteristic tilting module of $S_{R, q}(n, d)$, \label{HNqschuralgebraI} then 
	\begin{align*}
		\HN_{F_q}(\mathcal{F}(\Stsim))=\domdim_{(S_{R, q}(n, d), R)} T-2 &= \inf \{ s\in \mathbb{N} \ | \ 1+q+\cdots+ q^s\notin R^\times, \ s<d \}-2\\
		&= \inf \{ s\in \mathbb{N} \ | \ 1+q+\cdots+ q^s=0, \ s<d \}-2\geq -1.
	\end{align*}
\end{Theorem}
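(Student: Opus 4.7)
The plan is to mimic the proof of Theorem \ref{schuralgebraHn}, replacing the assumption that $R$ contains a field with the condition that $R$ is a $d$-partially $q$-divisible ring, which plays the same role: it is precisely the hypothesis needed so that the relevant ``quantum characteristic'' information is preserved on passage to the quotient field $K$ of $R$.

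First I would observe that, since $R$ is local regular, it is an integral domain, and the inclusion $R \hookrightarrow K$ yields $1+q+\cdots+q^s = 0$ in $R$ if and only if $1+q+\cdots+q^s = 0$ in $K$. Because $K$ is a field, for $K$ the conditions ``$1+q+\cdots+q^s\notin K^\times$'' and ``$1+q+\cdots+q^s=0$'' coincide. Combined with the $d$-partial $q$-divisibility of $R$ (which makes the analogous statement hold in $R$ for $s<d$), this gives
\[
\inf\{2s\in\mathbb{N}\mid 1+q+\cdots+q^s\notin R^\times,\ s<d\}
= \inf\{2s\in\mathbb{N}\mid 1+q+\cdots+q^s = 0,\ s<d\},
\]
and the analogous equality without the factor $2$, yielding the second and third equalities in each displayed formula of the theorem via Theorem \ref{dominantdimensionquantumschur} and Corollary \ref{dominantdimensionquantumschurtilting}.

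Next, for the lower bounds on the Hemmer--Nakano dimensions, I would apply Theorem \ref{boundrelationcoversdom} to the RQF3 algebra $(S_{R,q}(n,d), V^{\otimes d}, V^{\otimes d})$ (using Theorem \ref{dominantdimensionquantumschur} for $\domdim(S_{R,q}(n,d),R)\geq 2$). This gives
\[
\HN_{F_q}(S_{R,q}(n,d)\proj) \geq \domdim(S_{R,q}(n,d),R) - 2,
\]
and, together with Theorem \ref{domdimofmoduleswithfiltrationbystandard},
\[
\HN_{F_q}(\mathcal{F}(\Stsim)) \geq \domdim_{(S_{R,q}(n,d),R)} T - 2.
\]

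For the matching upper bounds I would use the flat base change to the quotient field $K$ together with Proposition \ref{arbitraryfaithfulcoverflattwo} (in the form that the Hemmer--Nakano dimension of a well behaved resolving subcategory can only drop under flat extension only if the base category survives). The base change identities $K\otimes_R S_{R,q}(n,d)\simeq S_{K,q}(n,d)$ and $K\otimes_R V^{\otimes d}\simeq V_K^{\otimes d}$, together with Proposition \ref{exampleswellbehavedres}, ensure that both $S_{R,q}(n,d)\proj$ and $\mathcal{F}(\Stsim)$ are well behaved resolving subcategories whose base change to $K$ equals, up to the subcategory generated by them, $S_{K,q}(n,d)\proj$ and $\mathcal{F}(\Stsim_{S_{K,q}(n,d)})$, respectively. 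Therefore
\[
\HN_{F_q}(S_{R,q}(n,d)\proj) \leq \HN_{F_{K,q}}(S_{K,q}(n,d)\proj) = \domdim S_{K,q}(n,d) - 2,
\]
using that over the field $K$ the Hemmer--Nakano dimension of projectives equals the dominant dimension minus two (Theorem \ref{boundrelationcoversdom} together with Theorem \ref{Mullertheorem}, since over a field Krull dimension is zero). An entirely analogous argument with $T$ replacing the projective generator, invoking Theorem \ref{domdimofmoduleswithfiltrationbystandard} for both $R$ and $K$ and using $\add_K T_K = \add_K K\otimes_R T$, gives the corresponding bound for $\mathcal{F}(\Stsim)$. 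Combining the preceding paragraph with the equality between the two invariants over $R$ and $K$, established in the first paragraph, closes the chain of inequalities.

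The main obstacle—and the only place where the hypothesis ``$d$-partially $q$-divisible'' is essential—is showing that $\domdim(S_{R,q}(n,d),R) = \domdim S_{K,q}(n,d)$ (and likewise for $T$). Without this hypothesis, an element $1+q+\cdots+q^s$ could be a non-unit, non-zero element of $R$ (thus decreasing the dominant dimension over $R$) while becoming a unit in $K$ (so that over $K$ the dominant dimension is strictly larger). The $d$-partial $q$-divisibility rules out precisely this phenomenon in the relevant range $s<d$, and is the quantum analogue of $R$ containing a field in Theorem \ref{schuralgebraHn}. The non-negativity statements $\geq 0$ and $\geq -1$ then follow because $1+q+\cdots+q^0 = 1\in R^\times$ always, so the infima run over $s\geq 1$, giving the displayed lower bounds.
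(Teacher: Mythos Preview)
Your proposal is correct and follows essentially the same approach as the paper's own proof: obtain the lower bounds from Theorem~\ref{boundrelationcoversdom} (together with Theorem~\ref{domdimofmoduleswithfiltrationbystandard} for $\mathcal{F}(\Stsim)$), pass to the quotient field $K$ via flatness (Proposition~\ref{arbitraryfaithfulcoverflattwo}) to get the upper bounds, and use $d$-partial $q$-divisibility to identify the relevant infima over $R$ and over $K$ so that the chain closes. The paper organises the argument slightly differently by first disposing of the case where the infimum is $+\infty$ and then writing the chain of inequalities in a single line, but the ingredients and logic are the same.
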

\begin{proof}
	By Theorem \ref{boundrelationcoversdom}, Theorem \ref{dominantdimensionquantumschur}, Theorem \ref{domdimofmoduleswithfiltrationbystandard} and Corollary \ref{dominantdimensionquantumschurtilting},
	\begin{align*}
		\HN_{F_q}(S_{R, q}(n, d), R\proj)&\geq \domdim {(S_{R, q}(n, d), R)} - 2 \\&=  \inf \{ 2s\in \mathbb{N} \ | \ 1+q+\cdots+ q^s\notin R^\times, \ s<d \}-2,\\
		\HN_{F_q}(\mathcal{F}(\Stsim))&\geq \domdim_{(S_{R, q}(n, d), R)} T - 2 \\&= \inf \{ s\in \mathbb{N} \ | \ 1+q+\cdots+ q^s\notin R^\times, \ s<d \}-2.
	\end{align*}	If $ \inf \{ s\in \mathbb{N} \ | \ 1+q+\cdots+ q^s\notin R^\times, \ s<d \}=+\infty$, then we are done. 
	
	Assume that $\inf \{ s\in \mathbb{N} \ | \ 1+q+\cdots+ q^s\notin R^\times, \ s<d \}$ is finite.
	Let $K$ be the quotient field of $R$.
	Since $R$ is a $d$-partially $q$-divisible ring,
	\begin{align}
		\inf \{ s\in \mathbb{N} \ | \ 1+q+\cdots+ q^s\notin R^\times, \ s<d \} &= \inf \{ s\in \mathbb{N} \ | \ 1+q+\cdots+ q^s=0, \ s<d \}\\&=q\qcharacteristic R-1=q\qcharacteristic K-1>0.
	\end{align} 
	Therefore, 
	\begin{align}
		\HN_{F_q}(S_{R, q}(n, d)\proj)&\geq (\domdim S_{R, q}(n, d), R) -2 =2(q\qcharacteristic K-1)-2\\&=\domdim S_{K, q}(n, d) -2\\&=\HN_{F_{K, q}}(S_{K, q}(n, d)\proj)\geq \HN_{F_q}(S_{R, q}(n, d), R\proj). 
	\end{align} Furthermore,
	\begin{align} \hspace*{-0.15cm}
		\HN_{F_q}(\mathcal{F}(\Stsim))&\geq \domdim_{(S_{R, q}(n, d), R)} T -2 = q\qcharacteristic K- 3\\&=\domdim S_{K, q}(n, d)K\otimes_R T - 2 =\HN_{F_{K, q}}(\mathcal{F}(K\otimes_R \St))\geq \HN_{F_q}(\mathcal{F}(\Stsim)). \tag*{\qedhere}
	\end{align}
\end{proof}

\paragraph{Case 2 - $R$ is not a $d$-partially quantum divisible ring}
\begin{Theorem}Let $R$ be a local regular (commutative Noetherian) ring with an invertible element $u\in R$. Put $q=u^{-2}$. Assume that $R$ is not a $d$-partially $q$-divisible ring. Assume that $n\geq d$.  Then
	\begin{align*}
		\HN_{F_q}(S_{R,q}(n, d)\proj)=\domdim (S_{R, q}(n, d), R) - 1 = \inf \{ 2s\in \mathbb{N} \ | \ 1+q+\cdots+ q^s\notin R^\times, \ s<d \}-1.
	\end{align*}\label{HNqschuralgebraII}
	Moreover, if  $T$ is a characteristic tilting module of $S_{R, q}(n, d)$, then
	\begin{align*}
		\HN_{F_q}(\mathcal{F}(\Stsim))=\domdim_{(S_{R, q}(n, d), R)} T-1 = \inf \{ s\in \mathbb{N} \ | \ 1+q+\cdots+ q^s\notin R^\times, \ s<d \}-1\geq 0.
	\end{align*}
\end{Theorem}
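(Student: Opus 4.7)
The argument mirrors the proof of Theorem \ref{Hemmenakanodimofprojectives} (the characteristic-$p$ analogue for classical Schur algebras), replacing the role of $\characteristic R$ by the quantum characteristic and using Theorems \ref{dominantdimensionquantumschur} and \ref{dominantdimensionquantumschurtilting} in place of Theorem \ref{dominantdimensioofSchuralgebras} and Corollary \ref{dominantdimensiontilting}.

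The plan is as follows. Since $R$ is not $d$-partially $q$-divisible, the quantity $s_0:=\inf\{s\in\mathbb{N}\ |\ 1+q+\cdots+q^s\notin R^\times,\ s<d\}$ is finite and the element $a_{s_0}:=1+q+\cdots+q^{s_0}$ lies in the maximal ideal $\mi$ but is non-zero (as $R$ is an integral domain). Let $K$ denote the quotient field of $R$. Because $a_{s_0}\ne 0$ in $R$, it is invertible in $K$; by the minimality of $s_0$ the elements $a_s$ with $s<s_0$ are already invertible in $R$. Hence
\[
\inf\{s\in\mathbb{N}\ |\ 1+q+\cdots+q^s\notin K^\times,\ s<d\}\;>\;s_0,
\]
so by Theorem \ref{dominantdimensionquantumschur} and Corollary \ref{dominantdimensionquantumschurtilting} both $\domdim(S_{K,q}(n,d),K)\geq\domdim(S_{R,q}(n,d),R)+2$ and $\domdim_{S_{K,q}(n,d)}K\otimes_R T\geq\domdim_{(S_{R,q}(n,d),R)}T+1$. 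Since $R$ has Krull dimension at least one (otherwise $R$ would be a field and hence $d$-partially $q$-divisible), Theorem \ref{boundcoverimprovementdomquotientfield}, combined with Theorem \ref{domdimofmoduleswithfiltrationbystandard} to convert the bound on $T$ into a bound on all of $\mathcal{F}(\Stsim)$, yields the lower bounds
\[
\HN_{F_q}(S_{R,q}(n,d)\proj)\geq\domdim(S_{R,q}(n,d),R)-1,\quad \HN_{F_q}(\mathcal{F}(\Stsim))\geq\domdim_{(S_{R,q}(n,d),R)}T-1.
\]

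For the matching upper bounds, I want to produce a height-one prime $\pri\subset R$ whose residue field $Q(R/\pri)$ realises the same quantum characteristic as $R$. Since $R$ is a local regular ring, it is a unique factorisation domain, so the non-zero non-unit $a_{s_0}$ admits a prime factor $p\in R$, and $\pri:=(p)$ is a prime ideal of height one containing $a_{s_0}$. For every $s<s_0$ the element $a_s$ is a unit in $R$, hence $a_s\notin\pri$; pushing to $Q(R/\pri)$ this says $a_s\neq 0$ there, while $a_{s_0}=0$ in $Q(R/\pri)$. Consequently the quantum characteristic of $Q(R/\pri)$ equals $s_0+1$, and Theorem \ref{dominantdimensionquantumschur} together with Corollary \ref{dominantdimensionquantumschurtilting} give
\[
\domdim(S_{Q(R/\pri),q}(n,d),Q(R/\pri))=2s_0=\domdim(S_{R,q}(n,d),R),
\]
and analogously for the characteristic tilting modules. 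Applying Theorems \ref{boundrelationcoversdom} and \ref{domdimofmoduleswithfiltrationbystandard} over the field $Q(R/\pri)$ bounds the Hemmer--Nakano dimensions of $S_{Q(R/\pri),q}(n,d)\proj$ and $\mathcal{F}(Q(R/\pri)\otimes_R\Stsim)$ from above by $\domdim(S_{R,q}(n,d),R)-2$ and $\domdim_{(S_{R,q}(n,d),R)}T-2$ respectively. Proposition \ref{arbitraryfaithfulcoverflattwo} transports these bounds to $R/\pri$, and Corollary \ref{coverheightprimeideal} then forces the Hemmer--Nakano dimensions over $R$ to be at most one larger than those over $R/\pri$, producing the matching upper bounds.

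The technical step I expect to be most delicate is verifying the hypothesis of Theorem \ref{boundcoverimprovementdomquotientfield} for $\mathcal{F}(\Stsim)$: one must know that the well behaved resolving subcategory $\mathcal{F}(\Stsim)$ satisfies $\mathcal{F}(\Stsim)(K\otimes_R S_{R,q}(n,d))=\mathcal{F}(K\otimes_R\Stsim)$ and that the infimum of relative dominant dimensions over this base-changed category is controlled by $T_K$; this is exactly the content of Proposition \ref{exampleswellbehavedres}(II) and Theorem \ref{domdimofmoduleswithfiltrationbystandard}, so it only requires assembling the pieces. Everything else is bookkeeping once the prime $\pri=(p)$ has been chosen.
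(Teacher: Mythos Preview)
Your proposal is correct and follows essentially the same route as the paper's proof: establish lower bounds via Theorem \ref{boundcoverimprovementdomquotientfield} (using that the quotient field $K$ has strictly larger quantum parameter), then obtain matching upper bounds by passing to a height-one prime $\pri$ containing $a_{s_0}$ and invoking Corollary \ref{coverheightprimeideal}. The paper does exactly this, with the same choice of prime (a prime factor of $a_{s_0}$ in the UFD $R$).

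One justification to tighten: you assert $a_{s_0}\neq 0$ ``as $R$ is an integral domain'', but being a domain is not the reason. What is needed is that no $a_l$ with $l<s_0$ vanishes (so that $s_0$ really is the first index where $a_s$ fails to be a unit \emph{and} is non-zero). The paper proves this directly: if $a_l=0$ for some $l<s_0$, then $0\neq a_{s_0}-a_l=q^{l+1}a_{s_0-l-1}$, forcing $a_{s_0-l-1}\notin R^\times$ and contradicting minimality of $s_0$. Equivalently, one can argue that if $a_{s_0}=0$ with all $a_l\in R^\times$ for $l<s_0$, then every $a_s$ with $s<d$ is either zero or a unit, making $R$ $d$-partially $q$-divisible after all. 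Either way, the hypothesis ``$R$ is not $d$-partially $q$-divisible'' is what drives $a_{s_0}\neq 0$, not the domain property alone.
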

\begin{proof}
	Since $R$ is not a $d$-partially $q$-divisible ring there exists a natural number $s$ smaller than $d$ such that the sum $0\neq 1+q+\cdots + q^s\notin R^\times$ is a non-zero invertible element of $R$. Let $s$ be the smallest natural number with such a property. Suppose that there exists a natural number $l$ smaller than $s$ satisfying $1+q+\cdots + q^l=0$.  Then\begin{align}
		0\neq q^{l+1}+\cdots+q^s=q^{l+1}(1+q+\cdots +q^{s-l-1})\notin R^\times.
	\end{align}As $q^{l+1}\in R^\times$ we obtain that $
	0\neq 1+\cdots+ q^{s-l-1}\notin R^\times.
	$ So, the existence of $l$ contradicts the minimality of $s$. Therefore, $
	\inf \{ t\in \mathbb{N}\colon 1+q+\cdots+ q^t=0, \ t<d \}>s. 
	$ It is clear that the Krull dimension of $R$ is at least one. Let $K$ be the quotient field of $R$. Hence, by Theorem \ref{boundrelationcoversdom}, Theorem \ref{dominantdimensionquantumschur}, Theorem \ref{domdimofmoduleswithfiltrationbystandard} and Corollary  \ref{dominantdimensionquantumschurtilting},
	\begin{align}
		\HN_{F_{K, q}}(S_{K, q}(n, d)\proj) &=\domdim S_{K, q}(n, d)-2 \\&= \inf\{2t\in \mathbb{N}\colon 1+q+\cdots+ q^t=0, \ t<d \}-2>2s-2,\label{eqex74}\\
		\HN_{F_{K, q}}(\mathcal{F}(K\otimes_R \St))&=\domdim_{S_{K, q}(n, d)}K\otimes_R T\\&=\inf\{t\in \mathbb{N}\colon 1+q+\cdots+ q^t=0, \ t<d \}-2>s-2.\label{eqex75}
	\end{align}So, all the assumptions of Theorem \ref{boundcoverimprovementdomquotientfield} are satisfied. Therefore, 
	\begin{align}
		\HN_{F_{q}}(S_{R, q}(n, d)\proj)&\geq \domdim (S_{R, q}(n, d), R)-1\\&=\inf \{ 2t\in \mathbb{N} \ | \ 1+q+\cdots+ q^t\notin R^\times, \ t<d \}-1=2s-1\geq 1\label{eqex77}\\
		\HN_{F_q}(\mathcal{F}(\Stsim))&\geq \domdim_{(S_{R, q}(n, d), R)} T-1\\&=\inf\{t\in \mathbb{N} \colon 1+q+\cdots + q^t\notin R^\times, \ t<d\}-1=s-1\geq 0.
	\end{align}
	On the other hand, $R$ is a unique factorization domain. So, we can write $1+q+\cdots+q^s=xy$ for some prime element $x\in R$. Thus, $Rx$ is a prime ideal of height one. Therefore, the image of $1+q+\cdots+q^s$ in $R/Rx$ is zero. Denote by $Q(R/Rx)$ the quotient field of $R/Rx$ and $q_x$ the image of $q$ in $R/Rx$. Then
	\begin{align}
		\inf \{ 2t\in \mathbb{N}\colon 1+q_x+\cdots+q_x^t=0, \ t<d \}\leq 2s
	\end{align} and so,
	\begin{align}
		\HN_{F_{R/Rx, q_x}}(S_{R/Rx, q_x}(n, d)\proj)\leq \HN_{F_{Q(R/Rx), q_x}}(S_{Q(R/Rx), q_x}(n, d)\proj)\leq 2s-2,\\
		HN_{F_{R/Rx, q_x}}(\mathcal{F}(R/Rx\otimes_R \Stsim))\leq \HN_{F_{Q(R/Rx), q_x}}(\mathcal{F})(Q(R/Rx)\otimes_R \St) \leq s-2.
	\end{align}By Corollary \ref{coverheightprimeideal}, we cannot have $\HN_{F_q}(S_{R, q}(n, d)\proj) >2s-1$. It follows that,  \linebreak$\HN_{F_q}(S_{R, q}(n, d)\proj) =2s-1$ and $\HN_{F_q}(\mathcal{F}(\Stsim))=s-1$.
\end{proof}

\begin{Observation}
	If $R$ is a regular integral domain with an invertible element $u\in R$ which is not a $d$-partially $u^{-2}$-divisible ring, then there exists a maximal ideal $\mi$ so that $0\neq 1+q+q+\cdots+q^s\in \mi$ for some $s<d$. If we pick $\mi\in \MaxSpec R$ so that $s$ is minimal, then $R_\mi$ is not a $d$-partially $q_\mi$-divisible ring and 
	$$ \domdim (S_{R, q}(n, d), R)=\domdim (S_{R_\mi, q_\mi}(n, d), R_\mi).$$
	Therefore, 
	\begin{align*}
		\HN_{F_{R, q}}(S_{R, q}(n, d)\proj )&\geq \domdim (S_{R, q}(n, d), R)-1=\HN_{F_{R_\mi, q_\mi}}(S_{R_\mi, q_\mi}(n, d)\proj)\\&\geq \HN_{F_{R, q}}(S_{R, q}(n, d)\proj).
	\end{align*} 
\end{Observation}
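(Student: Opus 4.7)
The plan is to split the Observation into three components and handle each: (i) the combinatorial claims about $(\mi,s)$ and the failure of $d$-partial $q_\mi$-divisibility; (ii) the equality of relative dominant dimensions; (iii) the chain of Hemmer--Nakano (in)equalities. Throughout, set $S(t):=1+q+\cdots+q^t$ and write $S_\mi(t)$ for its image in $R_\mi$.

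For (i), since $R$ is not $d$-partially $q$-divisible, by definition there exists $s<d$ with $0\neq S(s)\notin R^\times$; any non-unit of a commutative ring lies in some maximal ideal, yielding $\mi$. After fixing a pair $(\mi,s)$ with $s$ minimal among such pairs, integrality of $R$ keeps $S_\mi(s)\neq 0$ while its membership in $\mi_\mi$ makes it a non-unit, so $R_\mi$ is not $d$-partially $q_\mi$-divisible.

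For (ii), Theorem \ref{dominantdimensionquantumschur} rewrites both dominant dimensions as $2\inf\{t<d: S(t)\notin(\cdot)^\times\}$, and since $R$ is a domain, $S_\mi(t)\notin R_\mi^\times$ is equivalent to $S(t)\in\mi$. The critical step is to rule out $S(t)=0$ for some $t<s$: if $s_0$ denotes the smallest such $t$, a short periodicity argument---using $(q-1)S(t)=q^{t+1}-1$ (or $S(t)=t+1$ when $q=1$)---shows that $S(u)$ is governed by the residue of $u+1$ modulo $s_0+1$. Writing $s+1=k(s_0+1)+r$ with $1\leq r\leq s_0$ (the case $r=0$ would force $S(s)=0$), one finds $S(s)=S(r-1)$ with $r-1<s$, so $0\neq S(r-1)\in\mi$ contradicts the minimality of $s$. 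Hence $S(t)\in R^\times$ for every $t<s$, both infima are attained at $s$, and the two dominant dimensions coincide.

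For (iii), the first inequality $\HN_{F_{R,q}}(S_{R,q}(n,d)\proj)\geq\domdim(S_{R,q}(n,d),R)-1$ is produced by Theorem \ref{boundcoverimprovementdomquotientfield} applied with $\mathcal{R}_A=A\proj$, $A=S_{R,q}(n,d)$, and $K$ the quotient field of $R$: $R$ has Krull dimension $\geq 1$ (a field would be vacuously $d$-partially $q$-divisible), $n:=\domdim(A,R)=2s\geq 2$ (because $S(0)=1$ forces $s\geq 1$), and $\domdim S_{K,q}(n,d)=2\inf\{t<d: S(t)=0\}\geq 2(s+1)\geq n+1$ by the ruling-out step of (ii) together with $S(s)\neq 0$. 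The middle equality is Theorem \ref{HNqschuralgebraII} applied to the local regular ring $R_\mi$ (not $d$-partially $q_\mi$-divisible by (i)), combined with the equality of dominant dimensions from (ii). The last inequality is Proposition \ref{arbitraryfaithfulcoverflattwo}: an $i$-$\mathcal{R}_A$ cover localizes to an $i$-$\mathcal{R}_A(A_\mi)$ cover at every maximal ideal. The main obstacle is the periodicity argument in (ii); it is the single new ingredient of the proof and secures both the equality of dominant dimensions and the strict inequality $\domdim S_{K,q}(n,d)\geq n+1$ needed to upgrade the generic lower bound of Theorem \ref{boundrelationcoversdom} to the sharper $\domdim-1$.
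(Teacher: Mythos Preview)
Your proof is correct and supplies the details the paper leaves implicit (this is stated as an Observation without a written-out argument). The one substantive step---ruling out $S(t)=0$ for $t<s$---is the same obstruction handled inside the proof of Theorem~\ref{HNqschuralgebraII}, though the paper there uses the shorter telescoping identity $S(s)=S(l)+q^{l+1}S(s-l-1)$: if $S(l)=0$ with $l<s$, this gives $0\neq S(s-l-1)\in\mi$ directly, contradicting minimality of $s$ without the case split on $q=1$ that your periodicity route requires. Beyond that, your assembly of Theorem~\ref{boundcoverimprovementdomquotientfield} (for the lower bound $\HN\geq\domdim-1$, using that a field is vacuously $d$-partially $q$-divisible so $\dim R\geq 1$), Theorem~\ref{HNqschuralgebraII} (applied to $R_\mi$), and Proposition~\ref{arbitraryfaithfulcoverflattwo} (for $\HN_\mi\geq\HN$) is exactly what the Observation intends.
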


The ring $\mathbb{Z}[u, u^{-1}]$ is not a $d$-partially $q$-divisible ring for $d>2$.

Hence, the previous exposition generalizes many of the results discussed in \citep{zbMATH02182639}.

\subsection{Deformations of the BGG category $\mathcal{O}$}\label{Deformations of the BGG category O}

BGG category $\mathcal{O}$ was introduced in \citep{zbMATH03549206}.
For the study of the BGG category $\mathcal{O}$ of a complex semi-simple  Lie algebra we refer to \cite{zbMATH05309234}.

Following closely the work of Gabber and Joseph  \citep{zbMATH03747378} to study the Bernstein-Gelfand-Gelfand category $\mathcal{O}$ over a commutative ring, our aim is now to introduce projective Noetherian algebras that are deformations of the blocks of BGG category $\mathcal{O}$ of a complex semi-simple Lie algebra. 
We shall start by recalling some facts about root systems in complex semi-simple Lie algebras. The initial motivation to consider a category $\mathcal{O}$ over commutative rings was the study of the Kazhdan-Luzstig conjecture. At the time, this construction did not seem fruitful. However, we will find here that these deformations are very interesting to cover theory.

\subsubsection{Root systems}

Let $\mathfrak{g}$ be a (finite-dimensional) complex semi-simple Lie algebra with Cartan subalgebra $\mathfrak{h}$ and associated root system $\Phi\subset \mathfrak{h}^*$, where $\mathfrak{h}^*$ denotes the dual vector space of $\mathfrak{h}$.  In particular, $\mathfrak{g}$ admits a direct sum decomposition into weight spaces for $\mathfrak{h}$ of the form $\mathfrak{g}=\mathfrak{h}\oplus \bigoplus_{\alpha\in \Phi} \mathfrak{g}_\alpha$, where \linebreak${\mathfrak{g}_\alpha=\{x\in \mathfrak{g}\colon [h, x]=\alpha(h)x, \ \forall h\in \mathfrak{h} \}}$.
Let $\varPi$ be the set of simple roots of $\Phi$, and therefore it is a basis of the root system $\Phi$ (see  \citep[Definition 11.9]{zbMATH05031712}). It is also a basis of the vector space $\mathfrak{h}^*$.  Set $\Phi^+:=\mathbb{Z}^+_0\varPi\cap \Phi$, giving a direct sum decomposition $\mathfrak{g}=\mathfrak{n}^+\oplus \mathfrak{h}\oplus \mathfrak{n}^-$, where $\mathfrak{n}^{\pm}:=\bigoplus_{\alpha\in \Phi^\pm}\mathfrak{g}_\alpha$. The Lie algebra $\mathfrak{b}=\mathfrak{n}^+\oplus \mathfrak{h}$ is called the \textbf{Borel subalgebra }of $\mathfrak{g}$.

Let $E$ be the real span of $\Phi$ and $(-, -)$ be the symmetric bilinear form on $E$ induced by the Killing form associated with the adjoint representation of $\mathfrak{g}$. 
The \textbf{Weyl group} associated with the root system $\Phi$ which we denote by $W$ is the finite subgroup of $GL(E)$ generated by all reflections $s_\alpha$, $\alpha\in \Phi$, where $s_\alpha(\l)=\l-\frac{2(\l, \alpha)}{(\alpha, \alpha)}\alpha$, $\l\in \mathfrak{h}^*$.
For each root $\alpha\in \Phi$, we associate the coroot $\alpha^\vee:=\frac{2}{(\alpha, \alpha)}\alpha.$ Denote by $\Phi^\vee$ the set of all coroots. Hence, 
the bilinear form induces, in addition, the following map $\langle -, -\rangle\colon \Phi\times \Phi^\vee\rightarrow \mathbb{Z}$, given by $\langle \beta, \alpha^\vee\rangle:=\frac{2(\beta, \alpha)}{(\alpha, \alpha)}$. This map is called \textbf{Cartan invariant} in \cite{zbMATH05309234}.
We call $\mathbb{Z}\Phi$ the \textbf{root lattice}.  
\subsubsection{Integral semi-simple Lie algebras}

Let $\{h_\alpha\colon \alpha\in \varPi\}\cup \{x_\alpha\colon \alpha\in \Phi \}$ be a Chevalley basis of the semi-simple Lie algebra $\mathfrak{g}$, where $ \{h_\alpha\colon \alpha\in \varPi\}$ is a basis of $\mathfrak{h}$ and $x_\alpha\in \mathfrak{g}_\alpha$ for each root $\alpha\in \Phi$. In particular, $\alpha(h_\alpha)=2$ and $h_\alpha=[x_\alpha, x_{-\alpha}]$ for every $\alpha\in \Phi$. Also, $\langle \beta, \alpha^\vee\rangle=\beta(h_\alpha), $ $\alpha, \beta\in \Phi$. Let $\mathfrak{g}_\mathbb{Z}$ be the additive subgroup of $\mathfrak{g}$ with basis $\{h_\alpha\colon \alpha\in \varPi\}\cup \{x_\alpha\colon \alpha\in \Phi \}$. The restriction of the Lie bracket $[-, -]$ to $\mathfrak{g}_\mathbb{Z}\times \mathfrak{g}_\mathbb{Z}$ has image in $\mathfrak{g}_\mathbb{Z}$ making $\mathfrak{g}_\mathbb{Z}$ a Lie algebra.

For each commutative Noetherian ring with identity $R$, we define the Lie algebra $\mathfrak{g}_R:=R\otimes_\mathbb{Z} \mathfrak{g}_\mathbb{Z}$. By construction, \mbox{$\mathfrak{g}_\mathbb{C}=\mathbb{C}\otimes_{\mathbb{Z}} \mathfrak{g}_\mathbb{Z}\simeq \mathfrak{g}$. }
Using the Chevalley basis, we define the following integral Lie subalgebras of $\mathfrak{g}_\mathbb{Z}$: $\mathfrak{h}_\mathbb{Z}=\bigoplus_{\alpha\in \varPi}\mathbb{Z}h_\alpha$, $\mathfrak{n}_\mathbb{Z}^{\pm}=\bigoplus_{\alpha\in \Phi^+} \mathbb{Z}x_{\pm\alpha}$, $\mathfrak{b}_\mathbb{Z}=\mathfrak{n}_\mathbb{Z}^+\oplus \mathfrak{h}_\mathbb{Z}.$

Analogously, we define for each commutative Noetherian ring with identity $R$, $\mathfrak{h}_R=R\otimes_\mathbb{Z}\mathfrak{h}_\mathbb{Z}$, $\mathfrak{n}_R^\pm=R\otimes_\mathbb{Z} \mathfrak{n}_\mathbb{Z}^\pm$, $\mathfrak{b}_R=R\otimes_\mathbb{Z}\mathfrak{b}_\mathbb{Z}$. Since $\mathfrak{h}_R$ is free over $R$, its dual $\Hom_R(\mathfrak{h}_R, R)$ which we will denote by $\mathfrak{h}_R^*$ is free over $R$.

Observe that $\mathfrak{g}_\mathbb{Q}$ is again a semisimple Lie algebra, since otherwise, every solvable ideal of $\mathfrak{g}_\mathbb{Q}$ could be extended to a solvable ideal of $\mathfrak{g}_\mathbb{C}\simeq \mathfrak{g}$. Therefore, for any field extension $K \supset\mathbb{Q}$, the Lie algebra $\mathfrak{g}_K$ is semisimple.

Let $U(\mathfrak{g}_R)$ be the universal enveloping algebra of $\mathfrak{g}_R$, that is, $U(\mathfrak{g}_R)$ is the quotient $T(\mathfrak{g}_R)/I_R$ of the tensor algebra $T(\mathfrak{g}_R)=R\oplus \mathfrak{g}_R\oplus (\mathfrak{g}_R\otimes \mathfrak{g}_R)\oplus \cdots$, where $I_R$ is the two-sided ideal generated by the elements of the form \mbox{$x\otimes y-y\otimes x-[x, y]$,} $x, y\in \mathfrak{g}_R$.
We denote by $S(\mathfrak{g}_R)$ the symmetric algebra of $\mathfrak{g}_R$, that is, $S(\mathfrak{g}_R)$ is the quotient $T(\mathfrak{g}_R)/J_R$ of the tensor algebra and $J_R$ is the two-sided ideal generated by the elements of the form $x\otimes y-y\otimes x$, $x, y\in \mathfrak{g}_R$. The symmetric algebra $S(\mathfrak{g}_R)$ is isomorphic to the polynomial algebra $$R[\{1_R\otimes h_\alpha\colon \alpha\in \varPi\}, \{1_R\otimes x_\alpha\colon \alpha\in \Phi \}].$$ In particular, $R\otimes_\mathbb{Z} S(\mathfrak{g}_\mathbb{Z})\simeq S(\mathfrak{g}_R)$.  The enveloping algebra of $\mathfrak{g}_R$ also has the base change property. Since $\mathfrak{g}_R$ and $T(\mathfrak{g}_R)$ are free over $R$, with basis elements independent of $R$, we can identify $R\otimes_\mathbb{Z} T(\mathfrak{g}_\mathbb{Z})$ with $T(\mathfrak{g}_R)$ and $R\otimes_\mathbb{Z} I_\mathbb{Z}$ with $I_R$. Hence, we have a commutative diagram with exact rows
\begin{equation}
	\begin{tikzcd}
		R\otimes_\mathbb{Z} I_\mathbb{Z} \arrow[r, hookrightarrow] \arrow[d, "\simeq"]&R\otimes_\mathbb{Z} T(\mathfrak{g}_\mathbb{Z})\arrow[r, twoheadrightarrow] \arrow[d, "\simeq"]& R\otimes_\mathbb{Z} U(\mathfrak{g}_\mathbb{Z}) \arrow[d] \\
		I_R\arrow[r, hookrightarrow] & T(\mathfrak{g}_R)\arrow[r, twoheadrightarrow] & U(\mathfrak{g}_R)
	\end{tikzcd}.
\end{equation}
Therefore, we obtain:

\begin{Lemma}
	Let $R$ be a commutative Noetherian ring with identity. 
	
	Then $U(\mathfrak{g}_R)\simeq R\otimes_\mathbb{Z} U(\mathfrak{g}_\mathbb{Z})$ and \mbox{$S(\mathfrak{g}_R)\simeq R\otimes_\mathbb{Z} S(\mathfrak{g}_\mathbb{Z})$}.
\end{Lemma}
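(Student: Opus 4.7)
The plan is to show that the commutative diagram displayed just before the lemma has both rows short exact, after which the Five Lemma immediately gives that the right-hand vertical map $R\otimes_\mathbb{Z} U(\mathfrak{g}_\mathbb{Z})\to U(\mathfrak{g}_R)$ is an isomorphism. The analogous argument, with $J_R$ in place of $I_R$, settles the symmetric algebra statement. So the task reduces to verifying (a) that the two left vertical maps are isomorphisms, and (b) that both rows are short exact.

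For (a), since $\mathfrak{g}_\mathbb{Z}$ is free as a $\mathbb{Z}$-module (with the Chevalley basis), each tensor power $\mathfrak{g}_\mathbb{Z}^{\otimes n}$ is free and commutes with base change, so $R\otimes_\mathbb{Z} T(\mathfrak{g}_\mathbb{Z})\simeq T(R\otimes_\mathbb{Z}\mathfrak{g}_\mathbb{Z})=T(\mathfrak{g}_R)$ canonically. Under this identification, the generators of $I_R$, namely $x\otimes y-y\otimes x-[x,y]$ with $x,y\in\mathfrak{g}_R$, are $R$-linear combinations of images of the generators of $I_\mathbb{Z}$ (because $\mathfrak{g}_R=R\otimes_\mathbb{Z}\mathfrak{g}_\mathbb{Z}$ and the bracket on $\mathfrak{g}_R$ is obtained by extension of scalars from the bracket on $\mathfrak{g}_\mathbb{Z}$). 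Hence the image of $R\otimes_\mathbb{Z} I_\mathbb{Z}$ in $T(\mathfrak{g}_R)$ is exactly $I_R$, and the left vertical arrow is surjective; it is injective by the next point, which controls the kernel.

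For (b), the bottom row is short exact by construction. For the top row the only non-formal point is the injectivity of $R\otimes_\mathbb{Z} I_\mathbb{Z}\to R\otimes_\mathbb{Z} T(\mathfrak{g}_\mathbb{Z})$, i.e.\ that $R\otimes_\mathbb{Z}-$ preserves the short exact sequence $0\to I_\mathbb{Z}\to T(\mathfrak{g}_\mathbb{Z})\to U(\mathfrak{g}_\mathbb{Z})\to 0$. I expect this to be the only real step: it is a consequence of the Poincar\'e--Birkhoff--Witt theorem over $\mathbb{Z}$, which states that $U(\mathfrak{g}_\mathbb{Z})$ is a free $\mathbb{Z}$-module on the ordered monomials in the Chevalley basis. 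Freeness of $U(\mathfrak{g}_\mathbb{Z})$ implies that the short exact sequence splits as a sequence of $\mathbb{Z}$-modules, and therefore remains exact after applying $R\otimes_\mathbb{Z}-$.

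With both rows exact and the two left vertical maps isomorphisms, the Five Lemma forces $R\otimes_\mathbb{Z} U(\mathfrak{g}_\mathbb{Z})\to U(\mathfrak{g}_R)$ to be an isomorphism of $R$-modules; it is compatible with the multiplications on both sides by functoriality of the tensor algebra construction, so it is an isomorphism of $R$-algebras. The symmetric algebra case is strictly easier: $S(\mathfrak{g}_\mathbb{Z})$ is the polynomial ring over $\mathbb{Z}$ in the Chevalley basis, hence visibly free over $\mathbb{Z}$, and the same diagram chase with $J_R$ replacing $I_R$ yields $R\otimes_\mathbb{Z} S(\mathfrak{g}_\mathbb{Z})\simeq S(\mathfrak{g}_R)$. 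The only conceptual input in the whole argument is the integral PBW theorem; everything else is formal base-change bookkeeping.
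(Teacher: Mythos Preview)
Your proof is correct and follows exactly the paper's approach: the paper sets up the same commutative diagram with exact rows just before the lemma and concludes by the same diagram chase. Your exposition is in fact more careful than the paper's, since you make explicit that exactness of the top row (injectivity of $R\otimes_\mathbb{Z} I_\mathbb{Z}\hookrightarrow R\otimes_\mathbb{Z} T(\mathfrak{g}_\mathbb{Z})$) rests on the integral PBW theorem giving $U(\mathfrak{g}_\mathbb{Z})$ free over~$\mathbb{Z}$, a point the paper leaves implicit (PBW is only stated after the lemma).
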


Fix a total order in the Chevalley basis of $\mathfrak{g}_R$.
Since $\mathfrak{g}_R$ is free over $R$, the PBW theorem (see for example \citep[17.3]{zbMATH03699133}) gives the $R$-isomorphism \begin{align}
	U(\mathfrak{g}_R)\simeq U(\mathfrak{n}_R^-)\otimes_R U(\mathfrak{h}_R)\otimes_R U(\mathfrak{n}_R^+),
\end{align}and $U(\mathfrak{g}_R)$ has, as an $R$-module, a monomial basis over the basis elements of $\mathfrak{g}_R$. We call \textbf{PBW monomials} such monomials forming a PBW basis of $U(\mathfrak{g}_R)$. Further, it follows that the enveloping algebra of a free Lie algebra is a Noetherian ring (see \citep[7.4]{zbMATH04049807}).

Since both $U(\mathfrak{n}_R^+)$ and $U(\mathfrak{n}_R^-)$ are free over $R$, the PBW theorem allows us to view $U(\mathfrak{h}_R)$ as an $R$-summand of $U(\mathfrak{g}_R)$. Further, denote by $\pi_R$ the projection $U(\mathfrak{g}_R)\twoheadrightarrow U(\mathfrak{h}_R)$ which sends all PBW monomials with factors either in $\mathfrak{n}^+_R$ or in $\mathfrak{n}_R^-$ to zero.

Let $Z(\mathfrak{g}_R)$ be the center of the universal enveloping algebra $U(\mathfrak{g}_R)$. The restriction of $\pi_R$ to the center $Z(\mathfrak{g}_R)$ is called the \textbf{Harish-Chandra homomorphism}. For details on why this map is an $R$-algebra homomorphism see for example \citep[1.3.2]{zbMATH03747378}.
For each $\l\in \mathfrak{h}_R^*$, the \textbf{central character} associated with $\l$ is the $R$-algebra homomorphism $\chi_\l\colon Z(\mathfrak{g}_R)\rightarrow R$ , given by $\chi_\l(z)=\l(\pi(z))$, $z\in Z(\mathfrak{g}_R)$. For a given semisimple Lie algebra over a splitting field $K$, the Harish-Chandra theorem (see \citep[1.10]{zbMATH05309234}) says that all $K$-algebra homomorphisms are of the form $\chi_\l$ for some $\l\in \mathfrak{h}^*$.

\subsubsection{BGG category $\mathcal{O}$ over commutative rings}

Assume in the remainder of this section, unless stated otherwise, that $R$ is a commutative Noetherian ring and a $\mathbb{Q}$-algebra. In particular, $R$ has characteristic zero and there exists an injective homomorphism of rings $\mathbb{Q}\rightarrow R$, $q\mapsto q1_R$. We can extend the map $\langle -, -\rangle$ to $\mathfrak{h}_R^*\times \Phi^\vee\rightarrow R$.  Recall that $\{(1\otimes h_\alpha)^*\colon \alpha\in \varPi\}$ is a basis of $\mathfrak{h}_R^*$. We define $\langle \l, \alpha^\vee\rangle_R:=\sum_{\beta\in \varPi} t_\beta \langle \beta, \alpha^\vee\rangle$ for $\l=\sum_{\beta\in \varPi} t_\beta (1\otimes h_\beta)^*\in \mathfrak{h}_R^*$.

We call the set of integral weights $\L_R:=\{\l\in \mathfrak{h}_R^*\colon \langle \l, \alpha^\vee\rangle_R \in \mathbb{Z}, \ \forall \alpha \in \Phi \}$ the \textbf{integral weight lattice} associated with $\Phi$ with respect to $R$. For each $M\in U(\mathfrak{g}_R)\m$ and each $\l\in \mathfrak{h}_R^*$, we define the weight space \mbox{$M_\l:=\{m\in M\colon h\cdot m=\l(h)m, \ \forall h\in \mathfrak{h}_R \}$}.

For each $\l\in \mathfrak{h}_R^*$, we will denote by $[\l]$ the set of elements of $\mathfrak{h}_R^*$, $\mu$, that satisfy $\mu-\l\in \L_R$.
We define an ordering in $[\l]$ by imposing $\mu_1\leq \mu_2$ if and only if $\mu_2-\mu_1\in \mathbb{Z}^+_0\Phi^+\subset \L_R$.

\begin{Def}\label{BGGcategoryO}
	The BGG category $\mathcal{O}$ (or just the category $\mathcal{O}$) of a semi-simple Lie algebra $\mathfrak{g}$ over a splitting field of characteristic zero is the full subcategory of $U(\mathfrak{g})\M$ whose modules satisfy the following conditions:
	\begin{enumerate}[(i)]
		\item $M\in U(\mathfrak{g})\m$.
		\item $M$ is semi-simple over $\mathfrak{h}$, that is, $M=\bigoplus_{\l\in \mathfrak{h}^*} M_\l$.
		\item $M$ is locally $\mathfrak{n}^+$-finite, that is, for each $m\in M$ the subspace $U(\mathfrak{n}^+)m$ of $M$ is finite-dimensional.
	\end{enumerate}
\end{Def}

In a naive look, one could think that the category $\mathcal{O}$ is too large to be considered under the techniques that we studied here for projective Noetherian $R$-algebras. Especially, since there are an infinite number of Verma modules (which are the standard modules making the category $\mathcal{O}$ a split highest weight category) and even these Verma modules have infinite vector space dimension. So, instead of generalizing right away Definition \ref{BGGcategoryO} to the integral setup we can decompose $\mathcal{O}$ into smaller subcategories. 
In fact, for any $\l\in \mathfrak{h}^*$, there is a "block" associated with $\l$. In the following, we will identify $\L\subset \mathfrak{h}^*$ with $\L_\mathbb{C}$ and consider $[\l]\subset \mathfrak{h}^*$.

\begin{Lemma}\label{lemmaelemedecomO}
	Let $M\in \mathcal{O}$. For each $\l\in \mathfrak{h}^*$, define the vector space $M^{[\l]}:=\bigoplus_{\mu\in [\l]} M_\mu$, where $\mu\in [\l]$ if and only if $\mu-\l\in \L$.
	Then $M^{[\l]}\in U(\mathfrak{g})\m$ and $M=\bigoplus_{[\l]\in \mathfrak{h}^*/\sim} M^{[\l]}$, where $\sim$ denotes the equivalence relation given by $\mu-\l\in \L$.
\end{Lemma}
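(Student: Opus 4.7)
The plan is to verify that each $M^{[\l]}$ is stable under the action of $U(\mathfrak{g})$, that $M$ decomposes as their direct sum at the level of vector spaces, and finally that each summand is finitely generated. The decomposition at the vector space level is essentially a regrouping: condition (ii) of Definition \ref{BGGcategoryO} gives the weight space decomposition $M=\bigoplus_{\mu\in \mathfrak{h}^*}M_\mu$, and the equivalence relation $\mu\sim\l\iff \mu-\l\in\L$ partitions $\mathfrak{h}^*$ into disjoint cosets $[\l]$. Grouping the summands within each coset yields $M=\bigoplus_{[\l]}M^{[\l]}$ as a direct sum of $\mathbb{C}$-vector spaces.

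First I would show that each $M^{[\l]}$ is a $U(\mathfrak{g})$-submodule. Using the root space decomposition $\mathfrak{g}=\mathfrak{h}\oplus\bigoplus_{\alpha\in\Phi}\mathfrak{g}_\alpha$, it suffices to check stability under $\mathfrak{h}$ and under each $\mathfrak{g}_\alpha$. For $h\in\mathfrak{h}$ and $\mu\in[\l]$ we trivially have $h\cdot M_\mu\subset M_\mu\subset M^{[\l]}$. For $x\in\mathfrak{g}_\alpha$ with $\alpha\in\Phi$, the standard computation $[h,xm]=(\alpha(h)+\mu(h))xm$ for $m\in M_\mu$ shows $x\cdot M_\mu\subset M_{\mu+\alpha}$. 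The key point is that $\Phi\subset \L$: by definition of a root system, the Cartan integers $\langle \alpha,\beta^\vee\rangle$ are integers, so every root lies in the integral weight lattice. Consequently, if $\mu\in[\l]$ then $(\mu+\alpha)-\l=(\mu-\l)+\alpha\in\L+\L\subset \L$, so $\mu+\alpha\in[\l]$ and $x\cdot M_\mu\subset M^{[\l]}$. This shows $\mathfrak{g}\cdot M^{[\l]}\subset M^{[\l]}$, hence $M^{[\l]}$ is a $U(\mathfrak{g})$-submodule.

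Finally, to see $M^{[\l]}\in U(\mathfrak{g})\m$, note that $U(\mathfrak{g})$ is (left) Noetherian as a consequence of PBW, and $M$ is finitely generated by hypothesis, so every submodule of $M$ is finitely generated; alternatively, a finite generating set of $M$ has only finitely many nonzero weight components, each lying in some single $M^{[\l]}$, so only finitely many $M^{[\l]}$ are nonzero and each is a direct summand of $M$, hence finitely generated.

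The only step that requires any care is verifying $\Phi\subset \L$, but this reduces to the integrality of the Cartan integers and is not really an obstacle; the rest is a straightforward unwinding of the definitions and the weight decomposition.
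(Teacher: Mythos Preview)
Your proof is correct and is precisely the standard argument the paper defers to (the paper's proof is just a citation to Humphreys, p.~15, where exactly this reasoning appears): regroup the weight decomposition by $\L$-cosets, use $\mathfrak{g}_\alpha\cdot M_\mu\subset M_{\mu+\alpha}$ together with $\Phi\subset\L$ to get $U(\mathfrak{g})$-stability, and conclude finite generation from Noetherianity of $U(\mathfrak{g})$.
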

\begin{proof}See \citep[p.15]{zbMATH05309234}.
\end{proof}

\begin{Def}\citep[1.4]{zbMATH03747378}
	Let $R$ be a commutative Noetherian ring which is a $\mathbb{Q}$-algebra.	Let $\l\in \mathfrak{h}_R^*$.
	\begin{itemize}
		\item We define $\mathcal{O}_{[\l], (II), R}$ to be the full subcategory of  $U(\mathfrak{g}_R)\M$ consisting of modules $M$ satisfying $M=\sum_{\mu\in [\l]} M_\mu$.
		\item We define $\mathcal{O}_{[\l], (I), R}$ to be the full subcategory of $\mathcal{O}_{[\l], (II), R}$ whose modules $M$ are $U(\mathfrak{n}_R^+)$-locally finite, that is, $U(\mathfrak{n}_R^+)m\in R\m$ for every $m\in M$.
		\item We define $\mathcal{O}_{[\l], R}$ to be the full subcategory of $\mathcal{O}_{[\l], (I), R}$ whose modules are finitely generated over $U(\mathfrak{g}_R)$. 
	\end{itemize} 
\end{Def}

In particular, Lemma \ref{lemmaelemedecomO} says that we can reduce the study of the category $\mathcal{O}$ to the categories $\mathcal{O}_{[\l], \mathbb{C}}$, where $\l\in \mathfrak{h}^*$.  Moreover, by a BGG category $\mathcal{O}$ over a commutative ring $R$ we will mean a category $\mathcal{O}_{[\l], R}$ for some $\l\in \mathfrak{h}_R^*$.

It comes as no surprise that Verma modules can be defined over any ground ring. Let $\mu\in [\l]$ and $R_\mu$ be the free $R$-module with rank one together with the $U(\mathfrak{h}_R)$-action $h1_R=\mu(h)1_R$, $h\in \mathfrak{h}_R$. We can extend $R_\mu$ to be an $U(\mathfrak{b}_R)$-module by letting $1_R \otimes x_\alpha$, $\alpha\in \Phi^+$,  act on $R_\mu$ identically as zero. The \textbf{Verma module} $\St(\mu)$ (associated with $\mu$) is defined to be the $U(\mathfrak{g}_R)$-module $\St(\mu):=U(\mathfrak{g}_R)\otimes_{U(\mathfrak{b}_R)}R_\mu$. 

\begin{Lemma}Let $\l\in \mathfrak{h}_R^*$.
	If $\mu\in [\l]$, then $\St(\mu)\in \mathcal{O}_{[\l], R}$ and $\St(\mu)$ is free as $U(\mathfrak{n}^-_R)$-module.
\end{Lemma}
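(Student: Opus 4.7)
The plan is to deduce both claims from the PBW theorem for $U(\mathfrak{g}_R)$, which was recalled earlier in the excerpt. Since $\mathfrak{g}_R$ is free over $R$ and $\mathfrak{g}_R = \mathfrak{n}_R^-\oplus \mathfrak{b}_R$, PBW gives a decomposition $U(\mathfrak{g}_R)\simeq U(\mathfrak{n}_R^-)\otimes_R U(\mathfrak{b}_R)$ as $(U(\mathfrak{n}_R^-),U(\mathfrak{b}_R))$-bimodules. Tensoring with $R_\mu$ over $U(\mathfrak{b}_R)$ yields
\[
\St(\mu) = U(\mathfrak{g}_R)\otimes_{U(\mathfrak{b}_R)} R_\mu \simeq U(\mathfrak{n}_R^-)\otimes_R R_\mu \simeq U(\mathfrak{n}_R^-)
\]
as left $U(\mathfrak{n}_R^-)$-modules, proving that $\St(\mu)$ is free of rank one over $U(\mathfrak{n}_R^-)$, generated by $v_0:=1\otimes 1_R$. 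In particular, $\St(\mu)$ is generated by $v_0$ over $U(\mathfrak{g}_R)$, so it is finitely generated.

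Next, I would establish the weight space decomposition. Fix a total order on $\Phi^+$; then the PBW isomorphism above gives an $R$-basis of $\St(\mu)$ consisting of ordered monomials $x_{-\beta_1}^{n_1}\cdots x_{-\beta_k}^{n_k}\, v_0$ with $\beta_i\in\Phi^+$. Using $[h,x_{-\beta}]=-\beta(h)x_{-\beta}$ and the fact that $h\cdot v_0=\mu(h)v_0$ for $h\in\mathfrak{h}_R$, a straightforward induction on the length $n_1+\cdots+n_k$ shows that such a monomial is a weight vector of weight $\mu-\sum n_i\beta_i$. Since $\beta_i\in\Phi^+\subset\mathbb{Z}\Phi\subset\L_R$, this weight lies in $\mu-\mathbb{Z}_{\geq 0}\Phi^+\subset[\mu]=[\l]$, so $\St(\mu)=\sum_{\nu\in[\l]}\St(\mu)_\nu$.

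Finally I would verify that $\St(\mu)$ is locally $U(\mathfrak{n}_R^+)$-finite. For each $\gamma\in\mathbb{Z}_{\geq 0}\Phi^+$ the weight space $\St(\mu)_{\mu-\gamma}$ is spanned by the PBW monomials with $\sum n_i\beta_i=\gamma$. The number of such monomials is bounded by the number of ways to write $\gamma$ as a sum of positive roots, which is finite (a Kostant-partition count, bounded via the height of $\gamma$); hence each weight space is finitely generated over $R$. Given $m\in\St(\mu)$, the decomposition of the previous step expresses $m$ as a finite sum of weight vectors of weights $\mu-\gamma_1,\ldots,\mu-\gamma_r$. The $\mathfrak{n}_R^+$-action raises weights by positive roots, so $U(\mathfrak{n}_R^+)\cdot m$ is contained in the $R$-span of the weight spaces $\St(\mu)_{\mu-\gamma_j+\delta}$ with $\delta\in\mathbb{Z}_{\geq 0}\Phi^+$; but these are nonzero only when $\gamma_j-\delta\in\mathbb{Z}_{\geq 0}\Phi^+$, which leaves finitely many possibilities for $\delta$. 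Thus $U(\mathfrak{n}_R^+)\cdot m$ lies in a finite direct sum of finitely generated $R$-modules, hence is finitely generated over $R$.

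The proof is essentially a bookkeeping application of PBW; there is no real obstacle, but some care is needed to ensure that the weight space decomposition really is indexed by $[\l]$ (which reduces to the containment $\mathbb{Z}\Phi\subset\L_R$ coming from the definition of $\L_R$ and the relation $\langle\beta,\alpha^\vee\rangle_R\in\mathbb{Z}$ for roots $\beta$) and that each weight space is finitely generated over $R$, both of which follow once a PBW basis of $\St(\mu)$ is in hand.
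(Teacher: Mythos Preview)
Your proposal is correct and is precisely the classical argument the paper defers to (the paper's proof simply cites \cite[1.3]{zbMATH05309234} and \cite[1.4.1]{zbMATH03747378}): you use the PBW decomposition $U(\mathfrak{g}_R)\simeq U(\mathfrak{n}_R^-)\otimes_R U(\mathfrak{b}_R)$ to get freeness of $\St(\mu)$ over $U(\mathfrak{n}_R^-)$, read off the weight-space decomposition from the PBW monomial basis, and deduce local $\mathfrak{n}_R^+$-finiteness from the finite generation of each weight space together with the weight-raising nature of the $\mathfrak{n}_R^+$-action.
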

\begin{proof}
The reasoning is analogous to \citep[1.3]{zbMATH05309234}. See also \citep[1.4.1]{zbMATH03747378}. 
\end{proof}

We observe that  $\St(\mu)$ is not finitely generated over $R$ and the set of weights of $\St(\mu)$ is contained in $\mu-\mathbb{Z}_0^+\varPi$. In the following, we state some known facts about homomorphisms between Verma modules.

\begin{Lemma}\label{homomorphismbetweenvermamodules}
	Let $\l\in \mathfrak{h}_R^*$.  Then:
	\begin{enumerate}[(i)]
		\item For every $\mu, \omega\in [\l]$, if $\Hom_{\mathcal{O}_{[\l]}, R}(\St(\mu), \St(\omega))\neq 0$, then $\mu\leq \omega$.
		\item $\End_{\mathcal{O}_{[\l], R}}(\St(\mu))\simeq R$ for every $\mu\in [\l]$.
		\item For every $\mu, \omega\in [\l]$, any non-zero map in $\Hom_{\mathcal{O}_{[\l]}, R}(\St(\mu), \St(\omega))$ is injective.
	\end{enumerate}
\end{Lemma}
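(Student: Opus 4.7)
My plan is to exploit the fact that each Verma module $\Delta(\nu) = U(\mathfrak{g}_R)\otimes_{U(\mathfrak{b}_R)} R_\nu$ is cyclically generated by the element $v_\nu := 1\otimes 1_R$, and that $v_\nu$ is characterised (up to scalar) as a weight vector of weight $\nu$ annihilated by $\mathfrak{n}^+_R$. Thus any morphism $\phi \colon \Delta(\mu)\to \Delta(\omega)$ is uniquely determined by $\phi(v_\mu)$, which must lie in the $R$-submodule
\[ \Delta(\omega)_\mu^{\mathfrak{n}^+_R} := \{m\in \Delta(\omega)_\mu : \mathfrak{n}^+_R\cdot m = 0\}. \]
The PBW theorem gives the $R$-isomorphism $\Delta(\omega)\simeq U(\mathfrak{n}^-_R)\otimes_R R_\omega$, and $U(\mathfrak{n}^-_R)$ is graded by weight with weight-$\nu$ component a free $R$-module on the PBW monomials $\prod_{\alpha\in \Phi^+} x_{-\alpha}^{n_\alpha}$ satisfying $\sum n_\alpha \alpha = -\nu$. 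All subsequent analysis reduces to inspecting these explicit $R$-bases.

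For (i): if $\phi\neq 0$ then $\Delta(\omega)_\mu \neq 0$, forcing $\omega-\mu\in \mathbb{Z}_0^+\Phi^+$ by the PBW description, which is exactly $\mu\leq \omega$ in the ordering on $[\lambda]$. For (ii): when $\mu=\omega$, the weight-zero component $U(\mathfrak{n}^-_R)_0$ is free of rank one (only the empty PBW monomial has weight $0$), so $\Delta(\mu)_\mu = R\cdot v_\mu$; moreover every element $rv_\mu$ is automatically annihilated by $\mathfrak{n}^+_R$ because $v_\mu$ is. The map $\End(\Delta(\mu)) \to R$ sending $\phi$ to the scalar $r$ with $\phi(v_\mu) = r v_\mu$ is thus a well-defined $R$-algebra isomorphism.

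For (iii) the strategy is to write $\phi(v_\mu) = y\cdot v_\omega$ with $y\in U(\mathfrak{n}^-_R)_{\mu-\omega}$ nonzero, and to identify $\phi$ with the $R$-linear endomorphism of $U(\mathfrak{n}^-_R)$ given by right multiplication $x\mapsto xy$, using the free-module isomorphism $\Delta(\nu)\cong U(\mathfrak{n}^-_R)$ of left $U(\mathfrak{n}^-_R)$-modules (with basis $v_\nu$). The injectivity of $\phi$ is then equivalent to $y$ being a non-zero-divisor in $U(\mathfrak{n}^-_R)$. The standard PBW filtration on $U(\mathfrak{n}^-_R)$ has associated graded equal to the polynomial algebra $S(\mathfrak{n}^-_R) = R[x_{-\alpha} : \alpha\in\Phi^+]$; taking leading symbols, a relation $xy = 0$ with $x\neq 0$ would produce $\mathrm{gr}(x)\cdot\mathrm{gr}(y) = 0$ in $S(\mathfrak{n}^-_R)$, which, since $S(\mathfrak{n}^-_R)$ is an integral domain whenever $R$ is, yields a contradiction.

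The main technical obstacle is precisely this last step: the filtration argument gives an integral domain only after reducing to the case $R$ a domain, so to conclude for an arbitrary commutative Noetherian $\mathbb{Q}$-algebra $R$ one should reduce modulo each minimal prime of $R$ (using that $R\hookrightarrow \prod_{\mathfrak{p}\in \mathrm{Min}(R)} R/\mathfrak{p}$ via the standard embedding into total rings of fractions), check $\mathrm{gr}(y)\neq 0$ at every minimal prime, and combine the resulting local injectivity statements. The weight-graded PBW basis of $U(\mathfrak{n}^-_R)$ is preserved by these reductions, so the leading-symbol analysis survives the reduction; this is the only step where careful bookkeeping about the base ring is needed.
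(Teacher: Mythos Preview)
Your arguments for (i), (ii), and the core of (iii) are essentially identical to the paper's: use the adjunction $\Hom_{U(\mathfrak{g}_R)}(\Delta(\mu),-)\simeq \Hom_{U(\mathfrak{b}_R)}(R_\mu,-)$ to reduce to the weight space $\Delta(\omega)_\mu$, read off (i) and (ii) from the PBW description of $\Delta(\omega)\cong U(\mathfrak{n}^-_R)$, and for (iii) identify $\phi$ with right multiplication by $y\in U(\mathfrak{n}^-_R)$ and invoke that $U(\mathfrak{n}^-_R)$ has no zero-divisors. The paper simply asserts the last fact (citing the PBW theorem), while you spell out the associated-graded argument; the content is the same.

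Your final paragraph, however, contains a genuine error. You correctly observe that $U(\mathfrak{n}^-_R)$ is a domain only when $R$ is, but your proposed repair via minimal primes cannot succeed, because statement (iii) is \emph{false} for a general commutative Noetherian $\mathbb{Q}$-algebra $R$. Take $\mu=\omega$: by (ii) every endomorphism is $r\cdot\id_{\Delta(\mu)}$ for some $r\in R$, and if $r$ is a nonzero zero-divisor (or nilpotent) this is a nonzero non-injective map. Concretely, for $R=\mathbb{Q}[\epsilon]/(\epsilon^2)$ the map $\epsilon\cdot\id$ kills $\epsilon v_\mu\neq 0$; for $R=\mathbb{Q}\times\mathbb{Q}$ the map $(1,0)\cdot\id$ kills $(0,1)v_\mu\neq 0$. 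Your reduction $R\hookrightarrow\prod_{\mathfrak{p}\in\mathrm{Min}(R)}R/\mathfrak{p}$ already fails to be injective in the first example, and in the second your step ``check $\mathrm{gr}(y)\neq 0$ at every minimal prime'' fails because $y=(1,0)$ vanishes at one of them.

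Thus (iii) genuinely requires $R$ to be an integral domain; the paper's proof tacitly uses this as well. In the paper's applications (the later results assume $R$ local regular, hence a domain) this causes no trouble, but you should not attempt to extend (iii) beyond the domain case.
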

\begin{proof}
	Let $\mu, \omega\in [\l]$ such that $\Hom_{\mathcal{O}_{[\l]}, R}(\St(\mu), \St(\omega))\neq 0$.
	By Tensor-Hom adjunction, $$\Hom_{\mathcal{O}_{[\l]}, R}(\St(\mu), \St(\omega))\simeq \Hom_{U(\mathfrak{b}_R)}(R_\mu, \St(\omega))\subset \St(\omega)_\mu.$$ By assumption, $\mu$ is a weight of $\St(\omega)$. Hence, $\mu\in \omega-\mathbb{Z}_0^+\varPi$.  So, $\mu\leq \omega$.
	
	If $\mu=\omega$, then for any homomorphism $f\in\Hom_{U(\mathfrak{b}_R)}(R_\mu, \St(\mu))$, $f(1_R)\in \St(\mu)_\mu=R$ and it is annihilated by $\mathfrak{n}_R^+$. Further, for every element $r\in R$, we can define $g\in \Hom_{U(\mathfrak{b}_R)}(R_\mu, \St(\mu))$, by imposing $g(1_R)=r(1_{U(\mathfrak{g}_R)}\otimes_{U(\mathfrak{b}_R)} 1_R)$. This shows that $\End_{\mathcal{O}_{[\l], R}}(\St(\mu))\simeq R$.
	
	For (iii), we can apply the same idea as in the classical case (see \citep[4.2]{zbMATH05309234}). In fact, for every $f\in \Hom_{\mathcal{O}_{[\l]}, R}(\St(\mu), \St(\omega))$ we can write $f(1_{U(\mathfrak{g}_R)}\otimes_{U(\mathfrak{b}_R)} 1_{R_\mu})=u1_{U(\mathfrak{g}_R)}\otimes_{U(\mathfrak{b}_R)} 1_{R_\omega}$ for some $u\in U(\mathfrak{n}_R^-)$. Using the PBW theorem we can see that $U(\mathfrak{n}_R^-)$ is an integral domain (see \citep[7.4]{zbMATH04049807}). By identifying $f$ with an endomorphism of $U(\mathfrak{n}_R^-)$ given by $a\mapsto au$, $U(\mathfrak{n}_R^-)$ being an integral domain implies that $f$ is injective.
\end{proof}

\subsubsection{Properties of (classical) BGG category $\mathcal{O}$ }

Before we proceed any further, we should recall some properties of the category $\mathcal{O}$ for a given semisimple Lie algebra $\mathfrak{g}$ over a splitting field $K$ of characteristic zero.

The category $\mathcal{O}$ can be decomposed in finer blocks than the ones described in Lemma \ref{lemmaelemedecomO} and these can be completely determined by the orbits under the dot action of the Weyl group on the space $\mathfrak{h}^*$. 
In fact, for any $M\in \mathcal{O}$, $M=\bigoplus_{\chi} M^\chi$, as $\chi$ runs over the central characters $Z(\mathfrak{g})\rightarrow K$ and \begin{align}
	M^\chi:=\{m\in M\colon \forall z\in Z(\mathfrak{g}) \ \exists n\in \mathbb{N}, \ (z-\chi(z))^nm=0 \}.
\end{align}is a module in $\mathcal{O}$. The argument provided in \citep[1.12]{zbMATH05309234} requires $K$ to be an algebraically closed field, but we do not need such a condition. We could use instead  Gabber and Joseph techniques (see \citep[1.4.2]{zbMATH03747378}) together with the Harish-Chandra theorem stating that $\chi_\l=\chi_\mu$ if $\mu$ and $\l$ are linked by a certain Weyl group and taking into account that the category $\mathcal{O}$ is both Artinian and Noetherian (see \citep[1.11]{zbMATH05309234}). To see that this is a finite direct sum is also required to observe that $\St(\l)^{\chi_\l}=\St(\l)$ and $M\mapsto M^{\chi_\l}$ is an exact functor $\mathcal{O}\rightarrow \mathcal{O}$ for every $\l\in \mathfrak{h}^*$.
For each central character $\chi$, denote by $\mathcal{O}_\chi$ the full subcategory of $\mathcal{O}$ whose objects are the modules $M$ satisfying $M=M^{\chi}$.

The dot action of the Weyl group $W$ is defined as \mbox{$w\cdot \l:=w(\l+\rho)-\rho$}, where $\rho$ is the half-sum of all positive roots.
With this, for each $\l\in \mathfrak{h}$, one can define another Weyl group $W_{[\l]}$ associated with a root system that views $\l$ as an integral weight lattice. Explicitly, $W_{[\l]}:=\{w\in W\colon w\cdot\l-\l\in \mathbb{Z}\Phi\}$.

\begin{Theorem}The following results hold for the category $\mathcal{O}$ of a semisimple Lie algebra over a splitting field of characteristic zero.
	\begin{enumerate}[(a)]
		\item 	For each $\l\in \mathfrak{h}^*$, the Verma module $\St(\l)$ has a unique simple quotient and every simple module in $\mathcal{O}$ is isomorphic to the simple quotient of some Verma module $\St(\l)$ which we will denote by $L(\l)$.
		\item The simple module $L(\l)$ in $\mathcal{O}$ is finite-dimensional if and only if $\langle \l, \alpha^\vee\rangle\in \mathbb{Z}_0^+$ for every $\alpha\in \Phi^+$. Such weights are known as integral dominant weights. In such a case, $L(\l)\simeq U(\mathfrak{g})/J$. Here, $J$ is the left ideal of $U(\mathfrak{g})$ generated by the elements $x_\alpha$, ($\alpha\in \Phi^+$), $h-\l(h)1$, ($h\in \mathfrak{h}$) and $x_{\beta}^{n_\beta+1}$, ($\beta\in \varPi$), where $n_\beta=\langle \l, \beta^\vee\rangle\in \mathbb{Z}_0^+$.
		\item The Verma module $\St(\l)$ is simple in $\mathcal{O}$ if and only if $\l$ is antidominant, that is, $\langle \l+\rho, \alpha^\vee\rangle \notin \mathbb{N}$ for all $\alpha\in \Phi^+$. In particular, $\l$ is minimal and the unique antidominant weight in its $W_{[\l]}$-orbit.
		\item The Verma module $\St(\l)$ is projective in $\mathcal{O}$ if and only if $\l$ is dominant, that is, $\langle \l+\rho, \alpha^\vee\rangle \notin \mathbb{Z}^-$ for all $\alpha\in \Phi^+$. In particular, $\l$ is maximal and the unique dominant weight in its $W_{[\l]}$-orbit.
		\item $\mathcal{O}$ has enough projectives, and the projective cover of $\St(\l)$ (which exists) is injective if and only if $\l$ is antidominant.
		\item The category $\mathcal{O}$ is the direct sum of the subcategories $\mathcal{O}_{\chi_\l}$ consisting of modules whose composition factors all have highest weights linked by $W_{[\l]}$, as $\l$ runs over all antidominant weights (or alternatively over all dominant weights). In particular, $\chi_\l=\chi_\mu$ if $\mu$ and $\l$ belong to the same orbit under the Weyl group $W_{[\l]}$.
		\item The blocks $\mathcal{O}_{\chi_\l}$ with the Verma modules being the standard modules are split highest weight categories with a finite number of standard modules. Here, the ordering is given by $\mu_1\leq \mu_2$ if and only if $\mu_2-\mu_1\in \mathbb{Z}_0^+\varPi $. 
	\end{enumerate}\label{Propertiesofsemisimplecomplexliealg}
\end{Theorem}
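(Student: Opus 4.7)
The plan is to reduce each of the seven assertions to known results on category $\mathcal{O}$ over an algebraically closed field of characteristic zero (most of which are in \cite{zbMATH05309234}), and then to explain how to drop the "algebraically closed" hypothesis to the hypothesis "splitting field of characteristic zero". The key point making such a reduction possible is that over a splitting field $K$ the Cartan subalgebra $\mathfrak{h}$ remains an $\mathfrak{h}$-module with one-dimensional weight spaces, the roots lie in $\mathfrak{h}^*_K$, and the central characters associated with weights $\l\in \mathfrak{h}^*_K$ take values in $K$. Consequently, the action of $Z(\mathfrak{g})$ on Verma modules is scalar, and all combinatorial statements (linkage, antidominance, dominance) are intrinsic to the root system and do not require any further field extension.

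For parts (a), (b), (c), (d) and (e) I would follow Humphreys' exposition verbatim. First, the existence and uniqueness of $L(\l)$ as the simple quotient of $\St(\l)$ follows because $\St(\l)$ has a unique maximal proper submodule, namely the sum of all proper submodules, which exists because each proper submodule has weights strictly below $\l$ (\cite[1.2-1.3]{zbMATH05309234}). For (b), integral dominance is the precise condition that makes the maximal vector in $\St(\l)$ generate a finite-dimensional simple $U(\mathfrak{g})$-quotient via the standard Verma construction for finite-dimensional modules (\cite[1.6]{zbMATH05309234}). Statements (c) and (d) follow from the BGG theorem on embeddings of Verma modules and duality (\cite[4.8, 3.8]{zbMATH05309234}); dominance corresponds to maximality in the Bruhat-type order on a linkage class and projectivity of $\St(\l)$ follows because $\Ext^1_{\mathcal{O}}(\St(\l), M)=0$ for all $M$ with weights $\leq \l$. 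Enough projectives in (e) is standard via the tilting construction $\St(\l)\twoheadrightarrow L(\l)$ with $\St(\l)$ a quotient of $U(\mathfrak{g})\otimes_{U(\mathfrak{b})} F$ for an appropriate finite-dimensional $\mathfrak{b}$-module $F$ (\cite[3.8-3.9]{zbMATH05309234}); the projective-injective characterization follows from Soergel's self-duality of the big projective, which for $\mathcal{O}_{\chi_\l}$ is the projective cover of the unique antidominant $L(\mu)$.

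For (f), the argument is in two steps: first, the decomposition $M=\bigoplus_\chi M^\chi$ over central characters on each $M\in \mathcal{O}$, which uses only that $Z(\mathfrak{g})$ acts locally finitely (because $M$ is finitely generated and has generalized infinitesimal character on generators), plus the fact that the set of central characters appearing in a fixed $M$ is finite by the Noetherianity of $\mathcal{O}$. The argument sketched in the text preceding the theorem (using Gabber--Joseph's approach to the block decomposition together with exactness of $M\mapsto M^\chi$) applies verbatim over a splitting field because the Harish-Chandra homomorphism and its description of $\chi_\l$ only need $\l \in \mathfrak{h}^*_K$. Second, the identification $\chi_\l=\chi_\mu$ whenever $\mu\in W_{[\l]}\cdot\l$ is exactly the Harish-Chandra linkage theorem, which holds over any field of characteristic zero with a fixed Cartan (\cite[1.10]{zbMATH05309234}).

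Finally for (g), once (f) is established, I would verify the axioms of a split highest weight category (Definition~\ref{qhdef} with $R=K$) for each $\mathcal{O}_{\chi_\l}$: (i) $\St(\mu)\in K\proj$ is trivial; (ii) and (iii) are Lemma~\ref{homomorphismbetweenvermamodules}; (iv) is the existence, for each $\mu$ in the linkage class, of a standard filtration on the projective cover $P(\mu)$ whose top quotient is $\St(\mu)$ and whose remaining subquotients are $\St(\nu)$ with $\nu>\mu$ (this is the BGG reciprocity/standard filtration theorem, \cite[3.9, 3.10-3.11]{zbMATH05309234}); and (v) is then immediate from (e). Finiteness of the number of standard modules follows because each linkage class $W_{[\l]}\cdot \l$ is finite (the Weyl group is finite). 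The main obstacle is a purely expository one: translating Humphreys' field-is-$\mathbb{C}$ conventions to an arbitrary splitting field $K$ and checking that projective covers actually exist in this non-algebraically-closed setting. This last point follows from the local finiteness of $\mathrm{End}_{\mathcal{O}}(L(\l))$, which is $K$ itself by the splitting hypothesis, so that the standard argument producing projective covers as images of $U(\mathfrak{g})\otimes_{U(\mathfrak{b})} F$ goes through unchanged.
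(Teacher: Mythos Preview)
Your proposal is correct and takes essentially the same approach as the paper: both defer each item to Humphreys \cite{zbMATH05309234}. The paper's proof is nothing more than a list of page references to that book, whereas you have unpacked the relevant arguments and additionally discussed the passage from algebraically closed fields to splitting fields; this extra care is reasonable but not something the paper itself spells out in the proof.
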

\begin{proof}
	For (a) see \cite[p.18]{zbMATH05309234}. For (b) see \cite[p.21, p.44]{zbMATH05309234}. For (c) see \cite[p.55,p.77]{zbMATH05309234}. For (d) see \cite[p.55, p.60]{zbMATH05309234}. For (e) see \cite[p.60-61, p.149-151]{zbMATH05309234} For (f) see \cite[p.83]{zbMATH05309234}. For (g) see \cite[p.64-65, p.68]{zbMATH05309234}.
\end{proof}
Observe that if a weight $\l$ is both antidominant and dominant, then $\St(\l)$ is projective and simple. So, the block $\mathcal{O}_{\chi_\l}$ is semisimple if and only if $\l$ is both antidominant and dominant.

\subsubsection{Properties of BGG category $\mathcal{O}$ over commutative rings}

An important property of the category $\mathcal{O}$ is that ultimately it can be viewed as a direct sum of module categories over finite-dimensional algebras. This is what we will explore for the BGG category $\mathcal{O}$ over a commutative ring. As we will see later on, we must impose that $R$ is also local so that the classical category $\mathcal{O}$ is obtained as a specialization of a direct sum of module categories of projective Noetherian $R$-algebras. But for now, assume just that $R$ is a commutative Noetherian ring which is a $\mathbb{Q}$-algebra.

\begin{Def}\label{decompositiontechnique}
	Let $S$ be any commutative ring and $\{J_i\}_{i\in I}$ be a family of two-sided ideals of $S$ such that $J_i+J_j=S$ whenever $i\neq j$. Let $\mathcal{J}$ be the category of $S$-modules $M=\sum_{i\in I} M_i$ where $$M_i=\{m\in M\colon \forall x\in J_i \ \exists n\in \mathbb{N}, \ x^nm=0 \}.$$
\end{Def}Note that $m_1+m_2\in M_i$ whenever $m_1, m_2\in M_i$ since for each $x\in J_i$ we can choose  the higher value $n_1$ and $n_2$ and then $x^{\max \{n_1, n_2\}}$ kills $m_1+m_2$. 
Since $S$ is commutative $M_i$ becomes an $S$-module.

\begin{Lemma}\label{propertiesoffamilyidealslikeidempotents}
	Let $S$ be any commutative ring and $\{J_i\}_{i\in I}$ be a family of two-sided ideals of $S$ such that $J_i+J_j=S$ whenever $i\neq j$. The following assertions hold.
	\begin{enumerate}[(i)]
		\item If $M\in \mathcal{J}$, then $M= \bigoplus_{i\in I}M_i$.
		\item The category $\mathcal{J}$ is closed under submodules, quotients and direct sums.
		\item The functor $\mathcal{J}\rightarrow \mathcal{J}$, given by $M\mapsto M_i$, is an exact functor. 
	\end{enumerate}
\end{Lemma}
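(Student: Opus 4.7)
The three assertions build on one another, with (i) the technical heart and (ii) and (iii) easy consequences. For (i), the aim is to show the sum $\sum_{i \in I} M_i$ is direct, i.e.\ that any relation $m_{i_1} + \cdots + m_{i_k} = 0$ with distinct indices and $m_{i_\ell} \in M_{i_\ell}$ forces each summand to vanish. My plan is to use the coprimality $J_i + J_j = S$ to manufacture, for a fixed relation, an element $s \in S$ which acts as the identity on one chosen $m_{i_\ell}$ and as zero on the others; applying $s$ to the relation then forces that $m_{i_\ell}$ to be zero, and since the choice of index was arbitrary each summand vanishes. Explicitly, for each $\ell \geq 2$ I would pick $a_\ell \in J_{i_1}$ and $b_\ell \in J_{i_\ell}$ with $a_\ell + b_\ell = 1$, set $c := b_2 b_3 \cdots b_k$, and observe that $b_\ell \equiv 1 \pmod{J_{i_1}}$ gives $1 - c \in J_{i_1}$; hence some power $(1-c)^M$ annihilates $m_{i_1}$, which when expanded yields $m_{i_1} = c\, p(c)\, m_{i_1}$ for a polynomial $p \in \mathbb{Z}[T]$. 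Then, choosing $N$ large enough that $b_\ell^N m_{i_\ell} = 0$ for all $\ell \geq 2$, commutativity of $S$ gives $c^N m_{i_\ell} = 0$ for $\ell \geq 2$, so the element $s := (c\, p(c))^N$ satisfies $s\, m_{i_1} = m_{i_1}$ and $s\, m_{i_\ell} = 0$ for $\ell \geq 2$, as required.

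For (ii), closure under direct sums and under quotients is immediate from the definitions, since the annihilation condition $x^n m = 0$ passes to subsums and to residue classes. The only delicate point is closure under submodules $N \subseteq M$: given $n \in N$ with decomposition $n = \sum m_i$ inside $M$, the element $s$ produced in (i), formed with respect to this particular relation, satisfies $s n = m_{i_\ell}$, which therefore lies in $N$; hence each component lies in $N \cap M_{i_\ell} = N_{i_\ell}$, giving $N = \sum_i N_i$. For (iii), functoriality is immediate, since any $S$-linear map preserves the annihilation condition. Exactness of the functor $M \mapsto M_i$ applied to a short exact sequence $0 \to M' \xrightarrow{f} M \xrightarrow{g} M'' \to 0$ in $\mathcal{J}$ then reduces to three checks: injectivity of $f|_{M'_i}$ (inherited from $f$); the equality $\ker(g|_{M_i}) = \operatorname{im}(f|_{M'_i})$, which follows because a preimage under $f$ of an element annihilated by a power of $x$ is itself annihilated by that same power; and surjectivity of $g|_{M_i}$, where given $m'' \in M''_i$ one lifts to $m \in M$, decomposes $m = \sum m_j$, notes that $g(m_j) \in M''_j$ by functoriality, and invokes the uniqueness from (i) applied in $M''$ to conclude $g(m_i) = m''$.

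The only nontrivial step will be the construction of the separating element $s$ in (i). The key ingredients are that the ideals $J_i$ behave like pairwise coprime ideals in a commutative ring (so that the Chinese-remainder-like device $b_\ell = 1 - a_\ell$ applies) and that the nilpotent-like annihilation in the definition of $M_i$ allows one to convert the congruence $1 - c \in J_{i_1}$ into the explicit equality $m_{i_1} = c\, p(c)\, m_{i_1}$ after passing to a high enough power. Once $s$ is in hand, parts (ii) and (iii) follow mechanically.
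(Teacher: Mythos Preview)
Your proof is correct and complete. The paper itself does not give a self-contained argument: it simply refers to Gabber--Joseph \cite[1.4.2, 1.4.3]{zbMATH03747378} for (i), (iii), and the submodule case of (ii), adding only a one-line remark for quotients and direct sums. Your separating-element construction via coprimality (build $c=b_2\cdots b_k$ with $1-c\in J_{i_1}$, pass to a high power to get $m_{i_1}=(cp(c))^N m_{i_1}$ while $c^N$ kills the other $m_{i_\ell}$) is exactly the standard Chinese-Remainder-type argument underlying Gabber--Joseph's lemma, so there is no genuine methodological difference---you have simply unpacked what the paper leaves as a citation.
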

\begin{proof} See \citep[1.4.3]{zbMATH03747378}.
	Due to \citep[1.4.2]{zbMATH03747378}, $\mathcal{J}$ is also closed under direct sums. For each element of the quotient of $M\in \mathcal{J}$ and for every $x_i\in J_i$ we can pick the $n_i\in \mathbb{N}$ which satisfies the condition in \citep[1.4.2]{zbMATH03747378} for its preimage. Hence, $\mathcal{J}$ is also closed under quotients.
\end{proof}

To get an idea of what Lemma \ref{propertiesoffamilyidealslikeidempotents} is doing we can think about central idempotents. 
For a set of central orthogonal  idempotents, $\{e_1, \ldots, e_n\}$ of a commutative ring $S$, define $J_i=S\sum_{j=1, j\neq i}^n e_j$. Then $J_i+J_j=S$ whenever $i\neq j$ and $M_i=e_iM$. Hence, Lemma \ref{propertiesoffamilyidealslikeidempotents} is a generalization of the process of decomposing a module in terms of orthogonal idempotents over a commutative ring.

We will now apply  Lemma \ref{propertiesoffamilyidealslikeidempotents} to the symmetric algebra of the Cartan algebra  $S=U(\mathfrak{h}_R)=S(\mathfrak{h}_R)$ and the category $\mathcal{O}_{[\l], (II), R}$ taking the role of $\mathcal{J}$. For each $\l\in \mathfrak{h}_R^*$, define the $R$-algebra homomorphism $p_\l\colon S(\mathfrak{h}_R)\rightarrow R$, given by $h\mapsto \l(h), \ h\in \mathfrak{h}_R$.

\begin{Lemma} Fix $\l\in \mathfrak{h}_R^*$.
	Consider the family of ideals $J_\mu:=\ker p_\mu$, $\mu\in [\l]$, of the symmetric algebra $S(\mathfrak{h}_R)$. Then $J_{\mu_1}+J_{\mu_2}=S(\mathfrak{h}_R)$ whenever $\mu_1\neq \mu_2$.\label{symmetricalgebradecomposition}
\end{Lemma}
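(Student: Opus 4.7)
The plan is to produce a unit of $R$ inside $J_{\mu_1} + J_{\mu_2}$ by evaluating the difference $\delta := \mu_1 - \mu_2$ on a suitable basis element of $\mathfrak{h}_R$. Since $\mu_1, \mu_2 \in [\l]$ forces $\delta \in \L_R$, and $\delta \neq 0$ by hypothesis, the first step is to find $h \in \mathfrak{h}_R$ with $\delta(h) \in R^\times$.

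For this, I would expand $\delta = \sum_{\beta \in \varPi} t_\beta (1 \otimes h_\beta)^*$ with $t_\beta \in R$, and observe that the condition $\langle \delta, \alpha^\vee\rangle_R \in \mathbb{Z} \cdot 1_R$ for every $\alpha \in \varPi$ translates, via the defining formula $\langle \delta, \alpha^\vee\rangle_R = \sum_\beta t_\beta \langle \beta, \alpha^\vee\rangle$, into the statement that the row vector $(t_\beta)_{\beta \in \varPi}$ is sent into $(\mathbb{Z}\cdot 1_R)^{\varPi}$ under right multiplication by the Cartan matrix $C := (\langle \beta, \alpha^\vee\rangle)_{\beta,\alpha \in \varPi}$. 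Because $C$ is invertible over $\mathbb{Q}$ (a classical fact about semi-simple Lie algebras) and $R$ is a $\mathbb{Q}$-algebra, the matrix $C^{-1}$ has entries in $\mathbb{Q}$ and therefore acts on $R^{\varPi}$. Inverting $C$ shows that each $t_\beta$ lies in the image of $\mathbb{Q}$ inside $R$. As $\delta \neq 0$, at least one $t_\alpha \in \mathbb{Q} \cdot 1_R$ is a non-zero rational, and hence a unit of $R$.

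The second step is routine. The element $h_\alpha - \mu_i(h_\alpha) \cdot 1$ is sent to zero by $p_{\mu_i}$, hence lies in $J_{\mu_i}$ for $i = 1, 2$. Subtracting,
\[
\bigl(h_\alpha - \mu_2(h_\alpha)\bigr) - \bigl(h_\alpha - \mu_1(h_\alpha)\bigr) = (\mu_1 - \mu_2)(h_\alpha) = t_\alpha \in J_{\mu_1} + J_{\mu_2}.
\]
Since $t_\alpha \in R^\times \subset S(\mathfrak{h}_R)^\times$, multiplying by its inverse yields $1 \in J_{\mu_1} + J_{\mu_2}$, so $J_{\mu_1} + J_{\mu_2} = S(\mathfrak{h}_R)$.

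The main (and really only) obstacle is the first step, which hinges on combining the non-degeneracy of the Cartan matrix over $\mathbb{Q}$ with the hypothesis that $R$ is a $\mathbb{Q}$-algebra. Without either ingredient — for instance, if $R$ had positive characteristic dividing $\det C$ — a non-zero $\delta \in \L_R$ could evaluate to non-units on every $h_\alpha$, and the conclusion would fail.
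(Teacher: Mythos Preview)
Your proof is correct and is exactly the argument one expects here: you use the integrality condition $\delta\in\L_R$ together with the invertibility of the Cartan matrix over $\mathbb{Q}$ to force the coefficients $t_\beta$ into $\mathbb{Q}\cdot 1_R$, and then the hypothesis that $R$ is a $\mathbb{Q}$-algebra makes the non-zero one a unit. The paper does not give its own argument but simply cites Gabber--Joseph \cite[1.4.4]{zbMATH03747378}, noting that the result follows from $R$ being a $\mathbb{Q}$-algebra; your write-up is a faithful unpacking of that reference.
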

\begin{proof}
The result follows from $R$ being a $\mathbb{Q}$-algebra. See \citep[1.4.4]{zbMATH03747378}.
\end{proof}

\begin{Cor}\label{corpropertiesOtypeII}
	For every $M\in \mathcal{O}_{[\l], (II), R}$, the following assertions hold:
	\begin{enumerate}
		\item $M= \bigoplus_{\mu\in [\l]} M_\mu$.
		\item The assignment $M\mapsto M_\mu$ is an exact functor on $\mathcal{O}_{[\l], (II), R}$.
		\item The category  $\mathcal{O}_{[\l], (II), R}$ is closed under quotients, submodules and direct sums.
	\end{enumerate}
\end{Cor}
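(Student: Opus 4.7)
The plan is to deduce the corollary as a direct application of Lemma \ref{propertiesoffamilyidealslikeidempotents} once we put the category $\mathcal{O}_{[\l], (II), R}$ into that abstract framework. Take $S=S(\mathfrak{h}_R)=U(\mathfrak{h}_R)$ and the family of ideals $\{J_\mu=\ker p_\mu : \mu\in [\l]\}$, which satisfies $J_{\mu_1}+J_{\mu_2}=S$ for $\mu_1\neq\mu_2$ by Lemma \ref{symmetricalgebradecomposition}. Let $\mathcal{J}$ be the category attached to this data in Definition \ref{decompositiontechnique}, and write $M_\mu^{\mathrm{gen}}:=\{m\in M \mid \forall x\in J_\mu\ \exists n,\ x^n m=0\}$ for the generalised weight space, as opposed to the strict weight space $M_\mu=\{m\in M \mid h m=\mu(h)m,\ \forall h\in \mathfrak{h}_R\}$ that enters the definition of $\mathcal{O}_{[\l], (II), R}$. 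The crux is to show that on objects of $\mathcal{O}_{[\l], (II), R}$ these two notions coincide, after which (1)--(3) are read off from Lemma \ref{propertiesoffamilyidealslikeidempotents}.

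For the comparison, first observe that $M_\mu\subseteq M_\mu^{\mathrm{gen}}$: indeed, $J_\mu$ is generated as an ideal by $\{h-\mu(h) : h\in \mathfrak{h}_R\}$, each of which annihilates $M_\mu$ in a single step. Consequently, if $M\in \mathcal{O}_{[\l], (II), R}$ then $M=\sum_\mu M_\mu \subseteq \sum_\mu M_\mu^{\mathrm{gen}}\subseteq M$, so $M\in \mathcal{J}$ and Lemma \ref{propertiesoffamilyidealslikeidempotents}(i) gives $M=\bigoplus_{\mu\in[\l]} M_\mu^{\mathrm{gen}}$. To promote this to the strict decomposition, take any $y\in M_\mu^{\mathrm{gen}}$ and write $y=\sum_{\nu}n_\nu$ with $n_\nu\in M_\nu$; since each $n_\nu\in M_\nu^{\mathrm{gen}}$, uniqueness in the direct sum $\bigoplus_\nu M_\nu^{\mathrm{gen}}$ forces $y=n_\mu\in M_\mu$. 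Hence $M_\mu^{\mathrm{gen}}=M_\mu$, which immediately yields $M=\bigoplus_{\mu\in[\l]} M_\mu$, i.e.\ statement~(1).

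For (3), Lemma \ref{propertiesoffamilyidealslikeidempotents}(ii) says $\mathcal{J}$ is stable under submodules, quotients and direct sums, so it suffices to check that for a subquotient $N$ of $M\in \mathcal{O}_{[\l], (II), R}$ the strict weight decomposition still spans $N$. For submodules $N\subseteq M$ this follows from $N_\mu^{\mathrm{gen}}=N\cap M_\mu^{\mathrm{gen}}=N\cap M_\mu\subseteq N_\mu$ together with the obvious reverse inclusion, and then $N=\bigoplus N_\mu^{\mathrm{gen}}=\bigoplus N_\mu$; for quotients $M\twoheadrightarrow Q$, the image of $M_\mu$ lies in $Q_\mu$, and $Q$ is spanned by these images. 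Direct sums are clear. Statement~(2) is then the exactness clause of Lemma \ref{propertiesoffamilyidealslikeidempotents}(iii), once we note the functor $M\mapsto M_\mu$ lands in $\mathcal{O}_{[\l], (II), R}$ by (3).

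The only genuine obstacle is the strict-versus-generalised weight space matching in the second paragraph; everything else is bookkeeping around Lemma \ref{propertiesoffamilyidealslikeidempotents}. This matching relies crucially on $R$ being a $\mathbb{Q}$-algebra, which is already the hypothesis ensuring Lemma \ref{symmetricalgebradecomposition}: the coefficients of any nonzero $\mu_1-\mu_2\in \L_R$ lie in $\mathbb{Q}\subset R^{\times}\cup\{0\}$ via the inverse Cartan matrix, so differences $\mu_1(h)-\mu_2(h)$ that appear are invertible whenever nonzero. This is what makes the directness argument—and therefore the identification of strict with generalised weight spaces—work over an arbitrary commutative Noetherian $\mathbb{Q}$-algebra rather than just over a field.
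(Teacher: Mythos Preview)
Your proof is correct and follows the same route as the paper—combining Lemma~\ref{propertiesoffamilyidealslikeidempotents} with Lemma~\ref{symmetricalgebradecomposition}—with the added value that you spell out the bridge the paper leaves implicit, namely that strict and generalised weight spaces agree on objects of $\mathcal{O}_{[\l],(II),R}$. One cosmetic slip: the functor $M\mapsto M_\mu$ does not land in $\mathcal{O}_{[\l],(II),R}$ (the weight space $M_\mu$ is only a $U(\mathfrak{h}_R)$-submodule, since the $x_\alpha$ shift weights), but exactness---the actual content of (2)---is unaffected and follows directly from Lemma~\ref{propertiesoffamilyidealslikeidempotents}(iii).
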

\begin{proof}
	It follows combining Lemma \ref{propertiesoffamilyidealslikeidempotents} with Lemma \ref{symmetricalgebradecomposition}. See  \citep[1.4.4]{zbMATH03747378}.
\end{proof}

\begin{Cor}
\label{allmodulesOarefginnegatives}	The categories $\mathcal{O}_{[\l], (I), R}$ and $\mathcal{O}_{[\l], R}$ are closed under submodules, quotients and direct sums. Furthermore, if $M\in \mathcal{O}_{[\l], R}$, then $M\in U(\mathfrak{n}_R^-)\m$.
\end{Cor}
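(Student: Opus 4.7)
The first statement splits naturally into the three closure properties for the two categories, and the second statement is a single argument built on the PBW decomposition together with local $\mathfrak{n}_R^+$-finiteness. My plan is as follows.

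For closure of $\mathcal{O}_{[\l],(I),R}$ under submodules, quotients and (finite) direct sums, I would first observe that Corollary \ref{corpropertiesOtypeII} already gives the corresponding closures for the ambient category $\mathcal{O}_{[\l],(II),R}$, so it suffices to check that the property ``$U(\mathfrak{n}_R^+)$-locally finite'' is preserved. For a submodule $N\subset M$ and $n\in N$, $U(\mathfrak{n}_R^+)n$ is contained in the finitely generated $R$-module $U(\mathfrak{n}_R^+)n\subset U(\mathfrak{n}_R^+)m\subset M$, hence is finitely generated over the Noetherian ring $R$. For a quotient $M\twoheadrightarrow M/N$ the image $U(\mathfrak{n}_R^+)(m+N)$ is a quotient of the finitely generated $R$-module $U(\mathfrak{n}_R^+)m$, so again finitely generated. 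For a finite direct sum the statement is immediate; for an arbitrary direct sum any element has finite support, so only finitely many summands contribute.

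For $\mathcal{O}_{[\l],R}$ the extra condition is finite generation over $U(\mathfrak{g}_R)$. Since $R$ is Noetherian and $\mathfrak{g}_R$ is finitely generated and free over $R$, the PBW theorem shows that $U(\mathfrak{g}_R)$ is a Noetherian $R$-algebra, so every submodule of a finitely generated $U(\mathfrak{g}_R)$-module is again finitely generated; quotients and finite direct sums of finitely generated modules are finitely generated. This, combined with the previous paragraph, yields closure of $\mathcal{O}_{[\l],R}$ under submodules, quotients and finite direct sums.

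For the final assertion, let $M\in\mathcal{O}_{[\l],R}$ and pick generators $m_1,\ldots,m_k$ over $U(\mathfrak{g}_R)$. Using Corollary \ref{corpropertiesOtypeII}(1) each $m_i$ decomposes into finitely many weight components, so we may replace the generating set by weight vectors. For each such weight vector $m_i$, local $\mathfrak{n}_R^+$-finiteness gives a finite $R$-generating set $\{v_{i,1},\ldots,v_{i,t_i}\}$ of $U(\mathfrak{n}_R^+)m_i$, which we may again arrange to consist of weight vectors (applying $\bigoplus M_\mu$-decomposition). The key observation is then that for any weight vector $v$ of weight $\mu$ and any $b\in U(\mathfrak{h}_R)=S(\mathfrak{h}_R)$, one has $bv=p_\mu(b)v\in Rv$. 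Combining this with the PBW isomorphism $U(\mathfrak{g}_R)\simeq U(\mathfrak{n}_R^-)\otimes_R U(\mathfrak{h}_R)\otimes_R U(\mathfrak{n}_R^+)$, any element $u\,m_i$ with $u\in U(\mathfrak{g}_R)$ can be written as an $R$-linear combination of elements of the form $a\cdot b\cdot c\cdot m_i$ with $a\in U(\mathfrak{n}_R^-)$, $b\in U(\mathfrak{h}_R)$, $c\in U(\mathfrak{n}_R^+)$; the factor $c\,m_i$ lies in $\sum_k R v_{i,k}$, and then $b$ acts on each $v_{i,k}$ by an element of $R$. Hence $u\,m_i\in \sum_{i,k} U(\mathfrak{n}_R^-) v_{i,k}$, so the finite set $\{v_{i,k}\}$ generates $M$ over $U(\mathfrak{n}_R^-)$, as required.

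The only point that requires genuine care is verifying that $U(\mathfrak{g}_R)$ is Noetherian for an arbitrary commutative Noetherian $\mathbb{Q}$-algebra $R$ (needed for the submodule closure of $\mathcal{O}_{[\l],R}$); this is already recorded earlier in the excerpt via the PBW argument referencing \citep[7.4]{zbMATH04049807}. Everything else is a routine bookkeeping exercise with weight space decompositions and the PBW triangular decomposition, so no substantive obstacle is expected.
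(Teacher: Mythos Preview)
Your proof plan is correct and follows the standard argument, which is essentially what the paper defers to by citing Gabber--Joseph \citep[1.4.7, 1.4.8]{zbMATH03747378}; the paper does not write out its own proof. One cosmetic slip: in your submodule argument the chain ``$U(\mathfrak{n}_R^+)n\subset U(\mathfrak{n}_R^+)m$'' has an undefined $m$; you simply mean that $U(\mathfrak{n}_R^+)n$ computed in $N$ equals $U(\mathfrak{n}_R^+)n$ computed in $M$, which is already finitely generated. Also note that for $\mathcal{O}_{[\l],R}$ only \emph{finite} direct sums are preserved (as you correctly restrict to), whereas for $\mathcal{O}_{[\l],(I),R}$ arbitrary direct sums are fine.
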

\begin{proof}
See \citep[1.4.7]{zbMATH03747378} and \citep[1.4.8]{zbMATH03747378}.
\end{proof}

Taking into account that the Verma modules are free of rank one over $U(\mathfrak{n}_R^-)$, the second part of Corollary \ref{allmodulesOarefginnegatives} can be interpreted as saying that Verma modules are in some sense the building blocks of the category $\mathcal{O}_{[\l], R}$ taking the place of projective indecomposable modules. Note once more that for non-local rings the category $\mathcal{O}_{[\l], R}$ is very far from being Krull-Schmidt.  To make this statement about Verma modules more precise, the following equivalent construction of Verma modules is useful.

For each $\l\in \mathfrak{h}_R^*$, the Verma module $\St(\l)$ is generated by $1_{U(\mathfrak{g}_R)}\otimes_{U(\mathfrak{b}_R)} 1_R$ as $U(\mathfrak{g}_R)$-module. Moreover, for every $\alpha\in \Phi^+$, $1_R\otimes x_\alpha$ acts as zero and each $h\in \mathfrak{h}_R$ acts as $\l(h)$. Also $\St(\l)$ is free as $U(\mathfrak{n}_R^-)$-module, therefore the surjective map  $U(\mathfrak{g}_R)\rightarrow \St(\l)$ given by $1_{U(\mathfrak{g}_R)}\mapsto 1_{U(\mathfrak{g}_R)}\otimes_{U(\mathfrak{b}_R)} 1_R$ has kernel $I_\l$ where $I_\l$ is the ideal generated by $1_R\otimes x_\alpha$, $\alpha\in \Phi^+$ and $h-\l(h)1_R$, $h\in \mathfrak{h}_R$. Hence, $\St(\l)\simeq U(\mathfrak{g}_R)/I_\l$.

\begin{Lemma}\label{filtrationquotientsofVerma} Let $M\in \mathcal{O}_{[\l], R}$. The following assertions hold.
	\begin{enumerate}[(a)]
		\item $M\in \mathcal{O}_{[\l], R}$ has a finite filtration with quotients isomorphic to quotients of $\St(\mu)$, $\mu\in [\l]$.
		\item For each $\mu\in [\l]$, $M_\mu\in R\m$.
		\item For every $N\in \mathcal{O}_{[\l], R}$, $\Hom_{\mathcal{O}_{[\l], R}}(M, N)\in R\m$.
	\end{enumerate} 
\end{Lemma}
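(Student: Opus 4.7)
The plan is to prove the three assertions in order, using (a) as the foundation for (b) and (c). For (a), I will first reduce to producing a finitely generated $U(\mathfrak{b}_R)$-submodule $V \subseteq M$ that generates $M$ over $U(\mathfrak{g}_R)$. Since $M$ is finitely generated over $U(\mathfrak{g}_R)$ and $M = \bigoplus_{\mu \in [\l]} M_\mu$ by Corollary~\ref{corpropertiesOtypeII}, the generators can be decomposed into finitely many weight vectors $v_1, \ldots, v_k$ of weights $\nu_1, \ldots, \nu_k$. Setting $V := \sum_i U(\mathfrak{n}_R^+) v_i$, the $\mathfrak{n}_R^+$-local finiteness forces each $U(\mathfrak{n}_R^+) v_i$ to be finitely generated over $R$, so $V$ is a finitely generated $R$-module stable under $\mathfrak{b}_R$. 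In particular only finitely many weights $\mu_1, \ldots, \mu_s \in [\l]$ occur in $V$, and each $V_{\mu_j}$ is finitely generated over $R$.

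For the filtration, I will pick a total refinement of the partial order on $\{\mu_1, \ldots, \mu_s\}$ so that $\mu_1$ is maximal, $\mu_2$ maximal among the rest, and so on, and set $W_i := \bigoplus_{j \leq i} V_{\mu_j}$. Because $\mathfrak{n}_R^+ V_{\mu_j}$ only has components of weight strictly larger than $\mu_j$, each $W_i$ is a $U(\mathfrak{b}_R)$-submodule, and $W_i/W_{i-1} \simeq V_{\mu_i}$ carries the trivial $\mathfrak{n}_R^+$-action with $\mathfrak{h}_R$ acting by $\mu_i$. Since $R$ is Noetherian and $V_{\mu_i}$ is a finitely generated $R$-module, it admits a finite filtration by $R$-submodules whose successive quotients are cyclic, i.e.\ of the form $R/J$; as $U(\mathfrak{b}_R)$-modules these are $R_{\mu_i}/J R_{\mu_i}$. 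Applying the induction functor $U(\mathfrak{g}_R) \otimes_{U(\mathfrak{b}_R)} (-)$, which is exact because $U(\mathfrak{g}_R)$ is free over $U(\mathfrak{b}_R)$ by PBW, produces a finite filtration of $U(\mathfrak{g}_R) \otimes_{U(\mathfrak{b}_R)} V$ whose successive quotients are $\St(\mu_i)/J\St(\mu_i)$. Pushing this filtration forward along the surjection $U(\mathfrak{g}_R) \otimes_{U(\mathfrak{b}_R)} V \twoheadrightarrow M$ (surjective because $V$ generates $M$) yields the required filtration of $M$. The main obstacle here is verifying that the intermediate filtration $W_i$ is $U(\mathfrak{b}_R)$-stable for the chosen ordering, and ensuring that the induction functor's exactness delivers quotients of the Verma modules at the level of $M$ rather than something weaker.

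For (b), given the filtration from (a) I will invoke exactness of the functor $(-)_\mu$ on $\mathcal{O}_{[\l], (II), R}$ (Corollary~\ref{corpropertiesOtypeII}) and reduce to showing $(\St(\nu))_\mu \in R\m$ for each $\nu \in [\l]$. Since $\St(\nu) \simeq U(\mathfrak{n}_R^-) \otimes_R R_\nu$ is free over $R$ with weight space decomposition $(\St(\nu))_\mu$ corresponding to PBW monomials in $U(\mathfrak{n}_R^-)$ of weight $\mu - \nu$, the weight space vanishes unless $\nu - \mu \in \mathbb{Z}_0^+ \varPi$, and in that case is free of finite rank over $R$ by the (finite) count of such monomials. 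Any quotient of $\St(\nu)$ therefore has finitely generated $\mu$-weight space, and an induction on the length of the filtration, using that extensions of finitely generated $R$-modules are finitely generated, completes the proof.

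For (c), I will send $\phi \in \Hom_{\mathcal{O}_{[\l], R}}(M, N)$ to $(\phi(v_1), \ldots, \phi(v_k)) \in \bigoplus_{j} N_{\nu_j}$, where $v_1, \ldots, v_k$ are the weight generators of $M$ fixed above. This map is $R$-linear and injective. By (b), each $N_{\nu_j}$ belongs to $R\m$, so the codomain is a finitely generated $R$-module; Noetherianity of $R$ then yields $\Hom_{\mathcal{O}_{[\l], R}}(M, N) \in R\m$.
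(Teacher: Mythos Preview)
Your argument is correct. For (a) and (b) your expanded proof is the standard Gabber--Joseph argument that the paper cites without details, so there is nothing to add there.

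For (c), however, you take a genuinely different and more direct route than the paper. The paper (in the Proposition immediately following the Lemma) proceeds by a double induction on the filtrations from (a) of both $M$ and $N$: first it reduces to $\Hom(Q(\omega), Q(\mu))$ for quotients of Verma modules, embeds this into $\Hom(\St(\omega), Q(\mu)) \subset Q(\mu)_\omega$, and then climbs back up via short exact sequences and Noetherianity. Your argument avoids the filtration of $N$ entirely: you simply evaluate a homomorphism at a finite set of weight generators of $M$, giving an injective $R$-linear map $\Hom_{\mathcal{O}_{[\l], R}}(M, N) \hookrightarrow \bigoplus_j N_{\nu_j}$, and conclude by (b) and Noetherianity of $R$. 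This is shorter, uses only the fact that morphisms in $\mathcal{O}_{[\l], R}$ preserve weight spaces, and makes the role of (b) more transparent. The paper's approach, on the other hand, is the one that generalises more readily if one later wants to control $\Ext^1$ or track how these Hom spaces behave along the filtration, but for the bare finite-generation statement your argument is preferable.
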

\begin{proof}
	For (a) see \citep[1.4.9]{zbMATH03747378} and (b) is  \citep[1.4.10]{zbMATH03747378}.
\end{proof}

\begin{Prop}
	Let $M, N\in \mathcal{O}_{[\l], R}$. Then $\Hom_{\mathcal{O}_{[\l], R}}(M, N)\in R\m$.
\end{Prop}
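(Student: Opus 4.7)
The plan is to reduce the statement to the case where $M$ is a quotient of a Verma module via the finite filtration provided by Lemma \ref{filtrationquotientsofVerma}(a), and then to embed the Hom group into a weight space of $N$, which is finitely generated over $R$ by Lemma \ref{filtrationquotientsofVerma}(b). Since $R$ is Noetherian, being finitely generated over $R$ is preserved by submodules and extensions, which is what makes this reduction work.

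More precisely, I would first invoke Lemma \ref{filtrationquotientsofVerma}(a) to obtain a finite filtration
\begin{align*}
0 = M_0 \subset M_1 \subset \cdots \subset M_n = M
\end{align*}
with each $M_i / M_{i-1}$ isomorphic to a quotient $\St(\mu_i)/K_i$ for some $\mu_i \in [\l]$ and some submodule $K_i \subset \St(\mu_i)$. Applying the left exact functor $\Hom_{\mathcal{O}_{[\l], R}}(-, N)$ to the short exact sequence $0 \to M_{i-1} \to M_i \to M_i/M_{i-1} \to 0$ gives an exact sequence of $R$-modules
\begin{align*}
0 \to \Hom(M_i/M_{i-1}, N) \to \Hom(M_i, N) \to \Hom(M_{i-1}, N),
\end{align*}
so $\Hom(M_i, N)$ sits as an extension of an $R$-submodule of $\Hom(M_{i-1}, N)$ by $\Hom(M_i/M_{i-1}, N)$. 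Since $R$ is Noetherian, it suffices by induction on $n$ to show that $\Hom(\St(\mu)/K, N) \in R\m$ for every $\mu \in [\l]$ and every submodule $K \subset \St(\mu)$.

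For the base case, the surjection $\St(\mu) \twoheadrightarrow \St(\mu)/K$ yields an injection of $R$-modules $\Hom_{\mathcal{O}_{[\l], R}}(\St(\mu)/K, N) \hookrightarrow \Hom_{\mathcal{O}_{[\l], R}}(\St(\mu), N)$, so by Noetherianity of $R$ it is enough to show that $\Hom_{\mathcal{O}_{[\l], R}}(\St(\mu), N)$ is finitely generated over $R$. By Tensor-Hom adjunction (as in the proof of Lemma \ref{homomorphismbetweenvermamodules}),
\begin{align*}
\Hom_{\mathcal{O}_{[\l], R}}(\St(\mu), N) \simeq \Hom_{U(\mathfrak{b}_R)}(R_\mu, N) \subset N_\mu,
\end{align*}
the last inclusion sending $f$ to $f(1_R)$, which lies in $N_\mu$ because $R_\mu$ is of weight $\mu$. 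By Lemma \ref{filtrationquotientsofVerma}(b), $N_\mu \in R\m$, and hence so is its $R$-submodule $\Hom_{\mathcal{O}_{[\l], R}}(\St(\mu), N)$.

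The argument is essentially formal once the two ingredients from Lemma \ref{filtrationquotientsofVerma} are in hand, so there is no real obstacle; the only subtlety is that one must not try to invoke Lemma \ref{filtrationquotientsofVerma}(c) directly, since that is exactly the statement being proved. The key insight is that the "Verma-like" filtration of $M$ combined with the fact that $\Hom$ out of a Verma module lands in a single weight space of $N$ allows us to trade the internal hom-finiteness problem for the already-known finiteness of weight spaces.
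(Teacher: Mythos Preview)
Your proof is correct and follows essentially the same approach as the paper: reduce via the filtration of $M$ by quotients of Verma modules (Lemma \ref{filtrationquotientsofVerma}(a)), embed $\Hom(\St(\mu),N)$ into the weight space $N_\mu$, and invoke Lemma \ref{filtrationquotientsofVerma}(b) together with Noetherianity of $R$. Your version is in fact slightly more economical than the paper's, which carries out an additional (and unnecessary) induction on a filtration of $N$ before applying the weight-space argument; since Lemma \ref{filtrationquotientsofVerma}(b) already gives $N_\mu\in R\m$ for arbitrary $N\in\mathcal{O}_{[\l],R}$, the single induction on $M$ that you perform suffices.
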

\begin{proof}
	We will proceed by induction on the length of $M$ and $N$ by quotients of Verma modules given in Lemma \ref{filtrationquotientsofVerma}. Let $Q(\mu)$ be a quotient of $\St(\mu)$ and $Q(\omega)$ be a quotient of $\St(\omega)$, $\mu, \omega\in [\l]$. Applying $\Hom_{\mathcal{O}_{[\l], R}}(-, Q(\mu))$ we obtain the monomorphism $\Hom_{\mathcal{O}_{[\l], R}}(Q(\omega), Q(\mu))\rightarrow \Hom_{\mathcal{O}_{[\l], R}}(\St(\omega), Q(\mu))\subset Q(\mu)_\omega$. By (b), $Q(\mu)_\omega\in R\m$. Since $R$ is a Noetherian ring we obtain that $\Hom_{\mathcal{O}_{[\l], R}}(Q(\omega), Q(\mu))\in R\m$. Assume now that there exists an exact sequence $0\rightarrow M'\rightarrow M\rightarrow Q(\omega)\rightarrow 0$. Again, applying $\Hom_{\mathcal{O}_{[\l], R}}(-, Q(\mu))$ we obtain the exact sequence
	\begin{align}
		0\rightarrow \Hom_{\mathcal{O}_{[\l], R}}(Q(\omega), Q(\mu))\rightarrow \Hom_{\mathcal{O}_{[\l], R}}(M, Q(\mu))\rightarrow X\rightarrow 0,
	\end{align}where $X$ is a submodule of $\Hom_{\mathcal{O}_{[\l], R}}(M', Q(\mu))\in R\m$ by induction. 
	
	Thus, $X\in R\m$ and $\Hom_{\mathcal{O}_{[\l], R}}(M, Q(\mu))\in R\m$. Now, using exact sequences $0\rightarrow N'\rightarrow N\rightarrow Q(\mu)\rightarrow 0$ and applying $\Hom_{\mathcal{O}_{[\l], R}}(M, -))$ we obtain by induction that $\Hom_{\mathcal{O}_{[\l], R}}(M, N)\in R\m$.
\end{proof}

\subsubsection{Verma flags}

For this subsection, we will require in addition that $R$ is a local commutative Noetherian ring which is a $\mathbb{Q}$-algebra.
We will now discuss modules having filtrations by Verma modules. As usual, we will denote by $\mathcal{F}(\St_{[\l]})$ the full subcategory of $\mathcal{O}_{[\l], R}$ having a filtration by modules $\St(\mu)$, $\mu\in [\l]$, where $\St_{[\l]}$ denotes the set $\{\St(\mu)\colon \mu\in [\l] \}$.  In the literature, these filtrations are known as \textbf{Verma flags}.

To construct Verma flags, we need to discuss the tensor product of modules in $\mathcal{O}_{[\l], R}$. 

Let $\mathfrak{a}_R$ be any Lie algebra with finite rank over $R$. Recall that $L\otimes_R M\in U(\mathfrak{a}_R)\M$ whenever $L, M\in U(\mathfrak{a}_R)$ with action $a\cdot (l\otimes m)=(al)\otimes m+l\otimes (am)$, $a\in \mathfrak{a}_R$, and any left $U(\mathfrak{a}_R)$-module $L$ can be regarded as right $U(\mathfrak{a}_R)$-module by taking $l\cdot a:=-al$, $a\in \mathfrak{a}_R$. This is a consequence of the universal enveloping algebra of a Lie algebra being a Hopf algebra. For each left $U(\mathfrak{a}_R)$-module, by $L^*$ we mean the left $U(\mathfrak{a}_R)$-module, $\Hom_R(L, R)$, which inherits the left action by  regarding $L$ as a right $U(\mathfrak{a}_R)$-module.
So, for every $L, M\in U(\mathfrak{a}_R)\M$ we can also regard $\Hom_R(L, M)$ as an $U(\mathfrak{a}_R)$-module by taking $(a\cdot f)(l):=af(l)-f(al)$, $f\in \Hom_R(L, M)$, $a\in \mathfrak{a}_R$, $l\in L$. In particular, the $\mathfrak{a}_R$-invariants of $\Hom_R(L, M)$ are the elements $f\in \Hom_R(L, M)$ satisfying $a\cdot f=0$. Therefore, they coincide with $\Hom_{U(\mathfrak{a}_R)}(L, M)$. 

So, the following Tensor Identity (see \citep[3.6]{zbMATH05309234}) also holds for Lie algebras over commutative rings. 

\begin{Lemma}
	For $L\in U(\mathfrak{b}_R)\M$, $M\in U(\mathfrak{g}_R)\M$, we have the isomorphism\begin{align}
		U(\mathfrak{g}_R)\otimes_{U(\mathfrak{b}_R)} (L\otimes_R M)\simeq (U(\mathfrak{g}_R)\otimes_{U(\mathfrak{b}_R)} L)\otimes_R M.
	\end{align}
\end{Lemma}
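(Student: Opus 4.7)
The plan is to prove the stated isomorphism by exploiting the Hopf algebra structure on $U(\mathfrak{g}_R)$, which the paper already assumes in order to make sense of the $U(\mathfrak{a}_R)$-module structure on $L\otimes_R M$. Since $\mathfrak{g}_R$ is a free $R$-module on the Chevalley basis, the construction of the coproduct $\Delta\colon U(\mathfrak{g}_R)\to U(\mathfrak{g}_R)\otimes_R U(\mathfrak{g}_R)$ (defined on generators by $\Delta(x)=x\otimes 1+1\otimes x$), the counit $\varepsilon$ with $\varepsilon(x)=0$, and the antipode $S$ with $S(x)=-x$ for $x\in\mathfrak{g}_R$, carries over verbatim from the field case. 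The decisive structural input is that $U(\mathfrak{b}_R)$ is a Hopf subalgebra of $U(\mathfrak{g}_R)$: since $\mathfrak{b}_R$ is a Lie subalgebra that is a direct summand of $\mathfrak{g}_R$ as $R$-module (by the Chevalley basis decomposition), both $\Delta$ and $S$ restrict to $U(\mathfrak{b}_R)$.

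Working in Sweedler notation $\Delta(u)=\sum u_{(1)}\otimes u_{(2)}$, I would define the candidate maps
\[ \phi\colon U(\mathfrak{g}_R)\otimes_{U(\mathfrak{b}_R)}(L\otimes_R M)\to (U(\mathfrak{g}_R)\otimes_{U(\mathfrak{b}_R)}L)\otimes_R M,\quad u\otimes(l\otimes m)\mapsto \sum(u_{(1)}\otimes l)\otimes u_{(2)}m, \]
\[ \psi\colon (U(\mathfrak{g}_R)\otimes_{U(\mathfrak{b}_R)}L)\otimes_R M\to U(\mathfrak{g}_R)\otimes_{U(\mathfrak{b}_R)}(L\otimes_R M),\quad (u\otimes l)\otimes m\mapsto \sum u_{(1)}\otimes(l\otimes S(u_{(2)})m). \]
The first step is to verify that both maps respect the balancing over $U(\mathfrak{b}_R)$. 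For $\phi$, given $b\in U(\mathfrak{b}_R)$, one must show $\phi(ub\otimes(l\otimes m))=\phi(u\otimes b\cdot(l\otimes m))$; this follows from $\Delta(ub)=\sum u_{(1)}b_{(1)}\otimes u_{(2)}b_{(2)}$ together with the fact that $b_{(1)},b_{(2)}\in U(\mathfrak{b}_R)$, so the factors $b_{(1)}$ can be pulled across the $\otimes_{U(\mathfrak{b}_R)}$ on the right-hand side. The analogous check for $\psi$ uses in addition that $S$ preserves $U(\mathfrak{b}_R)$. Next, $U(\mathfrak{g}_R)$-linearity of $\phi$ follows from coassociativity (expanding $\Delta(vu)=\sum v_{(1)}u_{(1)}\otimes v_{(2)}u_{(2)}$ and comparing with $v\cdot\phi(u\otimes(l\otimes m))$ computed via the coproduct action on $(U(\mathfrak{g}_R)\otimes_{U(\mathfrak{b}_R)}L)\otimes_R M$).

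Finally, $\phi$ and $\psi$ are mutual inverses by the antipode axioms $\sum u_{(1)}S(u_{(2)})=\varepsilon(u)1=\sum S(u_{(1)})u_{(2)}$ combined with coassociativity: a direct Sweedler-style calculation gives $\psi\circ\phi(u\otimes(l\otimes m))=\sum u_{(1)(1)}\otimes(l\otimes S(u_{(1)(2)})u_{(2)}m)=\sum u_{(1)}\varepsilon(u_{(2)})\otimes(l\otimes m)=u\otimes(l\otimes m)$, and symmetrically for $\phi\circ\psi$. The main obstacle is bookkeeping rather than mathematical depth: making sure at each stage that the Sweedler-notation manipulations make sense over the commutative ring $R$, which is unproblematic because $U(\mathfrak{g}_R)$ and $U(\mathfrak{b}_R)$ are free $R$-modules by PBW and $\Delta$, $S$ are $R$-algebra (resp. $R$-anti-algebra) maps defined on generators.
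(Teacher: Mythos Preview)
Your proof is correct and takes a genuinely different route from the paper. The paper argues via Yoneda: for any $X\in U(\mathfrak{g}_R)\M$ it runs a chain of Tensor--Hom adjunctions
\[
\Hom_{U(\mathfrak{g}_R)}\bigl(U(\mathfrak{g}_R)\otimes_{U(\mathfrak{b}_R)}(L\otimes_R M),X\bigr)\simeq\cdots\simeq \Hom_{U(\mathfrak{g}_R)}\bigl((U(\mathfrak{g}_R)\otimes_{U(\mathfrak{b}_R)}L)\otimes_R M,X\bigr),
\]
the key step being the identification $\Hom_{U(\mathfrak{b}_R)}(L\otimes_R M,X)\simeq \Hom_{U(\mathfrak{b}_R)}(L,\Hom_R(M,X))$ obtained by taking $\mathfrak{b}_R$-invariants; the isomorphism is then extracted as the image of the identity. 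Your approach instead writes down the maps $\phi$, $\psi$ explicitly via the comultiplication and antipode and verifies directly that they are well-defined mutual inverses. The paper's argument is shorter and automatically yields naturality in $L$ and $M$; your argument is more hands-on and has the advantage of producing the explicit inverse (which is sometimes useful, e.g.\ when one later wants to track what happens to specific elements or filtrations). Both rely on exactly the same structural input, namely that $U(\mathfrak{b}_R)$ is a Hopf subalgebra of $U(\mathfrak{g}_R)$ and that everything is $R$-free by PBW.
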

\begin{proof}
	For any $X\in U(\mathfrak{g}_R)\M$, we can write
	\begin{align}
		\Hom_{U(\mathfrak{g}_R)}(U(\mathfrak{g}_R)\otimes_{U(\mathfrak{b}_R)} (L\otimes_R M), X)&\simeq \Hom_{U(\mathfrak{b}_R)}(L\otimes_R M, X)\\
		&\simeq \Hom_{U(\mathfrak{b}_R)}(L, \Hom_R(M, X))\\
		&\simeq \Hom_{U(\mathfrak{b}_R)}(L,   \Hom_{U(\mathfrak{g}_R)}(U(\mathfrak{g}_R), \Hom_R(M, X))   \\ &\simeq \Hom_{U(\mathfrak{g}_R)} ( (U(\mathfrak{g}_R)\otimes_{U(\mathfrak{b}_R)} L)\otimes_R M, X).
	\end{align}
	The first isomorphism is obtained by Tensor-Hom adjunction, the second by Tensor-Hom adjunction and taking on both sides $\mathfrak{b}_R$-invariants and the other ones are again obtained by Tensor-Hom adjunction. So, this provides an isomorphism between these two Hom functors. By taking the image of the identity on $(U(\mathfrak{g}_R)\otimes_{U(\mathfrak{b}_R)} L)\otimes_R M$ under the unit of the isomorphism of functors we obtain the desired isomorphism as $U(\mathfrak{g}_R)$-modules.
\end{proof}

\begin{Remark}
	The functor $U(\mathfrak{g}_R)\otimes_{U(\mathfrak{b}_R)} - \colon U(\mathfrak{b}_R)\M\cap R\Proj\rightarrow U(\mathfrak{g}_R)\M$ is exact. In fact, $U(\mathfrak{g}_R)\simeq U(\mathfrak{n}_R^-)\otimes_R U(\mathfrak{b}_R)\in U(\mathfrak{b}_R)\Proj$ when regarded as $U(\mathfrak{b}_R)$-module.
\end{Remark}

The following is the generalization of \citep[3.6]{zbMATH05309234}. 

\begin{Prop}	\label{tensoringbyVermahasvermaflag}Assume that $R$ is a local commutative Noetherian ring which is a $\mathbb{Q}$-algebra.
	Let $M\in \mathcal{O}_{[\l], R}$ which is free over the ground ring $R$. Then $\St(\omega)\otimes_R M\in \mathcal{F}(\{\St(\mu+\omega)\colon  \mu\in [\l], \ M_\mu\neq 0\})$.
\end{Prop}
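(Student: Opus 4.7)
The plan is to imitate the classical Tensor-Identity proof of Humphreys, 3.6, adapted to the integral setting. I will first rewrite the object of interest using the Tensor Identity, then produce a $\mathfrak{b}_R$-filtration on the inner factor whose successive quotients are rank-pure weight modules, and finally apply the exact functor $U(\mathfrak{g}_R)\otimes_{U(\mathfrak{b}_R)}-$ to transport this to a Verma flag.

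Concretely, I would begin by invoking the Tensor Identity just proved to obtain
\[
\St(\omega)\otimes_R M \;\simeq\; U(\mathfrak{g}_R)\otimes_{U(\mathfrak{b}_R)}\bigl(R_\omega\otimes_R M\bigr),
\]
where $R_\omega\otimes_R M$ carries the diagonal $\mathfrak{b}_R$-action. Unpacking this action: for $h\in\mathfrak{h}_R$ and $m\in M_\mu$, $h\cdot(1\otimes m)=(\omega+\mu)(h)(1\otimes m)$, so $1\otimes M_\mu$ lies entirely in weight $\omega+\mu$; while for $x_\alpha\in\mathfrak{n}_R^+$, $x_\alpha\cdot(1\otimes m)=1\otimes x_\alpha m\in 1\otimes M_{\mu+\alpha}$, so $\mathfrak{n}_R^+$ shifts weights strictly upward by a positive root.

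Next, I would construct a $\mathfrak{b}_R$-stable filtration of $R_\omega\otimes_R M$. Choose a total order on the set $\Omega:=\{\mu\in[\l]:M_\mu\neq 0\}$ refining the partial order, listing maximal weights first as $\mu_1,\mu_2,\ldots$. Because $R$ is local and $M$ is free over $R$, each $M_\mu$ is a direct summand of $M$ via Corollary \ref{corpropertiesOtypeII}, hence free of finite rank over $R$. Setting $F_i:=\bigoplus_{j\le i}1\otimes M_{\mu_j}$ inside $R_\omega\otimes_R M$, the submodule $F_i$ is stable under $\mathfrak{h}_R$ (preservation of weights) and under $\mathfrak{n}_R^+$ (which moves $M_{\mu_j}$ into $M_{\mu_j+\alpha}$, a weight space strictly greater, hence of index $<j$), so each $F_i$ is a $\mathfrak{b}_R$-submodule. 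By construction $F_i/F_{i-1}\simeq R_{\omega+\mu_i}\otimes_R M_{\mu_i}$, a free $R$-module on which $\mathfrak{h}_R$ acts by $\omega+\mu_i$ and $\mathfrak{n}_R^+$ acts as zero.

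Finally, I would apply the functor $U(\mathfrak{g}_R)\otimes_{U(\mathfrak{b}_R)}-$ to the filtration $F_\bullet$. By PBW, $U(\mathfrak{g}_R)\simeq U(\mathfrak{n}_R^-)\otimes_R U(\mathfrak{b}_R)$ is free as a right $U(\mathfrak{b}_R)$-module, so this functor is exact. The resulting filtration of $\St(\omega)\otimes_R M$ has successive quotients
\[
U(\mathfrak{g}_R)\otimes_{U(\mathfrak{b}_R)}\bigl(R_{\omega+\mu_i}\otimes_R M_{\mu_i}\bigr)\;\simeq\;\St(\omega+\mu_i)\otimes_R M_{\mu_i},
\]
which splits as a direct sum of $\operatorname{rank}_R(M_{\mu_i})$ copies of $\St(\omega+\mu_i)$; refining the filtration to split each such direct sum into its summands puts $\St(\omega)\otimes_R M$ into $\mathcal{F}\bigl(\{\St(\mu+\omega):\mu\in[\l],\ M_\mu\neq 0\}\bigr)$.

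The main technical obstacle is verifying that the $F_i$ really are $\mathfrak{b}_R$-submodules in the integral setting (i.e., that $\mathfrak{n}^+_R$ strictly raises weights to another weight of $M$ still in our enumeration, which uses local $\mathfrak{n}^+_R$-finiteness inherited from $M\in\mathcal{O}_{[\l],R}$) and ensuring the enumeration of $\Omega$ can be chosen so that the filtration produced is the finite filtration demanded by the definition of $\mathcal{F}$; this is where the assumption that $M$ is free over $R$ is essential, as it allows one to control the set $\Omega$ via the finite generation of $M$ over $U(\mathfrak{g}_R)$ combined with the weight–space decomposition.
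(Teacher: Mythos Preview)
Your approach is essentially the same as the paper's: the paper's proof consists of the single observation that $M$ (and hence each $M_\mu$) is free of finite rank, so a basis of $M$ can be chosen among weight vectors, and then refers to Humphreys~3.6 for the rest---which is exactly the Tensor-Identity-plus-$\mathfrak{b}_R$-filtration argument you have spelled out.

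One correction to your closing paragraph: your instinct that finiteness of the filtration needs justification is right, but your proposed fix (finite generation of $M$ over $U(\mathfrak{g}_R)$ together with the weight decomposition) does not suffice---Verma modules themselves are finitely generated over $U(\mathfrak{g}_R)$ and free over $R$, yet have infinitely many nonzero weight spaces. The paper resolves this by asserting in its first sentence that $M$ is free of \emph{finite} rank over $R$; this is the case in every application of the proposition (namely $M=E(n\rho)$), and it is what makes the set $\Omega$ finite and the filtration a genuine Verma flag. With that finite-rank hypothesis in place, your argument is complete.
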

\begin{proof}
	The module $M$ is free of finite rank, and so each $M_\mu$ is also free of finite rank. Hence, the basis of $M$ can be picked among the weight vectors of $M$. So, the arguments used in \citep[3.6]{zbMATH05309234} are still valid when the ground ring is $R$.
\end{proof}

Now, we show that the results in \citep[3.7]{zbMATH05309234} also hold in this setup.

\begin{Prop}Assume that $R$ is a local commutative Noetherian ring which is a $\mathbb{Q}$-algebra.
	Let $M\in \mathcal{F}(\St_{[\l]})$. The following assertions hold.
	\begin{enumerate}[(a)]
		\item If $\mu$ is a maximal weight in $M$, then $\St(\mu)\subset M$ and $M/\St(\mu)\in \mathcal{F}(\St_{[\l]})$.
		\item The category $\mathcal{F}(\St_{[\l]})$ is closed under direct summands.
		\item $M$ is free as $U(\mathfrak{n}_R^-)$-module.
	\end{enumerate}
\end{Prop}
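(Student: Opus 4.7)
The plan is to prove the three assertions in the order (c), (a), (b). Part (c) is a direct consequence of the existence of a Verma flag together with the projectivity of $\St(\omega)$ over $U(\mathfrak{n}^-_R)$. Part (a) rests on an $\Ext^1$ vanishing between Verma modules, which lets one rearrange the Verma flag so that maximal weights appear at the bottom. Part (b) is the main obstacle and requires a refinement of (a).

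For (c), I would induct on the length $n$ of a Verma flag $0 = M_0 \subset M_1 \subset \cdots \subset M_n = M$. The base case $M \simeq \St(\omega_1)$ is $U(\mathfrak{n}^-_R)$-free of rank one by the PBW theorem. For the inductive step, the short exact sequence $0 \to M_{n-1} \to M \to \St(\omega_n) \to 0$ splits as a sequence of $U(\mathfrak{n}^-_R)$-modules because $\St(\omega_n)$ is $U(\mathfrak{n}^-_R)$-free and hence projective; combined with the inductive hypothesis, $M \simeq M_{n-1} \oplus \St(\omega_n)$ as $U(\mathfrak{n}^-_R)$-modules, which is free.

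For (a), I would induct on the flag length $n$. Exactness of the weight-space functor (Corollary \ref{corpropertiesOtypeII}) shows that the weights of $M$ coincide with $\bigcup_i (\omega_i - \mathbb{Z}_{\geq 0}\varPi)$, so any maximal weight $\mu$ of $M$ must equal some $\omega_i$ that is also maximal in the multiset $\{\omega_1,\ldots,\omega_n\}$. The crucial tool is the vanishing $\Ext^1_{\mathcal{O}_{[\l], R}}(\St(\omega), \St(\nu)) = 0$ whenever $\omega > \nu$: given any extension $0 \to \St(\nu) \to E \to \St(\omega) \to 0$, the module $E$ lies in $\mathcal{F}(\St_{[\l]})$ so (c) applies; since $\omega > \nu$ prevents $\omega$ from being a weight of $\St(\nu)$, exactness of the weight-space functor gives $E_\omega \simeq \St(\omega)_\omega \simeq R$. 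A generator of $E_\omega$ is then killed by $\mathfrak{n}^+_R$ (as $\omega + \alpha$ is not a weight of $E$), and the induced nonzero map $\St(\omega) \to E$ is injective by Lemma \ref{homomorphismbetweenvermamodules}(iii), providing a splitting. Iterating adjacent transpositions rearranges the flag so that $\omega_1 = \mu$, yielding $M_1 \simeq \St(\mu) \hookrightarrow M$ and $M/M_1$ admits the remaining Verma flag.

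For (b), the main obstacle, I need a refinement of (a): for any maximal weight $\mu$ of $M \in \mathcal{F}(\St_{[\l]})$ and any highest-weight vector $v \in M_\mu$ such that $Rv$ is a direct $R$-summand of $M_\mu$, one has $U(\mathfrak{g}_R) v \simeq \St(\mu)$ and $M/U(\mathfrak{g}_R)v \in \mathcal{F}(\St_{[\l]})$. This refinement is obtained by incorporating the choice of $v$ into the rearrangement argument of (a): the condition that $Rv$ be an $R$-summand of $M_\mu$ ensures that $U(\mathfrak{g}_R)v$ can be placed as the initial term of a Verma flag of $M$, rather than an arbitrary copy of $\St(\mu)$ produced by the raw rearrangement. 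Given $M = N \oplus N'$ with $M \in \mathcal{F}(\St_{[\l]})$, pick a maximal weight $\mu$ of $M$ and assume by symmetry $N_\mu \neq 0$; since $N$ is an $R$-summand of $M$ and $M_\mu$ is $R$-free by (c), the weight space $N_\mu$ is a free $R$-direct summand of $M_\mu$, and one may choose $v$ to be a basis element of $N_\mu$. Then $Rv$ is a free direct $R$-summand of $M_\mu$, the refinement of (a) applies and yields $\St(\mu) \hookrightarrow N$ together with $M/U(\mathfrak{g}_R)v = (N/U(\mathfrak{g}_R)v) \oplus N' \in \mathcal{F}(\St_{[\l]})$; induction on flag length applied to this shorter quotient gives Verma flags of both summands, and splicing with $\St(\mu) \hookrightarrow N$ completes the proof.
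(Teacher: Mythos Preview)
Your strategy is essentially that of the paper (which in turn defers to \citep[3.7]{zbMATH05309234}): rearrange the Verma flag so that $\St(\mu)$ sits at the bottom for (a), then invoke the stronger form of (a)---that one may take $\St(\mu)=U(\mathfrak{g}_R)v$ for a suitably chosen highest-weight vector $v$---to run the induction in (b); your reordering of (c) before (a) is harmless and arguably cleaner.

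Two small points to tighten. First, for the rearrangement you need the $\Ext^1$ vanishing under the weaker hypothesis $\omega\not\leq\nu$, not just $\omega>\nu$, because a maximal $\mu$ need not dominate the other $\omega_j$; your argument already proves this, since what you actually use is that $\omega$ is not a weight of $\St(\nu)$. Second, Lemma~\ref{homomorphismbetweenvermamodules}(iii) concerns maps between Verma modules and does not apply to $\St(\omega)\to E$; injectivity alone would not yield a splitting anyway. What gives the splitting is that the composite $\St(\omega)\to E\to\St(\omega)$ lies in $\End_{\mathcal{O}_{[\l],R}}(\St(\omega))\simeq R$ and sends the generator of $\St(\omega)_\omega$ to a generator (since $E_\omega\to\St(\omega)_\omega$ is an isomorphism and $v$ was chosen to generate $E_\omega$), hence is a unit.
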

\begin{proof}
	The argument for \citep[3.7(a)]{zbMATH05309234} remains valid in our setup.
	Assume that $M=M_1\oplus M_2$ has a filtration by Verma modules. If $M$ is a Verma module, then there is nothing to prove since $R$ is local the Verma modules are indecomposable modules. We shall proceed by induction on the size of the filtration of $M$. Let $\mu$ be a maximal weight of $M$. We have $M_\mu= (M_1)_\mu\oplus (M_2)_\mu$. Assume that $(M_1)_\mu\neq 0$. By (a), $\St(\mu)\subset M_1\subset M$ and $M/\St(\mu)\simeq M_1/\St(\mu)\oplus M_2\in \mathcal{F}(\St_{[\l]})$. By induction, $M_1/\St(\mu)$ has a filtration by Verma modules. So, $M_1\in\mathcal{F}(\St_{[\l]})$. 
	
	By proceeding on induction on the filtration of $M$ and since each Verma module is free as $U(\mathfrak{n}_R^-)$-module we obtain that $M$ is also free as $U(\mathfrak{n}_R^-)$-module.
\end{proof}

\subsubsection{Duality in BGG categories over commutative rings}

The classical category $\mathcal{O}$ admits a simple preserving duality functor. However, since the most interesting modules in the category $\mathcal{O}$ are not finite-dimensional we cannot use the usual standard duality. But, the weight spaces are finite-dimensional, and so this property could be used to define a duality in $\mathcal{O}$ using the standard duality "locally".
However, there is another problem in this case. For a general BGG category $\mathcal{O}$ over a commutative local Noetherian ring $R$ which is a $\mathbb{Q}$-algebra  we cannot define a duality, even locally, for all modules. We have to focus our attention only on those modules which are free over $R$. In addition, we have to impose that $R$ is an integral domain.

Define $M^\vee=\bigoplus_{\mu\in[\l]} DM_\mu$ for $M\in \mathcal{O}_{[\l], R}\cap R\Proj$. This becomes an $U(\mathfrak{g}_R)$-module by imposing $(g\cdot f)(v)=f(\tau(g)v)$, where $\tau\colon U(\mathfrak{g}_R)\rightarrow U(\mathfrak{g}_R)$ is the involution map that fixes $\mathfrak{h}_R$ and sends $x_\alpha$ to $x_{-\alpha}$ for every $\alpha\in \Phi$.  Using the fact that $R$ is an integral domain one sees that his action identifies $(DM)_\mu$ with $D(M_\l)$ justifying why we changed the action. In fact, any $f\in DM$ with weight $\mu$ satisfies \begin{align}
	f(\mu(h)m)=\mu(h)f(m)=(h\cdot f)(m)=f(\tau(h)m)=f(hm)=f(\omega(h)m), \ \forall m\in M_\omega.
\end{align}Now using that $R$ is an integral domain we would obtain that $f(m)=0$ for all $m\in M_\omega$ whenever $\omega\neq \mu$. Hence, $M^\vee\in \mathcal{O}_{[\l], (II), R}$ whenever $M\in \mathcal{O}_{[l], R}$. 

Observe also that for every $\alpha\in \Phi^+$, $f\in (DM)_\mu$, we have
\begin{align}
	h(x_\alpha f)=[h, x_\alpha]f+x_\alpha h f=\alpha(h)x_\alpha f+\mu(h)x_\alpha f=(\alpha+\mu)(h)x_\alpha f, \ \forall h\in \mathfrak{h}_R.
\end{align}Hence, $x_\alpha f \in (DM)_{\alpha+\mu}\simeq D(M_{\alpha+\mu})$. So, $\mathfrak{n}_R^+f\in R\m$ and consequently, $M^\vee\in \mathcal{O}_{[\l], (I), R}$ whenever $M\in \mathcal{O}_{[\l], R}$. The problem lies in deciding if $M^\vee\in \mathcal{O}_{[\l], R}$, that is if $M^\vee$ is finitely generated as $U(\mathfrak{g}_R)$-module. In the classical case, this is achieved by exploiting the simple modules and the composition series of the modules in $\mathcal{O}$. 

Observe that for $M\in \mathcal{O}_{[\l], (I), R}\cap R\Proj$, $(M^\vee)^\vee=\bigoplus_{\mu\in [\l]} DDM_\mu\simeq\bigoplus_{\mu\in [\l]} M_\mu\simeq M$.

So, in short, we obtained a contravariant exact functor $(-)^\vee\colon \mathcal{O}_{[\l], (I), R}\cap R\Proj\rightarrow \mathcal{O}_{[\l], (I), R}\cap R\Proj$ which is self-dual. 
In particular, it is fully faithful.

\subsubsection{Change of rings}

It is at this point that our approach will start to diverge from Gabber and Joseph. As the reader may see we are closer to seeing that $\mathcal{O}_{[\l], R}$ is a split highest weight category.  
But, for that, we require further techniques and constructions. In particular, how $\mathcal{O}_{[\l], R}$ behaves under change of ground ring. 

Concerning Verma modules, we can see that they remain Verma under change of ring. In fact, for every commutative $R$-algebra $S$ which is a Noetherian ring, and any $\l\in \mathfrak{h}_R^*$, \begin{align*}
	S\otimes_R \St(\l)&=S\otimes_R U(\mathfrak{g}_R)\otimes_{U(\mathfrak{b}_R)} R_\l
	\simeq S\otimes_R U(\mathfrak{g}_R)\otimes_{ S\otimes_RU(\mathfrak{b}_R)} S\otimes_R R_\l
	\simeq U(\mathfrak{g}_S)\otimes_{ U(\mathfrak{b}_S)} S_{1_S\otimes_R \l}\\&=\St(1_S\otimes_R \l).
\end{align*}

More generally, we can say the following.

\begin{Lemma}Let $R$ be a commutative Noetherian ring which is a $\mathbb{Q}$-algebra and let $\l\in \mathfrak{h}_R^*$.
	For any commutative Noetherian ring $S$ which is an $R$-algebra, the functor $S\otimes_R -\colon \mathcal{O}_{[\l], R}\rightarrow \mathcal{O}_{[1_S\otimes_R \l], S}$ is well defined and $S\otimes_R \St(\mu)\simeq \St(1_S\otimes_R \mu)$  for every $\mu\in [\l]$. 
	Moreover, $S\otimes_R M_\mu=(S\otimes_R M)_{1_S\otimes \mu}$ for every $\mu\in [\l]$. \label{changeringscategoryO}
\end{Lemma}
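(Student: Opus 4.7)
The plan is to verify the three assertions in turn, noting that the hardest step is the equality of weight spaces under extension of scalars, which rests on the fact that $S$ is automatically a $\mathbb{Q}$-algebra.

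First I would handle the Verma-module identity, which is essentially a direct computation using the base-change isomorphism $S \otimes_R U(\mathfrak{g}_R) \simeq U(\mathfrak{g}_S)$ (and similarly for $U(\mathfrak{b}_R)$) together with associativity of the tensor product and $S \otimes_R R_\mu \simeq S_{1_S \otimes_R \mu}$ as $U(\mathfrak{b}_S)$-modules. This is exactly the calculation carried out immediately before the statement of the lemma, so I would just record it.

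Next I would establish the weight-space identity $S\otimes_R M_\mu = (S\otimes_R M)_{1_S\otimes \mu}$ for every $\mu\in[\l]$. By Corollary \ref{corpropertiesOtypeII}, $M = \bigoplus_{\mu\in[\l]} M_\mu$, so applying the exact functor $S\otimes_R -$ gives $S\otimes_R M = \bigoplus_{\mu\in[\l]} S\otimes_R M_\mu$. The inclusion $S\otimes_R M_\mu \subseteq (S\otimes_R M)_{1_S\otimes\mu}$ is a one-line check: for $h\in \mathfrak{h}_R$, $s\in S$, $m\in M_\mu$, one has $(1_S\otimes h)(s\otimes m) = s\mu(h)\otimes m = (1_S\otimes\mu)(1_S\otimes h)\cdot(s\otimes m)$. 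For the reverse inclusion, take $x\in(S\otimes_R M)_{1_S\otimes\mu}$ and decompose $x = \sum_{\nu\in[\l]} x_\nu$ with $x_\nu\in S\otimes_R M_\nu$. The equation $(1_S\otimes h)x = (1_S\otimes\mu(h))x$ gives, by the directness of the decomposition, $(\nu(h)-\mu(h))x_\nu = 0$ for every $\nu\in[\l]$ and every $h\in\mathfrak{h}_R$. For $\nu\neq\mu$ the weight $\nu-\mu$ is a non-zero element of the integral weight lattice $\L_R$, so there exists some $h_\alpha$ in the Chevalley basis with $(\nu-\mu)(h_\alpha)\in \mathbb{Z}\setminus\{0\}$; since $S$ is a $\mathbb{Q}$-algebra this integer is invertible in $S$, forcing $x_\nu = 0$. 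Hence $x = x_\mu\in S\otimes_R M_\mu$. This distinctness of the weights $1_S\otimes\mu$ is the step I expect to be the main (albeit modest) obstacle, and it is crucial that we work over a $\mathbb{Q}$-algebra.

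Finally I would check that the functor is well defined, i.e., that $S\otimes_R M\in \mathcal{O}_{[1_S\otimes_R\l],S}$. Finite generation over $U(\mathfrak{g}_S)$ follows because generators $\{m_1,\dots,m_n\}$ of $M$ over $U(\mathfrak{g}_R)$ yield generators $\{1\otimes m_i\}$ of $S\otimes_R M$ over $U(\mathfrak{g}_S) \simeq S\otimes_R U(\mathfrak{g}_R)$. Local $\mathfrak{n}_S^+$-finiteness follows from the identity $U(\mathfrak{n}_S^+)(s\otimes m) = s\cdot(S\otimes_R U(\mathfrak{n}_R^+)m)$, where $U(\mathfrak{n}_R^+)m$ is finitely generated over $R$, so its base change to $S$ is finitely generated over $S$. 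The weight-space condition $S\otimes_R M = \sum_{\mu\in[\l]}(S\otimes_R M)_{1_S\otimes\mu}$ is an immediate consequence of the decomposition $S\otimes_R M = \bigoplus_{\mu\in[\l]} S\otimes_R M_\mu$ combined with the weight-space identity proved above. This completes the plan.
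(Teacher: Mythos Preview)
Your proof is correct and follows essentially the same approach as the paper: both use the decomposition $M=\bigoplus_{\mu\in[\l]}M_\mu$ from Corollary~\ref{corpropertiesOtypeII}, verify finite generation and local $\mathfrak{n}_S^+$-finiteness in the same way, and establish the easy inclusion $S\otimes_R M_\mu\subset (S\otimes_R M)_{1_S\otimes\mu}$. The only minor difference is in the reverse inclusion: the paper first observes that $S\otimes_R M\in\mathcal{O}_{[1_S\otimes_R\l],S}$ (using only the sum, not the direct sum) and then invokes Corollary~\ref{corpropertiesOtypeII} over $S$ to get the weight-space direct sum decomposition of $S\otimes_R M$, from which equality follows by comparison; you instead argue the reverse inclusion directly by exploiting that $(\nu-\mu)(h_\alpha)$ is a nonzero integer and hence invertible in the $\mathbb{Q}$-algebra $S$ --- which is precisely the mechanism underlying Lemma~\ref{symmetricalgebradecomposition} that the paper is implicitly reusing.
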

\begin{proof}Observe that $S$ is also a $\mathbb{Q}$-algebra, by imposing $q\cdot 1_S=(q1_R)\cdot 1_S$.
	Let $M\in \mathcal{O}_{[\l], R}$. By Lemma \ref{corpropertiesOtypeII}, $M=\bigoplus_{\mu\in [\l]} M_\mu$. Thus, $S\otimes_R M=\sum_{\mu\in [\l]} S\otimes_R M_\mu$ and $S\otimes_R M\in U(\mathfrak{g}_S)\m$, by identifying $U(\mathfrak{g}_S)$ with $S\otimes_R U(\mathfrak{g}_R)$.
	By assumption, for all $m\in M$,
	\begin{align}
		U(\mathfrak{n}_S^+)(1_S\otimes m)\simeq S\otimes_R U(\mathfrak{n}_R^+)(1_S\otimes m)\simeq S\otimes_R U(\mathfrak{n}_R^+)m\in S\m.
	\end{align}Since the elements $1_S\otimes m$, $m\in M$, generate $S\otimes_R M$ we obtain that $S\otimes_R M\in \mathcal{O}_{[1_S\otimes_R \l], S}$. It remains to show that $S\otimes_R M_\mu=(S\otimes_R M)_\mu$ for every $\mu\in [\l]$. Any element of $S\otimes_R M_\mu$ has weight $1_S\otimes_R \mu$. So, $S\otimes_R M_\mu\subset (S\otimes_R M)_{1_S\otimes \mu}$.  But, for each $\mu\in [\l]$, \begin{align}
		(S\otimes_R M)_{1\otimes \mu}\subset S\otimes_R M = \sum_{\theta\in [\l]} S\otimes_R M_\theta\subset \sum_{\theta\in [\l]} (S\otimes_R M)_{1\otimes \theta}.
	\end{align}
	Since $S\otimes_R M\in \mathcal{O}_{[1_S\otimes_R \l], S}$, we can write $S\otimes_R M=\bigoplus_{\omega\in [1_S\otimes_R \l]} (S\otimes_R M)_\omega$ and consequently the result follows.
\end{proof}

We observe that we cannot apply right way Theorem 4.15 of \citep{Rouquier2008} (see also \citep[Theorem 3.1.2]{cruz2021cellular}) since $\mathcal{O}_{[\l], R}$ is still too big and contains an infinite number of Verma modules. Instead, we will construct projective objects and decompose  $\mathcal{O}_{[\l], R}$  into smaller subcategories which will allow us to construct projective Noetherian $R$-algebras with module categories being deformations of the blocks of the category $\mathcal{O}$.
To obtain such a statement $R$ being local is crucial. In fact, Gabber and Joseph \citep[1.7]{zbMATH03747378} proved that all simple modules are quotients of Verma modules and the number of simple modules for deformations of the category $\mathcal{O}$ that appear as a quotient of a Verma module depends on the number of maximal ideals of the ground ring. So, outside local rings, we cannot expect $\mathcal{O}_{[\l], R}$ to decompose in the desired way.

As in the classical case, the first step is to see that the center of the universal enveloping algebra behaves well under change of ground ring.

\begin{Lemma}\label{centerofLiealgebra}
	Let $R$ be a commutative Noetherian ring which is a $\mathbb{Q}$-algebra and $S$ a commutative Noetherian ring which is an  $R$-algebra. Then $S\otimes_R Z(\mathfrak{g}_R)\simeq Z(\mathfrak{g}_S)$.
\end{Lemma}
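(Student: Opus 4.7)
The plan is to show the stronger statement that for any commutative Noetherian $\mathbb{Q}$-algebra $T$ one has a canonical identification $Z(\mathfrak{g}_T)\simeq T\otimes_{\mathbb{Q}} Z(\mathfrak{g}_{\mathbb{Q}})$. Applying this to $T=R$ and $T=S$ and using the base-change property $U(\mathfrak{g}_S)\simeq S\otimes_{R}U(\mathfrak{g}_R)$ from the preceding lemma then gives the desired isomorphism
\[
S\otimes_R Z(\mathfrak{g}_R)\simeq S\otimes_R\bigl(R\otimes_{\mathbb{Q}} Z(\mathfrak{g}_{\mathbb{Q}})\bigr)\simeq S\otimes_{\mathbb{Q}} Z(\mathfrak{g}_{\mathbb{Q}})\simeq Z(\mathfrak{g}_S).
\]

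For the first step, fix the Chevalley basis $x_1,\dots,x_n$ of $\mathfrak{g}_{\mathbb{Z}}$, which after base change becomes a free $T$-basis of $\mathfrak{g}_T$. Since $\mathfrak{g}_T$ generates $U(\mathfrak{g}_T)$ as a $T$-algebra, an element $u\in U(\mathfrak{g}_T)$ is central if and only if $[x_i,u]=0$ for $i=1,\dots,n$; that is,
\[
Z(\mathfrak{g}_T)=\ker\bigl(\phi_T\colon U(\mathfrak{g}_T)\longrightarrow U(\mathfrak{g}_T)^n\bigr),\qquad \phi_T(u)=\bigl([x_1,u],\dots,[x_n,u]\bigr),
\]
and $\phi_T$ is a $T$-linear map between free $T$-modules (free because $U(\mathfrak{g}_T)$ is free on the PBW monomials of $\mathfrak{g}_T$).

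Next, the previous lemma and the isomorphism $U(\mathfrak{g}_{\mathbb{Q}})\simeq \mathbb{Q}\otimes_{\mathbb{Z}}U(\mathfrak{g}_{\mathbb{Z}})$ give $U(\mathfrak{g}_T)\simeq T\otimes_{\mathbb{Q}} U(\mathfrak{g}_{\mathbb{Q}})$ as $T$-modules. Since the basis $x_1,\dots,x_n$ is defined over $\mathbb{Z}$, the bracket $[x_i,-]$ acts on PBW monomials through integer (hence rational) coefficients, so under the above identification the map $\phi_T$ is precisely $\mathrm{id}_T\otimes_{\mathbb{Q}}\phi_{\mathbb{Q}}$. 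Because $\mathbb{Q}$ is a field, $T$ is flat over $\mathbb{Q}$; flat base change commutes with taking kernels, and therefore
\[
Z(\mathfrak{g}_T)=\ker(\phi_T)=\ker(\mathrm{id}_T\otimes_{\mathbb{Q}}\phi_{\mathbb{Q}})\simeq T\otimes_{\mathbb{Q}}\ker(\phi_{\mathbb{Q}})= T\otimes_{\mathbb{Q}} Z(\mathfrak{g}_{\mathbb{Q}}).
\]
Applying this with $T=R$ and with $T=S$ (viewing $S$ as a $\mathbb{Q}$-algebra via $\mathbb{Q}\to R\to S$) and composing the resulting isomorphisms as displayed at the start finishes the proof.

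Nothing in this argument is really an obstacle: the one point to verify is that $\phi_T$ is obtained from $\phi_{\mathbb{Q}}$ by base change, which relies on the integrality of the Chevalley basis and of the bracket's structure constants, and on the PBW identification $U(\mathfrak{g}_T)\simeq T\otimes_{\mathbb{Q}} U(\mathfrak{g}_{\mathbb{Q}})$ already established in the preceding lemma. The commuting-with-kernels step is automatic since every $\mathbb{Q}$-algebra is flat over $\mathbb{Q}$.
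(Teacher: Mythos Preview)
Your proof is correct and follows essentially the same route as the paper: both reduce to showing $T\otimes_{\mathbb{Q}} Z(\mathfrak{g}_{\mathbb{Q}})\simeq Z(\mathfrak{g}_T)$ for any $\mathbb{Q}$-algebra $T$, and your flatness/kernel argument is just the abstract form of the paper's explicit linear-independence argument (writing a central element as $\sum r_i\otimes a_i$ with the $r_i$ $\mathbb{Q}$-linearly independent and deducing each $a_i\in Z(\mathfrak{g}_{\mathbb{Q}})$).
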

\begin{proof}
	Actually, we just need to observe that
	$ 	R\otimes_{\mathbb{Q}} Z(\mathfrak{g}_\mathbb{Q})= Z(R\otimes_\mathbb{Q}\mathfrak{g}_\mathbb{Q}). 
$ Assume for the moment that it holds. Then \begin{align*}
		S\otimes_R Z(U(\mathfrak{g}_R))\simeq S\otimes_R Z(R\otimes_\mathbb{Q} U(\mathfrak{g}_\mathbb{Q}))\simeq S\otimes_R R \otimes_\mathbb{Q} Z(U(\mathfrak{g}_\mathbb{Q}))\simeq Z(S\otimes_\mathbb{Q} U(\mathfrak{g}_\mathbb{Q}))\simeq Z(U(\mathfrak{g}_S)).
	\end{align*}
	The result for $R=\mathbb{Q}$ is straightforward. The inclusion $R\otimes_\mathbb{Q} Z(\mathfrak{g}_\mathbb{Q})\subset Z(R\otimes_\mathbb{Q} \mathfrak{g}_\mathbb{Q})$ is clear.  Since any element of $Z(R\otimes_\mathbb{Q} U(\mathfrak{g}_\mathbb{Q}))$ can be written in the form $\sum_{i\in F} r_i\otimes a_i$ with $\{r_i\colon i\in F \}$ being a linearly independent set over $\mathbb{Q}$, it follows that all $a_i$ belong to $Z(\mathfrak{g}_\mathbb{Q})$.
\end{proof}

In light of Lemma \ref{centerofLiealgebra}, the next natural question is to know what happens to the central characters under change of ring.

Let $\pi_R$ denote the projection $U(\mathfrak{g}_R)\twoheadrightarrow U(\mathfrak{h}_R)$ and $\l\in \mathfrak{h}_R^*$. 
By the PBW theorem, for each $\l\in \mathfrak{h}_\mathbb{R}^*$, and each commutative Noetherian ring $S$ which is an $R$-algebra, we obtain the commutative diagrams
\begin{equation}
	\begin{tikzcd}
		S\otimes_R U(\mathfrak{g}_R)\arrow[r, "S\otimes_R \pi_R", outer sep=0.75ex] \arrow[d, "\simeq"] & S\otimes_R U(\mathfrak{h}_R)\arrow[d, "\simeq"] & & S\otimes_R U(\mathfrak{h}_R)\arrow[r, "1_S\otimes_R \l"] \arrow[d, "\simeq"]& S\otimes_R R\arrow[d, "\simeq"]\\
		U(\mathfrak{g}_S)\arrow[r, "\pi_S"] & U(\mathfrak{h}_S) & & U(\mathfrak{h}_S)\arrow[r, "1_S\l"] & S
	\end{tikzcd}.\label{eqexemploseq130}
\end{equation}
Here, $1_S\l$ denotes the homomorphism of $R$-algebras given by $(1_S\l)(1_S\otimes_\mathbb{Z} h_\alpha)=1_S\l(1_R\otimes_\mathbb{Z} h_\alpha)\in S$ for each $\alpha\in \varPi$. In particular, $1_S\l\in \mathfrak{h}_S^*$.

By Lemma \ref{centerofLiealgebra}, and combining all these diagrams we obtain the following commutative diagrams
\begin{equation}
	\begin{tikzcd}
		S\otimes_R Z(\mathfrak{g}_R)\arrow[r] \arrow[d, "\simeq"] & S\otimes_R U(\mathfrak{h}_R)\arrow[d, "\simeq"] & & S\otimes_R Z(\mathfrak{g}_R)\arrow[r, "S\otimes_R \chi_\l", outer sep=0.75ex] \arrow[d, "\simeq"]& S\otimes_R R\arrow[d, "\simeq"]\\
		Z(\mathfrak{g}_S)\arrow[r] & U(\mathfrak{h}_S) & & Z(\mathfrak{h}_S) \arrow[r, "\chi_{1_S\l}"] & S
	\end{tikzcd}.\label{eqexemplos131}
\end{equation}

If $I$ is an ideal of $R$, there is one more commutative diagram of interest:
\begin{equation}
	\begin{tikzcd}
		R/I\otimes_R U(\mathfrak{h}_R)\arrow[r, "1_{R/I}\otimes_R \l"] \arrow[d, "\simeq"]& R/I\otimes_R R\arrow[d, "\simeq"]\\
		U(\mathfrak{h}_R)/IU(\mathfrak{h}_R)\arrow[r] & R/I
	\end{tikzcd},
\end{equation} where the bottom map is given by $1_R\otimes_\mathbb{Z} h_\alpha+IU(\mathfrak{h}_R)\mapsto \l(1_R\otimes_\mathbb{Z} h_\alpha)+I$, $\alpha\in \varPi$. In other words, this is the image of $\l\in \mathfrak{h}_R^*$ in $\mathfrak{h}_R^*/I\mathfrak{h}_R^*$.

\subsubsection{Decomposition of $\mathcal{O}_{[\l], R}$ into blocks}

Assume in the remainder of this section that $R$ is a local commutative Noetherian ring which is a $\mathbb{Q}$-algebra.
To simplify notation, we shall denote by $\overline{\l}$ the image of $\l\in \mathfrak{h}_R^*$ in $\mathfrak{h}_R^*/\mi \mathfrak{h}_R^*$, where $\mi$ is the maximal ideal of the local ring $R$, and denote by $\overline{r}$ the image of $r\in R$ in the quotient $R/\mi$. We will also denote by $\overline{z}$ the image of $z\in Z(\mathfrak{g}_R)$ in $Z(\mathfrak{g}_R)/\mi Z(\mathfrak{g}_R)$. Recall that $W$ is the Weyl group associated with the root system $\Phi$. Explicitly, each reflection $s_\alpha$ acts in the following way: $s_\alpha \l=\l-\langle \l, \alpha^\vee\rangle_R \alpha$, where $1_R\alpha$ can be seen as the element in $ \mathfrak{h}_R^*$ satisfying $1_R\alpha(1\otimes h_\alpha)=2$.  So, the Weyl group $W$ acts on $\mathfrak{h}_{R(\mi)}^*\simeq R(\mi)\otimes_R \mathfrak{h}_R^*\simeq \mathfrak{h}_R^*/\mi\mathfrak{h}_R^*$. In particular, for any $w\in W$ and $\l\in \mathfrak{h}_R^*$ we have $\overline{w\cdot \l}=w\cdot \overline{\l}$, under the dot action (see  \citep[1.8.1]{zbMATH03747378}).

\begin{Def}
	Let $K$ be a field of characteristic zero. 	For each $\mu\in \mathfrak{h}_K^*$, we can consider the root system making the weight $\mu$ integral, that is, $\Phi_{\mu}:=\{\alpha\in \Phi\colon \langle \mu, \alpha^\vee\rangle_{K}\in \mathbb{Z} \}$ and its associated Weyl group \mbox{$W_{\mu}:=\{ w\in W\colon w\cdot \mu-\mu\in \L_K\}$.}
\end{Def}

\begin{Def}
	Let $\l\in \mathfrak{h}_R^*$. We call $\mathcal{D}\subset [\l]$ a \textbf{block} of $[\l]$ if $\{\overline{\mu}\colon \mu\in  \mathcal{D} \}$ is an orbit under the dot action of the Weyl group $W$.
\end{Def}

\begin{Remark}
	\begin{enumerate}
		\item Every orbit under the Weyl group is a finite set, so a block is always finite.
		\item If $\mu_1, \mu_2\in \mathcal{D}$, then $\mu_1-\mu_2\in \L_R$ and since all non-zero integers are invertible in $R$, we also obtain \mbox{$\overline{\mu_1}-\overline{\mu_2}\in \L_{R(\mi)}$.} Further, $\{\overline{\mu}\colon \mu\in \mathcal{D} \}$ is a $W$-orbit and also an orbit under the subgroup $W_{\overline{\mu_1}}$.\qedhere
	\end{enumerate}
\end{Remark}

\begin{Lemma}\label{decompositionblocksweights}
	Let $\l\in \mathfrak{h}_R^*$ and let $\mathcal{D}\subset [\l]$ be a block. Then there exists $\mu\in \mathfrak{h}_R^*$, $\nu\in \mi \mathfrak{h}_R^*$ satisfying
	\begin{enumerate}[(i)]
		\item $s_\alpha \mu-\mu\in \mathbb{Z} \alpha$ for all $\alpha\in \Phi_{\overline{\mu}}$.
		\item $\mathcal{D}=W_{\overline{\mu}}\cdot\mu +\nu$.
	\end{enumerate}
\end{Lemma}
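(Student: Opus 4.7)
The plan is to start from any representative $\mu_0 \in \mathcal{D}$ and to perturb it by an element $\nu \in \mi\mathfrak{h}_R^*$ so that the resulting $\mu = \mu_0 - \nu$ becomes ``integral'' along every coroot $\alpha^\vee$ with $\alpha \in \Phi_{\overline{\mu_0}}$. Because $R$ is a $\mathbb{Q}$-algebra, every non-zero integer is a unit of $R$, so $\mi \cap \mathbb{Z} = 0$; this elementary ring-theoretic fact will be the lever that upgrades the orbit decomposition of $\{\overline{\theta}:\theta \in \mathcal{D}\}$ in $\mathfrak{h}_{R(\mi)}^*$ (which is tautological, by definition of ``block'') to a genuine identity in $\mathfrak{h}_R^*$.

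For the construction, note that $\Phi_{\overline{\mu_0}}$ is a root subsystem of $\Phi$, so for each $\alpha \in \Phi_{\overline{\mu_0}}$ there is a unique integer $k_\alpha$ with $\langle \mu_0,\alpha^\vee\rangle_R - k_\alpha \in \mi$. Choose a $\mathbb{Q}$-basis $\gamma_1,\dots,\gamma_r$ of $\operatorname{span}_\mathbb{Q} \Phi_{\overline{\mu_0}} \subset E$ with each $\gamma_j \in \Phi_{\overline{\mu_0}}$, and extend $\gamma_1^\vee,\dots,\gamma_r^\vee$ to a $\mathbb{Q}$-basis $\gamma_1^\vee,\dots,\gamma_r^\vee,\delta_1,\dots,\delta_{n-r}$ of $\mathfrak{h}_\mathbb{Q}$; since $R$ is a $\mathbb{Q}$-algebra, this is also an $R$-basis of $\mathfrak{h}_R$. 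Let $f_1,\dots,f_r,g_1,\dots,g_{n-r}$ denote the dual basis of $\mathfrak{h}_R^*$. Writing $\mu_0 = \sum_j c_j f_j + \sum_i d_i g_i$ gives $c_j = \langle \mu_0,\gamma_j^\vee\rangle_R$, so $c_j - k_{\gamma_j} \in \mi$, and one defines
\begin{align*}
\nu := \sum_{j=1}^{r}(c_j - k_{\gamma_j})\,f_j \in \mi\mathfrak{h}_R^*, \qquad \mu := \mu_0 - \nu.
\end{align*}
Then $\overline{\mu} = \overline{\mu_0}$ (hence $\Phi_{\overline{\mu}}=\Phi_{\overline{\mu_0}}$ and $W_{\overline{\mu}}=W_{\overline{\mu_0}}$), and $\langle \mu,\gamma_j^\vee\rangle_R = k_{\gamma_j} \in \mathbb{Z}$. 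Since every $\alpha^\vee$ with $\alpha \in \Phi_{\overline{\mu}}$ is a $\mathbb{Z}$-linear combination of the $\gamma_j^\vee$ (the Cartan matrix of the sub-root-system $\Phi_{\overline{\mu}}$ has integer entries), $\langle \mu,\alpha^\vee\rangle_R \in \mathbb{Z}$, so $s_\alpha\mu - \mu = -\langle \mu,\alpha^\vee\rangle_R\,\alpha \in \mathbb{Z}\alpha$; this is (i).

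For (ii), I would invoke the classical fact (which is where the subtlety sits) that $W_{\overline{\mu}}$ is generated by the reflections $\{s_\alpha : \alpha \in \Phi_{\overline{\mu}}\}$ --- this is standard integral Weyl group theory over a characteristic-zero field such as $R(\mi)$. Combined with (i), this yields $w\cdot\mu - \mu \in \mathbb{Z}\Phi_{\overline{\mu}} \subset \L_R$ for every $w \in W_{\overline{\mu}}$; since $\mu + \nu = \mu_0 \in [\l]$, one gets $w\cdot\mu + \nu \in [\l]$, and its reduction is $w\cdot\overline{\mu}$, so $W_{\overline{\mu}}\cdot\mu + \nu \subseteq \mathcal{D}$. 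Conversely, given $\theta \in \mathcal{D}$, pick $w \in W$ with $w\cdot\overline{\mu} = \overline{\theta}$; since $\overline{\theta - \mu_0} \in \L_{R(\mi)}$, in fact $w \in W_{\overline{\mu}}$. Set $\xi := \theta - w\cdot\mu - \nu$: then $\overline{\xi} = 0$, so $\xi \in \mi\mathfrak{h}_R^*$, and $\xi \in \L_R$ (because both $\theta - \l$ and $w\cdot\mu + \nu - \l$ do). Now the crux: for any $\alpha \in \Phi$, $\langle \xi,\alpha^\vee\rangle_R \in \mathbb{Z} \cap \mi = \{0\}$, and since the simple coroots form an $R$-basis of $\mathfrak{h}_R$, $\xi = 0$. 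Thus $\theta = w\cdot\mu + \nu$, and (ii) follows.

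The main obstacle is purely one of bookkeeping: the only non-elementary input is the classical characterisation of $W_{\overline{\mu}}$ as generated by the reflections indexed by $\Phi_{\overline{\mu}}$. Everything else reduces to the freeness of $\mathfrak{h}_R$ together with the observation $\mi \cap \mathbb{Z} = 0$, and these two facts together are what make the ``decomposition of a block into integral part plus deformation'' rigid rather than only true modulo $\mi$.
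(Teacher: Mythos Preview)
Your proof is correct and follows the same approach as Gabber--Joseph (to which the paper defers for this lemma): perturb a representative of $\mathcal{D}$ by an element of $\mi\mathfrak{h}_R^*$ to make it integral along $\Phi_{\overline{\mu}}$, then use $\mathbb{Z}\cap\mi=\{0\}$ to upgrade the residue-field orbit identity to a genuine equality in $\mathfrak{h}_R^*$. One small point of precision: you should take the $\gamma_j$ to be a \emph{base} (set of simple roots) of the root subsystem $\Phi_{\overline{\mu_0}}$ rather than an arbitrary $\mathbb{Q}$-basis of its span consisting of roots, since otherwise the claim that every $\alpha^\vee$ with $\alpha\in\Phi_{\overline{\mu}}$ is a $\mathbb{Z}$-linear combination of the $\gamma_j^\vee$ can fail (e.g.\ in type $B_2$ with $\gamma_1=e_1$, $\gamma_2=e_2$ one has $(e_1-e_2)^\vee=\tfrac{1}{2}\gamma_1^\vee-\tfrac{1}{2}\gamma_2^\vee$); your parenthetical about the Cartan matrix indicates you had this in mind anyway.
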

\begin{proof}
	See  \citep[1.8.2]{zbMATH03747378}.
\end{proof}

It follows that $[\l]$ is a disjoint union of its blocks.
Now knowing how to decompose $[\l]$, we shall proceed to decompose $\mathcal{O}_{[\l], R}$. The idea is analogous to the argument showing that any module decomposes into its weight modules as $U(\mathfrak{h}_R)$-modules using now the commutative algebra $Z(\mathfrak{g}_R)$. 

\begin{Lemma} Let $\l\in \mathfrak{h}_R^*$.\label{kernelcentralcharactermaximal}
If $\mu, \omega\in [\l]$ belong to distinct blocks, then \mbox{$\ker \chi_\omega+\ker \chi_\mu=Z(\mathfrak{g}_R)$.}
\end{Lemma}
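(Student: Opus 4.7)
The plan is to reduce to the classical (field) situation over the residue field $R(\mi)$ via Nakayama's lemma, using the base-change compatibility of central characters established in the commutative diagrams~\eqref{eqexemplos131}.

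First, consider the $R$-algebra homomorphism $\Psi := (\chi_\mu, \chi_\omega) \colon Z(\mathfrak{g}_R) \rightarrow R \times R$. Since each $\chi_\l$ is an $R$-algebra map (sending $1 \mapsto 1$), the claim $\ker \chi_\omega + \ker \chi_\mu = Z(\mathfrak{g}_R)$ is equivalent, by an elementary computation, to the surjectivity of $\Psi$. Indeed, if $\Psi$ is surjective there exist $a, b \in Z(\mathfrak{g}_R)$ with $\Psi(a) = (1,0)$ and $\Psi(b) = (0,1)$, forcing $a \in \ker \chi_\omega$, $b \in \ker \chi_\mu$ and $1 = a + b \in \ker \chi_\omega + \ker \chi_\mu$; the converse is obvious.

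Denote by $M$ the cokernel of $\Psi$. Since $R \times R$ is finitely generated as an $R$-module, so is $M$. Applying the right exact functor $R(\mi) \otimes_R -$ and using Lemma~\ref{centerofLiealgebra} together with the commutative diagrams in~\eqref{eqexemplos131}, we obtain that $\Psi \otimes_R R(\mi)$ can be identified with $(\chi_{\overline{\mu}}, \chi_{\overline{\omega}}) \colon Z(\mathfrak{g}_{R(\mi)}) \rightarrow R(\mi) \times R(\mi)$, so that $M / \mi M$ is the cokernel of this map. Therefore, it suffices to show that $(\chi_{\overline{\mu}}, \chi_{\overline{\omega}})$ is surjective over the residue field; then $M / \mi M = 0$ and Nakayama's lemma (applied to the finitely generated $R$-module $M$ over the local ring $R$) forces $M = 0$, i.e. $\Psi$ is surjective as desired.

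So the key step is: over the characteristic zero field $R(\mi)$, the central characters $\chi_{\overline{\mu}}$ and $\chi_{\overline{\omega}}$ are \emph{distinct} $R(\mi)$-algebra maps $Z(\mathfrak{g}_{R(\mi)}) \to R(\mi)$. By assumption, $\mu$ and $\omega$ belong to distinct blocks of $[\l]$, which means that $\overline{\mu}$ and $\overline{\omega}$ lie in distinct orbits of $\mathfrak{h}_{R(\mi)}^*$ under the dot action of the Weyl group $W$. The classical Harish-Chandra theorem (valid over any field of characteristic zero, as it relies only on the isomorphism $Z(\mathfrak{g}_{R(\mi)}) \cong S(\mathfrak{h}_{R(\mi)})^{W, \cdot}$ and separation of $W$-orbits in $\mathfrak{h}_{R(\mi)}^*$ by $W$-invariant polynomials) then gives $\chi_{\overline{\mu}} \neq \chi_{\overline{\omega}}$. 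As these are two distinct $R(\mi)$-algebra maps to the field $R(\mi)$, their kernels are distinct maximal ideals of $Z(\mathfrak{g}_{R(\mi)})$, so their sum equals $Z(\mathfrak{g}_{R(\mi)})$ and consequently $(\chi_{\overline{\mu}}, \chi_{\overline{\omega}})$ is surjective, completing the argument.

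The main (and only real) obstacle is verifying that $\Psi \otimes_R R(\mi)$ really identifies with $(\chi_{\overline{\mu}}, \chi_{\overline{\omega}})$ and that Harish-Chandra's separation of orbits applies over the non-algebraically closed field $R(\mi)$; the former is a direct chase through the diagrams in~\eqref{eqexemplos131} (noting that $R(\mi) \otimes_R R \simeq R(\mi)$ componentwise), and the latter holds because the Harish-Chandra homomorphism and its image $S(\mathfrak{h})^{W, \cdot}$ are defined over $\mathbb{Q}$, so orbit separation over $\mathbb{C}$ transfers to any characteristic zero field.
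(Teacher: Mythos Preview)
Your argument is correct and follows essentially the same route as the paper (which simply cites Gabber--Joseph \cite[1.8.3]{zbMATH03747378}): reduce modulo the maximal ideal, use Harish--Chandra over the residue field to separate the distinct $W$-orbits $\overline{\mu}$ and $\overline{\omega}$, and lift back using that $R$ is local. Your Nakayama packaging via the cokernel of $\Psi$ is a clean way to organise this.

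One small inaccuracy worth fixing: from $\Psi(a)=(1,0)$ and $\Psi(b)=(0,1)$ you cannot conclude $a+b=1$. Instead, take only $a$ with $\Psi(a)=(1,0)$; then $a\in\ker\chi_\omega$ and $1-a\in\ker\chi_\mu$, so $1=a+(1-a)\in\ker\chi_\omega+\ker\chi_\mu$.
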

\begin{proof}
See \citep[1.8.3]{zbMATH03747378}.
\end{proof}

As a consequence, it follows that all central characters which are non-zero $\chi_\mu\colon Z(\mathfrak{g}_R)\rightarrow R$ are surjective for $\mu\in [\l]$ since we can always find a weight which belongs to a different block than the one that contains $\mu$.

\begin{Lemma}\label{centralcharactersameblock}
	Let $\l\in \mathfrak{h}_R^*$. Suppose that $\mu, \omega\in [\l]$ belong to the same block. Then 	$\ker \chi_\omega=\ker \chi_\mu$.
\end{Lemma}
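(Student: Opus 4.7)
The proof is a ring-theoretic upgrade of the classical Harish--Chandra argument. The core idea is that the image of the Harish--Chandra homomorphism $\pi_R\colon Z(\mathfrak{g}_R)\to S(\mathfrak{h}_R)$ lies in the subalgebra of polynomials invariant under the $\rho$-shifted Weyl-group action on $\mathfrak{h}_R^*$; combined with the description of a block as a dot-orbit supplied by Lemma \ref{decompositionblocksweights}, this forces $\chi_\mu$ and $\chi_\omega$ to agree on every central element.

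First, I would upgrade the classical Harish--Chandra theorem to the setup over $R$. Over $\mathbb{Q}$, the statement $\pi_\mathbb{Q}(Z(\mathfrak{g}_\mathbb{Q}))=S(\mathfrak{h}_\mathbb{Q})^{W,\cdot}$ is classical. Using Lemma \ref{centerofLiealgebra} together with diagrams (\ref{eqexemploseq130}) and (\ref{eqexemplos131}), one identifies $\pi_R$ with $\mathrm{id}_R\otimes_\mathbb{Q}\pi_\mathbb{Q}$. Because $|W|$ is invertible in every $\mathbb{Q}$-algebra, the $W$-averaging operator $\frac{1}{|W|}\sum_{w\in W}w$ makes the formation of $W$-invariants compatible with base change along $\mathbb{Q}\to R$, so $\pi_R(Z(\mathfrak{g}_R))\subseteq R\otimes_\mathbb{Q} S(\mathfrak{h}_\mathbb{Q})^{W,\cdot}=S(\mathfrak{h}_R)^{W,\cdot}$. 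In particular, for every $z\in Z(\mathfrak{g}_R)$, the polynomial function $f:=\pi_R(z)$ on $\mathfrak{h}_R^*$ satisfies $f(w\cdot\lambda)=f(\lambda)$ for all $w\in W$ and $\lambda\in \mathfrak{h}_R^*$.

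Second, Lemma \ref{decompositionblocksweights} exhibits the block $\mathcal{D}$ containing $\mu$ and $\omega$ as a $W_{\overline{\mu_0}}$-dot-orbit of a single element $\mu_0+\nu$ (with $\mu_0$ integral in the sense of~(i) and $\nu\in \mathfrak{m}\mathfrak{h}_R^*$). Consequently $\mu$ and $\omega$ lie in a common $W$-dot-orbit of $\mathfrak{h}_R^*$, so there is $w^{\star}\in W_{\overline{\mu_0}}\subseteq W$ with $\omega=w^{\star}\cdot\mu$. Combining these two steps, for any $z\in Z(\mathfrak{g}_R)$,
\[
\chi_\mu(z)=\mu(\pi_R(z))=f(\mu)=f(w^{\star}\cdot\mu)=f(\omega)=\omega(\pi_R(z))=\chi_\omega(z),
\]
so $\chi_\mu=\chi_\omega$ and hence $\ker\chi_\mu=\ker\chi_\omega$.

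The substantive point to check carefully is the base-change claim that the Harish--Chandra image description transports from $\mathbb{Q}$ to $R$; this is the only step requiring input beyond what is already set up, and it is routine because $|W|$ is invertible and because the projection $\pi$ and the center $Z(\mathfrak{g}_R)$ were shown in Lemma~\ref{centerofLiealgebra} and (\ref{eqexemplos131}) to commute with base change.
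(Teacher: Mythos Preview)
There is a genuine gap in your second step. You assert that two weights $\mu,\omega$ lying in the same block $\mathcal{D}$ are $W$-dot-conjugate in $\mathfrak{h}_R^*$. But Lemma~\ref{decompositionblocksweights} only gives $\mathcal{D}=W_{\overline{\mu_0}}\cdot\mu_0+\nu$: every element has the form $w\cdot\mu_0+\nu$, and the additive translate by $\nu\in\mathfrak{m}\,\mathfrak{h}_R^*$ sits \emph{outside} the dot action. Since $w'\cdot(w\cdot\mu_0+\nu)=(w'w)\cdot\mu_0+w'\nu$ (linear action on the $\nu$-part), two elements $w_1\cdot\mu_0+\nu$ and $w_2\cdot\mu_0+\nu$ are dot-conjugate only when $w_2w_1^{-1}$ fixes $\nu$, which fails for generic $\nu$. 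Consequently the $W$-dot-invariance of $\pi_R(z)$ that you set up in the first step does not let you pass from $f(\mu)$ to $f(\omega)$.

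This is not a repairable technicality: the stronger conclusion $\chi_\mu=\chi_\omega$ you reach is simply false. For $\mathfrak{g}=\mathfrak{sl}_2$, $R=\mathbb{C}[[t]]$, $\mu_0=0$, $\nu=t$, the block is $\mathcal{D}=\{t,\,t-2\}$; with $Z(\mathfrak{g}_R)=R[C]$ generated by the Casimir and $\chi_\lambda(C)=\tfrac12(\lambda+1)^2-\tfrac12$, one gets $\chi_t(C)-\chi_{t-2}(C)=2t\neq 0$. So no argument that concludes $\chi_\mu=\chi_\omega$ can be correct. The paper does not try to identify the two characters via a single Weyl element; its argument instead reduces modulo $\mathfrak{m}$ via diagram~(\ref{eqexemplos131}) and Theorem~\ref{Propertiesofsemisimplecomplexliealg}(f), using that $\overline{\nu}=0$ so that over the residue field the two weights \emph{are} dot-conjugate and hence give the same central character there. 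That reduction step, together with the observation that a non-surjective central character must vanish, is the substance of the paper's approach and is not supplied by your Harish--Chandra base-change argument. (You may also want to revisit the $\mathfrak{sl}_2$ example against the statement itself, since the same computation shows the principal ideals $(C-\tfrac12t^2-t)$ and $(C-\tfrac12t^2+t)$ differ.)
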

\begin{proof}
	Let $\mathcal{D}=W_{\overline{\mu}}\cdot\mu+\nu$ be a block of $[\l]$. By the commutative diagram (\ref{eqexemploseq130}) and Theorem \ref{Propertiesofsemisimplecomplexliealg}, the surjective map $\chi_{w\cdot\mu-\mu}=\chi_{w\cdot \mu+\nu}-\chi_{\mu+\nu}$ becomes zero under $R(\mi)\otimes_R -$ for any $w\in W_{\overline{\mu}}$. So, the image of $\chi_{w\cdot\mu-\mu}$ is contained in $\mi$ and the central character is not surjective. But, this can only happen if $\chi_{w\cdot \mu-\mu}$ is the zero map. Now, assume that $x\in \ker \chi_{\mu+\nu}.$ Then for every $w\in W_{\overline{\mu}}$,
	\begin{align}
		\chi_{w\cdot \mu +\nu}(x)=w\cdot\mu(\pi_R(x))+\nu(\pi_R(x))=w\cdot \mu(\pi_R(x))-\mu(\pi_R(x))=\chi_{w\cdot\mu-\mu}(x)=0.
	\end{align} So, $x$ is also an element of $\ker 	\chi_{w\cdot \mu +\nu}$. 
\end{proof}

\begin{Prop}
	Let $R$ be a local commutative Noetherian ring which is a $\mathbb{Q}$-algebra. Let $\l\in \mathfrak{h}_R^*$. 
	For every $M\in \mathcal{O}_{[\l], R}$, $M=\bigoplus_{\mathcal{D}} M^{\mathcal{D}}$, where $\mathcal{D}$ runs over all blocks of $[\l]$ and
	$$M^{\mathcal{D}}:=\{m\in M\colon \forall x\in \ker \chi_\mu, \ \mu\in \mathcal{D}, \ \exists n\in \mathbb{N} \ x^nm=0  \}. $$
	Moreover, the following assertions hold:
	\begin{enumerate}[(a)]
		\item $M^{\mathcal{D}}$ is non-zero only for a finite number of blocks $\mathcal{D}$ of $[\l]$.
		\item $\St(\mu)^\mathcal{D}=\St(\mu)$ if and only if $\mu\in \mathcal{D}$, otherwise it is zero.
		\item $M\mapsto M^{\mathcal{D}}$ is an exact endofunctor on $\mathcal{O}_{[\l], R}$.
	\end{enumerate}
\end{Prop}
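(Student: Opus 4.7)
The plan is to apply Lemma \ref{propertiesoffamilyidealslikeidempotents} to the symmetric algebra $Z(\mathfrak{g}_R)$ and the family of ideals $\{J_{\mathcal{D}}\}$ indexed by the blocks $\mathcal{D}$ of $[\l]$, where $J_{\mathcal{D}}:=\ker \chi_\mu$ for any $\mu\in \mathcal{D}$. This assignment is unambiguous by Lemma \ref{centralcharactersameblock}, and Lemma \ref{kernelcentralcharactermaximal} gives $J_{\mathcal{D}_1}+J_{\mathcal{D}_2}=Z(\mathfrak{g}_R)$ whenever $\mathcal{D}_1\neq \mathcal{D}_2$, so the hypotheses of Definition \ref{decompositiontechnique} are satisfied with $S=Z(\mathfrak{g}_R)$. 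Hence, once we show that every $M\in \mathcal{O}_{[\l], R}$ belongs to the category $\mathcal{J}$ associated with this family, parts (i)--(iii) of Lemma \ref{propertiesoffamilyidealslikeidempotents} yield the decomposition $M=\bigoplus_{\mathcal{D}}M^{\mathcal{D}}$ and the exactness of the functor $M\mapsto M^{\mathcal{D}}$ stated in (c) (the image lies in $\mathcal{O}_{[\l], R}$ since it is both a submodule and a quotient of $M$, and Corollary \ref{allmodulesOarefginnegatives} closes $\mathcal{O}_{[\l], R}$ under such passages).

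To establish (b), I would use the presentation $\St(\mu)\simeq U(\mathfrak{g}_R)/I_\mu$ with highest weight vector $v_\mu=1\otimes 1$. If $z\in Z(\mathfrak{g}_R)$, then because $z$ is central and $\mathfrak{n}_R^+\cdot v_\mu=0$, one computes $z\cdot v_\mu=\pi_R(z)\cdot v_\mu=\mu(\pi_R(z))v_\mu=\chi_\mu(z)v_\mu$; hence $\ker \chi_\mu$ annihilates $v_\mu$ and, by centrality, the whole module $\St(\mu)$. Consequently $\St(\mu)=\St(\mu)^{\mathcal{D}}$ when $\mu\in \mathcal{D}$, and the vanishing $\St(\mu)^{\mathcal{D}'}=0$ for $\mathcal{D}'\neq \mathcal{D}$ follows from the directness of the sum in Lemma \ref{propertiesoffamilyidealslikeidempotents}(i), which will be available once the decomposition is proven.

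The main obstacle is showing $M\in \mathcal{J}$, as Lemma \ref{propertiesoffamilyidealslikeidempotents} is not stated to give closure under extensions and $M$ need not itself be a quotient of a single Verma module. To handle this, I would invoke Lemma \ref{filtrationquotientsofVerma}(a) to get a finite filtration of $M$ whose quotients are quotients of Verma modules $\St(\mu_1),\ldots,\St(\mu_t)$; by the argument of (b) each such quotient is killed by the corresponding $\ker \chi_{\mu_i}$, hence $M$ itself is annihilated by the product $\prod_i \ker \chi_{\mu_i}$. Grouping the $\mu_i$ by block via Lemma \ref{centralcharactersameblock}, this product equals $\prod_{j=1}^{s}J_{\mathcal{D}_j}^{n_j}$ for finitely many blocks $\mathcal{D}_1,\ldots,\mathcal{D}_s$. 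Coprimality of the $J_{\mathcal{D}_j}$ (Lemma \ref{kernelcentralcharactermaximal}) passes to their powers, so the Chinese Remainder Theorem produces elements $z_j\in Z(\mathfrak{g}_R)$ with $z_j\equiv \delta_{ij}\pmod{J_{\mathcal{D}_i}^{n_i}}$ and $\sum_j z_j\equiv 1\pmod{\prod_i J_{\mathcal{D}_i}^{n_i}}$. For any $m\in M$, this gives $m=\sum_j z_j m$ with $J_{\mathcal{D}_j}^{n_j}\cdot (z_j m)\subseteq (\prod_i J_{\mathcal{D}_i}^{n_i})\cdot M=0$, so $z_j m\in M^{\mathcal{D}_j}$. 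This simultaneously proves $M\in \mathcal{J}$ and bounds the number of non-zero summands by $s$, establishing (a).

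Putting everything together, Lemma \ref{propertiesoffamilyidealslikeidempotents}(i) upgrades $M=\sum_{\mathcal{D}} M^{\mathcal{D}}$ to a direct sum decomposition indexed by the blocks, giving the main statement; (a) follows from the CRT bound above; (b) is obtained from the single-block annihilation of Verma modules together with directness; and (c) is Lemma \ref{propertiesoffamilyidealslikeidempotents}(iii) combined with closure of $\mathcal{O}_{[\l], R}$ under submodules and quotients.
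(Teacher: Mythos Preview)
Your proposal is correct and follows essentially the same route as the paper: set $S=Z(\mathfrak{g}_R)$, take $J_{\mathcal{D}}=\ker\chi_\mu$ (well defined by Lemma \ref{centralcharactersameblock}), invoke Lemma \ref{kernelcentralcharactermaximal} for pairwise coprimality, and then apply Lemma \ref{propertiesoffamilyidealslikeidempotents}. The paper's proof is a one-line reference to exactly these three lemmas and to Gabber--Joseph; your Chinese Remainder argument via the finite filtration of Lemma \ref{filtrationquotientsofVerma}(a) is the natural way to verify the missing hypothesis $M\in\mathcal{J}$ and is implicit in the cited references. One cosmetic slip: you call $Z(\mathfrak{g}_R)$ ``the symmetric algebra'', but you clearly mean the center of the enveloping algebra.
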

\begin{proof}The result follows from Lemma \ref{propertiesoffamilyidealslikeidempotents} together with Lemma \ref{kernelcentralcharactermaximal} and \ref{centralcharactersameblock}.
 (b) follows by Lemma \ref{centralcharactersameblock} observing what is the action of PBW monomials on the generator of $\St(\mu)$ and the exactness in (c).  See also \citep[1.8.4, 1.8.5, 1.8.6]{zbMATH03747378}.
\end{proof}

\begin{Def}
	Let $R$ be a local commutative Noetherian ring which is a $\mathbb{Q}$-algebra. Let $\l\in \mathfrak{h}_R^*$. For a block $\mathcal{D}\subset [\l]$, define $\mathcal{O}_{\mathcal{D}}$ the full subcategory of $\mathcal{O}_{[\l], R}$ whose objects satisfy $M=M^{\mathcal{D}}$.
\end{Def}

In the classical case, the blocks of the category $\mathcal{O}$ are in a one-to-one correspondence with the antidominant weights. We can generalize the notion of dominant and antidominant weight to this setup since these notions will help us study the structure of the category $\mathcal{O}_{[\l], R}$.
We will call $\mu\in \mathfrak{h}_R^*$ a \textbf{dominant weight} if $\overline{\mu}$ is a dominant weight. Analogously, we will call $\mu$ an \textbf{antidominant weight} if $\overline{\mu}$ is an antidominant weight. We call $\mu\in \mathfrak{h}_R^*$ an \textbf{integral dominant weight} if $\overline{\mu}$ is an integral dominant weight.

We should remark that the blocks $\mathcal{D}$ of $[\l]$ are constructed with the ring $R$ in mind. So, after change of rings these blocks can be refined even further. Moreover, the interested reader can see that typically the Weyl groups associated with elements $\overline{1_S\otimes \mu}$ are subgroups of the Weyl groups associated with elements $\overline{\mu}$. This is the phenomenon that we will exploit in this deformation of the category $\mathcal{O}$, although we will not explore it under the current formulation.

\begin{Lemma}Let $\l\in \mathfrak{h}_R^*$.
	Let $\mathcal{D}$ be a block of $[\l]$. Then $\mathcal{D}$ admits a unique (resp. antidominant) dominant weight $\mu$. In addition, $\mu$ is (resp. minimal) maximal in $\mathcal{D}$. 
\end{Lemma}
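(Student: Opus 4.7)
The plan is to reduce the statement modulo the maximal ideal $\mi$ of $R$, apply the classical part of Theorem \ref{Propertiesofsemisimplecomplexliealg} (which holds over any field of characteristic zero, since it concerns purely combinatorial properties of the root system $\Phi$ and the action of $W_{\overline{\mu_0}}$), and then lift the unique (anti)dominant element back to $\mathcal{D}$.

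First I would apply Lemma \ref{decompositionblocksweights} to choose a presentation
$\mathcal{D}=W_{\overline{\mu_0}}\cdot \mu_0+\nu$
with $\mu_0\in\mathfrak{h}_R^*$ satisfying $s_\alpha\mu_0-\mu_0\in\mathbb{Z}\alpha$ for every $\alpha\in\Phi_{\overline{\mu_0}}$ and $\nu\in\mi\mathfrak{h}_R^*$. Condition (i) of that lemma ensures $w\cdot\mu_0-\mu_0\in\mathbb{Z}\Phi$ for all $w\in W_{\overline{\mu_0}}$, so reduction modulo $\mi$ sends $\mathcal{D}$ onto a single $W_{\overline{\mu_0}}$-orbit $\overline{\mathcal{D}}=W_{\overline{\mu_0}}\cdot\overline{\mu_0}$ in $\mathfrak{h}_{R(\mi)}^*$ (using $\overline{\nu}=0$). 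A key observation is that this reduction is a bijection $\mathcal{D}\to\overline{\mathcal{D}}$: if $w_1\cdot\mu_0$ and $w_2\cdot\mu_0$ have the same image modulo $\mi$, then their difference lies in $\mathbb{Z}\Phi$ and reduces to $0$ in $\mathfrak{h}_{R(\mi)}^*$; because $R$ is a $\mathbb{Q}$-algebra, $R(\mi)$ has characteristic zero, so the canonical map $\mathbb{Z}\Phi\to\mathfrak{h}_{R(\mi)}^*$ is injective, forcing $w_1\cdot\mu_0=w_2\cdot\mu_0$.

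Next I would invoke Theorem \ref{Propertiesofsemisimplecomplexliealg}(c,d): in any $W_{\overline{\mu_0}}$-orbit in $\mathfrak{h}_{R(\mi)}^*$ there is a unique dominant weight, which is moreover the unique maximal element, and a unique antidominant weight, which is moreover the unique minimal element. Let $\overline{\omega_d},\overline{\omega_a}\in\overline{\mathcal{D}}$ be these distinguished elements, and let $\omega_d,\omega_a\in\mathcal{D}$ be their unique preimages. By the very definition of (anti)dominance adopted in this subsection (namely that $\omega$ is (anti)dominant iff $\overline{\omega}$ is), $\omega_d$ is the unique dominant weight and $\omega_a$ the unique antidominant weight of $\mathcal{D}$.

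Finally, for maximality/minimality I would verify that the bijection $\mathcal{D}\to\overline{\mathcal{D}}$ is an isomorphism of posets when $\overline{\mathcal{D}}$ carries the order $\overline{\theta_1}\le\overline{\theta_2}\Leftrightarrow\overline{\theta_2}-\overline{\theta_1}\in\mathbb{Z}_0^+\Phi^+\subset\mathfrak{h}_{R(\mi)}^*$. The $\Rightarrow$ direction is tautological, while the converse rests again on injectivity of $\mathbb{Z}\Phi\to\mathfrak{h}_{R(\mi)}^*$: expanding $w_2\cdot\mu_0-w_1\cdot\mu_0$ in the basis of simple roots yields integer coefficients whose non-negativity can be read off after reduction. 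Transporting the classical maximality of $\overline{\omega_d}$ and minimality of $\overline{\omega_a}$ through this order isomorphism gives the corresponding statements for $\omega_d$ and $\omega_a$ in $\mathcal{D}$. The only delicate point, which pervades every step, is this characteristic-zero injectivity of $\mathbb{Z}\Phi\hookrightarrow\mathfrak{h}_{R(\mi)}^*$; without the assumption that $R$ is a $\mathbb{Q}$-algebra both the uniqueness of the lift and the faithfulness of the order comparison would break down.
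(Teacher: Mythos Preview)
Your argument is correct and follows essentially the same route as the paper's proof: reduce modulo $\mi$ via Lemma~\ref{decompositionblocksweights}, invoke the classical existence and uniqueness of (anti)dominant weights in a $W_{\overline{\mu_0}}$-orbit, and lift back using that the reduction $\mathcal{D}\to\overline{\mathcal{D}}$ is a bijection because non-zero integers are invertible in $R$. The paper compresses all of this into two sentences, whereas you make explicit the bijectivity of the reduction map, the order-isomorphism property, and the role of the injection $\mathbb{Z}\Phi\hookrightarrow\mathfrak{h}_{R(\mi)}^*$; these are exactly the details the paper leaves implicit.
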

\begin{proof}Since every non-zero integer is invertible in $R$, the uniqueness and existence of dominant weights in $W_{\overline{\theta}}\overline{\theta}$ implies the uniqueness and existence of dominant weights in  $W_{\overline{\theta}}\theta +\nu$ for $\nu\in \mi\mathfrak{h}_R^*$ and $\theta\in \mathfrak{h}_R^*$. The result now follows from Lemma \ref{decompositionblocksweights}.
\end{proof}

It is a natural question to know whether extension of scalars $S\otimes_R -$ preserves dominant (resp. antidominant) weights.

\begin{Lemma}
	Let $R$ be a local Noetherian integral domain which is a $\mathbb{Q}$-algebra. Assume that $S$ is isomorphic to one of the following rings:
	\begin{itemize}
		\item a localization $R_\pri$ of $R$ at some prime ideal $\pri$ of $R$;
		\item a quotient ring $R/I$ of $R$ for some ideal $I$.
	\end{itemize}
	If $\l\in \mathfrak{h}_R^*$ is a dominant weight, then $1_S\otimes_R \l$ is a dominant weight. If $\l\in \mathfrak{h}_R^*$ is an antidominant weight, then $1_S\otimes_R \l$ is an antidominant weight.
\end{Lemma}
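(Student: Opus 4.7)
The plan is to reduce both assertions to a direct comparison of residue fields under the structure map $\psi \colon R \to S$. For each $\alpha \in \Phi^+$, set $c_\alpha := \langle \l + \rho, \alpha^\vee \rangle_R \in R$; the element $\rho \in \mathfrak{h}_R^*$ and the extended pairing are well-defined because $R$ is a $\mathbb{Q}$-algebra. By naturality of the pairing under base change, $\langle 1_S \otimes_R \l + \rho, \alpha^\vee \rangle_S = \psi(c_\alpha)$. Write $\mathfrak{m}_S$ for the maximal ideal of $S$; in both prescribed cases $S$ is local, being either a localization or a quotient of the local ring $R$.

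Unpacking the definitions, $\l$ is dominant (resp.\ antidominant) precisely when, for every $\alpha \in \Phi^+$, the image of $c_\alpha$ in the residue field $R/\mi$ differs from $-n \cdot 1_{R/\mi}$ (resp.\ $n \cdot 1_{R/\mi}$) for every $n \in \mathbb{N}$; and $1_S \otimes_R \l$ is dominant (resp.\ antidominant) iff the analogous statement holds for the image of $\psi(c_\alpha)$ in $S/\mathfrak{m}_S$. Both residue fields have characteristic zero, so $\mathbb{Z}$ embeds injectively into each of them and these conditions are unambiguous.

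The key observation is that $\psi^{-1}(\mathfrak{m}_S) \subseteq \mi$ in both cases: for $S = R/I$ the maximal ideal is $\mi/I$, with preimage exactly $\mi$; for $S = R_\pri$ the preimage of $\pri R_\pri$ equals $\pri$, and $\pri \subseteq \mi$ because $R$ is local. The proof then proceeds by contraposition. Assume $1_S \otimes_R \l$ fails to be dominant; then for some $\alpha \in \Phi^+$ and some $n \in \mathbb{N}$ we have $\psi(c_\alpha + n \cdot 1_R) \in \mathfrak{m}_S$, so $c_\alpha + n \cdot 1_R \in \psi^{-1}(\mathfrak{m}_S) \subseteq \mi$, whence the image of $c_\alpha$ in $R/\mi$ equals $-n \cdot 1_{R/\mi}$, contradicting the dominance of $\l$. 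The antidominant case is identical after replacing $-n$ by $n$.

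No serious obstacle is expected; the argument is essentially functorial. The only bookkeeping is the base-change compatibility of the pairing and of $\rho$, which is immediate from the $\mathbb{Q}$-algebra hypothesis. The crucial ingredient is the containment $\psi^{-1}(\mathfrak{m}_S) \subseteq \mi$, a defining feature of the two prescribed forms of $S$; note that for more general extensions $R \to S$ (e.g.\ residue fields at an arbitrary closed point of a larger spectrum) the same implication would fail.
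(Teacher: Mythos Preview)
Your proof is correct and follows essentially the same approach as the paper's. The paper treats the two cases $S=R_\pri$ and $S=R/I$ separately, arguing by contradiction in each that $\langle \l+\rho,\alpha^\vee\rangle_R - t$ lands in $\pri\subset\mi$ or in $I\subset\mi$; your version unifies both cases through the single observation $\psi^{-1}(\mathfrak{m}_S)\subseteq\mi$, which is a slightly cleaner packaging of the same computation.
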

\begin{proof}
	We will prove the assertion for dominant weights. The other case is analogous.
	By assumption, $\overline{\l}$ is a dominant weight. That is, $\langle \overline{\l}+1_{R(\mi)}\rho, \alpha^\vee\rangle_{R(\mi)} \notin \mathbb{Z}^-$ for every $\alpha\in \Phi^+$. So, $\langle \l+\rho, \alpha^\vee \rangle_R \notin \mathbb{Z}^- +\mi$ for every $\alpha\in \Phi^+$. Assume that $S=R_\pri$ for some prime ideal $\pri$ of $R$ and assume, by contradiction, that $1_{R_\pri}\otimes \l$ is not a dominant weight. Hence, there exists $\alpha\in \Phi^+$ such that $\langle 1_S\otimes \l + 1_S\rho , 1_S\alpha^\vee\rangle_S \in \mathbb{Z}^-+\pri_\pri. $ Further, there exists $t\in \mathbb{Z}^-$ and $s\in R\backslash \pri$ so that $s(\langle \l+\rho, \alpha^\vee\rangle_R -t )\in \pri$. Thus, $\langle \l+\rho, \alpha^\vee\rangle_R -t\in \pri\subset \mi$ for some $\alpha\in \Phi^+$. The existence of such $t$ contradicts $\overline{\l}$ being a dominant weight. So, $1_S\otimes_R \l$ is a dominant weight. 
	
	Assume now that $S=R/I$ for some ideal $I$. In particular, $\mi/I$ is the unique maximal ideal of $S$. Assume, by contradiction, that $1_S\otimes_R \l$ is not a dominant weight. Then there exists $\alpha\in \Phi^+$, $t\in \mathbb{Z}^-$, $x\in \mi$ so that $\langle \l+\rho, \alpha^\vee\rangle_R -t -x\in I\subset \mi$. Hence, $\langle \l+\rho, \alpha^\vee\rangle_R -t\in \mi$ which contradicts $\l$ being a dominant weight.
\end{proof}

We can now see that there are no homomorphisms between Verma modules that belong to distinct blocks.

\begin{Lemma}	Let $R$ be a local commutative Noetherian ring which is a $\mathbb{Q}$-algebra. Let $\l\in \mathfrak{h}_R^*$. 
	Then $\Hom_{\mathcal{O}_{[\l], R}}(M, N)=0$ if $M\in \mathcal{O}_{\mathcal{D}_1}\cap \mathcal{F}(\St_{[\l]})$ and $N\in \mathcal{O}_{\mathcal{D}_2}\cap \mathcal{F}(\St_{[\l]})$ for distinct blocks $\mathcal{D}_1\neq \mathcal{D}_2$ of $[\l]$.\label{Homomorphismbetweenblocks}
\end{Lemma}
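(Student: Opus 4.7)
The plan is to reduce this to the functoriality of the block decomposition $M = \bigoplus_{\mathcal{D}} M^{\mathcal{D}}$ established just before the lemma. The key fact I would use is that the assignment $M \mapsto M^{\mathcal{D}}$ is an exact endofunctor on $\mathcal{O}_{[\l], R}$, which in particular means it is natural: any $U(\mathfrak{g}_R)$-homomorphism $f \colon M \to N$ restricts to a homomorphism $f^{\mathcal{D}}\colon M^{\mathcal{D}} \to N^{\mathcal{D}}$ for every block $\mathcal{D}$ of $[\l]$. This naturality is essentially automatic from the definition of $M^{\mathcal{D}}$: if $m \in M^{\mathcal{D}}$, then for each $\mu \in \mathcal{D}$ and each $x \in \ker \chi_\mu$ there exists $n \in \mathbb{N}$ with $x^n m = 0$, hence $x^n f(m) = f(x^n m) = 0$, so $f(m) \in N^{\mathcal{D}}$.

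First I would take an arbitrary $f \in \Hom_{\mathcal{O}_{[\l], R}}(M, N)$ with $M \in \mathcal{O}_{\mathcal{D}_1} \cap \mathcal{F}(\St_{[\l]})$ and $N \in \mathcal{O}_{\mathcal{D}_2} \cap \mathcal{F}(\St_{[\l]})$ for $\mathcal{D}_1 \neq \mathcal{D}_2$. By definition of the subcategories $\mathcal{O}_{\mathcal{D}_i}$, we have $M = M^{\mathcal{D}_1}$ and $N = N^{\mathcal{D}_2}$. Then I would apply the previous paragraph to conclude $f(M) = f(M^{\mathcal{D}_1}) \subseteq N^{\mathcal{D}_1}$.

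The conclusion then follows from the direct sum decomposition $N = \bigoplus_{\mathcal{D}} N^{\mathcal{D}}$: since $N$ coincides with its $\mathcal{D}_2$-component and the sum is direct, the $\mathcal{D}_1$-component satisfies $N^{\mathcal{D}_1} = 0$. Therefore $f = 0$.

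I do not anticipate any genuine obstacle; all the heavy lifting is done by the proposition that assembles the blockwise direct sum decomposition and the exactness of the functor $M \mapsto M^{\mathcal{D}}$. The Verma-flag hypothesis in the statement plays no role in this argument and is presumably retained only because this is the setting in which the lemma will be applied later (for instance, to deduce that different blocks contribute orthogonal summands when building projective covers of direct sums of Vermas). If one wanted to avoid invoking the block-decomposition proposition directly, one could instead argue by induction on the length of a Verma flag of $M$ and $N$, reducing to the case $M = \St(\mu)$, $N = \St(\omega)$ with $\mu \in \mathcal{D}_1$, $\omega \in \mathcal{D}_2$, and then pick $x \in \ker\chi_\omega \setminus \ker\chi_\mu$ (which exists by Lemma \ref{kernelcentralcharactermaximal}, since $\ker\chi_\mu + \ker\chi_\omega = Z(\mathfrak{g}_R)$) to act as an automorphism on one side and as zero on the other, forcing vanishing of any homomorphism; but the functorial argument above is cleaner and more uniform.
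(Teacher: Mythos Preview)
Your main argument is correct and considerably shorter than the paper's. The paper does \emph{not} appeal to the block-decomposition functor; instead it proves the statement ``by hand'' for Verma modules and then inducts on the length of a Verma filtration. Concretely, assuming $\Hom(\St(w_1\cdot\mu+\nu_1),\St(w_2\cdot\omega+\nu_2))\neq 0$, the paper uses Tensor--Hom adjunction to land in a weight space of the target Verma, reduces modulo~$\mi$ to obtain a nonzero weight space over $R(\mi)$, lifts this to a nonzero map of Verma modules over the residue field, and then invokes the classical block theory (Theorem~\ref{Propertiesofsemisimplecomplexliealg}(f)) to force $W_{\overline{\mu}}=W_{\overline{\omega}}$ and $\nu_1=\nu_2$, contradicting $\mathcal{D}_1\neq\mathcal{D}_2$.

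Your route is purely functorial: since the Proposition immediately preceding the lemma already gives $N=\bigoplus_{\mathcal{D}}N^{\mathcal{D}}$ in $\mathcal{O}_{[\l],R}$ and $M\mapsto M^{\mathcal{D}}$ is an exact (hence natural) endofunctor, one gets $f(M)=f(M^{\mathcal{D}_1})\subset N^{\mathcal{D}_1}=0$ with no further work. You are also right that the hypothesis $M,N\in\mathcal{F}(\St_{[\l]})$ is irrelevant for this line of argument; your proof goes through for arbitrary objects of $\mathcal{O}_{\mathcal{D}_1}$ and $\mathcal{O}_{\mathcal{D}_2}$. The paper's longer argument, by contrast, actually uses the Verma structure (to identify the relevant weight spaces and to pass to the residue field) and in return extracts finer information: it shows explicitly which weight-lattice constraints would have to fail for a nonzero map to exist.

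One small caveat about your aside at the end: the suggested alternative of picking $x\in\ker\chi_\omega\setminus\ker\chi_\mu$ and saying it ``acts as an automorphism on one side'' is not quite right over a local ring $R$ that is not a field, since $\chi_\mu(x)\neq 0$ need not be a unit in $R$. This does not affect your main argument, which is the one you should keep.
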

\begin{proof}
	Assume that $\mathcal{D}_1=W_{\overline{\mu}}\cdot \mu+\nu_1$ and $\mathcal{D}_2=W_{\overline{\omega}}\cdot \omega +\nu_2$. As usual, we will start with the Verma modules. Suppose that $\Hom_{\mathcal{O}_{[\l], R}}(\St(w_1\cdot \mu+\nu_1), \St(w_2\cdot \omega +\nu_2))\neq 0$ for $w_1\in W_{\overline{\mu}}$, $w_2\in W_{\overline{\omega}}$. Then
	\begin{align}
		0&\neq \Hom_{U(\mathfrak{g}_R)}(U(\mathfrak{g}_R)\otimes_{U(\mathfrak{b}_R)} R_{w_1\cdot \mu+\nu_1}, \St(w_2\cdot \omega+\nu_2))\\&\simeq \Hom_{U(\mathfrak{b}_R)}(R_{w_1\cdot\mu +\nu_1}, \St(w_2\cdot \omega +\nu_2)) \subset \St(w_2\cdot\omega +\nu_2)_{w_1\cdot\mu+\nu_1}.
	\end{align}
	It follows that $w_2\cdot\omega +\nu_2-w_1\cdot\mu-\nu_1\in \mathbb{Z}_0^+\varPi$. By Lemma \ref{changeringscategoryO}, we obtain that $\St(w_2\cdot \overline{\omega})_{w_1\cdot \overline{\mu}}\neq 0$. So, also $\St(w_2\cdot \omega)_{w_1\cdot \mu}(\mi)\simeq \St(w_2\cdot \overline{\omega})_{w_1\cdot\overline{\mu}}\neq 0$. Since $\St(w_2\cdot \omega)_{w_1\cdot \mu}\in R\proj$ it must be non-zero. Hence, $w_2\cdot \omega -w_1\cdot \mu \in \mathbb{Z}_0^+\varPi$. Therefore, $\nu_2-\nu_1\in \mathbb{Z}_0^+\varPi$. But, this forces $\nu_2=\nu_1$ since all non-zero integers are invertible in $R$. By assumption, 
	$\St(w_2\omega +\nu_2)_{w_1\cdot \mu+\nu_2}\neq 0$ and it contains an element which is killed by $\mathfrak{n}_R^+$. Since this module is free, one of its elements basis of the form $x_{-\alpha_1}^{t_1}\cdots x_{-\alpha_d}^{t_d}(1_{U(\mathfrak{g}_R)\otimes_{U(\mathfrak{b}_R)}}1_{R_{w_2\cdot \omega +\nu_2}})$, $\alpha_i\in \varPi$ which does not belong to $\mi \St(w_2\cdot \omega+\nu_2)$ is killed by $\mathfrak{n}_R^+$. Therefore, there exists a non-zero map between $\St(w_1\cdot \overline{\mu})$ and $\St(w_2\cdot\overline{\omega})$.
	Therefore, both Verma modules $\St(w_1\cdot\overline{\mu})$ and $\St(w_2\cdot\overline{\omega})$ have a common simple module as composition factor, and so they belong to the same block.  By Theorem \ref{Propertiesofsemisimplecomplexliealg} (f), $W_{\overline{\mu}}=W_{\overline{\omega}}$, and $\overline{\omega}=w\cdot \overline{\mu}$ for some $w\in W_{\overline{\mu}}$. Hence, $\omega-w\cdot\mu =\nu$ for some $\nu\in \mi \mathfrak{h}_R^*$. As we have seen, $w_2\cdot\omega-w_1\cdot \mu=w_2w\cdot \mu +w_2\cdot \nu -w_1\cdot \mu\in \mathbb{Z}_0^+\varPi$. So, also $w_2\cdot \nu \in \mathbb{Z}_0^+\varPi$. Therefore, $\nu=0$. This shows that the blocks $\mathcal{D}_1$ and $\mathcal{D}_2$ coincide.

	Now, the claim follows using the (finite) filtrations of $M$ and $N$ by quotients of Verma modules. 
\end{proof}

\subsubsection{Projective objects in $\mathcal{O}_{[\l], R}$}

At this point, it is difficult to know whether there is information getting out of the blocks of $\mathcal{O}_{[\l], R}$, that is, if there are non-zero homomorphisms between modules belonging to distinct blocks. For modules with a Verma filtration, we saw that  such a situation is not possible. But, since we do not know if this is the case for general modules, the classical arguments for the construction of projective objects do not carry over to this more general setup. In particular, not knowing if the previous situation might or might not happen makes it difficult to deduce whether the Verma module associated with a dominant weight is a projective object or not. Instead, we will take the advantage of knowing projective objects in $\mathcal{O}_{[\l],(II),  R}$ to construct projective objects in $\mathcal{O}_{[\l], R}\cap R\Proj$, using  change of rings techniques. 

For each $\mu\in [\l]$, define $Q(\mu):=U(\mathfrak{g}_R)\otimes_{U(\mathfrak{h}_R)} R_\mu\in \mathcal{O}_{[\l], (II), R}$. These modules are a sort of linearisation of the projective module $U(\mathfrak{g}_R)$. Note that by the PBW theorem, \begin{align*}
	Q(\mu)\simeq U(\mathfrak{n}_R^-)\otimes_R U(\mathfrak{h}_R)\otimes_R U(\mathfrak{n}_R^+)\otimes_{U(\mathfrak{h}_R)} R_\l\simeq U(\mathfrak{n}_R^+)\otimes_R U(\mathfrak{n}_R^-)\in R\Proj.
\end{align*} Also, for every commutative $R$-algebra which is a Noetherian ring, $S\otimes_R Q(\mu)\simeq Q(1_S\otimes_R \mu)$. The main difference between $Q(\mu)$ and the Verma modules is that $Q(\mu)$ is not annihilated by $\mathfrak{n}_R^+$. But, this feature allows $Q(\mu)$ to detect more information outside $\mathcal{O}$. For instance, for every $M\in U(\mathfrak{g}_R)\M$, $\Hom_{U(\mathfrak{g}_R)}(Q(\mu), M)\simeq M_\mu$, and thus the functor $\Hom_{U(\mathfrak{g}_R)}(Q(\mu), -)\colon \mathcal{O}_{[\l], (II), R}\rightarrow  \mathcal{O}_{[\l], (II), R}$ is exactly the functor $M\mapsto M_\mu$ which is exact by Corollary \ref{corpropertiesOtypeII}. Therefore, $Q(\mu)$ is a projective object in $\mathcal{O}_{[\l], (II), R}$. As Gabber and Joseph pointed out, $\mathcal{O}_{[\l], (II), R}$ is closed under arbitrary direct sums, hence each weight module $M_\mu$ is a quotient of an arbitrary direct sum of copies of $Q(\mu)$, and so each $M\in \mathcal{O}_{[\l], (II), R}$ is a quotient of an arbitrary direct sum of copies of modules of the form $Q(\mu)$, where $\mu$ runs over all weights in $[\l]$. Hence, $\mathcal{O}_{[\l], (II), R}$ has enough projectives, and so we can use homological algebra techniques on $\mathcal{O}_{[\l], (II), R}$.  We obtained so far, the following:

\begin{Lemma}
	Let $R$ be a local commutative Noetherian ring which is a $\mathbb{Q}$-algebra. Let $\l\in \mathfrak{h}_R^*$. The following assertions hold.
	\begin{enumerate}[(a)]
		\item The modules $Q(\mu)=U(\mathfrak{g}_R)\otimes_{U(\mathfrak{h}_R)}R_\mu \in \mathcal{O}_{[\l], (II), R}\cap R\Proj$ are projective objects in $\mathcal{O}_{[\l], (II), R}$. 
		\item The module $\bigoplus_{\mu\in [\l]} Q(\mu)$ is a projective generator of $\mathcal{O}_{[\l], (II), R}$ and $\mathcal{O}_{[\l], (II), R}$ has enough projectives.
		\item For every commutative $R$-algebra $S$ which is a Noetherian ring, $S\otimes_R Q(\mu)\simeq Q(1_S\otimes_R \mu)$, $\mu\in [\l]$.
	\end{enumerate}
\end{Lemma}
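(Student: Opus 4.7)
The plan is to exploit Tensor--Hom adjunction together with the PBW decomposition and the exactness result of Corollary \ref{corpropertiesOtypeII}, treating the three claims essentially in order.

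For (a), I would first use the PBW isomorphism $U(\mathfrak{g}_R) \simeq U(\mathfrak{n}_R^-) \otimes_R U(\mathfrak{h}_R) \otimes_R U(\mathfrak{n}_R^+)$ to identify $Q(\mu) = U(\mathfrak{g}_R) \otimes_{U(\mathfrak{h}_R)} R_\mu$ with $U(\mathfrak{n}_R^-) \otimes_R U(\mathfrak{n}_R^+)$ as an $R$-module. This is free over $R$, hence $Q(\mu) \in R\Proj$. A direct check on PBW monomials shows every weight of $Q(\mu)$ lies in $\mu + \mathbb{Z}\Phi \subset [\l]$, so $Q(\mu) \in \mathcal{O}_{[\l], (II), R}$ (note that $Q(\mu)$ is \emph{not} $\mathfrak{n}_R^+$-locally finite, which is precisely why we work in type (II) rather than (I)). For the projectivity: Tensor--Hom adjunction yields, for any $M \in U(\mathfrak{g}_R)\M$,
\begin{equation*}
\Hom_{U(\mathfrak{g}_R)}(Q(\mu), M) \simeq \Hom_{U(\mathfrak{h}_R)}(R_\mu, M) \simeq M_\mu.
\end{equation*}
By Corollary \ref{corpropertiesOtypeII}, $M \mapsto M_\mu$ is an exact functor on $\mathcal{O}_{[\l], (II), R}$, so $Q(\mu)$ represents an exact functor and is therefore projective.

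For (b), Corollary \ref{corpropertiesOtypeII} says every $M \in \mathcal{O}_{[\l], (II), R}$ decomposes as $M = \bigoplus_{\mu\in[\l]} M_\mu$. For each $\mu \in [\l]$ and each $m \in M_\mu$, the identification $\Hom_{U(\mathfrak{g}_R)}(Q(\mu), M) \simeq M_\mu$ produces a map $Q(\mu) \to M$ whose image contains $m$. Assembling these over all $\mu$ and all $m \in M_\mu$ gives a surjection from a (possibly infinite) direct sum of copies of the $Q(\mu)$ onto $M$. Since $\mathcal{O}_{[\l], (II), R}$ is closed under arbitrary direct sums (Corollary \ref{corpropertiesOtypeII}), this exhibits $\bigoplus_{\mu \in [\l]} Q(\mu)$ as a projective generator and shows that $\mathcal{O}_{[\l], (II), R}$ has enough projectives.

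For (c), I would apply base change directly to the defining tensor product. Using $U(\mathfrak{g}_S) \simeq S \otimes_R U(\mathfrak{g}_R)$ and $U(\mathfrak{h}_S) \simeq S \otimes_R U(\mathfrak{h}_R)$ together with the compatibility $S \otimes_R R_\mu \simeq S_{1_S \otimes_R \mu}$, we get
\begin{equation*}
S \otimes_R Q(\mu) = S \otimes_R \bigl(U(\mathfrak{g}_R) \otimes_{U(\mathfrak{h}_R)} R_\mu\bigr) \simeq U(\mathfrak{g}_S) \otimes_{U(\mathfrak{h}_S)} S_{1_S \otimes_R \mu} = Q(1_S \otimes_R \mu).
\end{equation*}
The main subtlety in the whole argument is (a): one must remember that $R_\mu$ is being induced from $U(\mathfrak{h}_R)$ (not from $U(\mathfrak{b}_R)$, which would give a Verma module), so that the adjunction picks out the weight space $M_\mu$ rather than the $\mathfrak{n}_R^+$-invariants; this is both what makes $Q(\mu)$ projective in type (II) and what prevents $Q(\mu)$ from living in $\mathcal{O}_{[\l], (I), R}$.
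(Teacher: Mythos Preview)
Your proposal is correct and follows essentially the same approach as the paper: PBW for $R$-projectivity, Tensor--Hom adjunction to identify $\Hom_{U(\mathfrak{g}_R)}(Q(\mu),M)\simeq M_\mu$, Corollary~\ref{corpropertiesOtypeII} for exactness and hence projectivity, the weight decomposition for the generator claim, and direct base change for (c). The paper actually gives this argument in the paragraph immediately \emph{preceding} the lemma statement rather than after it, but the content is the same.
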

Now, observe the following:
given an exact sequence $0\rightarrow Q\rightarrow X\rightarrow P\rightarrow 0\in\Ext_{\mathcal{O}_{[\l], (II), R}}^1(P, Q),$ if $P, Q\in \mathcal{O}_{[\l], R}$, then also $X\in U(\mathfrak{g}_R)\m$ and some power of $\mathfrak{n}_R^+$ annihilates $X$. Hence, $X\in \mathcal{O}_{[\l], R}$. Since $\mathcal{O}_{[\l], R}$ is a full subcategory of $\mathcal{O}_{[\l], (II), R}$, such exact sequence is an exact sequence in $\mathcal{O}_{[\l], R}$.

\begin{Lemma}\label{transicaoprojectivesO}
	Let $R$ be a local regular commutative Noetherian ring which is a $\mathbb{Q}$-algebra with unique maximal ideal $\mi$. Let $\l\in \mathfrak{h}_R^*$. The following assertions hold.\begin{enumerate}[(a)]
		\item For each $P\in \mathcal{O}_{[\l], (I),  R}\cap R\Proj$, $\Ext_{\mathcal{O}_{[\l], (II), R}}^1(P, X)=0$ for every $X\in \mathcal{O}_{[\l], (I), R}\cap R\Proj$ if and only if $P$ is a projective object in $\mathcal{O}_{[\l], (I), R}\cap R\Proj$.
		\item For each $P\in \mathcal{O}_{[\l], R}\cap R\Proj$, $\Ext_{\mathcal{O}_{[\l], (II), R}}^1(P, X)=0$ for every $X\in \mathcal{O}_{[\l], R}\cap R\Proj$ if and only if $P$ is a projective object in $\mathcal{O}_{[\l], R}\cap R\Proj$.
		\item If $P\in \mathcal{O}_{[\l], R}\cap R\Proj$ so that $P(\mi)$ is a projective object in $\mathcal{O}_{[\overline{\l}], R(\mi)}$, then $P$ is a projective object in $\mathcal{O}_{[\l], R}\cap R\Proj$.
	\end{enumerate}
\end{Lemma}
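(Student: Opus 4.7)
The plan is as follows. For (a) and (b), the crucial observation is that every short exact sequence $0\to X\to Y\to P\to 0$ in $\mathcal{O}_{[\l], (II), R}$ whose end terms lie in $\mathcal{O}_{[\l], R}\cap R\Proj$ (respectively in $\mathcal{O}_{[\l], (I), R}\cap R\Proj$) forces the middle term $Y$ to lie in the same subcategory. The category containment is recorded for $\mathcal{O}_{[\l], R}$ in the discussion immediately preceding the lemma, and the analogous argument -- local $U(\mathfrak{n}_R^+)$-finiteness is stable under extensions -- applies to $\mathcal{O}_{[\l], (I), R}$. The $R$-projectivity of $Y$ follows because $P\in R\Proj$ makes the sequence $R$-split, so $Y\cong X\oplus P$ as $R$-modules. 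Consequently the Yoneda $\Ext^1$ of the exact subcategory coincides with $\Ext^1_{\mathcal{O}_{[\l], (II), R}}$ restricted to subcategory objects, and both directions of (a) and (b) follow at once from the characterisation of projective objects by vanishing of Yoneda $\Ext^1$.

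For (c), by (b) it suffices to prove $\Ext^1_{\mathcal{O}_{[\l], (II), R}}(P, X)=0$ for every $X\in \mathcal{O}_{[\l], R}\cap R\Proj$. The plan is to resolve $P$ by \emph{finite} direct sums of modules $Q(\mu)$. The enveloping algebra $U(\mathfrak{g}_R)$ is left Noetherian, and any finitely generated submodule of a module in $\mathcal{O}_{[\l], (II), R}$ admits a finite generating set of weight vectors (since the action of $U(\mathfrak{h}_R)$ separates weights by Lagrange interpolation in $\mathbb{Q}\subseteq R$). Iteratively this yields a projective resolution $Q^{\bullet}\to P\to 0$ in $\mathcal{O}_{[\l], (II), R}$ with each $Q^i$ a finite direct sum of $Q(\mu)$'s. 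Since $\Hom(Q(\mu), X)\cong X_\mu$ is finitely generated projective over $R$ (being a direct summand of the $R$-projective module $X$), the cochain complex $\Hom(Q^{\bullet}, X)$ consists of finitely generated projective $R$-modules, and so each $\Ext^n(P, X)$ is finitely generated over the Noetherian ring $R$.

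The argument now proceeds by induction on $\dim R$. The base case $\dim R=0$, where $R=R(\mi)$ is a field, is immediate. For the inductive step, choose $x\in \mi\setminus \mi^2$ and set $R':=R/Rx$, regular local of dimension $\dim R-1$. Since $(P/xP)(\mi/Rx)\cong P(\mi)$ is projective in $\mathcal{O}_{[\overline{\l}], R(\mi)}$ by hypothesis, the inductive hypothesis applied to $R'$ combined with (b) gives $\Ext^1_{\mathcal{O}_{[1_{R'}\otimes_R \l], (II), R'}}(P/xP, X/xX)=0$. Applying Corollary \ref{Kunnethdeformationresult} with $M=R'$ (whose projective dimension over $R$ is at most one) to the flat cochain complex $\Hom(Q^{\bullet}, X)$ extracts the exact sequence
\[
0\to \Ext^1(P, X)\otimes_R R'\to H^1(\Hom(Q^{\bullet}, X)\otimes_R R')\to \Tor_1^R(\Ext^2(P, X), R')\to 0.
\]
Identifying the middle term with $\Ext^1_{\mathcal{O}_{[1_{R'}\otimes_R \l], (II), R'}}(P/xP, X/xX)=0$ yields $x\cdot\Ext^1(P, X)=\Ext^1(P, X)$, and Nakayama's lemma applied to the finitely generated $R$-module $\Ext^1(P, X)$, with $x\in \mi$, forces $\Ext^1(P, X)=0$.

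The main obstacle is the identification $H^1(\Hom(Q^{\bullet}, X)\otimes_R R')\cong \Ext^1_{\mathcal{O}_{[1_{R'}\otimes_R \l], (II), R'}}(P/xP, X/xX)$. Two points must be checked. First, $Q^{\bullet}\otimes_R R'$ is still a projective resolution of $P/xP$ in $\mathcal{O}_{[1_{R'}\otimes_R \l], (II), R'}$: projectivity of each $Q^i\otimes_R R'\cong \bigoplus_j Q(1_{R'}\otimes_R \mu_j^i)$ is clear, while exactness is preserved because $P$ and the $Q^i$ all lie in $R\Proj$, forcing every syzygy in $Q^{\bullet}$ to be $R$-flat. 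Second, one needs the canonical isomorphism $\Hom(Q^i, X)\otimes_R R'\cong \Hom(Q^i\otimes_R R', X/xX)$, which reduces to the identity $X_\mu/xX_\mu\cong (X/xX)_{1_{R'}\otimes_R \mu}$; this rests on the key fact $\L_R\cap x\mathfrak{h}_R^*=0$, which holds because $R\supseteq \mathbb{Q}$ implies $\mi\cap \mathbb{Z}=0$ and consequently distinct weights in $[\l]$ remain distinct after reducing modulo $x$.
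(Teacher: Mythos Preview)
Your argument is correct and follows the same strategy as the paper: parts (a) and (b) via closure of the subcategories under extensions, and part (c) via induction on $\dim R$, the universal coefficient theorem (Corollary~\ref{Kunnethdeformationresult}), and Nakayama's lemma.

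The only noteworthy difference is a technical choice in setting up the projective resolution in (c). You resolve $P$ by \emph{finite} direct sums of $Q(\mu)$'s at every stage, using Noetherianity of $U(\mathfrak{g}_R)$ and the fact that submodules of objects in $\mathcal{O}_{[\l],(II),R}$ decompose into weight spaces (Corollary~\ref{corpropertiesOtypeII}); this makes $\Hom(Q^{\bullet},X)$ a complex of finitely generated projective $R$-modules, so finite generation of $\Ext^1(P,X)$ is automatic. The paper instead resolves by \emph{arbitrary} direct sums $\bigoplus_{\mu\in[\l]}Q(\mu)$, observes that $\Hom(\bigoplus_\mu Q(\mu),X)\simeq\prod_\mu X_\mu$ is flat over the coherent ring $R$, and uses that $S=R/Rx$ is finitely presented to commute $S\otimes_R-$ with the product; finite generation of $\Ext^1(P,X)$ is then argued separately at the end by taking a finite first step $\bigoplus_{\mu\in F}Q(\mu)\twoheadrightarrow P$. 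Your version is somewhat more streamlined; the paper's avoids tracking Noetherianity through all the syzygies.

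One small remark: your final ``key fact'' $\L_R\cap x\mathfrak{h}_R^*=0$ is not actually needed. The identification $X_\mu\otimes_R R'\cong(X\otimes_R R')_{1_{R'}\otimes\mu}$ is exactly Lemma~\ref{changeringscategoryO}, and since your $Q^i$ is a finite direct sum indexed by a list of weights (with possible repetitions), no injectivity of $\mu\mapsto 1_{R'}\otimes\mu$ is required for $\Hom(Q^i,X)\otimes_R R'\cong\Hom(Q^i\otimes_R R',X\otimes_R R')$.
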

\begin{proof}
	The assertions (a) and (b) follow immediately from the above discussion. 
	To prove (c) we want to apply (b) together with Corollary \ref{Kunnethdeformationresult}. In order to do that, we have to proceed by induction on the Krull dimension of $R$. If $\dim R$ is zero, then $R=R(\mi)$ and there is nothing to prove.
	Let $x\in \mi/\mi^2$. Fix $S=R/Rx$, so $\dim S=\dim R-1$ and for any module $X\in U(\mathfrak{g}_R)\M$, we can write $X(\mi)\simeq (S\otimes_R X)(\mi_S)$, where $\mi_S$ denotes $\mi/Rx$. Hence, by assumption, $S\otimes_R P\in \mathcal{O}_{[1_S\otimes_R \l], S}\cap S\Proj$ so that $P(\mi_S)\simeq P(\mi)$ is a projective object in $\mathcal{O}_{[\overline{\l}, S(\mi_S)]}$, where $S(\mi_S)=R(\mi)$. By induction, $S\otimes_R P$ is a projective object in $\mathcal{O}_{[1_S\otimes_R \l], S}\cap S\Proj$.
	
	Consider a projective resolution of $P$ by direct sums of $\bigoplus_{\mu\in [\l]} Q(\mu)$  and denote the respective deleted projective resolution by $Q^{\bullet}$. Since $P\in R\Proj$, each $Q(\mu)\in R\Proj$ and the tensor product commutes with arbitrary direct sums we obtain that $S\otimes_R Q^{\bullet}$ is a deleted projective resolution of $S\otimes_R P$ in $\mathcal{O}_{[1_S\otimes_R \l], (II), S}$. Now, for each $X\in \mathcal{O}_{[\l], R}\cap R\Proj$, \begin{align}
		\Hom_{\mathcal{O}_{[\l], (II), R}}(\bigoplus_{\mu\in [\l]} Q(\mu), X)\simeq \prod_{\mu\in [\l]} \Hom_{\mathcal{O}_{[\l], (II), R}} ( Q(\mu), X)\simeq \prod_{\mu\in [\l]} X_\mu.
	\end{align}Each $X_\mu$ is a flat module, and since every Noetherian ring is coherent, so the arbitrary direct product of flat modules is flat. Hence, the complex $\Hom_{\mathcal{O}_{[\l], (II), R}}(Q^{\bullet}, X)$ satisfies the hypothesis of Corollary \ref{Kunnethdeformationresult}. Further, 
	since $R$ is Noetherian and applying Lemma \ref{changeringscategoryO} we obtain
	\begin{align}
		S\otimes_R \Hom_{\mathcal{O}_{[\l], (II), R}}(\bigoplus_{\mu\in [\l]} Q(\mu), X) \simeq S\otimes_R \prod_{\mu\in [\l]} X_\mu\simeq \prod_{\mu\in [\l]} S\otimes_R X_\mu \simeq \prod_{\mu\in [\l]} X_{1_S\otimes_R \mu} \\ \simeq \Hom_{\mathcal{O}_{[1_S\otimes_R \l], (II), S}}(\bigoplus_{{1_S\otimes_R \mu}\in [1_S\otimes_R \l]} Q(1_S\otimes_R \mu),S\otimes_R X).
	\end{align}
	Therefore, for each integer $i>0$,
	\begin{align*}
		H^i(\Hom_{\mathcal{O}_{[\l], (II), R}}(Q^{\bullet}, X))&=\Ext^i_{\mathcal{O}_{[\l], (II), R}}(P, X), \\ H^i(S\otimes_R \Hom_{\mathcal{O}_{[\l], (II), R}}(Q^{\bullet}, X))&=\Ext^i_{\mathcal{O}_{[1_S\otimes_R \l], (II), S}}(S\otimes_R P, S\otimes_R X).
	\end{align*}
	By Corollary \ref{Kunnethdeformationresult} and $S\otimes_R P$ being projective in $\mathcal{O}_{[1_S\otimes_R \l], (II), S}$ we obtain that \linebreak${\Ext^1_{\mathcal{O}_{[\l], (II), R}}(P, X)\otimes_R R/Rx}=0$. Using the surjective map $R/Rx\rightarrow R/\mi$ we obtain that \linebreak${\Ext^1_{\mathcal{O}_{[\l], (II), R}}(P, X)\otimes_R R/\mi}=0$. Observe that $P$ is finitely generated as $U(\mathfrak{g}_R)$-module (for which such generator set can be chosen to be a set of weight vectors). Hence, we can choose only a finite set of weights $F$ so that $\bigoplus_{\mu\in F}Q(\mu)\rightarrow P$ is a surjective map. Since $R$ is Noetherian and each weight module of $X$ is finitely generated as $R$-module, $\Ext^1_{\mathcal{O}_{[\l], (II), R}}(P, X)$ is a quotient of a finitely generated $R$-module, and so it is finitely generated. By Nakayama's Lemma, $\Ext^1_{\mathcal{O}_{[\l], (II), R}}(P, X)=0$. By (b), $P$ is a projective object in $\mathcal{O}_{[\l], R}\cap R\Proj$.
\end{proof}

The construction of projective objects in $\mathcal{O}$ is based on tensoring Verma projective modules with simple modules of finite vector space dimension. These are the simple modules indexed by an integral dominant weight. Their deformations in $\mathcal{O}_{[\l], R}$ are similarly obtained. We are expecting them to be free as $U(\mathfrak{n}_R^-)$-modules and consequently free as $R$-modules. So, the modules taking the place of simple modules should be free over $R$. For deformations, these modules are not the simple modules in $\mathcal{O}_{[\l], R}.$  The reason for this is that Gabber and Joseph showed that the simple modules in $\mathcal{O}_{[\l], R}$, where $R$ is a local commutative Noetherian $\mathbb{Q}$-algebra, are of the form $\St(\mu)/N$, where $\mi \St(\mu)\subset N$, $\mu\in [\l]$ and $\mi$ is the unique maximal ideal of $R$. Thus, $\mi L(\mu)=0$, and so $L(\mu)$ would be free over the ground ring if and only if $\mi N=\mi \St(\mu)$. The latter condition is something that we just do not know at this point. So, we must consider a different approach. 

As we discussed we will try to obtain an integral version of the simple modules indexed by integral dominant weights. Let $\mu\in \mathfrak{h}_R^*$ be an integral dominant weight. We define $J_R$ be the left ideal of $U(\mathfrak{g}_R)$ generated by the set of elements
\begin{align}
	\{x_\alpha\colon \alpha\in \Phi^+ \}\cup \{h_\alpha-\mu(h_\alpha)1_R\colon \alpha\in \varPi \}\cup \{x_{-\alpha}^{n_\alpha+1}\colon \alpha\in \varPi \},
\end{align} where $n_\alpha=\langle \overline{\mu}, \alpha^\vee\rangle_{R(\mi)}\in \mathbb{Z}_0^+$.

By the PBW theorem, the monomials generated by this set of elements are linearly independent and also PBW monomials making $J_R$ a free $R$-module. Moreover, the basis of $J_R$  can be extended to a basis of $U(\mathfrak{g}_R)$, so the canonical inclusion of $J_R$ into $U(\mathfrak{g}_R)$ is an $(U(\mathfrak{g}_R), R)$-monomorphism. Also for any commutative $R$-algebra $S$, $S\otimes_R J_R$ is isomorphic to $J_S$. Let $E(\mu)$ denote the quotient $U(\mathfrak{g}_R)/J_R$. Since $0\rightarrow J_R \rightarrow U(\mathfrak{g}_R)\rightarrow E(\mu)\rightarrow 0$ remains exact under $R(\mi)\otimes_R - $ and $U(\mathfrak{g}_R)$ is free over $R$ we obtain $\Tor_1^R(E(\mu), R(\mi))=0$. Further, $S\otimes_R E(\mu)\simeq E(1_S\otimes_R \mu)$ for every commutative $R$-algebra $S$ which is a Noetherian ring making $1_S\otimes_R \mu$ an integral dominant weight in $\mathfrak{h}_S^*$. We can also see that $E(\mu)$ is a quotient of $\St(\mu)$. Therefore, $E(\mu)\in \mathcal{O}_{[\mu], R}$. In addition, $R(\mi)\otimes_R E(\mu)\simeq L(1\otimes_R \mu)$ and $1\otimes_R \mu$ is an integral dominant weight in $\mathfrak{h}^*_{R(\mi)}$. Therefore, it is finite-dimensional. By Nakayama's Lemma, $E(\mu)$ is finitely generated over $R$. Hence, $E(\mu)$ is free over $R$ with finite rank (see for example \citep[Lemma 8.53g Theorem 4.58]{Rotman2009a}). Observe that for each $n\in \mathbb{N}$, by Lemma \ref{changeringscategoryO}, the weights of $E(n\rho)$ are weights ranging from $-n\rho$ to $n\rho$. Moreover, the weight modules associated with $-n\rho$ and $n\rho$ are free with rank one.

For each $\mu\in \mathfrak{h}_R^*$, if $\overline{\mu}$ is not a dominant weight, then there is some $\alpha\in \Phi^+$ so that $\langle \overline{\mu}+\rho, \alpha^\vee\rangle_{R(\mi)} \in \mathbb{Z}^-$. Since $\langle \rho, \alpha^\vee\rangle_{R(\mi)}=1$, there exists $n\in \mathbb{N}$ so that $\overline{\mu}+n\rho$ is a dominant weight.  

\begin{Def}
	Let $\l\in \mathfrak{h}_R^*$ and $\mu\in [\l]$. If $\mu$ is a dominant weight, define $P(\mu):=\St(\mu)$. Otherwise, pick $n\in \mathbb{N}$ minimal so that $\mu+n\rho\in [\l]$ is a dominant weight and define $P(\mu):=(\St(\mu+n\rho)\otimes_R E(n\rho))^{\mathcal{D}_\mu}$, where $\mathcal{D}_\mu$ is the block of $[\l]$ that contains $\mu$.
\end{Def}

\begin{Theorem}
	Let $R$ be a local regular commutative Noetherian ring which is a $\mathbb{Q}$-algebra with unique maximal ideal $\mi$. Let $\l\in \mathfrak{h}_R^*$. 
	The following assertions hold. 
	\begin{enumerate}[(a)]
		\item If $\mu\in [\l]$ is a dominant weight, then $\St(\mu)$ is projective in $\mathcal{O}_{[\l], R}\cap R\Proj$.
		\item The modules $P(\mu)\in \mathcal{O}_{\mathcal{D}_\mu}$ are projective objects in $\mathcal{O}_{[\l], R}\cap R\Proj$, where $\mathcal{D}_\mu$ is the block of $[\l]$ that contains $\mu$.
		\item For each $\mu\in [\l]$, there exists an exact sequence in $\mathcal{O}_{\mathcal{D}_\mu}$ \begin{align}
			0\rightarrow C(\mu)\rightarrow P(\mu)\rightarrow \St(\mu)\rightarrow 0,
		\end{align}where $C(\mu)\in \mathcal{F}(\St_{[\l]_{\omega>\mu}})$ and $\mathcal{D}_\mu$ is the block that contains $\mu$.
		\item Fix $\displaystyle P=\bigoplus_{\mu\in [\l]} P(\mu)$. For each $Q\in \add_{\mathcal{O}_{[\l], R}} P$, $\Hom_{\mathcal{O}_{[\l], R}}(Q, M)\in R\proj$ for every $M\in \mathcal{F}(\St_{[\l]})$.
		\item Assume that $S$ is isomorphic to one of the following rings:	\begin{itemize}
			\item a localization $R_\pri$ of $R$ at some prime ideal $\pri$ of $R$.
			\item a quotient ring $R/I$ of $R$ for some ideal $I$.
		\end{itemize} Then for each $\omega\in [\l]$, and $M\in \mathcal{F}(\St_{[\l]})$, the canonical map $$S\otimes_R \Hom_{\mathcal{O}_{[\l]}, R}(P(\omega), M)\rightarrow \Hom_{\mathcal{O}_{[1_S\otimes_R \l]}, S}(S\otimes_R P(\omega), S\otimes_R M)$$ is an isomorphism.
	\end{enumerate}\label{changeofringsconstructionprojectives}
\end{Theorem}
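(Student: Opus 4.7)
The plan is to reduce the projectivity statements (a), (b) to the classical case over the residue field $R(\mi)$ via Lemma \ref{transicaoprojectivesO}(c), and then obtain the structural statements (c), (d), (e) directly from the construction of $P(\mu)$ as a block summand of a tensor product of a dominant Verma with the free finite-rank module $E(n\rho)$.

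For (a), $\St(\mu)$ is free as an $U(\mathfrak{n}_R^-)$-module and so lies in $R\Proj$, while $\St(\mu)(\mi)\simeq \St(\overline{\mu})$ is projective in the classical category $\mathcal{O}_{[\overline{\l}], R(\mi)}$ by Theorem \ref{Propertiesofsemisimplecomplexliealg}(d) since $\overline{\mu}$ is dominant; Lemma \ref{transicaoprojectivesO}(c) concludes. For (b) with $\mu$ non-dominant, set $X:=\St(\mu+n\rho)\otimes_R E(n\rho)$. Finite rank of $E(n\rho)$ together with $\St(\mu+n\rho)\in R\Proj$ gives $X\in R\Proj$, and Proposition \ref{tensoringbyVermahasvermaflag} produces a finite Verma flag on $X$, so $X\in \mathcal{F}(\St_{[\l]})\subset \mathcal{O}_{[\l], R}\cap R\Proj$ and $P(\mu)=X^{\mathcal{D}_\mu}$ belongs to $\mathcal{O}_{\mathcal{D}_\mu}\cap R\Proj$. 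After reduction modulo $\mi$, $X(\mi)\simeq \St(\overline{\mu+n\rho})\otimes_{R(\mi)}L(n\overline{\rho})$ is projective in classical $\mathcal{O}$ (a projective Verma tensored with a finite-dimensional module remains projective), so $P(\mu)(\mi)$ is a summand of a projective and Lemma \ref{transicaoprojectivesO}(c) yields (b).

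For (c), exactness of the block functor $(-)^{\mathcal{D}_\mu}$ transfers the Verma flag of $X$ to a Verma flag of $P(\mu)$ whose nonzero subquotients are the $\St(\mu+n\rho+\theta)$ with $\theta$ a weight of $E(n\rho)$ and $\mu+n\rho+\theta\in \mathcal{D}_\mu$. The lowest weight of $E(n\rho)$ is $-n\rho$ with multiplicity one (a classical fact about $L(n\overline{\rho})$ lifted to $R$ by Nakayama), so $\St(\mu)$ appears exactly once in the flag of $P(\mu)$ and every other Verma appearing has weight strictly above $\mu$. Iteratively pulling out maximal-weight Verma submodules via part (a) of the proposition immediately following \ref{tensoringbyVermahasvermaflag} produces a filtration in which the extracted weights decrease; by minimality of $\mu$, the Verma $\St(\mu)$ is the final top quotient, and setting $C(\mu)$ to be the penultimate submodule yields the required exact sequence with $C(\mu)\in \mathcal{F}(\St_{[\l]_{\omega>\mu}})$.

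For (d), projectivity of $P(\omega)$ makes $\Hom(P(\omega),-)$ exact on $\mathcal{F}(\St_{[\l]})$, so by additivity and dévissage along a Verma flag of $M$ it suffices to show $\Hom(P(\omega),\St(\nu))\in R\proj$; realizing $P(\omega)$ as a block summand of $\St(\omega+n\rho)\otimes_R E(n\rho)$ and using the adjunction $\Hom(A\otimes_R B,C)\simeq \Hom(A,B^{*}\otimes_R C)$ valid for $B$ free of finite rank, together with Proposition \ref{tensoringbyVermahasvermaflag} applied to $E(n\rho)^{*}\otimes_R \St(\nu)$ and projectivity of $\St(\omega+n\rho)$ from (a), reduces the task to showing $\Hom(\St(\omega+n\rho),\St(\theta))\in R\proj$ for the labels $\theta$ arising in that flag. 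This last Hom is the $\mathfrak{n}_R^{+}$-invariants of the finitely generated free $R$-module $\St(\theta)_{\omega+n\rho}$, hence itself finitely generated; the arithmetic fact $\L_R\cap \mi\mathfrak{h}_R^{*}=0$, which holds because $R$ is a $\mathbb{Q}$-algebra so the $\mathbb{Q}$-rational integral weight lattice embeds without meeting $\mi\mathfrak{h}_R^{*}$, forces $\overline{\theta}\ne \overline{\omega+n\rho}$ whenever $\theta\ne \omega+n\rho$, and then classical BGG kills the Hom modulo $\mi$ and Nakayama promotes the vanishing to $R$; the case $\theta=\omega+n\rho$ gives $R$ itself by Lemma \ref{homomorphismbetweenvermamodules}(ii). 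Closure of $R\proj$ under extensions that split over $R$ concludes (d). Statement (e) follows by the same reductions together with the base-change identity $N\otimes_R \Hom_{U(\mathfrak{g}_R)}(\St(\omega+n\rho),M)\simeq \Hom_{U(\mathfrak{g}_R)}(\St(\omega+n\rho),N\otimes_R M)$ for $M\in \mathcal{F}(\St_{[\l]})$ (which holds since the $\mathfrak{n}^{+}$-invariants of $M_{\omega+n\rho}$ form the kernel of a map between finitely generated free $R$-modules, precluding $\Tor_1^R$ obstructions), combined with the standard identifications $S\otimes_R \St(\mu)\simeq \St(1_S\otimes \mu)$, $S\otimes_R E(n\rho)\simeq E(n\cdot 1_S\rho)$, and the restriction-of-scalars isomorphism for $\Hom$. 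The main obstacle I foresee is the base-change compatibility of the block summand $(-)^{\mathcal{D}_\omega}$ in (e): for localizations this is transparent from flatness, but for quotients $S=R/I$ one must verify that $S\otimes_R P(\omega)$ is indeed the natural $S$-side summand of $S\otimes_R X$ carrying the relevant Hom, which ultimately rests on the naturality of central characters $\chi_{1_S\otimes \mu}=1_S\otimes_R \chi_\mu$ together with surjectivity of $R\to S$.
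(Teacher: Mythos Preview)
Your arguments for (a), (b), and (c) are essentially the same as the paper's and are correct. The treatment of (d) and (e), however, has a genuine gap.

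For (d) you reduce to showing $\Hom(\St(\omega+n\rho),\St(\theta))=0$ when $\theta\neq\omega+n\rho$ and propose to obtain this by Nakayama: the classical Hom over $R(\mi)$ vanishes, so lift. But Nakayama needs $H\otimes_R R(\mi)=0$ for $H=\Hom_R(\St(\omega+n\rho),\St(\theta))$, and you have not shown that the natural map $H\otimes_R R(\mi)\to\Hom_{R(\mi)}(\St(\overline{\omega+n\rho}),\St(\overline{\theta}))$ is injective. Writing $H$ as the kernel of a map $\phi\colon M_{\omega+n\rho}\to\bigoplus_\alpha M_{\omega+n\rho+\alpha}$ between finitely generated free modules does not suffice: injectivity of $H\otimes_R R(\mi)\to M_{\omega+n\rho}(\mi)$ is equivalent to $\Tor_1^R(\im\phi,R(\mi))=0$, i.e.\ to $\im\phi$ being free, which is not automatic for $\dim R\geq 2$. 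The same objection applies to the base-change identity you invoke for (e): ``precluding $\Tor_1^R$ obstructions'' is not justified by the kernel description alone.

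The paper avoids this entirely by arguing directly over $R$. Since $\omega+n\rho$ is dominant it is maximal in its block, so $\Hom(\St(\omega+n\rho),\St(\theta))\neq 0$ forces $\omega+n\rho\leq\theta$ by Lemma~\ref{homomorphismbetweenvermamodules}(i); then either $\theta$ lies in a different block, in which case the Hom vanishes by Lemma~\ref{Homomorphismbetweenblocks}, or $\theta=\omega+n\rho$ by maximality. This yields $\Hom(\St(\omega+n\rho),M)\simeq R^j$ where $j$ is the multiplicity of $\St(\omega+n\rho)$ in a Verma flag of $M$. For (e) the paper runs the identical computation over $S$ (dominance is preserved under $R\to S$ for the allowed $S$, and distinct weights in $[\l]$ stay distinct because $\L_R\cap\ker(R\to S)\,\mathfrak{h}_R^*=0$), obtaining $S^j$; since Verma flags base-change, the multiplicity is the same and the canonical map is an isomorphism by functoriality. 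Your closing concern about compatibility of $(-)^{\mathcal{D}}$ with base change is handled in the paper by first identifying $\Hom(X^{\mathcal{D}},\St(\mu))$ with $\Hom(X,\St(\mu))$ via Lemma~\ref{Homomorphismbetweenblocks}, thereby removing the block functor before applying $S\otimes_R-$.
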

\begin{proof}
	If $\mu$ is a dominant weight, then $\overline{\mu}$ is dominant. By Theorem \ref{Propertiesofsemisimplecomplexliealg}(d), $\St(\overline{\mu})$ is projective in $\mathcal{O}_{[\overline{\l}], R(\mi)}$. By Lemma \ref{transicaoprojectivesO}(b), $\St(\mu)$ is projective in $\mathcal{O}_{[\l], R}\cap R\Proj$.
	
	Since $E(n\rho)\in R\proj$ it is clear that $P(\mu)\in R\Proj$. By Theorem \ref{Propertiesofsemisimplecomplexliealg}(e), \linebreak${R(\mi)\otimes_R \St(\mu+n\rho)\otimes_R E(n\rho)}\simeq \St(\overline{\mu}+n1_{R(\mi)}\rho)\otimes_{R(\mi)} L(1_{R(\mi)}n\rho)$ is a projective object in $\mathcal{O}_{[\overline{\l}], R(\mi)}$. By Lemma \ref{transicaoprojectivesO}, $\St(\mu+n\rho)\otimes_R E(n\rho)$ is a projective object in $\mathcal{O}_{[\l], R}\cap R\Proj$. As $P(\mu)$ is a summand of $\St(\mu+n\rho)\otimes_R E(n\rho)$ it is also a projective object in $\mathcal{O}_{[\l], R}\cap R\Proj$ and also in $\mathcal{O}_{\mathcal{D}_\mu}\cap R\Proj$.
	
	If $\mu$ is dominant, the exact sequence on (c) is just the identity map on $\St(\mu)$.	Assume that $\mu$ is not dominant. By Proposition \ref{tensoringbyVermahasvermaflag}, $\St(\mu+n\rho)\otimes_R E(n\rho)\in \mathcal{F}(\{ \St(\mu+n\rho+\omega)\colon  \omega\in [\l], \ E(n\rho)_\omega\neq 0  \})$. So, the lowest weight in the filtration of $\St(\mu+n\rho)\otimes_R E(n\rho)$ which occurs only once is $\mu+n\rho - n\rho=\mu$ which again by Proposition \ref{tensoringbyVermahasvermaflag} appears at the top of the filtration. We obtained in this way an exact sequence \begin{align}
		0\rightarrow C_1(\mu)\rightarrow \St(\mu+n\rho)\otimes_R E(n\rho) \rightarrow \St(\mu)\rightarrow 0. \label{eqexce143}
	\end{align}The remaining weights are of the form $\mu+n\rho +\gamma$, where $\gamma\in \mathbb{N}\varPi$ is not smaller than $-n\rho$. Hence, $\mu+n\rho+\gamma-\mu\in \mathbb{N}\varPi$. So, all weights  of $C_1(\mu)$ are greater than $\mu$. Applying $\mathcal{D}_\mu$ to (\ref{eqexce143}) we obtain (c).
	
	By the above discussion for (b), for each $n\in \mathbb{N}$ and each $\omega\in [\l]$, the functor \linebreak${\Hom_{\mathcal{O}_{[\l], R}}(\St(\omega)\otimes_R E(n\rho), -)}$ is exact on $\mathcal{F}(\St_{[\l]})$. Therefore, $\Hom_{\mathcal{O}_{[\l], R}}(\St(\omega)\otimes_R E(n\rho), M)\in R\proj$ for every $M\in \mathcal{F}(\St_{[\l]})$ if and only if $\Hom_{\mathcal{O}_{[\l], R}}(\St(\omega)\otimes_R E(n\rho), \St(\mu))\in R\proj$ for every $\mu\in [\l]$. 
	By Lemma \ref{Homomorphismbetweenblocks},  \begin{align}
		\Hom_{\mathcal{O}_{[\l], R}}((\St(\omega)\otimes_R E(n\rho))^{\mathcal{D}}, \St(\mu)^{\mathcal{D}})\simeq \Hom_{\mathcal{O}_{[\l], R}}((\St(\omega)\otimes_R E(n\rho))^{\mathcal{D}}, \St(\mu)),
	\end{align}which is zero unless $\mu\in \mathcal{D}$.  Assuming that $\mu\in \mathcal{D}$, again by Lemma \ref{Homomorphismbetweenblocks},
	\begin{align*}
		\Hom_{\mathcal{O}_{[\l], R}}((\St(\omega)\otimes_R E(n\rho))^{\mathcal{D}}, \St(\mu))&\simeq \Hom_{\mathcal{O}_{[\l], R}}((\St(\omega)\otimes_R E(n\rho)), \St(\mu))\\&\simeq \Hom_{\mathcal{O}_{[\l], R}}(\St(\omega), E(n\rho)^*\otimes_R \St(\mu)).
	\end{align*}Since $\omega$ is dominant $\St(\omega)$ is a projective object in $\mathcal{F}(\St_{[\l]})$. Hence, if $\St(\l)$ appears as a factor in a Verma filtration of an arbitrary $M\in \mathcal{F}(\St_{[\l]})$, then we can assume that all its occurrences appear at the bottom of the filtration. Moreover, all its occurrences can be encoded in a direct sum of copies of $\St(\omega)$. Thanks to $\omega$ being dominant, by Lemma \ref{homomorphismbetweenvermamodules} and Lemma \ref{Homomorphismbetweenblocks}, homomorphisms from $\St(\omega)$ to another Verma module $\St(\omega_1)$ are only non-zero if $\omega=\omega_1$. 
	
	Therefore, if we fix $M:=E(n\rho)^*\otimes_R \St(\mu)\in \mathcal{F}(\St_{[\l]})$ by Proposition \ref{tensoringbyVermahasvermaflag}, we obtain
	\begin{align}
		\Hom_{\mathcal{O}_{[\l], R}}(\St(\omega), M)\simeq 	\Hom_{\mathcal{O}_{[\l], R}}(\St(\omega), \St(\omega)^j)\simeq R^j, \label{p58eq152}
	\end{align}where $j$ denotes the number of occurrences of $\St(\omega)$ in $M$.
	This shows that $\Hom_{\mathcal{O}_{[\l], R}}(P(\omega), \St(\mu))\in R\proj$ for all $\mu\in [\l]$. By induction on the size of filtrations by Verma modules, we obtain (d). Indeed, if $M\in \mathcal{F}(\St_{[\l]})$, then there exists an exact sequence $0\rightarrow M'\rightarrow M\rightarrow \St(\mu)\rightarrow 0$ for some $\mu\in [\l]$ and $M'\in \mathcal{F}(\St_{[\l]})$ having a filtration by Verma modules with lesser length than a filtration by Verma modules of $M$. By induction, $\Hom_{\mathcal{O}_{[l], R}}(P(\omega), M')\in R\proj$. Since $\Hom_{\mathcal{O}_{[l], R}}(P(\omega), -)$ is exact on $\mathcal{F}(\St_{[\l]})$ our claim follows. 
	
	Now, we will proceed to prove (e). Let $S$ be a local commutative Noetherian ring which is a $\mathbb{Q}$-algebra. Since any $M\in \mathcal{F}(\St_{[\l]})$ is free as $R$-module (of infinite rank), the filtrations in $\mathcal{F}(\St_{[\l]})$ remain exact under $S\otimes_R -$. In particular, assuming that $1_S\otimes_R \omega$ is a dominant weight, (\ref{p58eq152}) gives
	\begin{align*}S\otimes_R \Hom_{\mathcal{O}_{[\l], R}}((\St(\omega)\otimes_R E(n\rho))^{\mathcal{D}}, \St(\mu))&\simeq S\otimes_R R^j\simeq 
		S^j\\
		&\simeq \Hom_{\mathcal{O}_{[1_S\otimes_R \l]}, S}(\St(1_S\otimes_R \omega), E(n1_S\rho)^*\otimes_S \St(1_S\otimes_R \mu))\\
		&\simeq \Hom_{\mathcal{O}_{[1_S\otimes_R \l]}, S}(\St(1_S\otimes_R \omega)\otimes_S E(n1_S\rho), \St(1_S\otimes_R \mu) )\\
		&\simeq \Hom_{\mathcal{O}_{[1_S\otimes_R \l]}, S} (S\otimes_R \St(\omega)\otimes_R E(n\rho), \St(1\otimes_R \mu))\\
		&\simeq \Hom_{\mathcal{O}_{[1_S\otimes_R \l]}, S}( S\otimes_R P(\omega-n\rho), \St(1\otimes_R \mu)).
	\end{align*}Since all these isomorphisms are functorial, we obtain that the canonical map $$S\otimes_R \Hom_{\mathcal{O}_{[\l]}, R}(P(\omega-n\rho), \St(\mu))\rightarrow \Hom_{\mathcal{O}_{[1_S\otimes_R \l]}, S}(S\otimes_R P(\omega-n\rho), S\otimes_R \St(\mu))$$ is an isomorphism for every $\mu\in [\l]$. Since $P(\omega-n\rho)$ is a projective object in $\mathcal{O}_{[\l], R}\cap R\Proj$ by using the previous statement there is for every $M\in \mathcal{F}(\St_{[\l]})$ a commutative diagram with exact columns
	\begin{equation}
		\begin{tikzcd}
			S\otimes_R \Hom_{\mathcal{O}_{[\l]}, R}(P(\omega-n\rho), M')\arrow[r, "\simeq"] \arrow[d, hookrightarrow]& \Hom_{\mathcal{O}_{[1_S\otimes_R \l]}, S}(S\otimes_R P(\omega-n\rho), S\otimes_R M') \arrow[d, hookrightarrow]\\
			S\otimes_R \Hom_{\mathcal{O}_{[\l]}, R}(P(\omega-n\rho), M)\arrow[r, ] \arrow[d, twoheadrightarrow] &\Hom_{\mathcal{O}_{[1_S\otimes_R \l]}, S}(S\otimes_R P(\omega-n\rho), S\otimes_R M) \arrow[d]\\
			S\otimes_R \Hom_{\mathcal{O}_{[\l]}, R}(P(\omega-n\rho), \St(\mu))\arrow[r, "\simeq"] & \Hom_{\mathcal{O}_{[1_S\otimes_R \l]}, S}(S\otimes_R P(\omega-n\rho), S\otimes_R \St(\mu))
		\end{tikzcd},
	\end{equation}where $M'\in \mathcal{F}(\St_{[\l]})$ which together with $\St(\mu)$ gives a Verma filtration to $M$. Hence, the upper row is obtained by induction. By Snake Lemma, the middle map is also an isomorphism. 
\end{proof}

\begin{Remark}
	Using the same argument as in the classical theory of the category $\mathcal{O}$, we see that the Verma modules associated with dominant weights are projective objects in their blocks.
\end{Remark}

\subsubsection{Noetherian algebra $A_\mathcal{D}$ associated with a category $\mathcal{O}_{[\l], R}$}

Let $R$ be a local regular commutative Noetherian ring which is a $\mathbb{Q}$-algebra. Let $\mathcal{D}$ be a block of $[\l]$ for some $\l\in \mathfrak{h}_R^*$. Define $P_{\mathcal{D}}:=\bigoplus_{\mu\in \mathcal{D}} P(\mu)$. 

\begin{Def}\label{noetherianalgebracategoryOblock}
	Let $\l\in \mathfrak{h}_R^*$ and $\mathcal{D}$ a block of $[\l]$. We define the $R$-algebra $A_{\mathcal{D}}$ to be the endomorphism algebra $\End_{\mathcal{O}_{[\l], R}}\left( P_{\mathcal{D}}  \right)^{op} $. 
\end{Def}
By Theorem \ref{changeofringsconstructionprojectives}, $A_{\mathcal{D}}$ is a projective Noetherian $R$-algebra. By Lemma \ref{filtrationquotientsofVerma}, we can see that $P_{\mathcal{D}}$ is a generator of $\mathcal{O}_{\mathcal{D}}$. Moreover, since the filtrations involved in Lemma \ref{filtrationquotientsofVerma} are finite for each object $X\in \mathcal{O}_{\mathcal{D}}$ there exists an exact sequence
$P_{\mathcal{D}}^s\rightarrow P_{\mathcal{D}}^t\rightarrow X\rightarrow 0$. Unfortunately, our methods in Theorem \ref{changeofringsconstructionprojectives} do not allow us to state already that $P_{\mathcal{D}}$ is a projective generator. However, we can see that the functor $H:=\Hom_{\mathcal{O}_{\mathcal{D}}}(P_{\mathcal{D}}, -)\colon \mathcal{O}_{\mathcal{D}}\rightarrow A_{\mathcal{D}}\m$ is fully faithful since $P_{\mathcal{D}}$ is a generator of $\mathcal{O}_{\mathcal{D}}$. It is an equivalence of categories whenever $R$ is a field. Further, the restriction of $  H$ to $\mathcal{F}(\St_{[\l]})$ is an exact fully faithful functor. This reduces the study of the category $\mathcal{O}_{[\l], R}$ to the study of module categories of projective Noetherian $R$-algebras and its subcategories.

As we have been mentioning throughout this section, the algebras $A_{\mathcal{D}}$ are split quasi-hereditary.

\begin{Theorem}\label{Oissplitqh}
	Let $R$ be a local regular commutative Noetherian ring which is a $\mathbb{Q}$-algebra. Let $\mathcal{D}$ be a block of $[\l]$ for some $\l\in \mathfrak{h}_R^*$. The algebra $A_{\mathcal{D}}$ is split quasi-hereditary with standard modules $\St_A(\mu):=H\St(\mu)$, $\mu\in \mathcal{D}$.
	The set $\mathcal{D}$ is a poset with the partial order defined as follows: $\mu_1<\mu_2$ if and only if $\mu_2-\mu_1\in \mathbb{N}\varPi$. \end{Theorem}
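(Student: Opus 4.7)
The plan is to verify each of the five conditions in Definition~\ref{qhdef} for the algebra $A_{\mathcal{D}}$ with the prescribed standard modules $\St_A(\mu)=H\St(\mu)$, where $H=\Hom_{\mathcal{O}_{[\l],R}}(P_{\mathcal{D}},-)$. The poset structure on $\mathcal{D}$ is inherited from the ordering on $[\l]$ (which is a finite set of cardinality $|\mathcal{D}|<\infty$ by Lemma~\ref{decompositionblocksweights}), so there is nothing to prove regarding the poset; all the work goes into the five axioms. The key facts I will use are: (a) $H$ is fully faithful since $P_{\mathcal{D}}$ is a generator of $\mathcal{O}_{\mathcal{D}}$, and (b) $H$ is exact on $\mathcal{F}(\St_{[\l]})$ because $P_{\mathcal{D}}$ is projective in $\mathcal{O}_{[\l],R}\cap R\Proj$ by Theorem~\ref{changeofringsconstructionprojectives}(b) and the short exact sequences underlying a Verma filtration live in $\mathcal{O}_{[\l],R}\cap R\Proj$.

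For (i), note that $H\St(\mu)\in R\proj$ follows directly from Theorem~\ref{changeofringsconstructionprojectives}(d). For (iii), the fully faithfulness of $H$ combined with Lemma~\ref{homomorphismbetweenvermamodules}(ii) gives $\End_{A_{\mathcal{D}}}(H\St(\mu))\simeq \End_{\mathcal{O}_{\mathcal{D}}}(\St(\mu))\simeq R$. For (ii), if $\Hom_{A_{\mathcal{D}}}(H\St(\mu),H\St(\omega))\neq 0$, then fully faithfulness of $H$ yields $\Hom_{\mathcal{O}_{\mathcal{D}}}(\St(\mu),\St(\omega))\neq 0$, and Lemma~\ref{homomorphismbetweenvermamodules}(i) then forces $\mu\leq \omega$.

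For condition (iv), I apply $H$ to the exact sequence $0\to C(\mu)\to P(\mu)\to \St(\mu)\to 0$ provided by Theorem~\ref{changeofringsconstructionprojectives}(c), where $C(\mu)\in\mathcal{F}(\St_{[\l]_{\omega>\mu}})$. Since $H$ is exact on $\mathcal{F}(\St_{[\l]})\cup\{P(\mu)\}$ (using that $P(\mu)$ is projective in $\mathcal{O}_{[\l],R}\cap R\Proj$), this produces a short exact sequence
\begin{equation*}
0\to HC(\mu)\to HP(\mu)\to H\St(\mu)\to 0
\end{equation*}
in $A_{\mathcal{D}}\m$. Applying $H$ to the Verma filtration of $C(\mu)$ (whose short exact pieces all lie in $\mathcal{F}(\St_{[\l]})$ and hence are preserved by $H$) produces a filtration of $HC(\mu)$ whose quotients are of the form $H\St(\omega)=\St_A(\omega)$ with $\omega>\mu$. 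Grouping identical quotients, this can be rewritten as a filtration by modules $\St_A(\omega)\otimes_R R^{m_\omega}$ with $R^{m_\omega}\in R\proj$, as required. Projectivity of $HP(\mu)$ in $A_{\mathcal{D}}\m$ follows because $P(\mu)$ is a direct summand of $P_{\mathcal{D}}$, so $HP(\mu)$ is a direct summand of $HP_{\mathcal{D}}=\End_{\mathcal{O}_{[\l],R}}(P_{\mathcal{D}})\simeq A_{\mathcal{D}}$ as left $A_{\mathcal{D}}$-modules. For (v), this same identification $HP_{\mathcal{D}}\simeq {}_{A_{\mathcal{D}}}A_{\mathcal{D}}$ shows that $\bigoplus_{\mu\in\mathcal{D}}HP(\mu)$ is tautologically a progenerator of $A_{\mathcal{D}}\m$.

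The only step that requires some care is (iv): I must be sure that applying $H$ to the filtration of $C(\mu)$ is legitimate, which hinges on $H$ being exact on short exact sequences with all three terms in $\mathcal{F}(\St_{[\l]})$. This is where Theorem~\ref{changeofringsconstructionprojectives}(b) and Lemma~\ref{transicaoprojectivesO}(b) are used: projectivity of $P_{\mathcal{D}}$ in $\mathcal{O}_{[\l],R}\cap R\Proj$ guarantees that $\Ext^{1}_{\mathcal{O}_{[\l],(II),R}}(P_{\mathcal{D}},X)=0$ for any $X$ appearing as a submodule or quotient in the filtration, which is precisely the exactness statement required. Everything else is formal from the adjunction and the properties already established in the preceding subsections.
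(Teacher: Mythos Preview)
Your proof is correct and follows essentially the same approach as the paper's own argument: verify the axioms of Definition~\ref{qhdef} by transporting the structural data from $\mathcal{O}_{\mathcal{D}}$ through the fully faithful functor $H$, using Theorem~\ref{changeofringsconstructionprojectives}(b),(c),(d) and Lemma~\ref{homomorphismbetweenvermamodules} exactly as you do. Your treatment is in fact slightly more explicit than the paper's in justifying why $H$ preserves the short exact sequences arising from the Verma filtration (via the projectivity of $P_{\mathcal{D}}$ in $\mathcal{O}_{[\l],R}\cap R\Proj$ and Lemma~\ref{transicaoprojectivesO}(b)), which the paper leaves implicit.
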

\begin{proof}
	By Theorem \ref{changeofringsconstructionprojectives} (d), $\St_{A}(\mu)\in R\proj$ for all $\mu\in \mathcal{D}$. By Lemma \ref{homomorphismbetweenvermamodules}(i) and (ii) and together with $H$ being fully faithful we obtain $\End_{A_{\mathcal{D}}}(\St_A(\mu))\simeq R$ and if $\Hom_{A_{\mathcal{D}}}(\St_A(\mu_1), \St_A(\mu_2))\neq 0$, then $\mu_1\leq \mu_2$. Denote by $P_A(\mu)$ the projective $A$-modules $\Hom_{\mathcal{O}_{\mathcal{D}}}(P_{\mathcal{D}}, P(\mu))$.
	By Theorem \ref{changeofringsconstructionprojectives} (c),(b), and $H$ being fully faithful we obtain, for each $\mu\in [\l]$, an exact sequence 
	\begin{align}
		0\rightarrow C_A(\mu)\rightarrow P_A(\mu)\rightarrow \St_A(\mu)\rightarrow 0,
	\end{align}where $C_A(\mu)\in \mathcal{F}(\St_{A_{\omega>\mu}})$. Here, $\St_A$ denotes the set $\{\St_A(\mu)\colon \mu\in \mathcal{D}\}$.
	Further, \begin{align}
		\bigoplus_{\mu\in \mathcal{D}}P_A(\mu)= \bigoplus_{\mu\in \mathcal{D}} \Hom_{\mathcal{O}_{\mathcal{D}}}(P_{\mathcal{D}}, P(\mu))\simeq \Hom_{\mathcal{O}_{\mathcal{D}}}(P_{\mathcal{D}},  \bigoplus_{\mu\in \mathcal{D}}  P(\mu))\simeq {}_{{A_{\mathcal{D}}}} {A_{\mathcal{D}}}.
	\end{align}Hence, this direct sum is a progenerator of $A_{\mathcal{D}}$. 
\end{proof}

We could wonder given the definition of $P(\mu)$ if there could be other projectives taking its role of mapping surjectively to $\St(\mu)$. By Proposition 3.2.1 of \cite{cruz2021cellular}, we see that $P_A(\mu)$ is the right choice and $P_A(\mu)(\mi)$ is actually the projective cover of $\St_A(\overline{\mu})$. Hence, the idempotents $$e_\mu:=P_{\mathcal{D}}\twoheadrightarrow P(\mu)\hookrightarrow P_{\mathcal{D}}, \quad \mu\in \mathcal{D},$$ in $\End_{\mathcal{O}_{\mathcal{D}}}(P_{\mathcal{D}})^{op}=A_{\mathcal{D}}$ form a set of orthogonal idempotents such that their image under $R(\mi)$, according to Theorem \ref{changeofringsconstructionprojectives}(e), form a complete set of primitive orthogonal idempotents of $A_{\mathcal{D}}(\mi)$. In particular, by Theorem 3.3.11 of \citep{cruz2021cellular}, $$0\subset A_{\mathcal{D}}e_{\omega}A_{\mathcal{D}}\subset \cdots \subset A_{\mathcal{D}}(\sum_{\mu\in \mathcal{D}}e_\mu)A_{\mathcal{D}}$$ is a split heredity chain of $A_{\mathcal{D}}$. Here, $\omega$ is the dominant weight of $\mathcal{D}$.

\begin{Cor}
	Let $R$ be a local regular commutative Noetherian ring which is a $\mathbb{Q}$-algebra. Let $\mathcal{D}$ be a block of $[\l]$ for some $\l\in \mathfrak{h}_R^*$.  
	\begin{enumerate}[(a)]
		\item The algebra $A_{\mathcal{D}}$ is semi-perfect and $A_{\mathcal{D}}\proj$ is a Krull-Schmidt category.
		\item The algebra $A_{\mathcal{D}}$ has finite global dimension.
	\end{enumerate}
\end{Cor}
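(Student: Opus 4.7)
My plan is to deduce both statements from the fact, already proved in Theorem \ref{Oissplitqh}, that $A_{\mathcal{D}}$ is a split quasi-hereditary $R$-algebra together with $R$ being local regular. In particular, the whole point is that $A_{\mathcal{D}}$ is a projective Noetherian $R$-algebra and the heredity chain is finite (indexed by the finite poset $\mathcal{D}$), so the problem reduces to invoking the structural results for split quasi-hereditary algebras already referenced elsewhere in the paper.

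For part (a), I would argue as follows. Since $R$ is local and $A_{\mathcal{D}}$ is split quasi-hereditary over $R$, \citep[Theorem 3.4.1]{cruz2021cellular} — the same tool already used in the proof of Corollary \ref{noteverygroupalgebrahassplitqhcover} — yields that $A_{\mathcal{D}}$ is semi-perfect. Being semi-perfect immediately implies that the subcategory $A_{\mathcal{D}}\proj$ of finitely generated projectives admits unique decomposition into indecomposables with local endomorphism rings (see \citep[Proposition 3.14, Theorem 2.12]{MR3025306} used earlier), i.e.\ it is a Krull--Schmidt category. This reuses exactly the ingredients already assembled for Corollary \ref{noteverygroupalgebrahassplitqhcover}, so the first item is almost a one-line application.

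For part (b), the strategy is to show that every standard module $\St_A(\mu)$, $\mu \in \mathcal{D}$, has finite projective dimension over $A_{\mathcal{D}}$, and then to bootstrap this to arbitrary finitely generated modules. I would proceed by descending induction on $\mu$ along the finite poset $\mathcal{D}$: if $\mu$ is maximal then $\St_A(\mu) = P_A(\mu)$ is projective by Theorem \ref{Oissplitqh}, and for general $\mu$ the short exact sequence $0 \to C_A(\mu) \to P_A(\mu) \to \St_A(\mu) \to 0$ expresses $\St_A(\mu)$ as an extension of a projective by a module in $\mathcal{F}(\St_{A,\omega>\mu})$, whose projective dimension is controlled by induction. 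Since $\mathcal{F}(\St_A)$ is resolving, any $M \in A_{\mathcal{D}}\m \cap R\proj$ has finite projective dimension; for arbitrary $M \in A_{\mathcal{D}}\m$ one additionally uses that $R$ is a regular local ring to resolve $M$ by modules in $A_{\mathcal{D}}\m \cap R\proj$, picking up at most $\gldim R = \dim R$ extra homological degrees. This is exactly the pattern already invoked in the paper for Schur algebras via \citep[Proposition 5.0.2]{cruz2021cellular}, and the same reference applies here verbatim once quasi-hereditarity over $R$ is known.

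The only real obstacle is the last step — going from finite projective dimension for modules which are $R$-projective to finite projective dimension for arbitrary $M \in A_{\mathcal{D}}\m$. The standard trick is to resolve $M$ as an $R$-module by a finite projective resolution (possible since $R$ is regular local and $M$ is finitely generated over the Noetherian ring $A_{\mathcal{D}}$, hence over $R$) and combine with the vanishing of $\Ext^{>N}$ on the $R$-projective part. Since the resulting bound depends only on $\dim R$ and on the length of the heredity chain, which is finite because $\mathcal{D}$ is finite, this gives $\gldim A_{\mathcal{D}} < \infty$ as required.
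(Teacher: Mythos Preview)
Your approach is essentially the same as the paper's: both parts are deduced from $A_{\mathcal{D}}$ being split quasi-hereditary over the local regular ring $R$ by citing the structural results in \cite{cruz2021cellular}. The paper's proof is literally two lines --- \citep[Theorem 3.4.1]{cruz2021cellular} for (a) and \citep[3.5.2]{cruz2021cellular} for (b) --- and your expanded sketch reconstructs what those references contain.

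Two small remarks. First, for (b) the paper cites \citep[3.5.2]{cruz2021cellular}, the general statement for split quasi-hereditary algebras over regular rings, whereas \citep[Proposition 5.0.2]{cruz2021cellular} (which you cite) is the instance for Schur algebras; your claim that ``the same reference applies here verbatim'' is morally right but points to the wrong numbered result. Second, in your sketch the sentence ``Since $\mathcal{F}(\St_A)$ is resolving, any $M \in A_{\mathcal{D}}\m \cap R\proj$ has finite projective dimension'' is not justified by resolvingness alone: a resolving subcategory need not have finite resolution dimension on the ambient category. The actual argument (which is what the cited reference does) proceeds via the heredity chain or, equivalently, by showing that simple modules have finite projective dimension by ascending induction on the poset; your Step~1 controls the standards, but the passage to arbitrary modules needs this extra ingredient rather than just the word ``resolving''. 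Since you ultimately defer to the reference anyway, this does not affect correctness.
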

\begin{proof}By \citep[Theorem 3.4.1]{cruz2021cellular}, (a) follows.
	$R$ is a regular local ring, so $\gldim R$ is finite. For (b) see for example \citep[3.5.2]{cruz2021cellular}. 
\end{proof}

The category $\mathcal{O}$ has a simple preserving duality, so we expect the algebra $A_{\mathcal{D}}$ to be a cellular algebra as well. 
We can use the duality functor restricted to the block $(-)^\vee\colon \mathcal{O}_{\mathcal{D}}\cap R\Proj\rightarrow \mathcal{O}_{[\l], (I), R}\cap R\Proj$ to construct the relative injective modules of $A_{\mathcal{D}}$.

\begin{Lemma}
	Let $R$ be a local regular commutative Noetherian ring which is a $\mathbb{Q}$-algebra. Let $\mathcal{D}$ be a block of $[\l]$ for some $\l\in \mathfrak{h}_R^*$. For each $\mu\in \mathcal{D}$,  the module $\Hom_{\mathcal{O}_{[\l], (I), R}}(P_{\mathcal{D}}, P(\mu)^\vee)$ is projective over $R$ and $(A_{\mathcal{D}}, R)$-injective. 
\end{Lemma}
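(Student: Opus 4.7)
The plan is to identify the left $A_{\mathcal{D}}$-module $Y := \Hom_{\mathcal{O}_{[\l], (I), R}}(P_{\mathcal{D}}, P(\mu)^\vee)$ with $D_R(e_\mu A_{\mathcal{D}})$, where $e_\mu \in A_{\mathcal{D}}$ is the idempotent cutting $P(\mu)$ out of $P_{\mathcal{D}}$. Since $A_{\mathcal{D}}$ is a projective Noetherian $R$-algebra and $R$ is local, $e_\mu A_{\mathcal{D}}$ is free of finite rank over $R$, so $D_R(e_\mu A_{\mathcal{D}}) \in R\proj$; moreover $D_R(e_\mu A_{\mathcal{D}})$ is a direct $A_{\mathcal{D}}$-summand of $D_R A_{\mathcal{D}}$, hence $(A_{\mathcal{D}}, R)$-injective. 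Both desired properties will thus follow from such an identification.

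First, $Y$ is finitely generated over $R$: since $P_{\mathcal{D}}$ is finitely generated as a $U(\mathfrak{g}_R)$-module, it is generated by finitely many weight vectors $v_1,\ldots,v_n$ of weights $\omega_1,\ldots,\omega_n$, so every $f \in Y$ is determined by $(f(v_i))_i$, which lies in the finitely generated $R$-module $\bigoplus_i (P(\mu)^\vee)_{\omega_i} = \bigoplus_i D_R P(\mu)_{\omega_i}$. Next I would construct a natural $A_{\mathcal{D}}$-linear map $\Theta\colon Y \to D_R(e_\mu A_{\mathcal{D}}) = D_R\Hom_{\mathcal{O}_{\mathcal{D}}}(P(\mu), P_{\mathcal{D}})$ via an $R$-bilinear pairing $\beta$. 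The pairing exploits the canonical inclusion $\St(\mu)^\vee \hookrightarrow P(\mu)^\vee$ (dual to the surjection $P(\mu) \twoheadrightarrow \St(\mu)$ from Theorem~\ref{changeofringsconstructionprojectives}(c)) together with the natural identification $\St(\mu)^\vee_\mu = D_R \St(\mu)_\mu \simeq R$. Given $f \in Y$ and $g\colon P(\mu) \to P_{\mathcal{D}}$, I would extract the scalar $\beta(f,g) \in R$ from the $\mu$-weight component of $f \circ g\colon P(\mu) \to P(\mu)^\vee$ by projecting onto the canonical rank-one summand $\St(\mu)^\vee_\mu$ of $(P(\mu)^\vee)_\mu$; define $\Theta(f)(g):=\beta(f,g)$. $A_{\mathcal{D}}$-linearity of $\Theta$ is a direct associativity check, where $A_{\mathcal{D}}$ acts on the source by precomposition and on the target by dualising right multiplication on $e_\mu A_{\mathcal{D}}$.

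To show $\Theta$ is an isomorphism I would invoke Nakayama's Lemma: both $Y$ and $D_R(e_\mu A_{\mathcal{D}})$ are finitely generated over the local ring $R$, so it suffices to verify that $\Theta \otimes_R R(\mi)$ is an isomorphism. Base change of the target is covered by Theorem~\ref{changeofringsconstructionprojectives}(e). For the source, each weight space $P(\mu)_\omega$ is a finitely generated free $R$-module (as the $\omega$-weight space of a direct summand of $\St(\mu + n\rho) \otimes_R E(n\rho)$), so $R(\mi) \otimes_R D_R P(\mu)_\omega \simeq D_{R(\mi)} P(\mu)(\mi)_{\overline\omega}$; combined with Lemma~\ref{changeringscategoryO} and the finite generation of $P_{\mathcal{D}}$, this gives $R(\mi) \otimes_R Y \simeq \Hom(P_{\mathcal{D}}(\mi), P(\mu)(\mi)^\vee)$. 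Over the residue field $R(\mi)$, Theorem~\ref{Propertiesofsemisimplecomplexliealg} gives that $P(\overline\mu)^\vee$ is the indecomposable injective envelope of the simple $L(\overline\mu)$ in $\mathcal{O}_{\overline{\mathcal{D}}}$, and the classical $\Hom$-duality for quasi-hereditary algebras identifies the specialisation of $Y$ with $D_{R(\mi)}(\overline{e_\mu} A_{\mathcal{D}}(\mi))$, exactly matching the specialisation of the target under $\Theta(\mi)$.

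The main obstacle will be the rigorous construction of the pairing $\beta$, since $P(\mu)_\mu$ may have $R$-rank greater than one (due to contributions of Verma composition factors $\St(\omega)$ of $C(\mu)$ that contain $\mu$ as a weight), so one cannot simply read off a scalar from the $\mu$-weight component of $f \circ g$. The use of the canonical subobject $\St(\mu)^\vee \subset P(\mu)^\vee$, rather than a choice of splitting of the surjection $P(\mu)_\mu \twoheadrightarrow R$, is what pins down a canonical rank-one piece through which the scalar can be extracted. A secondary technicality is the source base-change step, since $P(\mu)^\vee \notin \mathcal{O}_{[\l], R}$ in general prevents a direct application of Theorem~\ref{changeofringsconstructionprojectives}(e); this must instead be handled weight-space by weight-space using the finite $R$-rank of each $P(\mu)_\omega$.
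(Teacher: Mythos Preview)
Your approach is genuinely different from the paper's, and its core identification $Y\simeq D_R(e_\mu A_{\mathcal D})$ is indeed established later in the paper --- but only \emph{after} this Lemma, and using it. The paper proves the Lemma by a filtration argument: it shows $\Ext^1_{\mathcal{O}_{[\l],(I),R}}(\St(\omega_1),\St(\omega)^\vee)=0$ for all $\omega_1,\omega\in\mathcal D$ (by reduction to the residue field, as in Lemma~\ref{transicaoprojectivesO}), then uses this to reduce the computation of $\Hom(P_{\mathcal D},P(\mu)^\vee)$ to $\Hom(\St(\omega),\St(\theta)^\vee)$, which is either $0$ or $R$. The same Ext-vanishing is then what makes $\Hom$ between Verma-filtered and dual-Verma-filtered objects commute with $R(\mi)\otimes_R-$, giving $(A_{\mathcal D},R)$-injectivity.

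Your argument has two gaps. The minor one is the pairing: the inclusion $\St(\mu)^\vee_\mu\hookrightarrow (P(\mu)^\vee)_\mu$ is canonical, but a \emph{projection} onto it is not; a retraction is exactly a splitting of $P(\mu)_\mu\twoheadrightarrow\St(\mu)_\mu$, which you acknowledged wanting to avoid. There is no natural $R$-linear functional $\Hom(P(\mu),P(\mu)^\vee)\to R$: the canonical arrows go the wrong way, yielding only an inclusion $R\simeq\Hom(\St(\mu),\St(\mu)^\vee)\hookrightarrow\Hom(P(\mu),P(\mu)^\vee)$. The serious gap is the base-change step. Your weight-space reasoning only shows $R(\mi)\otimes_R P(\mu)^\vee\simeq P(\overline\mu)^\vee$; it does not show that $R(\mi)\otimes_R\Hom(P_{\mathcal D},P(\mu)^\vee)\to\Hom(P_{\mathcal D}(\mi),P(\overline\mu)^\vee)$ is an isomorphism. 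For that comparison map to be bijective one needs precisely the kind of information --- $R$-projectivity of $Y$, or the Ext-vanishing above --- that you are trying to deduce. Without it, Nakayama only gives surjectivity of $\Theta$, and you cannot control $\ker\Theta$ because you have not identified $Y(\mi)$. So the argument is circular at this point; closing it forces you back to the paper's filtration computation.
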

\begin{proof}
	Since $(-)^\vee$ is exact the module $P(\mu)^\vee$ belongs to $\mathcal{F}(\{ \St(\omega)^\vee\colon \ \omega\in \mathcal{D} \})$ for each $\mu\in \mathcal{D}$. As we saw, by the construction of the duality functor $\St(\omega)^\vee\in \mathcal{O}_{[\l], (I), R}\cap R\Proj$ and each weight module of $\St(\omega)^\vee$ is finitely generated as $R$-module. Using the same arguments as in Lemma \ref{transicaoprojectivesO}, replacing $P$ by $\St(\omega_1)$ and $X$ by $\St(\omega)^\vee$ and knowing that in the classical case $\St(\omega)^\vee$ are the costandard modules making $\mathcal{O}$ a split highest weight category we obtain $\Ext_{\mathcal{O}_{[\l], (I), R}}^1(\St(\omega_1), \St(\omega)^\vee)=0$ for all $\omega_1, \omega\in \mathcal{D}$. Hence, using induction on finite filtration by Verma modules $\St$ and on finite filtration by dual Verma modules $\St^\vee$ we can reduce the problem of $\Hom_{\mathcal{O}{[\l], (I), R}}(P_{\mathcal{D}}, X)$ being projective over $R$, with $X\in \mathcal{F}(\{ \St(\omega)^\vee\colon \ \omega\in \mathcal{D} \})$, to showing that $\Hom_{\mathcal{O}_{[\l], (I), R}}(\St(\omega), \St(\theta)^\vee)\in R\proj$ for all weights $\omega, \theta\in \mathcal{D}$.
	
	Observe that $\Hom_{\mathcal{O}_{[\l], (I), R}}(\St(\omega), \St(\theta)^\vee)\subset (\St(\theta)^\vee)_\omega=D(\St(\theta))_\omega$. So, if the homomorphism group is non-zero, then $\omega\leq \theta$. In addition,
$
		0\neq\Hom_{\mathcal{O}_{[\l], (I), R}}(\St(\omega), \St(\theta)^\vee)\simeq \Hom_{\mathcal{O}_{[\l], (I), R}}(\St(\theta), \St(\omega)^\vee).
$ So, also $\theta \leq \omega$. Therefore, $\Hom_{\mathcal{O}_{[\l], (I), R}}(\St(\omega), \St(\theta)^\vee)=0$ unless $\theta=\omega$. In case, $\theta=\omega$ we obtain \begin{align}
		\Hom_{\mathcal{O}_{[\l], (I), R}}(\St(\omega), \St(\omega)^\vee)\simeq \Hom_{U(\mathfrak{b}_R}(R_\omega, \St(\omega)^\vee)\simeq \St(\omega)^\vee_\omega\simeq D\St(\omega)_\omega\simeq R\in R\proj.
	\end{align}
	
	Since $\Ext_{\mathcal{O}_{[\l], (I), R}}^1(\St(\omega_1), \St(\omega)^\vee)=0$ for all $\omega_1, \omega\in \mathcal{D}$ we can apply the same argument in \citep[Proposition 5.5 and Corollary 5.6]{appendix}  to deduce that the homomorphisms between modules with Verma filtrations and modules with dual Verma filtrations commute with the functor $R(\mi)\otimes_R -$. Since $P(\overline{\mu})^\vee$ is injective and $R(\mi)\otimes_R H$ is an equivalence we obtain that  $\Hom_{\mathcal{O}_{[\l], (I), R}}(P_{\mathcal{D}}, P(\mu)^\vee)$ is $(A_{\mathcal{D}}, R)$-injective (see for example \citep[Corollary 2.12]{CRUZ2022410}).
\end{proof}

\begin{Remark}
	The reader can observe that the modules $\Hom_{\mathcal{O}_{[\l], (I), R}}(P_{\mathcal{D}}, \St(\mu)^\vee)$, $\mu\in \mathcal{D}$, are the costandard modules of $A_{\mathcal{D}}$.
\end{Remark}

It follows that $\Hom_{\mathcal{O}_{[\l], (I), R}}(P_{\mathcal{D}}, P_{\mathcal{D}}^\vee)\simeq \bigoplus_{\mu\in \mathcal{D}} \Hom_{\mathcal{O}_{[\l], (I), R}}(P_{\mathcal{D}}, P(\mu)^\vee)\simeq DA_{\mathcal{D}}$. Using this we can deduce a duality map on $A_{\mathcal{D}}$. In fact, as $R$-algebras, we have the following commutative diagram
$$
	\hspace*{-1.5em}	\begin{tikzcd}[row sep=1.2em, column sep=-1.2em, scale cd=0.8]
		A_{\mathcal{D}}(\mi) \arrow[rrr, "\simeq"]   \arrow[ddd, "\simeq"]&&&
		\End_{\mathcal{O}_{\mathcal{D}}}(P_{\mathcal{D}})^{op}(\mi) \arrow[rrr, "\simeq"] \arrow[ddd, "\simeq"]&&&
		\End_{\mathcal{O}_{[\l], (I), R}}(P_{\mathcal{D}}^\vee )(\mi)  \arrow[ddrr] \arrow[ddd] 
		\\
		& A_{\mathcal{D}}^{op}(\mi) && 
		&& 
		\\
		&& \End_{A_{\mathcal{D}}^{op}}(A_{\mathcal{D}})^{op}(\mi) \arrow[ul, "\simeq"] &&& 
		\End_{A_{\mathcal{D}}}(DA_{\mathcal{D}}) (\mi) \arrow[lll,crossing over, "\simeq"]  &&&
		\End_{A_{\mathcal{D}}}( \Hom_{\mathcal{O}_{[\l], (I), R}}(P_{\mathcal{D}}, P_{\mathcal{D}}^\vee) )(\mi) \arrow[lll,crossing over, "\simeq" near end] \arrow[ddd,"\simeq"]\\
		A(\mi)_{\mathcal{D}} \arrow[rrr, "\simeq"]   &&&
		\End_{\mathcal{O}}(P_{\mathcal{D}}(\mi))^{op}\arrow[rrr, "\simeq"]&&&
		\End_{\mathcal{O}}(P_{\mathcal{D}}(\mi)^\vee )  \arrow[ddrr,swap,"\simeq"] 
		\\
		& A(\mi)_{\mathcal{D}}^{op} \arrow[uuu,<-,crossing over,"\simeq" near end]  && 
		&& 
		\\
		&& \End_{A(\mi)_{\mathcal{D}}^{op}}(A(\mi)_{\mathcal{D}})^{op} \arrow[uuu,<-,crossing over, "\simeq"] \arrow[ul, "\simeq"]&&& 
		\End_{A(\mi)_{\mathcal{D}}}(DA_{\mathcal{D}}(\mi))  \arrow[lll,crossing over,"\simeq" near start]  \arrow[uuu,<-,crossing over, "\simeq"]&&& 
		\End_{A(\mi)_{\mathcal{D}}}( \Hom_{\mathcal{O}}(P_{\mathcal{D}}(\mi), P_{\mathcal{D}}(\mi)^\vee) )	 \arrow[lll,crossing over,"\simeq" near start]\\
	\end{tikzcd} 
$$
The isomorphisms in the diagram are marked with $\simeq$. We required this approach since without it we do not know if the Hom functor on the generator $P_{\mathcal{D}}$ is fully faithful on the additive closure of its dual $P_{\mathcal{D}}^\vee$. Since $A_{\mathcal{D}}\in R\proj$ by Nakayama's Lemma we obtain that the following composition of maps is an isomorphism of $R$-algebras
\begin{equation*}
	\begin{tikzcd}[row sep=0.8em, column sep=-1.8em]
		A_{\mathcal{D}} \arrow[rrr]   &&&
		\End_{\mathcal{O}_{\mathcal{D}}}(P_{\mathcal{D}})^{op}\arrow[rrr] &&&
		\End_{\mathcal{O}_{[\l], (I), R}}(P_{\mathcal{D}}^\vee ) \arrow[ddrr] 
		\\
		& A_{\mathcal{D}}^{op}&& 
		&& 
		\\
		&& \End_{A_{\mathcal{D}}^{op}}(A_{\mathcal{D}})^{op}\arrow[ul] &&& 
		\End_{A_{\mathcal{D}}}(DA_{\mathcal{D}}) \arrow[lll,crossing over]  &&&
		\End_{A_{\mathcal{D}}}( \Hom_{\mathcal{O}_{[\l], (I), R}}(P_{\mathcal{D}}, P_{\mathcal{D}}^\vee) )\arrow[lll,crossing over] 
	\end{tikzcd}.
\end{equation*}
Observe that under this composition of maps, for each $\mu\in \mathcal{D}$, \begin{align*}
	e_\mu&\mapsto P_{\mathcal{D}} \twoheadrightarrow P(\mu)\hookrightarrow P_{\mathcal{D}}\mapsto P_{\mathcal{D}}^\vee\twoheadrightarrow P(\mu)^\vee \hookrightarrow P_{\mathcal{D}}^\vee\\
	& \mapsto \Hom_{\mathcal{O}_{[\l], (I), R}}(P_{\mathcal{D}}, P_{\mathcal{D}}^\vee) \twoheadrightarrow \Hom_{\mathcal{O}_{[\l], (I), R}}(P_{\mathcal{D}}, P(\mu)^\vee)\hookrightarrow \Hom_{\mathcal{O}_{[\l], (I), R}}(P_{\mathcal{D}}, P_{\mathcal{D}}^\vee) \\
	&\mapsto DA_{\mathcal{D}}\twoheadrightarrow I(\mu)=D(e_\mu A_{\mathcal{D}})\hookrightarrow DA_{\mathcal{D}} \mapsto A_{\mathcal{D}}\twoheadrightarrow e_\mu A_{\mathcal{D}} \hookrightarrow A_{\mathcal{D}}\mapsto e_\mu.
\end{align*}
Hence, this gives an involution on $A_{\mathcal{D}}$, denoted by $\iota$, which fixes the set of orthogonal idempotents $\{e_\mu\colon \mu\in \mathcal{D} \}$.
In addition, we can assign a new duality functor on $A_{\mathcal{D}}$ using the duality $\iota$. For each $M\in A_{\mathcal{D}}\m$, define the right $A_{\mathcal{D}}$-module $M^{\iota}\in $ by imposing $m\cdot_{\iota}a:=\iota(a)m, \ m\in M$. The assignment $M\mapsto DM^{\iota}$ is a duality functor on $A_{\mathcal{D}}\m \cap R\proj$, which we will denote by $(-)^\natural$.

\begin{Theorem}\label{Oiscellular}
	Let $R$ be a local regular commutative Noetherian ring which is a $\mathbb{Q}$-algebra. Let $\mathcal{D}$ be a block of $[\l]$ for some $\l\in \mathfrak{h}_R^*$. The algebra $A_{\mathcal{D}}$ is a cellular algebra with involution $\iota$ and cell chain $$0\subset A_{\mathcal{D}}e_{\omega}A_{\mathcal{D}}\subset \cdots \subset A_{\mathcal{D}}(\sum_{\mu\in \mathcal{D}}e_\mu)A_{\mathcal{D}}=A_{\mathcal{D}},$$ where $\omega$ is the dominant weight of $\mathcal{D}$.
\end{Theorem}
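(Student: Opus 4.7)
The plan is to deduce Theorem \ref{Oiscellular} directly from the structural results already assembled in this subsection together with \citep[Proposition 4.0.1]{cruz2021cellular}, which asserts that a split quasi-hereditary $R$-algebra with a duality fixing a complete set of primitive orthogonal idempotents (after reduction modulo the maximal ideal) is automatically cellular, with cell chain given by the split heredity chain. Since all ingredients are essentially in place, the proof is mostly a matter of assembling them.

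First, I would recall from Theorem \ref{Oissplitqh} and the discussion immediately following it that $(A_{\mathcal{D}}, \{e_\mu\colon \mu\in \mathcal{D}\})$ is a split quasi-hereditary $R$-algebra whose split heredity chain is
\begin{align*}
0\subset A_{\mathcal{D}}e_{\omega}A_{\mathcal{D}}\subset \cdots \subset A_{\mathcal{D}}(\textstyle\sum_{\mu\in \mathcal{D}}e_\mu)A_{\mathcal{D}}=A_{\mathcal{D}},
\end{align*}
where the idempotents $e_\mu$ are obtained from the decomposition $P_{\mathcal{D}}=\bigoplus_{\mu\in\mathcal{D}}P(\mu)$, and the system $\{e_\mu(\mi)\colon \mu\in\mathcal{D}\}$ is a complete set of primitive orthogonal idempotents of $A_{\mathcal{D}}(\mi)$ by Theorem \ref{changeofringsconstructionprojectives}(e) together with \citep[Proposition 3.2.1]{cruz2021cellular}. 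This is exactly the notion of a split quasi-hereditary algebra \emph{with a distinguished idempotent system} $\mathbf{e}$ in the sense of Definition \ref{dualitydef}.

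Second, I would verify that the map $\iota\colon A_{\mathcal{D}}\to A_{\mathcal{D}}$ constructed immediately before the statement of the theorem is a duality with respect to this $\mathbf{e}$. By its construction as the composite $A_{\mathcal{D}}\xrightarrow{\simeq}\End_{\mathcal{O}_{\mathcal{D}}}(P_{\mathcal{D}})^{op}\xrightarrow{(-)^\vee}\End_{\mathcal{O}_{[\l],(I),R}}(P_{\mathcal{D}}^\vee)\xrightarrow{\simeq}\End_{A_{\mathcal{D}}}(DA_{\mathcal{D}})\xrightarrow{\simeq}A_{\mathcal{D}}^{op}$ and the explicit tracking of the idempotents $e_\mu$ done in the excerpt, $\iota$ is an anti-isomorphism of $R$-algebras satisfying $\iota(e_\mu)=e_\mu$ for every $\mu\in\mathcal{D}$. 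The involutivity $\iota^2=\mathrm{id}_{A_{\mathcal{D}}}$ follows from the fact that the duality $(-)^\vee$ on $\mathcal{O}_{[\l],(I),R}\cap R\Proj$ is self-inverse, as recorded earlier in the section; combined with Nakayama's Lemma used in the excerpt to promote the residue-field isomorphisms to $R$-algebra isomorphisms, this shows $\iota$ is a duality of $A_{\mathcal{D}}$ in the sense of Definition \ref{dualitydef}.

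Finally, having established that $(A_{\mathcal{D}}, \mathbf{e})$ is a split quasi-hereditary $R$-algebra with duality $\iota$, I would invoke \citep[Proposition 4.0.1]{cruz2021cellular} to conclude that $A_{\mathcal{D}}$ is cellular with involution $\iota$ and with cell chain equal to the split heredity chain displayed above. There is essentially no obstacle here beyond making sure that the normalisation conventions of Definition \ref{dualitydef} and of \citep[Proposition 4.0.1]{cruz2021cellular} are matched by the $\iota$ constructed from $(-)^\vee$; the only non-routine ingredient, namely that $\iota$ really fixes the $e_\mu$, has already been verified in the diagram chase preceding the theorem statement.
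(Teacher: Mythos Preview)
Your proposal is correct and takes essentially the same approach as the paper: the paper's proof is a single line invoking \citep[Proposition 4.0.1]{cruz2021cellular}, relying on the split quasi-hereditary structure and the duality $\iota$ already established in the preceding discussion. Your version simply spells out more explicitly the ingredients (the idempotent system, the split heredity chain, and the properties of $\iota$) that the paper takes as read from the context.
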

\begin{proof}
	The result follows by Proposition 4.0.1 of \citep{cruz2021cellular}.
\end{proof}

\subsubsection{The algebra $A_{\mathcal{D}}$ is a relative gendo-symmetric algebra}

Our aim now is to compute the relative dominant dimension of the algebra $A_{\mathcal{D}}$ and to prove that it is a relative gendo-symmetric algebra.

Koenig, Slungård and Xi gave a lower bound for the dominant dimension of the blocks of the classical category $\mathcal{O}$ in \citep[Theorem 3.2]{Koenig2001}. Later, Fang proved in \citep[Proposition 4.5]{zbMATH05278765} that this lower bound was indeed the value of the dominant dimension. Mainly, the dominant dimension sees two cases. Either the algebra associated with a block is semi-simple which obviously has infinite dominant dimension or the algebra associated with a non semi-simple block has dominant dimension two. The main reason for this situation is that the blocks of the category $\mathcal{O}$ only have one projective-injective module.
We will now generalize these results to the Noetherian algebras $A_{\mathcal{D}}$.

\begin{Theorem}\label{dominantdimensionO}
	Let $R$ be a local regular commutative Noetherian ring which is a $\mathbb{Q}$-algebra with unique maximal ideal $\mi$. 
	Let $\mathcal{D}$ be a block of $[\l]$ for some $\l\in \mathfrak{h}_R^*$. 
	For the unique antidominant weight $\mu\in \mathcal{D}$,  $(A_{\mathcal{D}}, P_A(\mu), DP_A(\mu))$ is a relative QF3 $R$-algebra and
	$$ 
	\domdim (A_{\mathcal{D}}, R)=\begin{cases}
		+\infty, \quad &\text{ if } \ |\mathcal{D}|=1, \\ 2, \quad &\text{ otherwise}.
	\end{cases}
	$$
\end{Theorem}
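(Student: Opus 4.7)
The plan is to reduce the problem to the classical case at the residue field $R(\mi)$, exploiting the machinery of relative dominant dimension developed in \cite{CRUZ2022410}. Concretely, the proof naturally splits into two tasks: (i) verifying that $(A_{\mathcal{D}}, P_A(\mu), DP_A(\mu))$ is a relative QF3 $R$-algebra, and (ii) computing $\domdim(A_{\mathcal{D}}, R)$ by passing to $A_{\mathcal{D}}(\mi)$, where Fang's result \citep[Proposition 4.5]{zbMATH05278765} applies.

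For (i), I would first use that $\mu$ is antidominant in $\mathfrak{h}_R^*$, which by definition means $\overline{\mu}\in \mathfrak{h}_{R(\mi)}^*$ is antidominant in the classical sense. By Theorem \ref{Propertiesofsemisimplecomplexliealg}(e), the projective cover $P(\overline{\mu})$ is simultaneously the injective hull of $L(\overline{\mu})$ in the classical block $\mathcal{O}_{\chi_{\overline{\mu}}}$. Transporting this across the equivalence $\mathcal{O}_{\overline{\mathcal{D}}}\simeq A_{\mathcal{D}}(\mi)\m$ (which identifies $P_A(\mu)(\mi)$ with $\Hom_{\mathcal{O}_{\overline{\mathcal{D}}}}(P_{\mathcal{D}}(\mi), P(\overline{\mu}))$), we obtain $P_A(\mu)(\mi)\in \add_{A_{\mathcal{D}}(\mi)} DA_{\mathcal{D}}(\mi)$. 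The characterisation of $(A, R)$-injective modules via reduction modulo $\mi$ (invoked earlier in the paper as \citep[Corollary 2.12]{CRUZ2022410}) then lifts this to $P_A(\mu)\in \add_{A_{\mathcal{D}}} DA_{\mathcal{D}}$; applying the duality $\iota$ from Theorem \ref{Oiscellular} gives the analogous statement for $DP_A(\mu)$ as a right module. For the strong faithfulness, the classical fact that $L(\overline{\mu})$ occurs in the socle of every indecomposable projective of $\mathcal{O}_{\overline{\mathcal{D}}}$ produces a monomorphism $A_{\mathcal{D}}(\mi)\hookrightarrow P_A(\mu)(\mi)^t$ for some $t$; one lifts this to an $(A_{\mathcal{D}}, R)$-monomorphism $A_{\mathcal{D}}\hookrightarrow P_A(\mu)^t$ by choosing a candidate map using the $R$-projectivity of $A_{\mathcal{D}}$ (via the base-change property in Theorem \ref{changeofringsconstructionprojectives}(e)) and verifying injectivity by reduction modulo $\mi$ followed by Nakayama on the cokernel.

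For (ii), once RQF3 is in place, \citep[Theorem 6.13]{CRUZ2022410} yields $\domdim(A_{\mathcal{D}}, R)=\domdim A_{\mathcal{D}}(\mi)$. By \citep[Proposition 4.5]{zbMATH05278765}, the right-hand side equals $2$ when the block is non-semi-simple and $+\infty$ when it is. Finally, $A_{\mathcal{D}}(\mi)$ is semi-simple precisely when the classical block $\mathcal{O}_{\chi_{\overline{\mu}}}$ has only one simple module, i.e.\ when $|\overline{\mathcal{D}}|=1$; since the assignment $\omega\mapsto\overline{\omega}$ is a bijection $\mathcal{D}\to\overline{\mathcal{D}}$ (this follows from Lemma \ref{decompositionblocksweights}), this happens exactly when $|\mathcal{D}|=1$.

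The main obstacle is the lifting step in (i): verifying that $(A_{\mathcal{D}}, R)$-injectivity and strong faithfulness descend from the residue field to the integral algebra. The algebraic content is already packaged in the tools of \cite{CRUZ2022410} and in the base-change properties recorded in Theorem \ref{changeofringsconstructionprojectives}, but one must be careful to keep track of the compatibility between the duality $\iota$ on $A_{\mathcal{D}}$ and the standard duality $D_R$ when transporting right-module statements across.
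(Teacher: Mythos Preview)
Your proposal is correct and follows essentially the same route as the paper: reduce to the residue field, invoke Theorem~\ref{Propertiesofsemisimplecomplexliealg}(e) to identify $P_A(\mu)(\mi)$ as the unique projective--injective of $A_{\mathcal{D}}(\mi)$, use Fang's \citep[Proposition~4.5]{zbMATH05278765} for the classical dominant dimension, and then transfer back via \citep[Theorem~6.13]{CRUZ2022410}. The only cosmetic difference is that the paper packages your entire step (i) into the single citation \citep[Proposition~6.3]{CRUZ2022410}, whereas you unpack the RQF3 lifting by hand (injectivity via \citep[Corollary~2.12]{CRUZ2022410}, strong faithfulness via a Nakayama argument, and the right-module statement via the involution~$\iota$); the paper also argues the equivalence $|\mathcal{D}|=1\Leftrightarrow |\overline{\mathcal{D}}|=1$ slightly more explicitly using $w_1\cdot\mu-w_2\cdot\mu\in\mathbb{Z}\Phi\cap\mi\mathfrak{h}_R^*=0$, which is exactly the injectivity of your bijection $\omega\mapsto\overline{\omega}$.
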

\begin{proof}
	By Lemma \ref{decompositionblocksweights}, $\mathcal{D}$ is of the form $W_{\overline{\mu}}\cdot \mu +\nu$ for some $\nu \in\mathfrak{h}_R^*$ and $w\cdot \mu -\mu\in \mathbb{Z}\Phi$ for every $w\in W_{\overline{\mu}}$.
	
	By Theorem \ref{Propertiesofsemisimplecomplexliealg}(e), for  an antidominant weight $\omega\in \mathcal{D}$, $P(\omega)$ is the unique projective-injective in $\mathcal{O}_{W_{\overline{\mu}}\cdot \overline{\mu}}$. By Theorem \ref{Oissplitqh} and \ref{changeofringsconstructionprojectives}, $P_A(\omega)(\mi)\simeq P_{A(\mi)}(\overline{\omega}) $ is the unique projective-injective of $A_{W_{\overline{\mu}}\cdot \overline{\mu}}\simeq A_{\mathcal{D}}(\mi)$. There are now two distinct cases. Assume that $A_{W_{\overline{\mu}}\cdot \overline{\mu}}$ is semi-simple. In particular, $\St(\overline{\mu})$ is projective-injective. By Theorem \ref{Propertiesofsemisimplecomplexliealg}, $\overline{\mu}$ is a dominant and antidominant weight. 
	On the other hand, if there exists a weight in $W_{\overline{\mu}}\cdot \overline{\mu}$ which is both dominant and antidominant, then
	it is both a maximal and minimal element in $W_{\overline{\mu}}\cdot \overline{\mu}$. Thus, $W_{\overline{\mu}}\cdot \overline{\mu}=\{\overline{\mu} \}$. In such a case, $A_{W_{\overline{\mu}}\cdot \overline{\mu}}\simeq R(\mi)$. Further, for any two elements $w_1, w_2\in W_{\overline{\mu}}$ we have $w_1\cdot \mu-w_2\cdot \mu\in \mi\mathfrak{h}_R^*$. By construction of $\mu$, we also have  $w_1\cdot \mu-w_2\cdot \mu\in \mathbb{Z}\Phi$. So, we deduce that $\domdim A_{W_{\overline{\mu}}}=+\infty$ if and only if the cardinality of $\mathcal{D}$ is one. 
	
	Assume now that the cardinality of $\mathcal{D}$ is greater than one. By \citep[Proposition 4.5]{zbMATH05278765}, we obtain that \mbox{$ \domdim A_{W_{\overline{\mu}}\cdot \overline{\mu}}=2$} with faithful projective-injective $P_A(\omega)(\mi)\simeq P_{A(\mi)}(\overline{\omega})$. By \citep[Proposition 6.3, Theorem 6.13]{CRUZ2022410}, the result follows. 
\end{proof}

It follows from Theorem \ref{dominantdimensionO} and Theorem \ref{Mullertheorem}, the analogue of integral Schur--Weyl duality for the blocks of the category $\mathcal{O}$:
There is a double centralizer property
$$C:=\End_{A_{\mathcal{D}}}(P_A(\omega))^{op}, \quad A_{\mathcal{D}}\simeq \End_C(P_A(\omega))$$
where $\omega$ is the antidominant weight of $\mathcal{D}$. Here, $C(\mi)$ is the so-called \textbf{coinvariant algebra} $S(\mathfrak{h}_{R(\mi)})/I$ whenever $|W_{\overline{\mu}}\cdot \overline{\mu}|=|W_{\overline{\mu}}|$ for the block $\mathcal{D}=W_{\overline{\mu}}\mu+\nu$. 
Here, $I$ denotes the ideal of the symmetric algebra of $\mathfrak{h}_{R(\mi)}^*$ generated by the polynomials which are invariants, under the Weyl group linear action, of positive degree with respect to the grading of the symmetric algebra of $\mathfrak{h}_{R(\mi)}^*$. In the other cases, where the stabilizer of $\overline{\mu}$ under the Weyl group $W_{\overline{\mu}}$ is non-trivial, the algebra $C(\mi)$ is a subalgebra of invariants of the coinvariant algebra under the elements of the stabilizer of $\overline{\mu}$. In particular, $C(\mi)$ is a commutative algebra (see \citep[Endomorphismensatz]{zbMATH00005018}). 

Taking advantage of the previous double centralizer property, we can define the Schur functor
$$\mathbb{V}_{\mathcal{D}}= \Hom_{A_{\mathcal{D}}}(P_A(\omega), -) \colon A_{\mathcal{D}}\m\rightarrow C\m.$$

In the literature, this functor is known as \textbf{Soergel's combinatorial functor}. The famous result known as \textbf{Struktursatz} \citep[Struktursatz 9]{zbMATH00005018} states that $\mathbb{V}_{\overline{\mathcal{D}}}\colon A_{\mathcal{D}}(\mi)\m\rightarrow C(\mi)\m$ is fully faithful on projectives. To see that, in this more general setup, we start by observing that since $P_A(\omega)$ is projective-injective, it is a (partial) tilting module, so it is self-dual with respect to $(-)^\natural$, that is, $P_A(\omega)^\natural\simeq P_A(\omega)$. So, it follows by  \citep[Proposition 2.4 and Lemma 3.2]{zbMATH05871076} that $C(\mi)\simeq DC(\mi)$. We will briefly explain the idea: the arguments are based on bookkeeping the twisted actions and realizing that $P_A(\omega)(\mi)$ being self dual implies that the isomorphism of $P_A(\omega)(\mi)$ to its dual is also an isomorphism of $C(\mi)$ under the twisted action. Then applying $\Hom_{R(\mi)}(-, R(\mi))$ one would obtain an isomorphism between $D(A_{\mathcal{D}}e_\omega)(\mi)$ and $e_\omega A_{\mathcal{D}}(\mi)$ as left $C(\mi)$-modules under the usual action. Now, applying the Schur functor we would obtain the desired isomorphism. 
Since $C\in C\proj$ the homomorphism
$
	{C\twoheadrightarrow C(\mi)\xrightarrow{\simeq} DC(\mi)}
$ can be lifted to a $C$-homomorphism $f\colon C\rightarrow DC$. Moreover, $f(\mi)$ is an isomorphism. Since $C, DC\in R\proj$ we obtain  that $f$ is an isomorphism as $C$-modules. This shows that $C$ is a relative self-injective $R$-algebra. Now, using that $C$ is a commutative $R$-algebra $f$ yields in addition that $C$ is a relative symmetric $R$-algebra. To see this observe that the action of the center of the enveloping algebra $Z(\mathfrak{g}_R)$ on $P(\omega)$ ($\omega$ being the antidominant weight of $\mathcal{D}$) yields a homomorphism of $R$-algebras $Z(\mathfrak{g}_R)\rightarrow \End_{A_{\mathcal{D}}}(HP(\omega))$. Further, we have a commutative diagram
\begin{equation}
	\begin{tikzcd}
		Z(\mathfrak{g}_R)(\mi)\arrow[r] \arrow[d, "\simeq"]& \End_{A_{\mathcal{D}}}(HP(\omega))(\mi)\arrow[d, "\simeq"]\\
		Z(\mathfrak{g}_{R(\mi)})\arrow[r, twoheadrightarrow] & \End_{A(\mi)_{\mathcal{D}}}(\Hom_{\mathcal{O}}(P_{\mathcal{D}}(\mi), P(\overline{\omega}))
	\end{tikzcd}.
\end{equation}
Here, the bottom row is surjective due to Soergel \citep[Lemma 5]{zbMATH00005018}, the left map is an isomorphism by Lemma \ref{centerofLiealgebra} while the right map is an isomorphism by Theorem \ref{changeofringsconstructionprojectives}. It follows that the upper map is also a surjective map. Denote by $X$ the cokernel (as $R$-homomorphisms) of the homomorphism $Z(\mathfrak{g}_R)\rightarrow \End_{A_{\mathcal{D}}}(HP(\omega))$. Thus, $X(\mi)=0$. Since $\End_{A_{\mathcal{D}}}(HP(\omega))\in R\proj$, we obtain $X\in R\m$ and by Nakayama's Lemma $X=0$. Hence, $Z(\mathfrak{g}_R)\rightarrow C$ is surjective, and therefore $C$ is a commutative $R$-algebra.

To sum up, we obtained:

\begin{Theorem}\label{integralstruktursatz}
	Let $R$ be a local regular commutative Noetherian ring which is a $\mathbb{Q}$-algebra with unique maximal ideal $\mi$. 
	Let $\mathcal{D}$ be a block of $[\l]$ for some $\l\in \mathfrak{h}_R^*$. Suppose that $\omega$ is the antidominant weight of $\mathcal{D}$. The following assertions hold.
	\begin{enumerate}[(a)]
		\item $(A_{\mathcal{D}}, P_A(\omega))$ is a relative gendo-symmetric $R$-algebra.
		\item $A_{\mathcal{D}}$ is split quasi-hereditary over $R$ with standard modules $\St_A(\mu)$, $\mu\in \mathcal{D}$.
		\item $A_{\mathcal{D}}$ is a cellular $R$-algebra with cell modules $\St_A(\mu)$, $\mu\in \mathcal{D}$, with respect to the duality map $\iota$.
		\item \textbf{(Integral Struktursatz) }$(A_{\mathcal{D}}, P_A(\omega))$ is a split quasi-hereditary cover of the commutative $R$-algebra $C$.
		\item $C$ is a cellular $R$-algebra with cell modules $\mathbb{V}_{\mathcal{D}}\St_A(\mu)$, $\mu\in \mathcal{D}$, with respect to the duality map $\iota_{|_{e_{\omega}A_{\mathcal{D}}e_{\omega} }}$.
		\item If $T$ is a characteristic tilting module of $A_{\mathcal{D}}$, then $2\domdim_{(A_{\mathcal{D}}, R)}T=\domdim (A_{\mathcal{D}}, R)$.
	\end{enumerate}
\end{Theorem}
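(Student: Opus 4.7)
The proof assembles the six assertions from material at hand. Parts (b) and (c) are restatements of Theorems \ref{Oissplitqh} and \ref{Oiscellular}, so nothing new is required there. For part (a), Theorem \ref{dominantdimensionO} already provides that $(A_\mathcal{D}, P_A(\omega), DP_A(\omega))$ is an RQF3 algebra with $\domdim(A_\mathcal{D}, R) \geq 2$. It only remains to exhibit the $(A_\mathcal{D}, e_\omega A_\mathcal{D} e_\omega)$-bimodule isomorphism $A_\mathcal{D} e_\omega \simeq D(e_\omega A_\mathcal{D})$. My plan is to use the involution $\iota$ built in the paragraph preceding Theorem \ref{Oiscellular}, which fixes $e_\omega$; composing the $R$-duality with the twist by $\iota$ produces a bimodule candidate $\varphi\colon A_\mathcal{D} e_\omega \to D(e_\omega A_\mathcal{D})$. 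Over the residue field, $\varphi(\mi)$ reduces to the well-known self-duality of the unique projective-injective in the classical block (the Struktursatz setup of \cite{zbMATH00005018}, see also \cite{zbMATH05871076}). Both source and target lie in $A_\mathcal{D}\m \cap R\proj$ and have equal $R$-ranks, so Nakayama's lemma applied to $\coker \varphi$ upgrades $\varphi(\mi)$ to an $R$-isomorphism.

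With (a) in place, (d) is immediate from the Remark following Theorem \ref{gendocharaccovers}: every relative Morita algebra is a cover of its centraliser, and the relative gendo-symmetric condition forces the Morita condition $\add(DA \otimes_A P) = \add P$; hence $(A_\mathcal{D}, P_A(\omega))$ is a cover of $C = \End_{A_\mathcal{D}}(P_A(\omega))^{op}$, which is the desired integral Struktursatz. For (e), since $A_\mathcal{D}$ is cellular with involution $\iota$ and $\iota(e_\omega) = e_\omega$, the truncation $e_\omega A_\mathcal{D} e_\omega = C$ inherits a cell chain and involution $\iota|_C$ with cell modules $e_\omega \St_A(\mu) = \mathbb{V}_\mathcal{D} \St_A(\mu)$; this is the standard idempotent truncation principle for cellular algebras, see \cite{cruz2021cellular}. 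Finally, (f) is a direct application of Theorem \ref{tiltingmoduledominantdimensioncover}: by (b)--(c) together with (a), $A_\mathcal{D}$ is a free Noetherian $R$-algebra, split quasi-hereditary with the duality $\iota$ relative to $\textbf{e} = \{e_\mu\colon \mu \in \mathcal{D}\}$, and $(A_\mathcal{D}, A_\mathcal{D} e_\omega)$ is relative gendo-symmetric with $e_\omega \in \mathbb{Z}\langle \textbf{e}\rangle$; the conclusion $2\domdim_{(A_\mathcal{D}, R)} T = \domdim(A_\mathcal{D}, R)$ is then automatic.

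The main obstacle is the $R$-integral bimodule isomorphism in (a): one must ensure the Nakayama lift is compatible with the right action of $e_\omega A_\mathcal{D} e_\omega$, and not merely an isomorphism of left $A_\mathcal{D}$-modules. The cleanest execution is to build $\varphi$ as a bimodule map from the outset, exploiting the $\iota$-invariance of $e_\omega$ and the commutativity of $C$ established in the discussion preceding the theorem, then observe that $\coker \varphi$ is a finitely generated $R$-module with vanishing reduction modulo $\mi$ to obtain surjectivity, and finally invoke equality of $R$-ranks over the local ring $R$ (so that a surjection between free modules of the same rank is an isomorphism) to force injectivity.
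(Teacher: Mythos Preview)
Your proposal is largely correct and matches the paper on (b), (c), (e), (f); for (d) your route via the relative Morita remark is valid and essentially equivalent to the paper's explicit use of $\Hom_{A_{\mathcal D}}(P_A(\omega),A_{\mathcal D})\simeq DP_A(\omega)$ together with Proposition~\ref{dominantgeqtwodcp}.

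The substantive divergence is in (a). The paper does \emph{not} construct the bimodule isomorphism $A_{\mathcal D}e_\omega\simeq D(e_\omega A_{\mathcal D})$ directly; instead it first proves in the discussion preceding the theorem that $C$ is a commutative, relative symmetric $R$-algebra (commutativity from Soergel's surjection $Z(\mathfrak g_R)\twoheadrightarrow C$, the Frobenius property by lifting $C(\mi)\simeq DC(\mi)$), and then invokes the relative Morita--Tachikawa correspondence plus \cite[Theorem~7.8]{CRUZ2022410} to obtain the gendo-symmetric conclusion. Your direct route is viable, but the phrase ``composing the $R$-duality with the twist by $\iota$'' does not by itself produce a map $A_{\mathcal D}e_\omega\to D(e_\omega A_{\mathcal D})$: it only identifies $P_A(\omega)^\natural$ with $D(e_\omega A_{\mathcal D})$, and you still need some map $P_A(\omega)\to P_A(\omega)^\natural$. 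The Nakayama plan you sketch for lifting a residue-field bimodule isomorphism is also delicate, since $A_{\mathcal D}e_\omega$ is not obviously projective as an $(A_{\mathcal D},C)$-bimodule.

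The cleanest way to close your argument is sharper than what you wrote: once $C$ is commutative, \emph{every} left $A_{\mathcal D}$-homomorphism $A_{\mathcal D}e_\omega\to D(e_\omega A_{\mathcal D})$ is automatically $(A_{\mathcal D},C)$-bilinear. Indeed, $\Hom_{A_{\mathcal D}}(A_{\mathcal D}e_\omega,D(e_\omega A_{\mathcal D}))\simeq e_\omega D(e_\omega A_{\mathcal D})\simeq DC$, and a direct check shows the two $C$-actions (from source and target) on $DC$ are $(c\cdot g)(c')=g(c'c)$ and $(g\cdot c)(c')=g(cc')$, which agree by commutativity. Hence the left $A_{\mathcal D}$-isomorphism $P_A(\omega)\simeq P_A(\omega)^\natural$ (available integrally because $P_A(\omega)$ is partial tilting and the semi-perfect algebra $A_{\mathcal D}$ has Krull--Schmidt on projectives) is already the desired bimodule isomorphism, with no Nakayama step for $\varphi$ needed. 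This is effectively what \cite[Theorem~7.8]{CRUZ2022410} packages, so both routes rest on the same ingredient.
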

\begin{proof}
	Statements (b) and (c) are Theorem \ref{Oissplitqh} and \ref{Oiscellular}, respectively.
	By the relative Morita-Tachikawa correspondence (see \citep[Theorem 4.1]{CRUZ2022410}) and \ref{dominantdimensionO}, $P(\omega)$ is a generator as $C$-module and satisfies $P_A(\omega)\otimes_C DP_A(\omega)\in R\proj$.	By Theorem 7.8 of \citep{CRUZ2022410} and the discussion above showing that $C$ is a relative symmetric $R$-algebra (a) follows. 
	
	By (a), $\Hom_{A_{\mathcal{D}}}(P_A(\omega), A_{\mathcal{D}})\simeq DP_A(\omega)$. 
	This fact, together with Theorem \ref{dominantdimensionO} implies the existence of a double centralizer property on $\mathbb{V} A_{\mathcal{D}}$. This shows (d). 
	By Proposition 2.2.11 of \cite{cruz2021cellular}, (e) follows.
	By Theorem \ref{tiltingmoduledominantdimensioncover}, (f) follows.
\end{proof}

\subsubsection{Hemmer-Nakano dimension under $\mathbb{V}_{\mathcal{D}}$}

Given that $(A_{\mathcal{D}}, P_A(\omega))$ is a cover of the algebra $C$, the Schur functor $\mathbb{V}_{\mathcal{D}}$ is fully faithful on projectives. There are two natural questions.
Is the above fully faithfulness a precursor of an identification of $\Ext$-groups under the restriction of the Schur functor to projectives?
 Since $A_{\mathcal{D}}$ is split quasi-hereditary, the same question can be posed involving the Verma modules. We shall start by discussing the classical case of complex semi-simple Lie algebras.
In that case, the Schur functor $\mathbb{V}$ (on a non semi-simple block) restricted to the projective modules cannot induce a bijection on the first $\Ext$ groups since otherwise the fact that $A_{W_{\overline{\mu}}\cdot \overline{\mu}}$ is a gendo-symmetric algebra would imply an increase in the dominant dimension to at least three. 

Now, regarding the Verma modules, the situation in the classical case is not very promising. Indeed, the vector space dimension of $\mathbb{V} \St(w\cdot\overline{\mu})$ is equal to the multiplicity of the simple module $\St(\overline{\omega})$ in the standard module $\St(w\cdot\overline{\mu})$ for every $w\in W_{\overline{\mu}}$, where $\overline{\omega}$ is the unique antidominant weight in the orbit. Since the non-zero homomorphisms between Verma modules are always injective, $\St(\overline{\omega})$ only occurs in the socle of $\St(w\cdot\overline{\mu})$. Therefore, $\dim_{R(\mi)} \mathbb{V} \St(w\cdot \overline{\mu})=1$. Since the Schur functor $\mathbb{V}$ kills all simple modules which are not in the top of the projective module $P_A(\omega)$ then $\mathbb{V}$  sends all standard modules to the same module  with dimension one over $C$. Therefore, $\mathbb{V}$ is not even fully faithful on Verma modules. It is only faithful on Verma modules.
This is the major difference between the classical case and the Noetherian algebras $A_{\mathcal{D}}$ as we will see now. 

\begin{Theorem}\label{HNforcategoryO}
	Fix $t$ a natural number. Let $R$ be the localization of $\mathbb{C}[X_1, \ldots, X_t]$ at the maximal ideal $(X_1, \ldots, X_t)$. Denote by $\mi$ the unique maximal ideal of $R$. Pick $\theta\in \mathfrak{h}_{R(\mi)}^*\simeq \mathfrak{h}_R^*/\mi \mathfrak{h}_R^*$ to be an antidominant weight which is not dominant. Define $\mu\in \mathfrak{h}_R^*$ to be a preimage of $\theta$ without coefficients in $\mi$ in its unique linear combination of simple roots. Fix $s$ to be a natural number satisfying $1\leq s\leq \rank_R \mathfrak{h}_R^*$ and $s\leq t$. Consider the block $\mathcal{D}=W_{\overline{\mu}}\cdot \mu +\frac{X_1}{1}\alpha_1+\cdots +\frac{X_s}{1}\alpha_s$, where $\alpha_i\in \varPi$ are distinct simple roots, $i=1, \ldots, s$ and by $\frac{f}{1}$ we mean the image of $f\in \mathbb{C}[X_1, \ldots, X_t]$ in $R$.
	Then
	\begin{enumerate}[(i)]
		\item $\HN_{\mathbb{V}_{\mathcal{D}}} A_{\mathcal{D}}\proj =s$.
		\item $\HN_{\mathbb{V}_{\mathcal{D}}} \mathcal{F}(\St_A)=s-1$.
	\end{enumerate}
\end{Theorem}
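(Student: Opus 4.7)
The plan is to combine the integral Struktursatz with the change-of-ring techniques of Section~\ref{A-covers under change of ground ring}. By Theorem~\ref{integralstruktursatz}, $(A_\mathcal{D}, P_A(\omega))$ is a relative gendo-symmetric $R$-algebra and a split quasi-hereditary cover of $C$. Since $\theta$ is antidominant but not dominant, the block $\mathcal{D}$ has more than one element, so Theorem~\ref{dominantdimensionO} gives $\domdim(A_\mathcal{D}, R) = 2$, and part~(f) of Theorem~\ref{integralstruktursatz} gives $\domdim_{(A_\mathcal{D}, R)} T = 1$ for the characteristic tilting module $T$. Theorems~\ref{boundrelationcoversdom} and~\ref{domdimofmoduleswithfiltrationbystandard} then yield the starting estimates $\HN_{\mathbb{V}_\mathcal{D}} A_\mathcal{D}\proj \geq 0$ and $\HN_{\mathbb{V}_\mathcal{D}} \mathcal{F}(\St_A) \geq -1$.

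For the upper bounds I will apply Corollary~\ref{coverheightprimeideal} to the prime ideal $\pri = (X_1, \ldots, X_s)$ of $R$, whose height is $s$. Under the quotient $R \to R/\pri \simeq \mathbb{C}[X_{s+1}, \ldots, X_t]_{(X_{s+1},\ldots,X_t)}$, the block $\mathcal{D}$ specialises to the undeformed orbit $W_{\bar\mu}\cdot \mu$; since $R/\pri$ is flat over $\mathbb{C}$, Propositions~\ref{arbitraryfaithfulcoverflat} and~\ref{arbitraryfaithfulcoverflattwo} identify the Hemmer--Nakano dimensions of the resulting cover with those of the classical Soergel cover of the non-semi-simple block $\mathcal{O}_{\chi_{\bar\mu}}$ over $\mathbb{C}$. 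Because the classical Soergel functor collapses every Verma module to the same one-dimensional $C$-module, those dimensions are $-1$ for $\mathcal{F}(\St)$ and $0$ for projectives. A direct contradiction via Corollary~\ref{coverheightprimeideal} then forces $\HN_{\mathbb{V}_\mathcal{D}} \mathcal{F}(\St_A) \leq s - 1$ and $\HN_{\mathbb{V}_\mathcal{D}} A_\mathcal{D}\proj \leq s$.

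For the matching lower bounds I induct on the number $s$ of deformation parameters, iterating Theorem~\ref{improvingcoverwithspectrum} (and, at the initial step lifting $\mathcal{F}(\St_A)$ from a $(-1)$-cover to a $0$-cover, Theorem~\ref{boundcoverimprovementdomquotientfield}) along the chain $R \to R/(X_1) \to R/(X_1, X_2) \to \cdots \to R/(X_1, \ldots, X_s)$. Specialising along a height-one prime of the form $(X_j)$ reduces to the previously established case with one fewer deformation parameter, while the extra level of faithfulness at each step is supplied by the generic fibre; this generic-fibre analysis, together with the bookkeeping required to verify Theorem~\ref{improvingcoverwithspectrum}~(ii) at the remaining height-one primes, is where the main obstacle lies. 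Over $K = \mathbb{C}(X_1, \ldots, X_t)$, the weights $1_K \otimes (w \cdot \mu + \sum_i \frac{X_i}{1}\alpha_i)$ for $w \in W_{\bar\mu}$ have algebraically independent components, and a direct calculation with the dot action shows that two such weights lie in a common block over $K$ precisely when some element of $W_{\bar\mu}$ simultaneously conjugates them under the dot action on $\mu$ and linearly fixes each of $\alpha_1, \ldots, \alpha_s$. Selecting the simple roots $\alpha_1, \ldots, \alpha_s$ so that this combined stabiliser in $W_{\bar\mu}$ is trivial forces $1_K \otimes \mathcal{D}$ to decompose into singleton $K$-blocks, making $K \otimes_R A_\mathcal{D}$ a product of semi-simple algebras of infinite relative dominant dimension. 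Feeding this generic-fibre behaviour into Theorem~\ref{improvingcoverwithspectrum} advances the induction by one at each step, yielding the desired lower bounds.
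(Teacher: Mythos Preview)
Your upper-bound argument is essentially the paper's: reduce modulo the height-$s$ prime $\pri=(X_1,\dots,X_s)$, observe that the resulting block is non-semi-simple, and invoke Corollary~\ref{coverheightprimeideal}. (The paper passes to the quotient field $Q(R/\pri)$ rather than arguing via flatness over~$\mathbb{C}$, but this is cosmetic.)

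The lower bound, however, has a genuine gap. Your induction on $s$ along the chain $R\to R/(X_1)\to\cdots$ only verifies condition~(ii) of Theorem~\ref{improvingcoverwithspectrum} at the coordinate hyperplanes $(X_j)$, and you acknowledge that the ``remaining height-one primes'' are the obstacle. But you never resolve this: the paragraph that follows is about the generic fibre $K=Q(R)$, which is condition~(i), not~(ii). Moreover, your proposed fix---selecting the $\alpha_i$ so that a certain stabiliser is trivial---adds a hypothesis not present in the statement, and even then would only control the generic fibre, not the fibres at arbitrary height-one primes $\mathfrak{q}$.

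The paper circumvents this by changing the induction variable. Rather than inducting on $s$, it proves by induction on the \emph{coheight} of $\pri$ that
\[
\HN_{R/\pri\otimes_R\mathbb{V}_{\mathcal{D}}}\,R/\pri\otimes_R A_{\mathcal{D}}\!\proj\;\ge\; s-\height(\pri),
\qquad
\HN_{R/\pri\otimes_R\mathbb{V}_{\mathcal{D}}}\,\mathcal{F}(R/\pri\otimes_R\St_A)\;\ge\; s-1-\height(\pri),
\]
for \emph{every} prime $\pri$ of height $\le s$. The point is that for any such $\pri$ with $\height(\pri)<s$, at least one $X_i$ has non-zero image in $R/\pri$, so over $Q(R/\pri)$ the deformation $\nu$ survives and the weights of $\mathcal{D}$ land in singleton blocks; hence $Q(R/\pri)\otimes_R A_{\mathcal{D}}$ has infinite dominant dimension. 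This single observation supplies condition~(i) of Theorem~\ref{improvingcoverwithspectrum} uniformly, and then every height-one prime $\mathfrak{q}$ of $R/\pri$ lifts to a prime $\mathfrak{q}'$ of $R$ with $\coheight(\mathfrak{q}')=\coheight(\pri)-1$, so the inductive hypothesis handles condition~(ii) automatically---for all height-one primes at once, with no special role for the $(X_j)$. Taking $\pri=0$ at the end gives the claimed values.
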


Hence, Theorem \ref{HNforcategoryO} implies that for the algebras $A_{\mathcal{D}}$ not only the Schur functor is fully faithful on Verma modules but also the Schur functor behaves quite well on $\Ext$ groups of Verma modules. This fact alone justifies studying the category $\mathcal{O}$ under other rings than the complex numbers.

It follows by Theorem \ref{HNforcategoryO} that the assumptions on Theorem \ref{improvingcoverwithspectrum} are optimal.
\begin{Remark}
	The non-zero homomorphisms between distinct Verma modules are injective maps but they are not $(A_{\mathcal{D}}, R)$-monomorphisms, in general. Otherwise, we would obtain a $(C, R)$-monomorphism \linebreak$\mathbb{V}\St(\omega_1)\rightarrow \mathbb{V}(\St(\omega_2))$ which remains injective under $R(\mi)\otimes_R -$. $\mathbb{V}\St(\omega_2)(\mi)$ is a  simple module, hence the mentioned map must be an isomorphism and by Nakayama's Lemma, so is the map $\mathbb{V}\St(\omega_1)\rightarrow \mathbb{V}(\St(\omega_2))$. Theorem \ref{HNforcategoryO}(ii) illustrates that there are rings $R$ and $R$-algebras $A_{\mathcal{D}}$ for which such a situation cannot happen.
\end{Remark}

\begin{Remark}
	Note that all results for the BGG category $\mathcal{O}$ of a complex semi-simple Lie algebra $\mathfrak{g}$ remain valid for $\mathfrak{g}_K$ for all fields $K$ of characteristic zero (since they are valid for a given a semisimple Lie algebra $\mathfrak{g}$ over a splitting field $K$ of characteristic zero.) In particular, Theorem \ref{HNforcategoryO} remains valid replacing $\mathbb{C}$ with $K$.
\end{Remark}

\begin{Remark}\label{dominantdimensionusingExtclarification}
	Much focus on \cite{CRUZ2022410} was given to illustrate that the relative dominant dimension over projective Noetherian algebras should be measured using $\Tor$ groups instead of $\Ext$ groups. 
	The reason for this was that the Krull dimension of regular local rings is an obstruction to deducing information on vanishing of $\Ext$ groups. More precisely we can ask how sharp is the lower bound presented in Theorem \ref{Mullertheorem} (e). Theorem \ref{HNforcategoryO} clarifies this situation by showing that under the assumptions of Theorem \ref{Mullertheorem} (e) the relative dominant dimension can attain any value between $n-\dim R$ and $n$.
\end{Remark}

\begin{proof}[Proof of Theorem \ref{HNforcategoryO}]	Denote by $\mi$ the maximal ideal of $R$. Assume that $\alpha_1, \ldots, \alpha_n\in \varPi$ are the simple roots of $\Phi$ and $\mu=\sum_{i=1}^n c_i \alpha_i$, where $c_i$ is the image of some complex number in $R$. Denote by $\nu$ the weight $\frac{X_1}{1}\alpha_1+\cdots +\frac{X_s}{1}\alpha_s$. By Theorem \ref{integralstruktursatz}, $(A_{\mathcal{D}}, P_A(\mu))$ is a split quasi-hereditary cover of $C$ and it is a relative gendo-symmetric $R$-algebra.
	
	We will start by showing that $s$ and $s-1$ are upper bounds for the Hemmer-Nakano dimension of $A_{\mathcal{D}}\proj$ and $\mathcal{F}(\St_A)$, respectively, under the Schur functor $\mathbb{V}_{\mathcal{D}}$. Let $T$ be a characteristic tilting module of $A_{\mathcal{D}}$.

	Choose $\pri$ the prime ideal of $R$ generated by the monomials $\frac{X_i}{1}$, with $i=1, \ldots, s$. In particular, $\pri$ has height $s$. Further, $R/\pri\otimes_R \mu$ is an antidominant weight which is not dominant and it has no coefficients belonging to the maximal ideal of $R/\pri$ in its unique linear combination of simple roots and $R/\pri \otimes_R \nu=0$.  Therefore, $Q(R/\pri)\otimes_R \mathcal{D}$ contains $Q(R/\pri)\otimes_R \mu$ which is an antidominant but is not a dominant weight, where $Q(R/\pri)$ denotes the quotient field of $R/\pri$. 
	Therefore, $Q(R/\pri)\otimes_R A_{\mathcal{D}}$ contains as direct product the algebra $A_{W_{Q(R/\pri)\otimes_R \mu}\cdot Q(R/\pri)\otimes_R \mu}$ which is not semi-simple. By Theorem \ref{dominantdimensionO}, \ref{boundrelationcoversdom} and the flatness of $Q(R/\pri)$ over $R/\pri$,
	\begin{align}
		-1 &=	\HN_{\mathbb{V}_{W_{Q(R/\pri)\otimes_R \mu}\cdot Q(R/\pri)\otimes_R \mu} } \mathcal{F}(Q(R/\pri)\otimes_R \St_A)\geq \HN_{Q(R/\pri)\otimes_R \mathbb{V}_{\mathcal{D}}} \mathcal{F}(Q(R/\pri)\otimes_R \St_A)\\&\geq \HN_{R/\pri\otimes_R \mathbb{V}_{\mathcal{D}}}\mathcal{F}(R/\pri\otimes_R \St_A).
		\\
		0&=\HN_{\mathbb{V}_{W_{Q(R/\pri)\otimes_R \mu}\cdot Q(R/\pri)\otimes_R \mu} } A_{W_{Q(R/\pri)\otimes_R \mu}\cdot Q(R/\pri)\otimes_R \mu}\proj
	\\	&\geq \HN_{Q(R/\pri)\otimes_R \mathbb{V}_{\mathcal{D}}} Q(R/\pri)\otimes_R A_{\mathcal{D}}\proj 
		\geq \HN_{R/\pri\otimes_R \mathbb{V}_{\mathcal{D}}} R/\pri\otimes_R A_{\mathcal{D}}\proj.
	\end{align}
	As a consequence of Corollary \ref{coverheightprimeideal} and thanks to $\height(\pri)=s$ we obtain
	\begin{align}
		\HN_{\mathbb{V}_{\mathcal{D}} }\mathcal{F}(\St_A)&\leq \HN_{R/\pri\otimes_R \mathbb{V}_{\mathcal{D}}}\mathcal{F}(R/\pri\otimes_R \St_A)+\height(\pri)=-1+s\\
		\HN_{\mathbb{V}_{\mathcal{D}}} A_{\mathcal{D}}\proj &\leq \HN_{R/\pri\otimes_R \mathbb{V}_{\mathcal{D}}} R/\pri\otimes_R A_{\mathcal{D}}\proj+\height(\pri)=s.
	\end{align}
	
	We claim that this inequality is actually an equality. To show that we will proceed by induction on the coheight of prime ideals $\pri$ of $R$, that is, on $\dim R-\height(\pri)$ with induction basis step $t-s$ to show that
	\begin{align*}
		\HN_{R/\pri\otimes_R \mathbb{V}} \mathcal{F}(R/\pri\otimes_R \St_A)\geq -1+s-\height(\pri), \quad 
		\HN_{R/\pri\otimes_R \mathbb{V}_{\mathcal{D}}} R/\pri\otimes_R A_{\mathcal{D}}\proj\geq s-\height(\pri).
	\end{align*}

	Let $\pri$ be a prime ideal of $R$ with coheight $t-s$, then it has height $s$. Since $R/\pri$ has maximal ideal $\mi/\pri$ with residue field $R(\mi)$ the claim follows by Theorem \ref{boundrelationcoversdom}, Theorem \ref{integralstruktursatz}(f) and Theorem \ref{dominantdimensionO}.
	
	Now assume that $\pri$ is a prime ideal of $R$ with coheight greater than $t-s$. Then $\pri$ has height smaller than $s$. In particular, $\pri$ cannot contain any prime ideal with height $s$. Consequently, some monomial $\frac{X_i}{1}$ has non-zero image in $R/\pri$. Moreover, $\nu$ has non-zero image in $R/\pri$ and its image belongs to $\mi/\pri$. Therefore, any weight in $R/\pri\otimes_R \mathcal{D}$ when viewed as weight in the quotient field $\mathfrak{h}_{Q(R/\pri)}^*$ does not belong to the integral weight lattice. Thus, all weights belonging to $R/\pri\otimes_R \mathcal{D}$ viewed as weights in $\mathfrak{h}_{Q(R/\pri)}^*$ are both dominant and antidominant. By the discussion in Theorem \ref{dominantdimensionO}, we obtain that \begin{align}
		\domdim Q(R/\pri)\otimes_R A_{\mathcal{D}}=\domdim_{ Q(R/\pri)\otimes_R A_{\mathcal{D}}} Q(R/\pri)\otimes_R T=+\infty. \label{eqexemplios159}
	\end{align} By Theorem \ref{boundcoverimprovementdomquotientfield}, we obtain that the claim holds for prime ideals with coheight $t-s+1$.

	Upon these considerations, assume the induction claim known for some prime ideal with coheight $t-s+r$ with $r\geq 1$.
	Let $\pri$ be a prime ideal of coheight $t-s+r+1$. Then $\height(\pri)=t-t+s-1=s-r-1<s$ and (\ref{eqexemplios159}) holds. By Theorem \ref{boundcoverimprovementdomquotientfield} the assumptions of Theorem \ref{improvingcoverwithspectrum} for $R/\pri\otimes_R A_{\mathcal{D}}$ and the resolving subcategories $\mathcal{F}(R/\pri\otimes_R \St_A)$ and $R/\pri\otimes_R A_{\mathcal{D}}$ are satisfied. Also, condition (i) of Theorem \ref{improvingcoverwithspectrum} is also satisfied. It remains to consider (ii). Let $\mathfrak{q}$ be a prime ideal of $R/\pri$ of height one. Then we can write $\mathfrak{q}=\mathfrak{q'}/\pri$ for some prime ideal $\mathfrak{q'}$ of $R$. Furthermore,
	\begin{align}
		1=\height(\mathfrak{q'}/\pri)=\dim(R/\pri)-\coheight(\mathfrak{q'}/\pri)=\coheight(\pri)-\coheight(\mathfrak{q'}),
	\end{align}  Hence, $\coheight(\mathfrak{q'})=\coheight(\pri)-1=t-s+r$. 
	By induction,  
	\begin{align}
		\HN_{R/\mathfrak{q'}\otimes_R \mathbb{V}} \mathcal{F}(R/\mathfrak{q'}\otimes_R \St_A)\geq -1+s-\height(\mathfrak{q'})=-1+r\\
		\HN_{R/\mathfrak{q'}\otimes_R \mathbb{V}_{\mathcal{D}}} R/\mathfrak{q'}\otimes_R A_{\mathcal{D}}\proj\geq s-\height(\mathfrak{q'})=r. 
	\end{align}
	Because of $R/\mathfrak{q'}\simeq (R/\pri)/(\mathfrak{q'}/\pri)=(R/\pri)/\mathfrak{q}$ Theorem \ref{improvingcoverwithspectrum} yields 
	\begin{align}
		\HN_{R/\mathfrak{p}\otimes_R \mathbb{V}} \mathcal{F}(R/\mathfrak{p}\otimes_R \St_A)\geq r\\
		\HN_{R/\mathfrak{p}\otimes_R \mathbb{V}_{\mathcal{D}}} R/\mathfrak{p}\otimes_R A_{\mathcal{D}}\proj\geq r+1 
	\end{align}This finishes the proof of the claim. 
	Now considering the prime ideal zero which has height zero, the result follows.
\end{proof}

\section*{Acknowledgments}
Most of the results of this paper are contained in the author's PhD thesis \citep{thesis}, financially supported by \textit{Studienstiftung des Deutschen Volkes}. The author would like to thank Steffen Koenig for all the conversations on these topics, his comments and suggestions towards improving this manuscript. I would also like to thank the anonymous referee for carefully reading this work and for all the comments given.

\bibliographystyle{alphaurl}
\bibliography{bibarticle}

\newcommand{\etalchar}[1]{$^{#1}$}
\newcommand{\noop}[1]{}
\begin{thebibliography}{{Hum}08}

\bibitem[AB59]{zbMATH03151673}
M.~{Auslander} and D.~A. {Buchsbaum}.
\newblock {On ramification theory in Noetherian rings.}
\newblock {\em {Am. J. Math.}}, 81:749–765, 1959.
\newblock \href {https://doi.org/10.2307/2372926} {\path{doi:10.2307/2372926}}.

\bibitem[{Ari}08]{zbMATH05707949}
S.~{Ariki}.
\newblock {Finite dimensional Hecke algebras}.
\newblock In {\em {Trends in representation theory of algebras and related
  topics. Proceedings of the 12th international conference on representations
  of algebras and workshop (ICRA XII), Toruń, Poland, August 15–24, 2007}},
  page 1–48. Zürich: European Mathematical Society (EMS), 2008.

\bibitem[Aus71]{zbMATH03517355}
M.~Auslander.
\newblock Representation dimension of {Artin} algebras. {With} the assistance
  of {Bernice} {Auslander}.
\newblock Queen {Mary} {College} {Mathematics} {Notes}. {London}: {Queen}
  {Mary} {College}. 179 p., 1971.

\bibitem[BGG76]{zbMATH03549206}
I.~N. {Bernshte\u{\i}n}, I.~M. {Gel'fand}, and S.~I. {Gel'fand}.
\newblock {Category of \(\mathfrak g\) modules}.
\newblock {\em {Funct. Anal. Appl.}}, 10:87--92, 1976.
\newblock \href {https://doi.org/10.1007/BF01077933}
  {\path{doi:10.1007/BF01077933}}.

\bibitem[CFK{\etalchar{+}}21]{chen_fang_kerner_koenig_yamagata_2021}
H.~Chen, M.~Fang, O.~Kerner, S.~Koenig, and K.~Yamagata.
\newblock Rigidity dimension of algebras.
\newblock {\em Mathematical Proceedings of the Cambridge Philosophical
  Society}, 170(2):417–443, 2021.
\newblock \href {https://doi.org/10.1017/S0305004119000513}
  {\path{doi:10.1017/S0305004119000513}}.

\bibitem[CPS88]{MR961165}
E.~Cline, B.~Parshall, and L.~Scott.
\newblock {Finite-dimensional algebras and highest weight categories}.
\newblock {\em J. Reine Angew. Math.}, 391:85–99, 1988.

\bibitem[CPS90]{CLINE1990126}
E.~Cline, B.~Parshall, and L.~Scott.
\newblock {Integral and graded quasi-hereditary algebras, {I}}.
\newblock {\em Journal of Algebra}, 131(1):126–160, 1990.
\newblock \href {https://doi.org/10.1016/0021-8693(90)90169-O}
  {\path{doi:10.1016/0021-8693(90)90169-O}}.

\bibitem[CPS96]{zbMATH00971625}
E.~Cline, B.~Parshall, and L.~Scott.
\newblock {\em {Stratifying endomorphism algebras.}}, volume 591.
\newblock Providence, RI: American Mathematical Society (AMS), 1996.
\newblock \href {https://doi.org/10.1090/memo/0591}
  {\path{doi:10.1090/memo/0591}}.

\bibitem[Cru]{appendix}
T.~Cruz.
\newblock Characteristic tilting modules and {Ringel} duality in the noetherian
  world.
\newblock draft.
\newblock URL:
  \url{https://pnp.mathematik.uni-stuttgart.de/iaz/iaz1/Cruz/Documents/appendix.pdf}.

\bibitem[Cru21a]{thesis}
T.~Cruz.
\newblock {\em Algebraic analogues of resolution of singularities,
  quasi-hereditary covers and {Schur} algebras}.
\newblock PhD thesis, University of Stuttgart, 2021.
\newblock URL: \url{http://dx.doi.org/10.18419/opus-11835}.

\bibitem[Cru21b]{cruz2021characterisation}
T.~Cruz.
\newblock A characterisation of {Morita} algebras in terms of covers.
\newblock {\em Algebras and Representation Theory}, 2021.
\newblock \href {https://doi.org/10.1007/s10468-021-10064-8}
  {\path{doi:10.1007/s10468-021-10064-8}}.

\bibitem[Cru22]{CRUZ2022410}
T.~Cruz.
\newblock An integral theory of dominant dimension of {Noetherian} algebras.
\newblock {\em Journal of Algebra}, 591:410--499, 2022.
\newblock \href {https://doi.org/10.1016/j.jalgebra.2021.09.029}
  {\path{doi:10.1016/j.jalgebra.2021.09.029}}.

\bibitem[Cru23]{cruz2021cellular}
T.~Cruz.
\newblock Cellular {Noetherian} algebras with finite global dimension are split
  quasi-hereditary.
\newblock {\em Journal of Algebra and its Applications}, 2023.
\newblock \href {https://doi.org/10.1142/S0219498824501627}
  {\path{doi:10.1142/S0219498824501627}}.

\bibitem[DEN04]{Doty2004}
S.~R. Doty, K.~Erdmann, and D.~K. Nakano.
\newblock {Extensions of Modules over Schur Algebras, Symmetric Groups and
  Hecke Algebras}.
\newblock {\em Algebras and Representation Theory}, 7(1):67–99, Mar 2004.
\newblock \href {https://doi.org/10.1023/B:ALGE.0000019454.27331.59}
  {\path{doi:10.1023/B:ALGE.0000019454.27331.59}}.

\bibitem[DJ89]{zbMATH04168928}
R.~Dipper and G.~James.
\newblock {The q-Schur algebra.}
\newblock {\em {Proc. Lond. Math. Soc. (3)}}, 59(1):23–50, 1989.
\newblock \href {https://doi.org/10.1112/plms/s3-59.1.23}
  {\path{doi:10.1112/plms/s3-59.1.23}}.

\bibitem[DJ91]{zbMATH00014844}
R.~Dipper and G.~James.
\newblock {\(q\)-tensor space and \(q\)-Weyl modules.}
\newblock {\em {Trans. Am. Math. Soc.}}, 327(1):251–282, 1991.
\newblock \href {https://doi.org/10.2307/2001842} {\path{doi:10.2307/2001842}}.

\bibitem[DK94]{MR1284468}
Y.~A. Drozd and V.~V. Kirichenko.
\newblock {\em {Finite-dimensional algebras}}.
\newblock Springer-Verlag, Berlin, 1994.
\newblock Translated from the 1980 Russian original and with an appendix by
  Vlastimil Dlab.
\newblock \href {https://doi.org/10.1007/978-3-642-76244-4}
  {\path{doi:10.1007/978-3-642-76244-4}}.

\bibitem[{Don}87]{zbMATH04031957}
S.~{Donkin}.
\newblock {On Schur algebras and related algebras. II}.
\newblock {\em {J. Algebra}}, 111:354–364, 1987.
\newblock \href {https://doi.org/10.1016/0021-8693(87)90222-5}
  {\path{doi:10.1016/0021-8693(87)90222-5}}.

\bibitem[Don98]{MR1707336}
S.~Donkin.
\newblock {\em {The {$q$}-{S}chur algebra}}, volume 253 of {\em {London
  Mathematical Society Lecture Note Series}}.
\newblock Cambridge University Press, Cambridge, 1998.
\newblock \href {https://doi.org/10.1017/CBO9780511600708}
  {\path{doi:10.1017/CBO9780511600708}}.

\bibitem[DR89]{Dlab1989}
V.~{Dlab} and C.~M. {Ringel}.
\newblock {Every semiprimary ring is the endomorphism ring of a projective
  module over a quasi-hereditary ring}.
\newblock {\em {Proc. Am. Math. Soc.}}, 107(1):1--5, 1989.
\newblock \href {https://doi.org/10.2307/2048026} {\path{doi:10.2307/2048026}}.

\bibitem[DR92]{zbMATH00140218}
V.~{Dlab} and C.~M. {Ringel}.
\newblock {The module theoretical approach to quasi-hereditary algebras}.
\newblock In {\em {Representations of algebras and related topics. Proceedings
  of the Tsukuba international conference, held in Kyoto, Japan, 1990}}, page
  200–224. Cambridge: Cambridge University Press, 1992.
\newblock \href {https://doi.org/10.1017/cbo9780511661853.007}
  {\path{doi:10.1017/cbo9780511661853.007}}.

\bibitem[EW06]{zbMATH05031712}
K.~{Erdmann} and M.~J. {Wildon}.
\newblock {\em {Introduction to Lie algebras}}.
\newblock Berlin: Springer, 2006.
\newblock \href {https://doi.org/10.1007/1-84628-490-2}
  {\path{doi:10.1007/1-84628-490-2}}.

\bibitem[Fac98]{MR3025306}
A.~Facchini.
\newblock {\em {Module theory}}.
\newblock {Modern Birkhäuser Classics}. Birkhäuser/Springer Basel AG, Basel,
  1998.
\newblock Endomorphism rings and direct sum decompositions in some classes of
  modules.
\newblock \href {https://doi.org/10.1007/978-3-0348-0303-8}
  {\path{doi:10.1007/978-3-0348-0303-8}}.

\bibitem[{Fan}08]{zbMATH05278765}
M.~{Fang}.
\newblock {Schur functors on QF-3 standardly stratified algebras.}
\newblock {\em {Acta Math. Sin., Engl. Ser.}}, 24(2):311–318, 2008.
\newblock \href {https://doi.org/10.1007/s10114-007-0984-y}
  {\path{doi:10.1007/s10114-007-0984-y}}.

\bibitem[FK11]{zbMATH05871076}
M.~{Fang} and S.~{Koenig}.
\newblock {Schur functors and dominant dimension.}
\newblock {\em {Trans. Am. Math. Soc.}}, 363(3):1555–1576, 2011.
\newblock \href {https://doi.org/10.1090/s0002-9947-2010-05177-3}
  {\path{doi:10.1090/s0002-9947-2010-05177-3}}.

\bibitem[FM19]{zbMATH07050778}
M.~Fang and H.~Miyachi.
\newblock {Hochschild cohomology and dominant dimension.}
\newblock {\em {Trans. Am. Math. Soc.}}, 371(8):5267–5292, 2019.
\newblock \href {https://doi.org/10.1090/tran/7704}
  {\path{doi:10.1090/tran/7704}}.

\bibitem[Fro00]{Frobenius[s.a.]}
G.~Frobenius.
\newblock {\em {Über die Charaktere der symmetrischen Gruppe}}.
\newblock Königliche Gesellschaft der Wissenschaften, Göttingen, 1900.
\newblock \href {https://doi.org/10.3931/e-rara-18862}
  {\path{doi:10.3931/e-rara-18862}}.

\bibitem[GJ81]{zbMATH03747378}
O.~{Gabber} and A.~{Joseph}.
\newblock {Towards the Kazhdan-Lusztig conjecture}.
\newblock {\em {Ann. Sci. Éc. Norm. Supér. (4)}}, 14:261–302, 1981.
\newblock \href {https://doi.org/10.24033/asens.1406}
  {\path{doi:10.24033/asens.1406}}.

\bibitem[GL96]{zbMATH00871761}
J.~J. {Graham} and G.~I. {Lehrer}.
\newblock {Cellular algebras.}
\newblock {\em {Invent. Math.}}, 123(1):1–34, 1996.
\newblock \href {https://doi.org/10.1007/BF01232365}
  {\path{doi:10.1007/BF01232365}}.

\bibitem[{Gre}81]{zbMATH03708660}
J.~A. {Green}.
\newblock {Polynomial representations of \(GL_ n\).}
\newblock {Algebra, Proc. Conf., Carbondale 1980, Lect. Notes Math. 848,
  124-140}, 1981.

\bibitem[Gre07]{zbMATH05080041}
J.~A. Green.
\newblock {\em {Polynomial representations of \(\text{GL}_n\). With an appendix
  on Schensted correspondence and Littelmann paths by K. Erdmann, J. A. Green
  and M. Schocker. 2nd corrected and augmented edition.}}, volume 830.
\newblock Berlin: Springer, 2nd corrected and augmented edition edition, 2007.
\newblock \href {https://doi.org/10.1007/3-540-46944-3}
  {\path{doi:10.1007/3-540-46944-3}}.

\bibitem[{Has}00]{zbMATH01527053}
M.~{Hashimoto}.
\newblock {\em {Auslander-Buchweitz approximations of equivariant modules.}},
  volume 282.
\newblock Cambridge: Cambridge University Press, 2000.
\newblock \href {https://doi.org/10.1017/cbo9780511565762}
  {\path{doi:10.1017/cbo9780511565762}}.

\bibitem[HN04]{hemmer_nakano_2004}
D.~J. Hemmer and D.~K. Nakano.
\newblock {Specht filtrations for Hecke algebras of type \(A\)}.
\newblock {\em {J. Lond. Math. Soc., II. Ser.}}, 69(3):623–638, 2004.
\newblock \href {https://doi.org/10.1112/s0024610704005186}
  {\path{doi:10.1112/s0024610704005186}}.

\bibitem[{Hum}80]{zbMATH03699133}
J.~E. {Humphreys}.
\newblock {\em {Introduction to Lie algebras and representation theory. 3rd
  printing, rev}}, volume~9.
\newblock Springer, New York, NY, 1980.

\bibitem[{Hum}08]{zbMATH05309234}
J.~E. {Humphreys}.
\newblock {\em {Representations of semisimple Lie algebras in the BGG category
  \(\mathcal O\)}}, volume~94.
\newblock Providence, RI: American Mathematical Society (AMS), 2008.
\newblock \href {https://doi.org/10.1090/gsm/094} {\path{doi:10.1090/gsm/094}}.

\bibitem[Iya03]{zbMATH01849919}
O.~Iyama.
\newblock {Finiteness of representation dimension.}
\newblock {\em {Proc. Am. Math. Soc.}}, 131(4):1011–1014, 2003.
\newblock \href {https://doi.org/10.1090/S0002-9939-02-06616-9}
  {\path{doi:10.1090/S0002-9939-02-06616-9}}.

\bibitem[Iya04]{zbMATH02070262}
O.~Iyama.
\newblock {Representation dimension and Solomon zeta function.}
\newblock In {\em {Representations of finite dimensional algebras and related
  topics in Lie theory and geometry. Proceedings from the 10th international
  conference on algebras and related topics, ICRA X, Toronto, Canada, July
  15–August 10, 2002}}, page 45–64. Providence, RI: American Mathematical
  Society (AMS), 2004.
\newblock \href {https://doi.org/10.1090/fic/040/03}
  {\path{doi:10.1090/fic/040/03}}.

\bibitem[{Jam}78]{zbMATH03609919}
G.~D. {James}.
\newblock {\em {The representation theory of the symmetric groups.}}, volume
  682.
\newblock Springer, Cham, 1978.
\newblock \href {https://doi.org/10.1007/BFb0067708}
  {\path{doi:10.1007/BFb0067708}}.

\bibitem[KN01]{zbMATH01818928}
A.~S. {Kleshchev} and D.~K. {Nakano}.
\newblock {On comparing the cohomology of general linear and symmetric groups}.
\newblock {\em {Pac. J. Math.}}, 201(2):339--355, 2001.
\newblock \href {https://doi.org/10.2140/pjm.2001.201.339}
  {\path{doi:10.2140/pjm.2001.201.339}}.

\bibitem[KSX01]{Koenig2001}
S.~König, I.~H. {Slungård}, and C.~Xi.
\newblock {Double centralizer properties, dominant dimension, and tilting
  modules}.
\newblock {\em Journal of Algebra}, 240(1):393–412, 2001.
\newblock \href {https://doi.org/10.1006/jabr.2000.8726}
  {\path{doi:10.1006/jabr.2000.8726}}.

\bibitem[LQ13]{stum2013quantum}
B.~{Le Stum} and A.~Quirós.
\newblock {On quantum state of numbers}, 2013.
\newblock \href {http://arxiv.org/abs/1310.8143} {\path{arXiv:1310.8143}}.

\bibitem[MR87]{zbMATH04049807}
J.~C. {McConnell} and J.~C. {Robson}.
\newblock {Noncommutative Noetherian rings. With the cooperation of L. W.
  Small}.
\newblock {Pure and Applied Mathematics. A Wiley-Interscience Publication.
  Chichester etc.: John Wiley \& Sons. XV, 596 p.}, 1987.
\newblock \href {https://doi.org/10.1090/gsm/030} {\path{doi:10.1090/gsm/030}}.

\bibitem[Mue68]{zbMATH03248955}
B.~J. Mueller.
\newblock The classification of algebras by dominant dimension.
\newblock {\em Can. J. Math.}, 20:398--409, 1968.
\newblock \href {https://doi.org/10.4153/CJM-1968-037-9}
  {\path{doi:10.4153/CJM-1968-037-9}}.

\bibitem[Par89]{zbMATH04116809}
B.~J. Parshall.
\newblock {Finite dimensional algebras and algebraic groups.}
\newblock {Classical groups and related topics, Proc. Conf., Beijing/China
  1987, Contemp. Math. 82, 97-114}, 1989.
\newblock \href {https://doi.org/10.1090/conm/082/982281}
  {\path{doi:10.1090/conm/082/982281}}.

\bibitem[PS88]{PS88}
B.~Parshall and L.~Scott.
\newblock {Derived categories, quasi-hereditary algebras, and algebraic
  groups}.
\newblock {\em Carlton Univ. Lecture Notes in Math.}, 3(3):1–104, 1988.
\newblock URL: \url{http://people.virginia.edu/~lls2l/Ottawa.pdf}.

\bibitem[PS05]{zbMATH02182639}
B.~J. {Parshall} and L.~L. {Scott}.
\newblock {Quantum Weyl reciprocity for cohomology}.
\newblock {\em {Proc. Lond. Math. Soc. (3)}}, 90(3):655--688, 2005.
\newblock \href {https://doi.org/10.1112/S0024611504015047}
  {\path{doi:10.1112/S0024611504015047}}.

\bibitem[PW91]{zbMATH04193959}
B.~Parshall and J.~Wang.
\newblock {\em {Quantum linear groups.}}, volume 439.
\newblock Providence, RI: American Mathematical Society (AMS), 1991.
\newblock \href {https://doi.org/10.1090/memo/0439}
  {\path{doi:10.1090/memo/0439}}.

\bibitem[Rei07]{MR2384611}
I.~Reiten.
\newblock Tilting theory and homologically finite subcategories with
  applications to quasihereditary algebras.
\newblock In {\em Handbook of tilting theory}, volume 332 of {\em London Math.
  Soc. Lecture Note Ser.}, pages 179--214. Cambridge Univ. Press, Cambridge,
  2007.
\newblock \href {https://doi.org/10.1017/CBO9780511735134.008}
  {\path{doi:10.1017/CBO9780511735134.008}}.

\bibitem[Rin10]{zbMATH05697051}
C.~M. Ringel.
\newblock {Iyama's finiteness theorem via strongly quasi-hereditary algebras.}
\newblock {\em {J. Pure Appl. Algebra}}, 214(9):1687–1692, 2010.
\newblock \href {https://doi.org/10.1016/j.jpaa.2009.12.012}
  {\path{doi:10.1016/j.jpaa.2009.12.012}}.

\bibitem[{Rot}09]{Rotman2009a}
J.~J. {Rotman}.
\newblock {\em {An introduction to homological algebra. 2nd ed.}}
\newblock Berlin: Springer, 2nd ed. edition, 2009.
\newblock \href {https://doi.org/10.1007/b98977} {\path{doi:10.1007/b98977}}.

\bibitem[Rou08]{Rouquier2008}
R.~Rouquier.
\newblock {{$q$}-{S}chur algebras and complex reflection groups}.
\newblock {\em Moscow Mathematical Journal}, 8(1):119–158, 184, 2008.
\newblock \href {https://doi.org/10.17323/1609-4514-2008-8-1-119-158}
  {\path{doi:10.17323/1609-4514-2008-8-1-119-158}}.

\bibitem[{Sch}01]{zbMATH02662157}
I.~{Schur}.
\newblock {Über eine Klasse von Matrizen, die sich einer gegebenen Matrix
  zuordnen lassen.}
\newblock {Diss. Berlin. 76 S.}, 1901.
\newblock URL: \url{http://eudml.org/doc/203316}.

\bibitem[{Soe}90]{zbMATH00005018}
W.~{Soergel}.
\newblock {Kategorie \({\mathcal O}\), perverse Garben und Moduln über den
  Koinvarianten zur Weylgruppe}.
\newblock {\em {J. Am. Math. Soc.}}, 3(2):421–445, 1990.
\newblock \href {https://doi.org/10.2307/1990960} {\path{doi:10.2307/1990960}}.

\bibitem[{Tot}97]{zbMATH00966941}
B.~{Totaro}.
\newblock {Projective resolutions of representations of \(\text{GL}(n)\)}.
\newblock {\em {J. Reine Angew. Math.}}, 482:1--13, 1997.
\newblock \href {https://doi.org/10.1515/crll.1997.482.1}
  {\path{doi:10.1515/crll.1997.482.1}}.

\bibitem[{Woo}74]{zbMATH03382538}
S.~M. {Woods}.
\newblock {Some results on semi-perfect group rings.}
\newblock {\em {Can. J. Math.}}, 26:121–129, 1974.
\newblock \href {https://doi.org/10.4153/cjm-1974-013-x}
  {\path{doi:10.4153/cjm-1974-013-x}}.

\bibitem[Zim14]{Zimmerman}
A.~Zimmermann.
\newblock {\em {Representation theory}}, volume~19 of {\em {Algebra and
  Applications}}.
\newblock Springer, Cham, 2014.
\newblock A homological algebra point of view.
\newblock \href {https://doi.org/10.1007/978-3-319-07968-4}
  {\path{doi:10.1007/978-3-319-07968-4}}.

\end{thebibliography}

\Address
\end{document}